\newtheorem{theorem}{Theorem}[section]
\newtheorem{lemma}[theorem]{Lemma}
\newtheorem{corollary}[theorem]{Corollary}
\newtheorem{proposition}[theorem]{Proposition}
\newtheorem{definition}[theorem]{Definition}
\theoremstyle{remark}
\newtheorem{remark}[theorem]{Remark}
\numberwithin{equation}{section}
\def\bp{\boldsymbol{\partial}}
\def\vb{\underline{h}}
\def\tmin{{\mbox{\tiny{min}}}}
\def\T{\mathcal{T}}
\def\slP{\slashed{P}}
\def\slE{\slashed{E}}
\def\smu{\slashed{\mu}}
\def\tt{{t'}}
\def\bb{{\mathbf{b}}}
\def\er{\mbox{err}}
\def\sn{{\slashed{\nabla}}}
\def\bE{{\bf E}}
\def\bn{{\bf n}}
\def\zb{{\underline{\zeta}}}
\def\bpi{{\bar{\pi}}}
\def\J{{\mathcal{J}}}
\def\Ab{{\underline{A}}}
\def\M{{\mathcal{M}}}
\def\bT{{\textbf{T}}}
\def\bR{{\textbf{R}}}
\def\bd{{\textbf{D}}}
\def\ti{\tilde}
\def\bg{\mathbf{g}}
\def\I{{\mathcal I}}
\def\beaa{\begin{eqnarray*}}
\def\eeaa{\end{eqnarray*}}
\def\ba{\begin{array}}
\def\ea{\end{array}}
\def\d{\delta}
\def\be#1{\begin{equation} \label{#1}}
\def \eeq{\end{equation}}
\newcommand{\nn}{\nonumber}
\def\l{\langle}
\def\r{\rangle}
\def\nn{\nonumber}
\def\S{{\mathcal S}}
\def\cga{\overset\circ{\ga}}
\def\S2{{\Bbb S}^2}
\def\A{\mathcal {A}}
\def\V{{\mathcal V}}
\def\E{{\mathcal E}}
\def\V{{\mathcal V}}
\def\W{{\mathcal W}}
\def\ze{{\zeta}}
\def\ub{\underline{u}}
\def\Lb{\underline{L}}
\def\Lie{{\mathcal L}}
\def\tr{\mbox{tr}}
\def\bA{{\textbf{A}}}
\def\D{{\mathcal D}}
\def\H{{\mathcal H}}
\def\N{{\mathcal N}}
\def\cga{\overset\circ{\ga}}
\def\La{{\Lambda}}
\def\B{{\mathcal B}}
\def\F{{\mathcal F}}
\def\P{{\mathcal P}}
\def\R{{\mathcal R}}
\def\c{\cdot}
\def\hot{\widehat{\otimes}}
\def\sig{\sigma}
\def\a{\alpha}
\def\b{\beta}
\def\ep{\epsilon}
\def\l{\langle}
\def\r{\rangle}
\def\ga{\gamma}
\def\Ga{\Gamma}
\def\la{\lambda}
\def\p{\partial}
\def\P{{\mathcal P}}
\def\nab{\nabla}
\def\F{{\mathcal{F}}}
\def\C{{\mathcal C}}
\def\Lb{{\underline{L}}}
\def\div{\mbox{\,div\,}}
\def\curl{\mbox{\,curl\,}}
\def\tr{\mbox{tr}}
\def\Tr{\mbox{Tr}}
\def\tir{{\tilde r}}
\def\itt{{\mbox{Int}}}
\def\f14{\frac{1}{4}}
\def\f12{{\frac{1}{2}}}
\def\t1a{t^{-\frac{1}{a}}}
\def\bm{{\bf m}}
\def\sl{\slashed}
\def\sD{\slashed{\Delta}}
\def\er{\mbox{err}}
\def\sn{{\slashed{\nabla}}}
\def\zb{{\underline{\zeta}}}
\def\bpi{{\bar{\pi}}}
\def\J{{\mathcal{J}}}
\def\Ab{{\underline{A}}}
\def\M{{\mathcal{M}}}
\def\bT{{\textbf{T}}}
\def\bR{{\textbf{R}}}
\def\bd{{\textbf{D}}}
\def\ti{\tilde}
\def\I{{\mathcal I}}
\def\beaa{\begin{eqnarray*}}
\def\eeaa{\end{eqnarray*}}
\def\ba{\begin{array}}
\def\ea{\end{array}}
\def\be#1{\begin{equation} \label{#1}}
\def \eeq{\end{equation}}
\def\nn{\nonumber}
\def\l{\langle}
\def\r{\rangle}
\def\nn{\nonumber}
\def\S{{\mathcal S}}
\def\cga{\overset\circ{\ga}}
\def\S2{{\Bbb S}^2}
\def\A{\mathcal {A}}
\def\V{{\mathcal V}}
\def\E{{\mathcal E}}
\def\V{{\mathcal V}}
\def\W{{\mathcal W}}
\def\ze{{\zeta}}
\def\ub{\underline{u}}
\def\Lb{\underline{L}}
\def\Lie{{\mathcal L}}
\def\tr{\mbox{tr}}
\def\bA{{\textbf{A}}}
\def\D{{\mathcal D}}
\def\H{{\mathcal H}}
\def\N{{\mathcal N}}
\def\cga{\overset\circ{\ga}}
\def\La{{\Lambda}}
\def\B{{\mathcal B}}
\def\F{{\mathcal F}}
\def\P{{\mathcal P}}
\def\R{{\mathcal R}}
\def\c{\cdot}
\def\hot{\widehat{\otimes}}
\def\sig{\sigma}
\def\a{\alpha}
\def\b{\beta}
\def\ep{\epsilon}
\def\l{\langle}
\def\r{\rangle}
\def\ga{\gamma}
\def\Ga{\Gamma}
\def\la{\lambda}
\def\p{\partial}
\def\P{{\mathcal P}}
\def\nab{\nabla}
\def\F{{\mathcal{F}}}
\def\Lb{{\underline{L}}}
\def\div{\mbox{\,div\,}}
\def\curl{\mbox{\,curl\,}}
\def\tr{\mbox{tr}}
\def\Tr{\mbox{Tr}}
\def\tir{{\tilde r}}
\def\tiw{{\tilde w}}
\def\itt{{\mbox{Int}}}
\def\f14{\frac{1}{4}}
\def\f12{{\frac{1}{2}}}
\def\t1a{t^{-\frac{1}{a}}}
\def\bm{{\bf m}}
\def\sl{\slashed}
\def\sD{\slashed{\Delta}}
\def\ckk{\check}
\def\dum{\mbox{ }}
\newcommand{\bea}{\begin{eqnarray}}
\newcommand{\eea}{\end{eqnarray}}
\def\nn{\nonumber}
\def\be{{(e)}}
\def\bi{{(i)}}
\def\G{\mathcal{G}}
\def\tiR{{\widetilde{\bR}}}
\newcommand{\chih}{\hat{\chi}}
\newcommand{\chib}{\underline{\chi}}
\newcommand{\chibh}{\underline{\hat{\chi}}\,}
\newcommand{\les}{\lesssim}
\def\gac{\stackrel{\circ}\ga}
\begin{document}
\title[]
{A geometric approach for sharp  Local well-posedness of quasilinear wave equations}
\author{Qian Wang}
\address{
Oxford PDE center, Mathematical Institute, University of Oxford, Oxford, UK}
\email{qian.wang@maths.ox.ac.uk}
\date{\today}
\maketitle
\begin{abstract}
The commuting vector fields approach, devised for strichartz estimates in \cite{KCom},  was developed for proving the local well-posedness  in the Sobolev spaces $H^s$ with $s>2+\frac{2-\sqrt{3}}{2}$ for general quasi-linear wave equation in ${\mathbb R}^{1+3}$ by Klainerman and Rodnianski.
Via this approach they obtained the local well-posedness in $H^s$ with $s>2$ for $(1+3)$ vacuum Einstein equations, by taking advantage of
the vanishing Ricci curvature. The sharp, $H^{2+\epsilon}$, local well-posedness result for general quasilinear wave
equation was achieved by Smith and Tataru by constructing a parametrix using wave packets.  Using the vector fields approach, one has to face the major hurdle caused by the Ricci tensor of the metric for the quasi-linear wave equations. This posed a question that if the geometric approach can provide the sharp result for the non-geometric equations.
In this paper, based on geometric normalization and new observations on the mass aspect function, we prove the sharp
local well-posedness of general quasilinear wave equation in ${\Bbb R}^{1+3}$ by a vector field approach.
 \end{abstract}

\section{\bf Introduction}\label{sec_1}

We consider the initial value problem of the second order quasi-linear wave equations of the form \begin{footnote}{Here, little Latin letters represent
indices from 1 to 3, and Greek letters represent indices from 0 to 3 with 0 standing for the time variable. Throughout the paper we use
the Einstein summation convention. We denote by $\p$ the partial differential $\p_{x_i}$. $\bp$ represents $(\p_t, \p).$ }\end{footnote}
\begin{equation}\label{wave1}
\left\{\begin{array}{lll}
-\p_t^2 \phi+g^{ij}(\phi)\p_i\p_j \phi=\N(\phi, \bp \phi),\\
\phi|_{t=0}=\phi_0, \quad \p_t \phi|_{t=0}=\phi_1
\end{array}\right.
\end{equation}
in ${\mathbb R}^{1+3}$, where the positive definite symmetric matrix $(g^{ij}(\phi))$ and its inverse $(g_{ij}(\phi))$
are smooth functions of $\phi$, and the function $\N(\phi, \bp\phi)$ takes the form
$$
\N(\phi, \bp \phi) = \N^{\a\b}(\phi) \bp_\a \phi \bp_\b \phi
$$
with $\N^{\a\b}(\phi)$ being smooth functions of $\phi$. We assume that $g^{ij}$ and $\N^{\a\b}$ are locally bounded in the sense that,
there is a sufficiently large integer $N$ such that for any $\La_0>0$ there exists a constants $C=C(\La_0)>1$ such that
\begin{equation}\label{g9}
\begin{split}
&C^{-1}|\xi|^2\le g^{ij}(y)\xi_i \xi_j \le C |\xi|^2,\qquad \forall \,|y|\le \Lambda_0,\\
&\sup_{|y|\le \Lambda_0} \left|\Big(\frac{d}{dy}\Big)^l g^{ij}(y)\right|\le C,\qquad \forall \,0\le l\le N,\\
&\sup_{|y|\le  \Lambda_0}\left|\Big(\frac{d}{dy}\Big)^l \N^{\a\b}(y)\right|\le C,\qquad \forall\, 0\le l\le N.
\end{split}
\end{equation}
%\begin{footnote}
%{A bit not sure about the part that $|y|\le \Lambda_0$. }
%\end{footnote}
The purpose of this paper is to prove the following sharp local well-posedness result in the Sobolev spaces $H^s$ with $s>2$
for the general quasi-linear wave equation (\ref{wave1}) by the vector field approach.

\begin{theorem}[Main Theorem]\label{main1}
For any $s>2$ and $M_0>0$,  there exist positive constants $T_*$ and $M_1$
such that for any initial data $(\phi_0, \phi_1)$ satisfying $\|\phi_0\|_{H^s}+\|\phi_1\|_{H^{s-1}} \le M_0$,
there exists a unique solution $\phi\in C(I_*, H^s)\times C^1(I_*, H^{s-1})$ to (\ref{wave1})
satisfying the estimates
\begin{equation*}
\|\bp \phi\|_{L^2_{I_*} L_x^\infty}+\|\bp\phi\|_{L_{I_*}^\infty H^{s-1}}\le M_1,
\end{equation*}
where $I_*:=[-T_*, T_*]$.
\end{theorem}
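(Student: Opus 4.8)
The plan is to run a bootstrap/continuity argument built around Strichartz estimates for the linearized equation, following the commuting-vector-fields philosophy of Klainerman--Rodnianski but with the geometric normalizations and the new mass-aspect-function estimates that replace the use of vanishing Ricci curvature. I would begin by reducing the problem to a priori estimates: assume a smooth solution exists on a time interval and that the bootstrap quantity $Q(t):=\|\bp\phi\|_{L^2_t L^\infty_x}+\|\bp\phi\|_{L^\infty_t H^{s-1}}$ is bounded by $2M_1$, and aim to improve this to $Q(t)\le M_1$ for $t\le T_*$ with $T_*,M_1$ depending only on $M_0,s$. The crucial ingredient is the Strichartz estimate $\|\bp\psi\|_{L^2_t L^\infty_x}\lesssim \|\bp\psi(0)\|_{H^{1+\epsilon}}+\|\Box_{g}\psi\|_{L^1_t H^{1+\epsilon}_x}$ (for some $\epsilon<s-2$) for solutions of the geometric wave equation $\Box_g\psi=F$ with $g=g(\phi)$; applying it to $\psi=\bp^{\le 1}\phi$ and commuting derivatives through the equation, the nonlinearity $\N(\phi,\bp\phi)$ and the commutator terms are controlled by $Q$ to close the loop, since the energy estimate at the level $H^s$ handles the $L^\infty_t H^{s-1}$ part via Gronwall.

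The main work, and the heart of the paper, is establishing that Strichartz estimate at the sharp regularity $H^{1+\epsilon}$ for the metric $g=g(\phi)$ whose coefficients are merely controlled in $L^2_t L^\infty_x$ (for one derivative) and $L^\infty_t H^{s-1}$. I would rescale to frequency $\la$ (Littlewood--Paley truncating the metric and the solution), reducing matters to a dyadic estimate on a rescaled time interval of length $\sim\la^{8\epsilon'}$ or so, and then construct the solution's dispersive behavior geometrically: build an optical function $u$ solving the eikonal equation $g^{\a\b}\p_\a u\p_\b u=0$, foliate by the null cones $\{u=\text{const}\}$, set up the null frame and the associated connection coefficients (second fundamental form $\chi$, torsion $\zeta$, the lapse), and prove decay of the conformal factor / control of the null geometry. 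The key new observation — this is where the mass aspect function enters — is that although $\Ric$ of $g$ does not vanish, the relevant curvature components that obstruct the geometry (in the Raychaudhuri equation for $\tr\chi$ and in the transport equations for $\mu$, the mass aspect function) can be renormalized: one isolates the worst term as an exact derivative or a total $L$-derivative plus an error that is quadratic in the good (already-controlled) quantities, so that $\tr\chi$, $\mu$, and the conformal factor satisfy estimates with the same strength as in the Einstein case. Concretely I expect to introduce a modified mass aspect $\mu$ adapted to $g(\phi)$ and prove a transport estimate $\|\mu\|_{L^\infty_u L^2(S_{t,u})}\lesssim \ldots$ with the Ricci contribution absorbed, which then feeds into the trace/Besov estimates for $\tr\chi$ needed to control the geometry of the cones and ultimately the $L^2_t L^\infty_x$ bound via a $TT^*$ / stationary phase argument on the parametrix or via the Klainerman--Rodnianski spectral argument.

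After the geometric estimates are in place, I would assemble the proof as follows: (i) fix $\epsilon\in(0,s-2)$; (ii) state and prove the energy estimates for $\Box_g\psi=F$ giving $\|\bp\psi\|_{L^\infty_t H^{k}}$ control, including the higher-order commutator estimates needed to reach $H^{s-1}$; (iii) prove the dyadic Strichartz estimate on the rescaled interval using the null geometry and the mass aspect function control; (iv) sum the dyadic pieces (Duhamel plus Littlewood--Paley square function) to obtain the $L^2_tL^\infty_x$ estimate at regularity $1+\epsilon$; (v) substitute into the bootstrap for $Q(t)$, showing the nonlinear and commutator terms contribute $\lesssim M_0 + T_*^{1/2}\,\text{poly}(M_1)$ so that choosing $M_1\sim C M_0$ and $T_*$ small closes the bootstrap; (vi) upgrade a priori estimates to existence/uniqueness by the standard regularization-and-limit procedure (solve for smooth mollified data, get uniform estimates, pass to the limit, prove uniqueness by an energy estimate for the difference of two solutions at a lower regularity level, and continuous dependence likewise). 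The main obstacle throughout is step (iii)–(iv): making the geometric/parametrix construction work with only $H^{1+\epsilon}$ control on $\phi$ while the metric carries a nonvanishing Ricci tensor, which is precisely why the renormalization of the mass aspect function and the choice of geometric normalization (e.g. a good gauge for $u$, harmonic-type coordinates on the spheres $S_{t,u}$) are essential; everything else is, in principle, adaptation of now-standard machinery.
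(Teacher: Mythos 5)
Your high-level skeleton — bootstrap on $\|\bp\phi\|_{L^2_tL^\infty_x}$, reduce to a dyadic Strichartz estimate after rescaling, build an optical function and control the null geometry, invoke the mass aspect function $\mu$ to replace the role of vanishing Ricci — correctly identifies the architecture of the argument. But there are two places where your proposal follows a route the paper explicitly argues cannot reach $s>2$, and misses the specific mechanism the paper introduces precisely to get around it.

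First, you propose to ``Littlewood--Paley truncate the metric'' before constructing the null foliation, i.e.\ to work with a frequency-regularized metric $H=P_{\le\la}\bg(P_{\le\la}\phi)$ as in Klainerman--Rodnianski. The paper argues at length (in the discussion around \eqref{w8.4.1}) that this regularization is lossy on null hypersurfaces: after LP truncation, the quantity $\sn\Box_HH$ one would need to control only has the regularity of $f(\phi)\,\bp^2\phi\cdot\bp\phi$, and the gain of half a derivative that comes from having a genuine angular (tangential) derivative in physical space is destroyed by the frequency cutoff. This is exactly why \cite{KRduke} stopped at $s>2+\tfrac{2-\sqrt3}{2}$ for generic quasilinear equations. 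The paper's central innovation is to normalize the metric in physical space only, via a conformal change $\tilde\bg=\Omega^{-2}\bg$ with $\Omega=e^{-\sigma}$ and $L\sigma=\tfrac12V_L$; this is engineered so that $\widetilde{\tr\chi}:=\tr\chi+V_L$ becomes the true null expansion of $\tilde\bg$, and so that the principal curvature terms in \eqref{s0} and \eqref{ans0} cancel in the structure equations for $\widetilde{\tr\chi}$ and $\sn\widetilde{\tr\chi}$, without any loss from a dyadic cutoff. The ``geometric normalization'' you need is this conformal gauge, not a harmonic gauge on the spheres.

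Second, your description of the mass-aspect-function renormalization is too vague to carry the estimate. It is not enough that $\Ric$-terms ``can be renormalized''; the paper needs the concrete identity $\mu=\div\pi+\bA\cdot\bA+\cdots$ (proved via the auxiliary function $\varphi=\log\sqrt{|\ga|}-\log\sqrt{|\check\ga|}$ and the commutator \eqref{comm1}), which feeds into the div--curl system for $\zeta$ and secures $\|\zeta\|_{L^1_tL^\infty}$. And for the conformal energy argument, one must control $\mu+2\sD\sigma$: the key point is that when the $\Lb$ transport of $\mu$ is added to the transport of $2\sD\sigma$, the dangerous terms $L\Lb V_4+\tfrac12\tr\chi\Lb V_4+\cdots+\sD V_4$ assemble exactly into $\Box_\bg(V_4)$, which by the original equation \eqref{wave1} is of the form $\bp\tilde\pi\cdot\tilde\pi$ rather than $\bp^2\bg$. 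Without spelling out this cancellation, the transport estimate for the modified mass aspect degenerates and the conformal energy argument does not close; this is the step that actually replaces $\Ric=0$. Finally, you invoke ``a $TT^*$/stationary phase argument on the parametrix''; the paper does use a $\T\T^*$ reduction, but the dispersive estimate is obtained via boundedness of a conformal energy (Theorem~\ref{thm8.1.1}) adapted from \cite{Wangrough,Wangricci,DaRod1}, not by constructing a wave-packet parametrix à la Smith--Tataru. Adopting the parametrix route would again put you back into the regularity regime that fails for generic $g(\phi)$ unless one supplies the same normalization.
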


\subsection{Review of history}
Since the work of Choquet-Bruhat \cite{Ch}, there has been extensive work on the well-posedness of quasi-linear
wave equation (\ref{wave1}) in ${\Bbb R}^{1+n}$ for $n\ge 2$. In view of the energy estimate
\begin{equation}\label{eqin1}
\|\bp \phi(t)\|_{H^{s-1}}\le c \|\bp \phi(0)\|_{H^{s-1}}\c
\exp \left(\int_0^t  \|\bp \phi(\tau)\|_{L_x^\infty} d\tau\right),
\end{equation}
the Sobolev embedding and a standard iteration argument, the classical local well-posedness result of Hughes-Kato-Marsden \cite{HKM}
in the Sobolev space $H^s$ follows for any $s>\frac{n}{2}+1$, where the estimate
of $\|\bp \phi\|_{L_t^\infty L_x^\infty}$ is heavily relied on. To improve the classical
result, it is crucial to get a good estimate on $\|\bp \phi\|_{L_t^1 L_x^\infty}$. This is
reduced to deriving the Strichartz estimate for the wave operator $\bg^{\a\b}(\phi)\p_\a\p_\b$ which has rough coefficients  since  the Lorentzian metric
$\bg(\phi)=- d t^2 + g_{ij}(\phi) d x^i d x^j$
depends on the solution $\phi$ and thus is only as smooth as $\phi$.

Strichartz estimate for wave equations  with rough coefficients was first studied by Smith \cite{Sm}.
An important breakthrough was then made by Bahouri-Chemin \cite{BC1, BC2} and by Tataru \cite{T1}
using parametrix constructions. By establishing a Strichartz estimate for solutions to linearized equation
$\p_t^2\phi-g^{ij}\p_i \p_j \phi=0$ of the form
\begin{equation*}
\|\bp \phi\|_{L_t^2 L_x^\infty}\le c\left(\|\phi_0\|_{H^{\frac{n}{2}+\f12+\a}}+\|\phi_1\|_{H^{\frac{n}{2}-\f12 +\a}}\right)
\end{equation*}
with a loss of $\a>\frac{1}{4}$, they obtained the well-posedness of (\ref{wave1}) in $H^s$
with $s>\frac{n}{2}+\f12+\frac{1}{4}$.
This result
was later improved to $s>\frac{n}{2}+\f12+\frac{1}{6}$ in \cite{T3}.

In \cite{KRduke}, Klainerman-Rodnianski improved the local well-posedness  of (\ref{wave1}) in ${\mathbb R}^{1+3}$ to the Sobolev space $H^{s}$
with $s>2+\frac{2-\sqrt{3}}{2}$, where they blended the geometric treatment on the actual nonlinear equation introduced in \cite{KCom},  with the paradifferential calculus ideas initiated in \cite{BC2}, \cite{T2} and \cite{T3}. Thanks to the geometric approach, the feature that\begin{footnote}{
 $\Box_{\bg}$ represents the Laplace-Beltrami operator of the Lorentzian metric $\bg$ in this paper. }\end{footnote} $\Box_{\bg} g^{ij}(\phi)$ is quadratic in $\bp \phi$ exhibited its power and provided the improvement.  According to the counter-examples in \cite{Lind_1,Lind_2}, one can only expect to obtain the
local well-posedness in $H^s$ with $s>2$. %The Strichartz estimate for the linearized equation established to achieve
%such $s>2$ result also has to be comparable to  the standard one for $\Box \phi=0$ in Minskowski space.
Einstein vacuum equations under the wave
coordinates can be written as a system of the type (\ref{wave1}).
For this equation system,  Klainerman and Rodnianski achieved the local well-posedness result in $H^s$ for any $s>2$
in \cite{KR1,KR2,KRd}  by using the vector field approach. An extension to Einstein vacuum equation
under the CMC spatial harmonic gauge was established in \cite{Wangrough, Wangricci}.   Vanishing Ricci plays a
key role in these works for Einstein equations. The local well-posedness of (\ref{wave1}) in $H^s$ with $s>2$ was achieved by
Smith and Tataru in \cite{SmTT} where, to prove the Strichartz estimates, they constructed the parametrix by using wave packets.

In \cite{KRduke,KR1}, there adopted a procedure of paradifferential regularization over the spacetime metric $\bg$. Without regularizing the rough Einstein metric, the vector field approach is implemented
in a more direct fashion in \cite{Wangrough, Wangricci}.  This was achieved by reducing the Strichartz estimates to controlling  conformal energy of a very low order. With the needed order of conformal energy lowered, to obtain such energy, the required  regularity on the background geometry was relaxed, and was obtained when the metric is Einstein and belongs to $H^s$ with $s>2$.

\subsection{ Bootstrap assumptions}
For the general quasi-linear wave equation (\ref{wave1}), the metric $\bg=\bg(\phi)$ is no longer Einstein and is rough
due to its dependence on the unknown solution. This presents a great deal of challenging issues in implementing the vector
field approach. It has been a longstanding question whether the vector field approach can  be used to achieve the sharp local well-posedness
result for (\ref{wave1}). In this paper we will give the affirmative answer to this question. We observe that regularizing the metric $\bg$
in phase space does not help much, due to the potential loss of regularity it causes on null hypersurfaces. We take the more direct scheme
in \cite{Wangrough, Wangricci}, and further relax the requirement on conformal energy. To derive the boundedness of such energy, we introduce
a new geometric normalization over the metric, such that ${\bf{Ric}}_{\ti\bg}(L,L)$, the tangential component of Ricci tensor along null
hypersurfaces under the new metric $\ti \bg$, only contains lower order terms. Moreover, we employ an energy method which makes full use of
the hidden structures we discovered on the connection coefficients of the null frame.

According to \cite{KRduke, KR1}, in order to complete the proof of Theorem \ref{main1}, it suffices to show that for any $s>2$
there exist positive constants $C$ and $T$ depending on $\|(\phi_0, \phi_1)\|_{H^s\times H^{s-1}}$ such that there holds the Strichartz
estimate
\begin{equation*} %\label{str8.1}
\int_0^T \|\bp \phi(t)\|_{L_x^\infty}^2 dt \le C.
\end{equation*}
We will achieve this by using a bootstrap argument. To be more precise, we take
 small positive numbers $\ep_0, \delta_0$ such that
\begin{equation}\label{7.10.6}
 0<\ep_0<\frac{s-2}{5}, \quad \delta_0=\ep_0^2.
\end{equation}
We make the bootstrap assumption with $T$ a small number in $(0,1)$ that
\begin{equation}\label{BA1}
\|\bp \phi\|_{L^2_{[0,T]}L_x^\infty}^2+\sum_{\la\ge 2}\la^{2\delta_0}\|P_\la\bp \phi\|_{L_{[0, T]}^2 L_x^\infty}^2 \le 1. \tag {\bf BA}
\end{equation}
We then improve (\ref{BA1}) by showing that\begin{footnote}
 {By Sobolev embedding on initial slice,  $|\phi(0)|\le \La_1$ with $\La_1$ depending on $\|\phi(0)\|_{H^2}$. Using (\ref{BA1}), we can derive $|\phi|\le \La_1+1$ in $[0,T]\times {\Bbb R}^3$  by using fundamental theorem of calculus.
  This determines the constant $C(\La_0)$ in (\ref{g9}) with $\La_0$ substituted by $\La_1+1$. Throughout this paper, a constant is universal means it  depends on the $C=C(\La_1+1)$  from (\ref{g9}) and $\|(\phi_0, \phi_1)\|_{H^s\times H^{s-1}}$ only.  For two quantities $A$ and $B$,
we use $A\les B$ to represent that there exists a universal constant $M$ such that $A\le M\c B$.}
   \end{footnote}
\begin{equation}\label{p2}
\|\bp \phi\|_{L^2_{[0,T]}L_x^\infty}^2+\sum_{\la\ge 2}\la^{2\delta_0}\|P_\la\bp \phi\|_{L_{[0,T]}^2 L_x^\infty}^2
\les T^{2\delta} \|(\phi_0, \phi_1)\|_{H^s\times H^{s-1}}^2
\end{equation}
for some constant $\delta>0$,
where $P_\la$ is the Littlewood-Paley (LP) projection with frequency $\la =2^k$ defined for any function $f$ by
\begin{equation}\label{LP8.4}
P_\la f (x) = f_\la(x) = \int e^{{\rm i} x\c \xi} \eta(\la^{-1} \xi) \hat f(\xi)d \xi
\end{equation}
for some smooth function $\eta$ supported over $\{\xi: 1/2 \le |\xi|\le 2\}$ with $\sum_{k\in {\mathbb Z}} \eta(2^k \xi)= 1$ for $\xi\ne 0$.

In what follows, we first briefly explain the scheme of reduction for deriving the Strichartz estimate
and then outline the novelty of this paper.

\subsection{Scheme of Reductions}
Our proof of the Strichartz estimate is based on the combination of an abstract $\T\T^*$ argument for wave equations and
the reduction of Strichartz estimate to estimates on weighted energies. This approach was devised in \cite{KCom},
developed and extended in \cite{KRduke, KR1,KR2, KRd,Wangrough, Wangricci}.
Now we outline the reduction scheme for proving (\ref{p2}).

\subsubsection*{{\bf$\bullet$} {\it Reduction to dyadic Strichartz estimates}} \,\,
By using the LP decomposition, it is easy to reduce the proof of (\ref{p2}) to establishing for sufficiently large $\la$ the estimates
\begin{equation}\label{11.3.1}
\|\la^{\delta_0}P_\la \bp \phi\|_{L_{[0, T]}^2 L_x^\infty} \le c_\la T^{\f12-\frac{1}{q}}\|(\phi(0), \p_t \phi(0))\|_{H^s\times H^{s-1}},
\end{equation}
%\begin{equation}\label{plphi}
%\| P_\la \p\phi\|_{L_I^2 L_x^\infty}
%\les \la^{-\d} |I|^{\f12-\frac{1}{q}}\|\hn \phi, e_0 \phi\|%_{H^{s-1}(\Sigma_0)}
%\end{equation}
%for any solution $\psi$ of the equation $\Box_\bg \psi=0$,
where $q>2$ is sufficiently close to $2$, and $\sum_{\la} c_\la <\infty$.
According to (\ref{BA1}), we may partition $[0, T]$ into disjoint union of sub-intervals $I_k:=[t_{k-1}, t_k]$ of total
number $\les \la^{8\epsilon_0}$ with the properties that
$|I_k|\les \la^{-8\epsilon_0} T$ and $\|\bp \phi\|_{L_{I_k}^2 L_x^\infty}\le \la^{-4\epsilon_0}$.
By considering $\phi$ on $I_k$ and using the Duhamel principle, the proof of (\ref{11.3.1}) can be reduced to establishing
the dyadic Strichartz estimate
\begin{equation}\label{plagen2}
  \|P_\la \bp \psi\|_{L^q_{I_k} L_x^\infty}\les\la^{\frac{3}{2}-\frac{1}{q}}\|\psi[0]\|_{\dot{H}^1}
\end{equation}
for linear equation $\Box_{\bg(\phi)} \psi=0$ on small time intervals, where $q>2$ is sufficiently close to $2$. Here $\psi[t]:=(\psi(t), \p_t \psi(t))$
and  $\|\psi[t]\|_{\dot{H}^1}:=\|\p_t \psi(t)\|_{L^2}+\|\p \psi(t)\|_{L^2}$.
To implement this reduction, the reproducing property of LP projections is used to resolve the issue that
the solution is not necessarily frequency localized even if the initial data is, see Section 3.

%We then  apply (\ref{plagen2}) to (\ref{plagen1}), followed with adding together the
%result of dyadic Strichartz estimates over intervals $I_k$ with the help of estimates on remainder.
% As one obvious problem arisen from not phase-localizing the metric $\bg$, the  operator  $W(t,s)$  does not preserve the frequency-localized feature of data.  Hence one can not apply %(\ref{plagen2}) to (\ref{plagen}).
% The trick of representing the dyadic piece of $\bp \phi$ in (\ref{plagen1}) by using reproducing property of $\bar P_\la$ was devised in \cite{Wangrough} to solve this issue.
 %who is of the same frequency as data.
 % By rescaling the coordinates $(t,x)\rightarrow(\frac{t}{\la}, \frac{x}{\la})$,
%\begin{equation}\label{pla}
%\|P \p \psi\|_{L^q_{I_*}L_x^\infty}\le C \|\psi[0]\|_{H^1}.
%\end{equation}
%Note that in (\ref{plagen}), the data  $P_\la \phi[t_{k-1}]$   is frequency-localized  at the dyadic shell $\{\xi: \frac{\la}{2}\le |\xi|\le 2\la\}$ but
%the corresponding operator $W(t,s)$ does not preserve the frequency-localized feature of data.
%To derive the Strichartz estimate for the dyadic pieces $P_\la \bp \phi(t)$, one can not directly apply (\ref{plagen2}) to each term of (\ref{plagen2}).  This issue is solved  by modifying %(\ref{plagen}) with the help of  the reproducing property of the
%Littlewood-Paley projections, i.e. $P_\la=\ti P_\la\ti P_\la$, as follows,

%where $F_\la=[\Box_\bg, \ti P_\la] \phi$,

\subsection*{{\bf$\bullet$} {\it Physical localization  and reduction to $L^2-L^\infty$ decay estimate}}\,\,
For a large fixed frequency $\la$, by rescaling the coordinates and using a $\T\T^*$ argument we can reduce (\ref{plagen2})
to showing that, for any $\psi$ satisfying $\Box_{\bg} \psi=0$ on $[0,\tau_*]\times {\Bbb R}^3$ with $\tau_*\le \la^{1-8\ep_0}T$,
there holds the $L^1-L^\infty$ decay estimate
\begin{equation}\label{dispst1}
\|P\p_t \psi(t)\|_{L_x^\infty} \les \left((1+t)^{-\frac{2}{q}}+d(t)\right)\left(
\sum_{m=0}^3 \|\p^m \psi(0)\|_{L_x^1}+\sum_{m=1}^3 \|\p^{m-1} \p_t \psi(0)\|_{L_x^1}\right)
\end{equation}
for $t\in I_*:=[0, \tau_*]$, where $P=P_1$ the LP projection with frequency $1$, $\bg = \bg(\phi(\la^{-1} t +t_k, \la^{-1} x))$ and $d(t)$  is a function satisfying
$\|d\|_{L^{\frac{q}{2}}}\les 1$ for  $q>2$ sufficiently close to $2$; see Section 3 and Appendix B.
Let $t_0=1$. By using a suitable partition of unity on ${\Bbb R}^3$, in Section 4 we further reduce the derivation of (\ref{dispst1}) to proving that
\begin{equation}\label{dispst2}
\|P \p_t \psi(t) \|_{L_x^\infty}\les \left((1+|t-t_0|)^{-\frac{2}{q}}+d(t)\right)
(\|\psi[t_0]\|_{\dot{H}^1}+\|\psi(t_0)\|_{L^2})
\end{equation}
for $\psi$ satisfying $\Box_\bg \psi=0$ with $\psi(t_0)$ supported within the ball $B_{1/2}$ centered at the origin.

\subsection*{{\bf$\bullet$} {\it Reduction to bounded conformal energy}}\,\,
Let $\Sigma_t$ be the level set of $t$ and let $\Ga^+$ be the positive time axis. We introduce the optical
function $u$ with $u=t$ on $\Ga^+$ whose level sets are null cones $C_u$ with vertices on $\Ga^+$.
Let $S_{t,u}:=C_u\cap \Sigma_t$ and $N$ be the outward unit normal of $S_{t,u}$ embedded in $\Sigma_t$.  We can show $B_{1/2}\times \{t_0\} \subset \cup_{0\le u\le t_0}  S_{t_0, u}$.
Then the causal future of $B_{1/2}\times \{t_0\}$ is contained in $\D_0^+$, the causal future of $\cup_{0\le u\le t_0} S_{t_0, u}$
within $t_0\le t\le \tau_*$. A null frame $\{L,\Lb,e_1, e_2\}$ over $\D_0^+$ can be naturally defined with
$L=\p_t+N$ tangent to $C_u$, $\Lb=\p_t-N$,  and $\{e_1, e_2\}$ being orthonormal frame on each $S_{t, u}$.
We denote by $\sn$ the covariant differentiation on $S_{t,u}$ and $\tir=t-u$.
We  prove (\ref{dispst2}) by controlling  the conformal energy
$$
\C[\psi](t)=\C[\psi]^\bi(t)+\C[\psi]^\be(t),
$$
where
\begin{align*}
&\C[\psi]^\bi(t)=\int_{\Sigma_t\cap\{u\ge \frac{t}{2}\}} t^2 \left(|\bp\psi|^2+|\tir^{-1} \psi|^2\right) d\mu_g, \displaybreak[0]\\
&\C[\psi]^\be(t)=\int_{\Sigma_t\cap\{0\le u\le \frac{t}{2}\}} \left(\tir^2|L \psi|^2+\tir^2|\sn \psi|^2+|\psi|^2\right) d\mu_g.
 \end{align*}
In Section \ref{sec_4} we will prove that the $L^2-L^\infty$ decay estimate (\ref{dispst2}) can be derived from the
following boundedness result on conformal energy.

\begin{theorem}[Boundedness theorem]\label{thm8.1.1}
Let (\ref{BA1}) hold. Let $\psi$ be any solution of $\Box_\bg \psi=0$ on $I_*=[0, \tau_*]$ with $\psi(t_0)$
supported on $B_{\f12}\times\{t_0\} \subset \D_0^+\cap \Sigma_{t_0}$. Then, for $t\in [t_0, \tau_*]$, the conformal energy of $\psi$
satisfies the estimate
$$
\C[\psi](t)\les(1+t)^{2\ep}\|(\psi(t_0), \p_t\psi(t_0))\|_{H^1\times L^2}^2,
$$
where $\ep>0$ is an arbitrarily small number.
\end{theorem}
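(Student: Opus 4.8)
The plan is to establish the bound on $\C[\psi]$ through a weighted energy identity for $\Box_\bg\psi = 0$ driven by a Morawetz-type conformal multiplier adapted to the optical function $u$, after first carrying out the geometric renormalization announced in the introduction. Concretely, I would pass to the conformal metric $\ti\bg = \Omega^2\bg$ for which $\Ric_{\ti\bg}(L,L)$ retains only lower-order terms; since $\Omega$ stays close to $1$ under (\ref{BA1}), the rescaled function $\ti\psi := \Omega^{-1}\psi$ solves $\Box_{\ti\bg}\ti\psi = F\,\ti\psi$ with $F$ built from $\Omega$ and the scalar curvatures and, after the equation for $\phi$ is used, of lower order, while $\C[\psi]$ is comparable to the corresponding conformal energy of $\ti\psi$ with respect to $\ti\bg$. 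I will therefore work throughout with $\ti\bg$ and its null frame, suppressing tildes, so that all null structure equations --- in particular the Raychaudhuri equation governing $\tr\chi$ --- are free of the dangerous quadratic curvature term.

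Next I set up the conformal current. Write $T = \p_t$ for the future unit normal of $\Sigma_t$ and $\ub := t + \tir = 2t - u$, the analogue of $t+r$. As multiplier take a perturbation of the Minkowski conformal Killing field,
\[
K := \tfrac{1}{2}\big(u^2\,\Lb + \ub^2\,L\big) \;=\; (t^2 + \tir^2)\,\p_t + 2t\tir\,N,
\]
and form the modified current
\[
\J_\mu := \Q_{\mu\nu}[\psi]\,K^\nu + \tfrac{1}{2}\,w\,\psi\,\bp_\mu\psi - \tfrac{1}{4}\,(\bp_\mu w)\,\psi^2, \qquad \Q_{\a\b}[\psi] := \bp_\a\psi\,\bp_\b\psi - \tfrac{1}{2}\,\bg_{\a\b}\,\bg^{\mu\nu}\bp_\mu\psi\,\bp_\nu\psi,
\]
with a weight $w \sim 2t$ chosen so that the zeroth-order terms cancel as in flat space. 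Integrating $\div\J$ over the solid region $\D \subset \D_0^+$ bounded by $\Sigma_{t_0}$, $\Sigma_t$ and the lateral null cone issuing from the support of $\psi(t_0)$, the finite speed of propagation makes the lateral contribution either absent or of favorable sign, while the two spacelike boundary integrals reproduce $\C[\psi](t_0)$ and $\C[\psi](t)$: the split into $\C^\bi$ near the axis $\Ga^+$ (all derivatives carrying the weight $t^2$, plus the Hardy term $t^2\tir^{-2}|\psi|^2$) and $\C^\be$ on $\{u \le t/2\}$ (the good derivatives $L\psi, \sn\psi$ carrying $\tir^2 \sim t^2$) comes precisely from evaluating $\Q[\psi](K,T)$ together with the $w$-correction on $\Sigma_t$, modulo terms absorbed by Hardy's inequality.

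The heart of the matter is the spacetime bulk term
\[
\int_\D \div\J \;=\; \int_\D\Big(\tfrac{1}{2}\,\Q[\psi]\cdot\piK + \tfrac{1}{2}\,w\,\bg^{\mu\nu}\bp_\mu\psi\,\bp_\nu\psi + \tfrac{1}{2}\,F\,\psi\,K\psi - \tfrac{1}{4}\,(\Box_\bg w)\,\psi^2\Big),
\]
the $\Box_\bg\psi = 0$ contribution having dropped. I would decompose $\piK = \Lie_K\bg$ into its null components relative to $\{L,\Lb,e_1,e_2\}$; because the Minkowski piece of $K$ is conformal Killing, each component equals an explicit ``flat'' part --- which cancels against the $w$-term and the trace of $\Q$ exactly as in Minkowski space --- plus an error that is \emph{linear} in the null connection coefficients $\tr\chi - \tfrac{2}{\tir}$, $\chih$, the torsion, $\etab$, $\trchb + \tfrac{2}{\tir}$, $\chibh$ and the null lapse. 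The error integrals are then schematically $\int_\D \tir^2\,|\Gamma|\,|\bp\psi|^2$, $\int_\D \tir\,|\Gamma|\,|\psi|\,|\bp\psi|$ and $\int_\D |\Gamma|\,|\psi|^2$ with $\Gamma$ a null connection coefficient; after Cauchy--Schwarz, Hardy's inequality and the coarea formula each is bounded by $\int_{t_0}^t g(t')\,\C[\psi](t')\,dt'$ with $\int_{t_0}^t g \les \ep\log(1+t)$ once the bounds on the connection-coefficient norms are inserted, while the $\Box_\bg w$ and $F$ terms are manifestly of lower order. The requisite $L^1_{t'}$-integrability (with at most an $\ep$-loss) of these norms is exactly what (\ref{BA1}) and the renormalized null structure yield; in particular $\tr\chi - \tfrac{2}{\tir}$ is controlled through a transport equation for the mass aspect function whose forcing, after the new observations, is one derivative better than naive counting suggests. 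A Grönwall inequality in $t$ then gives $\C[\psi](t) \les (1+t)^{2\ep}\,\C[\psi](t_0)$, and $\C[\psi](t_0) \les \|(\psi(t_0),\p_t\psi(t_0))\|_{H^1\times L^2}^2$ since $\psi(t_0)$ is supported in $B_{1/2}$, where all weights are $O(1)$.

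The main obstacle is precisely the control of the connection-coefficient error terms at the available low regularity, and above all of $\tr\chi - \tfrac{2}{\tir}$ and its conjugate: in the general quasilinear case these are polluted by $\Ric_\bg(L,L)$, which is not integrable in time, and it is to defuse this that both the renormalization $\bg \mapsto \ti\bg$ and the sharpened transport estimate for the mass aspect function are indispensable. A secondary difficulty is the degeneration of the null frame and of the weights $\tir^2$ near the time axis $\Ga^+$, which is what forces the two-region decomposition of $\C[\psi]$ and a separate Minkowski-comparison argument in the interior cone $\{u \ge t/2\}$.
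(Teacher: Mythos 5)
Your renormalization step and your diagnosis that the mass aspect function is the crux are both correct and match the paper's strategy. But the multiplier you choose and the mechanism by which you propose to close the estimate are precisely the route the paper considers and then abandons (see the discussion around (\ref{eniden}): ``We do not employ the argument of (\ref{eniden}) due to the potential technical issues. The control on lower order terms is also a difficulty if using this approach.''). The paper instead runs three separate arguments: an integrated (Morawetz) estimate with multiplier $fN$, a Dafermos--Rodnianski $r^p$-type identity with multiplier $\tir^m(L\ti\psi+\ti h\ti\psi)$ for $m=1,2$ integrated over the exterior regions $D^{\tau_2,\tau}_{\tau_1,R}$, and a dyadic pigeonhole argument producing decay of the standard energy on the slices $\widetilde\Sigma_{\tau,R}$, which is what ultimately controls $\C[\psi]^{(i)}$ and the zeroth-order term in $\C[\psi]^{(e)}$.

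The concrete gap in your plan is the closing of the error term involving the renormalized mass aspect function. Even after the conformal change, the quantity $\ckk\mu=\mu+2\sD\sigma-\tr\chi k_{NN}+\f12\tr\chi V_3$ satisfies only $\|\tir\,\ckk\mu\|_{L_u^2L_t^2L_\omega^{p}}+\|\tir^{3/2}\ckk\mu\|_{L_u^2L_t^\infty L_\omega^{p}}\les\la^{-4\ep_0}$ (Proposition \ref{improve}); there is no $L_u^\infty$-type bound, because $\Box_\bg V_4$ contains $\bp^2\bg\cdot\bp\bg$ and the full second derivative of the metric is not controlled on null hypersurfaces. Consequently the bulk term $\int \tir^{m-1}\psi\,\ckk\mu\,L(\tir\psi)$ cannot be estimated by $\int_{t_0}^t g(t')\,\C[\psi](t')\,dt'$ with $g\in L^1_{t}$: one factor must be placed in $L^2_u$ and the factor $L(\tir\psi)$ must then go into $L_u^\infty L_t^2$, i.e.\ into the conformal flux $CF_m[\psi](u)$ on the null cones $C_u$. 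That flux appears with a good sign on the left-hand side of the $r^p$-identity (\ref{B_1}) but is not produced by the $K$-current, so your Gr\"onwall loop does not close for this term. A second, independent gap is the lower-order term: the multiplier identity controls $\|\psi\|_{L_u^2L_\omega^2}$ only up to a logarithm via $L(v_t^{1/2}\psi)$ (Lemma \ref{lotlm}), and the decay $\|\psi(t)\|_{L_u^2L_\omega^2}\les(1+t)^{-1+\ep}$ together with the interior bound (\ref{con}) requires the separate pigeonhole construction of the sequence $\{\tau_n\}$ (Propositions \ref{disp} and \ref{dis2}); it does not follow from the $K$-identity plus Hardy's inequality. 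Minor point: your conventions $\ti\bg=\Omega^2\bg$, $\ti\psi=\Omega^{-1}\psi$ are the reciprocals of the paper's, which is harmless.
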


Compared with \cite[Theorem 5]{Wangrough}, Theorem \ref{thm8.1.1} achieves a slightly weaker control on the conformal energy which is sufficient
for our purpose.  The core difficulty is  then to control the conformal energy
in the very rough background. The proof of the boundedness theorem  will dominate this paper.

\subsection{Normalization and hidden structures}

One standard way for deriving the bound of conformal energy is to use the Morawetz vector field $K=\frac{1}{2} (u^2 \Lb +\ub^2 L)$,
where $\ub=2t-u$, as a multiplier in the energy momentum tensor
\begin{equation}\label{qmom}
Q[\psi]_{\mu\nu}=\p_\mu \psi \p_\nu \psi-\f12 \bg_{\mu\nu}(\bg^{\a\b}\p_\a \psi \p_\b \psi)
\end{equation}
associated with scalar functions $\psi$. To be more precise, we consider the modified weighted energy
$$
\widetilde Q[\psi](t)=\int_{\Sigma_t} \widetilde \P^{(K)}_\mu[\psi]\bT^\mu dx,
$$
where $\bT = \p_t$ and $\widetilde \P^{(K)}_\mu[\psi]$ is the modified energy current
\begin{equation}\label{gneng1}
\widetilde \P_\mu^{(K)}[\psi] = Q_{\mu\nu}[\psi] K^\mu+\f12\Theta  \p_\mu (\psi^2)-\frac{1}{2} \psi^2\p_\mu \Theta
\end{equation}
with scalar function $\Theta$. Running the typical energy approach  with $\Theta=2t$ gives the identity
\begin{equation}\label{eniden}
\widetilde Q[\psi](t_0) -\widetilde Q[\psi](t)= \f12\int_{\D_0^+[t_0, t]} {}^{(K)}\bar\pi_{\a\b} Q[\psi]^{\a\b}
+\int_{[t_0, t] \times {\mathbb R}^3} \Box_{\bg} \psi( K \psi+2t \psi)+\cdots,
\end{equation}
where $\bar\pi^{(K)}:=\pi^{(K)}-4t\bg$ and $\pi^{(K)}:={\mathscr L}_K \bg$
is the deformation tensor of $K$. The term involving $\Box_\bg$ in (\ref{eniden}) vanishes when $\psi$ satisfies $\Box_\bg \psi=0$.

To bound the energy $\widetilde Q[\psi](t)$, one needs to control the components of $\pi^{(K)}$ and some part of $\bd \,\pi^{(K)}$
due to the integration by part involved for adjusting the weights in energy. Since the Morawetz vector field $K$ is written in terms of
the null frame $\{L, \Lb\}$, the deformation tensor $\pi^{(K)}$ can be expressed by the connection coefficients
of the null frame (\cite{CK}) such as
\begin{align*}
\chi_{AB}=\bg(\bd_A L, e_B), &\quad \chib_{AB}=\bg(\bd_A \Lb, e_B), \quad
\zeta_A=\f12 \bg(\bd_\Lb L, e_A). %\label{introricc}
\end{align*}
Involved in $\bd \,\pi^{(K)}$, there are $\sn \tr\chi$ and the mass aspect function  $\mu=\Lb \tr\chi+\f12 \tr\chi \tr\chib$, where $\tr\chi$ and $\tr\chib$ denote respectively
the trace parts of $\chi$ and $\chib$ whose traceless parts are denoted by $\chih$ and $\chibh$.
Besides $L_x^\infty$-type control on $\tr\chi$, one needs $\|\chih, \zeta\|_{L_t^1 L_x^\infty}$ and suitable control
on $\sn \tr\chi, \mu$ to perform the energy argument. Various estimates on Ricci coefficients were established
in \cite{KRduke, KR2, KRd, Wangricci} by using the null structure equations
\begin{align}
&L\tr\chi+\f12 (\tr\chi)^2=-|\chih|^2-{ k}_{NN} \tr\chi-\bR_{LL},\label{s0} \displaybreak[0]\\
&L\sn \tr\chi+\frac{3}{2}\tr\chi =-\sn \bR_{L L}+\cdots\label{ans0}, \displaybreak[0]\\
&L\mu+\tr\chi \mu=-\Lb \bR_{L L}-\tr\chi \bR_{L \Lb} -\f12 \tr \chib \bR_{LL} +\cdots. \label{lmuu}
\end{align}
With the metric $\bg$ from the general quasi-linear wave equation (\ref{wave1}),
the Ricci terms in (\ref{s0})--(\ref{lmuu}) contain high order derivative of $\phi$, which fails the method devised for Einstein equations in \cite{KR2,KRd,Wangricci}.
Moreover, we can only expect to have $L_t^\infty H_x^s$ control on metric, thus,  without normalizing the metric,
we would have no control on $\bd \bf{Ric}$.

\subsubsection{Normalization}
Recall that, for a Lorentzian metric $\bg$, the Ricci tensor admits the standard decomposition
\begin{equation}\label{7.2.1}
\bR_{\a\b}=-\f12 \Box_\bg (\bg_{\a\b})+\f12 (\bd_\a V_\b+\bd_\b V_\a)+S_{\a\b},
\end{equation}
where $V$ is a 1-form whose components take the form $\bg \c \bp \bg$, and the term $S_{\a\b}$ is
quadratic  in $\bp \bg$.  %(See (\ref{ricc6.7.1}) in Section \ref{sec_4} .)
Based on (\ref{7.2.1}), in \cite{KRduke}, \cite{SmTT} and \cite{shock},  the term $\bR_{LL}$ in (\ref{s0})--(\ref{lmuu}) is decomposed as
\begin{equation}\label{dc_322}
\bR_{LL}=L (V_L)-\f12L^\a L^\b\Box_\bg (\bg_{\a\b})+\cdots.
\end{equation}
  With the help of (\ref{BA1}) this can give the $L_t^2 L_x^\infty $ control on $\tr\chi+V_L$.  The treatment of pairing $\tr\chi$ with $V_L$  implies $\tr\chi$ is as smooth as $V_L$. This treatment works through if $V_L$ is smooth enough which is the case in \cite{KRduke} when the initial data of (\ref{wave1}) is assumed to be smoother. With data in $H^{2+\ep}$, $V_L$ has limited regularity.
%However such pairing,  $\tr\chi+y$,  would cause an issue when we consider $\sn \tr\chi$.
 Meanwhile, to  obtain  boundedness of conformal energy by using (\ref{eniden}), the bound on $ \sn \tr\chi$ needs to be stronger than the regularity of $\sn (V_L)$. This indicates that we need an energy argument which relies on the term $\sn (\tr\chi+V_L)$, instead of $\sn \tr\chi$.

  Moreover, if estimating $\sn (\tr\chi+V_L)$ in view of (\ref{ans0}) and (\ref{7.2.1}),  the  term  $\I=L^\a L^\b(\sn \Box_\bg \bg_{\a\b}-\sn(S_{\a\b}))$  splitted from $\sn\bR_{LL}$ remains on the right side of the equation of $L\sn(\tr\chi+V_L)$.  By using (\ref{wave1}), we can write
 \begin{equation}\label{w8.4.1}
\I=f(\phi) \sn \bp \phi\c \bp \phi+\cdots
 \end{equation}
 where $f$ is a smooth function of $\phi$. If the angular derivative is replaced by a generic differentiation, we  need  $1/2$ more derivative on data to achieve the desired estimate for $\sn (\tr\chi+V_L)$. When regularizing the metric $\bg$ to the frequency-truncated metric $H=P_{\le \la}\bg(P_{\le \la}\phi)$,
$\sn \Box_H H$ approximates $\sn \Box_\bg \bg$ in terms of asymptotic estimates in frequency $\la$. On null hypersurface, such estimate reaches the regularity of $f(\phi)  \bp^2 \phi\c \bp \phi $.  But the regularity offered by the physical angular differentiation in (\ref{w8.4.1}) can not be recovered. Due to the loss of $1/2$ derivative on null hypersurface  caused by  the regularization  in phase space, we normalize the metric in physical space only.

Hence our idea is to construct a new metric $\ti \bg$ on $\D^+_0$ so that $\widetilde{\tr\chi}:=\tr\chi+V_L$ is exactly the null expansion
under the metric $\ti \bg$. With an energy argument carried out under $\ti\bg$,  the new null expansion $\widetilde{\tr\chi}$ can be directly involved.

In Section \ref{sec_6} we employ a conformal change to introduce a new metric $ \ti\bg=\Omega^{-2} \bg$
in $\D^+$, the causal future (containing $\D_0^+$) of the null cone initiating from $t=0$ on $\Ga^+$, with conformal factor $\Omega = e^{-\sigma}$,
where $\sigma$ is a scalar function defined in $\D^+$ by
\begin{equation}\label{3.23.1}
L\sigma=\f12 V_L,\quad  \sigma(\Ga^+)=0.
\end{equation}
 To prove Theorem \ref{thm8.1.1},  instead of considering the homogeneous wave equation  $\Box_\bg \psi=0$, the equation we consider is changed to (\ref{idc1}), i.e. with $\ti \psi=\Omega \psi$,
   \begin{equation}\label{7.3.1}
\Box_{\ti \bg}\ti\psi=-\Omega^2(\Box_\bg \sigma+\bd^\mu \sigma\bd_\mu \sigma) \ti \psi.
\end{equation}
(See the derivation in Section \ref{sec_6}).

Benefiting from the construction of the new metric,  in contrast to (\ref{s0}) and (\ref{ans0}), the higher order terms in $\bR_{LL}$ and  $\sn \bR_{LL}$ are canceled out in the structure
equations for $\widetilde{\tr\chi}$ and $\sn \widetilde{\tr\chi}$, see (\ref{lz}) and (\ref{ldz_2}).
Nevertheless, the inhomogeneous term in (\ref{7.3.1})  poses an issue in energy estimate. The difficulty is shifted to deriving enough control on $\sigma$.  $\sigma$ is introduced to carry the rough part away from the original metric. It is expected to have issues on regularity. The estimates on  $\sigma$  provided by (\ref{3.23.1}) is too weak to close the energy argument,  see Lemma \ref{pre_sig}. We do not have a good control on $\Lb\sigma$, hence the estimate on full derivative $\bp\ti\bg$ is actually much weaker than $\bp \bg$. The new metric $\ti \bg$  is only smoother along null hypersurface.

\subsubsection{Energy argument}
Since $\bp \ti\bg$ is rougher, we  employ the original equation $\Box_\bg \psi=0$
for the standard energy estimates obtained by using multipliers  $\bT$ and  $N$.
For conformal energy,  we consider (\ref{7.3.1})  and  adapt the energy argument using multiplier of type $r^m L$ in \cite{DaRod1} from Minkowski space to our domain of influence $(\D^+_0, \ti\bg)$.
 Due to the deformation terms caused by the rough, curved background,  our energy argument is a very delicate procedure, which is presented in Section \ref{sec_7}. Since the advantage of  the normalized metric $\ti \bg$ can be only seen along null hypersurface, the controllable geometric quantities are very limited. Therefore we carefully choose the form of the multiplier  to avoid creating extra error terms related to $\Lb \sigma$.

After careful pretreatment,  we can see immediately the main geometric quantities needed for proving Theorem \ref{thm8.1.1}.
Besides the same control on $\chih$ and $\zeta$ as required when using (\ref{eniden}), schematically, we need to control terms
taking the form
\begin{align}\label{7.3.2}
\int \tir^{m-1} \psi \left[(\mu+2\sD \sigma, |\sn\sigma|^2) L(\tir \psi)+ \tir\sn \widetilde{\tr\chi}\sn (\Omega\psi)\right]
\end{align}
with $m=1,2$, see (\ref{B_1}) in Section \ref{sec_7}.

 The quantity $\mu+2\sD \sigma$ emerges in the multiplier argument (\ref{B_1}) in Section \ref{sec_7}  when combining $\mu(\ti\bg)$ with the inhomogeneous term of (\ref{7.3.1}). We encounter major difficulties in controlling $\mu+2\sD\sigma$,  $\sn \sigma$  and $\zeta$. To solve these difficulties, we have to rely on the special structures in $\mu$ and its transport equation, which will be elaborated very shortly. Even after using our new observations,  the estimate on  $\mu+2\sD \sigma$ is  still weaker than the one for $\sn\widetilde{\tr\chi}$ due to the rough metric. Fortunately, this weaker control can be compensated by the control of the  conformal flux $\|\tir^{\frac{m-2}{2}} L(\tir \psi)\|_{L^2}$ on null cones, see Proposition \ref{e1}.

  We do not employ the argument of (\ref{eniden})  due to the potential technical  issues.  The control on lower order terms is also a difficulty if using this approach.

%This is one reason that we adopt the approach in \cite{DaRod1}.
%This quantity $\mu+2\sD \sigma$ emerges in the multiplier argument (\ref{B_1}) in Section \ref{sec_7}  when combining $\mu(\ti\bg)$
%with the inhomogeneous term of (\ref{7.3.1}).
%This quantity does not appear by accident. By running the same  multiplier argument to $\Box_\bg \psi=0$, pair $\sn \tr\chi$ by $2 L \sn \sigma$ ,
%then subtract the added term followed by integration by part a couple of times, one can obtain (\ref{7.3.2}).
%We will present it in Section \ref{sec_7}.

\subsubsection{Hidden structures on the mass aspect function}
As explained in the above energy argument, it is crucial to obtain enough control on $\sn\sigma$, $\mu+2\sD \sigma$, $\zeta$ and
$\chih$. By using their transport equations, $\sn \sigma$ is only $L^{2+}$ on $S_{t,u}$ and no estimate on $\sD\sigma$
can be obtained due to the presence of the third-order derivatives of the metric. We manage to solve these difficulties
by exploring hidden structures in $\mu$ and its transport equation.

{\bf $\bullet$} {\it The estimate on $\zeta$.}  Now we discuss how to obtain $\|\zeta \|_{L_t^1 L_x^\infty}$.
Since the work of \cite{CK}, the control for $\zeta$ has been tied to the control of $\mu=\Lb\tr\chi+\f12 \tr\chib\tr\chi$.
The idea for deriving the $L_t^1 L_x^\infty$ control on  $\zeta$ is to find a pair of terms $(\pi_1, \pi_2)$, of the type  $\bg\c\bp \bg$, such that
\begin{equation}\label{dmz}
(\div, \curl)\zeta=\sn (\pi_1, \pi_2)+\cdots.
\end{equation}
This roughly implies $\zeta=\D_0 (\pi_1, \pi_2)+\cdots$, where $\D_0$ is a Calderon-Zygmund operator on $S_{t,u}$
for which boundedness holds in $L^\infty(S_{t,u})$ with a $\log$-loss, in view of the Calderon-Zymund theory.
To achieve (\ref{dmz}), we use the decomposition of curvature (see Lemma \ref{decom_lem}) to rewrite the Codazzi equations
for $\zeta$, i.e. (\ref{dze}) and (\ref{dcurl}), as
\begin{equation}\label{cod_1}
\div \zeta=\f12\mu+\sn(\bg\c \bp \bg)+\cdots,\quad \curl \zeta=\sn(\bg\c \bp \bg)+\cdots
\end{equation}
In Einstein spacetime, due to  vanishing Ricci, $\mu$ can be well under control via (\ref{lmuu}).
In this case, conceptually $\mu$ can be regarded as a negligible term in (\ref{cod_1}).
This treatment on $\mu$ by using (\ref{lmuu}) can be seen in many works \cite{CK, KRduke, KR2, KRC1, Wangrough}.
This strategy does not work under the rough metric $\bg$,  due to the presence of  $\bf {Ric}$ terms
on the right of (\ref{lmuu}).  In view of (\ref{cod_1}), we observe that (\ref{dmz}) can be achieved  if $\mu$ has the structure
\begin{equation}\label{llmu}
\mu =\sn(\bg\c \bp \bg)+\cdots.
\end{equation}
%Suppose (\ref{llmu}) holds,   the right hand side of the transport equation
%\begin{equation}\label{4_9}
%L\mu+\tr\chi \mu=L \sn(\bg\c \bp \bg)+\cdots
%\end{equation}
%still contains the third order derivative of metric. This indicates  the cancelation of derivatives  created by vanishing $\bf{Ric}$ for (\ref{lmuu}) is not a necessary ingredient for the %purpose of controlling $\zeta$.
Instead of relying on (\ref{lmuu}),  we prove directly that $\mu$ has the structure of (\ref{llmu}) in Section \ref{ss1}.
%By introducing  a new quantity $\varphi$ on $S_{t,u}$, which is of the order of the induced metric $\ga$ on $S_{t,u}$,
By using an auxiliary function $\varphi$, we explicitly compute  $\mu-\underline{\mu}$ in (\ref{comm1}) and prove that it is of lower order, where
$\underline{\mu}=L \tr\chib+\f12 \tr\chi\tr\chib$. We then show that $\underline{\mu}=\sn (\bg\c \bp \bg)+l.o.t$.
This  implies (\ref{llmu}) and solves the issue of controlling $\zeta$.

{\bf$\bullet$} {\it Control of the conformal factor $\sigma$.}
Even after the structure (\ref{llmu}) is identified, $\mu$ is only as good as $\sn \bp \bg$, which is
weak in $t$-variable for the purpose of deriving control for (\ref{7.3.2}). Moreover, no estimate on $\sD \sigma$ can
be obtained by using its transport equation.
Fortunately,  we find a  delicate structure in the transport equation of $\mu$, which offers control on $\mu+2\sD \sigma$.

Indeed, the most dangerous term contained in $L\Lb \tr\chi$ can be seen by differentiating the term $-(\bR_{LL}+k_{NN}\tr\chi)$
in (\ref{s0}) in the direction $\Lb$.  Using the identity (\ref{lbkn}) for $\Lb k_{NN}$ and the curvature decomposition (\ref{7.2.1}), we have
\begin{equation}\label{s3}
-\Lb(\bR_{LL}+k_{NN}\tr\chi)=-\Lb L (V_L)-\f12 \tr\chi \Lb (V_L)+\cdots
\end{equation}
which exhibits the full $\Lb$ derivative terms in
\begin{equation*}
\Box_\bg (V_L)=-\Lb L (V_L)-\f12 \tr\chi\Lb (V_L)-(\f12 \tr\chib -k_{NN})L (V_L)+2\zeta\c \sn (V_L)+\sD( V_L).
\end{equation*}
Inspired by the this observation, we treat the collection of bad terms in (\ref{s3}) by adding $\sD (V_L)$ which is an
equally dangerous term appearing in the transport equation of $2\sD \sigma$. This yields
\begin{equation}\label{cancel}
L(\mu+2\sD \sigma)+\tr\chi(\mu+2\sD \sigma)=\Box_\bg(V_L)+l.o.t.
\end{equation}
Thanks to (\ref{wave1}), the higher order term in $\Box_\bg  (V_L)$  is canceled out.
The precise form of (\ref{cancel}) and its derivation can be seen in Section \ref{sec_6_imp}.

There  contain in $\Box_\bg (V_L)$ terms of the form $\bp^2 \bg\c \bp \bg$. Due to the rough data,
we do not have control of the full derivative $\bp^2\bg$ on null hypersurfaces. This causes the loss of the $L_u^\infty$
control for  $\mu+2\sD\sigma$. Nevertheless, by using (\ref{cancel}),  $L_u^2$ control can be obtained  without loss for  $L_t^\infty L^{2+}(C_u)$ norm of $\mu+2\sD\sigma$.  Combined with the conformal flux, such control is sufficient for estimating (\ref{7.3.2}). Finally, by using the result for $\mu+2\sD\sigma$,
 and the $\div$-$\curl$ system obtained by adding  $\sD \sigma$ to (\ref{cod_1}),  we manage to decompose $\sn \sigma+\zeta$ into two parts. For both parts, we have integral control on their  $L^\infty(S_{t,u})$ bounds. Since  $\zeta$ is well under control, such decomposition provides sufficient regularity for $\sn \sigma$.  This treatment is contained in Proposition \ref{dcmpsig}.

% In Section \ref{sec_2}, we establish energy estimates for $\phi$ by using (\ref{BA1}) and the analysis contained in Appendix A. In Section \ref{sec_3}, we prove the main estimate (\ref{p2}), restated in Theorem \ref{main}, by assuming the dyadic Strichartz estimate (\ref{plagen2}), which is Theorem \ref{dyastric}.  Theorem \ref{dyastric} is then reduced to the decay estimate (\ref{dispst1}) contained in Theorem \ref{decayth},  by the $\T \T^*$ argument presented in Appendix B. In Section \ref{sec_4},  we first reduce Theorem \ref{decayth} to the $L^2$-$L^\infty$ decay estimates in Theorem \ref{lcestimate} by the physical localization. In order to introduce the concept of the conformal energy,  we then provide the geometric basics including null frames, connection coefficients, and setting up folations by null cones with the help of Theorem \ref{exten}. This result gives the desired control when the truncated null cones initiates, which is proved in Appendix C. Finally, we prove Theorem \ref{lcestimate} by assuming  Theorem \ref{BT}  and  Proposition \ref{ricco}, which is the main result in Section \ref{sec_5}. In Section \ref{sec_5}, we further establish the complete sets of estimates on connection coefficients which are crucial for the proof of Theorem \ref{BT}. The  normalization by the conformal change of metric is introduced in Section \ref{sec_6}, where we provide sufficient control on the conformal factor,  $\mu+2\sD \sigma$ and $\sn \sigma$. In Section \ref{sec_7},  we prove Theorem \ref{BT}, which closes the proof of Theorem \ref{main1}.
This paper is organized as follows. In Section \ref{sec_2}, we establish energy estimates for $\phi$ with the  help of  the analysis contained in Appendix A. In Section \ref{sec_3} and \ref{sec_4}, we reduce the main theorem to Theorem \ref{BT}, the boundedness  theorem of the conformal energy, with some technicalities enclosed in Appendix B and C. Section \ref{sec_5} - Section \ref{sec_7} form the core part of this paper.  In Section \ref{sec_5}, we establish the complete sets of estimates on the connection coefficients of the null frame. In Section \ref{sec_6}, we introduce the conformal change of the metric, and provide sufficient control on the conformal factor and $\mu+2\sD\sigma$. In Section \ref{sec_7}, we complete the proof of Theorem \ref{BT}, which closes the proof of the main theorem.

\section{\bf Energy estimates and estimates for metrics}\label{sec_2}

Throughout this paper we use $\phi$ to denote the solution of (\ref{wave1}). Under the bootstrap assumption (\ref{BA1}),
in this section we will prove some basic energy estimates that are
contained in Proposition \ref{eng3}.
%and give a series of estimates on background metric
%$$
%\bg(\phi)=- d t^2 + g_{ij}(\phi) d x^i d x^j
%$$
%that are collected in Proposition \ref{metricest}.

We recall the standard energy approach. Let
$$
\bg = -d t^2 + g_{ij}(\phi) d x^i d x^j
$$
For any scalar function $f$, let $Q_{\a\b}:=Q[f]_{\a\b}$
be the energy momentum tensor defined by (\ref{qmom}). By applying the divergence theorem to the energy current
$\P_\a^{(\bT)}[f]=Q[f]_{\a\b}\bT^\b$ with $\bT = \p_t$, we can obtain the energy identity
\begin{equation}\label{qs}
\int Q_{\a\b}\bT^\a\bT^\b(t)d\mu_g-\int Q_{\a\b}\bT^\a\bT^\b(0) d\mu_g
=-\int_{[0,t]\times{\mathbb R}^3} \left(\Box_{\bg} f \bd_\bT f+\f12 \pi^{(\bT)}_{\a\b} Q^{\a\b}\right),
\end{equation}
where $\Box_{\bg}$ denotes the Laplace-Beltrami operator induced by $\bg$ and $\pi^{(\bT)}:= {\mathscr L}_{\bT} \bg$ is
the deformation tensor of $\bT$. Since $\pi_{\a\b}^{(\bT)} = \p_t \bg_{\a\b}$, one can use (\ref{BA1}) to derive that
\begin{equation}\label{eng7.10}
\|\bp f(t)\|_{L_x^2}\les \|\bp f(0)\|_{L_x^2}+ \int_0^t \|\Box_\bg f(t')\|_{L_x^2} dt'
\end{equation}
which can be used to show the following preliminary energy estimates.

\begin{lemma}\label{lem4.15}
Under the bootstrap assumption \emph{(\ref{BA1})}, there holds
\begin{equation}\label{eng0}
\|\phi[t]\|_{\dot{H}^1}\les \|\phi[0]\|_{\dot{H}^1}\quad \mbox{and} \quad \|\bp\phi[t]\|_{\dot{H}^1}\les \|\bp\phi[0]\|_{\dot{H}^1}+\|\phi[0]\|_{\dot{H}^1}.
\end{equation}
With  $\|\phi[0]\|_{H^2}=\|(\phi(0), \p_t \phi(0))\|_{H^2\times H^1}$, there holds
\begin{equation}\label{eng00}
\|(\phi(t), \p_t\phi(t))\|_{H^2\times H^1}\les \|\phi[0]\|_{H^2}.
\end{equation}
Moreover, for $\psi$ satisfying $\Box_\bg \psi=0$, there holds
$
\|\psi[t]\|_{\dot{H}^1}\les \|\psi[0]\|_{\dot{H}^1}.
$
\end{lemma}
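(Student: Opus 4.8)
The plan is to deduce all three statements from the single scalar energy inequality (\ref{eng7.10}), applied first to $f=\phi$ (and to $f=\psi$), then to $f=\p_j\phi$ for the spatial fields $\p_j$, each closed by a Grönwall argument in which the nonlinear terms are absorbed using (\ref{BA1}) and the smallness of $T$.

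First I would note that, since $\bg^{00}=-1$ and $\bg^{ij}=g^{ij}(\phi)$, the principal part of $\Box_\bg$ is $-\p_t^2+g^{ij}\p_i\p_j$, so (\ref{wave1}) gives $\Box_\bg\phi=\N(\phi,\bp\phi)-\bg^{\a\b}\Gamma^\g_{\a\b}\p_\g\phi$, a sum of terms of schematic form $c(\phi)\,\bp\phi\,\bp\phi$ with $c$ smooth; hence $\|\Box_\bg\phi(t')\|_{L^2_x}\les\|\bp\phi(t')\|_{L^\infty_x}\|\bp\phi(t')\|_{L^2_x}$. Inserting this into (\ref{eng7.10}) with $E_1(t):=\|\bp\phi(t)\|_{L^2_x}$ gives $E_1(t)\les E_1(0)+\int_0^t\|\bp\phi\|_{L^\infty_x}E_1\,dt'$, and since $\int_0^T\|\bp\phi\|_{L^\infty_x}\,dt'\le T^{1/2}\|\bp\phi\|_{L^2_{[0,T]}L^\infty_x}\le T^{1/2}\le1$ by (\ref{BA1}) and $T<1$, Grönwall yields $E_1(t)\les E_1(0)$, the first inequality in (\ref{eng0}). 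The same computation with $\Box_\bg\psi=0$ gives $\|\bp\psi(t)\|_{L^2_x}\les\|\bp\psi(0)\|_{L^2_x}$, i.e. the last assertion, because $\|\psi[t]\|_{\dot H^1}\simeq\|\bp\psi(t)\|_{L^2_x}$.

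For the second derivatives I would commute \emph{only} with the spatial fields $\p_j$. The crucial point is that $\p_j\bg^{00}=0$ (as $\bg^{00}$ is constant), so neither $\p_j(\Box_\bg\phi)$ nor the commutator $[\Box_\bg,\p_j]\phi=-(\p_j\bg^{\a\b})\p_\a\p_\b\phi+\cdots$ produces any $\p_t^2\phi$ on the right-hand side: only second derivatives of $\phi$ carrying at least one spatial index, i.e. $\p\bp\phi$, appear. Thus $\Box_\bg(\p_j\phi)$ is schematically $c(\phi)\,\bp\phi\,\p\bp\phi+c(\phi)\,\bp\phi\,\bp\phi\,\bp\phi$, so with $Y(t):=\|\p\bp\phi(t)\|_{L^2_x}$ we get $\|\Box_\bg(\p_j\phi)(t')\|_{L^2_x}\les\|\bp\phi\|_{L^\infty_x}Y+\|\bp\phi\|_{L^\infty_x}^2E_1$. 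Summing (\ref{eng7.10}) over $j=1,2,3$ controls $Y(t)$; using $E_1\les E_1(0)$ from the previous step, $\int_0^T\|\bp\phi\|_{L^\infty_x}^2\,dt'\le1$ and $\int_0^T\|\bp\phi\|_{L^\infty_x}\,dt'\le1$ from (\ref{BA1}), Grönwall gives $Y(t)\les Y(0)+E_1(0)$. Finally $\p_t^2\phi=g^{ij}\p_i\p_j\phi-\N$ recovers $\|\p_t^2\phi(t)\|_{L^2_x}\les Y(t)+\||\bp\phi(t)|^2\|_{L^2_x}$, the last term being lower order and controlled via Gagliardo--Nirenberg in terms of $E_1$ and $Y$; this gives the second inequality in (\ref{eng0}) once one notes $Y(0),E_1(0)\les\|\bp\phi[0]\|_{\dot H^1}+\|\phi[0]\|_{\dot H^1}$. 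Adding $\|\phi(t)\|_{L^2_x}\les\|\phi(0)\|_{L^2_x}+\int_0^t\|\p_t\phi\|_{L^2_x}\,dt'\les\|\phi(0)\|_{L^2_x}+TE_1(0)$ and collecting $\|\phi(t)\|_{L^2_x}$, $E_1(t)$, $Y(t)$, $\|\p_t^2\phi(t)\|_{L^2_x}$ yields (\ref{eng00}).

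The only genuine obstacle I anticipate is the one just flagged: one must avoid commuting $\Box_\bg$ with $\p_t$, since then the commutator brings in a term $\p_t g_{ij}(\phi)\,\p_i\p_j\phi$ together with third-order time derivatives that are not available in $L^2$ at this level of regularity. Commuting with spatial derivatives only, and reconstructing $\p_t^2\phi$ algebraically from (\ref{wave1}), is exactly what lets the quasilinear structure $\bg^{00}\equiv-1$ do the work; the rest is routine energy/Grönwall bookkeeping, with all nonlinear contributions absorbed because (\ref{BA1}) and $T<1$ force $\int_0^T\|\bp\phi\|_{L^\infty_x}\,dt'\le1$.
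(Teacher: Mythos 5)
Your proof is correct and is essentially the argument the paper intends: the lemma is stated as a direct consequence of the energy inequality (\ref{eng7.10}), applied to $\phi$, to its first derivatives, and to $\psi$, with the nonlinear source terms absorbed by Gr\"onwall using (\ref{BA1}) and $T<1$, and with $\p_t^2\phi$ recovered algebraically from the equation. (Your caution about commuting with $\p_t$ is not strictly necessary — since $\bg^{00}\equiv-1$ the commutator $[\Box_\bg,\p_t]$ also produces only spatial second derivatives, and the paper itself commutes with $\p_t$ in Lemma \ref{rmdt} — but restricting to spatial commutations as you do is a perfectly valid and clean way to close the estimate.)
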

%Here the $L^2$ norm of $\phi(t)$ is bounded by using fundamental theorem of calculus.
 With  the help of  (\ref{eng00}),  the Ricci tensor of the metric $g$ on $\Sigma_t$ verifies $\|Ric(g)\|_{L^2(\Sigma_t)}\les 1$ for $t\in [0,T]$, with the constant  depending on  $C$ in (\ref{g9}) and  $\|\phi[0]\|_{H^2}.$ Using \cite{Ander} or \cite[Theorem 5.4]{Petersen}, there exists a constant $d_0>0$ depending only on $C$ and  $\|\phi[0]\|_{H^2}$, which is the uniform lower bound of radius of injectivity on $\Sigma_t$ for $t\in [0,T]$. We assume $T$ satisfies  $0<T\le \min(d_0, 1)$ throughout the paper.

In order to derive finer estimates for the solution of (\ref{wave1}) with rough initial data, the Littlewood-Paley (LP)
decomposition is an important tool. Throughout the paper we will use $P_\la$ to denote the LP projection in ${\mathbb R}^3$
by (\ref{LP8.4}) with frequency size $\la$, where $\la$ denotes a dyadic number, i.e. $\la = 2^j$ for some integer
$j\in {\mathbb Z}$. Using LP decomposition, the Sobolev space $H^s({\mathbb R}^3)$ and the homogeneous $\dot{H}^s({\mathbb R}^3)$
for $s>0$ admit the equivalent norms
$$
\|f\|_{H^s}^2 = \|f\|_{L_x^2}^2 + \sum_{\la\ge 1} \la^{2s} \|P_\la f\|_{L_x^2}^2
\quad\mbox{and} \quad  \|f\|_{\dot{H}^s}^2 = \sum_{\la\ge 1} \la^{2s} \|P_\la f\|_{L_x^2}^2
$$
respectively. For a sequence $(a_\la)$ and $p\ge 1$, we use $\|a_\la\|_{\ell_\la^p}^p=\sum_{\la\ge 1} |a_\la|^p$.

To treat the inhomogeneous term in (\ref{wave1}), we first state the following product estimates whose proof will be
given in Appendix A.

\begin{lemma}\label{pres}
For $m=0,1$ and $0<\ep\le s-2$ there hold
\begin{align}
\|\la^{m+\ep}P_\la(\W(\phi)\bp \phi)\|_{l_\la^2 L_x^2}&\les \|\bp\phi\|_{\dot{H}^{m+\ep}}+\|\phi[0]\|_{H^2},\label{nphi}\\
\|\la^\ep P_\la\bp(\N(\phi, \bp \phi)\|_{l_\la^2 L_x^2}
      &\les (\|\bp \phi\|_{H^{1+\ep}}+\|\phi[0]\|_{H^2})(\|\bp \phi\|_{H^1}^2+\|\bp\phi\|_{L_x^\infty})\label{prd}\\
\|\la^\ep P_\la(\N(\phi, \bp \phi))\|_{l_\la^2 L_x^2}&\les \|\phi[0]\|_{H^2},   \label{prd1}\\
\|\la^\ep P_\la (g(\phi)\p^2 \phi )\|_{l_\la^2 L_x^2}&\les \|\p^2\phi\|_{H^\ep}+\|\phi[0]\|_{H^2}, \label{prd_2}
\end{align}
where  $g(\phi)$ stands for $g_{ij}(\phi)$ or $g^{ij}(\phi)$ and $\W$ is a product of factors in $(g, \N, g', \N')$. % and $C$ is a constant depending only on $\|\phi[0]\|_{H^2}$ and the constants in (\ref{g9}).
\end{lemma}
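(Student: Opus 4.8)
The plan is to prove Lemma~\ref{pres} by combining the standard paraproduct decomposition with the $H^2$ energy bound \eqref{eng00} and the algebra/Moser-type estimates for nonlinear functions $F(\phi)$ of a function $\phi$ that is bounded in $L^\infty$ (guaranteed by Sobolev embedding on $\Sigma_t$ together with \eqref{BA1}). For each product $u\cdot v$ appearing on the left, I would write $P_\la(uv)=P_\la(P_{\ll\la}u\, P_\la v)+P_\la(P_\la u\, P_{\ll\la}v)+P_\la(\sum_{\mu\gtrsim\la}P_\mu u\, P_{\sim\mu}v)$, i.e. low-high, high-low, and high-high interactions. The key input for the coefficient factors $\W(\phi)$, $g(\phi)$, $\N(\phi,\bp\phi)$ is the bound $\|P_\la \W(\phi)\|_{L_x^2}\les\la^{-1}$ (equivalently $\W(\phi)\in\dot H^1$ uniformly), obtained from the chain rule $\bp(\W(\phi))=\W'(\phi)\bp\phi$ together with $\|\bp\phi\|_{L_x^2}\les\|\phi[0]\|_{H^2}$ from \eqref{eng00}; more generally $\W(\phi)\in L^\infty\cap\dot H^1$ and $\|P_\la\W(\phi)\|_{L^\infty_x}\les 1$ from $\|\phi\|_{L^\infty}\les 1$. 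Likewise $\N(\phi,\bp\phi)=\N^{\a\b}(\phi)\bp_\a\phi\bp_\b\phi$ is a product of an $L^\infty\cap\dot H^1$ factor with two $\dot H^1$ factors, and its control in $L_x^2$ follows from $\dot H^1\cdot\dot H^1\hookrightarrow L^2$ (Sobolev) up to the $L^\infty$ bounded smooth coefficient.

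For \eqref{nphi} I would estimate the three interaction types: in the low-high piece the frequency-$\la$ factor is $\bp\phi$, so one gains $\la^{m+\ep}\|P_\la\bp\phi\|_{L_x^2}\les\|\bp\phi\|_{\dot H^{m+\ep}}$ after summing the harmless low-frequency factor $\|P_{\ll\la}\W(\phi)\|_{L^\infty}\les1$ in $\ell^2_\la$; in the high-low piece the frequency-$\la$ factor is $\W(\phi)$ with $\|P_\la\W(\phi)\|_{L_x^2}\les\la^{-1}$, and since $m+\ep\le 1+\ep\le s-1<1+(s-2)\le$ (using $\ep\le s-2$, $m\le1$) one has $\la^{m+\ep-1}$ summable against the $L^\infty$ low-frequency factor $\bp\phi$ — here Bernstein $\|P_{\ll\la}\bp\phi\|_{L^\infty_x}\les\sum_{\mu\ll\la}\mu^{3/2}\|P_\mu\bp\phi\|_{L^2_x}$ and the $H^2$ bound close it; the high-high piece is the most delicate, summed by Cauchy–Schwarz in the output frequency after using $\|P_\mu\W(\phi)\|_{L^2}\les\mu^{-1}$ and Bernstein to absorb the $L^2\times L^2\to L^1\to L^2$ loss, which again needs $m+\ep$ strictly below the critical threshold set by $s>2$. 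Estimates \eqref{prd}--\eqref{prd_2} follow the same template: \eqref{prd_2} is \eqref{nphi}-type with $\bp\phi$ replaced by $\p^2\phi$ and $m=0$; \eqref{prd1} is the statement $\N(\phi,\bp\phi)\in\dot H^\ep$ uniformly with constant $\les\|\phi[0]\|_{H^2}$, which is a bilinear $\dot H^1\times\dot H^1\to\dot H^\ep$ estimate valid for $\ep\le 1$ (indeed $\ep\le s-2\le$ something $<1$) times the smooth $L^\infty$ coefficient; \eqref{prd} is the corresponding estimate after one extra derivative $\bp$, producing schematically $\bp^2\phi\cdot\bp\phi$ plus $\N'(\phi)\bp\phi\cdot\bp\phi\cdot\bp\phi$ plus the coefficient-derivative terms, each handled by distributing derivatives via Leibniz and paraproducts and bounding the top-regularity factor in $\dot H^{1+\ep}$ and the remaining factors in $H^1\cap L^\infty_x$ — this is where the quadratic factor $(\|\bp\phi\|_{H^1}^2+\|\bp\phi\|_{L^\infty_x})$ on the right originates.

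The main obstacle I anticipate is the bookkeeping for the nonlinear composition $\W(\phi)$ (and $\N^{\a\b}(\phi)$) at \emph{all} frequencies: one needs uniform-in-$\la$ bounds $\|P_\la\W(\phi)\|_{L^2_x}\les\la^{-1}\|\bp\phi\|_{L^2_x}$, $\|P_\la\W(\phi)\|_{L^\infty_x}\les1$, and more refined Littlewood–Paley square-function bounds $\|\la\|P_\la\W(\phi)\|_{L^2_x}\|_{\ell^2_\la}\les\|\bp\phi\|_{L^2_x}$, proved by the standard Moser estimate for $F(\phi)$ with $\phi\in L^\infty\cap\dot H^1$ (writing $F(\phi)=\sum_\la (F(P_{\le\la}\phi)-F(P_{\le\la/2}\phi))$ and using the mean value theorem with the $L^\infty$ bound on $F'$ on the relevant compact set, controlled by \eqref{g9} with $\La_0=\La_1+1$). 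Once these composition estimates are in hand — which is genuinely the technical heart — the three paraproduct pieces for each of \eqref{nphi}--\eqref{prd_2} reduce to summing geometric series in $\la,\mu$ whose convergence is exactly guaranteed by $\ep\le s-2$ and $s>2$; the strictness of $s>2$ is what prevents a logarithmic divergence in the high-high interaction. I would organize the proof by first recording the composition/Moser lemma, then treating \eqref{nphi} in full detail, and then indicating the (routine) modifications for \eqref{prd1}, \eqref{prd_2}, and finally \eqref{prd}.
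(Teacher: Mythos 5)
Your overall strategy --- Littlewood--Paley trichotomy for each product, combined with Moser-type composition bounds for $\W(\phi)$, $g(\phi)$, $\N^{\a\b}(\phi)$ and the $H^2$ energy bound --- is essentially the route the paper takes, though the paper packages it differently: it first proves a single bilinear estimate $\|fG\|_{\dot{H}^\ep}\les\|f\|_{L_x^\infty}\|G\|_{\dot{H}^\ep}+\|\p f\|_{L_x^6}\|G\|_{L_x^2}$ (valid for $0<\ep<\f12$, via a commutator bound $\|[P_\mu,f]G_{\le\mu}\|_{L_x^2}\les\mu^{-1}\|\p f\|_{L_x^6}\|G_{\le\mu}\|_{L_x^3}$, which gains the crucial $\mu^{-1/2}$), and then reduces \eqref{nphi}, \eqref{prd1}, \eqref{prd_2} to it after one application of the Leibniz rule and the finite band property. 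Your direct trichotomy can be made to work, but note that in the high-low and high-high pieces of \eqref{nphi} you cannot place the low-frequency factor $\bp\phi$ in $L_x^\infty$ at face value, since that norm does not appear on the right of \eqref{nphi}; your Bernstein fix costs $\la^{1/2}$, which is only recouped by a square-function bound of the form $\|\la^{\ep+1/2}P_\la\p(\W(\phi))\|_{l_\la^2 L_x^2}\les\|\p\phi\|_{H^1}$, and this --- like the paper's sub-lemma --- silently requires $\ep+\f12\le1$. Your claim that the bilinear estimate behind \eqref{prd1} holds up to $\ep\le1$ is too generous: in ${\mathbb R}^3$ the product of two $\dot{H}^1$ functions only lands in $\dot{H}^\ep$ for $\ep\le\f12$, which is also the range the paper works in.

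The one substantive omission is in \eqref{prd}: writing $\bp(\N(\phi,\bp\phi))$ schematically as $\N'(\phi)(\bp\phi)^3+\V(\phi)\,\bp^2\phi\cdot\bp\phi$ produces the term $\p_t^2\phi\cdot\bp\phi$, and $\|\p_t^2\phi\|_{H^\ep}$ is not controlled by any quantity on the right-hand side of \eqref{prd} (you may not invoke Proposition \ref{eng3}, whose proof uses this lemma). The paper explicitly substitutes $\p_t^2\phi=g^{ij}(\phi)\p_i\p_j\phi-\N(\phi,\bp\phi)$ from the equation before estimating, so that only $\bp\p\phi\cdot\bp\phi$ and cubic terms remain. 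You should add this reduction; with it, the rest of your outline goes through.
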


\begin{lemma}\label{lem4.15.1}
Let $\phi$ be the solution of (\ref{wave1}) and let $\phi_\la = P_\la \phi$. If $\phi$ satisfies (\ref{BA1}), then
\begin{equation}\label{llbox_1}
-\p_t^2 \phi_\la +g^{ij}(\phi) \p_i \p_j \phi_\la=\ti R_\la,
\end{equation}
where, for $0<\ep\le s-2$ and $t\in [0,T]$, $\ti R_\la$ satisfies the estimate
\begin{equation}\label{rlambda}
\|\la^{1+\ep}\ti R_\la(t)\|_{\ell_\la^2 L_x^2} \les \left(\|\bp \phi\|_{L_x^\infty} +1\right) (\|\bp \phi\|_{H^{1+\ep}}+\|\phi[0]\|_{H^2}).
\end{equation}
%with a constant $C$ depending only on the constant in (\ref{g9}) and $\|\phi[0]\|_{H^2}$.
\end{lemma}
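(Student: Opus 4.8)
The plan is to commute the Littlewood--Paley projection $P_\la$ through the quasilinear operator $-\p_t^2 + g^{ij}(\phi)\p_i\p_j$ and collect the commutator terms into the remainder $\ti R_\la$. Starting from \eqref{wave1}, applying $P_\la$ gives
$$
-\p_t^2\phi_\la + g^{ij}(\phi)\p_i\p_j\phi_\la = P_\la\N(\phi,\bp\phi) + \big[g^{ij}(\phi),\,P_\la\big]\p_i\p_j\phi,
$$
so we set $\ti R_\la := P_\la\N(\phi,\bp\phi) + [g^{ij}(\phi), P_\la]\p_i\p_j\phi$ and must bound $\|\la^{1+\ep}\ti R_\la\|_{\ell_\la^2 L_x^2}$. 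The first piece is handled directly by the product estimate \eqref{prd} from Lemma \ref{pres}: since $\|\bp\phi\|_{H^1}\les\|\phi[0]\|_{H^2}$ by \eqref{eng0}, the factor $\|\bp\phi\|_{H^1}^2+\|\bp\phi\|_{L_x^\infty}$ is absorbed into $(\|\bp\phi\|_{L_x^\infty}+1)$, and we obtain the bound $(\|\bp\phi\|_{L_x^\infty}+1)(\|\bp\phi\|_{H^{1+\ep}}+\|\phi[0]\|_{H^2})$ as required.

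The heart of the matter is the commutator $[g^{ij}(\phi), P_\la]\p_i\p_j\phi$, and the main obstacle is that a naive count loses a derivative: $\p_i\p_j\phi$ is one derivative worse than the target regularity $\bp\phi\in H^{1+\ep}$. The standard resolution is to exploit the mollifier structure of $P_\la$: writing $P_\la f = \la^3\int K(\la(x-y))f(y)\,dy$ with $\int K = 0$ (which holds because $\eta$ is supported away from the origin), one has the first-order commutator gain
$$
[g^{ij}(\phi),P_\la]h(x) = \la^3\int K(\la(x-y))\big(g^{ij}(\phi)(y)-g^{ij}(\phi)(x)\big)h(y)\,dy,
$$
and the difference quotient $g^{ij}(\phi)(y)-g^{ij}(\phi)(x)$ contributes one factor of $\la^{-1}\p g^{ij}(\phi)$ up to harmless error, effectively converting $\p_i\p_j\phi$ into $\p_i\phi$ against the symbol $\la^{-1}\p g^{ij}(\phi) = \la^{-1} g'(\phi)\bp\phi$. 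Thus morally
$$
[g^{ij}(\phi),P_\la]\p_i\p_j\phi \;\approx\; \la^{-1}\, \widetilde P_\la\big(g'(\phi)\,\bp\phi\cdot\p\bp\phi\big) + (\text{lower order}),
$$
which is precisely of the form $\la^{-1}\W(\phi)\bp\phi$ type with one extra derivative hitting $\bp\phi$; one then invokes \eqref{nphi} with the roles adjusted, or more directly a paraproduct decomposition: put the high frequency on $\bp\phi$ (controlled in $\dot H^{1+\ep}$ after the $\la^{1+\ep}$ weight and summation in $\ell^2_\la$) and the low frequencies on $g'(\phi)\bp\phi$ (controlled in $L_x^\infty$ via Bernstein and \eqref{eng00}, giving the factor $\|\bp\phi\|_{L_x^\infty}+1$), with the symmetric and diagonal pieces treated by \eqref{nphi} and \eqref{prd_2}.

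In carrying this out I would organize the commutator estimate as a Littlewood--Paley trichotomy: (i) $g^{ij}(\phi)$ at frequency $\ll\la$ — here the commutator vanishes to leading order and the gain is genuine; (ii) $g^{ij}(\phi)$ at frequency $\sim\la$ — estimated by Bernstein plus $\|g^{ij}(\phi)\|_{L^\infty}\les 1$ and $\la^{-1}\|\p^2\phi_\la\|\les\la\|\phi_\la\|$, summed against $\la^{1+\ep}$ using $\phi\in\dot H^{2+\ep}$ which follows from $\bp\phi\in\dot H^{1+\ep}$; (iii) $g^{ij}(\phi)$ at frequency $\gg\la$ — handled by the standard high-high-to-low paraproduct bound. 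The routine-but-delicate chasing of which norm carries which factor is the main labor, but the structural point — that the symbol $\int K = 0$ buys back exactly the one derivative needed — is what makes \eqref{rlambda} come out with the stated right-hand side. I expect step (iii), the high-high interactions where one must not lose the $\ep$ of room, to be the most technical, though it is entirely parallel to the proof of Lemma \ref{pres} and so can lean on the Appendix A analysis.
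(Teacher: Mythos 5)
Your approach is exactly the paper's: writing $\ti R_\la = P_\la \N(\phi,\bp\phi) + [g^{ij}(\phi),P_\la]\p_i\p_j\phi$, handling the nonlinear term via (\ref{prd}) and the energy bound (\ref{eng00}), and estimating the commutator through a Littlewood--Paley trichotomy in the frequency of $g(\phi)$ with a $\la^{-1}$ commutator gain from the kernel cancellation (the paper packages this as $E_\la + A_\la$ and Lemma \ref{lem8.4.1}, citing \cite[Corollary 1]{Wangrough} for the commutator bound you describe). The sketch is accurate modulo routine bookkeeping; note only that the gain is a factor $\la^{-1}$ rather than an actual reduction of $\p^2\phi$ to $\p\phi$, so the second derivative is ultimately absorbed into $\|\p^2\phi\|_{H^\ep}\les\|\bp\phi\|_{H^{1+\ep}}$ after weighting by $\la^{\ep}$, just as in the paper.
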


\begin{proof}
Since $\phi$ satisfies (\ref{wave1}), $\phi_\la$ satisfies (\ref{llbox_1}) with
\begin{equation*}
\ti R_\la= P_\la \N(\phi, \bp \phi) -( E_\la + A_\la),
\end{equation*}
where
\begin{align*}
E_\la &= P_\la(g^{ij}(\phi)\p_i\p_j \phi)-P_{\le \la}( g^{ij}(\phi)) P_\la (\p_i\p_j\phi), \\
A_\la &=\left(P_{\le \la} (g^{ij}(\phi))-g^{ij}(\phi) \right) P_\la (\p_i\p_j \phi).
\end{align*}
By using (\ref{prd}) in Lemma \ref{pres}, the embedding $\|\phi\|_{L_x^\infty} \les \|\phi\|_{H_x^2}$ and (\ref{eng00}) in Lemma \ref{lem4.15},
we obtain
\begin{equation}\label{r3}
\|\la^{1+\ep} P_\la \N(\phi, \bp \phi)\|_{l_\la^2 L_x^2}\les \left(\|\bp \phi\|_{L_x^\infty} +1\right) \|\bp \phi\|_{H^{\ep+1}}.
\end{equation}
We use Lemma \ref{lem8.4.1} below to estimate $E_\la$ and $A_\la$. Note that $E_\la$, $A_\la$ can be written
as $\widetilde E_\la(\p \phi)$ and  $\widetilde A_\la(\p^2\phi)$ respectively. Thus, it follows from Lemma \ref{lem8.4.1} that
%By using (\ref{e_7_10.1}) and (\ref{a_7.10.2}), we obtain
\begin{align}
&\|\la^{1+\ep} E_\la\|_{l_\la^2 L_x^2}\les \|\bp \phi\|_{L_x^\infty} (\|\bp \phi\|_{H^{1+\ep}}+\|\phi[0]\|_{H^2}), \label{r1}\\
&\|\la^{1+\ep} A_\la\|_{l_\la^2 L_x^2}\les \|\p \phi\|_{L_x^\infty} \|\p^2\phi\|_{\dot{H}^\ep}. \label{r_7.10.3}
\end{align}
By combining  (\ref{r3})--(\ref{r_7.10.3}), we therefore obtain (\ref{rlambda}).
\end{proof}

\begin{lemma}\label{lem8.4.1}
For any scalar function $f$ let
\begin{align*}
\widetilde E_\la(f)&:=P_\la (g(\phi) \p f)-P_{\le \la} (g(\phi)) P_\la (\p f);\quad
\widetilde A_\la(f):=(P_{\le \la}(g(\phi))-g(\phi))P_\la f.
\end{align*}
Then for $0<\ep\le s-2$ there hold
\begin{align}
&\|\la^{1+\ep} \widetilde E_\la(f)\|_{l_\la^2 L_x^2}+\|\la^\ep \widetilde E_\la(\p f)\|_{l_\la^2 L_x^2}
   \les \|\p \phi\|_{L_x^\infty} \|f\|_{H^{1+\ep}}+\|f\|_{L_x^\infty}(\|\p \phi\|_{H^{1+\ep}}+\|\phi[0]\|_{H^2}),\label{e_7_10.1}\\
&\|\la^{1+\ep} \widetilde A_\la(f)\|_{l_\la^2 L_x^2}+\|\la^{\ep}\widetilde A_\la(\p f)\|_{l_\la^2 L_x^2}\les \|\p \phi\|_{L_x^\infty} \|f\|_{\dot{H}^\ep}.\label{a_7.10.2}
\end{align}
\end{lemma}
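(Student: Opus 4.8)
The plan is to establish the two inequalities by a standard paraproduct decomposition, separating each bilinear expression into pieces according to whether the coefficient $g(\phi)$ or the factor $f$ (respectively $\p f$) carries the dominant frequency. For the commutator term $\widetilde E_\la(f) = P_\la(g(\phi)\p f)-P_{\le\la}(g(\phi))P_\la(\p f)$, I would write $g(\phi) = P_{\le\la}(g(\phi)) + P_{>\la}(g(\phi))$ and correspondingly split $\p f = \sum_\mu P_\mu(\p f)$. The term $P_\la\big(P_{\le\la}(g(\phi))\cdot P_\la(\p f)\big) - P_{\le\la}(g(\phi))P_\la(\p f)$ is a genuine commutator $[P_\la, P_{\le\la}(g(\phi))]P_\la(\p f)$, which gains a derivative off the low-frequency factor via the standard kernel estimate $\|[P_\la, a]h\|_{L^2}\les \la^{-1}\|\p a\|_{L_x^\infty}\|h\|_{L^2}$; this contributes the $\|\p\phi\|_{L_x^\infty}\|f\|_{H^{1+\ep}}$ term after using the chain rule $\p(g(\phi)) = g'(\phi)\p\phi$ and summing the geometric series in $\la$ with the weight $\la^{1+\ep}$ (or $\la^\ep$ in the $\widetilde E_\la(\p f)$ case). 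The remaining pieces involve $P_{>\la}(g(\phi))$ paired with frequencies $\gtrsim\la$ of $\p f$, together with the ``high-high'' interactions that $P_\la$ sees; these are handled by Bernstein and by putting the high-frequency coefficient in $L^2$: one uses $\|P_\mu(g(\phi))\|_{L^2}\les \mu^{-1}\|\p(g(\phi))\|_{L^2}\les\mu^{-1}(\|\p\phi\|_{H^{1+\ep}}+\|\phi[0]\|_{H^2})$, where the last bound comes from Lemma \ref{pres} (estimate (\ref{nphi}) with $m=1$), and pairs this against $\|f\|_{L_x^\infty}$ or, after Bernstein, against the appropriate frequency-localized piece of $f$. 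Summing in $\mu$ and $\la$ with Schur's test on the resulting almost-diagonal matrix yields the $\|f\|_{L_x^\infty}(\|\p\phi\|_{H^{1+\ep}}+\|\phi[0]\|_{H^2})$ contribution.

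For the second inequality, concerning $\widetilde A_\la(f) = (P_{\le\la}(g(\phi)) - g(\phi))P_\la f = -P_{>\la}(g(\phi))\,P_\la f$, the structure is simpler: there is no commutator, only a high-frequency coefficient multiplying a frequency-$\la$ function. I would decompose $P_{>\la}(g(\phi)) = \sum_{\mu>\la}P_\mu(g(\phi))$ and use $\|P_\mu(g(\phi))\|_{L^2}\les\mu^{-1}\|g'(\phi)\p\phi\|_{L^2}\les\mu^{-1}\|\p\phi\|_{L_x^\infty}\|g'(\phi)\|_{L^2}$, then bound $\|P_\la f\|_{L_x^\infty}$ by $\la^{3/2}\|P_\la f\|_{L^2}$ only when necessary, or more efficiently pair $L^2\times L^\infty$ with the high-frequency factor in $L^2$ and $P_\la f$ in $L^\infty$ — but since we want $\|f\|_{\dot H^\ep}$ on the right, it is cleanest to put $P_\mu(g(\phi))$ in $L^\infty$ via Bernstein where the gain $\la^{-1}$ from one derivative hitting $g(\phi)$ compensates, and $P_\la f$ in $L^2$. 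Concretely, $\|\la^{1+\ep}\widetilde A_\la(f)\|_{\ell_\la^2 L^2}\les \big\|\la^{1+\ep}\sum_{\mu>\la}\|P_\mu(g(\phi))\|_{L_x^\infty}\|P_\la f\|_{L^2}\big\|_{\ell_\la^2}$ and $\|P_\mu(g(\phi))\|_{L_x^\infty}\les \mu^{-1}\mu^{3/2}\|P_\mu(g'(\phi)\p\phi)\|_{L^2}\les\mu^{1/2}\|\p\phi\|_{L_x^\infty}\cdot(\text{something summable})$; after the dust settles, Schur's lemma against the off-diagonal weight $(\la/\mu)^{1+\ep}$ (which is summable since $\mu>\la$ forces $\la/\mu<1$ and $1+\ep>0$) closes the estimate. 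The $\widetilde A_\la(\p f)$ variant is identical with one less power of $\la$ and one more derivative on $f$, absorbed into the $\dot H^\ep$ norm.

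The main obstacle I anticipate is the bookkeeping in the ``high-high to low'' regime of $\widetilde E_\la$: when both $g(\phi)$ and $\p f$ carry frequency much larger than $\la$ but their product has frequency $\la$, one cannot afford to lose any derivative, and the only structural input available is that $\p(g(\phi))$ lies in $H^{1+\ep}$ (i.e. essentially in $H^{2+\ep}$ for $g(\phi)$ itself), which at the critical regularity $s>2$ is exactly borderline. Here I would need to exploit that the coefficient is a smooth \emph{function} of $\phi$, apply the chain rule carefully, and invoke (\ref{nphi}) of Lemma \ref{pres} — which already encapsulates the Moser-type estimate for $\W(\phi)\bp\phi$ — rather than trying to reprove the composition bound by hand; the summability of the resulting double series then follows from the $\ep$-gain in the $\ell_\la^2$ weight. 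The other place requiring care is ensuring the $L_x^\infty$ norm of $\p\phi$ (rather than a Sobolev norm) appears in front of $\|f\|_{H^{1+\ep}}$ in the commutator piece, so that when this lemma is fed into Lemma \ref{lem4.15.1} via (\ref{r1}), the factor $\|\bp\phi\|_{L_x^\infty}$ is genuinely the small quantity controlled by the bootstrap assumption (\ref{BA1}); this dictates that in the commutator term the \emph{low}-frequency coefficient must always be estimated in $L_x^\infty$ and never in an $L^2$-based norm.
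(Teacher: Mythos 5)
Your overall architecture --- trichotomy of the product, a commutator estimate $\|[P_\la,a]h\|_{L^2}\les\la^{-1}\|\p a\|_{L_x^\infty}\|h\|_{L^2}$ for the piece with low-frequency coefficient, and the Moser-type estimate (\ref{nphi}) for the pieces where $g(\phi)$ carries the high frequency --- is the same as the paper's. But in two places the quantitative mechanism you propose does not close, and both failures trace to the same cause: you measure the high-frequency pieces of $g(\phi)$ in $L^2$-based norms and try to upgrade to $L^\infty$ by Bernstein, when the lemma is designed so that the \emph{only} information used about $g(\phi)$ at high frequency in $L^\infty$ is the finite band property $\|P_\mu (g(\phi))\|_{L_x^\infty}\les\mu^{-1}\|\p (g(\phi))\|_{L_x^\infty}\les\mu^{-1}\|\p\phi\|_{L_x^\infty}$.

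Concretely, for $\widetilde A_\la$ your chain $\|P_\mu(g(\phi))\|_{L_x^\infty}\les\mu^{-1}\mu^{3/2}\|P_\mu(g'(\phi)\p\phi)\|_{L^2}$ loses $\mu^{3/2}$; even inserting the sharp localized bound $\|P_\mu(g'(\phi)\p\phi)\|_{L^2}\les\mu^{-1-\ep}c_\mu$, $(c_\mu)\in l^2$, from (\ref{nphi}), summing over $\mu>\la$ yields only $\la^{-\f12-\ep}$, which against the weight $\la^{1+\ep}$ leaves an uncancelled $\la^{\f12}$; and the right-hand side would then carry Sobolev norms of $\phi$ rather than the required $\|\p\phi\|_{L_x^\infty}\|f\|_{\dot H^\ep}$. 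The paper instead writes $\|g(\phi)-P_{\le\la}(g(\phi))\|_{L_x^\infty}\les\sum_{\mu>\la}\mu^{-1}\|\p (g(\phi))_\mu\|_{L_x^\infty}\les\la^{-1}\|\p\phi\|_{L_x^\infty}$ and pairs this with $\|P_\la f\|_{L^2}$, which gives (\ref{a_7.10.2}) in one line. Similarly, in the high-high regime of $\widetilde E_\la$ your bound $\|P_\mu(g(\phi))\|_{L^2}\les\mu^{-1}(\|\p\phi\|_{H^{1+\ep}}+\|\phi[0]\|_{H^2})$, paired against $\|(\p f)_\mu\|_{L_x^\infty}\les\mu\|f\|_{L_x^\infty}$, produces a $\mu$-independent summand, so $\sum_{\mu>\la}$ diverges; you must retain the frequency localization $\|\mu^{1+\ep}P_\mu\p(g(\phi))\|_{l_\mu^2L^2}$ from (\ref{nphi}) so that Schur's test against $(\la/\mu)^{1+\ep}$ applies (or, as the paper does for this term, put $(g(\phi))_\mu$ in $L^\infty$ with the $\mu^{-1}\|\p\phi\|_{L_x^\infty}$ gain and $(\p f)_\mu$ in $L^2$). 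These are local repairs rather than a change of strategy, but as written neither sum converges. (A minor further point: the "$g$ high, $\p f$ low" term $P_\la((g(\phi))_\la(\p f)_{\le\la})$ is not of the form "$P_{>\la}(g(\phi))$ paired with frequencies $\gtrsim\la$ of $\p f$" as you describe, though the $L^2\times L^\infty$ pairing you assign to the remainder is the correct one for it.)
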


\begin{proof}
We will prove the first inequalities in (\ref{e_7_10.1}) and (\ref{a_7.10.2}). The others can be derived similarly.
We first consider $\widetilde A_\la(f)$. By the finite band property we have
\begin{align*}
\|\widetilde A_\la(f)\|_{L_x^2} &\les \|f_\la\|_{L_x^2} \|g(\phi)-g(\phi)_{\le \la}\|_{L_x^\infty}
\les \|f_\la\|_{L_x^2} \sum_{\mu>\la}\mu^{-1} \|\p g(\phi)_\mu\|_{L_x^\infty}\\
&\les \la^{-1} \|f_\la\|_{L_x^2} \|\p \phi\|_{L_x^\infty}.
\end{align*}
This implies (\ref{a_7.10.2}).

Next we consider $\widetilde E_\la(f)$.  Applying the trichotomy of LP decomposition to $P_\la(g(\phi) \p f)$ we obtain
\begin{align*}
\widetilde E_\la(f) &= \Big(P_\la(g(\phi)_{\le \la}\c (\p f)_\la )-g(\phi)_{\le\la}\c (\p f)_\la\Big)+ P_\la\Big(g(\phi)_\la \c(\p f)_{\le\la}\Big)\\
& \quad \, +P_\la\Big(\sum_{\mu>\la}g(\phi)_{\mu} \c(\p f)_\mu\Big)=:a_\la+b_\la+c_\la.
\end{align*}
By using \cite[Corollary 1]{Wangrough} and the finite band property of LP projections, we have
\begin{align*}
\|a_\la\|_{L_x^2} &=\|[P_\la, g(\phi)_{\le\la}](\p f)_\la\|_{L_x^2}\les \la^{-1} \|\p g(\phi)_{\le \la}\|_{L_x^\infty} \|(\p f)_\la\|_{L_x^2}
\les \|\p \phi\|_{L_x^\infty} \| f_\la\|_{L_x^2}.
\end{align*}
Consequently
\begin{equation*}
\|\la^{\ep+1} a_\la\|_{l_\la^2 L_x^2}\les \|\p \phi\|_{L^\infty} \|f\|_{H^{1+\ep}}.
\end{equation*}
By using again the finite band property, we have
\begin{align*}
\|b_\la\|_{L_x^2} &\les \|g(\phi)_\la\|_{L_x^2}\|(\p f)_{\le \la}\|_{L_x^\infty} \les\|f\|_{L_x^\infty}\|\p g(\phi)_\la \|_{L_x^2},
\end{align*}
Thus it follows from (\ref{nphi}) with $m=1$ that
\begin{equation*}
\|\la^{1+\ep} b_\la\|_{l_\la^2 L_x^2}\les \|f\|_{L^\infty} (\|\p\phi\|_{H^{1+\ep}}+\|\phi[0]\|_{H^2}).
\end{equation*}
For $c_\la$ we have
\begin{align*}
\|\la^{1+\ep} c_\la\|_{L_x^2} &\les \sum_{\mu\ge \la} \left(\frac{\la}{\mu}\right)^{1+\ep}
\|\p g(\phi)_{\mu}\|_{L_x^\infty}\|\mu^{\ep}(\p f)_\mu\|_{L_x^2},
\end{align*}
which implies
\begin{equation*}
\|\la^{1+\ep} c_\la\|_{l_\la^2 L_x^2}\les \|\p g(\phi)\|_{L_x^\infty} \|\p f\|_{H^\ep}
\les \|\p \phi\|_{L_x^\infty} \|f\|_{H^{1+\ep}}.
\end{equation*}
Combining the above estimates we obtain (\ref{e_7_10.1}). The proof is complete.
\end{proof}

Now we are ready to give the basic energy estimate.

\begin{proposition}[Energy estimates]\label{eng3}
Let $\phi$ be the solution of (\ref{wave1}). If $\phi$ satisfies the bootstrap assumption (\ref{BA1}), there holds
\begin{equation}\label{w7.11.2}
\|\bp\phi(t)\|_{H^{s-1}}+\|\p_t^2 \phi(t)\|_{H^{s-2}}\les\|(\phi(0), \p_t\phi(0))\|_{H^s\times H^{s-1}}.
\end{equation}
\end{proposition}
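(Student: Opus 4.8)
The plan is to run the standard energy estimate \eqref{eng7.10} at top order, that is, applied to the frequency pieces $\phi_\la = P_\la\phi$, and to sum up dyadically with the correct $s$-weights. The starting point is Lemma \ref{lem4.15.1}: each $\phi_\la$ satisfies the scalar wave equation \eqref{llbox_1} with right-hand side $\ti R_\la$ controlled by \eqref{rlambda}. Applying \eqref{eng7.10} to $f = \phi_\la$ and then to $f = \p\phi_\la$ (after commuting $\p$ through \eqref{llbox_1}, which generates an extra term of the schematic form $\p g(\phi)\cdot \p^2\phi_\la$ that is itself absorbed via the $L^\infty$-type bounds and \eqref{eng00}), one obtains for $t\in[0,T]$
\begin{equation*}
\|\bp\phi_\la(t)\|_{L_x^2} \les \|\bp\phi_\la(0)\|_{L_x^2} + \int_0^t \|\ti R_\la(\tau)\|_{L_x^2}\, d\tau + (\text{commutator contributions}).
\end{equation*}
First I would multiply by $\la^{s-1}$, take the $\ell_\la^2$ norm, and invoke \eqref{rlambda} with $\ep = s-2$ (which is admissible since $s>2$) to get
\begin{equation*}
\|\bp\phi(t)\|_{\dot H^{s-1}} \les \|\bp\phi(0)\|_{\dot H^{s-1}} + \int_0^t \big(\|\bp\phi(\tau)\|_{L_x^\infty}+1\big)\big(\|\bp\phi(\tau)\|_{H^{s-1}} + \|\phi[0]\|_{H^2}\big)\, d\tau.
\end{equation*}
Combining with \eqref{eng0} to restore the low-frequency $L_x^2$ part and with \eqref{eng00} (which handles $\|\phi[0]\|_{H^2}$ and the $H^2$-part of $\phi$ that the commutators need), and setting $F(t) := \|\bp\phi(t)\|_{H^{s-1}}$, this is a Gronwall inequality
\begin{equation*}
F(t) \les F(0) + \int_0^t \big(\|\bp\phi(\tau)\|_{L_x^\infty}+1\big)\, F(\tau)\, d\tau + \int_0^t \big(\|\bp\phi(\tau)\|_{L_x^\infty}+1\big)\|\phi[0]\|_{H^2}\, d\tau.
\end{equation*}
The key input to close this is that $\int_0^T \|\bp\phi\|_{L_x^\infty}\,d\tau \les T^{1/2}\|\bp\phi\|_{L^2_{[0,T]}L_x^\infty} \le T^{1/2}$ by Cauchy–Schwarz and the bootstrap assumption \eqref{BA1}, so the exponential factor produced by Gronwall is bounded by a universal constant (recall $0<T\le 1$). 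Hence $\|\bp\phi(t)\|_{H^{s-1}} \les \|\bp\phi(0)\|_{H^{s-1}} + \|\phi[0]\|_{H^2} \les \|(\phi_0,\phi_1)\|_{H^s\times H^{s-1}}$, using $\|\bp\phi(0)\|_{H^{s-1}} \le \|\phi_1\|_{H^{s-1}} + \|\p\phi_0\|_{H^{s-1}} \les \|(\phi_0,\phi_1)\|_{H^s\times H^{s-1}}$.

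For the remaining term $\|\p_t^2\phi(t)\|_{H^{s-2}}$ I would simply read it off the equation \eqref{wave1}: $\p_t^2\phi = g^{ij}(\phi)\p_i\p_j\phi - \N(\phi,\bp\phi)$. The first piece is estimated by \eqref{prd_2} of Lemma \ref{pres} with $\ep = s-2$, giving $\|g(\phi)\p^2\phi\|_{H^{s-2}} \les \|\p^2\phi\|_{H^{s-2}} + \|\phi[0]\|_{H^2}$, where $\|\p^2\phi\|_{H^{s-2}}$ is controlled by $\|\bp\phi\|_{H^{s-1}}$ (spatial derivatives) together with the bound on $\|\p_t^2\phi\|_{H^{s-2}}$ itself — so one must be slightly careful and instead estimate only the spatial Hessian $\|\p_i\p_j\phi\|_{H^{s-2}} \les \|\bp\phi(t)\|_{H^{s-1}}$ directly, which is already controlled by the first part of the proof; the quadratic nonlinearity $\N(\phi,\bp\phi)$ is handled by \eqref{prd} (or \eqref{prd1}) of Lemma \ref{pres}, again with \eqref{BA1} and \eqref{eng00} absorbing the resulting factors of $\|\bp\phi\|_{L_x^\infty}$.

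The main obstacle, and the only place requiring genuine care, is the top-order commutator step: commuting $\p$ past the variable-coefficient operator $g^{ij}(\phi)\p_i\p_j$ in \eqref{llbox_1} at regularity level $H^{s-1}$ with $s$ only slightly above $2$, and checking that the commutator $[\p, g^{ij}(\phi)\p_i\p_j]\phi_\la = \p g^{ij}(\phi)\,\p_i\p_j\phi_\la$ together with the paradifferential remainder terms in $\ti R_\la$ genuinely close at this regularity. This is precisely what Lemma \ref{lem8.4.1} and the product estimates of Lemma \ref{pres} are designed to supply — the commutator and the error $E_\la, A_\la$ are all of the form "one rough factor $g(\phi) \sim H^s$ times two derivatives of $\phi$", and the asymmetric distribution of regularity (putting $\bp\phi \in L_x^\infty$ on the low-regularity factor via \eqref{BA1}) is what avoids any derivative loss. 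Once those lemmas are in hand, the Gronwall argument above is routine; I do not anticipate any difficulty beyond bookkeeping.
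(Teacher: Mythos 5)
Your proposal is correct and follows essentially the same route as the paper: apply the energy inequality (\ref{eng7.10}) to the LP pieces $\phi_\la$ satisfying (\ref{bog}), sum with $\la^{s-1}$ weights using (\ref{rlambda}) at $\ep=s-2$, close by Gronwall via $\|\bp\phi\|_{L^1_tL_x^\infty}\les T^{1/2}$ from (\ref{BA1}), and read $\p_t^2\phi$ off the equation with (\ref{prd1}) and (\ref{prd_2}). The only difference is that your extra application of the estimate to $\p\phi_\la$ (and the attendant commutator) is unnecessary, since $\la^{s-1}\|\bp\phi_\la(t)\|_{L_x^2}$ summed in $\ell_\la^2$ already yields $\|\bp\phi(t)\|_{\dot H^{s-1}}$ directly.
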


\begin{proof}
We apply (\ref{eng7.10}) to $\phi_\la$  which, in view of Lemma \ref{lem4.15.1}, satisfies
\begin{equation}\label{bog}
\Box_\bg \phi_\la =R_\la, \qquad \mbox{where } R_\la :=\ti R_\la+\bp g\c \p \phi_\la.
\end{equation}
Since $\bp g = g'(\phi) \bp \phi$, we obtain
\begin{align*}
\|\bp \phi_\la(t)\|_{L_x^2} \les \|\bp \phi_\la(0)\|_{L_x^2}
+ \int_0^t \left(\|\ti R_\la(t')\|_{L_x^2}+\|\bp \phi(t')\|_{L_x^\infty} \|\p\phi_\la(t')\|_{L_x^2} \right) d\tt.
\end{align*}
Multiplying the both sides by $\la^{s-1}$, summing over $\la$ and using (\ref{rlambda}) with $\ep = s-2$, we have
\begin{align*}
\|\bp \phi(t)\|_{\dot{H}^{s-1}}&\les \|\bp \phi(0)\|_{\dot{H}^{s-1}}
+\int_0^t \left(\|\bp \phi(t')\|_{L_x^\infty}+1\right) \left(\|\bp \phi(t')\|_{\dot{H}^{s-1}}+\|\phi[0]\|_{H^2}\right) dt'
\end{align*}
which, in view of Gronwall's inequality and (\ref{BA1}), gives the desired estimate.

The second estimate in (\ref{w7.11.2}) follows from the equation in (\ref{wave1}), and (\ref{prd1}), (\ref{prd_2}) in Lemma \ref{pres} with $\ep=s-2$.
\end{proof}

We can similarly conclude the following result.

\begin{lemma}\label{pweng}
Consider the equation (\ref{llbox_1}), under the bootstrap assumption (\ref{BA1}), there holds
\begin{equation*}
\|\phi_\la [t]\|_{\dot{H}^1}\les \|\phi_\la[0]\|_{\dot{H}^1}+ \|\ti R_\la\|_{L_t^1 L_x^2}
\end{equation*}
\end{lemma}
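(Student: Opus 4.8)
The statement is the exact analogue of the basic energy estimate (\ref{eng7.10}) applied to $\phi_\la$, so the plan is to mimic the proof of Lemma \ref{lem4.15} (or rather the derivation of (\ref{eng7.10}) from the energy identity (\ref{qs})). First I would recall that $\phi_\la$ satisfies (\ref{llbox_1}), i.e. $-\p_t^2\phi_\la+g^{ij}(\phi)\p_i\p_j\phi_\la=\ti R_\la$, which rewrites as $\Box_\bg\phi_\la=\ti R_\la+\bp g\c\p\phi_\la$ exactly as in (\ref{bog}) (the discrepancy between $-\p_t^2+g^{ij}\p_i\p_j$ and the Laplace--Beltrami operator $\Box_\bg$ is the first-order term $\bp g\c\p\phi_\la$ with $\bp g=g'(\phi)\bp\phi$).

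Next I would apply the energy identity (\ref{qs}) to the scalar function $f=\phi_\la$, which gives
$$
\int Q_{\a\b}[\phi_\la]\bT^\a\bT^\b(t)\,d\mu_g-\int Q_{\a\b}[\phi_\la]\bT^\a\bT^\b(0)\,d\mu_g
=-\int_{[0,t]\times{\Bbb R}^3}\Big(\Box_\bg\phi_\la\,\bd_\bT\phi_\la+\tfrac12\pi^{(\bT)}_{\a\b}Q[\phi_\la]^{\a\b}\Big).
$$
As in the passage from (\ref{qs}) to (\ref{eng7.10}), the left side controls $\|\bp\phi_\la(t)\|_{L_x^2}^2$ from below and $\|\bp\phi_\la(0)\|_{L_x^2}^2$ from above (using the uniform ellipticity in (\ref{g9})); the deformation tensor term is handled by $\pi^{(\bT)}_{\a\b}=\p_t\bg_{\a\b}=g'(\phi)\p_t\phi$, which is bounded in $L^2_{[0,T]}L_x^\infty$ by (\ref{BA1}), yielding $\int_0^t\|\bp\phi(t')\|_{L_x^\infty}\|\bp\phi_\la(t')\|_{L_x^2}^2\,dt'$; and the remaining source term is bounded by $\int_0^t\|\Box_\bg\phi_\la(t')\|_{L_x^2}\|\bp\phi_\la(t')\|_{L_x^2}\,dt'$. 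Putting these together and dividing by $\|\bp\phi_\la\|_{L_x^2}$ gives the differential inequality
$$
\|\bp\phi_\la(t)\|_{L_x^2}\les\|\bp\phi_\la(0)\|_{L_x^2}+\int_0^t\|\bp\phi(t')\|_{L_x^\infty}\|\bp\phi_\la(t')\|_{L_x^2}\,dt'+\int_0^t\|\Box_\bg\phi_\la(t')\|_{L_x^2}\,dt',
$$
and since $\Box_\bg\phi_\la=\ti R_\la+g'(\phi)\bp\phi\c\p\phi_\la$, the last two terms combine (absorbing the $\bp g\c\p\phi_\la$ contribution into the Gronwall term) into $\int_0^t\|\bp\phi(t')\|_{L_x^\infty}\|\bp\phi_\la(t')\|_{L_x^2}\,dt'+\|\ti R_\la\|_{L^1_{[0,t]}L_x^2}$. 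An application of Gronwall's inequality, using that $\int_0^T\|\bp\phi(t')\|_{L_x^\infty}\,dt'\les T^{1/2}\|\bp\phi\|_{L^2_{[0,T]}L_x^\infty}\les 1$ by Cauchy--Schwarz and (\ref{BA1}), then yields $\|\phi_\la[t]\|_{\dot H^1}\les\|\phi_\la[0]\|_{\dot H^1}+\|\ti R_\la\|_{L^1_tL_x^2}$, which is the claim (recalling $\|\psi[t]\|_{\dot H^1}:=\|\p_t\psi(t)\|_{L^2}+\|\p\psi(t)\|_{L^2}\simeq\|\bp\psi(t)\|_{L_x^2}$).

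There is no serious obstacle here: every ingredient is already in place. The only point requiring a little care is that this is a single fixed $\la$ rather than the $\ell^2_\la$-summed statement, so I would not invoke (\ref{rlambda}) but simply keep $\|\ti R_\la\|_{L^1_tL_x^2}$ as an undetermined source term on the right-hand side, exactly as stated. The phrase ``we can similarly conclude'' in the paper signals precisely this: the argument is verbatim the proof of Proposition \ref{eng3} with $s-1$ replaced by $1$ (i.e.\ no LP weights or summation) and with the nonlinear source term left abstract.
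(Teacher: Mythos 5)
Your proposal is correct and is exactly the argument the paper intends: the lemma is the single-$\la$, unweighted version of the estimate already derived in the proof of Proposition \ref{eng3}, namely applying (\ref{eng7.10}) to $\phi_\la$ via the rewriting (\ref{bog}) and closing with Gronwall using $\|\bp\phi\|_{L_t^1L_x^\infty}\les 1$ from (\ref{BA1}). No issues.
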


\begin{lemma}\label{rmdt}
Let $0<\ep \le s-2$.
There holds with $\phi_t=\p_t\phi$,  for
$
R_\la[\phi_t]:=-\p_t^2P_\la(\phi_t)+g^{ij}(\phi)\p_i \p_j P_\la(\phi_t),$
\begin{equation*}
\|\la^\ep R_\la [\phi_t]\|_{l_\la^2 L_x^2}\les (\|\bp \phi\|_{L_x^\infty}+1)\|(\phi(0), \p_t\phi(0))\|_{H^s\times H^{s-1}}
\end{equation*}
\end{lemma}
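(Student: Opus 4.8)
The plan is to commute $P_\la$ with the quasilinear operator acting on $\p_t\phi$ in the same spirit as in Lemma \ref{lem4.15.1}, but now applied to the differentiated unknown $\phi_t := \p_t\phi$. First I would differentiate the equation in (\ref{wave1}) in time to get
\begin{equation*}
-\p_t^2 \phi_t + g^{ij}(\phi)\p_i\p_j \phi_t = \p_t\big(\N(\phi,\bp\phi)\big) - (\p_t g^{ij}(\phi))\,\p_i\p_j\phi,
\end{equation*}
so that applying $P_\la$ and inserting the standard paraproduct splitting yields
\begin{equation*}
R_\la[\phi_t] = P_\la\Big(\p_t\N(\phi,\bp\phi) - (\p_t g^{ij}(\phi))\p_i\p_j\phi\Big) - \big(E_\la[\phi_t] + A_\la[\phi_t]\big),
\end{equation*}
where $E_\la[\phi_t] = P_\la(g^{ij}(\phi)\p_i\p_j\phi_t) - P_{\le\la}(g^{ij}(\phi))P_\la(\p_i\p_j\phi_t)$ and $A_\la[\phi_t] = \big(P_{\le\la}(g^{ij}(\phi)) - g^{ij}(\phi)\big)P_\la(\p_i\p_j\phi_t)$ are exactly of the form $\widetilde E_\la(\p\phi_t)$ and $\widetilde A_\la(\p^2\phi_t)$ handled by Lemma \ref{lem8.4.1}.

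Next I would estimate each piece. For the commutator-type terms, Lemma \ref{lem8.4.1} (with $f$ chosen to be $\p\phi_t = \p\p_t\phi$, which is a component of $\bp^2\phi$) gives
\begin{equation*}
\|\la^\ep E_\la[\phi_t]\|_{l_\la^2 L_x^2} + \|\la^\ep A_\la[\phi_t]\|_{l_\la^2 L_x^2}
\les \|\bp\phi\|_{L_x^\infty}\big(\|\p^2\phi\|_{H^{1+\ep}} + \|\phi[0]\|_{H^2}\big),
\end{equation*}
and the right-hand side is controlled by $(\|\bp\phi\|_{L_x^\infty}+1)\|(\phi(0),\p_t\phi(0))\|_{H^s\times H^{s-1}}$ once one invokes the energy estimate (\ref{w7.11.2}) of Proposition \ref{eng3}, which bounds $\|\bp\phi\|_{H^{s-1}}$ and $\|\p_t^2\phi\|_{H^{s-2}}$, hence (together with (\ref{eng00})) controls $\|\p^2\phi\|_{H^{s-2}} \gtrsim \|\p^2\phi\|_{H^{1+\ep}}$ for $\ep \le s-2$. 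For the genuinely nonlinear source term, $\p_t\N(\phi,\bp\phi)$ and $(\p_t g^{ij})\p_i\p_j\phi$ are both of the schematic type $\W(\phi)\,\bp\phi\,\bp^2\phi + \W(\phi)\,\bp\phi\,\bp\phi\,\bp\phi$; a product estimate of the same flavour as (\ref{prd}) and (\ref{prd_2}) in Lemma \ref{pres} (distributing frequencies, putting the lowest-regularity factor in $L^2$ and the rest in $L^\infty$ via $\|\cdot\|_{L_x^\infty}\les\|\cdot\|_{H^2}$) gives $\|\la^\ep P_\la(\cdots)\|_{l_\la^2 L_x^2}\les (\|\bp\phi\|_{L_x^\infty}+1)(\|\bp^2\phi\|_{H^\ep}+\|\phi[0]\|_{H^2})$, again absorbed by (\ref{w7.11.2}). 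Summing the contributions yields the claimed bound.

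The main obstacle is bookkeeping the regularity budget: the differentiated equation naturally produces $\p^2\phi$-terms, so one must check that $\ep \le s-2$ leaves exactly enough room — i.e. that $\|\p^2\phi\|_{H^\ep}$ with $\ep \le s-2$ is the strongest norm that appears and that it is genuinely controlled by (\ref{w7.11.2}). The delicate point is that one cannot afford any loss beyond the $L_x^\infty$-factor that is the whole purpose of the estimate; this forces the use of the paraproduct decomposition (rather than a crude product rule) precisely as in Lemmas \ref{lem4.15.1} and \ref{lem8.4.1}, so that the top-order factor always sits in $L^2$ and the $\|\bp\phi\|_{L_x^\infty}$ (or a harmless $+1$) carries the low-frequency factor. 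Everything else is a routine repetition of the arguments already used to prove Lemma \ref{lem4.15.1} and Proposition \ref{eng3}.
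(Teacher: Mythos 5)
Your overall architecture is exactly the paper's: differentiate (\ref{wave1}) in $t$, write $R_\la[\phi_t]$ as $P_\la(\p_t\N)-P_\la(\p_t(g^{ij})\p_i\p_j\phi)-(\widetilde E_\la(\p\phi_t)+\widetilde A_\la(\p^2\phi_t))$, estimate the source terms by the product estimates of Appendix A (the paper uses (\ref{w8.9.1}) with $G=g'(\phi)\bp\phi$ together with (\ref{prd})), estimate the commutator terms by Lemma \ref{lem8.4.1}, and close with Proposition \ref{eng3}. The treatment of the nonlinear source terms in your proposal is fine.

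There is, however, a genuine error in your handling of the commutator terms. You apply Lemma \ref{lem8.4.1} ``with $f=\p\phi_t$'' and arrive at the bound $\|\bp\phi\|_{L_x^\infty}(\|\p^2\phi\|_{H^{1+\ep}}+\|\phi[0]\|_{H^2})$, which you then claim is controlled because ``$\|\p^2\phi\|_{H^{s-2}}\gtrsim\|\p^2\phi\|_{H^{1+\ep}}$ for $\ep\le s-2$.'' That inclusion is false: $H^{s-2}\hookleftarrow H^{1+\ep}$ would require $1+\ep\le s-2$, i.e.\ $s\ge 3+\ep$, whereas here $s$ is barely above $2$. The norm $\|\p^2\phi\|_{H^{1+\ep}}\sim\|\bp\phi\|_{H^{2+\ep}}$ is a full derivative beyond what (\ref{w7.11.2}) provides, so as written this step fails. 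The fix is precisely why Lemma \ref{lem8.4.1} records the \emph{second} estimates in (\ref{e_7_10.1}) and (\ref{a_7.10.2}): apply $\|\la^\ep\widetilde E_\la(\p f)\|_{l_\la^2L_x^2}$ with $f=\phi_t$ (not $f=\p\phi_t$), which yields $\|\p\phi\|_{L_x^\infty}\|\phi_t\|_{H^{1+\ep}}+\|\phi_t\|_{L_x^\infty}(\|\p\phi\|_{H^{1+\ep}}+\|\phi[0]\|_{H^2})$, and apply $\|\la^\ep\widetilde A_\la(\p f)\|_{l_\la^2L_x^2}$ with $f=\p\phi_t$, which yields $\|\p\phi\|_{L_x^\infty}\|\p\phi_t\|_{\dot H^\ep}$. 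Both $\|\phi_t\|_{H^{1+\ep}}$ and $\|\p\phi_t\|_{\dot H^\ep}$ are bounded by $\|\bp\phi\|_{H^{s-1}}$ for $\ep\le s-2$, and $\|\phi_t\|_{L_x^\infty}\le\|\bp\phi\|_{L_x^\infty}$ is absorbed into the prefactor of the lemma; only one spatial derivative ever lands on $\phi_t$ inside a weighted $L^2$ norm, with the $\la^\ep$ (rather than $\la^{1+\ep}$) weight compensating. With that substitution your argument closes and coincides with the paper's proof.
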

\begin{proof}
Differentiating (\ref{wave1}) yields
\begin{equation}\label{w8.9.3}
R_\la[\phi_t]=P_\la (\p_t \N(\phi, \bp \phi))-P_\la(\p_t (g^{ij}(\phi)) \p_i\p_j \phi)-\left(\widetilde E_\la(\p \phi_t)+\widetilde A(\p^2 \phi_t)\right).
\end{equation}
where the definition of the last two terms can be seen in Lemma \ref{lem8.4.1}.  In view of  (\ref{w8.9.1}) in Appendix A with $G=g'(\phi)\bp \phi$,  we have
\begin{equation}\label{w8.9.2}
\|\la^\ep P_\la(\p_t (g^{ij}(\phi)) \p_i\p_j \phi)\|_{l_\la^2 L_x^2}\les\|\bp \phi\|_{L_x^\infty}\left(\|\p \phi\|_{H^{1+\ep}}+\|\phi[0]\|_{H^2}\right).
\end{equation}
In view of (\ref{w8.9.3}), Lemma \ref{rmdt} can be obtained by  using  (\ref{w8.9.2}), (\ref{prd}), (\ref{e_7_10.1})-(\ref{w7.11.2}).
\end{proof}

Let $C_u[t_1, t_2]$ be a (truncated) null cone\begin{footnote}{In Section \ref{sec_4}, by introducing optical functions $u$
we will further set up the null cones $C_u$.}\end{footnote}  with $t$-variable varying in $[t_1, t_2]\subset [0,T]$.
For any scalar function $f$ we define on $C_u[t_1,t_2]$ the following flux
\begin{equation*}
\F[f]=\int_{C_u} \left(|\sn f|_{\ga}^2+|L f|^2\right) d \mu_\ga dt,
\end{equation*}
where $C_u$ represents $C_u[t_1, t_2]$ for shorthand if no confusion occurs and  $\ga$ denotes the induced metric on $C_u\cap \Sigma_t$.

We can apply the divergence theorem to $\P_\a^{(\bT)}[f]$ in the spacetime region $\tilde{\mathfrak{D}}$,
enclosed by $C_u[t_1,t_2],$  $\Sigma_{t_1}$ and $\Sigma_{t_2}$. Similar to (\ref{qs}),
\begin{align}\label{ff_2}
\F[f] & \les \left|\int_{{\mathbb R}^3} Q_{\a\b}\bT^\a\bT^\b(t_2)d\mu_g\right|+\left|\int_{{\mathbb R}^3} Q_{\a\b}\bT^\a\bT^\b(t_1) d\mu_g\right| \nn\\
& \quad \, +\left|\int_{\tilde{\mathfrak{D}}} \left(\Box_{\bg} f \bd_\bT f+\f12 \pi^{(\bT)}_{\a\b} Q^{\a\b}\right)\right|.
\end{align}
%Consequently,  similar to (\ref{eng7.10}),  we have
%\begin{align}
%\F[f]^\f12&\les  \sup_{t\in[t_1, t_2]}\|\bp f (t)\|_{L_x^2}+\int_{t_1}^{t_2} \|\Box_\bg f\|_{L_x^2} dt
%\end{align}
\begin{lemma}\label{lem:flux}
\begin{equation}\label{flux}
\F[\bp \phi]+\sum_{\mu>1}\mu^{2(s-2)} \F[(\bp \phi)_\mu]\les \|(\phi(0), \p_t\phi(0))\|^2_{H^s\times H^{s-1}}.
\end{equation}
\end{lemma}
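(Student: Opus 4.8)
The plan is to apply the flux inequality (\ref{ff_2}) to each Littlewood--Paley piece $f=(\bp\phi)_\mu$ of $\bp\phi$ (as well as to $f=\bp\phi$ itself), then sum over $\mu$ with the weight $\mu^{2(s-2)}$. For a single piece, the right-hand side of (\ref{ff_2}) has three contributions: the two boundary integrals $\int_{{\mathbb R}^3} Q_{\a\b}[f]\bT^\a\bT^\b(t_i)\,d\mu_g$ on $\Sigma_{t_1},\Sigma_{t_2}$, and the spacetime integral of $\Box_\bg f\,\bd_\bT f+\f12\pi^{(\bT)}_{\a\b}Q[f]^{\a\b}$ over $\tilde{\mathfrak D}$. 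The boundary terms are controlled by $\|\bp f(t_i)\|_{L_x^2}^2$ up to universal constants (since $\bg$ is uniformly equivalent to the flat metric by (\ref{g9}), $Q_{\a\b}\bT^\a\bT^\b\approx |\bp f|^2$); after multiplying by $\mu^{2(s-2)}$ and summing these become $\|\bp\phi(t_i)\|_{\dot H^{s-1}}^2\lesssim \|(\phi(0),\p_t\phi(0))\|_{H^s\times H^{s-1}}^2$ by Proposition \ref{eng3}.

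For the deformation-tensor term $\f12\pi^{(\bT)}_{\a\b}Q[f]^{\a\b}$, recall $\pi^{(\bT)}_{\a\b}=\p_t\bg_{\a\b}=g'(\phi)\p_t\phi$, so this term is bounded by $\|\bp\phi\|_{L_x^\infty}|\bp f|^2$; integrating in $t$ over $[0,T]$ and using (\ref{BA1}) (which gives $\|\bp\phi\|_{L^1_{[0,T]}L_x^\infty}\lesssim T^{1/2}\lesssim 1$) together with the uniform energy bound of Proposition \ref{eng3} applied to each $\phi_\mu$ (via Lemma \ref{pweng} and Lemma \ref{lem4.15.1}), this contributes $\lesssim \sup_t\|\bp\phi(t)\|_{\dot H^{s-1}}^2\lesssim\|(\phi(0),\p_t\phi(0))\|_{H^s\times H^{s-1}}^2$ after the weighted summation. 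The remaining term is $\int_{\tilde{\mathfrak D}}\Box_\bg f\,\bd_\bT f$ with $f=(\bp\phi)_\mu$. Here $\Box_\bg(\bp\phi)_\mu$ is not zero: writing $\Box_\bg=-\p_t^2+g^{ij}\p_i\p_j+(\text{first order})$ and commuting $\bp$ and $P_\mu$ through the equation (\ref{wave1}), one has $\Box_\bg(\bp\phi)_\mu=S_\mu$ where $S_\mu$ collects the commutator terms and the differentiated nonlinearity; the estimate $\|\mu^{s-2}S_\mu\|_{\ell^2_\mu L_x^2}\lesssim(\|\bp\phi\|_{L_x^\infty}+1)\|(\phi(0),\p_t\phi(0))\|_{H^s\times H^{s-1}}$ follows from Lemma \ref{rmdt} combined with Lemma \ref{lem4.15.1} (bounding the $\bp g\cdot\p\phi_\mu$-type piece as in (\ref{bog})). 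Then $\int_{\tilde{\mathfrak D}}|S_\mu||\bd_\bT f|\lesssim \|S_\mu\|_{L^1_t L_x^2}\|\bp\phi_\mu\|_{L^\infty_t L_x^2}$, and after weighting by $\mu^{2(s-2)}$, summing, applying Cauchy--Schwarz in $\mu$, and using (\ref{BA1}) on the $\|\bp\phi\|_{L_x^\infty}$ factor, this is again $\lesssim\|(\phi(0),\p_t\phi(0))\|_{H^s\times H^{s-1}}^2$. Adding the three contributions for each $\mu$ and summing gives (\ref{flux}).

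The main obstacle I expect is the bookkeeping of the inhomogeneity $S_\mu=\Box_\bg(\bp\phi)_\mu$: one must verify that commuting $P_\mu$ and $\bp$ past the rough coefficient $g^{ij}(\phi)$ and the quadratic nonlinearity $\N$ produces only terms already estimated in Lemmas \ref{lem4.15.1}, \ref{lem8.4.1} and \ref{rmdt}, so that no derivative is lost beyond $\dot H^{s-1}$ on $\bp\phi$ and so that every factor carrying the $L_x^\infty$ norm of $\bp\phi$ can be absorbed using (\ref{BA1}). Once the $\ell^2_\mu$-summability of $\mu^{s-2}S_\mu$ is in hand, everything else is the standard flux identity (\ref{ff_2}) plus the uniform energy bound of Proposition \ref{eng3}.
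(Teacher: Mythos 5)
Your proposal is correct and follows essentially the same route as the paper: apply the flux identity (\ref{ff_2}) to each piece $(\bp\phi)_\mu$, absorb the deformation-tensor term via $\|\bp g\|_{L_t^1L_x^\infty}\les 1$ from (\ref{BA1}), bound the boundary terms by Proposition \ref{eng3}, and control the inhomogeneity $\Box_\bg(\bp\phi)_\mu$ by splitting into the spatial-derivative piece via (\ref{bog})/(\ref{rlambda}) and the time-derivative piece via Lemma \ref{rmdt}. The only cosmetic difference is that the paper organizes the source term as $\int_{t_1}^{t_2}\|\mu^{s-2}\Box_\bg(\bp\phi)_\mu\|_{\ell_\mu^2L_x^2}\,dt$ rather than your $L_t^1L_x^2\times L_t^\infty L_x^2$ pairing with Cauchy--Schwarz in $\mu$, which amounts to the same estimate.
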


\begin{proof}
Applying (\ref{ff_2}) to $(\bp \phi)_\mu$, and using  $\|\bp g\|_{L_t^1 L_x^\infty}\les 1$ from (\ref{BA1}),  we  obtain
\begin{align*}
&\Big(\sum_{\mu>1}\mu^{2(s-2)}\F[(\bp \phi)_\mu]\Big)^\f12\\
&\les \sup_{t\in[t_1,t_2]} \Big(\sum_{\mu>1}  \|\mu^{s-2}\bp(\bp\phi)_\mu\|_{L_x^2}^2\Big)^\f12
+\int_{t_1}^{t_2} \Big(\sum_{\mu>1}\|\mu^{s-2}\Box_\bg (\bp \phi)_\mu\|_{L_x^2}^2\Big)^\f12 dt.
\end{align*}
The first term on the right hand side can be treated by Proposition \ref{eng3}.
For  the other term, we use (\ref{bog}) for $\Box_\bg (\p \phi)_\mu$, the estimate (\ref{rlambda}),
and Lemma \ref{rmdt} for $\Box_\bg (\phi_t)_\mu$.  Hence
\begin{align*}
& \Big(\sum_{\mu>1}\mu^{2(s-2)}\F[(\bp \phi)_\mu]\Big)^\f12\\
&\les \|(\phi(0), \p_t\phi(0))\|_{H^s\times H^{s-1}}
+\int_{t_1}^{t_2} \Big(\sum_{\mu>1}\mu^{2(s-2)}\|R_\mu[\phi_t], \mu \ti R_\mu\|_{L_x^2}^2  \Big)^\f12 dt\\
&+\int_{t_1}^{t_2} \|\bp g(\phi)\|_{L_x^\infty}\Big(\sum_{\mu>1}  \|\mu^{s-2}\bp(\bp\phi)_\mu\|_{L_x^2}^2\Big)^\f12 dt
 \end{align*}
 Using Proposition \ref{eng3} again,
 the second inequality in (\ref{flux}) can be obtained. The other one can be proved in the same way.
\end{proof}

%\begin{proposition}\label{metricest}
%Let $\phi$ be a solution of (\ref{wave1}) on $[0,T]$ with  $T\le 1$ satisfying the bootstrap assumption (\ref{BA1}).
%Le4 $\la>1$ be a large number. Then on any interval $I_\la\subset [0, T]$ such that $|I_\la|\le T \la^{-8\ep_0}$
%and $\|\p \phi\|_{L_{I_\la}^2 L_x^\infty}\le \la^{-4\ep_0}$ there hold
%\begin{align}
%&\|\p g\|_{L_{I_k}^1 L_x^\infty}\les \la^{-8\ep_0}\label{g1}\\
%&\|\p g\|_{L_{I_k}^2 L_x^\infty}\les \la^{-4\ep_0}\label{g2}\\
%&\|\bp^2 g\|_{L^\infty_{[0,t_*]}H^{s-2} }\les 1 \label{g5}
%\end{align}
%\end{proposition}

\section{\bf Reduction of Strichartz estimates}\label{sec_3}

As mentioned in the introduction, the main goal of this paper is to show that, under the bootstrap assumption (\ref{BA1}),
we actually have the improved estimate (\ref{p2}). We restate this in the following result.

\begin{theorem}[Main estimate]\label{main}
Let $s>2$ and let $\phi$ be the solution of (\ref{wave1}). If $\phi$ satisfies (\ref{BA1}), then there holds with a number $8\delta_0<\delta_1<s-2$
\begin{equation}\label{7.11.300}
\|\bp \phi\|_{L^2_{[0,T]}L_x^\infty}^2+\sum_{\la\ge 2}\la^{2\delta_1}\|\bar P_\la\bp \phi\|_{L^2_{[0,T]} L_x^\infty}^2
\les T^{2\delta} (\|\phi(0)\|_{H^s}^2+\|\p_t\phi(0)\|_{H^{s-1}}^2)
\end{equation}
where $\bar P_\la$ denote the Littlewood-Paley projections with $\sum_{\la} \bar P_\la=Id$ in $L^2({\mathbb R}^3)$ and $0<\d\le \ep_0$.
\end{theorem}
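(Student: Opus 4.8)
The plan is to reduce the Strichartz estimate (\ref{7.11.300}) to the dyadic estimate (\ref{plagen2}) for the linear equation $\Box_{\bg(\phi)}\psi=0$, exactly along the lines sketched in the introduction, and then to absorb (\ref{plagen2}) into the Boundedness Theorem \ref{thm8.1.1}. First I would perform the Littlewood--Paley decomposition of $\bp\phi$ and reduce matters to establishing, for each sufficiently large dyadic $\la$, an estimate of the shape
\begin{equation*}
\|\la^{\delta_0}\bar P_\la \bp\phi\|_{L^2_{[0,T]}L_x^\infty}\les c_\la T^{\f12-\frac1q}\|(\phi(0),\p_t\phi(0))\|_{H^s\times H^{s-1}},
\end{equation*}
with $q>2$ close to $2$ and $(c_\la)\in\ell^1$; here the gap between $\delta_1$ and $\delta_0$ (guaranteed by $8\delta_0<\delta_1<s-2$) is what supplies the summable prefactor $c_\la=\la^{-(\delta_1-\delta_0)+}\cdot(\text{energy weight})$ once one uses Proposition \ref{eng3}. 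The low frequencies ($\la\les 1$) and the non-LP-localized piece $\|\bp\phi\|_{L^2_{[0,T]}L_x^\infty}$ are controlled directly by Sobolev embedding $H^{1/2+}\hookrightarrow L^\infty$ together with (\ref{w7.11.2}) and the gain $T^{\f12}$ from Hölder in time.

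The core reduction has two substeps. Using the bootstrap assumption (\ref{BA1}), partition $[0,T]$ into $\les\la^{8\ep_0}$ subintervals $I_k=[t_{k-1},t_k]$ with $|I_k|\les\la^{-8\ep_0}T$ and $\|\bp\phi\|_{L^2_{I_k}L_x^\infty}\le\la^{-4\ep_0}$; the number of intervals is harmless because $\delta_1>8\delta_0$ and $\delta_0=\ep_0^2$, so the loss $\la^{8\ep_0}$ is beaten by the frequency gain once the dyadic estimate has a genuine $\la$-power to spare. On each $I_k$, write $\bp\phi_\la$ via Duhamel's formula for $\Box_{\bg}$: the homogeneous part is handled by (\ref{plagen2}) applied to the solution of $\Box_{\bg(\phi)}\psi=0$ with data $\bp\phi_\la[t_{k-1}]$, and the inhomogeneous part is controlled using Lemma \ref{lem4.15.1} and Lemma \ref{rmdt} (the estimates on $\ti R_\la$ and $R_\la[\phi_t]$) together with the flux bound of Lemma \ref{lem:flux} and energy estimate Proposition \ref{eng3}. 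A subtlety, flagged in the excerpt, is that $\bp\phi_\la$ is not exactly frequency-localized even though the equation is applied to $\phi_\la$; this is resolved by inserting fattened LP projections $\bar P_\la$, using their reproducing property, and absorbing the tails via the finite-band property as in Lemma \ref{lem8.4.1}. Once (\ref{plagen2}) is in hand on each $I_k$, summing over the $\les\la^{8\ep_0}$ intervals and then over dyadic $\la$ (with $\ell^2$ in $\la$ and the summable weight $c_\la$) yields (\ref{7.11.300}), with the exponent $\delta\in(0,\ep_0]$ coming from the time factors $T^{\f12-1/q}$ and the interval lengths.

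Finally, (\ref{plagen2}) itself is not proved in this section: it is reduced, by the rescaling $x\mapsto\la x$, $t\mapsto\la t$ and a $\T\T^\ast$ argument (Section \ref{sec_3} continued, plus Appendix B), to the $L^2$--$L^\infty$ decay estimate (\ref{dispst1})--(\ref{dispst2}), which in turn (Section \ref{sec_4}) follows from the conformal energy bound of Theorem \ref{thm8.1.1}. So at the level of this section the proof is: LP-decompose, subdivide in time using (\ref{BA1}), Duhamel on each piece, quote the dyadic Strichartz estimate (\ref{plagen2}), and sum. The main obstacle is \emph{not} the bookkeeping here but the fact that (\ref{plagen2}) rests on the entire machinery of the rest of the paper; within this section the only genuinely delicate point is controlling the Duhamel error terms — i.e.\ showing $\|\Box_{\bg}\bp\phi_\la\|_{L^2}$ and the flux of $\bp\phi_\la$ carry enough regularity, with the right $\la$-weights and a small $T$-power, so that the error does not swamp the $c_\la T^{\delta}$ budget — which is exactly what Lemmas \ref{lem4.15.1}, \ref{rmdt} and \ref{lem:flux} and Proposition \ref{eng3} were set up to provide.
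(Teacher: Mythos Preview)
Your proposal is correct and follows essentially the same strategy as the paper: LP-decompose via $\bar P_\la=P_\la P_\la$, partition $[0,T]$ into $\les\la^{8\ep_0}$ subintervals using (\ref{BA1}), apply Duhamel on each $I_k$, invoke the dyadic Strichartz estimate (Theorem~\ref{dyastric}, which is (\ref{plagen2})), and sum in $k$ and then in $\la$. Two small corrections: the paper does not use Lemma~\ref{rmdt} or Lemma~\ref{lem:flux} in this argument (those feed into Section~\ref{sec_5}); what is actually needed is Lemma~\ref{pweng} to propagate $\|\phi_\la[t_{k-1}]\|_{\dot H^1}$ back to $t=0$, together with (\ref{rlambda}) for the Duhamel source $R_\la=\ti R_\la+\bp g\cdot\p\phi_\la$, and the frequency-localization issue is handled directly by the reproducing property $\bar P_\la=P_\la P_\la$ rather than through Lemma~\ref{lem8.4.1}.
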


In this section, we first reduce the proof of Theorem \ref{main} to the proof of Strichartz estimates on small time intervals.
Let $\la$ be a fixed large dyadic number and let $0<\ep_0<\frac{s-2}{5}$ be a fixed number as mentioned in the introduction.
By using the bootstrap assumption (\ref{BA1}), we can partition $[0, T ]$ into disjoint union of sub-intervals
$I_k := [t_{k-1}, t_k]$ of total number $\les \la^{8\ep_0}$ with the properties that $|I_k|\le \la^{-8\ep_0} T$ and
\begin{equation}\label{BA2}
\|\bp \phi\|_{L_{I_k}^2 L_x^\infty}^2+\sum_{\mu\ge 2}\mu^{2\delta_0}\|\bar P_\mu\bp \phi\|_{L^2_{I_k} L_x^\infty}^2\les \la^{-8\ep_0}.
\end{equation}
On each time interval $I_k$ we will show the following Strichartz estimate.

\begin{theorem}[Strichartz estimates]\label{dyastric}
 Fix $\la\ge\La$ and let $\psi$ be a solution of
 \begin{equation}\label{eq2}
 \Box_\bg\psi=0
 \end{equation}
on the time interval $I_k$. Then for any $q>2$ sufficiently close to $2$ there holds
 \begin{equation*}
\|P_\la \bp \psi\|_{L^q_{I_k}L_x^\infty}\les \la^{\frac{3}{2}-\frac{1}{q}} \|\psi[t_k]\|_{\dot{H}^1}.
 \end{equation*}
\end{theorem}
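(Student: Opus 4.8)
\smallskip
\noindent\textbf{Proof strategy.}
The plan is to reduce Theorem \ref{dyastric} to a dispersive $L^1$--$L^\infty$ decay estimate for the geometric wave flow via the $\T\T^*$ method of Klainerman--Rodnianski, and then to reduce that decay estimate to the boundedness of the conformal energy (Theorem \ref{thm8.1.1}). First I would deal with the fact that, although $\psi$ solves $\Box_\bg\psi=0$, the localized piece $P_\la\psi$ is not a solution: decomposing the data $\psi[t_k]=\sum_\mu\bar P_\mu\psi[t_k]$ and letting $\psi^{(\mu)}$ solve $\Box_\bg\psi^{(\mu)}=0$ with data $\bar P_\mu\psi[t_k]$ gives $\psi=\sum_\mu\psi^{(\mu)}$, with $\sum_\mu\|\psi^{(\mu)}[t_k]\|_{\dot{H}^1}^2\les\|\psi[t_k]\|_{\dot{H}^1}^2$ by Littlewood--Paley orthogonality (and the energy estimate of Lemma \ref{lem4.15} available to control each $\psi^{(\mu)}$ on $I_k$). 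It then suffices to bound $\|P_\la\bp\psi^{(\mu)}\|_{L^q_{I_k}L^\infty_x}$ by $c_{\la\mu}\,\la^{\frac{3}{2}-\frac{1}{q}}\|\psi^{(\mu)}[t_k]\|_{\dot{H}^1}$ with $\sum_\mu c_{\la\mu}^2\les1$ uniformly in $\la$; for $\mu\not\sim\la$ the off-diagonal gain comes from the finite band property of $P_\la$ together with the $L^1_tL^\infty_x$ control on $\bp\bg$ from (\ref{BA1}) applied to the commutator $[\Box_\bg,P_\la]\psi^{(\mu)}$, while the reproducing property $P_\la=P_\la\wt P_\la$ allows $\wt P_\la\psi^{(\mu)}$ to be treated as essentially frequency localized. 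This reduces matters to a solution $\psi$ whose data is supported at frequency $\sim\la$.

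\smallskip
\noindent\textbf{Rescaling and $\T\T^*$.}
For fixed large $\la$ I would rescale $(t,x)\mapsto(\la^{-1}t+t_k,\la^{-1}x)$, turning the estimate into a fixed-unit-frequency Strichartz bound $\|P\bp\psi\|_{L^q_{[0,\tau_*]}L^\infty_x}\les\|\psi[0]\|_{\dot{H}^1}$ on a long time interval $[0,\tau_*]$ with $\tau_*\le\la^{1-8\ep_0}T\le\la$, for a solution of $\Box_\bg\psi=0$ with rescaled metric $\bg=\bg(\phi(\la^{-1}t+t_k,\la^{-1}x))$; by Proposition \ref{eng3} this rescaled metric carries exactly the regularity needed for the later geometric constructions, and the power $\la^{\frac{3}{2}-\frac{1}{q}}$ is precisely the homogeneity lost under the rescaling. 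Dualizing the data-to-solution map $\psi[0]\mapsto P\bp\psi$ and using the integral representation of $\T\T^*$ (together with the non-endpoint $q>2$), this Strichartz bound follows from the pointwise $L^1$--$L^\infty$ decay estimate (\ref{dispst1}), whose error function $d(t)$ satisfies $\|d\|_{L^{q/2}}\les1$ by virtue of (\ref{BA1}); the bookkeeping of this reduction is standard and would be carried out below (see also Appendix B).

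\smallskip
\noindent\textbf{From the decay estimate to the conformal energy.}
Using a partition of unity on ${\Bbb R}^3$ I would localize the source in (\ref{dispst1}) to the unit ball, reducing it to (\ref{dispst2}) for $\psi$ with $\psi(t_0)$ supported in $B_{1/2}$; this is where the optical function $u$, the null cones $C_u$ and the null frame $\{L,\Lb,e_1,e_2\}$ over $\D_0^+$ come in. The pointwise bound (\ref{dispst2}) is then extracted from the conformal energy estimate $\C[\psi](t)\les(1+t)^{2\ep}\|(\psi(t_0),\p_t\psi(t_0))\|_{H^1\times L^2}^2$ of Theorem \ref{thm8.1.1} by a Klainerman--Sobolev type inequality on the spheres $S_{t,u}$, as in Section \ref{sec_4}.

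\smallskip
\noindent\textbf{Main obstacle.}
The genuinely hard input is Theorem \ref{thm8.1.1}: controlling the conformal energy when $\bg$ is only as regular as $\phi\in H^s$, $s>2$, so that ${\bf Ric}_\bg$ carries uncontrolled third-order derivatives of $\phi$ along null hypersurfaces and phase-space regularization is not affordable --- it would cost $\tfrac12$ derivative on the cones --- which is why the later argument introduces the normalization $\ti\bg=\Omega^{-2}\bg$ and exploits the hidden structure of the mass aspect function $\mu$. Within the reduction above, the most delicate point is the frequency-localization step: since solutions of $\Box_\bg\psi=0$ are not frequency localized even for localized data, the commutator errors $[\Box_\bg,P_\la]\psi$ must be absorbed using only the $L^1_tL^\infty_x$ control on $\bp\bg$ (rather than the $L^\infty$ control one would want) furnished by the bootstrap assumption (\ref{BA1}).
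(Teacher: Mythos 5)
Your overall architecture --- rescale $(t,x)\mapsto(\la(t-t_k),\la x)$ to reduce to a unit-frequency Strichartz estimate on $[0,\tau_*]$, prove that by a $\T\T^*$ argument resting on the $L^1$--$L^\infty$ decay estimate, localize the data by a partition of unity, and feed everything into the boundedness of the conformal energy --- is exactly the paper's route (Theorem \ref{str2}, Theorem \ref{decayth}, Proposition \ref{lcestimate}, Theorem \ref{BT}, Appendix B). The scaling bookkeeping producing the factor $\la^{\frac32-\frac1q}$ and the role of $\|\bp g\|_{L^1_{I_*}L^\infty_x}\les\la^{-8\ep_0}$ (needed both for $\|d\|_{L^{q/2}}\les1$ and to close the $\T\T^*$ bootstrap $M^2\le C+\tfrac12M^2$) are as in the paper.

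The problem is your opening step. Theorem \ref{dyastric} is a statement about an \emph{arbitrary} $\dot H^1\times L^2$ datum $\psi[t_k]$, with the frequency localization sitting entirely on the output through the projection $P_\la$ (which becomes $P=P_1$ after rescaling). The operator $\T(t')\omega=P\,\p_t\psi(t;t',\omega)$ in the $\T\T^*$ argument is defined and bounded on all of $\H=\dot H^1\times L^2$, so no decomposition of the data into $\bar P_\mu\psi[t_k]$ is needed, and no commutator $[\Box_\bg,P_\la]$ ever has to be estimated in the proof of this theorem. By introducing that decomposition you create for yourself the genuinely hard problem of summable off-diagonal decay $\sum_\mu c_{\la\mu}^2\les1$, i.e.\ of controlling frequency leakage of the flow of a rough metric; your proposed mechanism --- the finite band property plus only $L^1_tL^\infty_x$ control of $\bp\bg$ from (\ref{BA1}) applied to $[\Box_\bg,P_\la]\psi^{(\mu)}$ --- is not substantiated and is precisely the kind of estimate that fails or requires a parametrix-level argument for $H^{2+\ep}$ metrics. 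The place where the reproducing property $\bar P_\la=P_\la P_\la$, the Duhamel formula, and the non-localization of solutions genuinely enter is the \emph{deduction of Theorem \ref{main} from Theorem \ref{dyastric}}: there one applies Theorem \ref{dyastric} (with arbitrary data) to the homogeneous evolution $W(t,s)$ inside the Duhamel representation of $\phi_\la$, which satisfies the inhomogeneous equation $\Box_\bg\phi_\la=R_\la$. You have transplanted that difficulty into the wrong theorem and then resolved it by assertion. Deleting your first paragraph and running the $\T\T^*$ argument directly on arbitrary data repairs the proof and brings it in line with the paper; as written, the first step is a gap.
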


\begin{proof}[Proof of Theorem \ref{main} assuming Theorem  \ref{dyastric}]
We use $W(t, s)$ to denote the operator that sends $(f_0, f_1)$ to the solution of $\Box_{\bg} \psi=0$ satisfying the initial conditions
$\psi(s) = f_0$ and $\p_t \psi(s) = f_1$ at the time $s$.  By the reproducing property of LP projections, we can write $\bar P_\la=P_\la P_\la$,
where the LP projections $P_\la$ are associated to a different symbol. Let $\phi_\la:= P_\la\phi$. Recall that
$\Box_{\bg} \phi_\la = R_\la$ by (\ref{bog}), we have from the Duhamel principle that
$$
\phi_\la(t) = W(t, t_{k-1}) \phi_\la[t_{k-1}] +\int_{t_{k-1}}^t W(t, s) (0, R_\la(s)) ds.
$$
Thus
\begin{equation*}
\bar P_\la  \phi(t)=P_\la \left(W(t,t_{k-1})\phi_\la[t_{k-1}]\right)+\int_{t_{k-1}}^t P_\la \left(W(t,s)(0, R_\la(s))\right) ds.
\end{equation*}
By differentiation and the fact $W(t, t)(0, R_\la(t))=0$, we have for $t\in I_k$ that
 \begin{equation*}
 \bar P_\la \bp \phi(t)=P_\la \bp W(t, t_{k-1}) \phi_\la[t_{k-1}]+\int_{t_{k-1}}^t P_\la \bp W(t,s)(0, R_\la(s)) ds.
 \end{equation*}
In view of H\"{o}lder inequality and Theorem \ref{dyastric}, it follows that
\begin{align*}
\|P_\la \bp \left(W(\cdot, t_{k-1}) \phi_\la[t_{k-1}]\right)\|_{L^2_{I_k} L_x^\infty}
&\les |I_k|^{\frac{1}{2}-\frac{1}{q}} \|P_\la \bp \left(W(\cdot, t_{k-1}) \phi_\la[t_{k-1}]\right)\|_{L_{I_k}^q L_x^\infty}\\
&\les \la^{\frac{3}{2}-\frac{1}{q}}|I_k|^{\frac{1}{2}-\frac{1}{q}}\|\phi_\la [t_{k-1}]\|_{\dot{H}^1}.
\end{align*}
Let $\delta=\f12-\frac{1}{q}$. We may use $|I_k|\les T \la^{-8\ep_0}$ and Lemma \ref{pweng} to derive that
\begin{align*}
\|P_\la \bp \left(W(\cdot, t_{k-1}) \phi_\la[t_{k-1}]\right)\|_{L^2_{I_k} L_x^\infty}
&\les \la^{1+(1-8\ep_0)\delta} T^\delta \left(\|\phi_\la[0]\|_{\dot{H}^1}+\|\ti R_\la\|_{L_t^1 L_x^2}\right). %\label{p5}.
\end{align*}
Similarly, by using the Minkowski inequality, Theorem \ref{dyastric} and Lemma \ref{lem4.15}, we have
\begin{align*}
\left\| \int_{t_{k-1}}^t P_\la \bp W(t,s)(0, R_\la(s)) ds\right\|_{L^2_{I_k}L_x^\infty}
& \les \int_{t_{k-1}}^{t_k} \left\|P_\la \bp W(t,s)(0, R_\la(s))\right\|_{L_{[s, t_k]}^2L_x^\infty} ds\\
&\les\la^{1+(1-8\ep_0)\delta} T^\delta \|R_\la\|_{L_t^1 L^2_x}.
\end{align*}
Recall that $R_\la = \ti R_\la + \bp g(\phi)\cdot \bp \phi_\la$, we may use (\ref{BA1}) and Lemma \ref{pweng} to derive that
\begin{align*}
\|R_\la\|_{L_t^1 L_x^2} & \les \|\ti R_\la\|_{L_t^1 L_x^2} + \|\bp \phi\|_{L_t^2 L_x^\infty} \|\bp \phi_\la\|_{L_t^2 L_x^2}
 \les \|\ti R_\la\|_{L_t^1 L_x^2} + \|\phi_\la[0]\|_{\dot{H}^1}.
\end{align*}
Therefore
\begin{align*}
\left\| \int_{t_{k-1}}^t P_\la \bp W(t,s)(0, R_\la(s)) ds\right\|_{L^2_{I_k}L_x^\infty}
&\les\la^{1+(1-8\ep_0)\delta} T^\delta \left(\|\phi_\la[0]\|_{\dot{H}^1} + \|\ti R_\la\|_{L_t^1 L^2_x}\right).
\end{align*}
Consequently
\begin{align*}
\|\bar P_\la \bp \phi\|_{L^2_{I_k} L_x^\infty}
\les \la^{1+(1-8\ep_0)\delta} T^\delta \left(\|\phi_\la[0]\|_{\dot{H}^1} + \|\ti R_\la\|_{L_t^1 L^2_x}\right).
\end{align*}
Now we sum over all subintervals $I_k$, remembering that the total number of such intervals is $\les \la^{8\ep_0}$, we obtain
\begin{align*}
\|\bar P_\la \bp \phi\|^2_{L^2_{[0,T]} L_x^\infty}
&\les \la^{8\ep_0} \la^{2+2(1-8\ep_0)\delta} T^{2\delta} (\|\phi_\la[0]\|^2_{\dot{H}^1}+\|\ti R_\la\|_{L_t^1 L^2_x}^2)\\
&\les \la^{-2\d_1} T^{2\delta}(\|\la^{s-1} \phi_\la[0]\|^2_{\dot{H}^1}+\|\la^{s-1}\ti R_\la\|_{L_t^1 L_x^2}^2)
\end{align*}
where  $\d_1=s-2-4\ep_0 -(1-8\ep_0)\d>0$. We can choose $q>2$ such that $0<\delta\le \ep_0$. Then
$s-2>\delta_1> 8\ep_0^2 =8\delta_0$ since we assumed $\delta_0=\ep_0^2$ in (\ref{7.10.6}).
Now we multiply the both sides by $\la^{2\d_1}$ and sum over $\la\ge \La$. Notice that
\begin{align*}
\sum_{\la\ge \La} \|\la^{s-1} \ti R_\la\|_{L_t^1L_x^2}^2
& \les \sum_{\la\ge \La} \|\la^{s-1} \ti R_\la\|_{L_t^2L_x^2}^2 T\les \|\la^{s-1} \ti R_\la\|_{L_t^2 l_\la^2 L_x^2}^2 T\\
& \les (\|\bp \phi(0)\|_{H^{s-1}}^2+\|\phi[0]\|_{H^2}^2) \left(\|\p \phi\|_{L_t^2 L_x^\infty}^2 +1\right)T
\end{align*}
which follows from (\ref{rlambda}) and Proposition \ref{eng3}. Using (\ref{BA1}), we then  complete the proof.
\end{proof}

\subsection{Prove Theorem \ref{dyastric} using dispersive estimate}

In order to prove Theorem \ref{dyastric} on each spacetime slab $I_k\times {\Bbb R}^3$, we consider the coordinate change
$(t,x)\rightarrow(\la ( t-t_k), \la x)$. The interval $I_k$ becomes $I_*=[0, \tau_*]$ with $\tau_*\le \la^{1-8\ep_0}T$.
Under the rescaled coordinates the function $\phi$ and the metric $g$ become
\begin{align}\label{7.26.1}
\phi(\la^{-1}t+t_k, \la^{-1}x) \quad \mbox{and} \quad g(\phi(\la^{-1}t+t_k, \la^{-1}x))
\end{align}
which are still denoted as $\phi$ and $g$. We write
\begin{align}\label{7.26.2}
\bg=-d t^2+g_{ij}dx^j\otimes dx^j
\end{align}
for the corresponding Lorentzian metric under the rescaled coordinates. In view of (\ref{BA2}) and $|I_k|\le \la^{-8\ep_0} T$,
we have $\|\bp g\|_{L_{I_*}^1 L_x^\infty}\les \la^{-8\ep_0}$. Therefore, to prove Theorem \ref{dyastric}, it is equivalent to showing
the following Strichartz estimate on $I_*$ with respect to LP projection $P$ on the frequency domain $\{1/2\le |\xi|\le 2\}$.

\begin{theorem}\label{str2}
There is a large universal constant $C_0$ such that if, on the time interval $I_*=[0,\tau_*]$, there holds
\begin{equation}\label{smallas}
C_0 \|\bp g\|_{L_{I_*}^1 L_x^\infty}\le 1
\end{equation}
then for any solution $\psi$ of $\Box_{\bg} \psi=0$ on the time interval $I_*$ and $q>2$ sufficiently close to $2$, there holds
\begin{equation}\label{str1}
\|P \bp \psi\|_{L_{I_*}^q L_x^\infty}\les \|\psi[0]\|_{\dot{H}^1}.
\end{equation}
\end{theorem}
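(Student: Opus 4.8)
The plan is to reduce the fixed-frequency Strichartz estimate \eqref{str1} to the $L^1$--$L^\infty$ dispersive estimate \eqref{dispst1} via an abstract $\T\T^*$ argument. First I would introduce the one-parameter family of half-wave type operators associated to $\Box_\bg$: denoting by $U(t)$ the evolution sending data to solutions of $\Box_\bg\psi=0$, it suffices by Duhamel and the smallness \eqref{smallas} (which controls the error between $\Box_\bg$ and its frequency-localized version) to prove the frequency-$1$ estimate $\|P\bp U(t)\psi[0]\|_{L^q_{I_*}L^\infty_x}\les\|\psi[0]\|_{\dot H^1}$. Setting $T f:=P\bp U(\cdot)f$ acting on suitably normalized data, the claim is $\|T\|_{\dot H^1\to L^q_tL^\infty_x}\les 1$, which by duality is equivalent to $\|TT^*\|$ being bounded from $L^{q'}_tL^1_x\to L^q_tL^\infty_x$. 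The kernel of $TT^*$ is, schematically, $P\bp U(t)U^*(s)\bp P$ evaluated between times $t$ and $s$, and the group-like property $U(t)U^*(s)\approx U(t-s)$ (valid up to errors controlled by \eqref{smallas}, since the metric varies slowly on $I_*$) reduces matters to bounding $\|P\bp U(t-s)(\bp P g)\|_{L^\infty_x}$ in terms of $\|g\|_{L^1_x}$, i.e. to a dispersive decay estimate for the frequency-$1$ piece of the evolution.

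The second step is to massage this into precisely the form \eqref{dispst1}: one localizes the frequency-$1$ data further in physical space to unit balls, applies finite speed of propagation, and sums the $\ell^2$ in the physical localization against the $L^q_t$ decay profile $(1+|t-s|)^{-2/q}+d(t)$; the point is that $(1+|t|)^{-2/q}\in L^{q/2}_t$ for $q>2$ and the remainder $d$ satisfies $\|d\|_{L^{q/2}_t}\les 1$ by hypothesis, so Young's inequality in the time variable closes the $L^{q'}_t\to L^q_t$ mapping with the correct constant. This is exactly the "physical localization and reduction to $L^2$--$L^\infty$ decay" step outlined in the introduction, so I would invoke Section~3 and Appendix~B for the bookkeeping; the substantive analytic input is \eqref{dispst1}, whose proof in turn rests on the boundedness of the conformal energy (Theorem~\ref{thm8.1.1}).

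The main obstacle is not the $\T\T^*$ machinery, which is by now standard (going back to \cite{KCom} and developed in \cite{KRduke,KR1,Wangrough}), but rather establishing the dispersive estimate \eqref{dispst1} on the long time interval $I_*=[0,\tau_*]$ with $\tau_*\lesssim\la^{1-8\ep_0}T$ for the rough metric $\bg$. In other words, the entire difficulty is hidden in justifying that the group property $U(t)U^*(s)\approx U(t-s)$ holds with acceptable errors and that the decay \eqref{dispst1} genuinely has the stated rate; both are consequences of the geometric analysis of the null cone foliation and the conformal-energy bound that occupies Sections~\ref{sec_5}--\ref{sec_7}. At the level of the present reduction, I would only need to carefully track that the smallness condition \eqref{smallas} with a large enough $C_0$ absorbs (i) the commutator errors from replacing $\Box_\bg$ by its paradifferential/frequency-localized approximation and (ii) the discrepancy between $U(t)U^*(s)$ and $U(t-s)$, so that the $TT^*$ kernel estimate reduces cleanly to \eqref{dispst1}. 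The choice of $q>2$ close to $2$ enters precisely to make $(1+|t|)^{-2/q}$ time-integrable to the power $q/2$, which is the minimal requirement for the final Young-type estimate.
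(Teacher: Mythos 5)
Your proposal follows the same overall strategy as the paper: an abstract $\T\T^*$ argument reducing \eqref{str1} to the dispersive estimate of Theorem \ref{decayth}, with the final $L^{q'}_t\to L^q_t$ step closed by Hardy--Littlewood--Sobolev for the $(1+|t-s|)^{-2/q}$ piece and by Young/H\"older for the $d(|t-s|)$ piece using $\|d\|_{L^{q/2}}\les 1$. The difference is in how the composition $\T\T^*$ is actually computed, and here your sketch leans on a step that does not literally hold: for a time-dependent metric there is no group property $U(t)U^*(s)\approx U(t-s)$, and the paper never invokes one. Instead it computes the adjoint $\T^*$ by pairing against the solution $\eta$ of the backward inhomogeneous problem $\Box_\bg\eta=-Pf$, $\eta[t_*]=0$, via the bilinear energy identity for $Q[\psi,\eta]$; this yields $\T^*f=\eta[t_0]+R(f)$, where the remainder $R(f)$ comes from the deformation tensor $\pi^{(\bT)}=\p_t\bg$ and is the \emph{only} place \eqref{smallas} enters (there is no frequency-localization of the metric in this proof, contrary to your item (i)). The main term $\T\eta[t_0]$ is then written by Duhamel as a superposition $-\int_{t_0}^{t_*}P\p_t\psi(t;s,F(s))\,ds$ of forward solutions launched at each time $s$, to which the dispersive estimate applies directly with decay in $|t-s|$ — so the "group property" is bypassed entirely. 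A further structural point you omit is that the whole argument is a bootstrap on the operator norm $M=\sup_{t'}\|\T(t')\|$: both the absorption of $\T R(f)$ and the auxiliary energy bound \eqref{dpsi} for $\eta$ cost a factor of $M$, leading to $M^2\le C+\tfrac12 M^2$ once $C_0$ is large enough; and the spatial derivatives $P\p_i\psi$ require a separate direct duality argument with multiplier $Z=\p_i$, since the $\T\T^*$ scheme only delivers $P\p_t\psi$. None of this changes the verdict that your essential analytic input (the dispersive estimate) and your use of \eqref{smallas} as an absorption device are correct, but the group-property step as written would need to be replaced by the paper's duality computation to make the argument rigorous.
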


The proof of Theorem \ref{str2} crucially relies on the following decay estimate.

\begin{theorem}[Decay estimate]\label{decayth}
Let $0<\ep_0<\frac{s-2}{5}$ be a given number. There exists a large number $\La$ such that for
any $\la\ge \La$ and any solution $\psi$ of the equation
\begin{equation}\label{wave.4}
\Box_\bg \psi=0
\end{equation}
on the time interval $I_*=[0, \tau_*]$ with $\tau_*\le \la^{1-8\ep_0} T$ there is a function $d(t)$
satisfying
\begin{equation}\label{correccondi}
 \|d\|_{L^{\frac{q}{2}}}\les 1, \mbox{ for } q>2 \mbox{ sufficiently close to }2
 \end{equation}
such that for any $0\le t\le \tau_*$ there holds
\begin{equation}\label{decay}
 \|P \p_t \psi(t) \|_{L_x^\infty}\le \left(\frac{1}{{(1+t)}^{\frac{2}{q}}}+d(t)\right)
\left(\sum_{m=0}^3\|\p^m \psi(0)\|_{L_x^1}+\sum_{m=0}^2 \|\p^m \p_t \psi(0)\|_{L_x^1}\right).
\end{equation}
%where $\psi[0]:=(\psi(0), \p_t \psi(0))$ and $\|\p^m\psi[0]\|_{L^1}:=\|\p^m \psi(0)\|_{L^1}+\|\p^{m-1}\p_t \psi(0)\|_{L^1}$.
\end{theorem}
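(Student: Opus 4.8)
The plan is to reduce the decay estimate \eqref{decay} to the boundedness theorem for the conformal energy (Theorem \ref{thm8.1.1}), after first performing the normalization $\ti\psi = \Omega\psi$ described in the introduction. First I would set up the geometric background: introduce the optical function $u$ with $u=t$ on $\Ga^+$, the null cones $C_u$, the level sets $S_{t,u}=C_u\cap\Sigma_t$, and the null frame $\{L,\Lb,e_1,e_2\}$, and verify the injectivity/regularity hypotheses hold on the relevant domain $\D^+_0$ using the energy bounds from Proposition \ref{eng3} and the radius-of-injectivity lower bound $d_0$. Because $\psi$ is not assumed frequency-localized or compactly supported, I would use a partition of unity in $x$ on $\Sigma_{t_0}$ (with $t_0=1$) to split $\psi$ into pieces each supported in a ball $B_{1/2}$, reducing \eqref{decay} to the localized estimate \eqref{dispst2} with $d(t)$ arising from the overlap/tail terms; this is the "physical localization" step. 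One also must handle the region $t\le t_0$ separately, which is short-time and controlled directly by energy estimates and finite propagation speed.

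Next I would carry out the reduction of the pointwise decay \eqref{dispst2} to the conformal energy bound. The idea, standard since \cite{KCom}, is to foliate the causal future of $B_{1/2}\times\{t_0\}$ by the interior region $\{u\ge t/2\}$ and the exterior region $\{0\le u\le t/2\}$, and to estimate $|P\p_t\psi(t,x)|$ at a point using a combination of: (i) Sobolev embedding on $\Sigma_t$ together with the finite-band property of $P$ to pass from $L^\infty_x$ to $L^2_x$ of a few derivatives; (ii) in the interior, the weight $t^2$ in $\C[\psi]^\bi$ supplies the decay $(1+t)^{-2/q}$ (after interpolation, to convert the $t^2$-gain into the $t^{-2/q}$ with the mild loss $(1+t)^{2\ep}$); (iii) in the exterior, the $\tir$-weighted quantities in $\C[\psi]^\be$ plus the conformal flux on null cones give decay along the cones. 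The role of $d(t)$ with $\|d\|_{L^{q/2}}\les 1$ is to absorb the error terms generated by commuting $P$ with the non-flat geometry and by the inhomogeneous term on the right of \eqref{7.3.1}: these errors are not pointwise small in $t$ but have bounded $L^{q/2}_t$ norm, exactly because the bootstrap assumption \eqref{BA1} controls $\|\bp\phi\|_{L^2_t L^\infty_x}$ and its Littlewood–Paley refinement.

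I would then invoke Theorem \ref{thm8.1.1} to bound $\C[\ti\psi](t)\les (1+t)^{2\ep}\|\ti\psi[t_0]\|^2_{H^1\times L^2}$, convert back from $\ti\psi$ to $\psi$ (the conformal factor $\Omega=e^{-\sigma}$ is bounded above and below on $\D^+_0$ by the estimates for $\sigma$ in Lemma \ref{pre_sig}, so the two conformal energies are comparable), and translate the $H^1\times L^2$ data norm at $t_0=1$ into the $L^1$-based norm $\sum_m\|\p^m\psi(0)\|_{L^1_x}+\sum_m\|\p^m\p_t\psi(0)\|_{L^1_x}$ appearing in \eqref{decay}. This last translation uses: propagation of the energy backward/forward from $t_0$ to $0$ on the short interval (Lemma \ref{lem4.15}), plus the elementary bound $\|f\|_{H^1(B_{1/2})}\les \sum_{m=0}^{2}\|\p^m f\|_{L^1}$ which holds because the ball is bounded and extra $L^1$-derivatives dominate $L^2$-derivatives via Sobolev embedding; the extra derivative budget (up to $\p^3$ on $\psi(0)$) is precisely what is needed to run the Sobolev step (i) for $\p_t\psi$.

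The main obstacle I expect is establishing the boundedness theorem, Theorem \ref{thm8.1.1}, which the excerpt itself flags as dominating the paper: controlling the conformal energy requires the full machinery of the normalized metric $\ti\bg$, the hidden structure \eqref{llmu} on the mass aspect function, the delicate cancellation \eqref{cancel} yielding $L^2_u$ control of $\mu+2\sD\sigma$, and the decomposition of $\sn\sigma+\zeta$ into pieces with integrable $L^\infty(S_{t,u})$ bounds. Granting that theorem, the remaining difficulty in the present statement is bookkeeping of the error terms: showing that every commutator and every contribution of the inhomogeneous term in \eqref{7.3.1} can be packaged into a single function $d(t)$ with $\|d\|_{L^{q/2}_t}\les 1$, which forces $q$ to be taken sufficiently close to $2$ and uses the Littlewood–Paley-refined bootstrap norm in \eqref{BA1} rather than just $\|\bp\phi\|_{L^2_t L^\infty_x}$. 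A secondary technical point is the uniformity of $\La$: the decay estimate must hold for all $\la\ge\La$ with constants independent of $\la$, which is why the rescaling has been arranged so that $\|\bp g\|_{L^1_{I_*}L^\infty_x}\les\la^{-8\ep_0}$ is small — the smallness is what lets the geometric constructions (optical function, null frame, injectivity radius) be carried out on the whole of $I_*$ with controlled constants.
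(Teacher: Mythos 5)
Your proposal follows essentially the same route as the paper: localization by a partition of unity at $t=t_0$ (with the short-time regime $t\le t_0$ handled by Bernstein, energy estimates and $W^{2,1}\hookrightarrow L^2$), followed by a reduction of the pointwise bound to the conformal-energy boundedness theorem via the cutoffs $\underline{\varpi},\varpi$, Bernstein/Sobolev on $S_{t,u}$, and the decomposition $\p_t=L-N$. The one slight mis-attribution is the source of $d(t)$: it comes neither from the partition overlap nor from the inhomogeneous term of (\ref{7.3.1}) (the latter is absorbed inside the proof of Theorem \ref{BT}), but precisely from the commutators $[P,\varpi N^i]\p_i\psi$ and $[P,\varpi\sn]\psi$, whose $L^{q/2}_t$ bound is supplied by the Strichartz-type estimates on $\chih$, $\sn\log\bb$, $\tr\chi-2/\tir$ in Proposition \ref{ricco} together with (\ref{BA2}).
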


The proof of Theorem \ref{decayth} will be given in next section using the boundedness of conformal energy.
Assuming Theorem \ref{decayth}, we can prove Theorem \ref{str2} by running the $\T \T^*$ argument, see the appendix in
Section \ref{apd_tt}.

\section{\bf Reduce dispersive estimate to bounded conformal energy}\label{sec_4}

In this section we give the proof of Theorem \ref{decayth} on the interval $I_*=[0, \tau_*]$ under the
rescaled coordinates, where $\tau_*\le\la^{1-8\ep_0} T$. For each $t$ we set $\Sigma_t =\{t\}\times {\mathbb R}^3$
on which we use $g=g(t)$ to denote the Riemannian metric defined by (\ref{7.26.1}). Given $\rho>0$ and ${\bf p}\in \Sigma_t$ we use
$B_\rho({\bf p})$ and $B_\rho({\bf p}, g)$ to denote the Euclidean ball and the geodesic ball on $\Sigma_t$ with respect to $g$.
We can find $R>0$ such that
\begin{equation}\label{8.7.1}
B_R({\bf p}) \subset B_{\f12}({\bf p}, g(t)), \quad \forall {\bf p}\in \Sigma_t \mbox{ and } 0\le t\le \tau_*
\end{equation}
This is possible due to the ellipticity condition from (\ref{g9}). Now we fix $t_0=1$. We may take a sequence of Euclidean
balls $\{B_J\}$ of radius $R$ such that their union covers ${\mathbb R}^3$ and any ball in this collection intersect
at most $20$ other balls. Let $\{\varrho_J\}$ be a partition of unity subordinate to the cover $\{B_J\}$. We may assume that
$\sum_{m=1}^3|\p^m\varrho_{J}|_{L_x^\infty}\le C_1$ uniformly in $J$. By using this partition of unity and a standard argument
we can reduce the proof of Theorem \ref{decayth} to establishing the following dispersive estimate for the solution of
$\Box_{\bg} \psi =0$ with $\psi[t_0]:=(\psi(t_0), \p_t \psi(t_0))$ supported on a Euclidean ball of radius $R$.

\begin{proposition}\label{lcestimate}
There is a large constant $\La$ such that for any $\la\ge \La$ and
any solution $\psi$ of
\begin{equation*}
\Box_\bg \psi=0
\end{equation*}
on the time interval $[0, \tau_*]$ with $\tau_*\le \la^{1-8\ep_0} T$ and with $\psi[t_0]$ supported in the Euclidean
ball $B_{R}$  of radius $R$, there exists a function $d(t)$ satisfying
\begin{equation}\label{correc}
 \|d\|_{L^\frac{q}{2}[0,\tau_*]}\les 1 \quad  \mbox{for  } q>2 \mbox{ sufficiently close to }2
 \end{equation}
 such that for all $t_0\le t\le \tau_*$,
 \begin{equation}\label{decaycp}
 \|P \p_t \psi(t)\|_{L_x^\infty}\les \left(\frac{1}{{(1+|t-t_0|)}^{\frac{2}{q}}}+d(t)\right)(\|\psi[t_0]\|_{H^1}+\|\psi(t_0)\|_{L^2}).
 \end{equation}
\end{proposition}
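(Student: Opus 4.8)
\textbf{Proof proposal for Proposition \ref{lcestimate}.}
The plan is to pass from the rescaled wave equation $\Box_\bg\psi=0$ to the conformally changed equation \eqref{7.3.1} for $\ti\psi=\Omega\psi$, establish the decay for the conformal field using the conformal energy bound of Theorem \ref{thm8.1.1}, and then transfer back. First I would set up the geometry: since $\psi[t_0]$ is supported in the Euclidean ball $B_R$, the ellipticity \eqref{8.7.1} puts it inside the geodesic ball $B_{1/2}(\cdot, g(t_0))$, so by finite speed of propagation $\psi$ on $[t_0,\tau_*]$ is supported in the causal future $\D_0^+$ of $\cup_{0\le u\le t_0}S_{t_0,u}$, where the optical function $u$, the null cones $C_u$, the null frame $\{L,\Lb,e_1,e_2\}$ and the new metric $\ti\bg=\Omega^{-2}\bg$ are all available. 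On the complementary region (where $u>t/2$, i.e. near the time axis) one uses the interior conformal energy $\C[\psi]^\bi$, and on $0\le u\le t/2$ the exterior piece $\C[\psi]^\be$; both are controlled by Theorem \ref{thm8.1.1} in terms of $\|\psi[t_0]\|_{H^1\times L^2}+\|\psi(t_0)\|_{L^2}$, which is exactly the right-hand side in \eqref{decaycp}.

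The core step is the $L^2$–$L^\infty$ conversion. On the interior region $\{u\ge t/2\}$ one expects, after rescaling by $t$, a Klainerman–Sobolev-type inequality on $\Sigma_t$: $\|P\p_t\psi(t)\|_{L^\infty}\les t^{-3/2}(\C[\psi]^\bi(t))^{1/2}\cdot(\text{lower order})$, but $t^{-3/2}$ is stronger than the claimed $(1+|t-t_0|)^{-2/q}$, so the interior region is comfortably handled — the genuine obstruction is the exterior/wave-zone region. There, foliating by the null cones $C_u$ and the spheres $S_{t,u}$, one writes $\p_t\psi$ in the null frame, uses a Sobolev embedding on the two-spheres $S_{t,u}$ (with constants controlled uniformly in $t,u$ thanks to the estimates on the connection coefficients $\tr\chi,\chih,\zeta$ from Section \ref{sec_5}) together with a transport estimate along $L$ to bring in $L(\tir\psi)$ and the conformal flux $\|\tir^{(m-2)/2}L(\tir\psi)\|_{L^2(C_u)}$. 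This is where the function $d(t)$ is born: the exceptional set in $u$ on which the $L^\infty(S_{t,u})$ bound degrades (coming from the log-loss in the Calderón–Zygmund bound for $\zeta,\sn\sigma$ and from the weaker $L_u^2$ control of $\mu+2\sD\sigma$, cf. \eqref{cancel}) contributes a term whose $L^{q/2}_t$ norm is $\les 1$ by \eqref{correccondi}, while the good part produces the $(1+|t-t_0|)^{-2/q}$ decay from the $\tir\sim t$ weights in $\C[\psi]^\be$.

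Concretely I would proceed: (1) reduce to $\psi$ supported in $\D_0^+$ and split $\Sigma_t$ into $\{u\ge t/2\}$ and $\{0\le u\le t/2\}$; (2) on the interior, prove a weighted Sobolev inequality on $\Sigma_t$ using the vector fields $t\bp$ and the volume comparison from the injectivity radius bound $d_0$, and dominate $\|P\p_t\psi\|_{L^\infty}$ by $t^{-1}(\C[\psi]^\bi(t))^{1/2}$ plus harmless terms absorbed into $d(t)$; (3) on the exterior, for each $t$ write $\|P\p_t\psi(t)\|_{L^\infty(S_{t,u})}$ via a 2-sphere Sobolev estimate controlled by $\tir^{-1}(\C[\psi]^\be)^{1/2}$ and flux terms, then integrate/take sup in $u$, isolating the bad-$u$ contribution as $d(t)$; (4) invoke Theorem \ref{thm8.1.1} to replace the conformal energies by $(1+t)^{2\ep}\|\psi[t_0]\|_{H^1\times L^2}^2$ and note $(1+t)^{\ep}t^{-1}\les(1+|t-t_0|)^{-2/q}$ and $(1+t)^\ep\tir^{-1}\les(1+|t-t_0|)^{-2/q}$ for $q$ close enough to $2$ and $\ep$ small; (5) transfer from $\ti\psi,\ti\bg$ back to $\psi,\bg$, checking that the conformal factor $\Omega=e^{-\sigma}$ and its controlled derivatives (Lemma \ref{pre_sig}) do not spoil the estimate, with the inhomogeneity $-\Omega^2(\Box_\bg\sigma+\bd\sigma\cdot\bd\sigma)\ti\psi$ in \eqref{7.3.1} being absorbed precisely because of the structure $\mu+2\sD\sigma$. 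The main obstacle I anticipate is step (3): extracting sharp decay from the wave-zone while the only quantities one genuinely controls along null hypersurfaces are the "tilde" ones, so the choice of how to split $\sn\sigma+\zeta$ (Proposition \ref{dcmpsig}) and how to package the resulting $L^\infty(S_{t,u})$ defect into a single $L^{q/2}_t$-bounded $d(t)$ is the delicate heart of the argument.
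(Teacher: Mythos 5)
Your overall architecture (restrict to $\D_0^+$ by finite speed of propagation, split $\Sigma_t$ into the interior $\{u\ge t/2\}$ and the wave zone, feed in Theorem \ref{thm8.1.1}, and collect the non-decaying contributions into an $L^{q/2}_t$-bounded $d(t)$) matches the paper, but the mechanism you propose for the $L^2$--$L^\infty$ conversion has a genuine gap. The conformal energy is a purely first-order quantity, so neither a Klainerman--Sobolev inequality on $\Sigma_t$ (your step (2)) nor a two-sphere Sobolev estimate applied directly to $P\p_t\psi$ on $S_{t,u}$ (your step (3)) is available: both would require angular or spatial derivatives of $\p_t\psi$, i.e.\ second derivatives of $\psi$, which $\C[\psi]$ does not control. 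The point you are missing is that $P$ is a Littlewood--Paley projection at frequency $\sim 1$, so Bernstein's inequality gives $\|Pf\|_{L_x^\infty}\les\|f\|_{L_x^2}$ for free. The paper uses exactly this: the interior piece $P((\underline{\varpi}-\varpi)\p_t\psi)$ and the exterior piece $P(\varpi L\psi)$ are estimated in one line by Bernstein plus the $t^2$, resp.\ $\tir^2$, weights in $\C[\psi]$, yielding $(1+t)^{-1+\ep}$ decay with no Sobolev embedding at all. Your claimed $t^{-3/2}$ interior rate is neither attainable from $\C[\psi]^\bi$ nor needed.

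The only genuinely delicate term is $P(\varpi N\psi)$, and there the correct move is to commute the coefficient through the projection: $\|P(\varpi N^l\p_l\psi)\|_{L^\infty}\le\|[P,\varpi N^l]\p_l\psi\|_{L^\infty}+\|\varpi\ti P\psi\|_{L^\infty}$. The commutator is bounded by $\|\p(\varpi N)\|_{L^\infty}\|\p\psi\|_{L^2}$ (Lemma \ref{comh1}), and this — not the Calder\'on--Zygmund log-loss for $\zeta$ or the $L^2_u$ control of $\mu+2\sD\sigma$, which live entirely inside the proof of Theorem \ref{BT} — is where $d(t)$ is born: $\p N$ is the second fundamental form $\theta$, so $d(t)$ is controlled by the $L_t^{q/2}L_x^\infty$ bounds on $\chih$, $\tr\chi-2/\tir$, $\sn\log\bb$, $k$ from (\ref{aricc}). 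The two-sphere Sobolev inequality (Lemma \ref{basic1}(ii)) is then applied only to the zeroth-order quantity $\varpi\ti P\psi$, whose angular derivative is traded back to $P(\varpi\sn\psi)$ plus another commutator, and the needed $H^1(\Sigma_t)$ norms come from the finite band property and the $|\psi|^2$, $\tir^2|\sn\psi|^2$ terms of $\C[\psi]^\be$. Finally, no conformal change is needed in this proposition: Theorem \ref{thm8.1.1} is a black box here, and your steps involving $\ti\psi$, $\Omega$ and the inhomogeneity of (\ref{7.3.1}) belong to its proof, not to this reduction.
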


In order to show  Theorem \ref{decayth} using Proposition \ref{lcestimate}, we need the following energy estimates.

\begin{lemma}\label{geoeg}
Under the bootstrap assumption (\ref{BA1}), there holds, for any solution $\psi$ of (\ref{wave.4}), the standard energy estimate
\begin{equation}\label{geoge}
\|\psi[t]\|_{\dot{H}^1}\les \| \psi[0]\|_{\dot{H}^1},
\end{equation}
and for $0<t\le t_0$,
\begin{equation*}
\|\psi(t)\|_{L_x^2}\les  \|\psi[0]\|_{\dot{H}^1}+\|\psi(0)\|_{L_x^2}.
\end{equation*}
\end{lemma}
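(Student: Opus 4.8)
The plan is to prove Lemma~\ref{geoeg} by a direct application of the energy identity~(\ref{qs}) to the scalar $\psi$, exactly as was done for $\phi_\la$ in the proof of Proposition~\ref{eng3}, and then to combine the $\dot H^1$ bound with a Poincar\'e-type / Duhamel argument over the short interval $[0,t_0]$ to control the $L_x^2$ norm. Since $\psi$ solves $\Box_\bg\psi=0$, the term $\int\Box_\bg\psi\,\bd_\bT\psi$ in~(\ref{qs}) vanishes, and the remaining bulk term $\frac12\int\pi^{(\bT)}_{\a\b}Q^{\a\b}$ is handled by observing $\pi^{(\bT)}_{\a\b}=\p_t\bg_{\a\b}=g'(\phi)\p_t\phi\,dx^idx^j$, so it is pointwise bounded by $\|\bp\phi\|_{L_x^\infty}\,|\bp\psi|^2$. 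This is precisely the mechanism behind~(\ref{eng7.10}); applying it to $f=\psi$ gives
\begin{equation*}
\|\bp\psi(t)\|_{L_x^2}\les\|\bp\psi(0)\|_{L_x^2}+\int_0^t\|\Box_\bg\psi(t')\|_{L_x^2}\,dt'=\|\bp\psi(0)\|_{L_x^2},
\end{equation*}
which, after noting that $\|\p_t\psi\|_{L_x^2}+\|\p\psi\|_{L_x^2}$ is comparable to $\|\psi[t]\|_{\dot H^1}$ using the ellipticity in~(\ref{g9}), yields~(\ref{geoge}). In fact this last statement is already recorded at the end of Lemma~\ref{lem4.15}, so one may simply invoke it.

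For the $L_x^2$ estimate, the idea is that on the bounded time interval $[0,t_0]$ with $t_0=1$ one can recover $\|\psi(t)\|_{L_x^2}$ from $\|\psi(0)\|_{L_x^2}$ by integrating $\p_t\psi$ in time: writing $\psi(t)=\psi(0)+\int_0^t\p_t\psi(t')\,dt'$ and taking the $L_x^2$ norm gives
\begin{equation*}
\|\psi(t)\|_{L_x^2}\le\|\psi(0)\|_{L_x^2}+\int_0^t\|\p_t\psi(t')\|_{L_x^2}\,dt'\le\|\psi(0)\|_{L_x^2}+t_0\sup_{0\le t'\le t_0}\|\p_t\psi(t')\|_{L_x^2}.
\end{equation*}
Since $\|\p_t\psi(t')\|_{L_x^2}\le\|\psi[t']\|_{\dot H^1}\les\|\psi[0]\|_{\dot H^1}$ by the already-established~(\ref{geoge}), and $t_0=1$, this gives $\|\psi(t)\|_{L_x^2}\les\|\psi[0]\|_{\dot H^1}+\|\psi(0)\|_{L_x^2}$ for all $0<t\le t_0$, which is the claimed bound.

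There is essentially no serious obstacle here: the lemma is a soft consequence of the basic energy identity and the fundamental theorem of calculus in time, with the only inputs being the coercivity/ellipticity of $g$ from~(\ref{g9}) and the smallness of $\|\bp\phi\|_{L^2_{[0,T]}L_x^\infty}$ from~(\ref{BA1}) needed to run Gronwall in~(\ref{eng7.10}). The one minor point to be careful about is that~(\ref{geoge}) is stated for the original (unrescaled) coordinates, but the bootstrap assumption~(\ref{BA1}) is scaling-compatible in the relevant norm $\|\bp\phi\|_{L_t^2L_x^\infty}$ — this is exactly the reason the rescaling in~(\ref{7.26.1})–(\ref{smallas}) preserves the smallness condition — so the same argument applies verbatim on $[0,\tau_*]$ after rescaling; for the statement of Lemma~\ref{geoeg} as written (on $[0, t_0]$ with $t_0 = 1$) no rescaling is even needed and one works directly with~(\ref{BA1}).
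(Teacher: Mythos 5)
Your proof is correct and follows exactly the same route as the paper: the $\dot H^1$ bound is an immediate application of (\ref{eng7.10}) to $f=\psi$ using $\Box_\bg\psi=0$, and the $L_x^2$ bound over $[0,t_0]$ comes from the fundamental theorem of calculus in $t$ together with the just-established $\dot H^1$ bound. The paper's proof is the same two-line argument, and your discussion of the rescaling compatibility is a harmless clarification rather than a deviation.
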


\begin{proof}
(\ref{geoge}) follows from (\ref{eng7.10}). The other inequality then can be obtained by using the fundamental theorem of calculus.
\end{proof}

\begin{proof}[Proof of Theorem \ref{decayth} assuming Proposition \ref{lcestimate}]
For $0<t<t_0$, we may use the Bernstein inequality of LP projections to obtain
\begin{equation*} %\label{12.04.1}
\|P \p_t \psi(t)\|_{L_x^\infty}\les \|\p_t \psi(t)\|_{L_x^2}.
\end{equation*}
which, together with Lemma \ref{geoeg} and the Sobolev embedding $W^{2,1} \hookrightarrow L^2$,
implies the desired inequality. So we may assume $t_0\le t\le \tau_*$. By using the partition of unity $\{\varrho_J\}$ we can write
$
\psi =\sum_J \psi_J,
$
where $\psi_J$ is the solution of $\Box_{\bg} \psi_J=0$ satisfying the initial conditions
\begin{equation*}
\psi_J(t_0)=\varrho_J \psi(t_0),\qquad  \p_t \psi_J(t_0)=\varrho_J  \p_t \psi(t_0).
\end{equation*}
By using (\ref{decaycp}) in Proposition \ref{lcestimate}, we have
\begin{equation*} %\label{12.04.2}
 \|P \p_t \psi_J(t)\|_{L_x^\infty}\les \left(\frac{1}{{(1+|t-t_0|)}^{\frac{2}{q}}}+d(t)\right)(\|\psi_J[t_0]\|_{H^1}+\|\psi_J(t_0)\|_{L^2}).
\end{equation*}
In view of Lemma \ref{geoeg} and the Sobolev embedding $W^{2,1}\hookrightarrow L^2$, we obtain
\begin{equation*} %\label{12.04.3}
\|P \p_t \psi_J(t)\|_{L_x^\infty}\les \left(\frac{1}{{(1+|t-t_0|)}^{\frac{2}{q}}}+d(t)\right)
\left(\sum_{m=0}^3 \|\p^m \psi_J(0)\|_{L_x^1} + \sum_{m=0}^2 \|\p^m \p_t \psi_J(0)\|_{L_x^1}\right).
\end{equation*}
Summing over $J$ and using the fact that any ball $B_J$ intersects with at most $20$ other balls, we can conclude
the desired estimate.
\end{proof}

Next we will reduce the proof of Proposition \ref{lcestimate} to controlling certain weighted energy. To define such energy, we first
introduce the necessary geometric setup.

Recall that $\Sigma_t = \{t\}\times {\mathbb R}^3$ and $\bT := \p_t$ is the future directed unit normal to $\Sigma_t$
in $\M:=({\mathbb R}^{1+3}, \bg)$. The second fundamental form of $\Sigma_t$ is defined by
$$
k(X, Y) = -\bg (\bd_X \bT, Y)
$$
for any vector fields $X, Y$ tangent to $\Sigma_t$. The trace of $k$ is denoted by $\Tr \, k= g^{ij} k_{ij}$.
Let $\Ga^+$ be the portion of the time axis in $[0,\tau_*]$. We define the optical function $u$ to be the solution of eikonal equation
\begin{equation}\label{optial}
\bg^{\a\b}\p_\a u\p_\b u=0
\end{equation}
with $u=t$ on $\Ga^+$. Geometrically the optical function $u$ can be constructed as follows.
Let ${\bf p}$ be any point on $\Ga^+$ with time coordinate $t$ and let $L_\omega, \omega\in {\Bbb S}^2$,
be the family of null vectors in $T_{\bf p} \M$. For each $\omega\in {\mathbb S}^2$, there exists a unique null
geodesic $\Upsilon_\omega(t)$ initiating from ${\bf p}$ with
\begin{equation}\label{iniv}
\left.\frac{d}{dt}\Upsilon_\omega\right|_{\bf p} = L_\omega.
\end{equation}
We define $u =t$ on the ruled surface formed by this family of null geodesics. This function $u$ is then a solution of
(\ref{optial}) with $u= t$ on $\Ga_+$, see \cite{CK}.

For each $0\le u\le \tau_*$, the level set $C_u$ of $u$ is the outgoing null cones
with vertex on $\Ga^+$ at $t=u$. Let $S_{t,u}=C_u\cap \Sigma_t$ which is a smooth surface. We introduce the two solid cones
$$
\D^+_0=\bigcup_{\{t\in [t_0,\tau_*],0\le u\le t\}} S_{t,u} \quad \mbox{and}\quad
\D^+=\bigcup_{\{t\in [0,\tau_*], 0\le u\le t\}} S_{t,u}.
$$

\begin{figure}[ht]
\centering
\includegraphics[width = 0.49\textwidth, height= 1.6in]{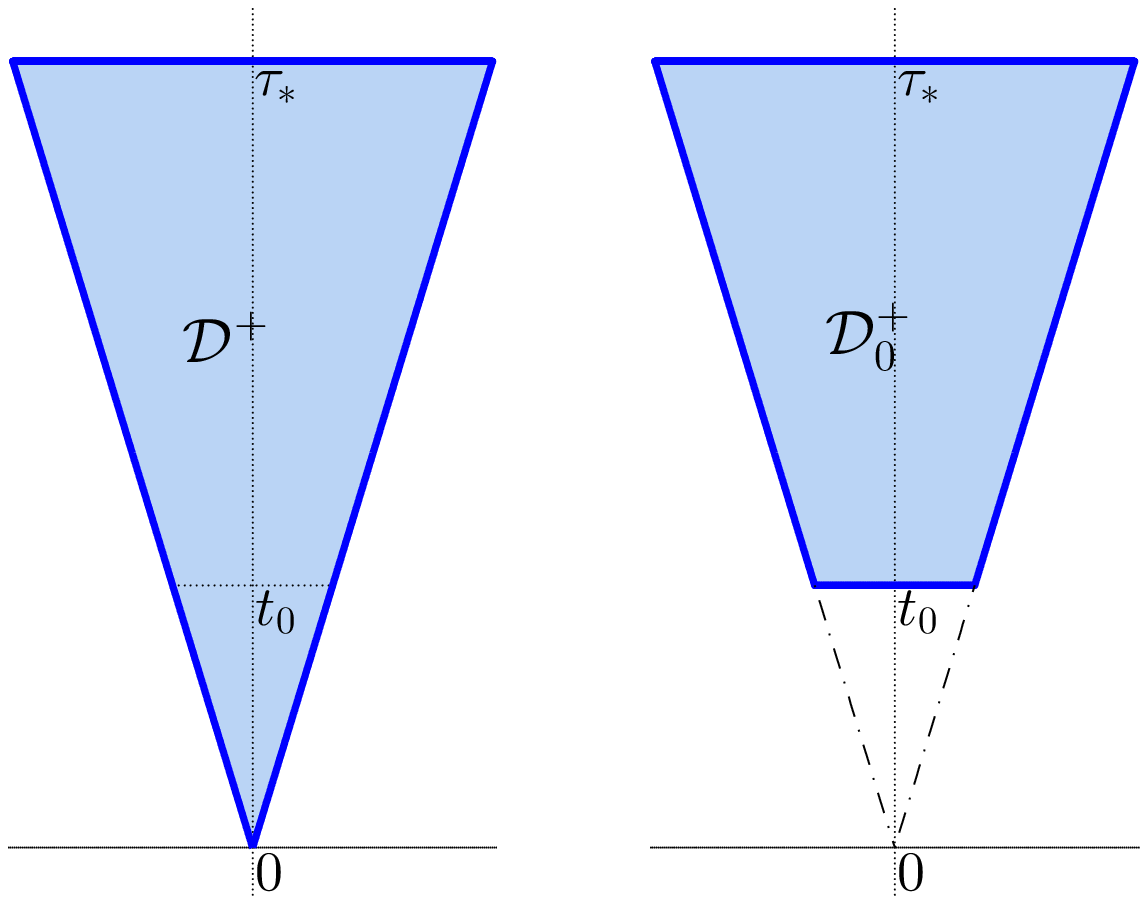}
  \includegraphics[width = 0.46\textwidth, height= 1.6in]{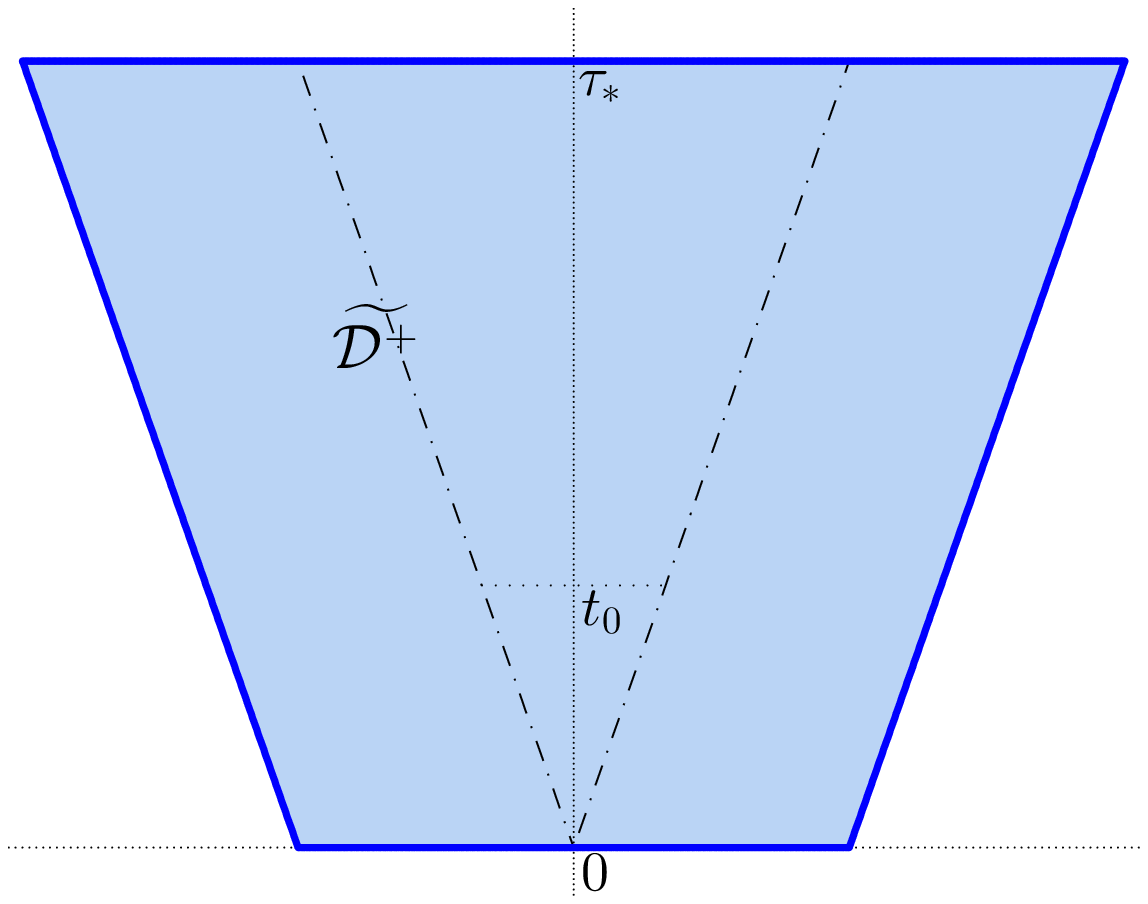}
  %\vskip -0.4cm
  \caption{Illustration of $\D^+$, $\D_0^+$ and $\widetilde{\D_+}$}\label{fig0}
\end{figure}

Next we extend the foliation of spacetime by null hypersurfaces to a neighborhood of  $\D^+$ in $[0, \tau_*]\times {\mathbb R}^3$.
We denote by ${\bf o}$ the point on $\Ga^+$ where $t=u=0$. Recall that $\tau_*\le \la^{1-8\ep_0} T$ and $0<T\le d_0$, where $d_0>0$ is defined in Section 2.
Let $v_*=\frac{4}{5} \tau_*$. We can guarantee that there is a neighborhood ${\mathscr O}$ of ${\bf o}$ contained in the geodesic ball
$B_{\tau_*}({\bf o}, g)$ of radius $\tau_*$ on $\{t=0\}$ such that it can be foliated by the level sets $S_v$ of a function $v$
taking all values in $[0, v_*]$ with $v({\bf o})=0$ and with each $S_v$, for $v>0$, diffeomorphic to ${\mathbb S}^2$;
see Proposition \ref{exten} below for various nice properties.
Let $a^{-1}=|\nab v|_g$ be the lapse function on $\{t=0\}$. Then, in ${\mathscr O}$ the metric $g$ can be written as
\begin{equation}\label{radial}
ds^2=a^{2} dv^2+\ga_{AB} d\omega^A d\omega^B,
\end{equation}
where $\omega^A$, $A=1,2$, denote the angular variables on ${\mathbb S}^2$. For each $0<v<v_*$, at any point ${\bf p}\in S_v$,
parametrized by $\omega\in {\mathbb S}^2$, there is a unique outward unit normal $N_\omega$.
We set $L_\omega:=N_\omega+\bT$, which is a null vector in $T_{\bf p}\M$. Initiating from ${\bf p}$,
there is a unique null geodesic $\Upsilon_\omega(t)$ satisfying (\ref{iniv}). We then define $u=-v$ on such ruled surface
formed by the family of null geodesics $\Upsilon_\omega, \omega\in {\Bbb S}^2$ for each $0<v\le v_*$. This gives an extension
of $u$ satisfying (\ref{optial}) in the extended region. Similar as above, we can introduce null cones $C_u$ and $S_{t, u}$
for $-v_*\le u<0$ and $t\in [0, \tau_*]$.
The causal future of $\cup_{0\le v\le v_*}S_v$ can be foliated by the null hypersurfaces $C_u$, initiating
from $S_{v}$ with  $u=-v$ at $t=0$, and those initiating from the time axis $\Ga^+$ with $u=t$. We set
$$
\widetilde{\D^+}=\bigcup_{\{t\in [0,\tau_*], -v_*\le u\le t\}} S_{t,u}.
$$
The local coordinates $(\omega^1, \omega^2)$, together with functions $t,u$ define a complete system of coordinates on $\widetilde{\D^+}$.\begin{footnote}{This  can be justified  by running the same continuity  argument for proving \cite[Theorem 1.2]{Wang10}, based on  (\ref{comp2}), Proposition \ref{ricpr}, (\ref{bb_4}) and (\ref{BA1}).}\end{footnote}  We set $\tir = t - u$.
In $\widetilde{\D^+}$, we define the function of null lapse $\bb$ by
\begin{equation}\label{bb2}
\bb^{-1}=|\nab u|_g.
\end{equation}
It is clear that
\begin{equation}\label{db}
\bb=a \quad \mbox{ when } t=0.
\end{equation}
Moreover, in view of (\ref{optial}), we have $\bb^{-1} =\bT(u)$. Since $u=t$ on $\Ga^+$, we have $\bb=1$ on $\Ga^+$.

In Lemma \ref{6.17.1} we will show that $|\bb-1|<\frac{1}{4}$ on $\widetilde{\D^+}$. Using this fact
we can prove that $B_{\f12}({\bf p}, g(t_0))\subset\D^+_0\cap\Sigma_{t_0}$, where ${\bf p}$ denotes the
point on $\Ga_+$ with time coordinate $t_0$. To see this, we introduce geodesic distance $\ti s$ to ${\bf p}$
on $\D^+_0\cap\Sigma_{t_0}$. Let $\sl{g}$ be the induced metric of $g$ on the level sets of $\ti s$
which are diffeomorphic to ${\Bbb S}^2$. Then on $\D^+_0\cap\Sigma_{t_0}$, we can write $g(t_0)$ as
$d {\ti s}^2 +\sl{g}_{AB} d\omega^A d\omega^B$. In view of (\ref{bb2}), this gives
\begin{equation}\label{7.12.1}
-\frac{\p u}{\p \ti s}\le \bb^{-1}.
\end{equation}
Thus for any point in $B_{\f12}({\bf p}, g(t_0))$ with coordinates $(t, u, \omega)$, by using (\ref{7.12.1}) we have
$$
t_0-u=\int_0^\f12 \bb^{-1} d{\ti s}< \frac{2}{3}
$$
This implies that $B_{\f12}({\bf p}, g(t_0)) \subset \cup_{t_0-\frac{2}{3}< u\le t_0} S_{t_0, u}$ which is in
the interior of $\D_0^+\cap \Sigma_{t_0}$. Consequently, in view of (\ref{8.7.1}), we have
$$
B_R({\bf p}) \subset \D_0^+\cap \Sigma_{t_0}.
$$

%On the other hand, with the radial foliation induced by the optical function $u$, we can write the line element of $g$ as
%\begin{equation*}
%\bb^2 d u^2+\ga_{AB} d\omega^A d\omega^B.
%\end{equation*}
%we can similarly derive in view of $|\bb-1|<\frac{1}{4}$, $S

We denote by $N$ the outward unit normal of $S_{t,u}$ in $\Sigma_t$.
Let $\ga$ be the metric on $S_{t,u}$ induced from $g$ and let $\sn$ be the corresponding covariant differentiation on $S_{t,u}$.
The second fundamental form of $S_{t,u}$ in $\Sigma_t$ for $0\le t\le\tau_*$ is given by
\begin{equation*}
\theta(X, Y)=\l \nab_X N, Y\r
\end{equation*}
for any vector fields $X, Y$ tangent to $S_{t,u}$. The trace of $\theta$ is defined by $\tr\theta=\ga^{AB} \theta_{AB}$, and the traceless part of $\theta$ is denoted by $\hat \theta$.

Let $(e_A)_{A=1,2}$ be an orthonormal frame on $S_{t, u}$. Then (see \cite{CK,KRduke})
\begin{equation}\label{thetan}
\sn_N N=-\sn\log \bb, \qquad  \sn_A N_B=\theta_{AB}.
\end{equation}
As in \cite{CK,KRduke}, we introduce $L=\p_t +N$ and $\Lb=\p_t-N$. Then $\{L, \Lb\}$ form a null pair with $\bg(L,L)=\bg(\Lb,\Lb)=0$
and $\bg(L,\Lb)=-2$. Under the null frame $(e_A)_{A=1,2}$, $e_3=\Lb$ and $e_4=L$, the null second fundamental forms $\chi$ and $\chib$,
the torsion $\zeta$, and the Ricci coefficient $\zb$ of the foliation $S_{t,u}$ are defined by
\begin{equation}\label{ricc_def}
\begin{split}
\chi_{AB}=\bg(\bd_A e_4, e_B), &\qquad \chib_{AB}=\bg (\bd_A e_3, e_B)\\
\zeta_A=\f12 \bg(\bd_3 e_4, e_A), &\qquad \zb_A=\f12 \bg (\bd_4 e_3, e_A).
\end{split}
\end{equation}
Then there hold
\begin{align}\label{6.7con}
\begin{split}
\bd_A e_4=\chi_{AB} e_B-k_{AN} e_4, &\qquad  \bd_A e_3=\chib_{AB}e_B+k_{AN} e_3, \\
\bd_4 e_4=-k_{NN} e_4,   &\qquad  \bd_4 e_3=2\zb_A e_A+  k_{NN} e_3, \\
\bd_3 e_4=2\zeta_A e_A+k_{NN} e_4, & \qquad \bd_4 e_A=\sn_L e_A+\zb_A e_4, \\
\bd_B e_A=\sn_B e_A+\f12 \chi_{AB} e_3+\f12 \chib_{AB} e_4& \qquad \bd_3 e_3 = -2 (\sn_A \log \bb) e_A -k_{NN} e_3
\end{split}
\end{align}
and
\begin{equation}\label{3.19.1}
\chi_{AB}=\theta_{AB}-k_{AB},\quad\zb_A=-k_{AN}, \quad  \zeta=\sn \log \bb+k_{AN}.
\end{equation}

Relative to any coordinate, the Ricci curvature of $\bg$ can be decomposed as
\begin{equation}\label{ricc6.7.1}
\bR_{\a\b}=-\f12 \Box_\bg (\bg_{\a\b})+\f12 (\bd_\a V_\b+\bd_\b V_\a)+S_{\a\b}
\end{equation}
where $V$ is a 1-form with
\begin{equation}\label{ricc6.7.2}
V_\ga=(\Ga_{\a\b}^\eta-{\hat \Ga}_{\a\b}^\eta)\bg^{\a\b}\bg_{\ga\eta},
\end{equation}
with $\hat\Ga $ being the Christoffel symbol of a smooth reference metric $\hat \bg$.
For convenience, $\hat\bg$ is chosen to be the Minkowski metric $\bm$. Under this choice, the term
$S_{\a\b}$ is quadratic in $\bp \bg$. We set $V_4=V(e_4)$.

The existence of a $v$-foliation with the desired properties in a neighborhood of ${\bf o}$ on $\{t=0\}$ is guaranteed by the following result.

\begin{proposition}\label{exten}
On $\{t=0\}$ there exists a function $v$ with $0\le v\le v_*=\frac{4}{5}\tau_*$ such that each level set $S_v$ is diffeomorphic to ${\Bbb S}^2$ and
\begin{equation}\label{amc}
{\emph\tr} \, \theta + k_{NN}=\frac{2}{av} + {\emph\Tr} \, k-V_4, \qquad a({\bf o})=1.
\end{equation}
Let $\ga^{(0)}$ be the canonical metric on ${\Bbb S}^2$ and $\ga$ the induced metric of $g$ on $S_v$. Let $\cga=v^{-2} \ga$
and $\ckk\ga=v^2\ga^{(0)}$. Then on $\cup_{0\le v\le v_*}S_v$ there hold
\begin{align}
&|a-1|\les \la^{-4 \ep_0} <\frac{1}{4},\quad \|v^{\f12}(\hat\theta, \sn \log a) \|_{L_\omega^{q_*}}\les \la^{-\f12},\label{a_3} \\
&\|\sn \log a\|_{L_v^2 L^\infty_{S_v}} +\| \chih\|_{L_v^2 L^\infty_{S_v}}\les\la^{-\f12}, \label{a_4}\\
&|\cga-\ga^{(0)}|+\|\p(\cga-\ga^{(0)})\|_{L_\omega^{q_*}(S_v)} \les \la^{-4\ep_0}, \label{a_5}\\
&\|v^\f12\sn (\log \sqrt{|\ga|}-\log \sqrt{|\ckk\ga|})\|_{L_\omega^{q*}}\les \la^{-\f12}, \label{4a_6}
\end{align}
where $0<1-\frac{2}{q_*}< s-2$. Moreover
\begin{equation}\label{w8.1.1}
|a-1|\les \la^{-4\ep_0},\qquad \|v^{-\f12}(a-1)\|_{L^\infty}\les \la^{-\f12}, \qquad \sqrt{|\ga|}\approx v^2
\end{equation}
and there holds the inclusion
\begin{equation}\label{w7.13.1}
\cup_{0\le v\le v_*} S_v\subset B_{\tau_*}({\bf o}).
\end{equation}
\end{proposition}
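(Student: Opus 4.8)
The plan is to realize the $v$-foliation as a small perturbation of the $g$-geodesic sphere foliation centered at ${\bf o}$ and to construct it by a continuity argument in $v$, in the spirit of the foliation construction behind \cite[Theorem 1.2]{Wang10}. I first rewrite (\ref{amc}) as a quasilinear second order equation for the single unknown $v$: since $|\nab v|_g=a^{-1}$ and the mean curvature of a level set of $v$ equals $\tr\theta=\div_g(a\,\nab v)$, the relation (\ref{amc}) reads
\[\div_g(a\,\nab v)=\frac{2}{av}+\Tr\,k-V_4-k_{NN},\qquad a=|\nab v|_g^{-1},\]
on ${\mathscr O}$, with the vertex condition $v({\bf o})=0$, $a({\bf o})=1$. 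Its flat model ($\bg=\bm$, no $k$, no $V$) is $\Delta v=2/v$, whose rotationally symmetric solution vanishing at the origin is the Euclidean distance; this dictates the ansatz $v=\rho+w$ with $\rho$ the $g$-geodesic distance to ${\bf o}$ and $w$ a perturbation vanishing at ${\bf o}$, which turns the equation into $L_v w=F$, where $L_v$ is a small perturbation of $\Delta_{{\Bbb R}^3}-2\rho^{-2}$ (degenerate at ${\bf o}$) and $F$ is built from $\Ric(g)$ and from $\Tr\,k$, $V_4$, $k_{NN}$.

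The only information on $\bg$ over $\Sigma_0$ is the low regularity produced by Lemma \ref{lem4.15} and Proposition \ref{eng3} together with the localization (\ref{BA2}): in the rescaled coordinates $g-\bm$ is $\les\la^{-4\ep_0}$ in $C^0$, the $L^2_x$-type and flux norms of $\bp\phi$ on $\Sigma_0$ are $\les\la^{-\f12}$, and $\|\Ric(g)\|_{L^2(\Sigma_0)}\les\la^{-\f12}$. Correspondingly $F$ is no smoother than $\bp\phi|_{\Sigma_0}\in H^{s-1}$ and $\Ric(g)\in L^2$, and its trace to the $2$-surfaces lies in $L^{q_*}(S_v)$ precisely because $0<1-\frac{2}{q_*}<s-2$. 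The continuity argument then extends the foliation from $\cup_{0\le v\le v_1}S_v$ to a slightly larger $v$-range, each leaf staying diffeomorphic to ${\Bbb S}^2$, and strictly improves the bootstrapped bounds (\ref{a_3})--(\ref{w8.1.1}) by two mechanisms: (i) the transport equations along $N$ for $\cga=v^{-2}\ga$, for $\tr\theta$ (Raychaudhuri) and for $\log a$, all with $2/v$ as leading coefficient, which when integrated from the vertex against the small forcing give the round asymptotics $\cga\to\gaz$, $a\to1$, $\sqrt{|\ga|}\approx v^2$ and the $L^2_v$ bounds in (\ref{a_4}); (ii) Hodge/Codazzi estimates on each $S_v$ for $\hat\theta$, $\sn\log a$ and $\chih=\hat\theta-\hat k$, available with Calder\'on--Zygmund constants uniform in $v$ because $\cga$ is $\la^{-4\ep_0}$-close to $\gaz$. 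The size $\la^{-4\ep_0}$ of $g-\bm$ and of $F$ closes the non-scale-invariant estimates, while the sharper $\la^{-\f12}$ smallness of $\bp\phi$ on $\Sigma_0$ feeds the scale-invariant ones, $\|v^{\f12}(\hat\theta,\sn\log a)\|_{L^{q_*}_\omega}\les\la^{-\f12}$, $\|\chih\|_{L^2_vL^\infty_{S_v}}\les\la^{-\f12}$ and the corresponding bound for $\sn\big(\log\sqrt{|\ga|}-\log\sqrt{|\ckk\ga|}\big)$. Hence the set of admissible $v_1$ is open and closed and equals $[0,v_*]$.

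For the inclusion (\ref{w7.13.1}): since $a^{-1}=|\nab v|_g$ and $|a-1|\les\la^{-4\ep_0}$, the $g$-geodesic distance from ${\bf o}$ to any point of $S_v$ is at most $v(1+O(\la^{-4\ep_0}))$, so for $v\le v_*=\frac{4}{5}\tau_*$ and $\la$ large it is $<\tau_*$; and since $\tau_*\le\la^{1-8\ep_0}T\le\la d_0$ is below the injectivity radius of $g$ on the rescaled $\Sigma_0$, $B_{\tau_*}({\bf o})$ is a genuine geodesic ball, so the construction never leaves the region where $g$ is controlled.

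The main obstacle is the genuine roughness of the background on $\Sigma_0$: $F$ (through $\sn V_4$, $\sn k$) and the curvature $\Ric(g)$ are only $H^{s-2}$/$L^2$ with $s-2>0$ small, so classical smooth elliptic theory is unavailable and $\Ric(g)$ cannot even be restricted to the leaves; the whole argument must be carried out in the borderline $L^{q_*}$ (with $q_*>2$ just above $2$) Calder\'on--Zygmund framework on the $2$-spheres, converting restricted Ricci terms into the Gauss curvature of $S_v$ via the Gauss equation, and absorbing the vertex degeneracy ($2/v$ in the transport equations, $\rho^{-2}$ in $L_v$) by working throughout with $\cga=v^{-2}\ga$ and scale-invariant norms -- which is precisely the analytic core that the continuity scheme of \cite{Wang10} is built to handle.
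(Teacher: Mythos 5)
Your overall architecture (local existence near the vertex, then a continuity/bootstrap argument in $v$ closing scale-invariant estimates via transport equations for $\cga$, $\hat\theta$ and Hodge/Codazzi systems on the leaves, with the inclusion (\ref{w7.13.1}) obtained by integrating $ds=a\,dv$ along the foliation) matches the paper's Appendix C. But there are two genuine gaps.

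First, the existence mechanism you propose is flawed. Writing (\ref{amc}) as $\div_g(a\,\nab v)=\tfrac{2}{av}+\Tr k-V_4-k_{NN}$ with $a=|\nab v|_g^{-1}$ does not yield a nondegenerate elliptic equation for $v$: the principal operator $\div_g(|\nab v|_g^{-1}\nab v)$ is the prescribed-mean-curvature (``1-Laplacian'') operator, which depends only on the foliation by level sets and is therefore degenerate in the normal direction --- the second normal derivative of the perturbation $w$ drops out of the linearization, so $L_v$ is \emph{not} a small perturbation of $\Delta_{{\mathbb R}^3}-2\rho^{-2}$. This degeneracy is precisely why the paper does not solve an elliptic problem but instead invokes Szeftel's Nash--Moser theorem for a geometric quasilinear \emph{parabolic} equation (\cite{JS1,JS}) for local existence of the foliation near ${\bf o}$, and then devotes the appendix only to the a priori estimates needed for continuation.

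Second, you attribute the control of $\log a$ to a transport equation along $N$ with leading coefficient $2/v$; no such equation exists. The lapse is governed by the parabolic equation $a^2(\sD\log a+|\sn\log a|^2)=\tfrac{2}{v}\p_v\log a+a^2\mathfrak{E}$ (equation (\ref{aa_1})), where $\mathfrak{E}$ contains $\p_v(V_4-\Tr k)$ and $R_{NN}$, i.e.\ full second derivatives of $\phi$ that are only controlled in flux/$L^2$ norms. Extracting $\|v^{\f12}\sn\log a\|_{L^{q_*}_\omega}\les\la^{-\f12}$ and $\|\sn\log a\|_{L_v^2L^\infty_{S_v}}\les\la^{-\f12}$ from this requires the $L^2$ energy estimates of Lemmas \ref{ellp_4.26}--\ref{ellp_5.1} followed by the $L^{q_*}$ parabolic theory built on the geometric Littlewood--Paley projections $\slP_\mu$ on the leaves, including the commutator estimate for $[\slP_\mu,\p_v]$ (Lemma \ref{comm_4.26}) and the product estimates of Lemma \ref{prodan}; these bounds then feed the Codazzi system (\ref{a_2}) and the transport equation (\ref{a_11}) to produce the stated estimates for $\hat\theta$ and $\chih$. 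This parabolic/LP machinery is the analytic core of the proof and is absent from your sketch, so the scale-invariant estimates in (\ref{a_3})--(\ref{4a_6}) are not actually established by the argument as written.
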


The existence of the $v$- foliation  for $v \in [0,\ep)$ with a small $\ep>0$  can be proved by Nash-Moser implicit function theorem (see \cite{JS1,JS}).
In Appendix C, we will prove the above estimates. During the proof, we will simultaneously show that the foliation can be extended to $v_*$
by an argument of continuity.
%In view of (\ref{radial}) and $a<\frac{5}{4}$ in (\ref{a_3}) at $t=0$, $\cup_{0\le v\le v_*} S_v\subset B_{\tau_*}\subset B_{\la d_0}$. This gives (\ref{w7.13.1}).

In order to give the definition of our conformal energy, we take two smooth cut-off functions $\underline{\varpi}$ and $\varpi$
depending only on two variables $t,u$; for $t>0$ they are defined in a manner such that
\begin{equation*}
\underline{\varpi}=\left\{\begin{array}{lll}
1  & \mbox{ on }\,  0\le u\le t\\
0  & \mbox { on } u \le -\frac{t}{4} ,
\end{array}\right.
\quad \mbox{and} \quad
 \varpi=\left\{\begin{array} {lll}
 1  & \mbox { on } 0\le\frac{u}{t}\le \f12 \\
 0  & \mbox{ if } \frac{u}{t}\ge \frac{3}{4} \mbox { or } u\le  -\frac{t}{4}.
\end{array}\right.
\end{equation*}
We may define $\varpi$ and $\underline{\varpi}$ such that they coincide in the region $\cup_{\{t\in [t_0, \tau_*],-\frac{t}{4}<u\le 0 \}}S_{t,u}$.

\begin{definition}\label{cfen}
For any scalar function $\psi$ vanishing outside $\D^+$,  we define the conformal energy $\C[\psi]$ of $\psi$ by
 \begin{equation*}
\C[\psi](t)=\C[\psi]^\bi(t)+\C[\psi]^\be(t),
 \end{equation*}
 where
 \begin{align}\label{confen}
 &\C[\psi]^\bi(t)=\int_{\Sigma_t} (\underline{\varpi}-\varpi)t^2\left(|\bd \psi|^2+|\tir^{-1} \psi|^2\right) d\mu_g,\\
 &\C[\psi]^\be(t)=\int_{\Sigma_t}\varpi \left(\tir^2|\bd_L \psi|^2+\tir^2|\sn \psi|^2+|\psi|^2\right) d\mu_g.
 \end{align}
\end{definition}

For the conformal energy we have the following boundedness result whose proof forms the core part of this paper and is given through Sections 5--7.

\begin{theorem}[Boundedness theorem]\label{BT}
Let (\ref{BA1}) and (\ref{BA2}) hold. Let $\psi$ be any solution of $\Box_\bg \psi=0$ on $I_*=[0, \tau_*]$ with $\psi[t_0]$ supported in
$B_R\subset \D^+\cap \Sigma_{t_0}$.  Then, for $t\in [t_0, \tau_*]$, the conformal energy of $\psi$ satisfies the estimate
$$
\C[\psi](t)\les(1+t)^{2\ep}\left(\|\psi[t_0]\|_{\dot{H}^1(\Sigma)}^2+\|\psi(t_0)\|_{L^2(\Sigma)}^2\right),
$$
where $\ep>0$ is an arbitrarily small number.
\end{theorem}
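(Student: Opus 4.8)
\textbf{Proof strategy for Theorem \ref{BT}.}
The plan is to run a weighted multiplier argument on the conformally rescaled wave equation \eqref{7.3.1} for $\ti\psi=\Omega\psi$, using multipliers of the type $\tir^m L$ (adapted from \cite{DaRod1}) inside the exterior region $\varpi=1$ and the standard Morawetz-type/scaling multiplier in the interior region where $\cir\varpi-\varpi\ne 0$. First I would set up the energy-flux identities obtained by contracting the energy-momentum tensor $Q[\ti\psi]$ with $\tir^m L$ and integrating over a slab $\{t_0\le t'\le t\}\cap\widetilde{\D^+}$; this produces the conformal energy $\C[\psi](t)$ on $\Sigma_t$, a positive conformal flux term $\|\tir^{(m-2)/2}L(\tir\psi)\|_{L^2(C_u)}$ on the null cones, the data term on $\Sigma_{t_0}$ (which is $\les \|\psi[t_0]\|_{\dot H^1}^2+\|\psi(t_0)\|_{L^2}^2$ by Lemma \ref{geoeg} and support considerations), and a collection of spacetime error integrals coming from (i) the deformation tensor $\pi^{(\tir^m L)}$, hence the Ricci coefficients $\tr\chi,\chih,\zeta$ of the null frame, and (ii) the inhomogeneous right side $-\Omega^2(\Box_\bg\sigma+\bd^\mu\sigma\,\bd_\mu\sigma)\ti\psi$ of \eqref{7.3.1}. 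After the pretreatment described around \eqref{7.3.2}, the dangerous error terms reduce to integrals of the schematic form $\int \tir^{m-1}\psi\big[(\mu+2\sD\sigma,|\sn\sigma|^2)L(\tir\psi)+\tir\,\sn\widetilde{\tr\chi}\,\sn(\Omega\psi)\big]$, i.e. \eqref{B_1} in Section \ref{sec_7}.

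The second step is to feed in the geometric estimates. From Section \ref{sec_5} I would use the $L^\infty$ control on $\widetilde{\tr\chi}$ and the $L^1_tL^\infty_x$ control on $\chih$ and $\zeta$, together with the decomposition of $\sn\sigma+\zeta$ from Proposition \ref{dcmpsig} (which splits it into two pieces with integrable $L^\infty(S_{t,u})$ bounds, using the $\div$-$\curl$ system \eqref{cod_1} augmented by $\sD\sigma$ and the hidden structure $\mu=\sn(\bg\c\bp\bg)+l.o.t.$). From Section \ref{sec_6} I would use the key bound on $\mu+2\sD\sigma$: by \eqref{cancel}, $L(\mu+2\sD\sigma)+\tr\chi(\mu+2\sD\sigma)=\Box_\bg(V_L)+l.o.t.$, and since \eqref{wave1} cancels the top-order term in $\Box_\bg(V_L)$, integrating the transport equation along $L$ gives an $L^2_u$ control, without loss, of the $L^\infty_tL^{2+}(C_u)$ norm of $\mu+2\sD\sigma$. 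This is weaker than what holds for $\sn\widetilde{\tr\chi}$, but it is compensated by pairing against the conformal flux $\|\tir^{(m-2)/2}L(\tir\psi)\|_{L^2}$ from Proposition \ref{e1}, via Cauchy-Schwarz on the null cones followed by a co-area / Gronwall bookkeeping in $t$. The terms $\tir\,\sn\widetilde{\tr\chi}\,\sn(\Omega\psi)$ are handled by integrating by parts on $S_{t,u}$ to move a derivative off $\sn\widetilde{\tr\chi}$ (using \eqref{ldz_2}), turning them into terms controllable by $\|\widetilde{\tr\chi}\|_{L^\infty}$, $\C[\psi](t)$ and the flux.

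The third step is to close the estimate. Collecting everything, I expect an inequality of the shape
\[
\C[\psi](t)+(\text{fluxes})\les \C[\psi](t_0)+\int_{t_0}^t \big(h(t')+d(t')\big)\,\C[\psi](t')\,dt'
\]
for some $h\in L^1_t$ with $\int h\les 1$ (collecting $\|\chih,\zeta\|_{L^1_tL^\infty}$, $\|\sn\log\bb\|$, etc.) and a borderline logarithmically-divergent contribution that accounts for the factor $(1+t)^{2\ep}$ rather than a uniform bound. Gronwall's inequality then yields $\C[\psi](t)\les(1+t)^{2\ep}\big(\|\psi[t_0]\|_{\dot H^1}^2+\|\psi(t_0)\|_{L^2}^2\big)$. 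One should also verify that $\psi$ (or rather $\ti\psi$) remains supported inside $\widetilde{\D^+}$ by finite speed of propagation, so that the cut-offs $\varpi,\cir\varpi$ and the null geometry are all well-defined along the evolution.

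\textbf{Main obstacle.} The crux is controlling the error integrals in \eqref{B_1} involving $\mu+2\sD\sigma$ and $|\sn\sigma|^2$: because the background metric $\bg(\phi)$ is only $H^{2+\ep}$ and $\ti\bg$ is rough in the $\Lb$-direction (no good bound on $\Lb\sigma$), one genuinely loses the $L^\infty_u$ control and is forced into an $L^2_u$-on-null-cones estimate. Making this compatible with the multiplier argument requires the precise choice of the weight $\tir^m L$ (so that no extra $\Lb\sigma$ error is created), the hidden cancellation \eqref{cancel} in the transport equation for $\mu+2\sD\sigma$, and the splitting of $\sn\sigma+\zeta$ in Proposition \ref{dcmpsig}; balancing these against the conformal flux of Proposition \ref{e1} is the delicate heart of the proof.
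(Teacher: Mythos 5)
Your setup is essentially the paper's: the multiplier $\tir^m L$ ($m=1,2$) applied to the conformal equation (\ref{7.3.1}), the error terms (\ref{B_1}), and the geometric inputs (the hidden structure of $\mu+2\sD\sigma$, the decomposition of $\sn\sigma+\zeta$, pairing the weak $L^2_u$ bound on $\ckk\mu$ against the conformal flux) all match Sections 5--7. The gap is in your closing step. A Gronwall inequality of the form $\C[\psi](t)\les \C[\psi](t_0)+\int_{t_0}^t(h+d)\C[\psi]$ cannot be produced by the $\tir^m L$ multipliers, because those multipliers only generate the \emph{exterior} conformal energy $\C[\psi]^{(e)}$ (the $\tir^2|L(v_t^{1/2}\psi)|^2+\tir^2|\sn\ti\psi|^2$ terms); they never put $t^2|\Lb\psi|^2$ on the left-hand side, so the interior piece $\C[\psi]^{(i)}(t)=\int_{\{u\ge t/2\}}t^2(|\bd\psi|^2+|\tir^{-1}\psi|^2)$ is simply not controlled by such an inequality. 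The classical way to get it would be the Morawetz vector field $K=\f12(u^2\Lb+\ub^2L)$, but that reintroduces $\Lb$-weights and hence the uncontrollable $\Lb\sigma$ — precisely what the choice $\tir^m L$ was designed to avoid. Your passing reference to "the standard Morawetz-type/scaling multiplier in the interior region" does not resolve this tension.

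What the paper actually does for the interior is prove genuine $t^{-2}$ decay of the \emph{standard} energy flux through the spatial-null hypersurfaces $\widetilde\Sigma_{\tau,R}$ (Proposition \ref{dis2}), so that $\C[\psi]^{(i)}(t)\les t^2\cdot(1+t)^{-2}\C_0^2$. This requires the full Dafermos--Rodnianski hierarchy: the $m=2$ estimate gives the integrated bound $\int_{D_{t_0,R}^{\tau_*}}\tir|L(v_t^{1/2}\psi)|^2\les\C_0^2$ (Proposition \ref{intg}); a pigeonhole argument extracts a dyadic sequence of good slices $\tau_n$ (Proposition \ref{disp}); and the $m=1$ estimate on each slab $[\tau_n,\tau_{n+1}]$, iterated twice through (\ref{bd_4.21}), converts this into $(1+\tau)^{-2}$ decay. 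Two further ingredients you omit are also load-bearing: the integrated local energy estimate near the axis with the multiplier $fN$ (Proposition \ref{int_est3}), which is needed to control the inner-boundary terms $\int_{S_{t,t-R'}}$ and the quantity $\B_{\tau_1}$ appearing in every application of (\ref{MA2}); and the bootstrap assumptions (\ref{baen})--(\ref{lowba}) on $\C[\psi]$ and $\|\psi\|_{L^2_uL^2_\omega}$, which replace Gronwall as the closing mechanism. The $(1+t)^{2\ep}$ loss then comes from the logarithmic factor $\Phi_2(\tau)=(\ln(2+\tau))^{1/2}$ in the Hardy-type estimate for the lower-order term (Lemma \ref{lotlm}) and its propagation into (\ref{con2}), not from a borderline Gronwall kernel.
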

In what follows,  the small growth indicated in Theorem \ref{BT} will be proved to be harmless.
%This can be clearly seen in the proof of
We adapt \cite[Proposition 11]{Wangrough} to prove Proposition \ref{lcestimate} by assuming Theorem \ref{BT}.

\subsection{Reduction to bounded conformal energy}

In this section we give the proof of Proposition \ref{lcestimate} using Theorem \ref{BT} concerning the boundedness of conformal energy.

\begin{lemma}\label{comh1}
Let $b\ge 2$. Then for any scalar functions $f$ and $\psi$ there holds
\begin{equation*}
\|[P,f]\p \psi\|_{W^{1,b}}+\|[P, f]\p \psi\|_{L_x^\infty}\les \|\p f\|_{L_x^\infty}\|\p \psi\|_{L_x^2},
\end{equation*}
where  $P$ denote the Littlewood-Paley projection with frequency $1$.
\end{lemma}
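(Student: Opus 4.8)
The plan is to estimate the commutator $[P,f]\p\psi$ by exploiting the smoothing kernel of $P$. Write $P$ as convolution with a Schwartz kernel $K$ of unit frequency, so that
\begin{equation*}
[P,f]\p\psi(x)=\int K(x-y)\bigl(f(y)-f(x)\bigr)\p\psi(y)\,dy.
\end{equation*}
The standard move is to absorb the difference $f(y)-f(x)$: using $f(y)-f(x)=-\int_0^1 (x-y)\c\nab f(x+s(y-x))\,ds$ and $\p\psi=\p(\cdot)$, one integration by parts in $y$ transfers the derivative off $\psi$ and onto the kernel and onto $f$. This produces a representation of $[P,f]\p\psi$ as a sum of terms of the schematic form $\wt K_1 * (\p f\,\psi)$ type expressions and $\wt K_2 *(f(y)-f(x))\psi$ type expressions, where $\wt K_1,\wt K_2$ are again Schwartz kernels of unit frequency scale (in particular $L^1$ in $x$ with uniformly bounded norm, and with the $(x-y)$ weight still available). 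The upshot is a pointwise bound
\begin{equation*}
\bigl|[P,f]\p\psi(x)\bigr|\les \|\p f\|_{L_x^\infty}\int |\wt K(x-y)|\,|\psi(y)|\,dy,
\end{equation*}
where $|\wt K(z)|\les (1+|z|)^{-M}$ for large $M$. Then Young's inequality gives $\|[P,f]\p\psi\|_{L_x^p}\les \|\p f\|_{L_x^\infty}\|\psi\|_{L_x^p}$ for every $p$, in particular $p=b$ and $p=\infty$. To convert $\|\psi\|_{L_x^p}$ into $\|\p\psi\|_{L_x^2}$ one does not want Sobolev embedding directly on $\psi$; instead one performs the integration by parts more carefully so that the derivative lands back on $\psi$ in exactly one of the kernels: concretely, keep one factor $\p\psi$ intact and only use the mean-value form on $f$ when it is paired against a localized kernel whose $x$-support can be controlled. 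Since $P$ has frequency $1$, Bernstein's inequality then upgrades $L^2$ control: $\|\wt K * (\p f\,\psi)\|_{L^\infty}\les \|\wt K*(\p f\,\psi)\|_{L^2}\les \|\p f\|_{L^\infty}\|\psi\|_{L^2}$, and $\|\psi\|_{L^2}$ is controlled by $\|\p\psi\|_{L^2}$ only up to the zero-frequency part, which is killed because $\wt K$ (being derived from $P=P_1$) annihilates constants — so effectively $\|\wt K*(\p f\,\psi)\|_{L^2}\les\|\p f\|_{L^\infty}\|\p\psi\|_{L^2}$ by a Poincaré-type argument at unit scale. The $W^{1,b}$ bound is obtained identically after noting $\p_x$ falls either on $\wt K$ (harmless, another Schwartz kernel) or on $f$ (giving $\p^2 f$, but this term can be re-integrated by parts once more to return to $\p f$), so no second derivative of $f$ actually appears.

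I would organize the argument as follows. First, reduce to $b<\infty$ and record the kernel representation of $[P,f]\p\psi$. Second, perform the single integration by parts, splitting into (i) the "good" term where $\p$ has moved onto $\p\psi$, which is directly $\wt K*(\ell^\infty\text{-bounded coefficient}\times\p\psi)$ and handled by Young plus Bernstein, and (ii) the "difference" terms carrying $f(y)-f(x)$ against a kernel with an extra decaying weight, where one uses $|f(y)-f(x)|\les |x-y|\|\p f\|_{L^\infty}$ and the weighted kernel $|z\wt K(z)|\les(1+|z|)^{-M}$ to land on $\wt K * (\|\p f\|_{L^\infty}|\psi|)$. Third, to pass from $\|\psi\|_{L^p}$ to $\|\p\psi\|_{L^2}$, use that the relevant kernels are $\p_x$ of Schwartz functions (hence have vanishing integral), so $\wt K*\psi = \wt{\wt K}*\p\psi$ after one more integration by parts, giving the $L^2$ factor $\|\p\psi\|_{L^2}$; then Bernstein at frequency $1$ upgrades $L^2\to L^\infty$ and $L^2\to L^b$ for $b\ge 2$. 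Fourth, for the $W^{1,b}$ piece, differentiate once more and re-run the same dichotomy, re-integrating by parts whenever $\p^2 f$ would appear.

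The main obstacle is the interplay between the two requirements: we must pay a derivative to turn $f(y)-f(x)$ into $\|\p f\|_{L^\infty}|x-y|$, yet simultaneously keep (or recover) a full derivative on $\psi$ so that the right side is $\|\p\psi\|_{L^2}$ and not $\|\psi\|_{L^2}$, and do all this without ever producing $\p^2 f$ or $\p^2\psi$. The bookkeeping of which kernel retains the weight $(x-y)$, which one retains a derivative, and verifying that each intermediate kernel still has vanishing integral (so the Poincaré/Bernstein steps are legitimate), is the delicate part; everything else is standard Littlewood–Paley and Young/Bernstein estimates. I expect this to be essentially \cite[Corollary 1]{Wangrough} applied with a little extra care to track the two norms simultaneously, so the write-up can be kept short by citing that commutator estimate and only spelling out the additional $\psi\mapsto\p\psi$ integration by parts.
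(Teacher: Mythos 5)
The paper does not prove this lemma at all: its ``proof'' is the single line ``This is \cite[Lemma 14]{Wangrough}'', so you are supplying an argument the paper omits. Your kernel representation plus the mean--value bound $|f(y)-f(x)|\le |x-y|\,\|\p f\|_{L^\infty}$ is indeed the standard mechanism behind such commutator estimates, and the conclusion is correct; but your write-up is considerably more complicated than it needs to be, and the extra complications are where the risk of error lies. There is no need to integrate by parts in $y$ to move the derivative off $\p\psi$, and consequently no need for the subsequent ``Poincar\'e at unit scale'' step to convert $\|\psi\|_{L^2}$ back into $\|\p\psi\|_{L^2}$: the function being commuted against is already $\p\psi$, so you simply keep it intact, apply the mean-value bound to the factor $f(y)-f(x)$ only, and use Young's (or Cauchy--Schwarz for the $L^\infty$ case) with the weighted Schwartz kernel $|z|\,|K(z)|$, which lies in $L^q$ for every $q\ge 1$; this gives $\|[P,f]\p\psi\|_{L^b}+\|[P,f]\p\psi\|_{L^\infty}\les\|\p f\|_{L^\infty}\|\p\psi\|_{L^2}$ for all $b\ge 2$ in one stroke. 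For the $W^{1,b}$ part, differentiating the representation produces only two terms: one where $\p_x$ hits the kernel (again a Schwartz kernel paired with the same mean-value bound) and one equal to $-\p f(x)\,P\p\psi(x)$, which is handled by Bernstein at frequency $1$; the $x$-derivative never sees $f(y)$, so the $\p^2 f$ terms you propose to remove by further integrations by parts never arise in the first place. With these simplifications your approach closes cleanly; as written, the bookkeeping you flag as ``the delicate part'' is an artifact of an unnecessary detour rather than a genuine difficulty of the lemma.
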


\begin{proof}
This is \cite[Lemma 14]{Wangrough}.
\end{proof}

\begin{lemma}\label{err1111}
Let (\ref{BA2}) hold. Then for $q>2$ close to $2$ there holds
\begin{equation}\label{aricc}
\left\|\varpi\left(\chih, \sn \log \bb, \tr\chi-2/\tir\right)\right\|_{L^\frac{q}{2}[0,\tau_*] L_x^\infty}
\les\la^{ \frac{2}{q}-1-4\ep_0(\frac{4}{q}-1)}.
\end{equation}
\end{lemma}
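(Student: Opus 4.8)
The plan is to obtain $L^\infty_x$ control over the null-frame connection coefficients $\chih$, $\sn\log\bb$, and $\tr\chi - 2/\tir$ on each slice $\Sigma_t$, and then integrate the resulting pointwise bound against $t$. The natural starting point is the initial hypersurface $\{t=0\}$, where Proposition \ref{exten} already gives such control: in particular \eqref{a_4} gives $\|\sn\log a\|_{L^2_v L^\infty_{S_v}}+\|\chih\|_{L^2_v L^\infty_{S_v}}\les \la^{-1/2}$, and \eqref{amc}, \eqref{a_3}, \eqref{w8.1.1} together pin down $\tr\theta+k_{NN}-2/(av)$; combined with \eqref{3.19.1} (which relates $\chi$, $\theta$, $k$ and converts $\sn\log a$ on $\{t=0\}$ to $\sn\log\bb$ via \eqref{db}) this gives the desired bounds on the data for the null cones $C_u$. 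The cutoff $\varpi$ restricts attention to $0\le u/t\le 3/4$, i.e. cones whose vertex is on $\Ga^+$ or close to it, so $\tir = t-u \approx t$ on the support of $\varpi$; this is what will let us trade the $v$-weights on $\{t=0\}$ for powers of $t$.

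First I would transport these bounds along $L$ using the null structure equations. For $\tr\chi$ one uses \eqref{s0}, rewriting it via the curvature decomposition \eqref{ricc6.7.1}–\eqref{ricc6.7.2} so that the dangerous $\bR_{LL}$ term is paired with $\tr\chi$ as $\tr\chi + V_L$ (as discussed around \eqref{dc_322}); the transport equation for $\tr\chi - 2/\tir$ then has right-hand side controlled, after using \eqref{BA1}/\eqref{BA2} for the $\|\bp g\|_{L^1_tL^\infty_x}$-type terms and the energy/flux estimates of Section 2 for the curvature terms, by quantities that are $O(\la^{-4\ep_0})$ in the appropriate norm. For $\chih$ one uses the corresponding transport (Raychaudhuri-type) equation; for $\sn\log\bb$ one uses $\sn_N N = -\sn\log\bb$ from \eqref{thetan} together with the transport equation for $\zeta$ (via \eqref{3.19.1}, $\zeta = \sn\log\bb + k_{AN}$) and the $L^1_tL^\infty_x$ control on $\zeta$ and $k$. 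Integrating the transport equations along $C_u$ from $t=0$ (or from the vertex on $\Ga^+$, where $\bb=1$, $\tr\chi-2/\tir\to 0$) and using Gronwall, one gets for each fixed $t$ a bound of the schematic form
\begin{equation*}
\left\|\varpi\big(\chih, \sn\log\bb, \tr\chi-2/\tir\big)(t)\right\|_{L^\infty_x} \les f(t),
\end{equation*}
where $f$ collects the data term $\la^{-1/2} t^{-1/2}$ (the $v$-weight traded for $t$, using $v\approx \tir\approx t$) plus the accumulated error, which is controlled by $\|\bp g\|_{L^1_tL^\infty_x}\les \la^{-8\ep_0}$ and the $L^2_t$-type norms of the curvature. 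This should be a repackaging of the Ricci-coefficient estimates established in Section \ref{sec_5}, so in the actual write-up I would simply cite those.

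Then I would take the $L^{q/2}_t$ norm. The leading contribution $\la^{-1/2}\,t^{-1/2}$ is in $L^{q/2}_t[0,\tau_*]$ for $q$ close to $2$ (since $q/2$ is close to $1$, but strictly larger, $t^{-1/2}$ is borderline integrable — one gets $\|t^{-1/2}\|_{L^{q/2}[0,\tau_*]}\approx \tau_*^{2/q - 1}$), and with $\tau_*\le \la^{1-8\ep_0}T$ this produces $\la^{-1/2}\la^{(1-8\ep_0)(2/q-1)}= \la^{2/q - 1 - 4\ep_0(4/q - 2) - 1/2 + \cdots}$; bookkeeping the exponents, together with the $\la^{-4\ep_0}$ gain per unit from \eqref{a_3}/\eqref{a_5} and the $\la^{-8\ep_0}$ from \eqref{smallas}-type bounds, should land on the claimed exponent $\frac{2}{q} - 1 - 4\ep_0(\frac{4}{q}-1)$. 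The error terms from the transport integration are lower order in $\la$ by construction (each carries an extra $\la^{-4\ep_0}$ or better) and are absorbed.

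The main obstacle is the bookkeeping of the $\la$-powers and $t$-weights so that the $t^{-1/2}$ singularity from the data on $\{t=0\}$ integrates to exactly the stated power of $\la$ — this is where the restriction "$q>2$ close to $2$" and the precise value of $\tau_*$ are used, and it requires that the $v$-weight $v^{1/2}$ in \eqref{a_3}–\eqref{4a_6} be faithfully converted to $\tir^{1/2}\approx t^{1/2}$ on the support of $\varpi$ and tracked through the transport estimates. A secondary difficulty is ensuring the transport-equation error terms involving curvature are genuinely controlled in $L^1_t L^\infty_x$ (not merely $L^2_t$); this is handled by combining the flux bounds of Lemma \ref{lem:flux} with the frequency-decomposition/Littlewood–Paley estimates so that the $\la^{\delta_0}$-weighted square-summability in \eqref{BA2} upgrades $L^2_t$ control into $L^1_t L^\infty_x$ control with a favorable power of $\la$.
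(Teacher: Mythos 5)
Your treatment of $\tr\chi-2/\tir$ is essentially the paper's: pair $\tr\chi$ with $V_4$, transport $z=\tr\chi+V_4-\frac{2}{\tir}$ via (\ref{lz}), pick up the data term $\la^{-\f12}t^{-\f12}$ from Proposition \ref{exten} on the cones with $u<0$ (it vanishes on cones from $\Ga^+$), and integrate in $L_t^{q/2}$ using $\tau_*\les\la^{1-8\ep_0}$; that bookkeeping does land on the stated exponent. The genuine gap is in your plan for $\chih$ and $\sn\log\bb$. You propose to get their $L_x^\infty$ bounds by integrating the transport equations (\ref{s2}) and (\ref{tran2}) along $L$. But the sources of those equations contain the curvature components $\bR_{4A4B}$ and $\bR_{A443}$, which by Lemma \ref{decom_lem} are of the form $(\sn,\sn_L)\pi+\sl\bE$, i.e.\ they contain first derivatives of $\bp\bg$. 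With data only in $H^s$, $s>2$, these are controlled in $\tir$-weighted $L_t^2L_\omega^p$ along null cones (the flux bounds (\ref{flux_3})), not in $L_t^1L_x^\infty$; no frequency-weighted refinement of (\ref{BA2}) can fix this, since it is a bound on $\bp\phi$ itself, not on $\sn\bp\phi$. Consequently the transport argument only yields $\|\chih\|_{L_t^2L_\omega^p},\|\zeta\|_{L_t^2L_\omega^p}\les\la^{-\f12}$ (this is exactly Proposition \ref{p1}), and cannot produce the sup-in-$\omega$ estimate (\ref{aricc}).

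What the paper does instead is elliptic: $\chih$ is recovered from the Codazzi system (\ref{dchi1}), $\chih=\D_2^{-1}(\f12\sn z+\sn\pi+\tir^{-1}\pi+\bA\c\pi)$, and the $L^\infty(S_{t,u})$ bound follows from the Calderon--Zygmund estimates of Propositions \ref{cz} and \ref{cz.2}, which exploit that the rough terms enter in divergence form so that $\D_2^{-1}\sn$ is bounded on $L^\infty$ up to a logarithm controlled by (\ref{ricp}). For $\zeta$ (hence $\sn\log\bb=\zeta-k_{AN}$) the same strategy applied to the div--curl system (\ref{dze})--(\ref{dcurl}) runs into the mass aspect function $\mu=\Lb\tr\chi+\f12\tr\chi\tr\chib$ on the right of $\div\zeta$, which a priori involves an uncontrollable $\Lb$-derivative. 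The paper's resolution is the structural identity (\ref{msb}) of Proposition \ref{mude}, proved via the commutator computation (\ref{comm1}) for the area function $\varphi$, showing $\mu=\div\pi+\bA\c\bA+\cdots$ is itself in divergence-plus-lower-order form; only then does Proposition \ref{cz.2} close the $L_t^{q/2}L_x^\infty$ estimate for $\zeta$. Your proposal assumes ``$L_t^1L_x^\infty$ control on $\zeta$'' as an input to the $\sn\log\bb$ step, but that is precisely the conclusion to be proved, and the route to it is not a transport estimate.
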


\begin{proof}
The result will be verified in Section \ref{sec_5} where (\ref{aricc}) is restated as (\ref{ric1.1}) in Proposition \ref{ricco}.
\end{proof}

\begin{lemma}\label{err11}
Let (\ref{BA2}) hold. Then, for any solution $\psi$ of $\Box_{\bg} \psi =0$ in $I_*=[0, \tau_*]$ with $\tau_*\le \la^{1-8\ep_0} T$,
there holds
\begin{equation}\label{4.18.1}
\left\|[P, \varpi N^i] \p_i\psi\right\|_{L^\infty(\Sigma_t)}+ \left\|[P,\varpi\sn]\psi \right\|_{H^1(\Sigma_t)}
\les \ti d(t)\|\psi[t_0]\|_{\dot{H}^1},
\end{equation}
where
\begin{equation*}
(1+t)^{\delta}\ti d(t)\les (1+t)^{-\frac{2}{q}}+d(t) \quad \mbox{and} \quad  \|d(t)\|_{L^{\frac{q}{2}}[0, \tau_*]}\les 1
\end{equation*}
with $q>2$ close to $2$ and $0<\delta< 1-\frac{2}{q}$.
\end{lemma}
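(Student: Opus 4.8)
\textbf{Proof proposal for Lemma \ref{err11}.}

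The plan is to estimate the two commutator terms $[P,\varpi N^i]\p_i\psi$ and $[P,\varpi\sn]\psi$ by decomposing the operators $\varpi N^i$ and $\varpi\sn$ into their ``constant'' parts (the values at the vertex of the cone, or equivalently the Minkowski radial/angular derivatives adapted to the $(t,u)$ foliation) plus geometric perturbations controlled by the Ricci coefficients. Concretely, I would write $N^i=\frac{x^i}{|x|}+(N^i-\frac{x^i}{|x|})$ and similarly split $\sn$ into the Euclidean angular derivative plus a correction involving $\ga-\mathring\ga$; the ``constant'' pieces give commutators $[P,\varpi\,\mbox{(smooth in }t\mbox{ only)}\,]$ which are handled by the standard Calder\'on commutator bound, and the ``geometric'' pieces are multiplication operators with factors that, after applying Lemma \ref{comh1}, produce norms of $\p$ of the perturbations in $L^\infty_x$.

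The key steps, in order, are: (i) use Lemma \ref{comh1} to bound both commutators by $\|\p(\varpi N^i)\|_{L^\infty_x}\|\p\psi\|_{L^2_x}$ and $\|\p(\varpi\sn\mbox{-coefficients})\|_{L^\infty_x}\|\p\psi\|_{L^2_x}$ respectively, modulo the constant pieces whose commutators are bounded directly with a harmless fixed constant; (ii) express $\p N$, $\p(\text{angular frame})$, and the difference $\tr\theta-2/\tir$ in terms of $\chih$, $\sn\log\bb$, $\tr\chi-2/\tir$, $k$, and the metric perturbation $\ga-\mathring\ga$, using the structure equations (\ref{thetan}), (\ref{3.19.1}) and the foliation identities from Proposition \ref{exten}; (iii) invoke Lemma \ref{err1111} to get the $L^{q/2}_t L^\infty_x$ control $\les \la^{\frac2q-1-4\ep_0(\frac4q-1)}$ of the $\varpi$-localized Ricci coefficients, together with the energy bound $\|\p\psi(t)\|_{L^2_x}\les\|\psi[t_0]\|_{\dot H^1}$ from Lemma \ref{geoeg}; (iv) assemble: set $\ti d(t):=(1+t)^{-\delta}\big((1+t)^{-\frac2q}+d(t)\big)$ with $d(t)$ the function produced in Lemma \ref{err1111}/Theorem \ref{decayth}, and check that the rescaling factor $\la^{\frac2q-1-4\ep_0(\frac4q-1)}$ combined with $\tau_*\le\la^{1-8\ep_0}T$ and the support of $\varpi$ (which confines $\tir$ to $\tir\approx t$) yields $\|\ti d\|_{L^{q/2}}\les1$ with room to spare for any $\delta<1-\frac2q$.

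The main obstacle I expect is step (iii)–(iv): matching the loss in $\la$ coming from the rescaled Ricci estimate (\ref{aricc}) against the $(1+t)^{-2/q}$ decay we need, using only the a priori control on $\tau_*$ and the fact that on $\mathrm{supp}\,\varpi$ the weight $\tir$ is comparable to $t$. One has to be careful that the commutator $[P,\varpi N^i]\p_i\psi$ also picks up a contribution where the derivative hits $\varpi$ itself rather than $N^i$; since $\varpi$ depends only on $t,u$ and $\p_u\varpi$ is supported where $\tir\approx t$, this term is $\les\tir^{-1}\|\psi\|_{L^2}\les(1+t)^{-1}\|\psi[t_0]\|_{\dot H^1}$ by the conformal/Hardy estimate, which is acceptable. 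A secondary point is that $\sn\log\bb$ appears both through $\sn_N N$ and through $\zeta$ via (\ref{3.19.1}), so one must be sure not to double-count, and the $L^{q/2}_tL^\infty_x$ bound on $\sn\log\bb$ is exactly one of the three quantities provided by Lemma \ref{err1111}. Granting these, the estimate closes term by term.
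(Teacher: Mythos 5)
Your proposal is correct and follows essentially the same route as the paper: apply Lemma \ref{comh1}, express $\p(\varpi N^i)$ via (\ref{thetan}) and (\ref{3.19.1}) in terms of $\chih$, $\sn\log\bb$, $\tr\chi-2/\tir$, $k$, $\p g$ plus the harmless $2/\tir\approx(1+t)^{-1}$ piece, then close with Lemma \ref{err1111}, (\ref{BA2}), the energy bound of Lemma \ref{geoeg}, and $\tau_*\les\la^{1-8\ep_0}$. The only cosmetic difference is that the paper handles the $\sn$-commutator via the algebraic identity $\sn_j\psi=\p_j\psi-N_jN^i\p_i\psi$ rather than your frame decomposition, which avoids estimating derivatives of the angular frame.
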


\begin{proof}
Notice that $\sn_j \psi = \p_j \psi -N_j N^i \p_i \psi$ with $N_j = g_{ij} N^i$. We have
$$
[P, \varpi \sn_j]\psi = [P, \varpi] \p_j \psi - [P, \varpi N_j N^i] \p_i \psi.
$$
 In view of Lemma \ref{comh1}, we can obtain
\begin{align*}
& \|[P, \varpi N^i]\p_i \psi\|_{L_x^\infty} +\|[P, \varpi \sn] \psi\|_{H^1} \les \left(\|\p (\varpi N^i)\|_{L_x^\infty} +\|\p(\varpi N_j N^i)\|_{L_x^\infty}+\|\p \varpi\|_{L_x^\infty}\right) \|\p \psi\|_{L_x^2}
\end{align*}
Since the support of $\varpi$ on $\Sigma_t$ with $t\ge t_0$ is contained in $\cup_{-t/4\le u\le 3t/4} S_{t, u}$, we have
$\|\p \varpi\|_{L_x^\infty} \les (1+t)^{-1}$ and $t-u\approx t$. By using (\ref{thetan}), (\ref{3.19.1}) and $k=-\f12\p_t g$ we have
\begin{align}\label{h1theta}
& \|\p (\varpi N^i)\|_{L_x^\infty}+ \|\p (\varpi N_j N^i)\|_{L_x^\infty} \nonumber\\
& \les \|\varpi \left(\chih, \sn \log \bb, \tr\chi-2/\tir\right), k,\p g\|_{L^\infty}+(1+t)^{-1}.
\end{align}
Let
$$
\ti d(t) =  \|\varpi \left(\chih, \sn \log \bb, \tr\chi-2/\tir\right), k,\p g\|_{L_x^\infty}+(1+t)^{-1}.
$$
Then we have
\begin{align*}
& \|[P, \varpi N^i] \p_i\psi\|_{L_x^\infty} +\|[P, \varpi \sn] \psi\|_{H^1} \les \ti d(t) \|\p \psi\|_{L_x^2},
\end{align*}
which, together with Lemma \ref{geoeg}, implies (\ref{4.18.1}).

We can write $\ti d(t) = (1+t)^{-1} + (1+t)^{-\d} d(t)$, where
$$
d(t) = (1+t)^\d \|\varpi \left(\chih, \sn \log \bb, \tr\chi-2/\tir\right), k,\p g\|_{L_x^\infty}
$$
Note that $|k|, |\p g|\les |\bp \phi|$. We may use (\ref{BA2}), (\ref{aricc}) and $t\le \tau_*\les \la^{1-8\ep_0}$ to conclude that
$\|d\|_{L^{\frac{q}{2}}[0, \tau_*]}\les 1$.
\end{proof}

\begin{lemma}\label{basic1}
(i) For $2\le q<\infty$ and any $S_{t, u}$-tangent tensor $F$, there holds
\begin{equation}\label{sob.12}
\|r^{1-2/q}F\|_{L^q(S_{t,u})}\les\|r\sn F\|_{L^2(S_{t,u})}^{1-2/q}\|F\|_{L^2(S_{t,u})}^{2/q}+\|F\|_{L^2(S_{t,u})}.
\end{equation}
(ii) For any $\delta\in (0,1)$, any $q\in (2,\infty)$ and any scalar function $f$
there hold
\begin{align*}
\sup_{S_{t,u}}|f|\les  \tir^{\frac{2\delta(q-2)}{2q+\delta(q-2)}}
&\left(\int_{S_{t,u}} \left(|\sn f|^2+\tir^{-2} |f|^2\right)\right)^{\f12-\frac{\delta q}{2q+\delta(q-2)}}
\left(\int_{S_{t,u}} \left(|\sn f|^q+\tir^{-q}|f|^q\right) \right)^{\frac{2\delta}{2q+\delta(q-2)}}.
\end{align*}
\end{lemma}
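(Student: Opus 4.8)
The plan is to treat both estimates as standard Gagliardo--Nirenberg/Sobolev inequalities on the $2$--surface $S_{t,u}$, renormalised to a fixed reference sphere. First I would record the scaling picture. Since $\tir=t-u$ is constant on each $S_{t,u}$, the rescaled metric $\cga:=\tir^{-2}\ga$ has the same Levi-Civita connection as $\ga$, area $\approx 1$, and --- uniformly in $(t,u)$ --- is comparable, with one derivative, to a fixed metric on ${\Bbb S}^2$ (equivalently, $S_{t,u}$ has uniformly bounded isoperimetric constant and volume radius, and $\sqrt{|\ga|}\approx\tir^2$). Crucially these geometric facts are not consequences of the present lemma: they are obtained by propagating $\log\sqrt{|\ga|}$, $\tr\chi$ and $\chih$ along the generators of $C_u$ from their values on $\{t=0\}$ (Proposition \ref{exten}) via the structure equations of Section \ref{sec_5} and the bootstrap assumption \emph{(\ref{BA1})}, as in \cite{CK,KRduke,Wangrough}, so the argument is not circular. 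Because the $\tir$--weights in the two displayed inequalities are exactly calibrated so that each inequality is scale covariant (both sides scale identically under $\ga\mapsto\tir^2\cga$, for a tensor of any fixed rank), it suffices to prove them on $(S_{t,u},\cga)$ with $\tir$ replaced by $1$; on that fixed surface all Sobolev constants below are universal.

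For part (i), by Kato's inequality $\abs{\sn\abs{F}}\le\abs{\sn F}$ (valid $\cga$--a.e.; $\abs{F}$ is Lipschitz) it is enough to prove, for a nonnegative scalar $h$ on $(S_{t,u},\cga)$ and $2\le q<\infty$, the Gagliardo--Nirenberg inequality $\|h\|_{L^q}\les\|\sn h\|_{L^2}^{1-2/q}\|h\|_{L^2}^{2/q}+\|h\|_{L^2}$. On a compact $2$--manifold of bounded geometry this is classical: by $W^{1,1}\hookrightarrow L^2$ applied to $h^2$ one has Ladyzhenskaya's inequality $\|h\|_{L^4}^2\les\|h\|_{L^2}\|\sn h\|_{L^2}+\|h\|_{L^2}^2$, and the general case follows by applying the same device to a power of $h$ and interpolating the resulting lower-order $L$--norm between $L^2$ and $L^q$; the additive $\|h\|_{L^2}$ is the (necessary) compact-manifold correction. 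Rescaling back --- the weights are chosen precisely so this goes through for the original tensor $F$ --- yields the stated inequality. (Alternatively one may quote the corresponding lemma of \cite{CK} or \cite{Wangrough} verbatim.)

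For part (ii), fix $\delta\in(0,1)$ and $q\in(2,\infty)$, set $p_q:=\frac{2q\delta}{2q+\delta(q-2)}$, and choose the intermediate exponent $m_0:=1+\frac{2q(1-\delta)}{\delta(q-2)}\in(1,\infty)$, which is exactly the value making the Morrey--Gagliardo--Nirenberg exponent $\frac{2q}{m_0(q-2)+2q}$ equal to $p_q$. Since $q>2$, $W^{1,q}(S_{t,u},\cga)\hookrightarrow C^{0,1-2/q}$; combining the elementary bound $\abs{f(x)}\le |B_\rho(x)|^{-1}\int_{B_\rho(x)}\abs{f}+\mbox{osc}_{B_\rho(x)}f$ (balls taken with respect to $\cga$) with H\"older and the H\"older-seminorm estimate, then optimising in $\rho$, gives $\|f\|_{L^\infty}\les\|\sn f\|_{L^q}^{p_q}\|f\|_{L^{m_0}}^{1-p_q}+\|f\|_{L^{m_0}}$. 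If $m_0\ge2$ I would estimate $\|f\|_{L^{m_0}}$ by part (i), so $\|f\|_{L^{m_0}}\les\|\sn f\|_{L^2}^{1-2/m_0}\|f\|_{L^2}^{2/m_0}+\|f\|_{L^2}\les\|f\|_{W^{1,2}}$; if $1<m_0<2$ I would simply use $L^2\hookrightarrow L^{m_0}$ on the finite-volume surface. In either case, absorbing lower-order terms via $\|f\|_{W^{1,2}}\les\|f\|_{W^{1,q}}$ (finite volume, $q>2$), one gets $\|f\|_{L^\infty(\cga)}\les\|f\|_{W^{1,q}(\cga)}^{p_q}\|f\|_{W^{1,2}(\cga)}^{1-p_q}$. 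Finally, reinstating the weights via $\|f\|_{W^{1,q}(\cga)}^q=\tir^{\,q-2}\int_{S_{t,u}}(\abs{\sn f}^q+\tir^{-q}\abs{f}^q)$, $\|f\|_{W^{1,2}(\cga)}^2=\int_{S_{t,u}}(\abs{\sn f}^2+\tir^{-2}\abs{f}^2)$ and $\sup_{S_{t,u}}\abs{f}=\|f\|_{L^\infty(\cga)}$, and noting $(q-2)\tfrac{p_q}{q}=\tfrac{2\delta(q-2)}{2q+\delta(q-2)}$, $\tfrac{p_q}{q}=\tfrac{2\delta}{2q+\delta(q-2)}$ and $\tfrac{1-p_q}{2}=\tfrac12-\tfrac{\delta q}{2q+\delta(q-2)}$, produces exactly the displayed estimate.

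The one point that needs genuine care --- and the only real obstacle --- is the normalisation step: everything rests on uniform control of the isoperimetric constant, volume radius and area element of $S_{t,u}$, which must be established from the data on $\{t=0\}$ and transport along $C_u$ (Proposition \ref{exten} and Section \ref{sec_5}) without invoking Lemma \ref{basic1}, so that the hierarchy of estimates stays one-directional. Granted that, both parts are routine Sobolev calculus on surfaces.
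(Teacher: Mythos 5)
Your proposal is correct, but there is nothing in the paper to compare it against: the paper's entire ``proof'' of Lemma \ref{basic1} is the citation ``This is \cite[Theorem 5.2]{KRduke}''. Your argument --- rescaling by $\ga\mapsto\tir^{-2}\ga$ to a surface of unit size (with the exponents checked to make both inequalities scale-covariant), Kato's inequality plus Gagliardo--Nirenberg for (i), and the Morrey/oscillation interpolation with the intermediate exponent $m_0$ for (ii) --- is essentially the standard proof of that cited result, and you correctly identify that the uniform geometric control of $S_{t,u}$ (area element, isoperimetric constant) must come from Proposition \ref{exten} and the transport estimates of Section \ref{sec_5} rather than from the lemma itself, which is exactly how the paper orders its estimates.
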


\begin{proof}
This is \cite[Theorem 5.2]{KRduke}
 \end{proof}

\begin{lemma}\label{lem113}
For any $\Sigma$-tangent tensor field $F$ there hold
$$
\|F\|_{L^2(S_{t, u})}^2 \les \|F\|_{H^1(\Sigma_t)} \|F\|_{L^2(\Sigma_t)}, \qquad
\|F\|_{L^4(S_{t,u})} \les \|F\|_{H^1(\Sigma_t)}.
$$
\end{lemma}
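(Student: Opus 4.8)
The plan is to treat both inequalities as codimension-one trace estimates on the Riemannian slice $(\Sigma_t,g)$, reducing them to the flat model of a Euclidean ball foliated by concentric geodesic spheres, by exploiting that the $u$-foliation of $\D^+\cap\Sigma_t$ has uniformly controlled geometry. Indeed, Proposition \ref{exten} together with the estimates of Section \ref{sec_5} give $\bb\approx 1$, $\sqrt{|\ga|}\approx\tir^2$, $\tr\theta=2\tir^{-1}+(\text{lower-order terms controlled in Section \ref{sec_5}})$, and that $N$ differs from the unit radial direction by a controlled amount; combined with the coarea identity $d\mu_g=\bb\,du'\,d\mu_\ga$, the first-variation-of-area formula converts $\frac{d}{du'}\int_{S_{t,u'}}|F|^p\,d\mu_\ga$ into $\int_{S_{t,u'}}\bb\big(N(|F|^p)+\tr\theta\,|F|^p\big)\,d\mu_\ga$, where $|F|$ is the scalar $g$-norm (Lipschitz, with $|\nab|F||\le|\nab F|$ a.e.) and $N(|F|^p)\les|F|^{p-1}|\nab F|$.

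First I would integrate this identity in $u'$ outward across the dyadic shell on which $\tir'$ runs from $\tir$ to $2\tir$; the $2\tir^{-1}$ part of $\tr\theta$ then contributes only a benign factor to the boundary term on $\{\tir'\sim 2\tir\}$, so iterating over the dyadic shells $\tir\to 2\tir\to 4\tir\to\cdots$ and summing the resulting geometric series eliminates all boundary contributions (the residual decays geometrically and is reabsorbed by the same estimate), leaving $\int_{S_{t,u}}|F|^p\,d\mu_\ga$ bounded by a geometrically-weighted sum, over pairwise disjoint shells $\Omega_j$, of $\int_{\Omega_j}\big(|F|^{p-1}|\nab F|+(\text{lower-order foliation terms})\big)\,d\mu_g$. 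For $p=2$, Cauchy--Schwarz in the shell index together with the disjointness of the $\Omega_j$ bounds the principal sum by $\|F\|_{L^2(\Sigma_t)}\|\nab F\|_{L^2(\Sigma_t)}$, and the foliation-geometry terms are dominated the same way after an application of H\"older and the three-dimensional Sobolev embedding $H^1(\Sigma_t)\hookrightarrow L^6(\Sigma_t)$, using $\|k\|_{L^3(\Sigma_t)}\les1$ (which follows from $|k|\les|\bp\phi|$ and $\bp\phi\in H^{s-1}\hookrightarrow L^3$, Proposition \ref{eng3}) and the corresponding bound for $\chih$ from Section \ref{sec_5}; this yields the first inequality.

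For $p=4$ the same dyadic mechanism, now with H\"older in the shell index at exponents $(4/3,4)$, bounds $\int_{S_{t,u}}|F|^4\,d\mu_\ga$ by $\|F\|_{L^6(\Sigma_t)}^3\,\|\nab F\|_{L^2(\Sigma_t)}$ plus foliation-geometry terms; the embedding $\|F\|_{L^6(\Sigma_t)}\les\|F\|_{H^1(\Sigma_t)}$ then turns the principal term into $\|F\|_{H^1(\Sigma_t)}^4$, the remaining terms being absorbed as before, and taking fourth roots gives the second inequality. I expect the only genuinely delicate point to be that $\bg$ is merely as regular as the bootstrap permits, so that the geometry of the $u$-foliation — the lapse $\bb$, the area element, and the second fundamental form $\theta$ of $S_{t,u}$ in $\Sigma_t$ — is available only in the borderline norms of Proposition \ref{exten} and Section \ref{sec_5}; in particular $L^\infty$ control on $k$ or $\chih$ is not available, which forces the foliation-error terms to be paired against $\|F\|_{L^6(\Sigma_t)}$ through $L^3(\Sigma_t)$-bounds, and the degeneration of the foliation at the vertex on $\Ga^+$ to be handled by the dyadic-shell summation above rather than a single application of the fundamental theorem of calculus. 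Apart from this bookkeeping the argument is the standard trace inequality.
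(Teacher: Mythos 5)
The paper does not prove this lemma: it is quoted verbatim from \cite[Proposition 7.5]{Wang10} ("This is [Proposition 7.5] of Wang10"), so there is no in-paper argument to compare against. Your proposal supplies the standard codimension-one trace argument, and in substance it is correct: the first-variation identity $\frac{d}{d\tir'}\int_{S_{t,u'}}|F|^p d\mu_\ga=\int_{S_{t,u'}}\bb\big(N(|F|^p)+\tr\theta\,|F|^p\big)d\mu_\ga$, together with $d\mu_g=\bb\,du\,d\mu_\ga$, $\bb\approx1$, $v_t\approx\tir^2$ and $\tr\theta-\tfrac{2}{\tir}=z-V_4+\tr_\ga k$ controlled in $L^3(\Sigma_t)$ (via $\|\tir^{1/2}z\|_{L^\infty}\les\la^{-1/2}$ and $|k|,|V_4|\les|\bp\phi|\in L^3$), reduces both inequalities to $\int_{\Sigma_t}|F|^{p-1}|\nab F|$ plus errors paired against $\|F\|_{L^6(\Sigma_t)}$, exactly as you say. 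Two remarks. First, the dyadic-shell iteration is an over-engineered discretization of a one-step argument: multiplying by the integrating factor $\tir'^{-2}$ turns the identity into $\frac{d}{d\tir'}\big(\tir'^{-2}A\big)=\tir'^{-2}\big[\int\bb N(|F|^p)+\int(\bb\tr\theta-\tfrac{2}{\tir'})|F|^p\big]$, and since $\tir^2\tir'^{-2}\le1$ on the outward range and $\tir'^{-2}d\mu_\ga\,d\tir'\approx d\mu_g$, a single integration from $\tir$ to any $\tir_1\sim\tir_{\max}$ already gives the geometric-series-free bound; the vertex degeneration you worry about never enters, because outward integration moves away from $\Ga^+$. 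Second, and this is the one genuine soft spot: "the residual decays geometrically and is reabsorbed by the same estimate" is not an argument for the outermost boundary term. That term must be disposed of by averaging $\tir_1^{-2}A(\tir_1)$ over an outer shell of the foliation of width $\gtrsim\tir_{\max}$, using $\inf_{\tir_1}\tir_1^{-2}A(\tir_1)\les\tir_{\max}^{-3}\|F\|^p_{L^p(\Sigma_t)}$ and the Sobolev embedding for $p=4$; one should also note that such a shell exists because $\Sigma_t\cap\widetilde{\D^+}$ is foliated out to $\tir=t+v_*$ with $v_*=\tfrac45\tau_*\gtrsim1$. Minor points: $\chih$ plays no role (only the trace part of $\theta$ enters), and no comparison of $N$ with a Euclidean radial direction is needed, since the whole argument is intrinsic to the $u$-foliation.
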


\begin{proof}
This is \cite[Proposition 7.5]{Wang10}.
\end{proof}

\begin{proof}[Proof of Proposition \ref{lcestimate}]
Since $\Box_{\bg}\psi=0$ and $\psi[t_0]$ is supported on $B_R\subset \D^+\cap \Sigma_{t_0}$,
by the finite speed of propagation, see \cite[Proposition 3.3]{KRduke},
$\psi(t)$, for each $t\ge t_0$, is supported on $\{0\le u\le t\}$ on which $\underline{\varpi}=1$. Thus
$$
\|P \left(\p_t \psi\right)\|_{L_x^\infty}= \|P(\underline{\varpi} \p_t \psi)\|_{L_x^\infty} \le \|P (\varpi \p_t \psi)\|_{L_x^\infty}
+\|P\left((\underline{\varpi}-\varpi) \p_t \psi\right)\|_{L_x^\infty}.
$$
By the Bernstein inequality for LP projections and Theorem \ref{BT}, we have
\begin{align*}
\|P\left( (\underline{\varpi}-\varpi) \p_t \psi\right)\|_{L_x^\infty}
& \les\|(\underline{\varpi}-\varpi) \p_t \psi\|_{L_x^2}
\les (1+t)^{-1} \C[\psi]^\f12(t)\\
& \les(1+t)^{-1+\ep} (\|\psi[t_0]\|_{\dot{H}^1(\Sigma)}+\|\psi(t_0)\|_{L^2(\Sigma)})
\end{align*}
and
\begin{align*}
\|P(\varpi \p_t\psi)\|_{L_x^\infty}&\les \|P(\varpi L\psi)\|_{L_x^\infty}
+\|P(\varpi N\psi)\|_{L_x^\infty}
\les \|\varpi L\psi\|_{L_x^2}+\|P(\varpi N\psi)\|_{L_x^\infty},
\end{align*}
where we used $\p_t = L-N$. From Theorem \ref{BT} we have
$$\|\varpi L\psi\|_{L_x^2}\les (1+t)^{-1} \C[\psi]^{\f12}(t)
\les (1+t)^{-1+\ep}(\|\psi[t_0]\|_{\dot{H}^1(\Sigma)}+\|\psi(t_0)\|_{L^2_\Sigma}).$$
Moreover
\begin{align}
\|P(\varpi N\psi)\|_{L_x^\infty}&\le \|[P, \varpi N^l]\p_l \psi\|_{L_x^\infty}
+\|\varpi N^l P \p_l \psi\|_{L_x^\infty}. \label{eqn19}
\end{align}
The first term in (\ref{eqn19}) can be estimated by Lemma \ref{err11}.
For the second term  in (\ref{eqn19}) we have
\begin{equation*}
\|\varpi N^l P \p_l \psi(t)\|_{L_x^\infty}\le \|\varpi \ti P\psi(t)\|_{L_x^\infty},
\end{equation*}
where $\ti P$ denotes an LP projection with frequency $1$ associated to
a different symbol. Therefore, it suffices to show that
\begin{equation}\label{eqn16}
\|\varpi \ti P \psi\|_{L^\infty(\Sigma_t)}\les \left( (1+|t-t_0|)^{-\frac{2}{q}}+d(t)\right)(\|\psi[t_0]\|_{\dot{H}^1}+\|\psi(t_0)\|_{L^2}).
\end{equation}
Since $\varpi$ vanishes outside the region $\{-t/4\le u< 3t/4\}$, we only need to consider $\varpi \ti P \psi$ in
the region $\{-t/4\le u< 3t/4\}$ on which we have $\tir\approx t$.
Recall that $\varpi$ is constant on each $S_{t,u}$,  we have from Lemma \ref{basic1} (ii) that
\begin{align*}
\sup_{S_{t,u}}|\varpi P\psi|^2
& \les \tir^\delta \left( \int_{S_{t,u}} \left(|\varpi \sn P \psi|^2
+\tir^{-2}|\varpi P\psi|^2\right)\right)^{1-\delta}
\left(\int_{S_{t,u}} \left(|\varpi \sn P\psi|^4
+\tir^{-4}|\varpi P\psi|^4\right) \right)^{\f12\delta}.
\end{align*}
Using $r\approx t$ and Lemma \ref{lem113} we then obtain
\begin{align*}
\sup_{S_{t,u}}|\varpi P\psi|^2 & \les \tir^\delta \left(\int_{S_{t,u}} \left(|P(\varpi\sn
\psi)|^2+\tir^{-2}|\varpi P\psi|^2+|[P,\varpi\sn]\psi|^2\right) \right)^{1-\delta}\\
&\quad \, \times \left(\int_{S_{t,u}} \left(|P(\varpi \sn \psi)|^4+\tir^{-4}|\varpi P\psi|^4
+| [P,\varpi\sn]\psi|^4\right)\right)^{\f12\delta}\\
&\les \tir^\delta \left( \|P(\varpi \sn \psi)\|_{H^1}^2 + t^{-2}\|\varpi P \psi\|_{H^1}^2 + \|[P,\varpi\sn]\psi\|^2_{H^1}\right).
\end{align*}
By the finite band property, we have
\begin{align*}
&\|P(\varpi \sn \psi)\|_{H^1}^2 \les \|\varpi \sn \psi\|_{L^2}^2 \les t^{-2} \C[\psi](t),\\
&\|\varpi P\psi\|_{H^1}^2 \les \|\psi\|_{L^2(\Sigma_t)}^2 = \|\underline{\varpi} \psi\|_{L^2}^2\les \C[\psi](t).
\end{align*}
Therefore
\begin{align*}
\sup_{-t/4\le u\le 3t/4}|\varpi P\psi|^2 & \les t^\d \left(t^{-2} \C[\psi](t) + \|[P, \varpi \sn]\psi\|_{H^1(\Sigma_t)}^2\right)
\end{align*}
By letting $0<\delta< 2( 1-\frac{2}{q})$,  we then obtain (\ref{eqn16}) from Lemma \ref{err11} and  Theorem \ref{BT} since  $\ep$ can verify $\ep<1-\frac{2}{q}-\frac{\delta}{2}$.
\end{proof}

The proof of Theorem \ref{BT} is the core part of this paper which will be given through Sections 5--7. It crucially relies on the control of Ricci coefficients
defined in (\ref{ricc_def}) on $\D^+$  along null hypersurfaces. In particular, we need to tackle the difficulty arising from the
non-vanishing Ricci tensor which contains the second derivative of the unknown solution.  In Section \ref{sec_5},
we will give the control of Ricci coefficients on $\widetilde{\D^+}$, which shows
Lemma \ref{err1111}. In  the wave zone $\D^+$, which is a subset of $\widetilde{\D^+}$, we will provide a
set of stronger estimates.

\section{\bf Control of Ricci coefficients}\label{sec_5}

Let $\phi$ be obtained from the solution of (\ref{wave1}) by the coordinate change $(t, x) \to (\la(t-t_k), \la x)$ as
was done in (\ref{7.26.1}) and let $g = g(\phi)$. They are defined on $I\times {\mathbb R}^3$ with $I= [0,\tau_*]$ and $\tau_*\le \la^{1-8\ep_0}T$.
According to (\ref{BA2}), the rescaled function $\phi$ and the metric $g$ satisfy the estimates
\begin{equation}
\|\bp\phi, \bp g\|_{L_t^2L_x^\infty(I\times {\mathbb R}^3)}+\la^{\delta_0}\left(\sum_{\mu\ge 2 }\mu^{2\delta_0}
\| P_\mu\ti\pi\|^2_{L_t^2 L_x^\infty(I\times {\mathbb R}^3)}\right)^{\f12}\les
\la^{-1/2-4\ep_0}. \label{pi.2}
\end{equation}
 Here $\ti \pi$ denotes the collection of terms taking the form of $f(\phi) \bp \phi$, with $f$ being a smooth function\begin{footnote}{ To be more precise, the functions $f$ are products of factors among $\{g^{(k)}, k=0,1,2\}$, constants, and the function $\N^{\a\b}$ in (\ref{wave1}).}\end{footnote}  of $\phi$. To derive the last inequality in (\ref{pi.2}) we also used (\ref{prd7}) in Appendix A. In the following sections we will work under the condition (\ref{pi.2}).

We will let $\bg$ denote the Lorentzian metric defined by (\ref{7.26.2}) and let $\bd$ denote the corresponding covariant differentiation.
Using the optical function $u$ introduced in Section 4, we have defined the null hypersurfaces $C_u$ and the regions $\D^+$ and $\widetilde{\D^+}$.
On each $S_{t, u}= C_u \cap \Sigma_t$, let $\ga$ be the induced metric from $\bg$ and let $\sn$ be the associated covariant differentiation.

%We can derive on $\widetilde{\D^+}$ that
%\begin{equation}\label{lb0}
%L\bb=-\bb k_{NN}
%\end{equation}
%Indeed, by using (\ref{geod}) and $L=\bb L'=\bT+N$, we have
%\begin{equation*}
%\bd_L (\bb^{-1})=-\bd_L\l \bT, L'\r=-\l \bd_N \bT, \bb^{-1}L\r=-\bb^{-1}\l \bd_N \bT,(T+N)\r=\bb^{-1} k_{NN}.
%\end{equation*}
%which gives (\ref{lb0}).

For null cones $C_u$ with $u=-v<0$ initiating from $S_v$ at $t=0$, using local coordinates $(\omega^1, \omega^2)\in {\mathbb S}^2$
on $S_v$ as in Proposition \ref{exten} and following the trajectories $\Upsilon_\omega(t)$ of the null generator vector field $L$, i.e.
\begin{equation}\label{trscoord}
\frac{d \Upsilon_\omega}{dt}=L
\end{equation}
with $\Upsilon_\omega(0)\in S_v$, we can parametrize $C_u$ by the transport coordinates
$t, \omega^1, \omega^2$. For a null cone $C_u$ with $u\ge 0$ initiating from a point $\wp$ on the time axis $\Ga^+$, by parametrizing null vectors
at $\wp$ by $(\omega^1, \omega^2)\in {\mathbb S}^2$, we can use (\ref{trscoord}) with $\Upsilon_\omega(0)=\wp$ to introduce
on $C_u$ the transport coordinates $t, \omega^1, \omega^2$.  Relative to the transport coordinates $t, \omega^1, \omega^2$, we can verify that
\begin{equation}\label{trscoord2}
\frac{d\ga_{ab}}{dt}=2\chi_{ab}
\end{equation}
along null geodesics $\Upsilon_\omega(t)$. Consequently, for the associated area element $v_t=\sqrt{|\ga|}$ of $S_{t, u}$, there holds
\begin{equation}\label{lv}
L(v_t) = v_t \tr \chi.
\end{equation}
Using the transport coordinates, we can also parametrize $\widetilde{\D^+}$ by $t, u, \omega^1, \omega^2$. Thus any function $f$
on $\widetilde{\D^+}$ can be represented by $f(t, u, \omega^1, \omega^2)$. By virtue of such representation we can introduce
various norms which will be used in the following. For any vector field $F$ on $S_{t, u}$ we will use the two norms
$$
\|F\|_{L_x^q(S_{t, u})}^q= \int_{S_{t, u}} |F|^q d \mu_\ga\quad \mbox{and} \quad
\|F\|_{L_\omega^q(S_{t, u})}^q = \int_{{\mathbb S}^2} |F|^q(\omega) d \mu_{{\mathbb S}^2}.
$$
For $S_{t, u}$-tangent tensor field $F$ on $C_u$, we introduce the mixed norms
\begin{align*}
\|F\|_{L_\omega^q L_t^\infty(C_u)}^q&= \int_{{\mathbb S}^2} \sup_{\Upsilon_\omega}|F|^q   d\mu_{{\Bbb S}^2}\quad
\mbox{and} \quad \|F\|_{L_x^q L_t^\infty(C_u)}^q=\int_{{\Bbb S}^2} \sup_{t\in \Upsilon_\omega}(v_{t} |F|^q) d\mu_{{\Bbb S}^2}
%\|f\|_{L_t^q L_\omega^\infty}^q=\int_{t_\tmin}^t \sup_{S_{t',u}} |f|^q d t'.
\end{align*}
For tensor fields $F$ defined on $\Sigma_t\cap \widetilde{\D^+}$ we use the norm
\begin{align*}
% &\|f\|_{L_t^p L_x^q(C_u)}^p=\int_{t_\tmin}^t(\int_{{\Bbb S}^2} |f|^q v_{t'} d\mu_{{\Bbb S}^2})^\frac{p}{q} dt', \displaybreak[0]\\
% &\|f\|_{L_x^q L_u^p}=\left(\int_{{\Bbb S}^2}(\int_{u_\tmin}^{ u_{\tiny{\max}}} |v_{t,u}^\frac{1}{q} f|^p(\omega) du)^\frac{q}{p} d\omega \right)^\frac{1}{q}, \displaybreak[0] \\
&\|F\|_{L_x^q L_u^\infty}=\left(\int_{{\Bbb S}^2}(\sup_{u}(v_t |F|^q ))(\omega) d\omega\right)^\frac{1}{q}.
 \end{align*}
There are many other mixed norms used in the paper which should be clear from the context.

In this section we will derive various estimates on the Ricci coefficient defined in (\ref{ricc_def}) which are crucial for proving
Theorem \ref{BT} in Section 7. To obtain these estimates, we first recall the initial conditions satisfied by these Ricci coefficients.
We fix the convention that
$$
\tir = t-u, \quad \widetilde{\tr\chi}=\tr\chi+V_4 \quad \mbox{ and } \quad z=\widetilde{\tr\chi}-\frac{2}{t-u},
$$
where $V_4=V(L)$ and $V$ is the 1-form introduced in (\ref{ricc6.7.2}).

\begin{lemma}\label{inii}

 \begin{enumerate}
\item[(i)]On any null cone $C_u$ initiating from a point on the time axis at $t=u\ge 0$, there hold
\begin{align*}
&\tir z,\bb-1, \sn \bb,  \tir\sn z, \tir^2\mu\rightarrow 0 \mbox{ as }
t\rightarrow u,\quad \lim_{t\rightarrow u}\|\chih,\zeta,\zb,k\|_{L^\infty(S_{t,u})}<\infty.
\end{align*}

\item[(ii)] Let $\gac:=(t-u)^{-2}\ga$ be the rescaled metric on $S_{t,u}$ and let ${\gamma}^{(0)}$ denote the
canonical metric on ${\Bbb S}^2$. Then, relative to the transport coordinates,  there hold
\begin{equation}
\lim_{t\rightarrow u}\stackrel{\circ}
\gamma_{ab}={\gamma}_{ab}^{(0)},\qquad  \lim_{t\rightarrow
u}\p_c\!\!\stackrel{\circ}\ga_{ab}=\p_c{\ga}_{ab}^{(0)}\label{8.1.1}
\end{equation}
where $a, b, c=1,2$.

\item[(iii)]
On $\bigcup_{v\in (0,v_*]}S_v$ there hold $|v z|\les \la^{-4\ep_0}$
and $\|v^\frac{3}{2} \sn z\|_{L_v^\infty L_\omega^p}+\|v^\f12 z\|_{L^\infty}\les \la^{-\f12}$,
where $\|F\|_{L_v^\infty L_\omega^p} = \sup_{v\in (0, v_*]} \left(\int_{S_v} |F|^p d\omega\right)^{1/p}$
for any tensor field $F$.
\end{enumerate}
\end{lemma}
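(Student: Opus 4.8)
\textbf{Part (iii)} is purely algebraic once Proposition~\ref{exten} is available. On $\{t=0\}$ one has $\tir=t-u=v$, so it suffices to express $\widetilde{\tr\chi}$ through the lapse $a$ of the $v$-foliation. Taking the $\ga$-trace of the first relation in (\ref{3.19.1}) and using $\ga^{AB}k_{AB}=\Tr k-k_{NN}$ gives $\tr\chi=\tr\theta-\Tr k+k_{NN}$, hence $\widetilde{\tr\chi}=\tr\chi+V_4=(\tr\theta+k_{NN})-\Tr k+V_4$; substituting (\ref{amc}) this collapses to the clean identity $\widetilde{\tr\chi}=\tfrac{2}{av}$ on $\{t=0\}$. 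Therefore $z=\widetilde{\tr\chi}-\tfrac2v=\tfrac{2(1-a)}{av}$ and, since $\sn v\equiv0$ on each $S_v$, $\sn z=-\tfrac{2}{va}\sn\log a$. The bounds $|vz|\les\la^{-4\ep_0}$, $\|v^{1/2}z\|_{L^\infty}\les\la^{-1/2}$ and $\|v^{3/2}\sn z\|_{L_v^\infty L_\omega^p}\les\la^{-1/2}$ then follow at once from $a\approx1$ together with the bounds $|a-1|\les\la^{-4\ep_0}$ and $\|v^{-1/2}(a-1)\|_{L^\infty}\les\la^{-1/2}$ of (\ref{w8.1.1}) and $\|v^{1/2}\sn\log a\|_{L_\omega^{q_*}}\les\la^{-1/2}$ of (\ref{a_3}).

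\textbf{Parts (i)--(ii)} are vertex-regularity statements, to be obtained by integrating transport equations outward from a vertex $\wp\in\Ga^+$ (with $t=u$), following the scheme of \cite{CK,KRduke,Wang10}. After a constant linear change of the spatial coordinates we may assume $g_{ij}(\phi(\wp))=\delta_{ij}$, so that near $\wp$ the rescaled metric $\bg$ is a $C^0$-small perturbation of $\bm$, and $\bb\equiv1$ on $\Ga^+$ by construction. I would work in the transport coordinates $(t,\omega^1,\omega^2)$ on $C_u$ of (\ref{trscoord})--(\ref{lv}). For the metric, (\ref{trscoord2}) gives $\tfrac{d}{dt}\bigl(\tir^{-2}\ga_{ab}\bigr)=\tir^{-2}\bigl(2\chi_{ab}-\tfrac2\tir\ga_{ab}\bigr)$ with $\chi_{ab}-\tfrac1\tir\ga_{ab}=\chih_{ab}+\tfrac12(z-V_4)\ga_{ab}$, so integrating from $\wp$ — where the right side is integrable in $t$ — yields $\gac_{ab}\to\gaz_{ab}$; differentiating this ODE in $\omega^c$, which introduces one more angular derivative of the Ricci coefficients whose flux along $C_u$ near $\wp$ is controlled by Lemma~\ref{lem:flux} and (\ref{pi.2}), gives the second relation in (\ref{8.1.1}). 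For the null expansion I would not use the Raychaudhuri equation (\ref{s0}) directly but the structure equation for $\widetilde{\tr\chi}$, which thanks to the conformal normalization (\ref{3.23.1}) carries only lower-order terms on the right (cf. (\ref{lz})): rewriting it, via (\ref{lv}), in the form $L(v_t z)=v_t\cdot(\mbox{lower order})$ and integrating from $\wp$ gives $v_t z\to0$, whence $\tir z\to0$ since $v_t\approx\tir^2$; likewise $\tir\sn z\to0$ from (\ref{ldz_2}), and $\tir^2\mu\to0$ from the structure (\ref{llmu}) together with the normalized identity (\ref{cancel}) for $\mu+2\sD\sigma$ and $\sigma\to0$ at $\wp$. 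Finally $\bb-1\to0$ and $\sn\bb\to0$ follow from the $L$-transport equations for $\log\bb$ and $\sn\log\bb$ with boundary value $\bb=1$ on $\Ga^+$, while $\|\chih,\zeta,\zb,k\|_{L^\infty(S_{t,u})}$ stays bounded, with finite (Minkowskian) limits, since the corresponding transport/structure equations have right-hand sides integrable in $t$ near $\wp$ (for $k$ one reconstructs $k_{AB},k_{AN},k_{NN}$ from $\theta$, $\chi$, $\zb$ and the Raychaudhuri identity).

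The main obstacle is the low regularity at the vertex: the right-hand sides of the transport equations governing $\sn z$ and $\mu$ carry one extra derivative of the Ricci coefficients, i.e. effectively $\bp^2\bg$ restricted to $C_u$, which sits exactly at the borderline — and this is precisely why the lemma is placed after the conformal normalization has been introduced. Under the conformally changed metric these right-hand sides become genuinely lower order, no second derivatives of $\bg$ surviving, so that only quantities whose weighted $L^2$ and flux norms along $C_u$ are controlled through Lemma~\ref{lem:flux}, Proposition~\ref{eng3} and (\ref{pi.2}) remain, and a weighted integration from $\wp$ forces the stated decay. One must also bear in mind that all the limits ``$t\to u$'' are understood in the transport coordinates, where $S_{t,u}$ is pulled back to a fixed copy of ${\Bbb S}^2$, since the spheres themselves degenerate at the vertex.
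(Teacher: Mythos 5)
Your treatment of part (iii) is correct and is in substance exactly what the paper does: the paper states only ``item (iii) follows from Proposition \ref{exten},'' and the identity $\widetilde{\tr\chi}=\tfrac{2}{av}$ on $\{t=0\}$ (hence $z=\tfrac{2(1-a)}{av}$ and $\sn z=-\tfrac{2}{va}\sn\log a$) is precisely the computation the reader is meant to supply. Your derivation of that identity via $\tr\chi=\tr\theta-\Tr k+k_{NN}$ and (\ref{amc}) is clean and correct, and the conclusions follow from (\ref{a_3}) and (\ref{w8.1.1}).

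For parts (i)--(ii), however, your argument diverges from the paper's in an important way and contains a genuine circularity. The paper does not prove (i)--(ii) at all: it cites \cite{Wangthesis} and \cite[Section 2]{Wangricci}, where the vertex asymptotics are established by a direct local expansion (comparison with the Minkowskian light cone near the apex), a calculation that is logically \emph{prior} to the transport-equation machinery of Sections 5 and 6. Your sketch instead proposes to derive the vanishing of $\tir^2\mu$ from the ``normalized identity (\ref{cancel}) for $\mu+2\sD\sigma$'' and the structure (\ref{llmu}), and to use the conformal normalization (\ref{3.23.1}) to explain why the $\widetilde{\tr\chi}$ transport equation is lower order. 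This cannot work as stated: $\sigma$ and (\ref{cancel}) belong to Section 6, (\ref{llmu}) is Proposition \ref{mude} in Section 5.4, and both are established by integrating transport equations from the vertex \emph{using} Lemma \ref{inii}(i)--(ii) as the initial conditions -- indeed (\ref{gronmu}) silently uses $\tir^2\ckk\mu\to 0$ at the vertex, which is exactly what you are trying to prove. Likewise Lemma \ref{larea}, needed for (\ref{msb}), draws on (\ref{8.1.1}). Invoking these results here is circular. A related but smaller imprecision: the lower-order structure of the $\widetilde{\tr\chi}$ equation (\ref{lz}) is a consequence of the algebraic definition $\widetilde{\tr\chi}=\tr\chi+V_4$ together with the Ricci decomposition (\ref{r44.1}) and the wave equation (\ref{wave1}); it has nothing to do with the conformal change (\ref{3.23.1}), which merely reinterprets $\widetilde{\tr\chi}$ geometrically afterwards. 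Finally, ``the right side is integrable in $t$'' when integrating (\ref{trscoord2}) from $\wp$, and the invocation of the global flux bound of Lemma \ref{lem:flux} for the $\omega$-derivative, both presuppose knowledge of how $\chih$, $z$, $\sn z$ behave as $t\to u^+$, which is again what (i) asserts. The clean way to break this circle is the local-expansion argument of the cited references, which derives the leading behavior of $\ga$, $\chi$, $\zeta$, etc.\ near the apex directly from the $C^1$-regularity of $\bg$ at $\wp$ and the geodesic structure of $C_u$, without appeal to any of the later Ricci-coefficient estimates.
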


The items (i) and (ii) in Lemma \ref{inii} can be found from \cite{Wangthesis} and  \cite[Section 2]{Wangricci}.
%, \cite{Wang09, Wang1}
 The item (iii) follows from Proposition \ref{exten}.

The main results of this section are Proposition \ref{ricco} and Proposition \ref{ricpr} on connection coefficients which will
be crucially used in Section \ref{sec_6} and Section \ref{sec_7}.

\begin{proposition}\label{ricco}
Let $p$ be a fixed number satisfying $0<1-\frac{2}{p}<s-2$. Let $\D_*=(\sn, \sn_L)$. Under the condition (\ref{pi.2}), there hold on $\widetilde{\D^+}\subset[0,\tau_*]\times \Sigma$ the estimates
\begin{align}
&\tir\widetilde{{\emph \tr} \chi}\approx 1,\quad \|\tir^\f12 z\|_{L^\infty(\widetilde{\D^+})}\les \la^{-\f12}, \label{comp2} \displaybreak[0]\\
%&\|\tir^{1/2} z\|_{L^\infty L^p_\omega}\les \la^{-1/2}\label{ricstu}\\
&\|\tir^{3/2}\sn z\|_{L_t^\infty L_u^\infty L_\omega^p(\widetilde{\D^+})}\les \la^{-\f12}\label{ricp}, \displaybreak[0]\\
&\|\tir \sn z, \tir \sn \chih\|_{L_t^2 L_\omega^p(C_u\cap \widetilde{\D^+})}\les \la^{-\f12}\label{sna}, \displaybreak[0]\\
&\left\|z, \chih, {\emph\tr}\chi-\frac{2}{t-u}, \zeta \right\|_{L_t^{\frac{q}{2}} L_x^\infty}\les \la^{\frac{2}{q}-1-4\ep_0(\frac{4}{q}-1)},
\quad q>2 \mbox{ and close to } 2, \label{ric1.1} \displaybreak[0]\\
&\left\|\frac{\bb^{-1}-1}{\tir}\right\|_{L_t^2 L_x^\infty} + \left\|\frac{\bb^{-1}-1}{\tir^{\f12}}\right\|_{L^{2p}_\omega}
+\|\tir\D_*(\frac{\bb^{-1}-1}{\tir})\|_{L_t^2 L_\omega^p} \les \la^{-\f12}. \label{ric4}
%&\|r\ckk\mu\|_{L_u^2 L_t^\infty L_\omega^{p}}\les \la^{-\f12+3\ep_0}, \quad 2<p<2(1+4\ep_0)\label{ric3}
\end{align}
Moreover, in $\D^+$ there holds
\begin{align}
&\left\|z, \chih, {\emph \tr}\chi-\frac{2}{t-u},\zeta\right\|_{L_t^2 L_x^\infty(\D^+)}\les \la^{-\f12-4\ep_0}. \label{ric1}
\end{align}
\end{proposition}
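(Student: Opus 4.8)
The plan is to prove (\ref{comp2})--(\ref{ric1}) simultaneously with the construction in Proposition~\ref{exten}, by a continuity argument in $t$ (equivalently, by extending the $v$-foliation while checking that the transport coordinates remain non-degenerate), using the estimates of Lemma~\ref{inii} as initial data at the cone vertices on $\Ga^+$ and on $\cup_v S_v$ at $t=0$. The two engines are: (a) the transport equations along the null generators $L$ for the connection coefficients of the null frame, integrated with suitable integrating factors; and (b) Hodge and elliptic estimates on the surfaces $S_{t,u}$, via Lemma~\ref{basic1}. Throughout, the quadratic ``lower-order'' terms built from the controlled quantities $\bp\phi,\bp\bg,\chih,\zeta,\dots$ are absorbed by feeding back the bootstrap form of the very estimates being proved, using the smallness in (\ref{pi.2}) and $|I_*|\les\la^{1-8\ep_0}$ to make the relevant $L^1_t$ norms $\les\la^{-8\ep_0}$.

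\emph{The normalized null expansion.} Starting from the Raychaudhuri-type equation (\ref{s0}), the curvature decomposition (\ref{ricc6.7.1})--(\ref{ricc6.7.2}) gives $\bR_{LL}=-\f12 L^\a L^\b\Box_\bg(\bg_{\a\b})+(\bd_L V)_L+S_{LL}$, where $(\bd_L V)_L$ equals $L(V_4)$ up to products of connection coefficients with $V$. The decisive point is that, by the equation (\ref{wave1}), $\Box_\bg\phi$ equals a smooth function of $\phi$ times a quadratic expression in $\bp\phi$, hence so is $L^\a L^\b\Box_\bg(\bg_{\a\b})=\Box_\bg(g_{ij}(\phi))\,L^iL^j$; and $S_{LL}$ is quadratic in $\bp\bg$. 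Thus $\widetilde{\tr\chi}=\tr\chi+V_4$ satisfies a transport equation $L\widetilde{\tr\chi}+\f12\tr\chi\,\widetilde{\tr\chi}=\fE$ whose source $\fE$ contains only such quadratic lower-order terms ($|\chih|^2$, $k_{NN}\tr\chi$, $f(\phi)(\bp\phi)^2$, $S_{LL}$, and quadratic l.o.t.). Using $L\tir=1$ and comparing with the profile $2/\tir$ --- which solves the same equation with $\tr\chi$ replaced by $2/\tir$ --- the remainder $z=\widetilde{\tr\chi}-2/\tir$ satisfies $L(\tir^2z)=\tir^2\fE+(\text{lower-order terms in }z)$, with the $2/\tir$ singularity exactly cancelled; integrating along $\Upsilon_\omega$ from the vertex --- where $\tir z\to0$ on cones from $\Ga^+$ by Lemma~\ref{inii}(i), and where Lemma~\ref{inii}(iii) supplies the data on cones from $S_v$ at $t=0$ --- and running Gronwall yields $\tir\widetilde{\tr\chi}\approx 1$ and the weighted bound in (\ref{comp2}).

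\emph{Angular derivatives; the main obstacle.} Differentiating this transport equation by $\sn$ and commuting $\sn$ past $L$ (which produces terms $\chi\cdot\sn\widetilde{\tr\chi}$ and commutator terms involving $\sn\log\bb$) yields a transport equation for $\sn z$ with source $\sn\fE$. The genuinely dangerous term is the angular derivative of $f(\phi)(\bp\phi)^2$ and of $S_{LL}$, which contains $f(\phi)\,\sn\bp\phi\cdot\bp\phi$ and $\bp\bg\cdot\sn\bp\bg$: since $s-1<2$ there is no pointwise bound for $\sn\bp\phi$. This is why the conclusion for $\sn z$ is stated as the $L^2_tL^p_\omega(C_u)$ bound (\ref{sna}) and the $L^\infty_tL^\infty_uL^p_\omega$ bound (\ref{ricp}) rather than an $L^\infty$ bound: one keeps this term on the right and estimates it on null cones through the flux --- $\|\sn\bp\phi\|_{L^2(C_u)}$ and $\|\sn\bp\bg\|_{L^2(C_u)}$ are controlled by Lemma~\ref{lem:flux}, while $\bp\phi,\bp\bg\in L^2_tL^\infty_x$ by (\ref{pi.2}), so that $\sn\fE$ lies in a flux-type norm on $C_u$. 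It is essential here that the dangerous derivative is \emph{angular}; a generic first derivative would cost an extra half-derivative of the data, which one does not have. Integrating the $\sn z$ equation with this right-hand side and invoking the Sobolev/trace inequalities on $S_{t,u}$ of Lemma~\ref{basic1} gives (\ref{ricp}) and (\ref{sna}); then (\ref{ric1.1}) for $z$, and hence for $\tr\chi-2/\tir=z-V_4$, follows by interpolating $\|z\|_{L^p_\omega}$ against $\|\sn z\|_{L^p_\omega}$ via Lemma~\ref{basic1}(ii) and inserting the time weights coming from $|I_*|\les\la^{1-8\ep_0}$ and (\ref{pi.2}).

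\emph{Shear, lapse, torsion, and the wave zone.} The shear $\chih$ is recovered elliptically on $S_{t,u}$: its Codazzi equation $\div\chih=\f12\sn\tr\chi+(\text{connection terms})+(\text{curvature})$, together with its transport equation along $L$, expresses $\sn\chih$ through $\sn z$ and curvature, which closes the $z$--$\chih$ coupling and provides the $\chih$ contributions in (\ref{sna}), (\ref{ric1.1}), (\ref{ric1}). For the null lapse, integrating the transport equation for $\bb^{-1}$ along $L$ --- using Lemma~\ref{inii}(i) at the vertices of cones from $\Ga^+$ (where $\bb-1\to0$) and $\bb|_{t=0}=a$, controlled by Proposition~\ref{exten}, on cones from $\cup_v S_v$ --- and differentiating it (the source then involving $\sn k_{NN}$, again controlled by the flux), yields (\ref{ric4}); note $\sn\tir=0$, so $\tir\sn\big(\tir^{-1}(\bb^{-1}-1)\big)=\sn(\bb^{-1}-1)$. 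For the torsion, $\zeta=\sn\log\bb+k_{AN}$ by (\ref{3.19.1}); the $k$-part is bounded directly by (\ref{pi.2}), and the $L^\infty(S_{t,u})$ control of $\zeta$ required in (\ref{ric1.1})--(\ref{ric1}) comes from the Hodge system (\ref{cod_1}) for $\zeta$, in which the mass aspect function is replaced, via its hidden structure (\ref{llmu}), by an angular derivative of $\bg\cdot\bp\bg$, followed by the $L^\infty$-boundedness (with a harmless $\log\la$ loss) of the associated Calder\'on--Zygmund operator on $S_{t,u}$. Finally, the sharper wave-zone bounds (\ref{ric1}) follow from the same arguments restricted to $\D^+$, where $\tir\approx t$ so that the weighted norms above become genuine $L^2_tL^\infty_x$ norms, the extra factor $\la^{-4\ep_0}$ being harvested from the $\la^{-4\ep_0}$-smallness in (\ref{pi.2}) together with the restricted time interval. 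The main obstacle, as stressed above, is controlling angular derivatives of the curvature terms $\Box_\bg\bg$ and $S_{\a\b}$ along null hypersurfaces without a pointwise bound on two derivatives of $\phi$ --- precisely what forces the mixed $L^p_\omega(C_u)$ norms and the systematic use of the flux estimate of Lemma~\ref{lem:flux}.
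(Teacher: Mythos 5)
Your proposal is correct in outline and follows essentially the same route as the paper: the cancellation of $L(V_4)$ in the transport equation for $\widetilde{\tr\chi}=\tr\chi+V_4$, the coupled transport–Hodge system for $\sn z$ and $\sn\chih$ closed through the null-cone flux of $\sn\bp\phi$, the Calder\'on–Zygmund recovery of $L^\infty(S_{t,u})$ bounds for $\chih$ and $\zeta$, the replacement of the mass aspect function by its hidden structure $\mu=\sn(\bg\cdot\bp\bg)+\mathrm{l.o.t.}$ in the div–curl system for $\zeta$, and the integration of $L\bb=-\bb k_{NN}$ for the lapse, all run inside a bootstrap/continuity argument seeded by the vertex and $t=0$ data of Lemma \ref{inii} and Proposition \ref{exten}. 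This matches the paper's proof in Sections 5.1–5.4.
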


\begin{proposition}\label{ricpr}
On null cone $C_u$ contained in $\widetilde{\D^+}$, there hold
\begin{align}
&\|z\|_{L_t^2 L_\omega^\infty(C_u\cap \widetilde{\D^+})}+\|\chih\|_{L_t^2 L_\omega^\infty(C_u\cap \widetilde{\D^+})}\les\la^{-\f12}, \label{ric3.18.1}\\
 &\|\p ( \stackrel{\circ}{\ga}-\ga^{(0)})\|_{L_\omega^p L_t^\infty(C_u\cap \widetilde{\D^+})}
 \le \la^{-4\ep_0}, \quad \|\stackrel{\circ}{\ga}-\ga^{(0)}\|_{L^\infty}\les \la^{-4\ep_0}, \label{8.0.3}
\end{align}
where $\stackrel{\circ}{\ga}=(t-u)^{-2}\ga$.
\end{proposition}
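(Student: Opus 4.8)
\textbf{Proof plan for Proposition \ref{ricpr}.}

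The plan is to derive both estimates by integrating the relevant transport equations along the null generators $\Upsilon_\omega(t)$ of $C_u$, starting from the initial data supplied by Lemma \ref{inii} (parts (i)--(iii)) and using the hierarchy of bounds already collected in Proposition \ref{ricco}. The key point is that the norms in \eqref{ric3.18.1} and \eqref{8.0.3} are $L^\infty_\omega$-type norms on each generator (rather than the $L^p_\omega$ or $L^2_t L^p_\omega$ norms of Proposition \ref{ricco}), so the task is essentially to promote the angular integrability already in hand to pointwise-in-$\omega$ control by exploiting the transport (ODE) structure.

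First I would treat \eqref{ric3.18.1}. For $\chih$ one uses the transport equation along $L$ for $\chih$ (the traceless part of the Raychaudhuri/second variation system, schematically $\nabla_L \chih + \tr\chi\,\chih = -\hat\alpha + \text{l.o.t.}$, where the curvature term $\hat\alpha$ is handled via the decomposition \eqref{ricc6.7.1}--\eqref{ricc6.7.2}, exactly as in Section \ref{sec_5}'s proof of \eqref{sna}). Multiplying by the integrating factor coming from $\tir\,\widetilde{\tr\chi}\approx 1$ and integrating from the vertex (or from $S_v$ at $t=0$), the initial terms vanish or are $\lesssim \la^{-1/2}$ by Lemma \ref{inii}(i),(iii), and the source is controlled in $L^2_t$ along each fixed generator by the flux-type and $L^2_t L^p_\omega$ bounds of Proposition \ref{ricco}; a Gr\"onwall argument then closes the $L^2_t L^\infty_\omega(C_u)$ estimate. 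The bound for $z$ is analogous, using the transport equation \eqref{lz} for $\widetilde{\tr\chi}$ (equivalently for $z$) established in Section \ref{sec_6}, where the crucial cancellation of the top-order Ricci term $\bR_{LL}$ under the normalization has already removed the worst source; the remaining right-hand side is quadratic in controlled Ricci coefficients ($|\chih|^2$, $k_{NN}\,\tr\chi$, etc.) plus lower-order metric terms, all of which are $L^1_t$ on each generator with the stated size by \eqref{ric1.1} and \eqref{comp2}.

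For \eqref{8.0.3} I would use \eqref{trscoord2}, $\tfrac{d}{dt}\ga_{ab} = 2\chi_{ab}$, rewritten for the rescaled metric $\cga = \tir^{-2}\ga$: one gets $\tfrac{d}{dt}\cga_{ab} = 2\tir^{-2}(\chi_{ab} - \tir^{-1}\ga_{ab})$, whose right-hand side is, up to the trace adjustment, exactly $\chih$ together with $\tr\chi - 2/\tir$ and the $V_4$ correction, i.e.\ precisely the quantities bounded in \eqref{ric1.1} and \eqref{ric3.18.1}. Integrating in $t$ from $0$ (where $\cga_{ab}\to \ga^{(0)}_{ab}$ by Lemma \ref{inii}(ii) and Proposition \ref{exten}) gives $\|\cga - \ga^{(0)}\|_{L^\infty}\lesssim \la^{-4\ep_0}$ after using $\|\,z,\chih,\tr\chi-2/\tir\,\|_{L^2_t L^\infty} \lesssim \la^{-1/2-4\ep_0}$ from \eqref{ric1} (and Cauchy--Schwarz in $t$, with $\tau_*\lesssim \la^{1-8\ep_0}$). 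For the derivative bound one differentiates \eqref{trscoord2} in $\omega$, producing a transport equation for $\p(\cga - \ga^{(0)})$ whose source involves $\sn\chih$, $\sn z$ and Christoffel symbols of $\cga$ contracted with $\chih$; the first two are controlled in $L^2_t L^p_\omega(C_u)$ by \eqref{sna}, the Christoffel terms are absorbed by the already-established $C^0$ closeness of $\cga$ to $\ga^{(0)}$, and a Gr\"onwall argument in $t$ along each generator yields $\|\p(\cga - \ga^{(0)})\|_{L^p_\omega L^\infty_t(C_u)}\lesssim \la^{-4\ep_0}$.

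The main obstacle I anticipate is the bootstrapping/continuity aspect: the estimates for $\chih$, $z$ and their angular derivatives are coupled (the curvature source for $\chih$ contains, through the decomposition \eqref{ricc6.7.1}, terms that are themselves only controlled modulo the structure of $\widetilde{\tr\chi}$ and $\mu + 2\sD\sigma$), and the parametrization of $\widetilde{\D^+}$ by transport coordinates is itself valid only a posteriori. So the proof should really be run as part of a single continuity argument on $\widetilde{\D^+}$ — assume \eqref{ric3.18.1}--\eqref{8.0.3} with doubled constants, use them to make sense of the coordinates and the $S_{t,u}$-tangent tensor norms, re-derive the estimates with the stated constants, and conclude. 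Keeping track of the precise powers of $\tir$ in the integrating factors (so that the $\tir^{3/2}\sn z$, $\tir\sn\chih$ weights in Proposition \ref{ricco} feed correctly into the unweighted $L^\infty_\omega$ norms here) is the delicate bookkeeping, but it is routine once the transport structure is set up.
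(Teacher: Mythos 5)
Your treatment of $z$ in \eqref{ric3.18.1} and of \eqref{8.0.3} matches the paper: $z$ is indeed obtained by integrating \eqref{lz} along the generators (with $|\chih|^2$ handled through the bootstrap assumption \eqref{ba3.18.1} and $\pi$ through \eqref{pi.2}), and the metric-comparison estimates follow from \eqref{trscoord2}/\eqref{dtga}, the transport lemma, \eqref{ric1}, \eqref{sna} and the Christoffel bound \eqref{gaa}, exactly as you describe. Your closing remark about running everything inside one continuity argument is also how the paper organizes Section \ref{sec_5}.

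There is, however, a genuine gap in your argument for the $\|\chih\|_{L_t^2 L_\omega^\infty}$ part of \eqref{ric3.18.1}. You propose to integrate the transport equation \eqref{s2} for $\chih$ along each fixed generator and close with Gr\"onwall, claiming the source is "controlled in $L^2_t$ along each fixed generator by the flux-type and $L^2_t L^p_\omega$ bounds." Those two things are incompatible: after the decomposition of Lemma \ref{decom_lem}(i), the curvature source $\bR_{4A4B}-\f12\bR_{44}\delta_{AB}$ contains $\D_*\pi=(\sn,\sn_L)\pi$, i.e.\ second derivatives of $\phi$ restricted to the null cone, and with $H^{2+\ep}$ data these are only controlled in $L_t^2 L_\omega^p$ for \emph{finite} $p$ via the flux (see \eqref{flux_3}); there is no pointwise-in-$\omega$ (or $L_\omega^\infty$) control of $\sn\bp\phi$ on $C_u$. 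Hence integrating \eqref{s2} generator by generator can only produce $\|\chih\|_{L_t^2 L_\omega^p}$ for finite $p$ (this is \eqref{pric1}), never the $L_\omega^\infty$ bound. The paper closes this gap by an elliptic step you have omitted: it couples the transport equation \eqref{ldz_2} for $\sn z$ with the Codazzi equation \eqref{dchi1}, ${\div}\,\chih=\f12(\sn z-\sn V_4)+\dots$, and the Calderon--Zygmund estimate of Lemma \ref{hdgm1} to obtain $\|\tir\sn\chih\|_{L_t^2 L_\omega^p}\les\la^{-\f12}$ (the chain \eqref{2psnz}, \eqref{rdchi}, \eqref{sna_1}), and only then passes to $L_\omega^\infty$ via the Sobolev inequality \eqref{sobinf} on $S_{t,u}$, $\|\chih\|_{L_\omega^\infty}\les\|\tir\sn\chih\|_{L_\omega^p}+\|\chih\|_{L_\omega^2}$. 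Without this detour through the Hodge system and the angular Sobolev embedding, the $L_\omega^\infty$ estimate for $\chih$ does not close.
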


The proofs of Proposition \ref{ricco} and Proposition \ref{ricpr} rely on a bootstrap argument.  We make the bootstrap assumptions that
on any $C_u$ contained in $\widetilde{\D^+}$ there hold
\begin{align}
&\|\chih\|_{L_t^2 L_\omega^\infty(C_u)}+\|z\|_{L_t^2 L_\omega^\infty(C_u)}\le \la^{-\f12 +2\ep_0},\label{ba3.18.1}\\
 &\|\p (\stackrel{\circ}{\ga}-\ga^{(0)})\|_{L_t^\infty L_\omega^p}\le \la^{-\ep_0}, \,\|\stackrel{\circ}{\ga}-\ga^{(0)}\|_{L^\infty}\le \la^{-\ep_0}\label{ba3},\\
&\|\chih\|_{L_t^2 L_\omega^\infty(C_u)}+\|z\|_{L_t^2 L_\omega^\infty(C_u)}\le \la^{-\f12} \quad \mbox{in } \D^+,\label{ric2a}\\
%&\|\chih, z\|_{L_t^2 L_\omega^\infty(C_u)}\le \la^{-\f12 -2\ep_0}, \mbox{ in }\D^+\label{ba2}
& \|\zeta\|_{L_t^2L_\omega^\infty(C_u\cap \D^+)} \le \la^{-\f12} \quad \mbox{in } \D^+. \label{ric3a}
\end{align}
We also assume that on any $S_{t,u}\subset\widetilde{\D^+}$, there hold
\begin{align}
& \|\tir (\chih, z)\|_{L_\omega^p}\le 1 \label{aux_1}, \displaybreak[0]\\
& |\bb-1|\le \f12. \label{bb_3}
\end{align}
We will improve (\ref{ba3.18.1}) to (\ref{ric3.18.1}), (\ref{ric2a}) and (\ref{ric3a}) to (\ref{ric1}),  and (\ref{ba3}) to (\ref{8.0.3}).
(\ref{aux_1}) will be improved in (\ref{pric2}). (\ref{bb_3}) will be improved in (\ref{bb_4}). At the same time, we will
derive all the other conclusions in Proposition \ref{ricco} and Proposition \ref{ricpr}.
The proofs of Proposition \ref{ricco} and Proposition \ref{ricpr} then follow by a classical continuity
argument.

To complete the bootstrap argument, we rely heavily on the following structure equations for the connection coefficients
on null hypersurfaces $C_u$ in $\widetilde{\D^+}$ (see \cite[Chapter 7]{CK} and \cite{KRduke}):
\begin{align}
&L \bb=-\bb { k}_{NN}, \label{lb} \displaybreak[0]\\
&L\tr\chi+\f12 (\tr\chi)^2=-|\chih|^2-{ k}_{NN} \tr\chi-\bR_{44}, \label{s1} \displaybreak[0]\\
&\sn_L \chih_{AB}+\f12 \tr\chi \chih_{AB}=-{k}_{NN} \chih_{AB}-(\bR_{4A4B}-\f12 \bR_{44} \delta_{AB}), \label{s2} \displaybreak[0]\\
&\sn_L \zeta+\f12\tr\chi \zeta=-(k_{BN}+\zeta_B) \chih_{AB}-\f12 \tr\chi k_{AN}-\f12 \bR_{A4 43}, \label{tran2} \displaybreak[0]\\
&(\div \chih)_A+\chih_{AB}\c k_{BN}=\f12(\sn \tr\chi+k_{AN} \tr\chi)+\bR_{B4BA}, \label{dchi} \displaybreak[0]\\
&\div \zeta=\f12(\mu-k_{NN} \tr\chi-2|\zeta|^2-|\chih|^2-2k_{AB}\chih_{AB})-\f12\delta^{AB}\bR_{A43B}, \label{dze} \displaybreak[0]\\
&\curl \zeta=-\f12 \chih\wedge \chibh+\f12 \ep^{AB}\bR_{B43A}, \label{dcurl} \displaybreak[0]\\
&L \tr\chib+\f12 \tr\chi \tr\chib=2\div \zb+k_{NN} \tr\chib-\chih\c \chibh+2|\zb|^2+\delta^{AB}\bR_{A34B}, \label{mub} \displaybreak[0]\\
&\sn_\Lb \chih_{AB}+\f12 \tr\chib \chih_{AB}=-\f12 \tr\chi\chibh_{AB}+2\sn_A\zeta_B-\div \ze \delta_{AB}+k_{NN} \chih_{AB} \label{3chi} \\
&\quad\quad\quad\quad\quad \quad\quad+(2\zeta_A\ze_B-|\ze|^2\delta_{AB})+\bR_{A43B}-\f12 \delta^{CD}\bR_{C43D}\delta_{AB}.\nn
\end{align}

We first draw some simple important conclusions from the bootstrap assumptions (\ref{aux_1}), (\ref{bb_3})
and the second assumption in (\ref{ba3.18.1}).

\begin{lemma}\label{6.17.1}
On $\widetilde{\D^+}$ there holds
\begin{align}
v_t &\approx (t-u)^2, \label{comp_3_27}\\
|\bb-1| &\les \la^{-4\ep_0}<\frac{1}{4}. \label{bb_4}
\end{align}
\end{lemma}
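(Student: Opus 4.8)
The plan is to prove Lemma \ref{6.17.1} by integrating the transport equation (\ref{lb}) for the null lapse $\bb$ along the null generators $\Upsilon_\omega(t)$, then separately establishing the comparison $v_t \approx (t-u)^2$ for the area element from (\ref{lv}) together with the initial normalizations in Lemma \ref{inii} and Proposition \ref{exten}. These two estimates are intertwined only weakly, so I would treat $\bb$ first.

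\textbf{Step 1: Estimate on $\bb$.} From (\ref{lb}) we have $L\log\bb = -k_{NN}$, so along each null geodesic $\Upsilon_\omega$ parametrized by $t$ (recall $\frac{d\Upsilon_\omega}{dt}=L$, and $L$ applied to a scalar equals $\frac{d}{dt}$ along $\Upsilon_\omega$),
\begin{equation*}
\log \bb(t,u,\omega) - \log \bb(t_{\text{in}},u,\omega) = -\int_{t_{\text{in}}}^{t} k_{NN}\, dt',
\end{equation*}
where $t_{\text{in}}$ is the value of $t$ where $C_u$ begins: $t_{\text{in}}=u$ for $u\ge 0$ (with $\bb\to 1$ there by Lemma \ref{inii}(i)), and $t_{\text{in}}=0$ for $u=-v<0$ (with $\bb=a$ there by (\ref{db})). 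Since $|k_{NN}|\les |k|\les |\bp g|$ and $\|\bp g\|_{L^1_tL^\infty_x}\les \la^{-8\ep_0}$ by (\ref{pi.2}) (or (\ref{BA2})), the integral is bounded by $C\la^{-8\ep_0}$, which is $\ll \la^{-4\ep_0}$ for large $\la$. Combined with the bound $|a-1|\les \la^{-4\ep_0}$ from (\ref{a_3}) in Proposition \ref{exten} on the $t=0$ initial data, exponentiating gives $|\bb - 1|\les \la^{-4\ep_0}$, which for $\la\ge\La$ large is $<\tfrac14$. This proves (\ref{bb_4}).

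\textbf{Step 2: Comparison for the area element.} From (\ref{lv}), $L\log v_t = \tr\chi$. Write $\tr\chi = \widetilde{\tr\chi} - V_4 = \frac{2}{t-u} + z - V_4$. Integrating along $\Upsilon_\omega$ from $t_{\text{in}}$ to $t$ and using $L(t-u)=1$ (since $L t =1$ and $Lu=0$ as $u$ is constant on $C_u$), the main term $\frac{2}{t-u}$ integrates to $2\log\frac{t-u}{t_{\text{in}}-u}$, producing exactly the factor $(t-u)^2$ after comparison with the initial area element. For $u\ge 0$ one uses Lemma \ref{inii}(ii): $\stackrel{\circ}\ga_{ab}\to\ga^{(0)}_{ab}$ as $t\to u$, so $v_t/(t-u)^2 \to \sqrt{|\ga^{(0)}|}\approx 1$; for $u<0$ one uses (\ref{w8.1.1}), $\sqrt{|\ga|}\approx v^2$ at $t=0$. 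The error terms are controlled by $\int |z|\,dt'$ and $\int |V_4|\,dt'$: the bootstrap assumption (\ref{ba3.18.1}) gives $\|z\|_{L^2_tL^\infty_\omega(C_u)}\le \la^{-1/2+2\ep_0}$ hence $\int|z|\,dt'\les |t-u|^{1/2}\la^{-1/2+2\ep_0}$ which is small for $\la$ large (noting $|t-u|\le\tau_*\les\la^{1-8\ep_0}T$ and $T\le 1$, so this is $\les \la^{-2\ep_0}T^{1/2}\ll 1$), and $|V_4|\les|\bp g|$ has small $L^1_t$ norm by (\ref{pi.2}). So $\log(v_t/(t-u)^2)$ stays bounded, giving $v_t\approx (t-u)^2$, i.e. (\ref{comp_3_27}).

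\textbf{Main obstacle.} The genuinely delicate point is that this lemma sits inside the continuity/bootstrap argument for Proposition \ref{ricco} and Proposition \ref{ricpr}, so I must be careful to use only the bootstrap assumptions (\ref{ba3.18.1})--(\ref{bb_3}) and the already-established initial data estimates (Lemma \ref{inii}, Proposition \ref{exten}), not the conclusions being proved. In particular the $z$-bound available here is only the weak bootstrap bound $\la^{-1/2+2\ep_0}$, not the improved $\la^{-1/2}$; this is why the smallness comes with room to spare, relying on the factor $|t-u|^{1/2}\les \la^{(1-8\ep_0)/2}T^{1/2}$ being beaten by $\la^{-1/2+2\ep_0}$, i.e. on $2\ep_0 < 4\ep_0$ — harmless, but it must be tracked. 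The other mild subtlety is handling the two families of null cones (those emanating from $\Ga^+$ and those from the $v$-foliation $S_v$ at $t=0$) uniformly, which just requires invoking the correct initial condition in each case; no new ideas are needed beyond bookkeeping.
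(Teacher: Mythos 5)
Your proposal is correct and follows essentially the same route as the paper: integrate (\ref{lb}) along null generators using (\ref{pi.2}) and the initial normalizations ($\bb\to1$ on $\Ga^+$, $\bb=a$ at $t=0$ with $|a-1|\les\la^{-4\ep_0}$) for (\ref{bb_4}), and integrate the transport equation for $v_t\tir^{-2}$ using $\|\tr\chi-2/\tir\|_{L^1_tL^\infty}\les\la^{-2\ep_0}$ from the bootstrap bound on $z$ and (\ref{pi.2}) for (\ref{comp_3_27}). The only differences (working with $\log\bb$ and $\log v_t$ rather than $\bb$ and $v_t\tir^{-2}$ directly) are cosmetic, and your bookkeeping of which bootstrap assumptions are admissible at this stage matches the paper's.
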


\begin{proof}
Let $C_u$ be a null cone contained in $\widetilde{\D^+}$.  Using (\ref{trscoord2}) we can derive that
\begin{eqnarray}\label{larea_1}
L(v_t\tir^{-2})=\left(\tr\chi-\frac{2}{\tir}\right) v_t \tir^{-2}. % &&  L(\tir^2 v_t^{-1})=(\frac{2}{\tir}-\tr\chi) \tir^2 v_t^{-1}\nn
\end{eqnarray}
By virtue of (\ref{pi.2}) and the estimate for $z$ in (\ref{ba3.18.1}), we have
$
\|\tr\chi-\frac{2}{\tir}\|_{L_t^1 L_x^\infty(C_u)}\le \la^{-2\ep_0}.
$
Therefore, (\ref{comp_3_27}) follows by integrating (\ref{larea_1}) along  null geodesics and using the initial data conditions
given in Lemma \ref{inii} and Proposition \ref{exten} for the cases $u\ge 0$ and $u<0$.

Next we derive (\ref{bb_4}) using (\ref{lb}). For null cones $C_u$ with $u\ge 0$, by integrating (\ref{lb}) along null geodesics
and using Lemma \ref{inii} (i), we have
\begin{equation}\label{w8.1.2}
\bb-1=-\int_u^t \bb k_{NN} dt'
\end{equation}
which together with (\ref{bb_3}) and (\ref{pi.2}) implies that $|\bb-1|\les \la^{-8\ep_0}<\frac{1}{4}$.
For null cones $C_u$ with $u=-v<0$, by integrating (\ref{lb}) along null geodesics and using (\ref{db}) we have
\begin{equation}\label{w8.1.3}
\bb-a=-\int_0^t \bb k_{NN} dt'
\end{equation}
This together with (\ref{a_3}), (\ref{bb_3}) and (\ref{pi.2}) implies
$
|\bb-1|\les \la^{-8\ep_0}+\la^{-4\ep_0}\les \la^{-4\ep_0}<\frac{1}{4}.
$
\end{proof}

Lemma \ref{6.17.1} shows that $v_t$ and $\tir^2$ are comparable and $\bb$ can be considered as a positive constant away from zero.
Thus for any tensor $F$ on $S_{t, u}$ and $1\le q<\infty$ we have
$$
\|F\|_{L^q(S_{t, u})}\approx \|\tir^{\frac{2}{q}} F\|_{L_\omega^q(S_{t, u})}.
$$
In what follows, we will frequently use Lemma \ref{6.17.1} without explicit mention.

Using Lemma \ref{6.17.1} and  the second assumption of (\ref{ba3}), we can show that on $\widetilde{\D^+}$ there hold the following Sobolev inequalities
and trace inequalities:

\begin{enumerate}
\item[$\bullet$] For any scalar functions or $S_{t,u}$-tangent tensor fields $F$ there hold (see \cite{Wang10})
\begin{align}
&\|\tir^{-\f12} F\|_{L^2(S_{t,u})}+\|F\|_{L^4(S_{t,u})}\les \|F\|_{H^1(\Sigma_t)}, \label{trc_1} \\
&\|F\|_{L_u^2 L_\omega^2}\les \|\tir \nab_N F\|_{L^2_u L_\omega^2}+\|\tir^\f12 F\|_{L_u^\infty L_\omega^2}. \label{trc_2}
\end{align}

\item[$\bullet$] For any scalar functions or $S_{t,u}$-tangent tensor fields $F$ and $2<q<\infty$ there hold (see \cite{CK,KRsurf,KRduke}
\begin{align}
&\|F\|_{L_\omega^q(S_{t, u})} \les \|\tir \sn F\|_{L_\omega^2(S_{t, u})}^{1-\frac{2}{q}}\|F\|_{L_\omega^2(S_{t, u})}^{\frac{2}{q}}
+\|F\|_{L_\omega^2(S_{t, u})}, \label{sob}\\
&\|F\|_{L_\omega^\infty(S_{t,u})} \les \|r \sn F\|_{L_\omega^q(S_{t,u})}+\|F\|_{L_\omega^2(S_{t,u})}. \label{sobinf}
\end{align}

\item[$\bullet$]
For $q\ge 2$ and any scalar functions or tensor fields $F$ defined on $\widetilde{\D^+}\cap \Sigma_t$ there holds
\begin{equation}\label{tran_sob}
\|\tir^{\f12-\frac{1}{q}}F\|^2_{L_x^{2q} L_u^\infty}\les \|F\|_{L_\omega^\infty L_u^2} \left(\|\tir \nab_N F\|_{L_\omega^q L_u^2}+\|F\|_{L_\omega^q L_u^2}\right);
\end{equation}
see \cite[Lemma 2.13]{Wangricci} and its proof.
\item[$\bullet$]
For $0< 1-\frac{2}{q}<s-2$, there holds for scalar function $f$ on $\widetilde{\D}^+$
\begin{equation}\label{w7.28.1}
\|\tir f\|_{L_u^2 L_\omega^q(\Sigma_t\cap \widetilde{\D}^+)}\les \|f\|_{H^{s-2}(\Sigma_t\cap \widetilde{\D}^+)}
\end{equation}
\end{enumerate}
The fractional Sobolev inequality (\ref{w7.28.1}) can be proved by
decomposing the scalar function $f=\sum_{\mu>1} P_\mu f+P_{\le 1}f$. By (\ref{sob}) and (\ref{sobinf})
\begin{equation*}
\tir \|P_\mu f\|_{L_\omega^q}\les \|\tir |\sn P_\mu f|_\ga\|_{L_x^2}^{1-\frac{2}{q}}\|P_\mu f\|_{L_x^2}^\frac{2}{q}+\|P_\mu f\|_{L_x^2}
\end{equation*}
Using finite band property and the second assumption in (\ref{ba3}),
\begin{equation*}
\|\tir P_\mu f\|_{L_u^2 L_\omega^q(\Sigma_t\cap \widetilde{\D}^+)}\les (\mu^{1-\frac{2}{q}}+1)\|P_\mu f\|_{L_u^2 L_x^2(\Sigma_t\cap \widetilde{\D}^+)}.
\end{equation*}
Summing over $\mu>1$, with $0<1-\frac{2}{q}<s-2$
\begin{equation}\label{5.3.3}
\sum_{\mu>1}\|\tir P_\mu f\|_{L_u^2 L_\omega^q(\Sigma_t\cap \widetilde{\D}^+)}\les \|f\|_{H^{s-2}(\Sigma_t\cap \widetilde{\D}^+)}.
\end{equation}
By Bernstein inequality
\begin{equation}\label{5.3.4}
\|\tir P_{\le 1} f\|_{L_u^2 L_\omega^q(\Sigma_t\cap \widetilde{\D}^+)}\les \|f\|_{L^2(\Sigma_t\cap \widetilde{\D}^+)}.
\end{equation}
Hence (\ref{w7.28.1}) is proved.

By using Proposition \ref{eng3}, Lemma \ref{lem:flux}, (\ref{comp_3_27}) together with the above Sobolev and trace inequalities,
we can derive the following result.

\begin{lemma}\label{flux00}
Let $\D_*=(\sn, \sn_L)$ and let $2\le q\le p$. Then on $\widetilde{\D^+}$ there hold
\begin{align}
&\|\tir^{1-\frac{2}{q}}\emph{\bd} \bp \phi\|_{L_u^2 L_x^q(\Sigma_t\cap \widetilde{\D^+})}\les \la^{-\f12}, \label{mtr_1}\\
&\|\bp \phi\|_{L_u^2 L_\omega^p(\Sigma_t\cap \widetilde{\D^+})}
+\|\tir^{\f12}\bp \phi\|_{L^\infty L_\omega^{2p}(\Sigma_t\cap \widetilde{\D^+})}\les \la^{-\f12},\label{mtr_2}\\
&\|\D_* \bp \phi\|_{L^2(C_u\cap \widetilde{\D^+})}+\|\tir^{1-\frac{2}{p}}\D_* \bp \phi\|_{L_t^2 L_x^p(C_u \cap \widetilde{\D^+})}\les \la^{-\f12}.\label{mtr_3}
\end{align}
\end{lemma}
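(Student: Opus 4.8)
The plan is to derive \eqref{mtr_1}--\eqref{mtr_3} as the realizations, adapted to the null foliation of $\widetilde{\D^+}$, of the flat energy and flux estimates of Section \ref{sec_2}. First I would transfer Proposition \ref{eng3} and Lemma \ref{lem:flux} to the rescaled coordinates: since $\bp\phi$ and $\p^2\phi$ carry one and two derivatives respectively, the change of variables $(t,x)\mapsto(\la(t-t_k),\la x)$ multiplies their homogeneous Sobolev norms by $\la^{3/2-s}=\la^{-1/2-(s-2)}$, so that from Proposition \ref{eng3} (which gives $\|\bp\phi(t)\|_{H^{s-1}}+\|\p_t^2\phi(t)\|_{H^{s-2}}\les1$, hence $\|\p^2\phi(t)\|_{H^{s-2}}\les1$, every spatial derivative of $\bp\phi$ lying in $H^{s-2}$) one obtains, in the rescaled coordinates and uniformly in $t\in[0,\tau_*]$,
\[
\|\bp\phi(t)\|_{\dot H^{s-1}}+\|\p^2\phi(t)\|_{\dot H^{s-2}}\les\la^{-1/2}.
\]
Since $\bd\bp\phi=\p^2\phi-\Ga\c\bp\phi$ with $|\Ga|\les|\bp g|$, a product estimate together with \eqref{pi.2} also gives $\|\bd\bp\phi(t)\|_{H^{s-2}}\les\la^{-1/2}$, and Lemma \ref{lem:flux} rescaled gives $\F[\bp\phi]\les\la^{-1}$ and $\sum_{\mu>1}\mu^{2(s-2)}\F[(\bp\phi)_\mu]\les\la^{-1}$ on every $C_u\subset\widetilde{\D^+}$.

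The estimates \eqref{mtr_1} and \eqref{mtr_2} on $\Sigma_t$ then follow from the Sobolev and trace inequalities of the section, using $v_t\approx\tir^2$ from Lemma \ref{6.17.1} to match the powers of $\tir$. Indeed $v_t\approx\tir^2$ gives $\|\tir^{1-2/q}F\|_{L_x^q(S_{t,u})}\approx\|\tir F\|_{L_\omega^q(S_{t,u})}$, so \eqref{mtr_1} is precisely $\|\tir\,\bd\bp\phi\|_{L_u^2L_\omega^q(\Sigma_t\cap\widetilde{\D^+})}\les\la^{-1/2}$, which is the fractional Sobolev inequality \eqref{w7.28.1} applied to $f=\bd\bp\phi$ — legitimate since $0<1-2/q\le1-2/p<s-2$ — combined with $\|\bd\bp\phi(t)\|_{H^{s-2}}\les\la^{-1/2}$. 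For \eqref{mtr_2}, the $L_u^2L_\omega^p$ bound is obtained the same way applied to $f=\bp\phi$ (whose extra derivative of regularity, together with the near-vertex vanishing recorded in Lemma \ref{inii}, lets one dispense with the $\tir$-weight), while the $\tir^{1/2}$-weighted $L^\infty L_\omega^{2p}$ bound follows from the trace inequality \eqref{trc_1} interpolated against $\bp\phi\in\dot H^{s-1}$.

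For \eqref{mtr_3} on $C_u$, reading $\bp\phi$ as a bundle of scalar components $\p_\a\phi$ and recalling $\sn_Lf=Lf$ for scalars, one has $\|\D_*\bp\phi\|_{L^2(C_u)}^2=\sum_\a\F[\p_\a\phi]=\F[\bp\phi]\les\la^{-1}$ directly. The weighted $L^p$ term, which by $v_t\approx\tir^2$ reads $\|\tir\,\D_*\bp\phi\|_{L_t^2L_\omega^p(C_u\cap\widetilde{\D^+})}\les\la^{-1/2}$, I would handle by the dyadic argument of \cite{Wang10,Wangricci}: decompose $\bp\phi=\sum_{\mu>1}(\bp\phi)_\mu+P_{\le1}\bp\phi$, apply the Sobolev inequality \eqref{sob} to each $S_{t,u}$-piece, use the finite-band property to trade the angular derivative for a factor of $\mu$ and the flux $\F[(\bp\phi)_\mu]^{1/2}$ to control the resulting $L^2(C_u)$ quantities, and sum in $\mu$ using $\sum_\mu\mu^{2(s-2)}\F[(\bp\phi)_\mu]\les\la^{-1}$ together with $1-2/p<s-2$, which makes the series converge. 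I expect this last step to be the main obstacle: one has to organize the Littlewood--Paley summation so that the loss $\mu^{1-2/p}$ produced by the Sobolev step is absorbed by the weight $\mu^{-(s-2)}$ coming from the flux, while simultaneously controlling the discrepancy between the covariant derivatives $\D_*$ of the $1$-form $\bp\phi$ and the flat projections $(\bp\phi)_\mu$ (commutator terms $\sim\bp g\c\bp\phi$, lower order by \eqref{pi.2}) and the behavior near the cone vertex $\tir\to0$, where the requisite control is supplied by Lemma \ref{inii} and Proposition \ref{exten}.
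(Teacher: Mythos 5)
Your proposal follows essentially the same route as the paper: both reduce the lemma to the energy estimate of Proposition \ref{eng3} and the flux estimate of Lemma \ref{lem:flux}, and then transfer these to the null foliation via the comparison $v_t\approx\tir^2$, the fractional Sobolev inequality \eqref{w7.28.1} for \eqref{mtr_1}, the trace/Sobolev inequalities \eqref{trc_1}--\eqref{tran_sob} for \eqref{mtr_2}, and a dyadic Littlewood--Paley argument (which the paper imports from \cite[Proposition 2.6]{Wangricci}) for the weighted term in \eqref{mtr_3}; whether one rescales the coordinates first or at the end is immaterial.

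One step does not work as written: for the unweighted bound $\|\bp\phi\|_{L_u^2L_\omega^p}\les\la^{-\f12}$ in \eqref{mtr_2} you propose to apply \eqref{w7.28.1} to $f=\bp\phi$ and then ``dispense with the $\tir$-weight'' using extra regularity and the near-vertex vanishing of Lemma \ref{inii}. But \eqref{w7.28.1} only controls $\|\tir\,\bp\phi\|_{L_u^2L_\omega^p}$, which is strictly weaker than the unweighted norm near the vertex $\tir\to0$, and Lemma \ref{inii} records vanishing of Ricci coefficients ($\tir z$, $\bb-1$, etc.), not of $\bp\phi$, so it supplies nothing here. The correct (and the paper's) route is to first get $\|\bp\phi\|_{L_u^2L_\omega^2}\les\|\bp\phi\|_{H^1}$ from the trace inequalities \eqref{trc_1}--\eqref{trc_2}, and then upgrade to $L_\omega^p$ by the Sobolev inequality \eqref{sob}, interpolating against $\|\tir\sn\bp\phi\|_{L_u^2L_\omega^2}$; this uses only tools you already invoke elsewhere, so the gap is local and easily repaired.
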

\begin{proof}
Recall that $\phi$ is obtained from the solution of (\ref{wave1}) by the coordinate change
$(t, x) \to (\la(t-t_k), \la x)$. We first consider $\phi$ under the original coordinates.
Because of Proposition \ref{eng3} and Lemma \ref{lem:flux}, we may use the similar argument in the proof of
\cite[Propositions 2.6]{Wangricci} to show that  (\ref{mtr_3}) hold with $\la^{-\f12}$ on the
right hand sides replaced by a universal constant.
(\ref{mtr_1}) can be derived by using (\ref{w7.28.1}) and Proposition \ref{eng3}.

Consider (\ref{mtr_2}). By (\ref{trc_1}) and (\ref{trc_2}),
\begin{equation*}
\|\bp\phi\|_{L_u^2 L_\omega^2(\Sigma_t\cap \widetilde{\D^+})}\les \|\bp\phi\|_{H^1}.
\end{equation*}
 Also using (\ref{sob}), we obtain
  \begin{equation}\label{w7.28.2}
  \| \bp\phi\|_{L_u^2 L_\omega^p(\Sigma_t\cap \widetilde{\D^+})}\les \|\tir\sn\bp \phi\|_{L_u^2 L_\omega^2}^{1-\frac{2}{p}}\|\bp \phi\|_{L_u^2 L_\omega^2}^{\frac{2}{p}}+\|\bp \phi\|_{L_u^2 L_\omega^2}\les \|\bp\phi\|_{H^1}
  \end{equation}
 By using (\ref{tran_sob}), (\ref{sobinf}), (\ref{w7.28.2}) and (\ref{w7.28.1})
 \begin{align}
 \|\tir^{\f12}\bp \phi\|_{L^\infty L_\omega^{2p}(\Sigma_t\cap \widetilde{\D^+})}^2&\les \|\bp \phi\|_{L_\omega^\infty L_u^2}\|\tir |\bd \bp \phi|+ |\bp \phi|\|_{L_\omega^q L_u^2}\les \|\bp \phi\|_{H^{s-1}}^2
 \end{align}
  The desired estimates on $\phi$ then follow by using Proposition \ref{eng3} and rescaling the coordinates back.
%
% Using the assumption (\ref{aux_1}), in view of (\ref{comp_3_27}) and  $\|\tir^{\f12}\bp\phi\|_{L^\infty L_\omega^p}\les \la^{-\f12}$ from (\ref{mtr_2}),  similar to  %\cite[Propositions 2.6 and 2.7]{Wangricci}, we can prove (\ref{mtr_3}) based on (\ref{flux}).
\end{proof}

As consequences of Lemma \ref{flux00}, we have

\begin{proposition}\label{flux_2}
Let $\D_*=(\sn, \sn_L)$. Then for any $2\le q\le p$ and any scalar component of metric $\bg$ there hold
\begin{align}
&\|\tir \D_*\ti \pi\|_{L_t^2 L_\omega^q(C_u\cap \widetilde{\D^+})}\les \la^{-\f12},\label{flux2}\\
&\|\tir \emph{\bd}\ti\pi\|_{L_u^2 L_\omega^q(\Sigma_t\cap \widetilde{\D^+})}\les \la^{-\f12}, \label{eng4}\\
&\|\ti\pi\|_{L_u^2 L_\omega^q(\Sigma_t\cap \widetilde{\D^+})}
+\|\tir^{\f12}\ti\pi\|_{L^\infty L_\omega^{2q}(\widetilde{\D^+})}\les \la^{-\f12}. \label{transtrace}
\end{align}
\end{proposition}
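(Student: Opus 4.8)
The plan is to deduce Proposition \ref{flux_2} directly from Lemma \ref{flux00} by exploiting that every scalar component of the metric $\bg$ depends on $\phi$ through smooth functions of $\phi$, and that $\ti\pi$ is a sum of terms of the form $f(\phi)\bp\phi$ with $f$ a smooth function of $\phi$. First I would record the pointwise identities for the null-frame derivatives of such products. Writing $\ti\pi = f(\phi)\bp\phi$, we have schematically
\begin{equation*}
\D_*\ti\pi = f(\phi)\,\D_*\bp\phi + f'(\phi)\,\D_*\phi\,\bp\phi,
\end{equation*}
and similarly $\bd\ti\pi = f(\phi)\,\bd\bp\phi + f'(\phi)\,\bd\phi\,\bp\phi$. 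Since $\bd\phi$ and $\D_*\phi$ are components of $\bp\phi$ contracted with bounded frame vectors, all the factors $f(\phi), f'(\phi)$ are uniformly bounded by the ellipticity and boundedness conditions \eqref{g9} together with the $L^\infty$ bound $|\phi|\le \La_1+1$; and the quadratic terms $f'(\phi)\,\D_*\phi\,\bp\phi$ and $f'(\phi)\,\bd\phi\,\bp\phi$ are of the form (bounded)$\cdot\bp\phi\cdot\bp\phi$.

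Next I would bound the three norms in turn. For \eqref{eng4}, applying $\|\cdot\|_{L_u^2 L_\omega^q(\Sigma_t\cap\widetilde{\D^+})}$ to $\tir\,\bd\ti\pi$: the linear term $\tir f(\phi)\bd\bp\phi$ is controlled by $\|\tir\bd\bp\phi\|_{L_u^2L_\omega^q}\les\la^{-\f12}$, which follows from \eqref{mtr_1} (with the weight $\tir^{1-\frac2q}$ upgraded to $\tir$ at the cost of a factor $\tir^{2/q}\les 1$ on $\widetilde{\D^+}$ since $\tir\le\tau_*$... more carefully, one uses $\|\tir\bd\bp\phi\|_{L_u^2L_x^q}=\|\tir^{2/q}\cdot\tir^{1-2/q}\bd\bp\phi\|_{L_u^2L_x^q}$ and bounds $\tir^{2/q}$; but in fact the cleanest route is to observe \eqref{mtr_1} already gives the $L_x^q$ version which converts to $L_\omega^q$ by Lemma \ref{6.17.1}). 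For the quadratic term $\tir\,(\bp\phi)^2$, one factor of $\bp\phi$ is placed in $L^\infty$ — but we do not have $L^\infty$ control of $\bp\phi$ on all of $\widetilde{\D^+}$; instead I would use $\|\tir^{1/2}\bp\phi\|_{L^\infty L_\omega^{2p}}\les\la^{-1/2}$ from \eqref{mtr_2} for one factor and $\|\tir^{1/2}\bp\phi\|$ again (or $\|\bp\phi\|_{L_u^2L_\omega^p}$) for the other, using Hölder in $\omega$ and the smallness $\tau_*\le\la^{1-8\ep_0}T$ together with $T\le 1$ to absorb the resulting power of $\tir$; the quadratic term is thus lower order in $\la$. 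For \eqref{flux2} the same decomposition is used with the flux norm $\|\cdot\|_{L_t^2L_\omega^q(C_u\cap\widetilde{\D^+})}$, invoking \eqref{mtr_3} for the linear piece and the trace/flux estimates in Lemma \ref{flux00} for the quadratic piece. For \eqref{transtrace}, the estimate $\|\ti\pi\|_{L_u^2L_\omega^q}+\|\tir^{1/2}\ti\pi\|_{L^\infty L_\omega^{2q}}\les\la^{-1/2}$ follows immediately from \eqref{mtr_2} since $\ti\pi = f(\phi)\bp\phi$ with $f(\phi)$ bounded — there is no derivative and hence no quadratic term to worry about.

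The main obstacle I anticipate is bookkeeping the quadratic error terms $f'(\phi)(\bp\phi)^2$ in the weighted mixed norms: one must distribute the weight $\tir$ and the two $L^2$-type integrations so that each factor of $\bp\phi$ lands in a norm actually controlled by Lemma \ref{flux00}, and then verify that the extra powers of $\tir$ (bounded by $\tau_*^{\,\text{const}}\les\la^{\text{const}(1-8\ep_0)}T^{\text{const}}$) combine with the two factors of $\la^{-1/2}$ to stay $\les\la^{-1/2}$ — this works precisely because $T$ is small and $\tau_*$ is only polynomially large while we gain a full extra $\la^{-1/2}$. A secondary technical point is the interchange between $L_x^q(S_{t,u})$ norms (with respect to $d\mu_\ga$) and $L_\omega^q(S_{t,u})$ norms, which is legitimate by Lemma \ref{6.17.1}, i.e. $v_t\approx\tir^2$, so no genuine difficulty arises there. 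Once these are handled, Proposition \ref{flux_2} follows by summing the linear and quadratic contributions.
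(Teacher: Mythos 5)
Your proposal is correct and follows essentially the same route as the paper: the same product-rule decomposition $\D_*\ti\pi=f(\phi)\D_*\bp\phi+f'(\phi)\D_*\phi\,\bp\phi$, the linear piece controlled by Lemma \ref{flux00} (with the $L_x^q$-to-$L_\omega^q$ conversion via $v_t\approx\tir^2$), and the quadratic piece bounded by H\"older as $\tau_*^{1/2}\|\tir^{1/2}\bp\phi\|_{L^\infty L_\omega^{2q}}^2\les\la^{-1}\tau_*^{1/2}\les\la^{-1/2}$.
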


\begin{proof}
Since $\ti\pi=f(\phi)\bp \phi$,
$
\D_*\ti\pi=f'(\phi) \D_*\phi\c \bp \phi +f(\phi) \D_*\bp\phi.$
Then
$$|\ti\pi|\les |\bp \phi|, \quad\, |\D_* \ti\pi|\les |\bp \phi|^2+|\D_* \bp \phi|.$$

Thus, by using (\ref{mtr_3}) and the last inequality in (\ref{mtr_2}) we obtain
\begin{align*}
\|\tir \D_* \ti\pi\|_{L_t^2 L_\omega^q(C_u\cap \widetilde{\D^+})}
& \les \|\tir (\bp \phi)^2 \|_{L_t^2 L_\omega^q(C_u \cap \widetilde{\D^+})}
+ \|\tir \D_* \bp \phi\|_{L_t^2 L_\omega^q(C_u\cap \widetilde{\D^+})}\\
& \les \tau_*^{\f12}\|\tir^{\f12}\bp \phi\|_{L_t^\infty L_\omega^{2q}(C_u\cap \widetilde{\D^+})}^2 + \la^{-\f12}\les \la^{-\f12}
\end{align*}
which shows (\ref{flux2}). Similarly (\ref{eng4}) and (\ref{transtrace}) can be proved by virtue of Lemma \ref{flux00}.
\end{proof}

\subsection*{The symbols $\pi$}

 Let $\pi$ represent scalar functions or tensor fields
derived  by contracting  ( multiplying ) terms of type $\ti \pi$ with $L$, $\Lb$, or the projection
\begin{equation}\label{proj}
\sl\Pi_{\a\b}=\bg_{\a\b}+\f12 L_\a \Lb_\b+\f12 L_\b \Lb_\a.
\end{equation}
Clearly $|\pi|\les |\bp \phi|$. By virtue of Proposition \ref{flux_2} we have the following result.

\begin{lemma}
On null cones $C_u$ contained in $\widetilde{\D^+}$,
\begin{equation}\label{flux_3}
\|\tir (\sn \pi,\sn_L  \pi), \pi\|_{L_t^2 L_\omega^p(C_u\cap \widetilde{\D^+})}\les \la^{-\f12}.
\end{equation}
\end{lemma}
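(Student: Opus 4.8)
The plan is to expand $\sn\pi$ and $\sn_L\pi$ by the Leibniz rule, using the frame identities (\ref{6.7con}) and the projection formula (\ref{proj}), and then to match every resulting term against Proposition~\ref{flux_2}, the comparison $v_t\approx\tir^2$ of Lemma~\ref{6.17.1}, and the Ricci-coefficient estimates of Propositions~\ref{ricco}--\ref{ricpr}. Since $\pi$ is a contraction of a tensor $\ti\pi$ (with $|\ti\pi|\les|\bp\phi|$) against $L,\Lb$ and $\sl\Pi$, differentiating covariantly in a direction $X\in\{e_A,L\}$ produces two kinds of terms: those in which $X$ falls on $\ti\pi$, which are $\D_*\ti\pi$ contracted with the pointwise-bounded objects $L,\Lb,\sl\Pi$; and those in which $X$ falls on a frame object, which by (\ref{6.7con}) are $\ti\pi$ contracted with a Ricci coefficient among $\chi,\chib,\zeta,\zb,k$. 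A short computation with (\ref{6.7con}) and (\ref{proj}) shows that $\sn_L\pi$ only picks up $k_{NN}$ and $\zb$ (through $\bd_4 e_4$, $\bd_4 e_3$, $\bd_4\sl\Pi$), both of type $\ti\pi$, while $\sn\pi$ in addition picks up $\chi$ and $\chib$ (through $\bd_A e_4$, $\bd_A e_3$, $\bd_A\sl\Pi$).

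For the derivative terms on the left-hand side: the contribution $\tir\,\D_*\ti\pi\star(L,\Lb,\sl\Pi)$ is bounded by $\la^{-1/2}$ directly by (\ref{flux2}). The contribution $\tir\,\ti\pi\star(k,\zb)$ coming from $\tir\sn_L\pi$ is quadratic in $\ti\pi$, so it is bounded by $\tau_*^{1/2}\|\tir^{1/2}\ti\pi\|_{L^\infty_t L^{2p}_\omega}^2\les\tau_*^{1/2}\la^{-1}\les\la^{-1/2}$ using (\ref{transtrace}) and $\tau_*\le\la^{1-8\ep_0}T$, $T\le 1$; the $\chih$-term in $\tir\sn\pi$ is handled the same way, using $\|\chih\|_{L^2_t L^\infty_\omega(C_u)}\les\la^{-1/2}$ from (\ref{ric3.18.1}) in place of one factor of $\ti\pi$. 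The traceless part $\chibh$ of $\chib$ is reduced to the $\chih$-term by the identity $\chibh=-\chih-2\hat k$ (which follows from (\ref{6.7con}) and (\ref{thetan}), which give $\chib_{AB}=-\theta_{AB}-k_{AB}$, together with $\chih=\hat\theta-\hat k$, $\hat k$ the traceless part of $k|_{S_{t,u}}$). It therefore only remains to treat the trace pieces $\tir\,(\tr\chi)\,\ga\cdot\ti\pi$ and $\tir\,(\tr\chib)\,\ga\cdot\ti\pi$ of $\tir\sn\pi$.

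For these one writes $\tr\chi=\frac{2}{\tir}+z-V_4$ and $\tr\chib=-\frac{2}{\tir}+(\tr\theta-\frac{2}{\tir})-\ga^{AB}k_{AB}$, where $|\tr\theta-\frac{2}{\tir}|\les\la^{-4\ep_0}\tir^{-1}$ by the metric comparison (\ref{8.0.3}) and $|\bb-1|\les\la^{-4\ep_0}$ of Lemma~\ref{6.17.1}. After multiplication by $\tir$, the leading parts contribute $\pm2\,\ga\cdot\ti\pi$, which is again a tensor of type $\pi$ and is hence majorised by the $\|\pi\|$-term already present on the left-hand side of (\ref{flux_3}); the remaining pieces $\tir(z-V_4)\,\ti\pi$ and $\tir(\tr\theta-\frac{2}{\tir}-\ga^{AB}k_{AB})\,\ti\pi$ are of the same quadratic/small type as above, controlled by $\|\tir^{1/2}z\|_{L^\infty(\widetilde{\D^+})}\les\la^{-1/2}$ from (\ref{comp2}), $\|\tir^{1/2}\ti\pi\|_{L^\infty_t L^{2p}_\omega}\les\la^{-1/2}$ and $\tau_*\le\la^{1-8\ep_0}T$, giving a bound $\les\tau_*^{1/2}(\la^{-1}+\la^{-4\ep_0-1/2})\les\la^{-1/2}$. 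Thus everything is reduced to the zeroth-order term: since $|\pi|\les|\bp\phi|$, it suffices to prove $\|\bp\phi\|_{L^2_t L^p_\omega(C_u\cap\widetilde{\D^+})}\les\la^{-1/2}$, which I would obtain by interpolating (via the Sobolev inequality (\ref{sob})) the flux bound $\|\sn\bp\phi\|_{L^2(C_u)}\les\la^{-1/2}$ of (\ref{mtr_3}) against the lower-order quantity $\|\tir^{-1}\bp\phi\|_{L^2(C_u)}$, the latter being supplied by a Hardy-type estimate along the null generators together with the trace bound (\ref{mtr_2}).

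I expect this last, zeroth-order step to be the real obstacle. For a cone $C_u$ with $u\ge0$ the sphere $S_{t,u}$ collapses as $t\to u$ and the flux-type norms degenerate at the tip, so the Hardy inequality must be run carefully from the vertex, and it is precisely there that the hypothesis $s>2$ is used quantitatively: the rescaled bound $\|\bp\phi(t)\|_{\dot H^{s-1}}\les\la^{3/2-s}$ from Proposition~\ref{eng3}, restricted to the small spheres $S_{t,u}$, produces a positive power of $\tir$ that keeps the tip contribution integrable. For cones with $u=-v<0$ one has $\tir\ge v>0$ throughout and no such issue arises. The rest of the argument is the bookkeeping needed to recognise that the apparently borderline pieces $\tir\cdot\tr\chi$ and $\tir\cdot\tr\chib$ collapse exactly onto the $\|\pi\|$-term already present in (\ref{flux_3}), and that all the other terms are either genuine first derivatives controlled by Proposition~\ref{flux_2} or quadratic errors absorbed by the smallness $\tau_*^{1/2}\la^{-1}\les\la^{-1/2}$.
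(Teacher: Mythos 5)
Your expansion by Leibniz's rule, the classification of the resulting terms into $\D_*\ti\pi$ pieces (controlled by (\ref{flux2})) and $\ti\pi\star(\text{Ricci coefficient})$ pieces, the identity $\chibh=-\chih-2\hat k$, and the splitting $\tr\chi=\frac{2}{\tir}+z-V_4$ are all sound and track the paper's route for the first-order part. The bookkeeping differs in one inessential way: the paper bounds the quadratic errors by first showing $\|\tir(\chi,\chib,k_{AN})\|_{L^p_\omega}\les 1$ directly from the bootstrap assumption (\ref{aux_1}) together with (\ref{mtr_2}), then applies H\"older against $\|\ti\pi\|_{L^2_t L^\infty_\omega}\les\la^{-1/2-4\ep_0}$ from (\ref{pi.2}); you instead pick up a factor $\tau_*^{1/2}$ by Cauchy--Schwarz in $t$ and use the trace bound (\ref{transtrace}). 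Both close. (When you invoke the $\chih$ bound, you should quote the bootstrap assumption (\ref{ba3.18.1}) rather than its improvement (\ref{ric3.18.1}), which is a downstream conclusion; the weaker $\la^{-1/2+2\ep_0}$ still closes because of the $\tau_*^{1/2}\les\la^{1/2-4\ep_0}$ gain.)

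The genuine error is in your treatment of the zeroth-order term $\|\pi\|_{L^2_t L^p_\omega(C_u)}$, which you single out as ``the real obstacle'' and attack via a Hardy inequality along null generators, interpolation with the flux, and a discussion of tip degeneracy and a quantitative role for $s>2$. None of that is needed, and the emphasis is backwards: this step is the trivial part. Since $|\pi|\les|\ti\pi|\les|\bp\phi|$, and (\ref{pi.2}) provides $\|\bp\phi\|_{L^2_t L^\infty_x}\les\la^{-1/2-4\ep_0}$ on the rescaled slab, one has $\|\pi(t)\|_{L^\infty_\omega(S_{t,u})}\les\|\bp\phi(t)\|_{L^\infty_x}$ uniformly in $(t,u)$, and because the $L^p_\omega$ norm is computed against the fixed measure $d\mu_{{\Bbb S}^2}$ on the unit sphere (not $d\mu_\ga$), $L^\infty_\omega\hookrightarrow L^p_\omega$ with a constant independent of $\tir$. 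Hence $\|\pi\|_{L^2_t L^p_\omega(C_u)}\les\|\bp\phi\|_{L^2_t L^\infty_x}\les\la^{-1/2-4\ep_0}$, with no degeneration at the vertex, no Hardy inequality, and no reliance on the extra room in $s>2$. The ``interpolation against $\|\tir^{-1}\bp\phi\|_{L^2(C_u)}$'' you propose would also need a Hardy bound that you do not establish, and the heuristic that $s>2$ enters ``quantitatively at the tip'' through $\|\bp\phi\|_{\dot H^{s-1}}\les\la^{3/2-s}$ is not part of the actual mechanism (that exponent is also misstated after rescaling). Replace the last paragraph of your argument with the one-line $L^\infty_x\to L^p_\omega$ reduction above and the proof matches the paper's.
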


\begin{proof}
The last estimate follows immediately from (\ref{pi.2}). Now we prove the other two estimates.
We consider only the case $\pi=\sl\Pi^\mu_{\nu}\ti\pi_{\mu\cdots}$ since other cases can be proved similarly.
By using (\ref{6.7con}), we have
\begin{align*}
\sn_L \sl\Pi^{\a\b}=2\zb_A (L^\a e_A^\b+e_A^\a L^\b), \quad
\sn_A \sl\Pi^{\a\b}=\chi_{AB} e_B^\a \Lb^\b+\chib_{AB} e_B^\a L^\b.
\end{align*}
Hence, by using (\ref{3.19.1}) we have
\begin{equation}\label{snpi_1}
|\sn_L \pi|\le |\bd_L \ti\pi|+|\ti\pi|\c|k_{AN}|, \quad
|\sn \pi|\les |\sn\ti\pi|+ (|\chi|+|\chib|)\c| \ti\pi|.
\end{equation}
In view of (\ref{aux_1}) and (\ref{mtr_2}), we have
$
\|\tir(\chi, \chib, k_{AN})\|_{L_\omega^p}\les 1.
$
Therefore
\begin{equation*}
\|\tir (|\chi|+|\chib|+|k_{AN}|)\c \ti\pi\|_{L_\omega^p}(t)\les\|\ti\pi(t)\|_{L_\omega^\infty}
\end{equation*}
Applying (\ref{pi.2}) to $\ti\pi$,  we derive that
$
\|\tir(|\chi|+|\chib|+|k_{AN}|)\c \ti\pi\|_{L_t^2 L_\omega^p}\les \la^{-\f12-4\ep_0}.
$
Thus  from (\ref{snpi_1}) and (\ref{flux2}) we obtain (\ref{flux_3}).
\end{proof}

\subsection*{Hardy-Littlewood maximal function} For a scalar function $f(t)$ defined on $[0, \tau_*]$, its Hardy-Littlewood maximal function is defined by
\begin{equation*}
\M(f)(t)=\sup_{0\le t'\le \tau_*} \frac{1}{|t-t'|}\int_{t'}^t |f(\tau)| d\tau.
\end{equation*}
It is well-known that for any $1<q<\infty$ there holds
\begin{equation}\label{hlm}
\|\M(f)\|_{L_t^q} \les \|f\|_{L_t^q}.
\end{equation}

\subsection*{Hodge systems}
We denote by $\D_1$ the Hodge operator which sends $S_{t,u}$-tangent 1-forms $F$ to $(\div F, \curl F)$
and by $\D_2$ the Hodge operator which maps covariant $S_{t,u}$-tangent symmetric traceless 2-tensors $F$ to $\div F$.
We will rely on the following Calderon-Zygmund theorems, which are consequences of (\ref{ba3}).

\begin{lemma}\label{hdgm1}
Let $\D$ denote either $\D_1$ or $\D_2$ and let $p>2$ be the number in (\ref{ba3}). Then for
$2\le q\le p$ there holds
\begin{equation*}
\|\sn  F\|_{L^q(S_{t,u})}+\|\tir^{-1}F\|_{L^q(S_{t,u})}\les \| \D F\|_{L^q(S_{t,u})}
\end{equation*}
for any $S_{t,u}$-tangent tensor $F$ in the domain  of $\D$.
\end{lemma}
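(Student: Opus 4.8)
The final statement to prove is Lemma \ref{hdgm1}, the $L^q$ Calderón–Zygmund estimate for the Hodge operators $\D_1$ and $\D_2$ on the surfaces $S_{t,u}$.

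Let me think about this. We have $S_{t,u}$, a 2-surface diffeomorphic to $\mathbb{S}^2$, with induced metric $\ga$. We have the rescaled metric $\cga = (t-u)^{-2}\ga = \tir^{-2}\ga$, and the bootstrap assumption (\ref{ba3}): $\|\p(\cga - \ga^{(0)})\|_{L_t^\infty L_\omega^p} \le \la^{-\ep_0}$ and $\|\cga - \ga^{(0)}\|_{L^\infty} \le \la^{-\ep_0}$. So $\cga$ is close to the round metric $\ga^{(0)}$ on $\mathbb{S}^2$ in a Sobolev-type sense. The claim is that for $2 \le q \le p$, we have $\|\sn F\|_{L^q(S_{t,u})} + \|\tir^{-1} F\|_{L^q(S_{t,u})} \lesssim \|\D F\|_{L^q(S_{t,u})}$.

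The plan: First, reduce to the rescaled metric $\cga$. Under the scaling $\ga = \tir^2 \cga$, the Hodge operators $\D_1, \D_2$ scale homogeneously: $\div_\ga F = \tir^{-2} \div_{\cga}(\tir^2 \text{-rescaled } F)$ appropriately, and the covariant derivative $\sn$ with respect to $\ga$ and $\cga$ have the same Christoffel symbols (since conformal rescaling by a constant on each $S_{t,u}$... actually $\tir$ is constant on $S_{t,u}$, so $\ga$ and $\cga$ differ by a constant factor, hence have identical Levi-Civita connections and identical Hodge operators up to powers of $\tir$). So the estimate is equivalent to the corresponding estimate on $(\mathbb{S}^2, \cga)$ with all $\tir$ weights replaced appropriately, and the $\tir^{-1}F$ term corresponds to the zeroth-order term on a unit-size sphere. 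Concretely, writing everything in terms of $\cga$, one needs $\|\snc F\|_{L^q(\cS)} + \|F\|_{L^q(\cS)} \lesssim \|\D_{\cga} F\|_{L^q(\cS)}$ where $\cS = (\mathbb{S}^2, \cga)$.

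Second, prove this $L^q$ Hodge estimate on $(\mathbb{S}^2, \cga)$ by perturbation off the round sphere $(\mathbb{S}^2, \ga^{(0)})$. For the round metric $\ga^{(0)}$, this is the classical elliptic estimate for the Hodge operators on $\mathbb{S}^2$, which holds for all $1 < q < \infty$ (the operators $\D_1, \D_2$ on $\mathbb{S}^2$ are elliptic first-order systems, invertible on the relevant subspaces — $\D_1$ has trivial kernel on 1-forms orthogonal to... actually one needs to be careful: on $\mathbb{S}^2$, $\D_2$ has trivial kernel and $\D_1$ too, since $\mathbb{S}^2$ has no harmonic 1-forms; and the standard Calderón–Zygmund / elliptic regularity gives $\|F\|_{W^{1,q}} \lesssim \|\D F\|_{L^q}$). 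Then write $\D_{\cga} = \D_{\ga^{(0)}} + (\D_{\cga} - \D_{\ga^{(0)}})$. The difference operator $\D_{\cga} - \D_{\ga^{(0)}}$ has coefficients controlled by $\cga - \ga^{(0)}$ and its first derivatives $\p(\cga-\ga^{(0)})$, plus zeroth-order terms from the difference of Christoffel symbols and volume forms. One estimates $\|(\D_{\cga} - \D_{\ga^{(0)}})F\|_{L^q}$ by a product: schematically $\|(\cga - \ga^{(0)}, \p(\cga - \ga^{(0)})) \cdot (F, \snc F)\|_{L^q}$. Using Hölder with the $L^\infty$ bound on $\cga - \ga^{(0)}$ for the lowest-order piece, and the $L^p$ bound on $\p(\cga - \ga^{(0)})$ combined with a Sobolev embedding $W^{1,q} \hookrightarrow L^{\frac{2pq}{2p - pq + 2q}}$ type estimate... hmm, actually the cleanest route: since $q \le p$, bound $\|\p(\cga - \ga^{(0)}) \cdot F\|_{L^q} \le \|\p(\cga - \ga^{(0)})\|_{L^p} \|F\|_{L^{r}}$ with $\frac1q = \frac1p + \frac1r$, so $r \ge q$ (indeed $r = \frac{pq}{p-q} \ge q$ when $2q \le ...$, need $r \ge 2$; since $q>2$ and... one can absorb via the Sobolev embedding $\|F\|_{L^r(\mathbb{S}^2)} \lesssim \|F\|_{W^{1,q}(\mathbb{S}^2)}$ which holds for $r < \infty$ when $q \ge 2$, or $r \le \frac{2q}{2-q}$ when $q<2$). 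So $\|(\D_{\cga}-\D_{\ga^{(0)}})F\|_{L^q} \lesssim \la^{-\ep_0}(\|\snc F\|_{L^q} + \|F\|_{L^q})$ plus Sobolev-controlled terms, and for $\la$ large this is absorbed into the left side. Hence $\|\snc F\|_{L^q} + \|F\|_{L^q} \lesssim \|\D_{\cga} F\|_{L^q} + \la^{-\ep_0}(\|\snc F\|_{L^q} + \|F\|_{L^q}) \lesssim \|\D_{\cga}F\|_{L^q}$, and rescaling back to $\ga$ gives the claim with the correct $\tir$ weights.

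The main obstacle I anticipate is making the perturbation argument fully rigorous when $\cga - \ga^{(0)}$ is only controlled in $W^{1,p}$ with $p$ barely above $2$ — the product estimates need to be done carefully with the right Sobolev embeddings on the 2-sphere, and one must verify that the error term genuinely comes with the small constant $\la^{-\ep_0}$ so that it can be absorbed (the zeroth-order terms involving differences of Christoffel symbols must be shown to be $O(\la^{-\ep_0})$ pointwise or in $L^p$, which follows from (\ref{ba3}) since Christoffel symbols are linear combinations of $\cga^{-1} \cdot \p\cga$ and $\cga$ is uniformly close to $\ga^{(0)}$ hence uniformly elliptic). A secondary point is confirming the kernel/cokernel structure so that the round-sphere estimate $\|F\|_{W^{1,q}} \lesssim \|\D F\|_{L^q}$ holds with no lower-order correction — for $\D_2$ on traceless symmetric 2-tensors and $\D_1$ restricted appropriately this is standard since $H^1(\mathbb{S}^2;\mathbb{R}) = 0$, but it should be cited (e.g. from \cite{CK} or \cite{KRsurf}) rather than reproved. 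In fact, the slickest presentation is probably to simply invoke the corresponding estimate from \cite{CK} or \cite{Wang10} on a surface with metric close to round, exactly as the excerpt does for the Sobolev inequalities (\ref{sob})–(\ref{sobinf}), so the "proof" amounts to: rescale by $\tir$, then apply the standard Calderón–Zygmund theory for Hodge systems on a near-round 2-sphere using (\ref{ba3}).
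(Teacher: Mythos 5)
The paper does not prove this lemma at all: it simply cites \cite{KR2} and \cite[Lemma 2.18]{Wangricci}, and your rescale-and-perturb argument off the round sphere (using (\ref{ba3}) to absorb the error from $\D_{\cga}-\D_{\ga^{(0)}}$) is precisely the standard proof underlying those references, so your approach is essentially the same and is sound. The only point worth pinning down is the endpoint $q=p$, where your H\"older exponent $r=pq/(p-q)$ degenerates to $\infty$; this is still fine because $p>2$ gives $W^{1,p}(\mathbb{S}^2)\hookrightarrow L^\infty$, but it deserves an explicit word.
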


\begin{proposition}\label{cz}
Let $F$ be a covariant symmetric traceless $2$-tensor satisfying the Hodge system
\begin{equation}\label{fe1}
{\emph\div} F=\sn G+e \qquad \mbox{on } S_{t,u}
\end{equation}
for some scalar function $G$ and $1$-form $e$. Then for $2<q<\infty$ and $\frac{1}{q'}=\frac{1}{2}+\frac{1}{q}$ there hold
\begin{equation}\label{lpp2}
\|F\|_{L^q(S_{t,u})}\les\|G\|_{L^q(S_{t,u})}+\|e\|_{L^{q'}(S_{t,u})}
\end{equation}
and
\begin{equation}\label{cz0}
\|F\|_{L^\infty(S_{t,u})}\les\|G\|_{L^\infty(S_{t,u})}\log { \left(2+\|\tir^{\frac{3}{2}-\frac{2}{q}}\sn
G\|_{L^q(S_{t,u})}\right)}+\tir^{1-\frac{2}{q}}\|e\|_{L^q(S_{t,u})}.
\end{equation}
Similarly, for the Hodge system
\begin{equation}\label{divcurl}
\left\{
\begin{array}{lll}
{\emph\div} F= \sn\c G_1+e_1,\\
{\emph\curl} F=\sn\c G_2+e_2,
\end{array}
\right.
\end{equation}
where $G=(G_1, G_2)$ are $1$-forms and $e=(e_1, e_2)$ are scalar functions, there hold (\ref{lpp2}) and (\ref{cz0}) for any $q>2$.
% there holds
%\begin{align}
%\|F\|_{L^\infty(S_{t,u})}\les\|G\|_{L^\infty(S_{t,u})}\ln {\small \left(2+\|\tir^{\frac{3}{2}-\frac{2}{p}}\sn
%G\|_{L^p(S_{t,u})}\right)}+\tir^{1-\frac{2}{p}}\|e\|_{L^p(S_{t,u})}\label{cz1}.
%\end{align}
\end{proposition}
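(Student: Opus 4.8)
\textbf{Proof plan for Proposition \ref{cz}.}
The strategy is the standard one for Hodge systems on two-spheres: reduce to the flat case on the round sphere via the comparison estimate \eqref{ba3}, then invoke the known Calder\'on--Zygmund and elliptic theory there. First I would fix $S_{t,u}\subset\widetilde{\D^+}$ and rescale, writing $\ga=\tir^2\cga$ with $\cga$ the rescaled metric of Lemma \ref{6.17.1}; by \eqref{ba3} we have $|\cga-\ga^{(0)}|\les\la^{-\ep_0}$ and $\|\p(\cga-\ga^{(0)})\|_{L_\omega^p}\les\la^{-\ep_0}$, so $\cga$ is an $L^\infty$-small, $W^{1,p}$-bounded perturbation of the round metric $\ga^{(0)}$. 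Under this rescaling the operator $\div F=\sn G+e$ becomes a Hodge system on $(\S2,\cga)$ with the $\tir$-weights converting into pure powers; it therefore suffices to prove \eqref{lpp2} and \eqref{cz0} on $(\S2,\cga)$ with $\tir$ replaced by $1$.

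Next, on the round sphere $(\S2,\ga^{(0)})$ one has the exact representation: a symmetric traceless $2$-tensor $F$ with $\div F=h$ can be written $F=\D_2^*\D_2^{-1}h$ up to kernel, where $\D_2^*$ is the adjoint Hodge operator; since the kernel of $\D_2$ on $\S2$ is trivial, $F$ is recovered from $\div F$ by an operator of order $-1$, so $\D_2^{-1}$ gains one derivative and $\D_2^*$ loses one, giving the zeroth-order operator $F=\D_2^*\D_2^{-1}(\sn G+e)$. The term $\D_2^*\D_2^{-1}\sn G$ is a bounded operator on $L^q$ for all $1<q<\infty$ by Calder\'on--Zygmund theory (it is of the form: Riesz-type operator composed with $\sn\circ\D_2^{-1}$, which is order $0$), yielding the $\|G\|_{L^q}$ contribution in \eqref{lpp2}; the term $\D_2^*\D_2^{-1}e$ maps $L^{q'}\to W^{1,q'}\hookrightarrow L^q$ by Sobolev embedding on $\S2$ with the critical gain $\tfrac1{q'}=\tfrac12+\tfrac1q$, yielding the $\|e\|_{L^{q'}}$ contribution. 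For \eqref{cz0} I would split $F=F_1+F_2$ with $F_1=\D_2^*\D_2^{-1}\sn G$ and $F_2=\D_2^*\D_2^{-1}e$: the bound on $F_2$ in $L^\infty$ follows from the Sobolev embedding $W^{1,q}(\S2)\hookrightarrow L^\infty(\S2)$ for $q>2$, giving the $\tir^{1-2/q}\|e\|_{L^q}$ term; the bound on $F_1$ requires the endpoint-type estimate for the zeroth order operator $\D_2^*\D_2^{-1}\sn$, which fails to be bounded $L^\infty\to L^\infty$ but satisfies the logarithmic refinement
\[
\|F_1\|_{L^\infty}\les \|G\|_{L^\infty}\log\!\big(2+\|\sn G\|_{L^q}/\|G\|_{L^\infty}\big),
\]
a now-classical fact (essentially Lemma \ref{hdgm1} combined with a Littlewood--Paley decomposition on $\S2$, or the Brezis--Gallouet inequality applied to the order-zero CZ operator). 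Rescaling back in $\tir$ produces the stated $\|\tir^{3/2-2/q}\sn G\|_{L^q}$ inside the logarithm.

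Finally, to pass from the round metric to $\cga$ I would write $\div_{\cga}=\div_{\ga^{(0)}}+(\cga-\ga^{(0)})\ast\p+\p(\cga-\ga^{(0)})\ast$, treat the difference perturbatively, and absorb it using smallness of $\|\cga-\ga^{(0)}\|_{L^\infty}$ together with the $L^p$-bound on $\p(\cga-\ga^{(0)})$ and H\"older (the loss is harmless since $q\le p$); this is exactly the mechanism already used in Lemma \ref{hdgm1}, which I may simply quote. The $\div$-$\curl$ system \eqref{divcurl} is handled identically, inverting the full Hodge operator $\D_1$ (whose kernel on $\S2$ consists only of the trivial $1$-form since there are no harmonic $1$-forms on $\S2$), so that $F=\D_1^*\D_1^{-1}(\sn G+e)$ with $\D_1^*\D_1^{-1}$ order $-1$; the estimates \eqref{lpp2} and \eqref{cz0} then follow verbatim, and here the restriction is only $q>2$ since both components of $G$ are $1$-forms and no extra structure is needed.

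The main obstacle is the logarithmic endpoint estimate \eqref{cz0}: one must be careful that the operator $\D_2^*\D_2^{-1}\sn$ (resp.\ $\D_1^*\D_1^{-1}\sn$) is genuinely a Calder\'on--Zygmund operator of order zero on $(\S2,\cga)$ with constants depending only on the $W^{1,p}$-geometry controlled by \eqref{ba3}, and that the $\log$-loss interacts correctly with the $\tir$-rescaling. Making the dependence on the perturbed metric uniform—so that the CZ and Sobolev constants do not degenerate—is the delicate point; everything else is bookkeeping with the weights $\tir$ and the exponents $q,q'$.
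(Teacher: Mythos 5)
The paper does not actually prove Proposition \ref{cz}: immediately after Proposition \ref{cz.2} it states ``Proposition \ref{cz} is [KR2, Proposition 6.20]'' and leaves the matter there. So there is no in-paper argument to compare against; your proposal is an independent reconstruction, and as a sketch it follows the standard route for such Calder\'on--Zygmund statements. The skeleton is sound: the exponent bookkeeping is right (order $-1$ inverse sends $L^{q'}\to W^{1,q'}\hookrightarrow L^q$ with $\frac{1}{q'}=\f12+\frac1q$, and $W^{1,q}\hookrightarrow L^\infty$ for $q>2$ gives the $\tir^{1-2/q}\|e\|_{L^q}$ term), the triviality of the kernels of $\D_1,\D_2$ on $\S2$ is correctly invoked, and the logarithmic endpoint for the zeroth-order operator is exactly the Brezis--Gallouet-type refinement you cite. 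For the record, the cited proof in [KR2] works directly on $S_{t,u}$ via kernel bounds for the Green's function of the Hodge system under the weak-regularity (bootstrap) assumptions, rather than by rescaling to the round sphere and perturbing; both routes are legitimate, and yours has the advantage of quoting classical flat-sphere theory, at the cost of pushing all the difficulty into the perturbation step.

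Two points deserve correction or elaboration. First, the representation $F=\D_2^*\D_2^{-1}h$ does not typecheck ($\D_2^{-1}h$ is already a symmetric traceless $2$-tensor, to which $\D_2^*$ cannot be applied); what you mean is $F=(\D_2^*\D_2)^{-1}\D_2^* h$, using the Bochner identity for $\D_2^*\D_2$, and it is the composite $(\D_2^*\D_2)^{-1}\D_2^*\sn$ that is the order-zero operator. Second, the perturbative absorption of $\div_{\cga}-\div_{\ga^{(0)}}$ is where essentially all of the work lies and you have only gestured at it: the error term $F\cdot\p(\cga-\ga^{(0)})$ lives merely in $L^p$ by \eqref{ba3}, so to feed it back through the $e$-slot of \eqref{cz0} you must run H\"older with $q<p$ strictly and control an intermediate $L^{qp/(p-q)}$ norm of $F$ via the already-established $L^q$ theory (Lemma \ref{hdgm1}), and you must verify that the Calder\'on--Zygmund and Sobolev constants on $(\S2,\cga)$ depend only on $\|\cga-\ga^{(0)}\|_{L^\infty}+\|\p(\cga-\ga^{(0)})\|_{L^p}$. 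You flag this as ``the delicate point'' but do not carry it out; since the proposition is quoted from [KR2] precisely because this uniformity under rough metrics is nontrivial, a complete proof would need that step made explicit.
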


\begin{proposition}\label{cz.2}
Let $F$ and $G$ be $S_{t,u}$-tangent tensor fields of suitable type satisfying (\ref{fe1}) or
(\ref{divcurl}) with certain term $e$. Suppose $G$ is  a projection of a
tensor field $\ti G$ to tangent space of $S_{t,u}$ by $ \sl\Pi_{\mu}^{\mu'}{\ti G}_{\mu'\cdots}$ or
takes the form of $N^\mu{\ti G}_{\mu\cdots}, L^\mu{\ti G}_{\mu\cdots}$. Then for $q>2$, $1\le c<\infty$ and
$\delta>0$ sufficiently close to $0$, there holds
\begin{align}
\|F\|_{L^\infty(S_{t,u})}\les \|\mu^{\delta}P_\mu \ti G\|_{l_\mu^c L^\infty(S_{t,u})}
+\|\ti G\|_{L^\infty(S_{t,u})}+\tir^{1-\frac{2}{q}}\|e\|_{L^q(S_{t,u})}.\label{cz2}
\end{align}
Here $\ti G$ is regarded as its components under the coordinate frame $\p_t, \p_1, \p_2, \p_3$.
%The similar estimate holds for the Hodge system (\ref{divcurl}).
\end{proposition}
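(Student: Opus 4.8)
The approach is to feed the logarithmic Calder\'on--Zygmund estimate of Proposition~\ref{cz} a Littlewood--Paley decomposition of $\ti G$ in ${\Bbb R}^3$, using that the contracting tensors $L,\Lb,\sl\Pi$ are uniformly bounded with respect to $\bg$ (which is uniformly comparable to the Euclidean metric by (\ref{g9})). Write $\ti G=\sum_\mu P_\mu\ti G$ over dyadic $\mu$, and let $G_\mu$ be the corresponding contraction of $P_\mu\ti G$ (of the form $\sl\Pi^{\mu'}_{\mu}(P_\mu\ti G)_{\mu'\cdots}$, or $N^\mu(P_\mu\ti G)_{\mu\cdots}$, or $L^\mu(P_\mu\ti G)_{\mu\cdots}$), so that $|G_\mu|\les|P_\mu\ti G|$ pointwise and $G=\sum_\mu G_\mu$. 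By linearity of the Hodge operator, decompose $F=F^{(e)}+\sum_\mu F_\mu$, where, after the standard normalization on $S_{t,u}\cong{\Bbb S}^2$ permitted by (\ref{ba3}), $F^{(e)}$ solves the system (\ref{fe1}) (resp.\ (\ref{divcurl})) with $G=0$ and $F_\mu$ solves it with $G$ replaced by $G_\mu$ and $e=0$.

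For $F^{(e)}$, the last term of (\ref{cz0}) gives $\|F^{(e)}\|_{L^\infty(S_{t,u})}\les\tir^{1-\frac{2}{q}}\|e\|_{L^q(S_{t,u})}$, which is the third term on the right of (\ref{cz2}). For each $F_\mu$, apply (\ref{cz0}) with $e=0$ to get $\|F_\mu\|_{L^\infty(S_{t,u})}\les\|P_\mu\ti G\|_{L^\infty(S_{t,u})}\log\big(2+\|\tir^{\frac{3}{2}-\frac{2}{q}}\sn G_\mu\|_{L^q(S_{t,u})}\big)$. The key observation is that the argument of the logarithm is only \emph{polynomially} large. Indeed $\sn G_\mu$ equals $(\sn(\text{contracting tensor}))\cdot P_\mu\ti G$ plus $(\text{contracting tensor})\cdot\sn(P_\mu\ti G)$; by (\ref{thetan}), (\ref{6.7con}) and (\ref{3.19.1}) the derivative of the contracting tensor along $S_{t,u}$ is a combination of $\chi,\chib,k$, for which $\|\tir\,\chi\|_{L^p_\omega}$, $\|\tir\,\chib\|_{L^p_\omega}$, $\|\tir\,k\|_{L^p_\omega}\les1$ by (\ref{aux_1}) and Lemma~\ref{flux00}, while the Bernstein inequality gives $|\sn(P_\mu\ti G)|\le|\bp P_\mu\ti G|\les\mu\,\|P_\mu\ti G\|_{L^\infty(\Sigma_t)}$, and $\|P_\mu\ti G\|_{L^\infty(\Sigma_t)}$ is bounded by a fixed power of $\mu$ and $\lambda$ (Bernstein together with Proposition~\ref{eng3}, since $\ti G=f(\phi)\bp\phi$ with $\phi$ as in (\ref{7.26.1})). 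With $\tir\les\tau_*\les\lambda^{1-8\ep_0}$ this yields $\|\tir^{\frac{3}{2}-\frac{2}{q}}\sn G_\mu\|_{L^q(S_{t,u})}\les\mu^{a}\lambda^{b}$ for universal $a,b$, hence $\log(2+\cdots)\les_\delta\mu^{\delta/4}+\log\lambda$.

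Summing over $\mu$ then gives $\|F\|_{L^\infty(S_{t,u})}\les\sum_\mu(\mu^{\delta/4}+\log\lambda)\|P_\mu\ti G\|_{L^\infty(S_{t,u})}+\tir^{1-\frac{2}{q}}\|e\|_{L^q(S_{t,u})}$. For the range $\mu\ge\lambda^{\ep_0}$, H\"older in the dyadic parameter (the dyadic sum $\sum_{\mu\ge\Lambda}\mu^{-\eta}$ is geometric, so this works for every $c$) bounds $\sum_{\mu\ge\lambda^{\ep_0}}(\mu^{\delta/4}+\log\lambda)\|P_\mu\ti G\|_{L^\infty}$ by $\lambda^{-\ep_0\delta/2}(1+\log\lambda)\,\|\mu^{\delta}P_\mu\ti G\|_{l^c_\mu L^\infty(S_{t,u})}\les\|\mu^{\delta}P_\mu\ti G\|_{l^c_\mu L^\infty(S_{t,u})}$, the first term of (\ref{cz2}). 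The complementary block $\mu<\lambda^{\ep_0}$ contains only $\les\ep_0\log\lambda$ dyadic values, each contributing $\les\|\ti G\|_{L^\infty(S_{t,u})}$ (since $P_\mu$ is bounded on $L^\infty$); after absorbing the resulting logarithmic-in-$\lambda$ factor, harmless in the same spirit as the $\lambda^{\pm4\ep_0(\cdot)}$ losses already present in (\ref{ric1.1}), this leaves the term $\|\ti G\|_{L^\infty(S_{t,u})}$.

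\textbf{Main obstacle.} The genuine difficulty is the two-scale mismatch at low frequency: for $\mu\sim1$ one has $|\sn(P_\mu\ti G)|\sim\mu\sim1$ rather than $\tir^{-1}\sim\lambda^{-1}$, so the argument of the logarithm in (\ref{cz0}) is truly of size $\tir^{3/2}$, and no elementary Sobolev embedding on $S_{t,u}$ can remove this $\log$; one must accept the harmless $\log\lambda$ (equivalently $\lambda^{\ep_0}$) loss and check it is compatible with the way (\ref{cz2}) is applied, whereas the entire payoff of the $\delta$-weighted Besov norm $\|\mu^{\delta}P_\mu\ti G\|_{l^c_\mu L^\infty}$ resides in the high-frequency tail. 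A secondary point is the precise normalization making each $F_\mu$ the unique solution of its Hodge system on $S_{t,u}\cong{\Bbb S}^2$, which relies only on (\ref{ba3}).
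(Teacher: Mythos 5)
Your decomposition of $F$ along Euclidean Littlewood--Paley blocks of $\ti G$ followed by a blockwise application of the logarithmic estimate (\ref{cz0}) cannot produce the clean bound (\ref{cz2}): there is an unavoidable $\log\lambda$ (or $\log\tir$) loss in the $\|\ti G\|_{L^\infty}$ term that the stated proposition does not permit. The source is a scale mismatch you have not addressed: the Euclidean projection $P_\mu$ localizes $\ti G$ at \emph{Euclidean} frequency $\mu$, but the surface $S_{t,u}$ has radius $\tir\les\tau_*\les\lambda^{1-8\ep_0}$, so the restriction of $P_\mu\ti G$ to $S_{t,u}$ lives at effective angular frequency $\sim\mu\tir$; the $L^\infty$-endpoint Calder\'on--Zygmund failure measured by (\ref{cz0}) is $\log$ of this effective frequency, i.e.\ $\log\mu+\log\tir$. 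The $\log\mu$ part is indeed absorbed by the $\mu^\delta$ weight in the Besov norm, exactly as you argue for $\mu\ge\lambda^{\ep_0}$, but the $\log\tir\sim\log\lambda$ contribution is independent of $\mu$ and sits entirely in the low-frequency range you assign to $\|\ti G\|_{L^\infty}$. Your computation of the right-hand side therefore gives $(\log\lambda)\,\|\ti G\|_{L^\infty}$ (or $(\log\lambda)^2$ if one sums the $\les\log\lambda$ blocks separately rather than treating $P_{<\lambda^{\ep_0}}\ti G$ as a single block), and the closing remark that this is ``harmless in the same spirit as the $\lambda^{\pm4\ep_0(\cdot)}$ losses in (\ref{ric1.1})'' is an appeal to the eventual application rather than a proof of the proposition as stated, where the implicit constant is universal and in particular $\lambda$-independent. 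A secondary gap of the same origin is the step ``$\|P_\mu\ti G\|_{L^\infty(\Sigma_t)}$ is bounded by a fixed power of $\mu$ and $\lambda$'': this is a property of the specific $\ti G=f(\phi)\bp\phi$ used later, not a hypothesis of the proposition, and the estimate is supposed to be a soft Calder\'on--Zygmund fact valid for arbitrary $\ti G$.

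The reference the paper cites, \cite[Proposition~3.5]{Wangricci} (in the spirit of \cite{KR2}), avoids this by working with the \emph{geometric} Littlewood--Paley projections $\slP_\nu$ on $S_{t,u}$ (the heat-flow projections of \cite{KRsurf}, the same ones used in Appendix C). For a geometrically frequency-localized piece $\slP_\nu G$, the zero-order Hodge operator $\D^{-1}\sn$ maps $L^\infty(S_{t,u})\to L^\infty(S_{t,u})$ with a constant uniform in $\nu$ and in $\tir$ --- no log --- because the mollified kernel has an $L^1$ bound independent of the cap radius. One then sums in $\nu$ and transfers from $\slP_\nu G$ to the Euclidean $P_\mu\ti G$ using comparison and commutator lemmas between the two calculi (this is where the $\mu^\delta$-weight and the $\|\ti G\|_{L^\infty}$ term enter). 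If you wish to keep your present framework, you must either switch to the geometric projections on the sphere, or restate the proposition with the additional $\log\lambda$ factor and verify that every downstream application of (\ref{cz2}) in Sections~5--6 tolerates it; you should not present (\ref{cz2}) as proved in the form given.
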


Lemma \ref{hdgm1} can be found in \cite{KR2} and \cite[Lemma 2.18]{Wangricci}, Proposition \ref{cz} is \cite[Proposition 6.20]{KR2}
and Proposition \ref{cz.2} is \cite[Proposition 3.5]{Wangricci}.

\subsection*{Commutation formulas}

We will frequently use the following commutation formulas which can be found in \cite{CK,KRduke, KR1}.
\begin{enumerate}
\item[$\bullet$] Let $U_A$ be an $m$-covariant tensor tangent to $S_{t,u}$. There holds
\begin{align}\label{cmu1}
\begin{split}
&\sn_L\sn_B U_A-\sn_B \sn_L U_A  \\
&=-\chi_{BC}\c \sn_C U_A+\sum_{i}(\chi_{A_i B} \zb_C-\chi_{BC} \zb_{A_i}
+\bR_{{A_i}C4 B})U_{A_1\cdots\ckk C\cdots A_m}
\end{split}
\end{align}
and for any scalar function $f$ there holds
\begin{equation}\label{cmu2}
[L,\sn_A] f=- \chi_{AB} \sn_B f.
\end{equation}
Consequently, for any scalar function $f$ there holds
\begin{align}
L\sD f+  \tr\chi \sD f= \sD L f-2\chih\c \sn^2 f-\sn_A\chi_{AC}\sn_C f
+(\tr\chi \zb_C-\chi_{AC}\zb_{A}-\delta^{AB}\bR_{CA4B}) \sn_C f.\label{tran1}
\end{align}

\item[$\bullet$] For the vector fields $L$ and $\Lb$ we have
\begin{equation}\label{cmu3}
[L, \Lb]=2(\zb_A-\zeta_A) e_A+k_{NN}\Lb-k_{NN}L.
\end{equation}
\end{enumerate}

\subsection*{The transport lemma} We will use transport equations to control various Ricci coefficients.
The following result can be derived in view of (\ref{lv}) and (\ref{comp_3_27}).

\begin{lemma}[The transport lemma]\label{tsp2}
For $C_u$ contained in $\widetilde{\D^+}$ let $t_\tmin =\max\{u, 0\}$. For any $S_{t,u}$-tangent tensor field $F$ satisfying
\begin{equation*}%\label{tsp1}
\sn_L F+m {\emph\tr}\chi F= W
\end{equation*}
with a constant $m$, there holds
\begin{equation*}
v_t^m F(t)=\lim_{\tau\rightarrow t_{\tmin}}v_\tau^m F(\tau)+\int_{t_{\tmin}}^t v_{t'}^m Wdt'.
\end{equation*}
Similarly, for the transport equation
\begin{equation*}
\sn_L F+\frac{2m}{t-u} F=G\c F+W
\end{equation*}
with a constant $m$, if $\|G\|_{L_\omega^\infty L_t^1}\le C$, then there holds
\begin{equation*}
\tir^{2m} |F(t)|\les\lim_{\tau\rightarrow t_{\tmin}}(\tau-u)^{2m}|F(\tau)|+\int_{t_\tmin}^t {(t'-u)}^{2m} |W| d\tt.
\end{equation*}
The same result holds when $\frac{2}{t-u}$ in the transport equation is replaced by ${\emph\tr} \chi$.
The above integrals are taken along null geodesics on $C_u$.
\end{lemma}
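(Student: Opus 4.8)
The plan is to treat each assertion by an integrating-factor argument along the null generators $\Upsilon_\omega(t)$ of $C_u$, the two inputs being the area-element identity $L(v_t)=v_t\tr\chi$ from (\ref{lv}) and the comparability $v_t\approx(t-u)^2$ from (\ref{comp_3_27}) (Lemma \ref{6.17.1}).

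For the first identity I would multiply $F$ by the weight $v_t^m$. Since $v_t$ is a positive scalar along $\Upsilon_\omega$, one has $L(v_t^m)=m v_t^{m-1}L(v_t)=m v_t^m\tr\chi$, hence
\[
\sn_L\!\left(v_t^m F\right)=m v_t^m\tr\chi\,F+v_t^m\sn_L F=v_t^m\bigl(\sn_L F+m\tr\chi F\bigr)=v_t^m W .
\]
Restricting to a single generator and recalling that $\sn_L$ acts there as the ordinary $t$-derivative of the components of an $S_{t,u}$-tangent tensor in an $\sn_L$-parallel frame — equivalently that $L\,\ga(F,F)=2\,\ga(\sn_L F,F)$, which holds because $\sn_L F$ is the $S_{t,u}$-projection of $\bd_L F$ and $\bd$ is $\bg$-compatible — integrating from $\tau$ to $t$ gives $v_t^m F(t)-v_\tau^m F(\tau)=\int_\tau^t v_{t'}^m W\,dt'$, and sending $\tau\to t_\tmin$ produces the stated formula. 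The two cases $u\ge 0$ and $u<0$ are handled separately at the lower endpoint: for $u<0$ the generator reaches $\{t=0\}$ transversally, where $v_0>0$ and the initial data of Proposition \ref{exten} applies, while for $u\ge 0$ it emanates from the vertex of $C_u$, where the required limits exist by Lemma \ref{inii}(i); in both regimes $\lim_{\tau\to t_\tmin}v_\tau^m F(\tau)$ is exactly what the right-hand side records.

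For the second assertion I would use the weight $(t-u)^{2m}$. Because $L$ is tangent to $C_u$ we have $Lu=0$, $Lt=1$, so $L\bigl((t-u)^{2m}\bigr)=2m(t-u)^{2m-1}$, and therefore
\[
\sn_L\!\left((t-u)^{2m}F\right)=(t-u)^{2m}\Bigl(\sn_L F+\tfrac{2m}{t-u}F\Bigr)=(t-u)^{2m}\bigl(G\cdot F+W\bigr).
\]
Applying the pointwise inequality $\tfrac{d}{dt}|H|\le|\sn_L H|$ with $H=(t-u)^{2m}F$, the scalar $h(t):=(t-u)^{2m}|F(t)|$ obeys $h'(t)\le |G|\,h(t)+(t-u)^{2m}|W|$ along $\Upsilon_\omega$; Gronwall's inequality then yields $h(t)\les e^{\int_{t_\tmin}^t|G|\,dt'}\bigl(h(t_\tmin^+)+\int_{t_\tmin}^t(t'-u)^{2m}|W|\,dt'\bigr)$, and the hypothesis $\|G\|_{L_\omega^\infty L_t^1}\le C$ bounds the exponential by a universal constant. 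Finally, the variant with $\tr\chi$ in place of $\tfrac{2}{t-u}$ (the equation then reading $\sn_L F+m\tr\chi F=G\cdot F+W$) follows from the same Gronwall step carried out with the weight $v_t^m$, exactly as in the first part, and then rewriting $v_t^m\approx(t-u)^{2m}$ at the very end via (\ref{comp_3_27}).

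The argument is essentially routine once the two displayed identities are in hand; the only point requiring care — rather than a genuine obstacle — is the behaviour at the lower endpoint $t_\tmin$, where one must distinguish the cone vertex ($u\ge0$) from the initial slice $\{t=0\}$ ($u<0$) and invoke the matching limiting/initial information for the Ricci coefficients from Lemma \ref{inii}(i),(iii) and Proposition \ref{exten} to guarantee that the relevant limits exist and are finite.
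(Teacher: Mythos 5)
Your proof is correct and follows exactly the route the paper intends: the paper does not write out a proof but states that the lemma "can be derived in view of (\ref{lv}) and (\ref{comp_3_27})," and your argument is precisely the integrating-factor computation with weight $v_t^m$ (using $L(v_t)=v_t\tr\chi$) for the first identity, the weight $(t-u)^{2m}$ plus Gronwall for the second, and the comparability $v_t\approx(t-u)^2$ for the $\tr\chi$ variant. The endpoint discussion at $t_\tmin$ is also consistent with how the paper invokes Lemma \ref{inii} and Proposition \ref{exten} whenever it applies this lemma.
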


\subsection*{Curvature decomposition}\label{decom}

To employ the structural equations, it is necessary to provide useful decompositions of the
curvature tensor. In (\ref{ricc6.7.2}) we have introduced the 1-form $V$ by which the Ricci tensor $\bR_{\a\b}$ has
the decomposition (\ref{ricc6.7.1}). In particular
\begin{equation*}
\bR_{44}=L^\a L^\b \bR_{\a\b} = \bd_4 V_4+S_{44}-\f12 L^\a L^\b \Box_\bg (\bg_{\a\b}),
\end{equation*}
where $S_{44}$ takes the form $L^\a L^\b \bg\c (\bp \bg)^2$. Since by  (\ref{ricc_def}), $\bd_4 V_4 = L(V_4) +k_{NN} V_4$. We have
\begin{equation}\label{r44.1}
\bR_{44}=L(V_4)+k_{NN} V_4+\ti \bE,
\end{equation}
where $\ti \bE=L^\a L^\b (\bg(\bp \bg)^2+\Box_{\bg} (\bg_{\a\b}))$ which takes the form
$\ti \bE = \pi \c \pi$ by virtue of (\ref{wave1}).

For the Riemannian curvature tensor there holds the following decomposition result.

\begin{lemma}\label{decom_lem}
 \begin{enumerate}
\item[(i)] Let $\D_*=(\sn, \sn_L)$. There hold
$$
{\emph \bR}_{4A4B}, {\emph\bR}_{A443}, {\emph \bR}_{44}, {\emph \bR}_{4A}=\D_* \pi+\sl \bE.
$$

\item[(ii)]  There exist scalar $\pi$, 1-form $\sl \bE$ and $S_{t,u}$ tangent 2-vector $\pi_{AB}$  such that
\begin{equation*}
\delta^{AB} {\emph \bR}_{CA4B}=\sn_C \pi+\sn^B\pi_{CB}+\sl \bE_C \quad \mbox{ and } \quad
{\emph \bR}_{CA4B}=\sn \pi+\sl \bE.
\end{equation*}

\item[(iii)]There exists 1-form $\pi$ and scalar $\sl \bE$ that ${\emph \bR}_{ABAB}={\emph \div} \pi+\sl \bE$.

\item[(iv)] There exist 1-forms $\pi$ and scalar $\sl \bE$ such that
\begin{equation*}
\delta^{AB} {\emph\bR}_{B43A}={\emph \div} \pi+\sl \bE, \quad \ep^{AB} {\emph\bR}_{A43B}={\emph \curl} \pi+\sl \bE.
\end{equation*}
\end{enumerate}
\noindent
Here $\sl \bE = {\emph \bA}\c \pi + \tr \chi \c \pi$, where ${\emph \bA}$ denotes the collection of $\chih, z$ and $\pi$.
\begin{footnote}{
The collection of terms in $\bA$ will be expanded further in Section \ref{ss1}.}
\end{footnote}
\end{lemma}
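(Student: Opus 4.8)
The plan is to prove Lemma \ref{decom_lem} by expressing every curvature component appearing in the null structure equations in terms of the 1-form $V$ via the decomposition (\ref{ricc6.7.1}), and then carefully tracking which derivatives of the metric can be absorbed into the schematic symbols $\pi$ (terms of type $f(\phi)\bp\phi$) and $\sl\bE$ (terms of type $\bA\cdot\pi+\tr\chi\cdot\pi$). The starting point is the identity (\ref{ricc6.7.1}), which already gives $\bR_{\a\b}=-\f12\Box_\bg\bg_{\a\b}+\f12(\bd_\a V_\b+\bd_\b V_\a)+S_{\a\b}$ with $S_{\a\b}$ quadratic in $\bp\bg$, hence of the form $\pi\cdot\pi$. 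The crucial input from the quasilinear equation (\ref{wave1}) is that $\Box_\bg\bg_{\a\b}$ can be rewritten, using the equation, as a quadratic expression $\pi\cdot\pi$ plus lower order: this is exactly the observation underlying (\ref{r44.1}), where $\ti\bE=\pi\cdot\pi$. So modulo $\sl\bE$-type terms, the only genuine first-derivative content in $\bR_{\a\b}$ is $\bd V$.

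For part (i), I would contract (\ref{ricc6.7.1}) with the appropriate null frame vectors. For $\bR_{44}$ this is precisely (\ref{r44.1}): $\bR_{44}=L(V_4)+k_{NN}V_4+\ti\bE=\sn_L\pi+\sl\bE$, using that $V_4=\pi$ and $k_{NN}V_4=\pi\cdot\pi$. For $\bR_{4A}$, $\bR_{4A4B}$ and $\bR_{A443}$ one proceeds similarly: contracting $\bd_\a V_\b$ against mixed frame indices produces, after using the connection relations (\ref{6.7con}) to convert $\bd$ into $\sn,\sn_L$ plus Christoffel-type corrections, a term $\D_*\pi$ (the genuine derivative of $V$ projected into the frame) plus terms where the frame derivatives have landed on $L,\Lb,e_A$ — and those produce connection coefficients $\chi,\chib,\zeta,\zb,k$ multiplying $\pi$, i.e. $\bA\cdot\pi$ or $\tr\chi\cdot\pi$, hence $\sl\bE$. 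The term $\Box_\bg\bg_{\a\b}$ contracted into the frame is $\pi\cdot\pi\subset\sl\bE$. One must be slightly careful that the "bad" component $\Lb\sigma$/$\Lb$-derivatives of $\bg$ do not appear: since all the listed components have at most one $e_3=\Lb$ index and the $V$-derivative is $\bd_4 V$ or $\bd_A V$ type (never $\bd_3 V_3$), only $L$- and angular derivatives of $V$ occur, which is what $\D_*\pi$ encodes.

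For parts (ii)–(iv) the idea is the same but one exploits the antisymmetries of the Riemann tensor to recognize divergence and curl structures. Here one should use the standard identity relating the spacetime Riemann tensor restricted to $S_{t,u}$ to the Ricci tensor of $\bg$, the second fundamental forms $\chi,\chib,\theta,k$, and the intrinsic Gauss curvature — i.e. Gauss and Codazzi type equations for the two-parameter foliation (these are in \cite{CK}). The curvature components $\bR_{CA4B}$, $\bR_{ABAB}$, $\bR_{B43A}$ expand into: the relevant component of $\bR_{\a\b}$ (which by part (i) and (\ref{ricc6.7.1}) is $\D_*\pi+\sl\bE$, and in particular the angular part is $\sn\pi+\sl\bE$), plus quadratic Ricci-coefficient terms of the form $\bA\cdot\pi$, $\tr\chi\cdot\pi$ (absorbed into $\sl\bE$), plus — for $\bR_{ABAB}$ — the Gauss curvature term, which via the Gauss equation is itself $\div$ of a one-form built from $\theta$ (hence from $\chi$, i.e. from $\tr\chi$ and $\pi$, plus curvature) together with quadratic terms. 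Tracking these and using $\theta=\chi+k$, $\chi=\frac{1}{t-u}\sl\Pi\cdot$(something)$+\chih+z\cdot\sl\Pi+\dots$ schematically, one peels off a $\sn$ or $\div/\curl$ applied to $\pi$ or to $\pi_{AB}$, with everything else of $\sl\bE$-type.

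The main obstacle I anticipate is bookkeeping rather than conceptual: one must verify that in every contraction, after commuting $\bd$ past the frame to produce $\D_*$, the leftover terms genuinely have the advertised schematic form $\bA\cdot\pi+\tr\chi\cdot\pi$ and never contain an uncontrolled full second derivative $\bp^2\bg$ or a $\Lb$-derivative of $\bg$ that is not paired into $\Box_\bg\bg$ (and hence not reducible via (\ref{wave1})). In particular, for part (ii)–(iv) one has to be sure the Gauss-curvature contribution to $\bR_{ABAB}$ really organizes as a pure divergence modulo $\sl\bE$; this is where the precise form of the Gauss equation for $S_{t,u}\subset\Sigma_t\subset\M$, combined with the definition $\tir\widetilde{\tr\chi}\approx 1$ and the structure equation for $\tr\chi$, must be invoked so that no bare $\sn\tr\chi$ term is left outside the claimed structure. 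The anti-symmetrization in the curl components and the traceless projection in part (ii) are what make these divergence structures available, so the argument rests on using those symmetries before doing any estimate.
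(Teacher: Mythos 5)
Your treatment of part (i) is essentially the paper's: it is exactly the computation behind (\ref{r44.1}), i.e.\ contract (\ref{ricc6.7.1}) with the frame, use (\ref{wave1}) to convert $\Box_\bg\bg_{\a\b}$ into $\pi\c\pi$, and absorb the frame-connection corrections from (\ref{6.7con}) into $\sl\bE$; your observation that no bare $\Lb$-derivative of $V$ survives in the listed components is also the right sanity check.

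The gap is in parts (ii)--(iv). You propose to obtain the divergence/curl structure of $\bR_{ABAB}$, $\delta^{AB}\bR_{CA4B}$ and $\delta^{AB}\bR_{B43A}$ from Gauss--Codazzi relations for $S_{t,u}$. For the Codazzi-type components this is at best redundant (those identities are the structure equations (\ref{dchi})--(\ref{dcurl}), which the lemma is meant to feed \emph{into}, not be derived from), and for part (iii) it is circular: the Gauss equation expresses the intrinsic curvature $K$ of $S_{t,u}$ as $-\frac14\tr\chi\tr\chib+\f12\chih\c\chibh-\f12\bR_{ABAB}$, so it determines $K$ \emph{from} $\bR_{ABAB}$; to run it the other way you would need an independent decomposition of $K-\tir^{-2}$ of the form $\div\pi+\sl\bE$, which is precisely what is not available at this stage (and the leftover $|\chih|^2$-type terms are not of the admissible $\sl\bE=\bA\c\pi+\tr\chi\c\pi$ form). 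What actually makes (ii)--(iv) work, and what the paper does, is (a) purely algebraic frame contractions of the Ricci tensor such as (\ref{cdc1}), which reduce the traced components to $\bR_{AB}$, $\bR_{A4}$ (handled by (\ref{ricc6.7.1}) plus (\ref{wave1})) and to genuinely angular Riemann components; and (b) the direct decomposition of $\bR_{\a\b\ga\d}$ in terms of $\bp^2\bg$ and $(\bp\bg)^2$ as in \cite[Prop.\ 4.1]{KR2}: the index antisymmetries guarantee that in each listed component every second derivative of $\bg$ carries at least one index contracted against $e_A$ or $L$, so it is $\sn$ or $\sn_L$ of a quantity of type $\pi=f(\phi)\bp\phi$ up to connection-coefficient corrections of type $\sl\bE$, and the remaining free angular index in the traced or $\ep^{AB}$-contracted expressions is what produces $\div$ or $\curl$. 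Without step (b) your argument does not produce the specific $\sn_C\pi+\sn^B\pi_{CB}$ structure in (ii) (as opposed to a generic $\D_*\pi$), nor the $\div$/$\curl$ structure in (iii)--(iv).
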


This result follows from the similar argument in \cite[Section 4]{KR2}.  In particular, for deriving the result in (iv),
we write
\begin{equation}\label{cdc1}
\delta^{AB}\bR_{B43A}=\delta^{AB}(\bR_{AB}-\delta^{CD}\bR_{ACBD}), \quad \ep^{AB} \bR_{AB43} =-2 \ep^{AB} \bR_{A43B};
\end{equation}
for $\bR_{ACBD}$ and $\bR_{AB43}$ we use \cite[Proposition 4.1]{KR2} and for $\bR_{AB}$ we use
$
\bR_{AB}=\f12(\sn_A V_B+\sn_B V_A)+\sl \bE
$
which follows from (\ref{ricc6.7.1}).

\subsection{Estimates on $\chih$, $\zeta$ and $z$}

We derive some useful estimates on $\chih$, $\zeta$ and $z$ from (\ref{ba3.18.1}) and (\ref{ba3}).

\begin{proposition}\label{p1}
There hold on $\widetilde{\D^+}$ that
\begin{align}
& \|\chih, \zeta,z,  \ti r\sn_L \chih, \ti r \sn_L \zeta, \tir \sn_L z\|_{L_t^2 L_\omega^p(C_u)}\les \la^{-\f12} , \label{pric1}\\
& \|{\ti r}^{\f12} (\chih, \zeta, z)\|_{L_t^\infty L_\omega^p(C_u)}\les \la^{-\f12}, \label{pric2}\\
& \|{\ti r}^{\f12}(\chih, \zeta, z)\|_{L_\omega^{2p} L_t^\infty(C_u)}\les \la^{-\f12} \quad \mbox{ on } \D^+. \label{pric3}
\end{align}
\end{proposition}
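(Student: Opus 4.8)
The plan is to prove Proposition \ref{p1} by running transport and Hodge estimates along the null generators of $C_u$, starting from the structure equations (\ref{s1})--(\ref{3chi}) and the curvature decomposition in Lemma \ref{decom_lem}, and feeding in the flux-type bounds from Proposition \ref{flux_2} and Lemma \ref{flux00} together with the bootstrap assumptions (\ref{ba3.18.1})--(\ref{bb_3}). First I would treat $\chih$: rewriting (\ref{s2}) as $\sn_L\chih+\f12\tr\chi\,\chih = -k_{NN}\chih - (\bR_{4A4B}-\f12\bR_{44}\delta_{AB})$ and using Lemma \ref{decom_lem}(i) to replace the curvature term by $\D_*\pi + \sl\bE$, where $\sl\bE = \bA\cdot\pi + \tr\chi\cdot\pi$. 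Using $\widetilde{\tr\chi}\approx 1/\tir$ and Lemma \ref{6.17.1} to normalize weights, the transport Lemma \ref{tsp2} (with weight $v_t^{1/2}$, and the vanishing/finite initial data from Lemma \ref{inii} and Proposition \ref{exten}) reduces the $L_t^2L_\omega^p(C_u)$ bound on $\chih$ to the $L_t^1L_\omega^p$ bound on $\tir^{-1}$ times the source; the main source term $\D_*\pi$ is handled by (\ref{flux_3}) and (\ref{flux2}), and the quadratic terms $\bA\cdot\pi$, $\tr\chi\cdot\pi$ by Hölder in $t$ using (\ref{pi.2}), (\ref{mtr_2}), and the bootstrap bound on $\bA$. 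The same equation then gives $\|\tir\sn_L\chih\|_{L_t^2L_\omega^p}$ directly from the right-hand side.

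Next I would do $\zeta$ and $z$ in the same spirit. For $\zeta$ I would use (\ref{tran2}), again replacing $\bR_{A443}$ by $\D_*\pi+\sl\bE$ via Lemma \ref{decom_lem}(i); the extra terms $(k_{BN}+\zeta_B)\chih_{AB}$ and $\tr\chi\,k_{AN}$ are of the schematic form $\bA\cdot\pi$ or $\tir^{-1}\pi$, so they are absorbed in the same way. For $z = \widetilde{\tr\chi}-\f2\tir = \tr\chi + V_4 - \f2\tir$, I would combine (\ref{s1}) with the transport equation for $V_4$ along $L$ (using $\bd_4V_4 = L(V_4)+k_{NN}V_4$ and the decomposition (\ref{r44.1}), $\bR_{44} = L(V_4)+k_{NN}V_4+\ti\bE$ with $\ti\bE = \pi\cdot\pi$); this is exactly the point of the normalization, so the dangerous $L(V_4)$ cancels and one is left with an equation $\sn_L z + \tr\chi\, z = (\text{quadratic in }\pi,\chih,z) - |\chih|^2$ with no curvature on the right, after also using $L(\f2\tir) = -\f2{\tir^2}$ against $\f12(\tr\chi)^2$ and $\tr\chi\cdot\f2\tir$ terms. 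Then Lemma \ref{tsp2} with weight $v_t$ and initial data $\tir z\to 0$ (Lemma \ref{inii}(i), Lemma \ref{inii}(iii)) gives the $L_t^2L_\omega^p(C_u)$ bound, where the quadratic sources are controlled by $\|\pi\|_{L_t^2L_\omega^{2p}}^2$, $\|\chih\|_{L_t^2L_\omega^{2p}}^2$ etc., all $\les\la^{-1}$ by (\ref{mtr_2}), (\ref{transtrace}) and the bootstrap bounds; the $\tir\sn_L z$ estimate is then read off from the equation.

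The estimate (\ref{pric2}) in $L_t^\infty L_\omega^p$ follows the same transport arguments but now estimating $\sup_t$ of $\tir^{1/2}|F|$ directly: integrate the (weight-normalized) transport equation from $t_\tmin$ and take $L_\omega^p$ norms, bounding $\int_{t_\tmin}^t \tir^{-1/2}\,|{\rm source}|\,dt'$ by Cauchy-Schwarz in $t$ against the $L_t^2L_\omega^p(C_u)$ bounds just obtained (together with $\|\tir^{1/2}\bd\bp\phi\|$-type bounds and (\ref{transtrace})). For (\ref{pric3}) on $\D^+$, which is an $L_\omega^{2p}L_t^\infty$ (i.e. $\sup$-in-$t$ taken before the angular integral) estimate, I would use the trace-type inequality (\ref{tran_sob}) or argue directly along each fixed generator $\Upsilon_\omega$: integrate the transport equation in $t$ pointwise in $\omega$ and then take the $L_\omega^{2p}$ norm, which requires the sources to be controlled in $L_\omega^{2p}L_t^1$ or $L_\omega^{2p}L_t^2$; here the improved bootstrap bounds (\ref{ric2a})--(\ref{ric3a}) valid in $\D^+$, together with the better flux estimates (\ref{mtr_3}), (\ref{flux2}) restricted to $C_u\cap\D^+$, give the gain to $\la^{-1/2}$ without the $\la^{2\ep_0}$ loss.

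The main obstacle I expect is the coupling between $\chih$, $\zeta$, $z$ and $\bA$ (which contains $\chih$, $z$, $\pi$) inside the quadratic error terms $\sl\bE = \bA\cdot\pi + \tr\chi\cdot\pi$: one cannot estimate these quantities in isolation but must close the three transport estimates simultaneously as a coupled system, using the bootstrap bounds (\ref{ba3.18.1}) with the small exponent $\ep_0$ to absorb the self-interactions and only afterwards improving to $\la^{-1/2}$. A second delicate point is keeping careful track of the $\tir$-weights when passing between $L_x^q(S_{t,u})$ and $L_\omega^q(S_{t,u})$ norms (Lemma \ref{6.17.1}) and when the generator initiates from $\Ga^+$ (where $t_\tmin = u$ and the data vanish suitably by Lemma \ref{inii}) versus from $S_v$ at $t=0$ (where the data are only $O(\la^{-4\ep_0})$ or $O(\la^{-1/2})$ by Lemma \ref{inii}(iii) and Proposition \ref{exten}); the two cases must be handled in parallel but give the same final bound.
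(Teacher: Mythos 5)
Your proposal is correct and follows essentially the same route as the paper: transport equations (\ref{s2}), (\ref{tran2}) and the normalized equation (\ref{lz}) obtained by cancelling $L(V_4)$ via (\ref{r44.1}), combined with Lemma \ref{decom_lem}, the transport Lemma \ref{tsp2}, the flux bounds of Proposition \ref{flux_2}, and the bootstrap assumptions, treating the $u\ge 0$ and $u<0$ cones separately. The only cosmetic difference is for (\ref{pric3}), where the paper uses the trace inequality (\ref{7.22.30}) along $C_u$ (rather than (\ref{tran_sob}), which is the $\Sigma_t$ version), but your alternative of integrating along each generator is exactly the mechanism behind that inequality.
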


\begin{proof}
We first derive the estimates for $\chih$ by using (\ref{s2}). In view of Lemma \ref{decom_lem} (i), we can write (\ref{s2}) symbolically as
\begin{equation} \label{lchi}
\sn_L \chih+\f12 \tr\chi\chih=\pi\c \bA+\tir^{-1}\pi+(\sn, \sn_L) \pi.
\end{equation}
By using Lemma \ref{tsp2} we obtain
\begin{align}\label{tchih}
|\tir\chih(t, \omega)| \les \lim_{\tau\rightarrow t_\tmin}|\tir\chih(\tau, \omega)|
+\int_{t_\tmin}^t \left(|\pi| + \tir (|\pi\c \bA| + (\sn, \sn_L) \pi|) \right) d\tt,
\end{align}
where the integral is taken over null geodesics.

On null cones $C_u$ with $u\ge 0$, by Lemma \ref{inii} the first term on the right of (\ref{tchih}) vanishes.
Thus, dividing the both sides of (\ref{tchih}) by $\tir$ and using (\ref{hlm}), it follows
\begin{align*}  %\label{chih1}
\|\chih\|_{L_t^2 L_\omega^p}\les\|\pi, \tir(\sn_L, \sn)\pi\|_{L_t^2 L_\omega^p}+\tau_*^\f12\|\pi\|_{L_t^2 L^\infty_\omega}\|\bA\|_{L^2_t L_\omega^p}.
\end{align*}
In view of (\ref{pi.2}) and (\ref{ba3.18.1}), we have
\begin{equation}\label{a6.19}
\|\bA\|_{L^2_t L_\omega^\infty(C_u\cap \widetilde{\D^+})}\les \la^{-\f12+2\ep_0}.
\end{equation}
This together with (\ref{flux_3}), (\ref{pi.2}) and $\tau_*\les \la^{1-8 \ep_0}$ then shows that
\begin{align}\label{chih1}
\|\chih\|_{L_t^2 L_\omega^p}\les \la^{-\f12}.
\end{align}
Similarly, we can obtain in $\D^+$ that
$
\|{\ti r}^{\f12} \chih\|_{L_\omega^p L_t^\infty} \les \la^{-\f12}.
$

On the null cones $C_u$ with $u=-v$ for $0<v<v_*$, we may use (\ref{tchih}) and the similar argument for deriving (\ref{chih1}) to obtain
\begin{align*}
\|\chih\|_{L_t^2 L_\omega^p}&\les \|\tir^{-1} v\chih(0)\|_{L_t^2 L_\omega^p}+\la^{-\f12}.
\end{align*}
In view of (\ref{3.19.1}), (\ref{a_3}) and (\ref{transtrace}), we have
\begin{equation}\label{chi0}
\|v^\f12 \chih(0)\|_{L_\omega^p}\les \|v^\f12 \hat\theta(0)\|_{L_\omega^p}+\|v^\f12 \hat k_{AB}(0)\|_{L_\omega^p}\les \la^{-\f12}.
\end{equation}
Consequently, by using $\tir = t +v$ we have $ \|\tir^{-1} v\chih(0)\|_{L_t^2 L_\omega^p(C_u)}\les \la^{-\f12}$.
Thus on $\widetilde{\D^+}$ there holds
\begin{equation}\label{6.7chih}
\|\chih\|_{L_t^2 L_\omega^p(C_u)}\les \la^{-\f12}.
\end{equation}
By a similar argument, we can derive on $\widetilde{\D^+}$ that $\|\tir^\f12 \chih\|_{L^\infty L_\omega^p}\les \la^{-\f12}$.
Thus we obtain the estimates for $\chih$ in (\ref{pric1}) and (\ref{pric2}).

To show the estimate for $\sn_L \chih$ in (\ref{pric1}), we use (\ref{lchi}) and write $\tr\chi = z-V_4 + \frac{2}{\tir}$ to derive that
\begin{equation*}
\|\tir\sn_L \chih\|_{L_t^2 L_\omega^p}\le \|\chih\|_{L_t^2 L_\omega^p}+\|\tir (z, V_4)\chih\|_{L_t^2 L_\omega^p}
+\|\tir(\bA\c \pi + \tir^{-1}\pi + (\sn, \sn_L) \pi)\|_{L_t^2 L_\omega^p}.
\end{equation*}
The last term on the right can be estimated by the similar argument for deriving (\ref{chih1}). The first term can be
controlled by (\ref{6.7chih}). For the second term, we can employ (\ref{a6.19}) and the estimate on $\chih$ in (\ref{pric2}) to obtain
\begin{align*}
\|\tir (z, V_4)\chih\|_{L_t^2 L_\omega^p} \les \tau_*^{\f12}\|\tir^{\f12} \chih\|_{L_t^\infty L_\omega^p}\|z, V_4\|_{L_t^2 L_\omega^\infty(C_u)}
\les \la^{-\f12-2\ep_0}.
\end{align*}
Hence we conclude on $\widetilde{\D^+}$ that
$$
\|\tir \sn_L \chih\|_{L_t^2 L_\omega^p(C_u)}\les \la^{-\f12}.
$$

With the help of (\ref{tran2}), Lemma \ref{decom_lem}, Lemma \ref{inii} and Proposition \ref{exten}, we can obtain the estimates for $\zeta$ and $\sn_L\zeta$ in
(\ref{pric1}) and (\ref{pric2}) similarly. Finally the estimates for $\chih$ and $\zeta$ in (\ref{pric3}) follow from (\ref{pric1}),
(\ref{ric2a}), (\ref{ric3a})  and the inequality
\begin{equation}\label{7.22.30}
\|\tir^{\f12} F \|_{L_\omega^{2p} L_t^\infty}^2 \les \left(\|\tir \sn_L F\|_{L_\omega^p L_t^2} + \|F\|_{L_\omega^p L_t^2}\right) \|F\|_{L_\omega^\infty L_t^2}
\end{equation}
for any $S_{t, u}$-tangent tensor $F$; see \cite[Lemma 2.13]{Wangricci}. We will prove the estimates for $z$ in  Section \ref{sec_6.19}
by using the following transport equations satisfied by $z$ and $\sn z$.
\end{proof}

\begin{lemma}
Let ${\emph\bA}$ denote the collection of the terms $\chih, z, \pi$.
\begin{align}
& Lz+\frac{2z}{t-u}=-\f12 V_4^2+\left(V_4-k_{NN}\right)\widetilde{\tr\chi}-|\chih|^2-\f12 z^2+\pi\c \pi, \label{lz}\\
& \sn_L \sn z+\frac{3}{t-u} \sn z= {\emph\bA}\c\sn z+\left(\frac{1}{(t-u)},  z, \pi \right)\c \sn  \pi+\sn \chih\c \chih, \label{ldz_2}\\
& {\emph\div} \chih=\f12 \left(\sn z-\sn V_4 \right)+\sn \pi+\frac{\pi}{t-u}+{\emph\bA}\c \pi.\label{dchi1}
\end{align}
\end{lemma}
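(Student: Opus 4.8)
The plan is to obtain the three identities by rewriting the classical structure equations of the null frame in terms of the normalized quantities $\widetilde{\tr\chi}=\tr\chi+V_4$ and $z=\widetilde{\tr\chi}-\frac{2}{t-u}$. Throughout one uses the elementary facts $L(t-u)=1$ and $\sn(t-u)=0$: indeed $L(t)=1$ while $L(u)=0$ since $L$ is tangent to $C_u$, and $t,u$ are constant on $S_{t,u}\subset\Sigma_t$ so their tangential gradients vanish. One also uses that $V_4=V(L)$ is a term of type $\pi$ (as $V=\bg\cdot\bp\bg$), that $\ti\bE=\pi\cdot\pi$ in (\ref{r44.1}), and — crucially — that in this paper the schematic symbol $\sn\pi$ already absorbs the $\chi\cdot\pi$ corrections produced by differentiating the frame contractions (cf. $|\sn\pi|\les|\sn\ti\pi|+(|\chi|+|\chib|)|\ti\pi|$), so that in all computations below the curvature and lower-order contributions organize without effort into the displayed classes $\bA\cdot(\,\cdot\,)$, $\frac{(\,\cdot\,)}{t-u}$, $\sn\pi$, $\sn\chih$.

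Derivation of (\ref{lz}). Add $L(V_4)$ to (\ref{s1}) and substitute $\bR_{44}=L(V_4)+k_{NN}V_4+\ti\bE$ from (\ref{r44.1}); this gives $L\widetilde{\tr\chi}+\f12(\tr\chi)^2=-|\chih|^2-k_{NN}\widetilde{\tr\chi}-\ti\bE$. Writing $\tr\chi=\widetilde{\tr\chi}-V_4$ and expanding $\f12(\tr\chi)^2=\f12\widetilde{\tr\chi}^2-V_4\widetilde{\tr\chi}+\f12 V_4^2$, we obtain $L\widetilde{\tr\chi}+\f12\widetilde{\tr\chi}^2=-\f12 V_4^2+(V_4-k_{NN})\widetilde{\tr\chi}-|\chih|^2+\pi\cdot\pi$. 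Now put $\widetilde{\tr\chi}=z+\frac{2}{t-u}$; since $L\!\left(\frac{2}{t-u}\right)=-\frac{2}{(t-u)^2}$ and $\f12\widetilde{\tr\chi}^2=\f12 z^2+\frac{2z}{t-u}+\frac{2}{(t-u)^2}$, the $\frac{2}{(t-u)^2}$ terms cancel and (\ref{lz}) follows.

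Derivation of (\ref{ldz_2}). Apply $\sn$ to (\ref{lz}). Since $\sn(t-u)=0$ we have $\sn\!\left(\frac{2z}{t-u}\right)=\frac{2}{t-u}\sn z$ and $\sn\widetilde{\tr\chi}=\sn z$; differentiating the right-hand side of (\ref{lz}) and writing $\sn V_4=\sn\pi$ in the schematic sense above, every term falls into $\bA\cdot\sn z$ (from $\sn z^2$ and from $(V_4-k_{NN})\sn\widetilde{\tr\chi}$), $\left(\frac{1}{t-u},z,\pi\right)\cdot\sn\pi$ (from $\sn V_4^2$, $\sn(\pi\cdot\pi)$, and $\widetilde{\tr\chi}\,\sn(V_4-k_{NN})$), or $\sn\chih\cdot\chih$ (from $\sn|\chih|^2$). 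Hence $\sn(Lz)=-\frac{2}{t-u}\sn z+\bA\cdot\sn z+\left(\frac{1}{t-u},z,\pi\right)\cdot\sn\pi+\sn\chih\cdot\chih$. On the other hand, the commutation formula (\ref{cmu2}) gives $\sn_L\sn z=\sn(Lz)-\chi\cdot\sn z$, and writing $\chi=\chih+\f12\tr\chi\,\ga$ with $\tr\chi=z+\frac{2}{t-u}-V_4$ we get $\chi\cdot\sn z=\frac{1}{t-u}\sn z+\bA\cdot\sn z$. Combining the two displays yields (\ref{ldz_2}), the coefficient $\frac{3}{t-u}=\frac{2}{t-u}+\frac{1}{t-u}$ being produced by $\sn(Lz)$ and the commutator term respectively.

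Derivation of (\ref{dchi1}) and the main obstacle. Start from the Codazzi equation (\ref{dchi}), $(\div\chih)_A+\chih_{AB}k_{BN}=\f12\sn_A\tr\chi+\f12 k_{AN}\tr\chi+\bR_{B4BA}$, and insert $\sn_A\tr\chi=\sn_A z-\sn_A V_4$. The term $\chih_{AB}k_{BN}$ is $\bA\cdot\pi$; using $\tr\chi=\frac{2}{t-u}+z-V_4$, the term $\f12 k_{AN}\tr\chi$ is $\frac{\pi}{t-u}+\bA\cdot\pi$; and by Lemma \ref{decom_lem} the contracted curvature satisfies $\bR_{B4BA}=\sn\pi+\sl\bE$ with $\sl\bE=\bA\cdot\pi+\tr\chi\cdot\pi=\sn\pi+\frac{\pi}{t-u}+\bA\cdot\pi$. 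Collecting everything gives (\ref{dchi1}). The computations themselves are short; the only delicate point is the schematic bookkeeping in (\ref{ldz_2}), where one must track exactly which derivative falls on which factor so that the weight $\frac{3}{t-u}$ comes out correctly and no term of regularity worse than $\sn\pi$ or $\sn\chih$ survives — this relies on the normalization (\ref{r44.1}) having packaged the dangerous $L(V_4)$-piece of $\bR_{44}$ away, leaving only the $\pi\cdot\pi$-type remainder guaranteed by (\ref{wave1}).
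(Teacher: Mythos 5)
Your derivation is correct and follows essentially the same route as the paper: (\ref{lz}) by combining (\ref{s1}) with (\ref{r44.1}) and the substitutions $\widetilde{\tr\chi}=\tr\chi+V_4$, $z=\widetilde{\tr\chi}-\frac{2}{t-u}$; (\ref{ldz_2}) by applying $\sn$ to (\ref{lz}) and commuting with (\ref{cmu2}), the split $\chi=\chih+\f12\tr\chi\,\ga$ producing the extra $\frac{1}{t-u}$ in the weight; and (\ref{dchi1}) from the Codazzi equation (\ref{dchi}) together with Lemma \ref{decom_lem}\,(ii). The schematic bookkeeping of the lower-order terms matches the paper's conventions, so no gap remains.
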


\begin{proof}
To obtain (\ref{lz}), we first use (\ref{s1}), $\widetilde{\tr\chi}=\tr\chi+V_4$ and (\ref{r44.1}) to derive that
\begin{equation*}
L\widetilde{\tr\chi}+\f12 (\widetilde{\tr\chi})^2=-\f12 V_4^2+V_4\widetilde{\tr\chi}-|\chih|^2-k_{NN} \widetilde{\tr\chi}+\pi\c \pi.
\end{equation*}
By using $z=\widetilde{\tr\chi}-\frac{2}{t-u}$, we then obtain (\ref{lz}).

Next we take the covariant derivative $\sn$ on the both sides of (\ref{lz}) and use the commutation formula (\ref{cmu2})
\begin{equation*}
\sn_L  \sn z+\frac{3}{t-u} \sn z=- \chih\c \sn z+\f12 (V_4-z) \sn z+\sn G,
\end{equation*}
where $G$ denotes the right hand side of (\ref{lz}). In view of the structure of $G$ we then obtain (\ref{ldz_2}).
Finally, (\ref{dchi1}) follows from (\ref{dchi}), Lemma \ref{decom_lem} (ii) and $\tr\chi = z - V_4 + \frac{2}{t-u}$.
\end{proof}

\subsection{Proof of Proposition \ref{ricpr} and \ref{ricco}}\label{sec_6.19}

We first consider $z$. By integrating the transport equation (\ref{lz}) along null geodesics,
we have from (\ref{ba3.18.1}) and Lemma \ref{tsp2} that
\begin{align}\label{tirz}
\tir^2|z(t)|&\les \left|\lim_{\tau\rightarrow t_{\tmin}} (\tau-u)^2z(\tau)\right|
+\int_{t_\tmin}^t \tir^2 \left( |\pi\c\pi| + |(V_4- { k}_{NN}) \widetilde{\tr\chi}|+  |\chih|^2 \right).
\end{align}
For null cones $C_u$ with $u\ge 0$, the limit term is zero by Lemma \ref{inii} (i). Thus, in view of (\ref{pi.2}),
(\ref{ba3.18.1}) and $\widetilde{\tr\chi}=z+2\tir^{-1}$, we obtain
\begin{equation*}
|\ti r z|\les \tau_*\|\pi\|_{L_t^2 L_x^\infty}^2+\|\pi\|_{L_t^1 L_x^\infty}
+\tau_*\|z\|_{L_t^2 L_\omega^\infty}\|\pi\|_{L_t^2 L_\omega^\infty}+\tau_* \|\chih\|_{L_\omega^\infty L_t^2}^2\les \la^{-4\ep_0}.
\end{equation*}
For null cones $C_u$ with $u=-v$ for $0<v<v_*$, the limit term is $|v^2 z(0)|$. Thus, by using Lemma \ref{inii} (iii) and (\ref{tirz})
we can again conclude that
$
|\tir z |\les \la^{-4\ep_0}.
$
Consequently $\tir \widetilde{\tr\chi}\approx 1$. We can then use (\ref{tirz}) with $\widetilde{\tr\chi}$ replaced by $\tir^{-1}$ to obtain
$
\|\tir^\f12 z\|_{L^\infty}\les\la^{-\f12}.
$
We thus obtain (\ref{comp2}). This directly gives the estimates for $z$ in (\ref{pric2}) and (\ref{pric3}).

Now we improve the estimate of $z$ in (\ref{ba3.18.1}) by showing the estimate of $z$ in (\ref{ric3.18.1}). For null cones $C_u$ with $u=-v<0$,
it follows from (\ref{tirz}) and Lemma \ref{inii} (iii) that
\begin{align}\label{6.9.1}
|z(t)| %|&\les\tir^{-2}\left( |\lim_{\tau\rightarrow t_{\tmin}} (\tau-u)^2z(\tau)|+\int_{t_\tmin}^t \pi^2 {\ti r}^2 +(V_4- { k}_{NN}) {\ti r}+|\chih|^2 {\ti r}^2\right)\label{6.9.1}\\
\les \tir^{-2} v^\frac{3}{2} \la^{-\f12}+\tir^{-2}\int_{t_\tmin}^t \left(\tir^2|\pi|^2 +\tir|\pi| +\tir^2 |\chih|^2\right).
\end{align}
Consequently, in view of (\ref{ba3.18.1}) and (\ref{pi.2}), we obtain
$
\|z\|_{L_t^2 L_\omega^\infty(C_u)}\les \la^{-\f12}.
$
For null cones $C_u$ with $u\ge 0$, the first term on the right of (\ref{tirz}) vanishes. Thus, by virtue of (\ref{pi.2}) and (\ref{ric2a}),
we can derive that
$
\|z\|_{L_t^2 L_\omega^\infty(C_u)}\les \la^{-\f12-4\ep_0}.
$
Therefore we obtained the estimate for $z$ in (\ref{ric3.18.1}).
This estimate also implies the estimate for $z$ in (\ref{pric1}).

In view of (\ref{lz}), schematically $\sn_L z= \bA\c \bA + \tir^{-1} \bA$. Thus
\begin{equation}\label{ee_1}
\|\tir \sn_L z\|_{L_t^2 L_\omega^p(C_u)}\les \|\bA\|_{L_t^2 L_\omega^\infty}\|\tir \bA\|_{L_t^\infty L_\omega^p}+\|\bA\|_{L_t^2 L_\omega^p}.
\end{equation}
Note that (\ref{transtrace}) and (\ref{pric2}) imply  $\|\tir^\f12 \bA\|_{L^\infty L_\omega^p}\les \la^{-\f12}$,
and (\ref{pric1}) and (\ref{flux_3}) imply $\|\bA\|_{L_t^2 L_\omega^p}\les \la^{-\f12}$. These estimates together with (\ref{a6.19})
imply that $\|\tir \sn_L z\|_{L_t^2 L_\omega^p(C_u)} \les \la^{-\f12}$.
We thus complete the proof of the estimates for $z$ in Proposition \ref{p1}.

To prove the estimate for $\chih$ in (\ref{ric3.18.1}), we need to derive (\ref{ricp}) and (\ref{sna}) first.
We will use (\ref{ldz_2}). According to (\ref{a6.19}), we may employ Lemma \ref{tsp2} to derive that
\begin{align*}
\ti r^{3} |\sn z|&\les \left|\lim_{\tau\rightarrow t_{\tmin}} (\tau-u)^3\sn z(\tau)\right|
+\int_{t_\tmin}^t \left(\tir^3 |\sn \chih\c \chih|+ \tir^2|\sn \pi|+ \tir^3 |(z, \pi)\c \sn  \pi|\right).
\end{align*}
For null cones $C_u$ with $u=-v<0$, we have
\begin{align*}
\|\tir\sn z\|_{ L_\omega^p}&\les\|\tir^{-2} v^3\sn z(0)\|_{L_\omega^p}+ \M(\|\tir\sn\pi \|_{L_\omega^p})
+\|\tir \sn \chih, \tir \sn \pi\|_{L_\omega^p L_t^2}\|\bA\|_{L_\omega^\infty L_t^2}.
\end{align*}
In view of Lemma \ref{inii} (iii), (\ref{hlm}), (\ref{a6.19}) and (\ref{flux_3}), we then obtain
\begin{align}\label{2psnz}
\|\ti r \sn z\|_{L_t^2 L_\omega^p} &\les \la^{-\f12}+\|\tir \sn \pi\|_{L_t^2 L_\omega^p}+\la^{-2\ep_0}\|\tir \sn \chih\|_{L_t^2 L_\omega^p} \nn \\
& \les \la^{-\f12}+\la^{-2\ep_0}\|\tir \sn \chih\|_{L_t^2 L_\omega^p}.
\end{align}
For null cone $C_u$ with $u\ge 0$, using Lemma \ref{inii} (i), we can obtain (\ref{2psnz}) in the same way.

From (\ref{dchi1}) and  Lemma \ref{hdgm1} it follows
\begin{equation*}
\|\tir\sn\chih\|_{L_\omega^p}\les \|\tir \sn z\|_{ L_\omega^p}+ \|\tir \sn \pi\|_{L_\omega^p}+\|\tir \bA\c\bA + \bA\|_{L_\omega^p}.
\end{equation*}
The last term can be estimated by the above same argument in deriving the estimate for $\|\tir \sn_L z\|_{L_t^2 L_\omega^p}$.
This together with (\ref{flux_3}) gives
\begin{equation}\label{rdchi}
\|\tir\sn \chih\|_{L_t^2 L_\omega^p}\les \|\tir \sn z\|_{L_t^2 L_\omega^p}+\la^{-\f12}.
\end{equation}
The combination of (\ref{rdchi}) and (\ref{2psnz}) gives
\begin{equation}\label{sna_1}
\|\ti r \sn z\|_{L_t^2 L_\omega^p}\les \la^{-\f12} \quad \mbox{and} \quad \|\tir\sn \chih\|_{L_t^2 L_\omega^p}\les\la^{-\f12}.
\end{equation}
Similarly, we can obtain $\|{\ti r}^{\frac{3}{2}}\sn z\|_{L_\omega^p L_t^\infty}\les \la^{-\f12}$.
Thus we proved (\ref{ricp}) and (\ref{sna}).
In view of (\ref{sobinf}), (\ref{pric1}) and (\ref{sna}), we can derive that
$
\|\chih\|_{L_t^2 L_\omega^\infty}\les \la^{-\f12}.
$
This is the estimate for $\chih$ in (\ref{ric3.18.1}) which improves the estimate of $\chih$ in (\ref{ba3.18.1}).

\subsubsection{Proof of (\ref{ric1.1}) and (\ref{ric1})}

We postpone the estimates on $\zeta$ to the end of Section 5.4.
In what follows, we give the estimates for $\chih, z, \tr\chi-\frac{2}{\tir}$ in (\ref{ric1.1}) and (\ref{ric1}).
We first use (\ref{tirz}), Lemma \ref{inii} (i), (\ref{pi.2}) and (\ref{ric3.18.1}) to deduce on $\D^+$ that
\begin{equation}\label{z1}
|z|\les \M(\|\pi\|_{L_x^\infty})+\la^{-1}.
\end{equation}
By (\ref{hlm}), (\ref{pi.2}) and $\tau_*\les \la^{1-8\ep_0}$, we can get $\|z\|_{L_t^2 L_x^\infty(\D^+)}\les \la^{-\f12-4\ep_0}$.
This proves the estimate for $z$ in (\ref{ric1}).

By virtue of (\ref{6.9.1}), (\ref{ric3.18.1}) and (\ref{pi.2}),  we have on $\widetilde{\D^+}$ that
\begin{equation}\label{z_3.19.1}
\|z(t)\|_{L_x^\infty}\les t^{-\f12}\la^{-\f12} +\la^{-1}+\M(\|\pi\|_{L_x^\infty}).
\end{equation}
After integration in $0\le t\le \tau_*$, it follows
\begin{equation}\label{zz3}
\|z\|_{L_t^\frac{q}{2} L_x^\infty(\widetilde{\D^+})}\les \la^{\frac{2}{q}-1-4\ep_0(\frac{4}{q}-1)},\quad q>2 \mbox{ and close to } 2,
\end{equation}
which gives the estimate on $z$ in (\ref{ric1.1}).
Using  (\ref{pi.2}), we get the  estimate on $\tr\chi-\frac{2}{\tir} = z-V_4$ in  (\ref{ric1.1}) and (\ref{ric1}).

To derive the estimates on $\chih$ in (\ref{ric1}) and  (\ref{ric1.1}), we use (\ref{dchi1}) to write
$$
\chih = \D_2^{-1} \left(\f12 \sn z + \sn \pi + \tir^{-1} \pi + \bA\c \pi\right).
$$
Thus, by using Proposition \ref{cz} and Proposition \ref{cz.2}, we have
\begin{align*}
\|\chih\|_{L^\infty(S_{t,u})}
&\les \|\D_2^{-1} (\sn z)\|_{L^\infty(S_{t, u})} + \|\D_2^{-1} \left(\sn \pi + \tir^{-1}\pi + \bA\c \pi\right)\|_{L^\infty(S_{t, u})}\\
& \les  \|z\|_{L^\infty(S_{t,u})}\log\left(2+\|\tir^{\frac{3}{2}-\frac{2}{p}}\sn z\|_{L^{p}(S_{t,u})}\right)\\
&\quad \, + \|\mu^{0+}P_\mu \ti\pi\|_{l_\mu^2 L^\infty (S_{t,u})}+\|\ti\pi\|_{L^\infty(S_{t,u})}+\|\tir^{1-\frac{2}{p}} \bA\c \pi\|_{L^{p}(S_{t,u})}\\
& \les  \|z\|_{L^\infty(S_{t,u})}+ \|\mu^{0+}P_\mu \ti\pi\|_{l_\mu^2 L^\infty (S_{t,u})}+\|\ti\pi\|_{L^\infty(S_{t,u})},
\end{align*}
where for the last inequality we used (\ref{ricp}) and (\ref{pric2}). By virtue of (\ref{pi.2}) and the estimate for $z$ in (\ref{ric1}),
we have
$$
\|\chih\|_{L_t^2 L_x^\infty(\D^+)} \les \|z\|_{L_t^2 L_x^\infty(\D^+)} + \la^{-\f12-4\ep_0} \les \la^{-\f12-4\ep_0};
$$
while by using (\ref{pi.2}) and (\ref{zz3}) with $q>2$ close to $2$, we have
$$
\|\chih\|_{L_t^\frac{q}{2} L_x^\infty(\widetilde{\D^+})}
\les \|z\|_{L_t^\frac{q}{2} L_x^\infty(\widetilde{\D^+})} +\la^{\frac{2}{q}-1-4\ep_0(\frac{4}{q}-1)}
\les \la^{\frac{2}{q}-1-4\ep_0(\frac{4}{q}-1)}.
$$
We therefore obtain the estimates for $\chih$ in (\ref{ric1}) and  (\ref{ric1.1}).

\subsubsection{ Proof of (\ref{8.0.3})}

Using (\ref{trscoord2}), the metric $\gac$ relative to the transport coordinate on $C_u$ satisfies
\begin{equation}\label{dtga}
\frac{d}{dt}\!\gac_{ab}=\left(\tr\chi-\frac{2}{\tir}\right) \gac_{ab}+2 \tir^{-2}\chih_{ab}.
\end{equation}
Thus
\begin{equation*}
\frac{d}{dt}(\gac_{ab}-\ga^{(0)}_{ab})=\left(\tr\chi-\frac{2}{\tir}\right)(\gac_{ab}-\ga_{ab}^{(0)})
+\left(\tr\chi-\frac{2}{\tir}\right) \c \ga^{(0)}_{ab}+2\tir^{-2} \chih_{ab}.
\end{equation*}
We now consider (\ref{8.0.3}) in $\D^+$. Using (\ref{8.1.1}), (\ref{ric1}) and Lemma \ref{tsp2} we can obtain
\begin{equation*}
\|\!\gac_{ab}-\ga^{(0)}_{ab}\|_{L^\infty(\D^+)}\les \la^{-8\ep_0}.
\end{equation*}
This gives the second estimate in (\ref{8.0.3}) on $\D^+$, improving the second assumption in (\ref{ba3}) on $\D^+$.

Next, by differentiating (\ref{dtga}) we have
\begin{align*}
\frac{d}{dt}\p_c \!\!\gac_{ab}
&=\left(\tr\chi-\frac{2}{\tir}\right)\p_c \!\!\gac_{ab}+ \p_c \tr\chi \!\gac_{ab}+2 \tir^{-2} \p_c \chih_{ab}
\end{align*}
with the initial condition given by (\ref{8.1.1}), where $a, b, c=1,2$. By using $\|\tr\chi-\frac{2}{t-u}\|_{L_t^2 L_\omega^\infty}\les \la^{-\f12-4\ep_0}$
from (\ref{ric1}) and  Lemma \ref{tsp2}, we can obtain
\begin{align*}
\left|\p_c \!\!\stackrel{\circ}\gamma_{ab}(t)-\p_c{\gamma}^{(0)}_{ab}\right|
&\les\int_u^t\left|\p_c \tr\chi\cdot
{\stackrel{\circ}\ga}_{ab}+\tir^{-2}({\sn}_c\chih_{ab}-\Gamma\cdot
\chih)+\Big(\tr\chi-\frac{2}{\tir}\Big) \p_c \ga^{(0)}_{ab}\right|  d \tt,
\end{align*}
where $\Gamma$ denotes the Christoffel symbols with respect to $\ga$, and $\Gamma \cdot
\chih$ stands for the term $\sum_{l=1}^2\Ga_{ki}^l\chih_{lj}$. Consequently
\begin{align}
\left\|\sup_{u\le t\le \tau_*}\left|\p_c \!\!\stackrel{\circ}\gamma_{ab}(t)-\p_c{\gamma}^{(0)}_{ab}\right|\right\|_{L_\omega^{p}}
&\lesssim \|\Gamma\|_{L_t^\infty L_\omega^{p}} \|\chih\|_{L_t^1 L_\omega^\infty} +\|\tir \sn \tr \chi \|_{L_\omega^{p} L_t^1} \nn\\
& +\|\tir\sn \chih\|_{L_\omega^{p} L_t^1} +\left\|\tr\chi-\frac{2}{\tir}\right\|_{L_\omega^p L_t^1}.\nn
\end{align}
Using (\ref{sna}), (\ref{ric3.18.1}) and (\ref{pi.2}) we then obtain
\begin{equation*}
\left\|\sup_{u \le t\le \tau_*}\left|\p_c \!\!\stackrel{\circ}\gamma_{ab}(t)-\p_c{\gamma}^{(0)}_{ab}\right|\right\|_{L_\omega^p}
\les\la^{-4\ep_0}+ \la^{-4\ep_0} \|\Gamma\|_{L_t^\infty L_\omega^p}.
\end{equation*}
Using the local expression of $\Ga_{ab}^c$ and the second estimate in (\ref{8.0.3}), it follows
\begin{equation*}
\|\Ga\|_{L_\omega^{p}}\les\sum_{a,b,c=1,2} \|\p_c (\gac_{ab}-{\gamma}^{(0)}_{ab})\|_{L_\omega^p}+C,
\end{equation*}
where $C$ is the constant such that  the Christoffel symbol of $\ga^{(0)}$ satisfies $|\p \ga^{(0)}|\le C$.
The combination of the above two inequality implies the first inequality in (\ref{8.0.3}) and the bound
\begin{equation}\label{gaa}
\|\Ga\|_{L_\omega^p} \les 1.
\end{equation}

By the similar argument, with the help of (\ref{a_5}) in Proposition \ref{exten} we can obtain (\ref{8.0.3})
for null cones on the region of $\widetilde{\D^+}$.
\subsubsection*{Proof of (\ref{ric4})}
We prove (\ref{ric4}) with the help of (\ref{lb}).  If $u>0$, in view of (\ref{w8.1.2}), we can derive
$
|\frac{\bb^{-1}-1}{t-u}|\les \M(\|k_{NN}\|_{L_\omega^\infty}).$
By using (\ref{hlm}) and (\ref{pi.2}), we obtain
\begin{equation}\label{bb_8.1}
\|\frac{\bb^{-1}-1}{t-u}\|_{L_t^2 L_x^\infty}\les \la^{-\f12-4\ep_0}
\end{equation}
If $u\le 0$, in view of (\ref{w8.1.3})
\begin{align}\label{w8.1.5}
|\frac{\bb^{-1}-1}{t-u}|&=|\frac{\bb^{-1}-a^{-1}}{t-u}|+|\frac{a^{-1}-1}{t-u}|\les\M(\|k_{NN}\|_{L_\omega^\infty})+|\frac{a^{-1}-1}{t-u}|.
\end{align}
Using (\ref{w8.1.1}), we have
\begin{align*}
\|\frac{a^{-1}-1}{t-u}\|_{L_t^2 L_x^\infty}\les\|\frac{a^{-1}-1}{v^\f12}\|_{L_x^\infty}(\int_0^{\tau_*} v(t+v)^{-2} dt)^\f12\les \la^{-\f12}.
\end{align*}
Combined with (\ref{w8.1.5}), we have (\ref{bb_8.1}) also holds for $u\le 0$ by using (\ref{hlm}) and (\ref{pi.2}). Hence  the first estimate in (\ref{ric4}) is proved.
The second estimate can be proved by using (\ref{w8.1.2}), (\ref{w8.1.3}) and (\ref{transtrace}).

By using (\ref{lb}), (\ref{pi.2}) and the first estimate in (\ref{ric4}),
$$
\|\tir\bd_L (\frac{\bb^{-1}-1}{\tir})\|_{L_t^2 L_\omega^p}\le \|\frac{\bb^{-1}-1}{\tir}\|_{L_t^2 L_\omega^p}+\|k_{NN}\|_{L_t^2 L_\omega^p}\les \la^{-\f12}.$$
In view of $\zeta=\sn \log \bb+k_{AN}$, we obtain $\|\sn\log \bb\|_{L_t^2 L_\omega^p}\les \la^{-\f12}$ by using (\ref{pric3}) and (\ref{pi.2}). Hence the last inequality of (\ref{ric4}) is proved.

Thus we complete the proof of Proposition \ref{ricpr} and Proposition \ref{ricco} except for the estimates of $\zeta$
in (\ref{ric1.1}) and (\ref{ric1}) which will be proved in Section \ref{ss1} based on new observations.

\subsection{Estimates on $\mu$ and $\zeta$ }\label{ss1}

We will use (\ref{dze}) and (\ref{dcurl}) to derive the estimates of $\zeta$ in (\ref{ric1.1}) and (\ref{ric1}).
Because the space-time metric $\bg$ is not necessarily Einstein,  the term $\mu:=\Lb \tr\chi + \f12 \tr\chi \tr \chib$
in the equation (\ref{dze}) presents technical challenges. Fortunately, we observe that $\mu$ has a hidden structure
which enables us to solve these difficulties. Our observation is based on the function $\varphi$ introduced
in the following result.

\begin{lemma}\label{larea}
Let $\ckk\ga=(t-u)^2 \ga^{(0)}$ and $\varphi=\log\sqrt{|\ga|}-\log\sqrt{|\ckk\ga|}$ on $S_{t, u}$. Then on $\widetilde{\D^+}$ there hold
\begin{equation*}
\|\tir^\f12\sn\varphi\|_{L_t^\infty L_\omega^p(C_u)}+\|\sn \varphi\|_{L_t^2 L_\omega^p(C_u)}
+\|\tir\sn_L \varphi\|_{L_t^2 L_\omega^p(C_u)}\les \la^{-\f12}.
\end{equation*}
\end{lemma}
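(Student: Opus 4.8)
The plan is to derive a transport equation for $\varphi$ along the null generators of $C_u$ and then apply the transport lemma (Lemma \ref{tsp2}) together with the already-established estimates on $\tr\chi - \tfrac{2}{\tir}$ and $\chih$. First I would compute $L\varphi$. Since $v_t = \sqrt{|\ga|}$ and $\ckk\ga = (t-u)^2\ga^{(0)}$, we have $\sqrt{|\ckk\ga|} = (t-u)^2\sqrt{|\ga^{(0)}|}$, and by (\ref{lv}), $L(\log v_t) = \tr\chi$, while $L(\log\sqrt{|\ckk\ga|}) = \tfrac{2}{t-u}$. Therefore
\begin{equation*}
L\varphi = \tr\chi - \frac{2}{t-u} = z - V_4,
\end{equation*}
which is exactly the good quantity controlled in (\ref{comp2}), (\ref{pric1}) and (\ref{ric3.18.1}). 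Integrating along null geodesics with the initial condition $\varphi \to 0$ as $t\to t_{\tmin}$ (from (\ref{4a_6})/(\ref{8.1.1}) when $u<0$, handled via $\|v^{1/2}\sn\varphi\|$, and from Lemma \ref{inii}(ii) when $u\ge0$), the $L^p_\omega$-bounds on $\varphi$ itself follow immediately; but the statement only concerns $\sn\varphi$ and $\sn_L\varphi$, so the real work is at the level of one angular derivative.

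Next I would commute $\sn$ through the transport equation $L\varphi = z - V_4$ using (\ref{cmu2}), $[L,\sn_A]f = -\chi_{AB}\sn_B f$, to get
\begin{equation*}
\sn_L \sn\varphi + \tr\chi\, \sn\varphi = \sn z - \sn V_4 + (\text{lower order from }\chih, z).
\end{equation*}
Writing $\tr\chi = \tfrac{2}{\tir} + z - V_4$, this is a transport equation of the type handled in Lemma \ref{tsp2} with $m=1$ modulo the $\bA$-type factor $(z-V_4)$, whose $L^1_t L^\infty_\omega$ norm is controlled by (\ref{ric1.1})/(\ref{pi.2}). Applying the transport lemma and then the $L^2_t L^p_\omega$-bounds $\|\tir\sn z\|_{L_t^2 L_\omega^p}\les\la^{-1/2}$ from (\ref{sna}) (i.e. (\ref{sna_1})) and $\|\tir\sn V_4\|_{L_t^2 L_\omega^p}\les\la^{-1/2}$ from (\ref{flux_3})/Proposition \ref{flux_2}, together with the initial-data bound $\|\tir^{1/2}\sn\varphi(t_{\tmin})\|_{L_\omega^p}\les\la^{-1/2}$ (vanishing when $u\ge0$, and from (\ref{4a_6}) when $u=-v<0$, noting $\tir = t+v$), yields both $\|\tir^{1/2}\sn\varphi\|_{L_t^\infty L_\omega^p}\les\la^{-1/2}$ and $\|\sn\varphi\|_{L_t^2 L_\omega^p}\les\la^{-1/2}$ after dividing by $\tir$ and using the Hardy–Littlewood maximal bound (\ref{hlm}) exactly as in the proof of Proposition \ref{p1} for $\chih$. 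Finally, $\|\tir\sn_L\varphi\|_{L_t^2 L_\omega^p}\les\la^{-1/2}$ is read off directly from the commuted equation: $\tir\sn_L\varphi = -\tir\,\tr\chi\,\sn\varphi + \tir\sn z - \tir\sn V_4 + \tir\,\chih\cdot\sn\varphi$, where the first term is bounded using $\tir\tr\chi\approx 1$ (i.e. (\ref{comp2})) and the $\sn\varphi$ estimate just obtained, and the remaining terms are already controlled.

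The main obstacle I expect is the treatment of the initial data on the null cones with $u = -v < 0$ emanating from the $v$-foliation on $\{t=0\}$: one must invoke (\ref{4a_6}) from Proposition \ref{exten} to control $\|v^{1/2}\sn\varphi\|_{L_\omega^{q_*}}$ at $t=0$ and then propagate it, and one has to be slightly careful that $\varphi$ as defined on $S_{t,u}$ matches, at $t=0$, the quantity $\log\sqrt{|\ga|} - \log\sqrt{|\ckk\ga|}$ appearing there — this is where the choice $\ckk\ga = (t-u)^2\ga^{(0)}$ and the transport coordinate setup in Section \ref{sec_5} must be used consistently with Proposition \ref{exten}. A secondary technical point is the bookkeeping of the $\bA\cdot\sn\varphi$ error term generated by the commutator, but since $\|\bA\|_{L_t^2 L_\omega^\infty}\les\la^{-1/2+2\ep_0}$ by (\ref{a6.19}) this is a small perturbation and closes via a bootstrap identical in structure to the one already run for $z$ and $\chih$; no new ideas are needed beyond those in Sections \ref{sec_5}.\ref{sec_6.19}.
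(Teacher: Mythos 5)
Your proposal is correct and follows essentially the same route as the paper: compute $L\varphi=\tr\chi-\frac{2}{t-u}$, commute with $\sn$ via (\ref{cmu2}), bound $\sn(\tr\chi-\frac{2}{\tir})=\sn z-\sn V_4$ using (\ref{sna}) and (\ref{flux_3}), handle the initial data with (\ref{8.1.1}) for $u\ge 0$ and (\ref{4a_6}) for $u<0$, and then run the transport argument of Proposition \ref{p1}. The only (immaterial) slip is that the commutator produces the coefficient $\f12\tr\chi$ rather than $\tr\chi$ in front of $\sn\varphi$, with the remainder $-\chih\c\sn\varphi$; your subsequent steps already treat it that way.
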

\begin{proof}
By using (\ref{cmu2}) and $L\varphi=\tr\chi-\frac{2}{t-u}$ we derive that
\begin{equation*}
\sn_L \sn\varphi+\frac{1}{2}\tr\chi\sn\varphi=-\chih \c\sn\varphi+\sn (\tr\chi-\frac{2}{t-u}).
\end{equation*}
Since $\tr \chi - \frac{2}{t-u}= z -V_4$, we may use (\ref{sna}) and (\ref{flux_3}) to obtain
$
\left\|\tir\sn (\tr\chi-\frac{2}{\tir})\right\|_{L_t^2 L_\omega^p(C_u)}\les \la^{-\f12}.
$
Thus we may use the same argument in the proof of Proposition \ref{p1} to derive the desired estimates;
for null cones $C_u$ with $u\ge 0$ we use (\ref{8.1.1}) and for null cones $C_u$ with $u<0$ we use (\ref{4a_6})
to deal with the initial data.
\end{proof}

\begin{proposition}\label{mude}
For $\mu =\Lb {\emph\tr} \chi +\f12 {\emph\tr} \chi {\emph\tr} \chib$ there holds the decomposition
\begin{equation}\label{msb}
 \mu={\emph\div} \pi+{\emph\bA}\c {\emph\bA}+ \chi\c\pi+\sn\varphi(\zeta+\pi).
\end{equation}
Moreover, for $p$ satisfying $0<1-\frac{2}{p}<s-2$ there hold the estimates
\begin{align}
&\|\tir\mu, \tir\sn \zeta\|_{L_t^2L_\omega^{p}(C_u)}\les \la^{-\f12}.\label{n1cz}
\end{align}
\end{proposition}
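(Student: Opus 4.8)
The plan is to establish the decomposition \eqref{msb} first, and then deduce the estimates \eqref{n1cz} from it by applying the flux and Sobolev estimates already accumulated in Section \ref{sec_5}. The structural identity \eqref{msb} is the heart of the matter; once it is in hand, \eqref{n1cz} follows by a now-routine bookkeeping using Lemma \ref{decom_lem}, Proposition \ref{flux_2}, Lemma \ref{larea} and Proposition \ref{p1}.

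To prove \eqref{msb}, I would not work directly with $\mu = \Lb \tr\chi + \f12 \tr\chi \tr\chib$. Instead, following the strategy outlined in the introduction, I would introduce the companion quantity $\underline{\mu} = L\tr\chib + \f12 \tr\chi \tr\chib$ and show two things: (a) $\mu - \underline{\mu}$ is lower order, i.e. of the schematic form $\div\pi + \bA\c\bA + \chi\c\pi + \sn\varphi(\zeta+\pi)$; and (b) $\underline{\mu} = \div\pi + (\text{l.o.t.})$. For (b) I would start from the transport equation \eqref{mub} for $L\tr\chib$, namely $L\tr\chib + \f12\tr\chi\tr\chib = 2\div\zb + k_{NN}\tr\chib - \chih\c\chibh + 2|\zb|^2 + \delta^{AB}\bR_{A34B}$; here $\zb = -k_{AN}$ is already of type $\pi$, so $2\div\zb$ is $\div\pi$, the term $k_{NN}\tr\chib$ is $\chi\c\pi$ (after writing $\tr\chib$ via $\tr\chi$ and curvature), $\chih\c\chibh$ and $|\zb|^2$ are $\bA\c\bA$-type, and the curvature term $\delta^{AB}\bR_{A34B}$ is handled by Lemma \ref{decom_lem}(iv), which gives $\delta^{AB}\bR_{B43A} = \div\pi + \sl\bE$ with $\sl\bE = \bA\c\pi + \tr\chi\c\pi$. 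Care is needed with the sign/index conventions relating $\bR_{A34B}$ and $\bR_{B43A}$, and with expressing $\tr\chib$ in terms of controlled quantities — here one uses an identity like $\tr\chib = -\tr\chi + 2\tir^{-1}(\text{something}) + \text{curvature}$, or more cleanly the relation between $\tr\chib$, $\tr\theta$ and $k_{NN}$ together with the Gauss equation; this is where the auxiliary function $\varphi$ enters, producing the $\sn\varphi(\zeta+\pi)$ term.

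For step (a), the commutation formula \eqref{cmu3}, $[L,\Lb] = 2(\zb_A - \zeta_A)e_A + k_{NN}\Lb - k_{NN}L$, is the key tool: applying it to $\tr\chi$ and using that $L\tr\chi$ and $\Lb\tr\chi$ are controlled by \eqref{s1} and by the definition of $\mu$ respectively, one converts $\Lb\tr\chi - (\text{the }L\text{-derivative piece})$ into terms involving $(\zb-\zeta)\c\sn\tr\chi$, $k_{NN}\c(\Lb,L)\tr\chi$, plus the gap between $\mu$ and $\underline\mu$ coming from the mismatch of $\tr\chi\tr\chib$ vs its symmetric counterpart (which is zero) and from $L\tr\chib$ vs $\Lb\tr\chi$. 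The term $(\zb-\zeta)\c\sn\tr\chi = (\zb-\zeta)\c\sn(z - V_4)$ is exactly of the form $\sn\varphi(\zeta+\pi)$ up to relabelling (since $\sn\varphi = \tir(\tr\chi - 2\tir^{-1})\cdot(\ldots)$... more precisely $\sn\tr\chi$ relates to $\sn z$, $\sn\varphi$ via Lemma \ref{larea}), while $k_{NN}\,L\tr\chi$ and $k_{NN}\,\Lb\tr\chi$ unwind, using \eqref{s1} and the computation of $\Lb\tr\chi$, into $\bA\c\bA$, $\chi\c\pi$ and $\div\pi$ pieces. I anticipate that the main obstacle is precisely this bookkeeping in steps (a)–(b): keeping track of which curvature components appear, invoking the correct part of Lemma \ref{decom_lem} for each, and verifying that every leftover term genuinely falls into one of the four admissible buckets $\div\pi$, $\bA\c\bA$, $\chi\c\pi$, $\sn\varphi(\zeta+\pi)$ — in particular that no uncontrollable full second derivative $\bp^2\bg$ survives except inside a $\div$ or $\sn$ acting on a $\pi$.

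Granting \eqref{msb}, the estimate for $\tir\mu$ in \eqref{n1cz} follows: $\|\tir\div\pi\|_{L_t^2 L_\omega^p(C_u)} \lesssim \|\tir\sn\pi\|_{L_t^2L_\omega^p(C_u)} \lesssim \la^{-1/2}$ by \eqref{flux_3}; $\|\tir\,\bA\c\bA\|_{L_t^2L_\omega^p}$ and $\|\tir\,\chi\c\pi\|_{L_t^2L_\omega^p}$ are bounded by $\tau_*^{1/2}\|\tir^{1/2}\bA\|_{L_t^\infty L_\omega^p}\|\bA\|_{L_t^2L_\omega^\infty}$-type products, controlled by \eqref{pric1}, \eqref{pric2}, \eqref{transtrace}, \eqref{a6.19} and $\tau_*\lesssim\la^{1-8\ep_0}$ (using that $\tir\tr\chi\approx1$ from \eqref{comp2} to absorb $\chi$); and $\|\tir\sn\varphi(\zeta+\pi)\|_{L_t^2L_\omega^p}$ is handled by pairing $\|\tir^{1/2}\sn\varphi\|_{L_t^\infty L_\omega^p}\lesssim\la^{-1/2}$ from Lemma \ref{larea} with $\|(\zeta,\pi)\|_{L_t^2L_\omega^\infty}$-type bounds, or the other way around, again gaining a power of $\tau_*^{1/2}$. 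Finally, for $\|\tir\sn\zeta\|_{L_t^2L_\omega^p}$ I would feed \eqref{msb} into the Hodge system \eqref{dze}–\eqref{dcurl}: the Codazzi equations give $\div\zeta = \f12\mu + \f12\sl\bE + \ldots$ and $\curl\zeta = \chih\wedge\chibh + \sl\bE$, so by Lemma \ref{hdgm1} applied to $\D_1\zeta$, $\|\tir\sn\zeta\|_{L^p(S_{t,u})} \lesssim \|\tir\,\D_1\zeta\|_{L^p(S_{t,u})} \lesssim \|\tir\mu\|_{L^p(S_{t,u})} + (\text{terms of type }\tir\bA\c\bA,\ \tir\chi\c\pi,\ \tir\div\pi)$, and integrating in $t$ and using the bound on $\tir\mu$ just obtained together with \eqref{flux_3} and Proposition \ref{p1} yields the $\la^{-1/2}$ bound. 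This completes the proof.
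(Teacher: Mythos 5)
Your overall architecture matches the paper's: compare $\mu$ with $\underline\mu=L\tr\chib+\f12\tr\chi\tr\chib$, read off $\underline\mu$ from the transport equation (\ref{mub}), decompose the curvature term by Lemma \ref{decom_lem} (iv), and then run the Hodge system (\ref{dze})--(\ref{dcurl}) through the Calderon--Zygmund estimate. But there are two concrete gaps.

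First, your step (a) — the computation of $\mu-\underline\mu$ — does not work as described. Applying the commutator $[L,\Lb]$ \emph{to $\tr\chi$} produces the second-order quantity $L\Lb\tr\chi-\Lb L\tr\chi$ and forces you back onto differentiating (\ref{s1}) in the $\Lb$ direction, i.e.\ onto the term $\Lb\bR_{44}$ that this proposition is designed to avoid. The correct move, which is the actual content of the paper's argument, is to apply $[L,\Lb]$ to the auxiliary function $\varphi=\log\sqrt{|\ga|}-\log\sqrt{|\ckk\ga|}$: since $L\varphi=\tr\chi-\frac{2}{t-u}$, $\Lb\varphi=\tr\chib+(2\bb^{-1}-1)\frac{2}{t-u}$ and $N\varphi=\tr\theta-\frac{2\bb^{-1}}{t-u}$, the identity (\ref{cmu3}) applied to $\varphi$, combined with $L\bb=-\bb k_{NN}$ from (\ref{lb}) to cancel the $\frac{4L(\bb^{-1})}{t-u}$ term, yields exactly $L\tr\chib-\Lb\tr\chi=-2k_{NN}\tr\theta+2(\zb-\zeta)\c\sn\varphi$, i.e.\ (\ref{comm1}). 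This is a first-order identity with no curvature on the right, and it is where the $\sn\varphi(\zeta+\pi)$ bucket comes from — not from step (b) as you suggest. As written, your step (a) would not close.

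Second, your order of estimation is circular. You propose to bound $\|\tir\mu\|_{L_t^2L_\omega^p}$ first and then feed it into the Hodge system for $\zeta$. But the term $\sn\varphi\c\zeta$ in (\ref{msb}) (and $(\zeta+\sn\varphi)\c\zeta$ in the $\div\zeta$ equation) can only be placed in $L_t^2L_\omega^p$ by pairing $\|\tir^{\f12}\sn\varphi\|_{L_t^\infty L_\omega^p}$ against $\|\zeta\|_{L_t^2L_\omega^\infty}$, and the latter is not among the bootstrap assumptions on all of $\widetilde{\D^+}$ (only on $\D^+$ via (\ref{ric3a})). The paper therefore runs the Hodge/CZ argument for $\sn\zeta$ \emph{first}, obtaining $\|\tir\sn\zeta\|_{L_t^2L_\omega^p}\les\la^{-\f12}+\la^{-4\ep_0}\|\zeta\|_{L_t^2L_\omega^\infty}$, closes via the Sobolev inequality (\ref{sobinf}) to get $\|\zeta\|_{L_t^2L_\omega^\infty(C_u)}\les\la^{-\f12}$, and only then estimates $\tir\mu$. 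You should reverse the order of the two estimates in (\ref{n1cz}) and make the absorption explicit.
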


\begin{proof}
Let $\varphi:=\log \sqrt{|\ga|}-\log \sqrt{|\ckk\ga|}$ be as in Lemma \ref{larea}. By (\ref{cmu3}) we have
\begin{equation}\label{comm0}
L \Lb \varphi-\Lb L \varphi=2(\zb-\zeta) \c\sn \varphi-2k_{NN} N\varphi.
\end{equation}
Using $L(t-u)=1$ and $\Lb(t-u)=1-2\bb^{-1}$, we can derive that
$$
\Lb \varphi=\tr\chib+(2\bb^{-1}-1)\frac{2}{t-u},\quad L\varphi=\tr\chi-\frac{2}{t-u},
\quad N(\varphi)=\tr\theta-\frac{2\bb^{-1}}{t-u}.
$$
Thus, in view of (\ref{comm0}), we obtain
\begin{equation*}
L\tr\chib-\Lb \tr\chi+L((2\bb^{-1}-1)\frac{2}{t-u})+\Lb(\frac{2}{t-u})=-2k_{NN} (\tr\theta-\frac{2\bb^{-1}}{t-u})+2(\zb-\zeta) \c \sn \varphi
\end{equation*}
which gives
\begin{equation*}
L\tr\chib-\Lb\tr\chi+\frac{4L(\bb^{-1})}{t-u}=-2k_{NN} \left(\tr\theta-\frac{2\bb^{-1}}{t-u}\right)+2(\zb-\zeta)\c\sn \varphi.
\end{equation*}
By virtue of (\ref{lb}), it then follows that
\begin{equation}\label{comm1}
L\tr\chib-\Lb\tr\chi=-2k_{NN} \tr\theta+2(\zb-\zeta)\c \sn \varphi.
\end{equation}
Recall the transport equation (\ref{mub}) and the fact $2\theta = \chi -\chib$. We then obtain
%\begin{equation*}
%L \tr\chib+\f12 \tr\chi \tr\chib=2\div \zb+k_{NN}\tr\chib-\chih\c \chibh+2|\zb|^2+\delta^{AB}\bR_{A34B}
%\end{equation*}
%with the help of (\ref{comm1}), we derive
\begin{equation}\label{mu.2}
\Lb \tr\chi+\f12 \tr\chi\tr\chib=2 \div \zb+k_{NN}\tr\chi-\chih\c \chibh+2|\zb|^2+ \delta^{AB}\bR_{A34B}-2(\zb-\zeta) \sn \varphi.
\end{equation}
Thus, by using the curvature decomposition formula Lemma \ref{decom_lem} (iv) and $\zb_A = -k_{AN}$, we obtain (\ref{msb}).

We next use (\ref{msb}) to prove (\ref{n1cz}). By using (\ref{msb}) and Lemma \ref{decom_lem} (iv) we can derive from
(\ref{dze}) and (\ref{dcurl}) that
\begin{align}\label{zz1}
\begin{split}
\div \zeta &=\sn\pi+\bA\c \bA+\pi \c \chi+(\zeta+\sn \varphi)\c \zeta+\pi \c \sn\varphi,\\
\curl \zeta &=\sn \pi+\bA\c \bA+\pi \c \chi.
\end{split}
\end{align}
Thus it follows from Lemma \ref{hdgm1} that
\begin{align*}
\|\tir \sn \zeta\|_{L_t^2 L_\omega^p}
&\les \|\tir \sn \pi\|_{L_t^2 L_\omega^p}+\|\pi\|_{L_t^2 L_\omega^p}+\|\zeta,\bA\|_{L_t^2 L_\omega^\infty}
\|\tir( \zeta, \bA, \sn \varphi)\|_{L_t^\infty L_\omega^p}.
\end{align*}
Note that  (\ref{transtrace}), (\ref{pric2}) and Lemma \ref{larea}  imply
 \begin{equation}\label{aux_2}
 \|\tir^\f12( \zeta, \bA, \sn \varphi)\|_{L_t^\infty L_\omega^p}\les \la^{-\f12}.
\end{equation}
Note also that (\ref{ric3.18.1}) and (\ref{pi.2}) give
\begin{equation}\label{ainfty}
\|\bA\|_{L_t^2 L_\omega^\infty}\les\la^{-\f12}.
\end{equation}
These estimates together with (\ref{flux_3}) then imply that
\begin{align}\label{7.22.1}
\|\tir \sn \zeta\|_{L_t^2 L_\omega^p} \les \la^{-\f12} +\la^{-4\ep_0} \|\zeta\|_{L_t^2 L_\omega^\infty}.
\end{align}
By employing (\ref{sobinf}) and (\ref{pric1}) we have
$$
\|\zeta\|_{L_t^2 L_\omega^\infty} \les \|\tir \sn \zeta\|_{L_t^2 L_\omega^p} + \|\zeta\|_{L_t^2 L_\omega^2}
\le \|\tir \sn \zeta\|_{L_t^2 L_\omega^p} +\la^{-\f12}.
$$
Combining this with (\ref{7.22.1}) gives the estimate for $\sn \ze$ in (\ref{n1cz}) and consequently
\begin{equation}\label{zza}
\|\zeta\|_{L_t^2 L_\omega^\infty(C_u)}\les \la^{-\f12}.
\end{equation}
Therefore, in view of (\ref{msb}), (\ref{flux_3}), (\ref{aux_2}), (\ref{ainfty}) and (\ref{zza}),  we can obtain
\begin{align*}
\|\tir\mu\|_{L_t^2 L_\omega^p} & \les \|\tir \sn \pi\|_{L_t^2 L_\omega^p}+\|\pi\|_{L_t^2 L_\omega^p(C_u)}
+\tau_*^\f12 \|\bA\|_{L_t^2 L_\omega^\infty} \|\tir^\f12 \bA\|_{L_t^\infty L_\omega^p} \\
& \quad + \tau_*^{\f12} \|\zeta, \bA\|_{L_t^2 L_\omega^\infty} \|\tir^{\f12} \sn \varphi\|_{L_t^\infty L_\omega^p}
\les \la^{-\f12}
\end{align*}
which gives the estimate on $\mu$ in (\ref{n1cz}).
\end{proof}

\begin{proof}[Proof of the estimates for $\zeta$ in (\ref{ric1}) and (\ref{ric1.1})]
Now we prove the estimate of $\ze$ in (\ref{ric1}) using (\ref{zz1}).
%By (\ref{zz1}) we can write
%\begin{equation*}
%\D_1\zeta=\D_1 \pi+\tir^{-1}\pi +A\c A+\zeta\c (\zeta+\sn\varphi)+\pi\c \sn\varphi
%\end{equation*}
%where $\pi$ in $\D_1 \pi$ stands for a pair of one forms.
In view of (\ref{zz1}), Proposition \ref{cz.2}, (\ref{pi.2}) and (\ref{aux_2}), we have on $\D^+$ that
\begin{align*}
& \left(\int_{0}^{\tau_*} \sup_u \|\zeta\|_{L^\infty(S_{t,u})}^2 dt \right)^{\f12}
\les\|\mu^{0+}P_\mu \ti\pi\|_{L_t^2 l_\mu^2 L_x^\infty}+\|\ti \pi\|_{L_t^2 L_x^\infty}\\
& \quad\qquad +\left(\int_0^{\tau_*}\left(\sup_{u} \|\tir^{1-\frac{2}{p}}(\bA, \ze)\c(\bA,\ze,\sn\varphi)\|_{L^p(S_{t,u})}\right)^2 dt\right)^{\f12}\\
&\quad \quad\les \la^{-\f12-4\ep_0}+\|\bA, \zeta\|_{L_t^2 L_x^\infty} \|\tir (\bA,\zeta, \sn \varphi)\|_{L^\infty L_\omega^p}\\
&\quad \quad \les \la^{-\f12-4\ep_0}+\la^{-4\ep_0}\|\bA, \zeta\|_{L_t^2 L_x^\infty}.
\end{align*}
By using the estimates for $z$, $\chih$ in (\ref{ric1}) and the estimate for $\pi$ in (\ref{pi.2}), we have
$\|\bA\|_{L_t^2 L_x^\infty}\les \la^{-\f12-4\ep_0}$. Therefore
\begin{align*}
\left(\int_{0}^{\tau_*} \sup_u \|\zeta\|_{L^\infty(S_{t,u})}^2 dt \right)^{\f12}
\les \la^{-\f12-4\ep_0}+\la^{-4\ep_0}\|\zeta\|_{L_t^2 L_x^\infty}
\end{align*}
which implies the estimate for $\zeta$ in (\ref{ric1}).

Similarly, we have on $\widetilde{\D^+}$ that
\begin{align*}
\|\zeta\|_{L_t^\frac{q}{2}L^\infty_x} &\les \|\mu^{0+}P_\mu \ti\pi\|_{L_t^\frac{q}{2} l_\mu^2 L_x^\infty}
+\|\pi\|_{L_t^\frac{q}{2} L_x^\infty}+\|\bA,\zeta\|_{L_t^\frac{q}{2} L_x^\infty} \|\tir (\bA, \zeta, \sn \varphi)\|_{L^\infty L_\omega^p}
\end{align*}
We then employ (\ref{pi.2}), (\ref{aux_2}), the estimates on $\bA$ in (\ref{ric1.1})  to obtain
\begin{equation*}
 \|\zeta\|_{L_t^\frac{q}{2}L^\infty}\les \la^{\frac{2}{q}-1-4\ep_0(\frac{4}{q}-1)}
\end{equation*}
which shows the estimate for $\zeta$ in (\ref{ric1.1}).
\end{proof}

From now on we will regard $\zeta$ as an element of the collection of $\bA$. We will use the convention that $\bA$
represents the terms of $\zeta, \chih, z, \pi, \frac{\bb^{-1}-1}{\tir}$ and their combinations. We use ${\bf \Ab}$ to denote all the elements in $\bA$ and $\sn \varphi$.
We have proved that

\begin{proposition}\label{awave}
On $\D^+$, with $0<1-\frac{2}{p}<s-2$, there hold $\|{\emph\bA}\|_{L_t^2 L_x^\infty}\les \la^{-\f12-4\ep_0}$ and
\begin{equation*}
\|\tir(\sn {\bf\Ab}, \sn_L {\bf \Ab})\|_{L_t^2 L_\omega^p}+\|{\bf \Ab}\|_{L_t^2 L_\omega^p}
+\|\tir^{\f12}{\bf \Ab}\|_{L_t^\infty L_\omega^p}+\|\tir^\f12 \bA\|_{L_t^\infty L_\omega^{2p}}\les \la^{-\f12}.
\end{equation*}
\end{proposition}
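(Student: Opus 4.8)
\textbf{Proof strategy for Proposition \ref{awave}.} The plan is to assemble the statement by collecting the estimates already proved for the individual members of the collection $\bA = \{\zeta, \chih, z, \pi, (\bb^{-1}-1)/\tir\}$ and for $\sn\varphi$, all of which have been established in the course of Sections 5.3--5.4. Concretely, for $\chih, \zeta, z$ the bounds $\|\bA\|_{L_t^2 L_x^\infty(\D^+)}\les \la^{-\f12-4\ep_0}$ come directly from \eqref{ric1} (the estimate for $\chih, z$ proved in \S5.4.1 via \eqref{dchi1}, \eqref{tirz} and the Hodge/Calder\'on--Zygmund estimates, and for $\zeta$ proved at the end of \S\ref{ss1} via \eqref{zz1} and Proposition \ref{cz.2}); for $\pi$ the stronger bound follows from \eqref{pi.2} together with $|\pi|\les|\bp\phi|$ and $\tau_*\les \la^{1-8\ep_0}$; and for $(\bb^{-1}-1)/\tir$ from the first estimate in \eqref{ric4} restricted to $\D^+$. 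Since a finite sum (and products, which are controlled by H\"older and the $L^2_t L_x^\infty$ bounds on each factor times $\tau_*^{1/2}$) of quantities each bounded by $\la^{-1/2-4\ep_0}$ is again bounded by a universal constant times $\la^{-1/2-4\ep_0}$, the first claim follows.

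For the second displayed chain of estimates, each seminorm on the left is handled factor by factor. The bound $\|{\bf\Ab}\|_{L_t^2 L_\omega^p}\les\la^{-\f12}$: for $\chih,\zeta,z$ use \eqref{pric1}, for $\pi$ use \eqref{flux_3}, for $(\bb^{-1}-1)/\tir$ use \eqref{ric4} (second estimate, after conversion between $L_x^q$ and weighted $L^q_\omega$ norms via Lemma \ref{6.17.1}), and for $\sn\varphi$ use Lemma \ref{larea}. The bound $\|\tir^{\f12}{\bf\Ab}\|_{L_t^\infty L_\omega^p}\les\la^{-\f12}$: for $\chih,\zeta,z$ this is \eqref{pric2}, for $\pi$ and $(\bb^{-1}-1)/\tir$ it is \eqref{transtrace} (recall $\pi$ and $(\bb^{-1}-1)/\tir$ are of type $\ti\pi$-related) and \eqref{ric4}, and for $\sn\varphi$ it is again Lemma \ref{larea}. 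The last term, $\|\tir^{\f12}\bA\|_{L_t^\infty L_\omega^{2p}}\les\la^{-\f12}$: for $\chih,\zeta$ it is \eqref{pric3}; for $z$ it is \eqref{comp2} (restated as $\|\tir^{\f12}z\|_{L^\infty}\les\la^{-\f12}$); for $\pi$ it follows from \eqref{transtrace}; and for $(\bb^{-1}-1)/\tir$ from the second estimate in \eqref{ric4}.

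The transport-derivative bound $\|\tir(\sn{\bf\Ab},\sn_L{\bf\Ab})\|_{L_t^2 L_\omega^p}\les\la^{-\f12}$ is the one requiring slightly more care. For $\sn$: apply \eqref{sna} for $\sn z,\sn\chih$, \eqref{n1cz} for $\sn\zeta$, \eqref{flux_3} for $\sn\pi$, the last estimate in \eqref{ric4} for $\sn((\bb^{-1}-1)/\tir)$, and Lemma \ref{larea} for $\sn\varphi$. For $\sn_L$: use \eqref{pric1} for $\sn_L\chih,\sn_L\zeta,\sn_L z$, \eqref{flux_3} for $\sn_L\pi$, \eqref{ric4} for $\sn_L((\bb^{-1}-1)/\tir)$ (using $L\bb=-\bb k_{NN}$ to express $\bd_L$ of the quotient), and Lemma \ref{larea} for $\sn_L\varphi$. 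I expect the main (though still routine) obstacle to be purely bookkeeping: keeping track of which norm conversions (between $L^q(S_{t,u})$, $L_\omega^q(S_{t,u})$, and $L_x^q$) are licensed by Lemma \ref{6.17.1} ($v_t\approx\tir^2$), and confirming that every product term $\bA\cdot\bA$ appearing implicitly through ``their combinations'' is absorbed by pairing an $L_t^2 L_x^\infty$ factor against an $L_t^\infty L_\omega^p$ factor with the $\tau_*^{1/2}\les\la^{(1-8\ep_0)/2}$ loss still leaving a net negative power of $\la$. Since all constituent estimates are already in hand, no new analytic input is needed and the proof is essentially a citation-assembly.
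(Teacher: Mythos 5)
Your proposal matches the paper's treatment exactly: the paper offers no separate argument for Proposition \ref{awave} beyond the phrase ``We have proved that,'' so the proposition is precisely the citation-assembly of (\ref{pi.2}), (\ref{flux_3}), (\ref{transtrace}), (\ref{ric4}), (\ref{ric1}), Propositions \ref{p1} and \ref{mude}, and Lemma \ref{larea} that you describe. The only quibbles are cosmetic --- for instance the $L_t^2L_\omega^p$ bound for $(\bb^{-1}-1)/\tir$ follows more directly from the first estimate in (\ref{ric4}) (since $L_x^\infty$ dominates $L_\omega^p$ on each $S_{t,u}$) than from the second, and the ambiguity over whether $\sn{\bf\Ab}$ literally includes $\sn^2\varphi$ (which Lemma \ref{larea} does not supply) is a looseness already present in the paper's own notation rather than something you introduced.
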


\section{\bf Conformal method}\label{sec_6}

In this section we will work on the wave zone ${\mathscr D}^+$ under the same setup in Section 5.
We define a scalar function $\sigma$ by
\begin{equation}\label{c1}
L\sigma=\f12 V_4,\quad \sigma(\Ga^+)=0,
\end{equation}
where $\Ga^+$ is the time axis. We then introduce the metric $\ti \bg=\Omega^{-2} \bg$ with the conformal factor
$\Omega=e^{-\sigma}$. Let $\ti \ga$ be the induced metric of $\ti \bg$ on each wave front $S_{t,u}$. By taking
the Lie derivatives of $\ti \ga$ in the directions of $L$ and $\Lb$ respectively, we introduce
$$
\ti\chi = \f12 {\mathscr L}_L\ti{\ga}, \qquad \ti\chib =\f12 {\mathscr L}_\Lb\ti{\ga}.
$$
Recall that $\chi =\f12 {\mathscr L}_L \ga$ and $\chib = \f12 {\mathscr L}_\Lb \ga$. It is straightforward to show that
$$
\ti \chi = \Omega^{-2} \left(\chi +(L\sigma) \ga\right), \qquad \ti \chib = \Omega^{-2} \left(\chib + (\Lb \sigma) \ga\right).
$$
We set $\tr\ti \chi:= \ti \ga^{AB} \ti\chi_{AB}$ and $\tr\ti\chib: = \ti \ga^{AB} \ti \chib_{AB}$.
It is easily seen that
\begin{align}\label{c_4_20}
\tr \ti \chi=\tr\chi+2L \sigma, \qquad \tr \ti\chib = \tr \chib + 2\Lb \sigma.
\end{align}
Considering (\ref{c1}) we have $\tr\ti\chi = \widetilde{\tr\chi}$ for the $\widetilde{\tr\chi}$ introduced in
Proposition \ref{ricco}. This explains the choice of $\sigma$.

Let $\tiR$ and $\Box_{\ti\bg}$ denote the Ricci curvature tensor and the Laplace-Beltrami operator associated with $\ti \bg$.
For a given scalar function $w$, we set $\tiw=\Omega w$. Then there hold (see \cite[p. 275]{hmd})
\begin{equation*}
\Box_\bg w-\frac{1}{6}\bR w=\Omega^{-3} (\Box_{\ti \bg} \tiw-\frac{1}{6} {\tiR} \tiw )
\end{equation*}
and
\begin{equation*}
\tiR =\Omega^2(\bR-6\Box_\bg \sigma-6 \bd^\mu \sigma\bd_\mu \sigma).
\end{equation*}
%then
%\begin{equation*}
%\Box_\bg w-\Omega^{-3} \Box_{\ti\bg} \tiw=\frac{1}{6} (\bR w-\Omega^{-3} \tiR \tiw).
%\end{equation*}
Hence
\begin{equation*}
\Box_\bg w-\Omega^{-3} \Box_{\ti \bg} \tiw =(\Box_\bg \sigma+ \bd^\mu \sigma\bd_\mu \sigma)w.
\end{equation*}
In Section \ref{sec_7}, we will use this equation to derive estimates on the conformal energy for the solutions
of $\Box_\bg w=0$ in ${\mathscr D}^+$; in such situation the above equation becomes
\begin{equation}\label{idc1}
\Box_{\ti \bg} \tiw=-\Omega^2(\Box_\bg \sigma+\bd^\mu \sigma\bd_\mu \sigma) \tiw.
\end{equation}

In this section we will provide various estimates on the conformal factor $\sigma$ which are crucial in Section \ref{sec_7}.
We start with some preliminary estimates.

\begin{lemma}\label{pre_sig}
Let $0<1-\frac{2}{p}<s-2$.  Within ${\mathscr D}^+$ there hold
\begin{equation}\label{l4.1}
\|\tir^\f12 L\sigma\|_{L^{2p}_\omega(C_u)}+ \|r^{\f12-\frac{2}{p}}\sn \sigma\|_{L_x^p L^\infty(C_u)}+\|\sn \sigma \|_{L_t^2 L_\omega^p(C_u)}\les \la^{-\f12}
\end{equation}
and
\begin{equation}\label{l4.19}
\|\sigma\|_{L^\infty}\les \la^{-8\ep_0}, \quad \|\tir^{-\f12} \sigma\|_{L^\infty}\les \la^{-\f12 -4\ep_0},\quad  \Omega\approx 1.
\end{equation}
\end{lemma}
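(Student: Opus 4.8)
The plan is to integrate the transport equation \eqref{c1} for $\sigma$ along the null generators of the cones $C_u$ (the integral curves of $L$) starting from the vertex on $\Ga^+$, where $\sigma=0$. Since $L\sigma=\tfrac12 V_4$ and $|V_4|\les|\pi|$, the bound $\|\sigma\|_{L^\infty}$ is controlled by $\|\pi\|_{L^1_tL^\infty_x}$ along $C_u$; using \eqref{pi.2} together with $\tau_*\les\la^{1-8\ep_0}$ via Cauchy--Schwarz, this gives $\|\sigma\|_{L^\infty}\les\la^{-8\ep_0}$. For the weighted bound $\|\tir^{-\f12}\sigma\|_{L^\infty}$ I would estimate $|\sigma(t,\omega)|\le\int_{t_{\min}}^t|V_4|\,dt'$ and bound the integral by $\|\tir^{\f12}|V_4|\|_{L^\infty_tL^\infty_\omega}$ times $\int\tir^{-1/2}dt'\approx\tir^{1/2}$; the required $L^\infty$ control of $\tir^{\f12}\pi$ comes from \eqref{transtrace} in Proposition \ref{flux_2} combined with Sobolev embedding on $S_{t,u}$ (using \eqref{sobinf} and Proposition \ref{awave}), yielding $\|\tir^{-\f12}\sigma\|_{L^\infty}\les\la^{-\f12-4\ep_0}$. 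Then $\Omega=e^{-\sigma}\approx1$ is immediate from $\|\sigma\|_{L^\infty}\les\la^{-8\ep_0}\ll1$.

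For \eqref{l4.1} the first term is essentially trivial: $L\sigma=\tfrac12 V_4$, so $\|\tir^{\f12}L\sigma\|_{L^{2p}_\omega}\les\|\tir^{\f12}\pi\|_{L^{2p}_\omega}\les\la^{-\f12}$ directly from \eqref{transtrace} (or the last bound in Proposition \ref{awave}). The angular-derivative bounds require a transport equation for $\sn\sigma$: commuting $\sn$ past $L$ using \eqref{cmu2} gives
\begin{equation*}
\sn_L\sn\sigma+\chi\c\sn\sigma=\tfrac12\sn V_4=\tfrac12\sn\pi,
\end{equation*}
i.e. $\sn_L\sn\sigma+\tr\chi\sn\sigma=-\chih\c\sn\sigma+\tfrac12\sn\pi$, and since $\tr\chi=\tfrac2\tir+z-V_4$ this is a transport equation of the type handled by Lemma \ref{tsp2} with $m=1$ and $G=\chih+z-V_4$ satisfying $\|G\|_{L^\infty_\omega L^1_t}\les\la^{-\f12+2\ep_0}$ by \eqref{ba3.18.1} and \eqref{pi.2}. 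Integrating from the vertex (where $\sn\sigma\to0$, which must be checked from the limiting behaviour near $\Ga^+$, or from \eqref{4a_6} for cones with $u<0$, paralleling Lemma \ref{larea}), applying the Hardy--Littlewood maximal bound \eqref{hlm} to absorb the $\tir^{-1}$ weight, and using the flux bound $\|\tir\sn\pi\|_{L^2_tL^p_\omega(C_u)}\les\la^{-\f12}$ from \eqref{flux_3}, I would get $\|\sn\sigma\|_{L^2_tL^p_\omega(C_u)}\les\la^{-\f12}$; the $L^p_xL^\infty_t$ estimate follows the same way keeping the sup in $t$ inside, using the $L^p_\omega L^\infty_t$ version of \eqref{flux_3} and the trace/transport inequality \eqref{tran_sob} or \eqref{7.22.30}, exactly mirroring the proof of Proposition \ref{p1}.

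The main obstacle I anticipate is not any single estimate but the handling of the initial data at the vertex of the null cone: one must show $\sn\sigma\to0$ (with the correct $\tir$-weighted rate) as $t\to t_{\min}$ on cones $C_u$, $u\ge0$, and supply the analogous data on cones with $u<0$ from Proposition \ref{exten}. For $u\ge0$ this is a matter of the regularity of the null geodesic congruence near a point of $\Ga^+$ (as in Lemma \ref{inii}), and for $u<0$ one inherits $\sn\sigma|_{t=0}$ from the structure of the $v$-foliation, where \eqref{4a_6} gives the decay of $\tir^{\f12}\sn\varphi$ and a similar bound must be propagated to $\sn\sigma$ through \eqref{c1}. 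A secondary technical point is that $\sn\pi$ only has the flux regularity of \eqref{flux_3} (it is merely $L^2_tL^p_\omega$ on $C_u$, not better), so all the weighted estimates for $\sn\sigma$ must be of \emph{flux} type rather than pointwise-in-$t$ type — which is exactly why the statement \eqref{l4.1} is phrased with $L^2_t$ and $L^p_x L^\infty_t$ norms rather than $L^\infty_t$. Once these initialization issues are dispatched, the argument is a direct transcription of the transport-equation technique already deployed for $\chih$, $\zeta$, $z$ in Section \ref{sec_5}.
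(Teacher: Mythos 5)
Your overall strategy is the same as the paper's: integrate the transport equations $L\sigma=\f12 V_4$ and $L\sn\sigma+\f12\tr\chi\,\sn\sigma=-\chih\c\sn\sigma+\f12\sn V_4$ along null generators from the vertex, and feed in (\ref{pi.2}) for $\sigma$ and the flux bound (\ref{flux_3}) for $\sn\sigma$. Two points, one of which is a genuine flaw in a step.

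First, your route to the weighted bound $\|\tir^{-\f12}\sigma\|_{L^\infty}\les\la^{-\f12-4\ep_0}$ does not work as written. You propose to bound $\int_u^t|V_4|$ by $\|\tir^{\f12}V_4\|_{L^\infty}\int\tir^{-\f12}$, citing (\ref{transtrace}) plus Sobolev embedding for the pointwise factor. But (\ref{transtrace}) only controls $\tir^{\f12}\ti\pi$ in $L^\infty L_\omega^{2q}$, and upgrading $L_\omega^{2q}$ to $L_\omega^\infty$ via (\ref{sobinf}) would require $\|\tir\sn\pi\|_{L_\omega^q}$ \emph{pointwise in $t$}, whereas $\sn\pi$ is only controlled in the flux sense $L_t^2L_\omega^p$ by (\ref{flux_3}) — precisely the limitation you yourself identify at the end. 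Moreover, even granting such a pointwise bound, it would yield only $\la^{-\f12}$, not $\la^{-\f12-4\ep_0}$. The correct argument is the one you already use for the unweighted bound: Cauchy--Schwarz in $t$ gives $|\sigma|\le\f12\int_u^t|V_4|\le\f12(t-u)^{\f12}\|V_4\|_{L_{t'}^2L_x^\infty}\les(t-u)^{\f12}\la^{-\f12-4\ep_0}$ by (\ref{pi.2}), which delivers both estimates of (\ref{l4.19}) simultaneously; this is what the paper does.

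Second, the "main obstacle" you anticipate — initializing $\sn\sigma$ on cones with $u<0$ via Proposition \ref{exten} — does not arise: the lemma is stated within $\D^+$, which by construction is foliated only by cones $C_u$ with $u\ge 0$ whose vertices lie on $\Ga^+$, where $\sigma\equiv 0$; indeed $\sigma$ is only defined by (\ref{c1}) with data on the time axis. For these cones the $\tir$-weight kills the limit term at the vertex, exactly as in the estimates for $\chih$, $\zeta$, $z$ with $u\ge 0$. Also note a small slip: decomposing $\chi\c\sn\sigma$ should leave the coefficient $\f12\tr\chi$ (i.e.\ $m=\f12$ in Lemma \ref{tsp2}, weight $v_t^{1/2}\approx\tir$), not $\tr\chi$; with the correct weight one gets $|\sn\sigma|\les\tir^{-1}\int_u^t\tir'|\sn V_4|$, from which both the $L_t^2L_\omega^p$ and the $L_x^pL_t^\infty$ bounds follow by Cauchy--Schwarz and (\ref{hlm}) as you indicate.
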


\begin{proof}
We first show (\ref{l4.19}). Let $(t,u,\omega)\in {\mathscr D}^+$ be any point. We integrate along the
null geodesic $\Upsilon_\omega(t')$ initiating from the time axis at $t'=u$. By (\ref{c1}) we have
\begin{equation*}
\sigma(t,u, \omega)=\f12\int_u^t V_4.
\end{equation*}
This together with (\ref{pi.2}) implies that
\begin{equation*}
|(t-u)^{-\f12}\sigma(t,u, \omega)|\les (t-u)^{-\f12} \int_u^t| V_4|
\les \la^{-\f12-4\ep_0}.
\end{equation*}
Since $t-u\le \tau_*\le \la^{1-8\ep_0}$, we obtain
\begin{equation*}
|\sigma(t,u, \omega)|\les (\tau_*)^\f12 \la^{-\f12-4\ep_0}\les \la^{-8\ep_0}.
\end{equation*}
By definition, this also gives $\Omega \approx 1$ . Thus we obtain (\ref{l4.19}).

We next prove (\ref{l4.1}). Since $L\sigma=\f12 V_4$ and $V_4=\bg \c {\bp \bg}$, the estimate for $L\sigma$ follows
from Proposition \ref{awave}. To derive the estimates for $\sn \sigma$, we rely on the transport equation
\begin{equation*}
L\sn\sigma+\f12 \tr\chi \sn \sigma=-\chih\c \sn \sigma+\f12\sn V_4.
\end{equation*}
which can be derived from (\ref{c1}) and (\ref{cmu2}).  Since $\|\chih\|_{L_t^2 L_x^\infty}\les \la^{-\f12-4\ep_0}$
from Proposition \ref{awave}, we may use Lemma \ref{tsp2} to obtain
\begin{equation*}
|\sn \sigma|\les\frac{1}{t-u}\int_u^t (t'-u) |\sn V_4| dt'.
\end{equation*}
In view of (\ref{flux_3}), we can obtain $\|\tir^{\f12-\frac{2}{p}}\sn \sigma\|_{L_x^p L^\infty(C_u)}
+\|\sn \sigma\|_{L_t^2 L_\omega^p(C_u)}\les \la^{-\f12}$. The proof is thus complete.
\end{proof}

\subsection{Improved estimates on $\sn \sigma$ and $\mu+2\sD \sigma$}\label{sec_6_imp}

The preliminary estimates contained in (\ref{l4.1}) are not strong enough for our purpose.
Due to the construction of $\sigma$, we observe some remarkable cancellation which leads us to
discover an important hidden structure in the transport equation of $\mu+2\sD\sigma+l.o.t$,
see Lemma \ref{mu4.4}. This nice structure enables us to derive a set of improved estimates
in Proposition \ref{improve} for $\sn\sigma$ and $\mu+2\sD\sigma$ both of which are crucial
for controlling the conformal energy.

To be more precise, we start with $\sD \sigma$. By using (\ref{tran1}), (\ref{dchi}) and (\ref{c1}), we can see that $\sD \sigma$
verifies the transport equation
\begin{align}
L \sD \sigma+ \tr\chi \sD \sigma&=\f12 \sD (V_4)-2\chih_{AC} \sn_A \sn_C \sigma- \sn_A\tr\chi \sn_A \sigma
-2 \delta^{AB}\bR_{CA4B}\c \sn_C\sigma\nn\\
&-\left(\f12 k_{AN} \tr\chi-\chih_{AB} k_{BN}\right)\sn_A\sigma-\chi_{AB} \zb_A \sn_B\sigma\label{tran4}.
\end{align}

Next we consider $\mu:=\Lb \tr\chi+\f12 \tr\chi\tr\chib$. By using (\ref{s1}) and (\ref{cmu3}) we can derive that
\begin{align}
L\mu+\tr\chi\mu&=\f12 L(\tr\chi\tr\chib)+\f12 (\tr\chi)^2 \tr\chib-2\sn_\Lb \chih\c \chih\label{lmu1}\\
&+2(\zb_A-\ze_A) \sn_A\tr\chi-\Lb \bR_{44}-\tr\chi \Lb k_{NN}-k_{NN} L\tr\chi.\nn
\end{align}
In view of (\ref{mub}) and (\ref{s1}), we have
\begin{align*}
& L (\tr\chi \tr \chib) = \tr\chib L\tr\chi+\tr\chi L\tr\chib\\
%&=\tr\chi(2\div \zb+\tr\chib k_{NN}-\chih\c \chibh+2|\zb|^2+\delta^{AB}\bR_{A34B}-\f12 \tr\chib\tr\chi)\\
%&\quad \, + \tr\chib(-\f12 (\tr\chi)^2-|\chih|^2-k_{NN}\tr\chi-\bR_{44})\\
&=\tr\chi \left(2\div\zb-\chih\c \chibh+2|\zb|^2+\delta^{AB}\bR_{A34B}\right)-\bR_{44}\tr\chib-\tr\chib|\chih|^2-\tr\chib (\tr\chi)^2.
\end{align*}
Plugging this equation into (\ref{lmu1}) and using (\ref{3chi}) for $\sn_\Lb \chih$, we can conclude
 %\begin{align*}
 %L\mu&+\tr\chi\mu=2(\zb_A-\ze_A) \sn_A\tr\chi-\Lb \bR_{44}-\tr\chi \Lb (k_{NN})-k_{NN} L\tr\chi\\
%& +\f12 \Big(\tr\chi \left(2\div\zb-\chih\c \chibh+2|\zb|^2+\delta^{AB}\bR_{A34B}\right)-\bR_{44}\tr\chib-\tr\chib|\chih|^2\Big)\\
%&-2\chih_{AB}\left(-\f12 \tr\chi\chibh_{AB}+2\sn_A\zeta_B+k_{NN}\chih_{AB}+2\zeta_A\zeta_B+\bR_{A43B}-\f12 \tr\chib\chih_{AB}\right).
% \end{align*}
% Hence
 \begin{align}
 &L\mu+\tr\chi\mu=-\Lb \bR_{44}-\f12\bR_{44}\tr\chib-\tr\chi\Lb (k_{NN})-k_{NN}L\tr\chi\label{6.15.1}\\
 &+2(\zb_A-\ze_A)\sn_A\tr\chi+\tr\chi\left(\div\zb+|\zb|^2+\f12\delta^{AB} \bR_{A34B}\right)+\f12 \left( \tr\chi\chih\c\chibh+\tr\chib|\chih|^2\right)\nn\\
 &-2\chih_{AB} \left(2\sn_A\zeta_B+k_{NN}\chih_{AB}+2\zeta_A\zeta_B+\bR_{A43B} \right).\nn
 \end{align}
%\begin{align*}
%L\ti\mu+ \tr\chi \ti\mu&=2(\zb-\zeta) \sn \tr\chi-2\chih_{AB}(2 \sn_A\zeta_B +2 \zeta_A \zeta_B+k_{NN}\chih_{AB}+\tr\chi \chih_{AB}+\chih_{AC} \chih_{CB}\\&+2 k_{AC} \chih_{CB} +\bR_{B434} %)
%-\Lb \bR_{44}+2k_{NN}(\f12 (\tr\chi)^2-|\chih|^2-k_{NN} \tr\chi-\bR_{44})\\&+4 k_{NN}^2 \tr\chi+(\tr\chi+4k_{NN})(|\chih|^2+\bR_{44})
%-\tr\chi(\f12 \bR_{4343}+2 k_{Nm} k_N^m))
%\end{align*}
% \cite[Page 8]{KR2} and \cite{KRduke},
 By direct calculation (see \cite[eq. (1.0.3a)]{CK}, we have
 \begin{equation}\label{7.16.11}
\Lb (k_{NN})=-L(k_{NN})+\f12 \bR_{4343}+2 k_{Nm}k^{m}_N.
\end{equation}
Noting that by decomposing Riemann curvature,
\begin{equation*}
\bR_{3434}=2(\bR_{34}- \delta^{AB} \bR_{A3B4}).
\end{equation*}
Consequently, in view of Lemma \ref{decom_lem} (iv) we can write
\begin{equation}\label{r34}
\bR_{3434}=2\left(\bR_{34}+\div \pi+\sl \bE\right),
\end{equation}
where $\sl \bE=\bA\c \pi +\tr\chi \c \pi$. Hence, by combining (\ref{r34}) with (\ref{7.16.11}) we have
\begin{equation}\label{lbkn}
\Lb (k_{NN})=\bR_{34}-L(k_{NN})+2 k_{Nm}k^{m}_N+ \div \pi+\sl \bE.
\end{equation}
By substituting (\ref{lbkn}) into (\ref{6.15.1}), it yields
\begin{align}\label{6.15.100}
L\mu&+\tr\chi\mu=R(\mu)-k_{NN}L\tr\chi +2(\zb_A-\ze_A)\sn_A\tr\chi+\f12\left( \tr\chi\chih\c\chibh+\tr\chib|\chih|^2\right)\nn\\
& +\tr\chi\left(\div\zb+|\zb|^2+\f12\delta^{AB} \bR_{A34B}+L(k_{NN})-2k_{Nm} k_N^m -(\div \pi + \sl \bE)\right) \nn\\
& -2\chih_{AB} \left(2\sn_A\zeta_B+k_{NN}\chih_{AB}+2\zeta_A\zeta_B+\bR_{A43B}\right),
\end{align}
where
$$
R(\mu):=-\Lb \bR_{44}-\tr\chi \bR_{34}-\f12\tr\chib \bR_{44}
$$
which determines the behavior of $L\mu+\tr\chi \mu$. By virtue of  (\ref{r44.1}), (\ref{ricc6.7.1}) and (\ref{6.7con}), we can derive that
\begin{align*}
\bR_{44} & = L(V_4)+k_{NN} V_4 + \pi\c\pi,\\
\bR_{34}&=\f12( \Lb(V_4)+L(V_3))+V\c (\zeta+\zb)+k_{NN}\c V+\pi\c\pi,\\
-\Lb \bR_{44}&=-\Lb L V_4-\Lb (k_{NN}) V_4+k_{NN}\c (\zeta+k)\c V+k \c\bd V+\Lb (\pi\c\pi).
\end{align*}
Therefore
\begin{align}
R(\mu)=-L \Lb (V_4)-\f12 \tr\chi\Lb (V_4)-\f12 \tr\chib L (V_4)-\f12 \tr\chi L (V_3)+\tr\chi \pi\c\pi+\bA \c \bd \ti \pi+\bA^3, \label{4.2.14}
\end{align}
where the meaning of the symbols $\pi$, $\ti \pi$ and $\bA$ can be found in Section \ref{sec_5}.  It is remarkable to observe that
the first three terms on the right hand side of (\ref{4.2.14}), plus $\sD (V_4)$, become exactly $\Box_\bg(V_4)$ modulo some lower order terms.
In fact, this can be seen from the decomposition formula of $\Box_\bg (V_4)$ under the null frame which can be stated as
\begin{equation}\label{psi2}
\Box_\bg (V_4) =- L \Lb (V_4)+\sD V_4-\f12 \tr\chi \Lb V_4-\f12 \tr\chib L V_4+2 \zb^A \sn_A (V_4)+k_{NN} \Lb V_4.
\end{equation}
Thus, it is natural to add to (\ref{6.15.100}) a twice multiple of the transport equation (\ref{tran4}). After further dealing with the terms $\tr\chi L(k_{NN})$
and $\tr\chi L(V_3)$, it motivates us to introduce
$$
\ckk\mu=2\sD \sigma +\mu- \tr\chi k_{NN}+\f12\tr\chi V_3.
$$
Therefore, in view of (\ref{tran4}), (\ref{6.15.100}), (\ref{4.2.14}), (\ref{psi2}) and (\ref{s1}) we obtain
\begin{align}
L\ckk \mu+\tr\chi \ckk\mu&=\Box_\bg( V_4)-2\left(2 \delta^{AB} \bR_{CA4B}+\f12 k_{AN} \tr\chi-\chih\c k+\chi\c \zb\right)\sn\sigma\nn\\
& +2(\zb-\ti\zeta) \sn \tr\chi -4 \chih\c\sn\ti\ze-2\chih_{AB} \big(k_{NN}\chih_{AB}+2\zeta_A\zeta_B+\bR_{A43B}\big)\nn\\
&+\tr\chi \left(\div\zb+|\zb|^2+\f12\delta^{AB} \bR_{A34B}\right)+\f12 \left( \tr\chi\chih\c\chibh+\tr\chib|\chih|^2\right)\nn\\
&+2k_{NN} \left(|\chih|^2+k_{NN} \tr\chi+\bR_{44}\right)+ \bA \c(\bd \bpi+\bE)+\tr\chi \left(\div \pi+\sl \bE\right)\label{mu2}
\end{align}
after incorporating more lower order terms into the last two terms of (\ref{mu2}), where $\ti\zeta=\sn\sigma+\zeta$ and
$\bE= \bA\c \bA +\tr\chi \c \bA$.

We now consider $\Box_{\bg} (V_4)$. By writing $V_4=V(L) = V_\mu L^\mu$, we can obtain
\begin{equation}\label{bv4}
\Box_\bg(V_4)=V_\mu \Box_\bg L^\mu+2\bd_\a V_\mu \bd^\a L^\mu+L^\mu \Box_\bg V_\mu.
\end{equation}
Decomposing $\Box_g L^\mu$ under the null frame, with the help of (\ref{6.7con}) we have
\begin{align}\label{4.4}
\Box_\bg L^\mu &= -\bd_\Lb (\bd_L L^\mu)+\sD L^\mu+2\zeta_B \sn_B L^\mu-\left(\f12 \tr\chib-k_{NN}\right)\bd_L L^\mu \nn \\
& \quad\, -\f12 \tr\chi\bd_\Lb L^\mu-\f12 \bR^\mu_{\dum LL\Lb}.
\end{align}
By using (\ref{6.7con}) again, we can deduce that
\begin{align*}
\sD L^\mu&=(\div \chi)_B e_B^\mu-\sn_B k_{BN} L^\mu  - k_{BN} \left(\chi_{BC} e_C^\mu-k_{BN}L^\mu\right) +\chi_{BC}\sn_B e_C^\mu\\
& \quad\, + \f12 \chi_{BC} \left(\chi_{BC} \Lb^\mu +\chib_{BC} L^\mu\right)
- \sn_B e_B^C \left(\chi_{CD} e_D^\mu -k_{CN} L^\mu\right).
\end{align*}
Note that
$$
\chi_{BC} \, \sn_B e_C^\mu - \sn_B e_B^C \, \chi_{CD} \,e_D^\mu = \chih_{BC} \sn_B e_C^\mu - \chih_{CD} e_D^\mu\sn_B e_B^C.
$$
We thus obtain
\begin{align*}
V_\mu \sD L^\mu&=(\div \chi)_B V_B-V_4 \sn_B k_{BN} - k_{BN} \left(\chi_{BC} V_C-k_{BN} V_4\right) +\chih_{BC} V_\mu \sn_B e_C^\mu \\
& \quad \, - \chih_{CD} V_D \sn_B e_B^C + \f12 \chi_{BC} \left(\chi_{BC} V_3 +\chib_{BC} V_4\right)
- V_4 k_{CN} \sn_B e_B^C.
\end{align*}
Let $\widetilde\Ga$ denote the collection of connection coefficients $\sn_A e_B^\mu$. In view of $\chi +\chib = -2k$,  symbolically we have
\begin{align*}
V_\mu \sD L^\mu = V\c \div \chi+V\c \bd k + \chi\c \chi \c V +V\c k\c (\chi, k)+V\c \widetilde\Ga \c (\chih, k).
\end{align*}
By straightforward calculation we also have
\begin{align*}
-V_\mu \bd_\Lb (\bd_L L^\mu) &= V_4 \Lb(k_{NN}) + k_{NN}\left(2\zeta_B V_B+k_{NN}V_4\right)
  = V\c \bd k + V\c k\c (\zeta, k),\\
V_\mu \zeta_B \sn_B L^\mu & = \zeta_B \chi_{BC} V_C- \zeta_B k_{BN} V_4 = V\c \zeta\c (k, \chi).
\end{align*}
For the remaining terms in (\ref{4.4}) and (\ref{bv4}), we have from (\ref{6.7con}) that
\begin{align*}
V^\mu& \left((k_{NN}-\f12 \tr\chib)\bd_L L^\mu-\f12 \tr\chi\bd_\Lb L^\mu\right)\\
& = k_{NN}(\f12 \tr\chib-k_{NN})V_4-\f12\tr\chi(2 \zeta_A V_A+k_{NN} V_4) =V\c k\c (\tr\chi, k)+ V\c \zeta\c \tr\chi,\\
\bd_\a& V_\mu \bd^\a L^\mu=-\f12(\bd_L V_\mu \bd_3 L^\mu+\bd_3 V_\mu \bd_4 L^\mu)+\sn V_\mu \sn L^\mu\\
&=-\f12(\bd_L V_\mu(2\zeta_B e_B^\mu+k_{NN} e_4^\mu)-\bd_3 V_\mu k_{NN} L^\mu) +\sn_B V_\mu(\chi_{BC}e_C^\mu-k_{BN} L^\mu) \\
&=\bd V\c (\chih, k, \zeta)+\f12\sn_B V_B \c  \tr\chi
\end{align*}
For the last term, it is important to discriminate $\sn_A V_A$ from the full derivative $\bd V$ in order to have the control
on $\sn\sigma$, see Proposition \ref{dcmpsig}.

Thus, by combining the above equations with (\ref{bv4}), we obtain
\begin{align}
\Box_\bg (V_4)&= \tr\chi \sn_B V_B+L^\mu \Box_\bg V_\mu+V\c (\div \chi+\bd k)+\bd V\c (\chih, k, \zeta)\nn\\
& + V\c k\c (k, \zeta) + V\c \chi \c (k, \zeta) +V\c \chi\c \chi + V\c \widetilde\Ga\c (\chih, k)+\bR_{\mu L L\Lb} V^\mu. \label{boxv4}
\end{align}

Now we plug (\ref{boxv4}) into (\ref{mu2}). We then write $\chi = \chih + \f12 \tr\chi \ga$ and use
$\tr\chi = \widetilde{\tr\chi} -V_4 = z-V_4 + \frac{2}{\tir}$. By making use of the curvature decomposition
in Lemma \ref{decom_lem} and the transport equation (\ref{dchi}) for $\div \chih$, we finally obtain
the following result.

\begin{lemma}\label{mu4.4}
Let  $\ckk\mu=2\sD \sigma+\mu - \tr\chi k_{NN}+\f12\tr\chi V_3$. Then $\ckk\mu$ verifies
\begin{align}
(L\ckk\mu +\tr\chi\ckk\mu)& -(\tir^{-1} \div \pi +\tir^{-2}\pi)=L^\mu\Box_\bg V_\mu+\chih \c\sn \ti \zeta+\sn \sigma\c \bE\label{mu3}\\
&+\sn\sigma\c (\sn \pi+\sn \widetilde{\tr\chi})+\bA\c(\sn \widetilde{\tr\chi}, \bE, \bd \ti\pi, V\c \widetilde\Ga)\nn
\end{align}
where $\bA= (\chih, k, \zeta, \pi, z)$, $\bE= \bA\c \bA + \tr\chi \c \bA$
and $z= \widetilde{tr \chi} - \frac{2}{\tir}$.  For $\pi$ and $\ti \pi$ we refer to Section \ref{sec_5} for their meaning.
\end{lemma}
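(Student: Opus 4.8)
The plan is to obtain (\ref{mu3}) as the endpoint of the computation already begun above. Equation (\ref{mu2}) expresses $L\ckk\mu+\tr\chi\ckk\mu$ as $\Box_\bg(V_4)$ plus an explicit collection of terms that are either quadratic (or lower) in the Ricci coefficients, or curvature components paired against $\sn\sigma$, $\sn\tr\chi$, $\chih$, $\zeta$. The first step is to substitute the null-frame decomposition (\ref{boxv4}) of $\Box_\bg(V_4)$ into (\ref{mu2}). The term $L^\mu\Box_\bg V_\mu$ is set aside untouched, since it is the one whose highest-order content is dictated by (\ref{wave1}) and is treated separately later; every other term produced by (\ref{boxv4}) has the schematic shape $\tr\chi\,\sn_BV_B$, $V\c(\div\chi+\bd k)$, $\bd V\c(\chih,k,\zeta)$, $V\c k\c(k,\zeta)$, $V\c\chi\c(k,\zeta,\chi)$, $V\c\widetilde\Ga\c(\chih,k)$, or $\bR_{\mu LL\Lb}V^\mu$.

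Next I would reduce every curvature component occurring — $\bR_{A43B}$, $\bR_{A34B}$, $\delta^{AB}\bR_{CA4B}$, $\bR_{44}$, $\bR_{\mu LL\Lb}V^\mu$, both those already present in (\ref{mu2}) and those just introduced — to the form $\D_*\pi+\sl\bE$ with $\sl\bE=\bA\c\bA+\tr\chi\c\bA$, using Lemma \ref{decom_lem} together with (\ref{r44.1}); simultaneously expand $\tr\chi=z-V_4+\frac{2}{\tir}$ and $\chi=\chih+\f12\tr\chi\ga$. The $\frac{2}{\tir}$-pieces multiplying $\sn_BV_B$, $\div\chih$ and $\div\zb$ are exactly what produces $\tir^{-1}\div\pi+\tir^{-2}\pi$, which is moved to the left-hand side; the $\frac{2}{\tir}$-pieces multiplying the remaining quadratic terms are absorbed into $\bA\c\bE$. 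The one genuinely structural step is the handling of angular derivatives of $\tr\chi$: by the Codazzi relation (\ref{dchi}) and Lemma \ref{decom_lem}(ii) one has $\sn\tr\chi=2\div\chih+\sn\pi+\bE$, hence $\sn\widetilde{\tr\chi}=\sn\tr\chi+\sn V_4=\div\chih+\sn\pi+\bE$, so every $\sn\tr\chi$ (notably inside $2(\zb-\ti\zeta)\sn\tr\chi$ of (\ref{mu2}) and inside $V\c\div\chi$ from (\ref{boxv4})) is traded for $\sn\widetilde{\tr\chi}$ modulo $\sn\pi$ and $\bE$.

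Finally I would collect the survivors into the four classes on the right of (\ref{mu3}): the term $\chih\c\sn\ti\zeta$ is the $-4\chih\c\sn\ti\zeta$ already isolated in (\ref{mu2}); writing $\ti\zeta=\sn\sigma+\zeta$ in $2(\zb-\ti\zeta)\sn\tr\chi$ yields $-2\sn\sigma\c\sn\widetilde{\tr\chi}$ plus contributions to $\sn\sigma\c\sn\pi$ and to $\bA\c(\sn\widetilde{\tr\chi},\bd\ti\pi,\bE)$, while the coefficient $-2(2\delta^{AB}\bR_{CA4B}+\f12 k_{AN}\tr\chi-\chih\c k+\chi\c\zb)$ of $\sn\sigma$ in (\ref{mu2}) equals $\sn\pi+\bE$ (curvature by Lemma \ref{decom_lem}(ii); the rest because $\chih\c k,\chi\c\zb,k_{AN}\tr\chi\in\bE$), giving the remaining $\sn\sigma\c\bE$ and $\sn\sigma\c\sn\pi$ pieces; all other terms fall into $\bA\c(\sn\widetilde{\tr\chi},\bE,\bd\ti\pi,V\c\widetilde\Ga)$ once one uses $V,k,\chi,\zeta,z,\frac{\bb^{-1}-1}{\tir}\in\bA$, that $\div\zb=\div k$ and $\bd k,\bd V$ are of type $\bd\ti\pi$, and $\sn\pi=\sn\ti\pi+\bA\c\pi$ by (\ref{snpi_1}). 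The step that needs the most care — and is really the content of the lemma — is verifying that this reorganization is an exact cancellation: one must confirm that no term rougher than $\sn\widetilde{\tr\chi}$, $\bd\ti\pi$, or $L^\mu\Box_\bg V_\mu$ remains, in particular that the worst terms $-L\Lb(V_4)-\f12\tr\chi\Lb(V_4)-\f12\tr\chib L(V_4)$ in $R(\mu)$ from (\ref{4.2.14}) combine with $\sD(V_4)$ from (\ref{tran4}) into $\Box_\bg(V_4)$ via the decomposition (\ref{psi2}), and that the terms $\tr\chi L(k_{NN})$ and $\tr\chi L(V_3)$ are precisely cancelled by the defining correction $-\tr\chi k_{NN}+\f12\tr\chi V_3$ in $\ckk\mu$, the identity (\ref{lbkn}) for $\Lb(k_{NN})$ disposing of the leftover $\Lb$-derivative. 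Everything else is routine expansion and term-chasing.
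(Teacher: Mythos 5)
Your proposal follows essentially the same route as the paper: it reproduces the passage from (\ref{mu2}) to (\ref{mu3}) by inserting the null decomposition (\ref{boxv4}) of $\Box_\bg(V_4)$, reducing all curvature components via Lemma \ref{decom_lem} and (\ref{r44.1}), trading $\sn\tr\chi$ and $\div\chi$ for $\sn\widetilde{\tr\chi}$ through the Codazzi equation (\ref{dchi}), and extracting the $\tir^{-1}\div\pi+\tir^{-2}\pi$ terms from the $\frac{2}{\tir}$ part of $\tr\chi$. You also correctly identify the two genuinely structural points — the recombination of $-L\Lb V_4-\f12\tr\chi\Lb V_4-\f12\tr\chib LV_4$ with $\sD V_4$ into $\Box_\bg(V_4)$ via (\ref{psi2}), and the cancellation of $\tr\chi L(k_{NN})$ and $\tr\chi L(V_3)$ by the correction terms built into $\ckk\mu$ using (\ref{lbkn}) — so the argument is complete and matches the paper's.
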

%with $\lim_{r\rightarrow 0+}r^2\mu^\sharp=0$.

To treat the term $\sn \ti \zeta$ with $\ti \zeta = \zeta +\sn \sigma$, we recall the Hodge operator $\D_1$
which sends $S_{t,u}$-tangent tensor $F$ to $(\div F, \curl F)$. Thus we can write
$
\sn \tilde \zeta = \nabla \D_1^{-1} (\div\ti \zeta, \curl \ti \zeta)
$
and use the Hodge system
\begin{align}
\div\ti\zeta&=\f12(\ckk \mu-2|\zeta|^2-|\chih|^2-2 k_{AB} \chih_{AB})+\div \pi_1+\bE =\frac{1}{2} \ckk \mu +\div \pi_1 +\bE, \label{hodge1} \\
\curl \ti\zeta&=\f12 \ep^{AB} k_{AC} \chih_{CB}+\f12 \ep^{AB}\bR_{B43A}=\curl \pi_2+\bE\label{hodge2}
\end{align}
which are directly derived from (\ref{dze}) and  (\ref{dcurl}) together with the curvature decomposition Lemma \ref{decom_lem} (iv)
contained in Lemma \ref{decom_lem}, where $\pi_1$ and $\pi_2$ are 1-forms of type $\pi$.
%We can write  the Hodge system as
%\begin{equation}\label{z4}
%\sn \ti\zeta=\sn \D_1^{-1} (\ckk\mu+\bE+\sn\pi).
%\end{equation}
%For simplicity, we will denote $\sn \D_1^{-1}$ by $\D_0$.

We now employ (\ref{mu3}), (\ref{hodge1}) and (\ref{hodge2}) to obtain

\begin{proposition}\label{improve}
For any $p$ satisfying $0<1-\frac{2}{p}<s-2$ there hold
\begin{align}
&\|\sn \sigma\|_{L_u^2 L_t^2 L_\omega^\infty({\mathscr D}^+)}+\|r\ckk \mu,r\sn \ti \zeta\|_{L_u^2 L_t^2 L_\omega^{p}({\mathscr D}^+)}\les \la^{-4\ep_0},\label{ckmu2}\\
&\|r^{\frac{3}{2}} \ckk \mu\|_{L_u^2 L_t^\infty L_\omega^{p}({\mathscr D}^+)}\les \la^{-4\ep_0}. \label{ckmu3}
\end{align}
\end{proposition}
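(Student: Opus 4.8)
The plan is to derive the estimates in Proposition \ref{improve} by running a transport--Hodge iteration based on the key structural equation (\ref{mu3}) for $\ckk\mu$ together with the elliptic Hodge system (\ref{hodge1})--(\ref{hodge2}) for $\ti\zeta$, closing everything by a bootstrap in which the quantities $\|r\ckk\mu, r\sn\ti\zeta\|_{L_u^2 L_t^2 L_\omega^p}$ and $\|r^{3/2}\ckk\mu\|_{L_u^2 L_t^\infty L_\omega^p}$ are assumed bounded by a small multiple of $\la^{-4\ep_0}$ and then improved. First I would record the crucial cancellation: in (\ref{mu3}) the only ``top-order'' source is $L^\mu\Box_\bg V_\mu$, and since $V$ has components of the schematic form $\bg\cdot\bp\bg$, using the equation (\ref{wave1}) exactly as in (\ref{w8.4.1})/the discussion after (\ref{cancel}) one writes $L^\mu\Box_\bg V_\mu = f(\phi)\,\bd\bp\phi\cdot\bp\phi + \cdots = \bA\cdot\bd\ti\pi + \bE$ schematically — i.e.\ the dangerous $\bp^3\phi$ term genuinely drops out thanks to the wave equation. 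So the entire right-hand side of (\ref{mu3}), after multiplying by $\tir$, is a sum of terms of the types $\div\pi + \tir^{-1}\pi$, $\tir\,\chih\cdot\sn\ti\zeta$, $\tir\,\sn\sigma\cdot(\bE,\sn\pi,\sn\widetilde{\tr\chi})$, and $\tir\,\bA\cdot(\sn\widetilde{\tr\chi},\bE,\bd\ti\pi, V\cdot\widetilde\Ga)$.

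Next I would integrate the transport equation for $\ckk\mu$ along null geodesics using the transport lemma, Lemma \ref{tsp2}, with $m=1$, noting from Lemma \ref{inii}(i) and the construction of $\sigma$ that $\tir^2\mu\to 0$ and $\tir^2\sD\sigma\to 0$ as $t\to t_\tmin$ on cones from $\Ga^+$, and handling cones with $u<0$ via Proposition \ref{exten} and (\ref{4a_6}). This produces a pointwise bound $\tir|\ckk\mu|(t)\lesssim \int_{t_\tmin}^t (\text{sources})\,dt'$; taking $L_\omega^p$ norms, applying Hölder in $t$, and using the Hardy--Littlewood maximal bound (\ref{hlm}) where an $L_t^1$-in-the-integrand structure appears, gives $\|\tir\ckk\mu\|_{L_t^2 L_\omega^p(C_u)}$ controlled by (i) flux-type quantities $\|\tir(\sn\pi,\sn_L\pi),\pi\|_{L_t^2 L_\omega^p}\lesssim\la^{-1/2}$ from (\ref{flux_3}), (ii) the connection coefficient estimates from Proposition \ref{awave}, in particular $\|\bA\|_{L_t^2 L_x^\infty}\lesssim\la^{-1/2-4\ep_0}$ and $\|\tir^{1/2}\bA\|_{L_t^\infty L_\omega^p}\lesssim\la^{-1/2}$, and (iii) the feedback terms $\|\tir\chih\cdot\sn\ti\zeta\|$ and $\|\tir\sn\sigma\cdot(\cdots)\|$. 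For the feedback I would estimate $\|\tir\chih\cdot\sn\ti\zeta\|_{L_t^2 L_\omega^p}\lesssim \tau_*^{1/2}\|\chih\|_{L_t^2 L_x^\infty}\|\tir\sn\ti\zeta\|_{L_t^\infty L_\omega^p}$ and similarly bound $\sn\sigma$ factors using (\ref{l4.1}), so that these terms carry a gain of $\la^{-4\ep_0}$ (since $\|\bA\|_{L_t^2 L_x^\infty}\lesssim\la^{-1/2-4\ep_0}$ and $\tau_*^{1/2}\lesssim\la^{1/2-4\ep_0}$) times the bootstrap quantity — which is exactly what closes the argument.

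Then I would close the loop through the Hodge system. From (\ref{hodge1})--(\ref{hodge2}), Lemma \ref{hdgm1} gives $\|\tir\sn\ti\zeta\|_{L_\omega^p} \lesssim \|\tir\ckk\mu\|_{L_\omega^p} + \|\tir\div\pi_1,\tir\curl\pi_2\|_{L_\omega^p} + \|\tir\bE\|_{L_\omega^p}$; integrating in $t$ (and in $u$, which is the outer $L_u^2$), the $\pi$-terms are $\lesssim\la^{-1/2}$ by (\ref{flux_3}) and the $\bE$ terms are $\lesssim\la^{-1/2}$ by Proposition \ref{awave}, so $\|\tir\sn\ti\zeta\|_{L_u^2 L_t^2 L_\omega^p}\lesssim \|\tir\ckk\mu\|_{L_u^2 L_t^2 L_\omega^p} + \la^{-1/2}$. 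Combining this with the transport estimate for $\ckk\mu$, which reads $\|\tir\ckk\mu\|_{L_u^2 L_t^2 L_\omega^p}\lesssim \la^{-1/2} + \la^{-4\ep_0}(\|\tir\ckk\mu\|_{L_u^2 L_t^2 L_\omega^p}+\|\tir\sn\ti\zeta\|_{L_u^2 L_t^2 L_\omega^p})$, and absorbing the small factor $\la^{-4\ep_0}$ (legitimate once $\la\ge\La$ is large enough), yields $\|\tir\ckk\mu, \tir\sn\ti\zeta\|_{L_u^2 L_t^2 L_\omega^p}\lesssim\la^{-4\ep_0}$ — the main part of (\ref{ckmu2}). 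The estimate $\|\sn\sigma\|_{L_u^2 L_t^2 L_\omega^\infty}\lesssim\la^{-4\ep_0}$ then follows by writing $\sn\sigma = \ti\zeta - \zeta$, applying the Sobolev inequality (\ref{sobinf}) to bound $L_\omega^\infty$ by $\tir\sn(\cdot)$ in $L_\omega^p$ plus the $L_\omega^2$ part, and using $\sn\ti\zeta$ from above together with Proposition \ref{awave} for $\zeta$ and $\sn\zeta$. Finally, (\ref{ckmu3}) is obtained by redoing the transport integration but taking the $L_t^\infty$ norm (with the weight $\tir^{3/2}$ instead of $\tir$, i.e.\ estimating $\tir^{3/2}|\ckk\mu|$ directly from the integrated equation) and invoking the already-established $L_u^2 L_t^2 L_\omega^p$ bounds for the feedback terms on the right.

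The main obstacle I expect is the very first step: verifying that $L^\mu\Box_\bg V_\mu$ truly reduces, modulo genuinely lower-order terms $\bA\cdot\bd\ti\pi + \bE$, to something with no uncontrolled third derivative of $\phi$ on the null cone. This requires the careful null-frame decomposition (\ref{bv4})--(\ref{boxv4}) of $\Box_\bg V_\mu$, the substitution of the wave equation (\ref{wave1}) to kill $\Box_\bg \bg_{\a\b}$, and — most delicately — keeping $\sn_A V_A$ (the \emph{angular} divergence) separate from the full derivative $\bd V$, since only the angular part carries the regularity needed to feed back into the $\sn\sigma$ estimate; mixing them up would lose half a derivative. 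A secondary technical point is bookkeeping the numerous $\bE$-type quadratic error terms and confirming each one is either a flux term controlled by (\ref{flux_3}) or a product with a factor from Proposition \ref{awave} carrying the decisive $\la^{-4\ep_0}$ smallness, so that the bootstrap genuinely closes rather than merely reproducing itself.
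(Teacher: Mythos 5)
Your proposal is correct and follows essentially the same route as the paper: integrate the transport identity (\ref{mu3}) for $\ckk\mu$ via Lemma \ref{tsp2}, control the source terms with (\ref{flux_3}), (\ref{l4.1}), (\ref{err_4_4}) and Proposition \ref{awave}, couple to the Hodge system (\ref{hodge1})--(\ref{hodge2}) and Lemma \ref{hdgm1} for $\sn\ti\zeta$, absorb the small feedback factor, and recover $\sn\sigma=\ti\zeta-\zeta$ in $L_\omega^\infty$ by (\ref{sobinf}). The only cosmetic difference is that the paper runs the bootstrap on $\|\sn\sigma\|_{L_t^2 L_u^2 L_\omega^\infty}\le 1$ rather than directly on the conclusion norms, which is equivalent since that bound is exactly what is needed for the $\sn\sigma\cdot(\sn\pi,\sn\widetilde{\tr\chi})$ term and is recovered from $\sn\ti\zeta$ at the end.
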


\begin{proof}
We prove the estimate on $\|\sn \sigma\|_{L_u^2 L_t^2 L_\omega^\infty}$ by making the bootstrap assumption
\begin{equation}\label{ba.1}
\|\sn \sigma\|_{L_t^2 L_u^2 L_\omega^\infty}\le 1
\end{equation}
and improving it to
\begin{equation}\label{con.1}
\|\sn \sigma\|_{L_t^2 L_u^2 L_\omega^\infty}\les \la^{-4\ep_0}.
\end{equation}
The other estimates will be established during the course of the derivation.

We first consider the estimates for $\ckk \mu$. By Lemma \ref{tsp2} and (\ref{comp_3_27}), we have
\begin{equation}\label{gronmu}
|\tir^2\ckk\mu|\les\left|\int_u^t  {\ti r}^2 |L\ckk\mu+\tr\chi\ckk \mu| d t'\right|.
\end{equation}
We will give estimates on $\ckk\mu$ in (\ref{ckmu2}) and (\ref{ckmu3}), by taking the corresponding norm in view of (\ref{gronmu}) and (\ref{mu3}).

For the first term on the right of (\ref{mu3}), we note that $|L^\mu \Box_\bg V_\mu|\le |\Box_\bg V_\mu|$
and $\Box_\bg V_\mu=\bp\ti\pi\c \ti\pi+(\ti\pi)^3$ by virtue of (\ref{wave1}).
By using (\ref{pi.2}) and Proposition \ref{flux_2}, we then obtain
\begin{align*}
\|\tir L^\mu \Box_\bg V_\mu\|_{L_t^1 L_u^2 L_\omega^{p}}
&\les \|\tir\bp\ti\pi, \tir(\ti\pi)^2\|_{L_t^\infty L_u^2 L_\omega^{p}}\|\ti\pi\|_{L_t^1 L_x^\infty}\les \la^{-\f12-8\ep_0}.
\end{align*}
Therefore
\begin{equation}\label{1st}
\left\|\frac{1}{\tir} \int_u^t \tir^2 |L^\mu \Box_\bg V_\mu| dt'\right\|_{L_u^2 L_t^\infty L_\omega^{p}}
\les \|\tir L^\mu \Box_\bg V_\mu\|_{L_t^1 L_u^2 L_\omega^{p}}\les \la^{-\f12-8\ep_0}.
\end{equation}

We now treat the remaining terms on the right hand side of (\ref{mu3}). By virtue of (\ref{ba.1}) and Proposition \ref{awave}
we have
\begin{align*}
\left\|\frac{1}{\tir}\int_u^t \tir^2 \sn \sigma(\sn \pi +\sn \widetilde{\tr\chi})\right\|_{L_u^2 L_t^\infty L_\omega^{p}}
\les \|\sn \sigma\|_{L_u^2 L_t^2 L_x^\infty} \|\tir \sn \pi, \tir\sn \widetilde{\tr\chi}\|_{L_u^\infty L_t^2 L_\omega^{p}} \les \la^{-\f12}
\end{align*}
Recall that $\bE = \bA\c \bA + \tr\chi \c \bA$ and $\tr\chi = z -V_4 +\frac{2}{\tir}$, we can use (\ref{l4.1}) and Proposition \ref{awave}
to obtain
\begin{align*}
\left\|\frac{1}{\tir}\int_u^t \tir^2  \sn \sigma \c \bE \right\|_{L_u^2 L_t^\infty L_\omega^{p}}
&\les \tau_*^{\f12} \|\tir^\f12 \sn \sigma \|_{L^\infty L_\omega^{p}}\|\tir^\f12 \bA\c \bA\|_{L_u^\infty L_t^1 L_x^\infty}\\
&+ \tau_*^{\f12} \|\sn \sigma\|_{L_u^\infty L_t^2 L_\omega^{p}}\|\bA\|_{L_t^2 L_x^\infty} \les \la^{-\f12-4\ep_0}.
\end{align*}
Moreover, it is easily seen that
\begin{equation}\label{err_4_4}
\|\tir \bE\|_{L_t^2 L_u^\infty L_\omega^p}\les \la^{-\f12-4\ep_0}, \quad\|\tir^\frac{3}{2} \bE\|_{L_u^2 L_t^\infty L_\omega^p}\les \la^{-4\ep_0}.
\end{equation}
Thus, it follows from (\ref{err_4_4}), (\ref{eng4}) in Proposition \ref{flux_2} and Proposition \ref{awave} that
\begin{align*}
\left\|\frac{1}{\tir}\int_u^t \tir^2 \bA\c(\sn \widetilde{\tr\chi}, \bd \ti\pi, \bE)\right\|_{L_u^2 L_t^\infty L_\omega^{p}}
&\les\|\bA\|_{L_u^\infty L_t^2 L_x^\infty} \|\tir (\bd \ti\pi, \bE, \sn \widetilde{\tr\chi})\|_{L_u^2 L_t^2 L_\omega^{p}}\les \la^{-\f12-8\ep_0}.
\end{align*}
For the term $\chih\c \sn\ti\zeta$, we may use (\ref{hodge1}), (\ref{hodge2}), Lemma \ref{hdgm1}, (\ref{err_4_4}) and Proposition \ref{awave}
to derive that
\begin{align*}
\left\|\frac{1}{\tir} \int_u^t \tir^2 \chih\c \sn \ti\zeta dt'\right\|_{L_u^2 L_t^2 L_\omega^p}
&= \left\|\frac{1}{\tir}\int_u^t \tir'^2 \chih\c \sn \D_1^{-1} (\div \ti \zeta, \curl \ti \zeta)d\tt\right\|_{L_u^2 L_t^\infty L_\omega^{p}}\\
&\les \|\chih\|_{L_t^2 L_x^\infty} \|\tir (\ckk\mu,\sn \pi, \bE)\|_{L_u^2 L_t^2 L_\omega^p}\\
&\les (\la^{-4\ep_0}+\|\tir\ckk\mu\|_{L_u^2 L_t^2 L_\omega^p})\la^{-\f12-4\ep_0}.
\end{align*}
For the term involving $\widetilde\Ga$ which comes from $\sn_{e_A} e_B$, we can employ (\ref{gaa}) to get $\|\tir \widetilde \Ga\|_{L^\infty L_\omega^p}\les 1$.
Thus, by using Proposition \ref{awave} we have
\begin{align*}
\left\|\frac{1}{\tir} \int_u^t \tir^2 \chih\c \widetilde\Ga\c V\right\|_{L_u^2 L_t^\infty L_\omega^p}
&\les \tau_*^{\f12} \|\tir \widetilde\Ga\|_{L_t^\infty L_\omega^p} \|\chih\|_{L_t^2 L_x^\infty}
\|V\|_{L_t^2 L_x^\infty}\les \la^{-\f12-12\ep_0}.
\end{align*}
Finally, for the second part on the left hand side of (\ref{mu3}), we use (\ref{hlm}) and Proposition \ref{awave} to derive that
\begin{equation*}
\left\|\tir^{-1} \int_u^t (\tir'\div \pi, \pi)\right\|_{L_u^\infty L_t^2 L_\omega^p} \les \la^{-\f12}
\end{equation*}
Now we may divide (\ref{gronmu}) by $\tir$ and use the above estimates to derive that
\begin{equation*}
\|\tir\ckk\mu\|_{L_u^2 L_t^2 L_\omega^p}\les \la^{-8\ep_0}\|\tir\mu\|_{L_u^2 L_t^2 L_\omega^p}+\la^{-4\ep_0}
\end{equation*}
which gives
\begin{equation}\label{mu_44}
\|\tir \ckk\mu\|_{L_u^2 L_t^2 L_\omega^p}\les \la^{-4\ep_0}.
\end{equation}
We may divide (\ref{gronmu}) by $\tir^{\f12}$ and employ the similar argument as above to derive (\ref{ckmu3}).

By making use of (\ref{hodge1}), (\ref{hodge2}), Lemma \ref{hdgm1}, (\ref{mu_44}), (\ref{flux_3})  and (\ref{err_4_4}), we can obtain that
\begin{align*}
\|\tir\sn \ti \zeta\|_{L_u^2 L_t^2 L_\omega^p}&\les \|\tir \ckk\mu\|_{L_u^2 L_t^2 L_\omega^p}
+\|\tir (\bE+\sn\pi)\|_{L_u^2 L_t^2 L_\omega^p}\les \la^{-4\ep_0}.
\end{align*}
In view of $\sn \sigma=\ti\zeta-\zeta$, (\ref{con.1}) can be obtained by using (\ref{sobinf}) and Proposition \ref{awave}.
The proof is thus complete.
 \end{proof}

\begin{proposition}\label{dcmpsig}
$\sn \sigma$ can be decomposed as
$$
\sn \sigma=\bA+\bA^\dag+\mu^\dag,
$$
where $\bA^\dag$ and $\mu^\dag$ are 1-forms satisfying the estimates
$$
\|\bA^\dag\|_{L_t^2 L_x^\infty}\les \la^{-\f12-4\ep_0} \quad \mbox{and} \quad \|\mu^\dag\|_{L_u^2 L^\infty}\les \la^{-\f12-4\ep_0}.
$$
\end{proposition}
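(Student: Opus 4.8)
\textbf{Proof proposal for Proposition \ref{dcmpsig}.} The plan is to feed the Hodge system \eqref{hodge1}--\eqref{hodge2} through the decomposition recorded in Proposition \ref{cz.2}, exactly as was done for $\zeta$ at the end of Section \ref{ss1}, but now keeping the $\ckk\mu$ contribution as a separate, weaker piece instead of absorbing it. Recall $\ti\zeta=\zeta+\sn\sigma$, so $\sn\sigma=\ti\zeta-\zeta$; since $\zeta$ has already been shown to lie in $\bA$ with $\|\bA\|_{L_t^2L_x^\infty}\les\la^{-\f12-4\ep_0}$ (Proposition \ref{awave}), it suffices to decompose $\ti\zeta$ itself. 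First I would write $\ti\zeta=\D_1^{-1}(\div\ti\zeta,\curl\ti\zeta)$ and split the right-hand sides of \eqref{hodge1}, \eqref{hodge2}: the $\div\pi_1$, $\curl\pi_2$ and $\bE$ terms are handled by Proposition \ref{cz.2} (the $\D_1^{-1}$ of $\div\pi$ being controlled by $\|\mu^{0+}P_\mu\ti\pi\|_{l^c_\mu L^\infty}+\|\ti\pi\|_{L^\infty}$, which by \eqref{pi.2} and Proposition \ref{awave} is $L_t^2L_x^\infty$ of size $\la^{-\f12-4\ep_0}$, and $\D_1^{-1}$ of $\bE$ being controlled by the $L^{q}(S_{t,u})$ norm of $\tir^{1-2/q}\bE$, again of that size by \eqref{err_4_4}). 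These contributions together with $-\zeta$ form the $\bA$ part and the $\bA^\dag$ part; the one summand that does \emph{not} fit into $L_t^2L_x^\infty$ is $\D_1^{-1}(\tfrac12\ckk\mu,0)$, which I would name $\mu^\dag$.

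Second, I would verify the bound on $\mu^\dag$. From Lemma \ref{hdgm1}, $\|\mu^\dag\|_{L^\infty(S_{t,u})}$ (or rather the precise $\log$-refined estimate from Proposition \ref{cz}/\ref{cz.2}) is controlled by $\|\tir\ckk\mu\|_{L^p(S_{t,u})}$ up to the $\sqrt{|\ga|}\approx\tir^2$ conversion; but the genuinely useful bound is the mixed-norm one, namely $\|\mu^\dag\|_{L_u^2 L^\infty}\les \|\tir\,\ckk\mu\|_{L_u^2 L_\omega^{p}}$ after using the Sobolev inequality \eqref{sobinf} on $S_{t,u}$ and Lemma \ref{hdgm1} to trade $\sn\mu^\dag$ and $\tir^{-1}\mu^\dag$ for $\ckk\mu$. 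Taking the supremum in $t$ (or the relevant $L_t^\infty$ in the norms on the left of \eqref{ckmu2}) and invoking \eqref{ckmu2}--\eqref{ckmu3} of Proposition \ref{improve}, which give $\|r\,\ckk\mu\|_{L_u^2 L_t^2 L_\omega^{p}}\les\la^{-4\ep_0}$ and $\|r^{3/2}\ckk\mu\|_{L_u^2 L_t^\infty L_\omega^p}\les\la^{-4\ep_0}$, yields $\|\mu^\dag\|_{L_u^2 L^\infty}\les\la^{-\f12-4\ep_0}$ — here the extra half-power of $\la$ comes from the $\tir^{-1/2}$ in passing from $r^{3/2}\ckk\mu$ to $r\,\ckk\mu$ weighted appropriately over $\D^+$ together with $\tau_*\les\la^{1-8\ep_0}$, exactly as in the proof of \eqref{z_3.19.1}. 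I would have to be slightly careful about which of \eqref{ckmu2} and \eqref{ckmu3} is used: the $L_u^2 L^\infty$ target wants an $L_t^\infty$-type input, so \eqref{ckmu3} is the right one, combined with the Sobolev embedding $L^p(S_{t,u})\hookrightarrow L^\infty(S_{t,u})$ at the cost of $\sn$, and one closes using Lemma \ref{hdgm1} on the Hodge operator.

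Third, for the $\bA^\dag$ piece I would just collect the remaining Hodge contributions: $\D_1^{-1}(\div\pi_1,\curl\pi_2)$ and $\D_1^{-1}(\bE,\bE)$. By Proposition \ref{cz.2} with $q>2$ close to $2$, $\|\D_1^{-1}(\div\pi_1,\curl\pi_2)\|_{L^\infty(S_{t,u})}\les\|\mu^{0+}P_\mu\ti\pi\|_{l_\mu^2 L^\infty}+\|\ti\pi\|_{L^\infty}$, and taking $L_t^2$ and using \eqref{pi.2} gives $\la^{-\f12-4\ep_0}$ (the $l^2_\mu$-square-summable frequency pieces from \eqref{pi.2} being exactly what feeds this); the error term $\D_1^{-1}(\bE,\bE)$ contributes $\tir^{1-2/q}\|\bE\|_{L^q(S_{t,u})}$, bounded in $L_t^2L_x^\infty$ by $\la^{-\f12-4\ep_0}$ via \eqref{err_4_4} and Proposition \ref{awave}. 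Thus $\bA^\dag\in L_t^2L_x^\infty$ with the stated size, and setting $\bA:=-\zeta$ (already an element of the collection $\bA$) completes the decomposition. The main obstacle I anticipate is \emph{not} any single estimate but the bookkeeping in splitting $\ckk\mu$ cleanly from the ``good'' terms in \eqref{hodge1}: one must make sure that every piece of $\div\ti\zeta$ other than $\tfrac12\ckk\mu$ really does land in $L_t^2L_x^\infty$ at the $\la^{-\f12-4\ep_0}$ level — in particular the quadratic terms $|\zeta|^2$, $|\chih|^2$, $k\c\chih$ hidden inside $\bE$ must be controlled using $\|\bA\|_{L_t^2L_x^\infty}\les\la^{-\f12-4\ep_0}$ rather than only the weaker $L_t^{q/2}$ bound, which is exactly why the proposition is stated on $\D^+$ (where Proposition \ref{awave} furnishes the stronger $L_t^2L_x^\infty$ control) and not on the larger region $\widetilde{\D^+}$.
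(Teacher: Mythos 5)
There is a genuine gap at the heart of your argument, namely the bound on $\mu^\dag$. You define $\mu^\dag=\D_1^{-1}(\tfrac12\ckk\mu,0)$ and propose to estimate it directly from Proposition \ref{improve} via Lemma \ref{hdgm1} and (\ref{sobinf}). But (\ref{ckmu2}) and (\ref{ckmu3}) only give $\|\tir\ckk\mu\|_{L_u^2L_t^2L_\omega^p}+\|\tir^{3/2}\ckk\mu\|_{L_u^2L_t^\infty L_\omega^p}\les\la^{-4\ep_0}$, which after the elliptic estimate and Sobolev embedding yields at best $\|\mu^\dag\|_{L_u^2L^\infty}\les\la^{-4\ep_0}$ — a full factor $\la^{-1/2}$ short of the claim. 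Your proposed mechanism for recovering that factor, ``the $\tir^{-1/2}$ in passing from $\tir^{3/2}\ckk\mu$ to $\tir\,\ckk\mu$ together with $\tau_*\les\la^{1-8\ep_0}$,'' goes the wrong way: $\tir^{-1/2}\gtrsim\tau_*^{-1/2}$ and blows up near the vertex, so multiplying by it cannot produce a gain of $\la^{-1/2}$ (the analogy with (\ref{z_3.19.1}) does not apply, since there the smallness comes from the transport source and initial data, not from trading weights after the fact). This is precisely why Proposition \ref{improve} is described in the introduction as giving a \emph{weaker} control on $\ckk\mu$ than on $\sn\widetilde{\tr\chi}$: the pointwise-in-$t$ elliptic route through $\ckk\mu$ cannot reach $\la^{-1/2-4\ep_0}$.

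The paper's proof circumvents this by an extra structural step you are missing. It introduces $\sl\mu$ on each $S_{t,u}$ by $\div\sl\mu=\tfrac12(\ckk\mu-\overline{\ckk\mu})$, $\curl\sl\mu=0$, and then \emph{commutes the Hodge system with $L+\tfrac12\tr\chi$} to derive a transport equation (\ref{3_21_1}) for $\sl\mu$ itself, whose source splits into $\D_1^{-1}(\tir^{-1}\div\pi+\tir^{-2}(\pi-\overline\pi),0)$ plus $\D_1^{-1}(G_1,G_2)$ with $G_1,G_2$ purely quadratic (products of $\sl\mu,\sn\sigma,\chih,\bA$ against $\sn\widetilde{\tr\chi},\sn\pi,\bE,\bd\ti\pi$, etc.). The object called $\mu^\dag$ is the transport integral of $\D_1^{-1}(G_1,G_2)$ along null geodesics — not $\D_1^{-1}(\tfrac12\ckk\mu,0)$ — and the extra $\la^{-1/2}$ comes from the product structure of $G_1,G_2$, which gives $\|\tir(G_1,G_2)\|_{L_u^2L_t^1L_\omega^p}\les\la^{-1/2-4\ep_0}$; the remaining transport contribution from $\tir^{-1}\div\pi$ is absorbed into $\bA^\dag$ via Proposition \ref{cz.2} and the maximal function. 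Your treatment of the $\bA^\dag$ pieces ($\D_1^{-1}(\div\pi_1,\curl\pi_2)$ and $\D_1^{-1}(\bE,\bE)$) and the choice $\bA=-\zeta$ do match the paper's final step, but without the transport equation for $\sl\mu$ the decomposition you propose does not deliver the stated estimate on $\mu^\dag$.
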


\begin{proof}
For any scalar function $f$, we use $\overline f$ to denote the function that takes on each $S_{t,u}$ the average value of $f$,
i.e. $\overline f= \frac{1}{|S_{t,u}|} \int_{S_{t,u}} f d\mu_\ga$ on $S_{t,u}$. Let $v_t$ be the area element of $S_{t, u}$. Recall that
$L v_t = v_t \tr\chi$, we can use it to derive that
\begin{equation}\label{7.18.1}
L\overline{f}+\tr\chi \overline{f}=(\tr\chi-\overline{\tr\chi})\c\overline{f}+\overline{L f+\tr\chi f}.
\end{equation}

Now, using $\ckk \mu$ we introduce a function $\sl \mu$ such that on each $S_{t,u}$ it is defined by
\begin{equation}\label{hdg_1}
\div \slashed{\mu}=\f12(\ckk \mu-\overline{\ckk\mu}),  \qquad \curl \slashed{\mu}=0.
\end{equation}
By using (\ref{hdg_1}) and (\ref{cmu1}) we can derive that
\begin{align}\label{7.19.1}
\curl (L\sl \mu  + \frac{1}{2} \tr \chi \sl \mu) %= \ep^{AB} [\sn_A, L] \sl \mu_B + \frac{1}{2} \ep^{AB} \sn_A \tr \chi  \c \sl\mu_B \nn\\
& = \ep^{AB} \left(\chi_{AC} \zb_B - \bR_{BC4A}\right) \sl\mu_C + \ep^{AB} \chi_{AC} \sn_C \sl\mu_B\nn\\
&\quad \, + \frac{1}{2} \ep^{AB} \sn_A \tr \chi \c \sl \mu_B
\end{align}
and
\begin{align*}
& \div (L \sl \mu + \f12 \tr\chi \sl \mu) \\%= L \div \sl \mu + [\div, L] \sl \mu + \frac{1}{2} \tr \chi \c \div \sl \mu +\f12 \sn \tr\chi \c \sn \sl \mu\\
& = L \div \sl \mu + \chi_{AB} \sn_A \sl\mu_B -\left(\tr \chi \, \zb_C - \chi_{AC} \zb_A +\bR_{BC4A}\right) \sl \mu_C
+ \frac{1}{2} \tr \chi \c \div \sl \mu +\f12 \sn \tr\chi \c \sn \sl \mu \displaybreak[0]\\
%& = L \div\sl \mu + \tr\chi \, \div \sl \mu + \chih \c \sn\sl\mu - \left(\tr \chi \, \zb_C - \chi_{AC} \zb_A +\bR_{BC4A}\right) \sl \mu_C
% +\f12 \sn \tr\chi \c \sn \sl \mu\\
& = \f12 \left(L(\ckk \mu-\overline{\ckk \mu})+\tr \chi (\ckk \mu -\overline{\ckk \mu})\right)
+ \chih \c \sn\sl\mu - \left(\tr \chi \, \zb_C - \chi_{AC} \zb_A +\bR_{BC4B}\right) \sl \mu_C
+\f12 \sn \tr\chi \c \sn \sl \mu.
\end{align*}
In view of (\ref{7.18.1}) with $f=\ckk \mu$, we have
$$
L(\ckk \mu-\overline{\ckk \mu})+\tr \chi (\ckk \mu -\overline{\ckk \mu})
= -(\tr\chi-\overline{\tr\chi}) \overline{\ckk \mu} + \left(L \ckk \mu+\tr \chi \, \ckk \mu-\overline{L \ckk \mu +\tr\chi \ckk \mu}\right).
$$
Let $G$ denote the right hand side of the identity in Lemma \ref{mu4.4} and note that $\overline{\tir^{-1} \sn_A \sl\pi_A}=0$. We have
$$
L(\ckk \mu-\overline{\ckk \mu})+\tr \chi (\ckk \mu -\overline{\ckk \mu})
= -(\tr\chi-\overline{\tr\chi}) \overline{\ckk \mu} + G-\overline{G} + \tir^{-1} \div \pi
+ \tir^{-2} (\pi -\overline{\pi}).
$$
Therefore
\begin{align}\label{7.19.2}
\div (L \sl \mu  + \f12 \tr\chi \sl \mu) & =-(\tr\chi-\overline{\tr\chi}) \overline{\ckk \mu} + \f12 \left(G-\overline{G} + \tir^{-1} \div\pi
+ \tir^{-2} (\pi -\overline{\pi})\right) + \chih \c \sn\sl\mu \nn\\
& \quad\, - \left(\tr \chi \, \zb_C - \chi_{AC} \zb_A +\bR_{BC4B}\right) \sl \mu_C
+\f12 \sn \tr\chi \c \sn \sl \mu.
\end{align}
Thus, in view of (\ref{7.19.1}), (\ref{7.19.2}) and the curvature decomposition
in Lemma \ref{decom_lem}, we obtain symbolically that
\begin{align}\label{g1_1}
\begin{split}
\div (L\smu+\f12 \tr\chi\smu)&=G_1+\f12\left(\tir^{-1} \div \pi+\tir^{-2}(\pi-\overline\pi)\right),\\
\curl (L\smu+\f12 \tr\chi \smu)&=G_2,
\end{split}
\end{align}
where
\begin{align*}
G_1&=\f12 \sn\tr\chi \c\smu+\chih \c \sn \smu+\f12( G-\overline{G}) +\smu\c( \sn \pi + \bE) - (\tr\chi-\overline{\tr\chi}) \overline{\ckk \mu},\\
G_2&=\hat\chi \c \sn \smu+\f12\sn \tr\chi\c\smu+\smu\c (\sn \pi + \bE).
\end{align*}
Consequently
\begin{equation}\label{3_21_1}
L\smu+\f12 \tr\chi \smu =\f12\D_1^{-1} (\tir^{-1}\div \pi+\tir^{-2} (\pi -\overline\pi),0)+\D_1^{-1}(G_1, G_2)
\end{equation}
which together with Lemma \ref{tsp2} implies that
\begin{align*}
\smu=v_t^{-\f12}\int_u^t v_\tt^\f12 \D_1^{-1}(G_1, G_2) d \tt
+ v_t^{-\f12}\int_u^t v_\tt^\f12 \D_1^{-1} (\tir^{-1}\div \pi+\tir^{-2}(\pi-\overline{\pi}),0) d\tt.
\end{align*}
By virtue of the fact that $\ti \zeta = \D_1^{-1} (\div \ti \zeta, \curl \ti \zeta)$ and $\sl \mu = \D_1^{-1} (\f12(\ckk \mu - \overline{\ckk \mu}),0)$,
we can obtain
$$
\zeta +\sn \sigma -\sl \mu = \D_1^{-1} \left(\div \ti \zeta -\f12 (\ckk \mu-\overline{\ckk \mu}), \curl \ti\zeta\right).
$$
Therefore $\nabla \sigma = \bA + \mu^\dag + \bA^\dag$ with $\bA=-\zeta$ and
\begin{align*}
\mu^\dag & = v_t^{-\f12}\int_u^t v_\tt^\f12 \D_1^{-1}(G_1, G_2) d \tt, \displaybreak[0]\\
\bA^\dag & =  v_t^{-\f12}\int_u^t v_\tt^\f12 \D_1^{-1} \left(\tir^{-1}\div \pi+\tir^{-2}(\pi-\overline{\pi}),0\right) d\tt
 + \D_1^{-1} \left(\div \ti \zeta -\f12 (\ckk \mu-\overline{\ckk \mu}), \curl \ti\zeta\right).
\end{align*}
It remains only to show that
\begin{equation}\label{smu1}
\|\smu^\dag\|_{L_u^2 L^\infty}\les \la^{-\f12-4\ep_0}, \qquad \|\bA^\dag\|_{L_t^2 L_x^\infty}\les \la^{-\f12-4\ep_0}.
\end{equation}

We first consider $\mu^\dag$. It follows from (\ref{sobinf}) and Lemma \ref{hdgm1} that
\begin{align*}
\|\mu^\dag\|_{L_u^2 L_t^\infty L_x^\infty}&\les\|\D_1^{-1}(G_1, G_2)\|_{L_u^2 L_t^1 L_\omega^\infty}
\les\|\tir (G_1, G_2)\|_{L_u^2 L_t^1 L_\omega^p}.
\end{align*}
for any $p$ satisfying $0<1-\frac{2}{p}<s-2$. Thus, to obtain the estimate on $\mu^\dag$, it suffices to show that
\begin{equation}\label{g12}
\|\tir (G_1, G_2)\|_{L_u^2 L_t^1 L_\omega^p}\les \la^{-\f12-4\ep_0}.
\end{equation}
By the definition of $\ckk \mu$ we have $(\tr\chi-\overline{\tr \chi}) \overline{\ckk \mu}=\bA\c \bA\c \bA+ \tir^{-1} \bA\c \bA$.
Therefore, symbolically we have
\begin{align*}
G_1, G_2&=(\smu, \sn\sigma)\c (\sn\widetilde{\tr\chi}, \sn \pi, \bE)+\chih\c (\sn \ti\zeta, \sn \smu)
+\bA\c(\sn \widetilde{\tr\chi}, \bE, \bd \ti\pi, \tir^{-1} \bA, V\c \widetilde\Ga)
\end{align*}
From Lemma \ref{hdgm1}, (\ref{ckmu2}) and (\ref{sobinf}) it follows
\begin{equation}\label{smu2}
\|\tir\sn\smu, \smu\|_{L_t^2 L_u^2 L_\omega^p} + \|\smu\|_{L_t^2 L_u^2 L_x^\infty}\les \la^{-4\ep_0}.
\end{equation}
Hence, by using Proposition \ref{awave}, Proposition \ref{flux_2} , (\ref{ckmu2}), (\ref{smu2}), (\ref{err_4_4}) and the fact that
$\|\tir \widetilde \Ga\|_{L^\infty L_\omega^p} \les 1$, we can deduce that
\begin{align*}
\|\tir (G_1, G_2)\|_{L_u^2 L_t^1 L_\omega^p}
&\les \|\smu, \sn \sigma\|_{L_u^2 L_t^2 L_x^\infty}\|\tir(\sn \widetilde{\tr\chi}, \sn \pi, \bE)\|_{L_u^\infty L_t^2 L_\omega^p}\\
& \quad \, + \|\chih\|_{L_t^2L_u^\infty L_\omega^\infty} \|\tir (\sn \zeta, \sl \mu)\|_{L_t^2 L_u^2 L_\omega^p} \\
& \quad \, + \|\bA\|_{L_t^2 L_x^\infty} \|r(\sn \widetilde{\tr\chi}, \bE, \bd \ti\pi, \tir^{-1} \bA, V\c\widetilde\Ga)\|_{L_u^2L_t^2 L_\omega^p}\\
& \les \la^{-\f12-4\ep_0}.
\end{align*}

Next we consider $\bA^\dag$. We use $v_t^{\f12} \approx t-u$. By (\ref{hlm}), Proposition \ref{cz.2} and (\ref{pi.2}),
we have with $0<c<\delta_0$ that
\begin{equation*}
\left\|v_t^{-\f12}\int_u^t v_{t'}^\f12 |\D_1^{-1} (\tir^{-1}\div \pi, 0)| d\tt \right\|_{L_t^2 L_x^\infty}
\les \|\ell^c P_\ell \ti\pi\|_{L_t^2 l_\ell^2 L_x^\infty}+\|\ti\pi\|_{L_t^2 L_x^\infty}\les \la^{-\f12-4\ep_0}.
\end{equation*}
By using (\ref{hlm}), (\ref{sobinf}),  Lemma \ref{hdgm1} and Proposition \ref{awave}, we have
\begin{align*}
&\left\|v_t^{-\f12}\int_u^t v_\tt^\f12 |\D_1^{-1} (\tir^{-2} (\pi -\overline{\pi}),0)| d\tt \right\|_{L_t^2 L_x^\infty}
\les \left\| \tir^{-1} \D_1^{-1} (\pi -\overline{\pi},0)\right\|_{L_t^2L_u^\infty L_\omega^\infty} \\
& \qquad\qquad\les \left\|\sn \D_1^{-1} (\pi -\overline{\pi},0)\right\|_{L_t^2L_u^\infty L_\omega^p}
+ \left\| \tir^{-1} \D_1^{-1} (\pi -\overline{\pi},0)\right\|_{L_t^2L_u^\infty L_\omega^p} \\
& \qquad \qquad\les \|\pi-\overline{\pi}\|_{L_t^2 L_u^\infty L_\omega^p} \les \|\pi\|_{L_t^2 L_u^\infty L_\omega^\infty} \les \la^{-\f12-4\ep_0}.
\end{align*}
Finally, we consider $\D_1^{-1} (\div\ti\zeta-\f12 (\ckk \mu -\overline{\ckk \mu}), \curl \ti\zeta)$. By virtue of
(\ref{hodge1}) and (\ref{hodge2}), we have
\begin{align*}
 \|\D_1^{-1} (\div\ti\zeta-\f12 (\ckk \mu -\overline{\ckk \mu}), \curl \ti\zeta)\|_{L_t^2 L_u^\infty L_\omega^\infty}
& \les \|\D_1^{-1} (\bE-\overline{\bE}, \bE -\overline{\bE})\|_{L_t^2 L_u^\infty L_\omega^\infty}\\
& \quad \, + \|\D_1^{-1} (\div \pi_1, \curl \pi_2)\|_{L_t^2 L_u^\infty L_\omega^\infty}.
\end{align*}
For the first term we use (\ref{sobinf}) and Lemma \ref{hdgm1} with $p$ satisfying $0<1-\frac{2}{p}<s-2$,
and for the second term we use Proposition \ref{cz.2}. We then obtain
\begin{align*}
 \|\D_1^{-1} (\div\ti\zeta-\f12 (\ckk \mu -\overline{\ckk \mu}), \curl \ti\zeta)\|_{L_t^2 L_u^\infty L_\omega^\infty}
& \les \|\tir (\bE-\overline{\bE})\|_{L_t^2 L_u^\infty L_\omega^p}+\|\ell^c P_{\ell} \ti \pi\|_{L_t^2 l_\ell^2 L_x^\infty}
+\|\ti \pi \|_{L_t^2 L_x^\infty}\\
& \les \|\tir \bE\|_{L_t^2 L_u^\infty L_\omega^p}+\|\ell^c P_{\ell} \ti \pi\|_{L_t^2 l_\ell^2 L_x^\infty}
+\|\ti \pi \|_{L_t^2 L_x^\infty}\\
&\les \la^{-\f12 -4\ep_0},
\end{align*}
where we used (\ref{err_4_4}) and (\ref{pi.2}) to derive the last inequality.
The proof is therefore complete.
\end{proof}

\section{\bf Bounded conformal energy in curved spacetime}\label{sec_7}

The goal of this section is to prove Theorem \ref{BT} concerning the boundedness of conformal energy for any function
$\psi$ satisfying
 \begin{equation}\label{wh}
\Box_{\bg} \psi=0
\end{equation}
on $I_*=[0, \tau_*]$ with $\tau_*\les \la^{1-8\ep_0}$ and with $\psi[t_0]$ supported in $B_R\subset {\mathcal D}_0^+\cap \Sigma_{t_0}$  with $R< t_0:=1$.
We will carry out the proof in three steps. We first adapt the argument of Morawetz to obtain the integrated energy
estimates by using the vector fields $N$ (see  \cite{Morawetz, Sterb, DaRod2,yang}). We then make conformal change of metric
and  use the multiplier approach with the  vector field of the type $r^p L$ in $\D_0^+$ but away from the time axis to control
the conformal energy in the exterior region and to provide a sequence of  null slices with  energy decay. Then, based on
the argument in \cite{DaRod1}, we obtain energy decay in each spatial-null slice. Finally, we control the conformal energy in the
interior region.

To this end, we consider the momentum tensor $Q_{\a\b}[\psi]$, see (\ref{qmom}). For any vector field $X$ we
introduce the energy current
$$
\P_\a^{(X)}[\psi] = Q_{\a\b}[\psi] X^\b.
$$
By direct calculation and (\ref{wh}) it is easily seen that
\begin{equation}\label{7.8.1}
\bd^\a \P_\a^{(X)}[\psi] = (\Box_{\bg} \psi) X \psi + \f12 Q^{\a\b}[\psi] \pi_{\a\b}^{(X)}= \f12 Q^{\a\b}[\psi] \pi_{\a\b}^{(X)},
\end{equation}
where $\pi_{\a\b}^{(X)}= \bd_\a X_\b+\bd_\b X_\a$ denotes the deformation tensor of $X$. Moreover, for
$\bT=\p_t$ there holds
\begin{equation}\label{7.8.2}
\P_\a^{(\bT)}[\psi] \bT^\a = \frac{1}{2} \left((\bT \psi)^2+ |\nabla \psi|^2\right).
\end{equation}

Recall that $C_{u}$ denotes the level set of the optical function $u$ and $\ti r=\tir(t) := t-u$. Given $t_0\le \tau_1 <\tau \le \tau_*$
and $0\le R'\le t_0$, let
$$
\widetilde \Sigma_{\tau_1, R'}^{\tau}:=\{\ti r\le R', t=\tau_1\}\cup (C_{\tau_1-R'}\cap \{\tau_1\le t\le \tau\}) \quad \mbox{and} \quad
\widetilde \Sigma_{\tau_1, R'} :=\widetilde \Sigma_{\tau_1, R'}^{\tau_*}
$$
and let $\R_{\tau_1, R'}^{\tau}$ denote the region enclosed by $\widetilde\Sigma_{\tau_1, R'}^{\tau}$ and $\{t=\tau\}$.
For any submanifold of codimension 1, we will use $\bn$ to denote its normal vector field in $({\mathbb R}^{3+1}, \bg)$.
It is easily seen that $\bn = \bT$ on each $\{t=\tau\}$ and $\bn = L$ on each $C_u$.

\begin{figure}[ht]
\centering
\includegraphics[width = 0.45\textwidth]{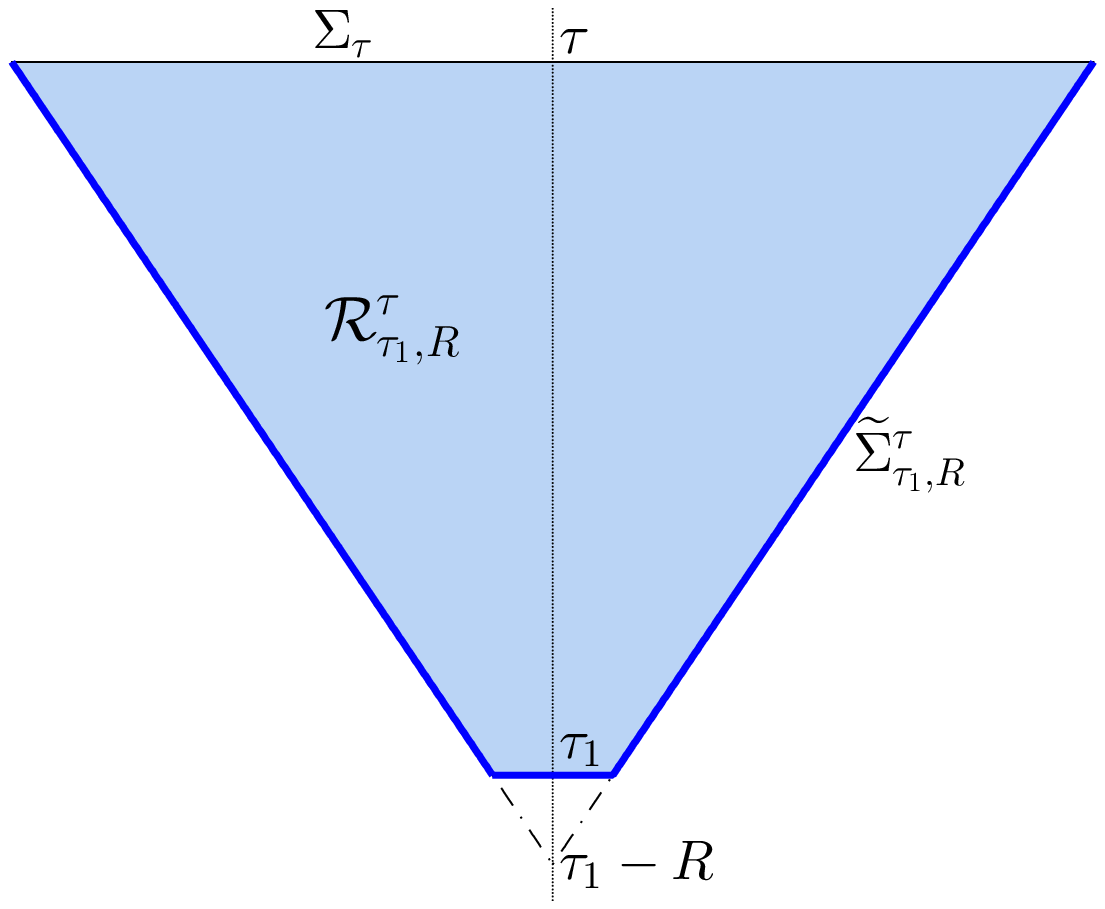}
  \includegraphics[width = 0.45\textwidth]{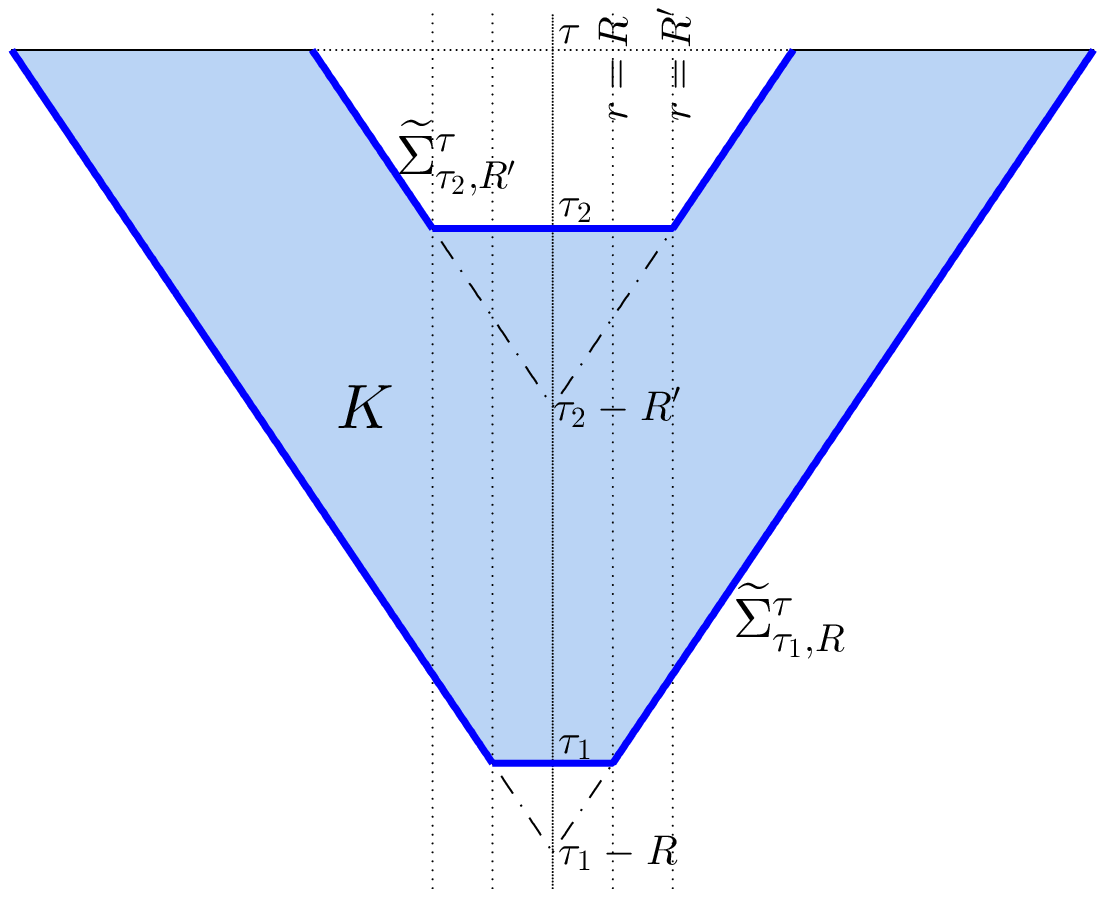}
  \vskip -0.4cm
  \caption{Illustration of $\widetilde \Sigma_{\tau_1, R}^\tau$, $\R_{\tau_1, R}^\tau$, and $K$ used in the proof of Lemma \ref{monoeng}}\label{fig1}
\end{figure}

We first give the following standard energy estimates.

\begin{lemma}\label{monoeng}
Let $\psi$ be any solution of $\Box_\bg \psi=0$ in $I_*=[t_0, \tau_*]$. Given $t_0\le \tau_1<\tau_2<\tau \le \tau_*$ and
$0\le R'\le \tau_2-\tau_1+R$. Let $\Sigma_\tau$ be the part of spacelike slice $\{t=\tau\}$ contained in the causal future of ${\widetilde\Sigma}_{\tau_1, R}$.
Then there hold
\begin{equation}\label{std}
\int_{{\Sigma}_\tau}\P_\a^{(\bT)}[\psi] \bn^\a\les \int_{{\widetilde\Sigma}_{\tau_1,R}^\tau}\P_\a^{(\bT)}[\psi] \bn^\a
\end{equation}
and
\begin{equation}\label{dec_1}
\int_{\{t=\tau, \tau_1-R\le u\le \tau_2-R'\}} \P_\a^{(\bT)}[\psi] \bn^\a+\int_{\widetilde{\Sigma}_{\tau_2, R'}^{\tau} }\P_\a^{(\bT)}[\psi] \bn^\a
\le 2\int_{\widetilde{\Sigma}_{\tau_1, R}^{\tau}} \P_\a^{(\bT)}[\psi] \bn^\a.
\end{equation}
\end{lemma}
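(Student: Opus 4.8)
\textbf{Proof plan for Lemma \ref{monoeng}.}
The plan is to run the standard divergence-theorem (energy-flux) argument for the current $\P_\a^{(\bT)}[\psi]$ on the appropriate spacetime regions, taking advantage of the sign of all boundary flux terms because $\bT$ is causal. First I would apply the divergence theorem to $\P_\a^{(\bT)}[\psi]$ over the region $\R_{\tau_1, R}^\tau$ enclosed by $\widetilde\Sigma_{\tau_1, R}^\tau$ below and the slice $\{t=\tau\}$ (truncated to the causal future of $\widetilde\Sigma_{\tau_1,R}$) above. By (\ref{7.8.1}) the bulk integrand is $\f12 Q^{\a\b}[\psi]\pi_{\a\b}^{(\bT)}$, and since $\pi^{(\bT)}_{\a\b}=\p_t\bg_{\a\b}$, this is controlled using (\ref{BA1}) exactly as in (\ref{qs})--(\ref{eng7.10}); in fact for the inequality (\ref{std}) one absorbs this term via a Gronwall argument in $\tau$, which is why the statement is an inequality $\les$ rather than an identity. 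The key structural input is that on each constituent piece of the boundary the flux $\P_\a^{(\bT)}[\psi]\bn^\a$ is nonnegative: on the spacelike slices $\{t=\mathrm{const}\}$ this is (\ref{7.8.2}), namely $\frac12((\bT\psi)^2+|\nabla\psi|^2)\ge 0$; on the null cones $C_u$ (where $\bn=L$) one computes $Q_{\a\b}[\psi]\bT^\a L^\b = \f12(|Le\psi|^2+|\sn\psi|^2)\ge 0$ using $\bg(L,L)=0$, $\bg(L,\bT)=-1$ and the null-frame decomposition $\bg^{\a\b}\p_\a\psi\p_\b\psi = -L\psi\,\Lb\psi + |\sn\psi|^2$. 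So the contribution from any null or spacelike boundary component is a genuine (nonnegative) energy, and the orientation of the outward normals on $\R_{\tau_1,R}^\tau$ gives the correct signs: the flux through the top slice plus the flux through $C_{\tau_1-R}$ segment appear with $+$, the flux through the bottom cap $\{\tir\le R, t=\tau_1\}$ with $-$, yielding (\ref{std}) after controlling the bulk term.

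For the refined estimate (\ref{dec_1}) I would apply the divergence theorem on the (smaller) region bounded below by $\widetilde\Sigma_{\tau_1,R}^\tau$ and bounded above/laterally by the broken hypersurface consisting of the piece of $\{t=\tau\}$ in the range $\tau_1-R\le u\le \tau_2-R'$, together with $\widetilde\Sigma_{\tau_2,R'}^\tau$. Geometrically, $\widetilde\Sigma_{\tau_2,R'}^\tau$ is obtained from $\widetilde\Sigma_{\tau_1,R}^\tau$ by sliding the vertex of the cone forward from $(\tau_1, u=\tau_1-R)$ to $(\tau_2, u=\tau_2-R')$; the hypothesis $0\le R'\le \tau_2-\tau_1+R$ is exactly what guarantees that $\widetilde\Sigma_{\tau_2,R'}^\tau$ lies in the causal future of $\widetilde\Sigma_{\tau_1,R}^\tau$ and that the enclosed region is well-defined (i.e. $\tau_2-R' \le \tau_1 - R + (\tau_2-\tau_1) $, so the new cone $C_{\tau_2-R'}$ sits ``inside''). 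All boundary fluxes are again nonnegative by the computation above, so discarding the (positive) flux through the portion of the old cone $C_{\tau_1-R}$ that lies to the causal past, one gets the sum of the two left-hand fluxes bounded by $\int_{\widetilde\Sigma_{\tau_1,R}^\tau}\P_\a^{(\bT)}[\psi]\bn^\a$ plus the bulk error term; the factor $2$ on the right absorbs the bulk term via the smallness $C_0\|\bp g\|_{L^1_{I_*}L^\infty_x}\le 1$ (or equivalently (\ref{BA1})), using that the error is bounded by $\|\bp\bg\|_{L^1_t L^\infty_x}\sup_\tau\int_{\Sigma_\tau}\P^{(\bT)}_\a[\psi]\bn^\a$ and that this last quantity is itself controlled by (\ref{std}).

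The step I expect to be the main (though still fairly routine) obstacle is the careful bookkeeping of the broken hypersurfaces $\widetilde\Sigma_{\tau_1,R}^\tau$ and the region $\R_{\tau_1,R}^\tau$ and $\R_{\tau_2,R'}^\tau$: verifying that the decomposition of $\p\R$ into a spacelike cap, a null cone segment, and the top slice is correct with the stated orientations, that the inclusion of causal futures required for (\ref{dec_1}) genuinely follows from $0\le R'\le \tau_2-\tau_1+R$, and that the corner/edge pieces (where a cone meets a slice) carry zero measure and cause no trouble when applying the divergence theorem on a Lipschitz domain. Once that geometric setup is pinned down, (\ref{std}) and (\ref{dec_1}) follow by combining the sign of the null/spacelike fluxes with a Gronwall estimate in $\tau$ for the bulk term driven by $\|\bp\bg\|_{L^1_t L^\infty_x}$, exactly paralleling the treatment of (\ref{qs}) already given in Section \ref{sec_2}.
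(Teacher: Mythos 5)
Your proposal is correct and follows essentially the same route as the paper: the divergence theorem for $\P^{(\bT)}_\a[\psi]$ over $\R_{\tau_1,R}^\tau$ with the bulk term $\f12\pi^{(\bT)}_{\a\b}Q^{\a\b}$ absorbed via $\|\pi^{(\bT)}\|_{L_t^1L_x^\infty}\les\la^{-8\ep_0}$ gives (\ref{std}), and integrating over the region $K$ between $\widetilde\Sigma_{\tau_1,R}^\tau$ and $\widetilde\Sigma_{\tau_2,R'}^\tau$ (your "sliding the vertex" region), with the bulk controlled by the already-established bound on $\int_{\R_{\tau_1,R}^\tau}|\pi^{(\bT)}_{\a\b}Q^{\a\b}|$, gives (\ref{dec_1}). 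The only cosmetic difference is that you make the nonnegativity of the null flux $\P^{(\bT)}_\mu L^\mu=\f12(|L\psi|^2+|\sn\psi|^2)$ explicit here, whereas the paper records that computation only later in Lemma \ref{bdry1}.
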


\begin{proof}
By integrating (\ref{7.8.1}) with $X=\bT$ over the region $\R_{\tau_1,R}^\tau$ enclosed by $\widetilde\Sigma_{\tau_1,R}^\tau$ and $\{t=\tau\}$ and using
the divergence theorem it follows that
\begin{equation}\label{eng1}
\int_{{\Sigma}_\tau}\P_\a^{(\bT)}[\psi] \bn^\a=\int_{{\widetilde\Sigma}_{\tau_1, R}^\tau} \P_\a^{(\bT)}[\psi] \bn^\a
-\f12 \int_{\R_{\tau_1, R}^\tau} \pi^{(\bT)}_{\a\b}\c Q^{\a\b},
\quad \forall \tau>\tau_1.
\end{equation}
In view of (\ref{7.8.2}) and the fact $\|\pi^{(\bT)}\|_{L_t^1 L_x^\infty}\les \la^{-8\ep_0}$ we have
\begin{equation}\label{eng2}
\int_{\R_{\tau_1, R}^\tau} |\pi_{\a\b}^{(\bT)} \c Q^{\a\b}|
\les \la^{-8\ep_0}  \sup_{\tau_1 \le t\le \tau} \int_{\Sigma_t}\P_\a^{(\bT)}[\psi] \bn^\a.
\end{equation}
The combination of (\ref{eng1}) and (\ref{eng2}) then implies that
\begin{equation*}
\sup_{\tau_1\le t\le \tau}\int_{{\Sigma}_t}\P_\a^{(\bT)}[\psi] \bn^\a\les \int_{{\widetilde\Sigma}_{\tau_1,R}^\tau}\P_\a^{(\bT)}[\psi] \bn^\a
\end{equation*}
which shows (\ref{std}) and the estimate
\begin{equation}\label{deferror}
\int_{\R_{\tau_1,R}^\tau} |\pi^{(\bT)}_{\a\b}  \c Q^{\a\b}|\les  \la^{-8\ep_0} \int_{{\widetilde\Sigma}_{\tau_1,R}^\tau} \P_\a^{(\bT)}[\psi] \bn^\a.
\end{equation}

To prove (\ref{dec_1}), let $K$ be the region enclosed by ${\widetilde{\Sigma}_{\tau_2, R'}}^{\tau},
\widetilde{\Sigma}_{\tau_1,R}^{\tau}$ and $\{t=\tau, \tau_1-R\le u\le \tau_2-R'\}$, see Figure \ref{fig1}.
Note that $K\subset \R_{\tau_1,R}^{\tau}$. Integrating (\ref{7.8.1}) over $K$ gives
\begin{equation*}
\int_{\{t=\tau, \tau_1-R\le u\le \tau_2-R'\}} \P_\a^{(\bT)}[\psi] \bn^\a+\int_{\widetilde{\Sigma}_{\tau_2, R'}^{\tau}} \P_\a^{(\bT)}[\psi] \bn^\a
=\int_{\widetilde{\Sigma}_{\tau_1,R}^{\tau}} \P_\a^{(\bT)}[\psi] \bn^\a-\f12 \int_{K} \pi^{(\bT)}_{\a\b}\c Q^{\a\b}.
\end{equation*}
This together with (\ref{deferror}) then implies (\ref{dec_1}).
\end{proof}

We will employ the following result.

\begin{lemma}\label{lem7.15.1}
For $\hat u \ge 0$ let $\itt_{t,\hat u}=\Sigma_t\cap \{u\ge \hat u\}$. Then for any scalar function $\psi$ there holds
\begin{equation}\label{lot4}
\|\psi\|_{L_u^2 L_\omega^2(\itt_{t,\hat u})}\les \|\tir N(\psi)\|_{L_u^2 L_\omega^2(\itt_{t,\hat u})}+\|\tir^{-\f12}\psi\|_{L^2(S_{t,\hat u})}
\end{equation}
\end{lemma}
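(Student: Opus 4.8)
\textbf{Proof plan for Lemma \ref{lem7.15.1}.}

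The statement is a weighted Poincar\'e-type inequality on the annular region $\itt_{t,\hat u} = \Sigma_t \cap \{u \ge \hat u\}$, expressing the $L^2_u L^2_\omega$ norm of a scalar $\psi$ in terms of its radial derivative $N(\psi)$ along the cones plus a trace term on the innermost sphere $S_{t,\hat u}$. The natural approach is to work in the transport coordinates $(t, u, \omega^1, \omega^2)$ introduced in Section \ref{sec_5}, fix $t$, and integrate along the radial direction. On $\Sigma_t$ the vector field $N$ is the outward unit normal to each $S_{t,u}$, so by \eqref{bb2} we have $N = \bb \, \partial/\partial(\text{geodesic distance})$; more usefully, since $u$ is the optical function with $\bb^{-1} = |\nabla u|_g$, moving outward from $S_{t,\hat u}$ corresponds to decreasing $u$, and $N(\psi)$ controls $\partial_u \psi$ up to the bounded factor $\bb$ (recall $|\bb - 1| < 1/4$ on $\widetilde{\D^+}$ by Lemma \ref{6.17.1}).

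The key step is the fundamental theorem of calculus in the $u$-variable: for a fixed angular direction $\omega$ and fixed $t$, write $\psi(t,u,\omega) = \psi(t,\hat u,\omega) + \int_{\hat u}^{u} \partial_{u'}\psi(t,u',\omega)\, du'$. Squaring, using Cauchy--Schwarz on the integral, and then integrating in $u$ over the range of $u$ on $\itt_{t,\hat u}$ (which has length $\lesssim \tir \approx t - u$, bounded by $\tau_*$ but more importantly the $\tir$ weight is exactly what appears), one gets $\|\psi\|_{L^2_u(\omega\text{ fixed})}^2 \lesssim \tir^2 \|\partial_u \psi\|_{L^2_u}^2 + \tir \, |\psi(t,\hat u,\omega)|^2$. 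Here the first $\tir$ power comes from the length of the integration interval in Cauchy--Schwarz and the second from integrating the constant-in-$u$ boundary term; one must be careful that the area element $v_t \approx \tir^2$ (Lemma \ref{6.17.1}) converts between the $L^q(S_{t,u})$ and $L^q_\omega(S_{t,u})$ norms, and that the weight $\tir$ is slowly varying so it can be pulled in and out of the $u$-integral at the cost of universal constants (on the annulus $\tir$ ranges over an interval comparable to its endpoints only if $\hat u$ is not too close to $t$; in general one handles this by a dyadic-in-$\tir$ decomposition or simply by noting the inequality is scale-covariant in $\tir$). Then integrate in $\omega$ over ${\mathbb S}^2$ and convert $\partial_u\psi$ to $N(\psi)$ using $|\partial_u \psi| \lesssim \tir |N(\psi)|$ — wait, more precisely $N(\psi)$ relates to the geodesic-distance derivative, and $du$ differs from geodesic arclength by the lapse $\bb$, so $|\partial_u \psi| \approx \bb |N \psi| \lesssim |N\psi|$, giving $\|\psi\|_{L^2_u L^2_\omega} \lesssim \|\tir N(\psi)\|_{L^2_u L^2_\omega} + \|\tir^{1/2}\psi\|_{L^2_\omega(S_{t,\hat u})}$, and the last term is exactly $\|\tir^{-1/2}\psi\|_{L^2(S_{t,\hat u})}$ after converting $L^2_\omega$ to $L^2(S_{t,\hat u})$ via $v_t \approx \tir^2$.

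The main obstacle I anticipate is bookkeeping the $\tir$-weights correctly through the three conversions — (i) from the FTC/Cauchy--Schwarz interval length, (ii) from $du$ versus geodesic arclength (the lapse $\bb$), and (iii) from the area element $v_t \approx \tir^2$ relating $L^2_\omega$ and $L^2$ norms on spheres — and in particular making sure the inequality is uniform as $\hat u \to t$ (the degenerate case where the annulus shrinks). The cleanest way around the uniformity issue is to not integrate over the full annulus at once but to decompose $\{u \ge \hat u\}$ into dyadic shells where $\tir \approx 2^{-j}$, prove the estimate on each shell with the boundary term living on the inner sphere of that shell, and then sum, absorbing the telescoping inner-sphere terms using the already-established estimate with $N(\psi)$; alternatively one can cite the fact that this is a routine Hardy-type inequality once the coordinate geometry of Lemma \ref{6.17.1} and \eqref{bb2} are in hand, which is presumably how the authors proceed since this is stated as a ``Lemma'' used later in the energy argument. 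No new geometric input beyond Lemma \ref{6.17.1} (comparability $v_t \approx \tir^2$, $\bb \approx 1$) should be needed.
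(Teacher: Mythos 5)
Your geometric setup is right (transport coordinates, $\bb\approx 1$, $v_t\approx \tir^2$ to pass between $L^2_\omega$ and $L^2(S_{t,u})$), but the core analytic step fails. First, $\itt_{t,\hat u}=\Sigma_t\cap\{u\ge \hat u\}$ is not an annulus: it always contains the vertex $u=t$, so $\tir$ ranges over all of $(0,\,t-\hat u]$ and is never slowly varying there; the degenerate case is not ``$\hat u\to t$'' but the presence of $\tir=0$ in the domain. The fundamental theorem of calculus plus Cauchy--Schwarz with the interval length pulled out gives
\begin{equation*}
\int_{\hat u}^{t}\psi^2\,du\ \les\ (t-\hat u)\,\psi(t,\hat u,\omega)^2+(t-\hat u)^2\int_{\hat u}^{t}|N\psi|^2\,du,
\end{equation*}
and $(t-\hat u)^2\int |N\psi|^2\,du$ dominates, but is not dominated by, $\int \tir^2|N\psi|^2\,du$; so this is a strictly weaker conclusion than \eqref{lot4}. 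No reweighting of Cauchy--Schwarz repairs it: the weight $\tir'^{-2}$ produces the divergent integral $\int_0^{t-\hat u}\tir^{-1}\,d\tir$, and intermediate weights $\tir'^{-2+\epsilon}$ force a factor $\tir'^{-\epsilon}$ that blows up at the vertex. Indeed \eqref{lot4} is the critical Hardy inequality $\int_0^R\phi^2\,dr\les\int_0^R r^2|\phi'|^2\,dr$ (for $\phi$ vanishing at $r=R$), which is not accessible by pointwise FTC bounds. Your dyadic fallback also does not close: telescoping the shell boundary terms gives $a_j\le a_{j-1}+b_{j-1}$ with no geometric decay, so $\sum_j a_j$ grows linearly in the number of shells, which is unbounded near the vertex.

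The paper's proof is the standard Hardy absorption argument. Since $Nu=-\bb^{-1}$, one has $\bb N=\p_{\tir}$ on $\Sigma_t$, hence the identity $\bb N(\tir\psi^2)=\psi^2+2\bb\,\tir\psi\,N\psi$. Integrating this in $du\,d\omega$ over $\itt_{t,\hat u}$, the only boundary contribution is $\int_{S_{t,\hat u}}\tir\psi^2\,d\omega$ (the vertex contributes nothing because of the factor $\tir$), which is the identity \eqref{6.27.2} with $m=1$. Cauchy--Schwarz on $2\bb\,\tir\psi N\psi$ and absorption of $\f12\int\psi^2$ into the left-hand side then give \eqref{lot4}, after converting $\int_{S_{t,\hat u}}\tir\psi^2\,d\omega\approx\|\tir^{-\f12}\psi\|^2_{L^2(S_{t,\hat u})}$ via $d\mu_\ga\approx\tir^2\,d\omega$. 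Replace your FTC step by this multiplier identity; the rest of your outline (the norm conversions and the use of $|\bb-1|<\frac14$) is fine.
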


\begin{proof}
Let $m \in {\mathbb N}$. Using $N u = -\bb^{-1}$ we have
\begin{equation*}
\bb N(\tir^m\psi^2 )=2 \bb\tir^m\psi N \psi  + m\tir^{m-1}\psi^2
\end{equation*}
Integrating this identity over $\itt_{t, \hat u}$ gives
\begin{equation}\label{6.27.2}
\int_{\itt_{t, \hat u}} m\tir^{m-1}\psi^2 d u d\omega + \int_{\itt_{t, \hat u}} 2 \tir^{m} \psi N(\psi) \bb d u d\omega
=\int_{S_{t, \hat u}} \tir^m \psi^2 d\omega.
\end{equation}
Setting $m=1$ and using the Cauchy-Schwarz inequality, we can obtain the inequality.
\end{proof}

\subsection{Step 1: Integrated energy estimates}

Consider the solution of $\Box_\bg \psi=0$ within $\D_0^+$. We will first give, in Proposition \ref{int_est3}, an integrated energy
estimate in the region $[t_0,\tau_*]\times B_R$. Similar to (\ref{radial}), with radial foliation formed by $\cup_{0\le u\le t}S_{t,u}$,
the metric on each $\Sigma_t \cap \D_0^+$ can be written as
\begin{equation*}
ds^2=\bb^{2} du^2+\ga_{AB} d\omega^A d\omega^B
\end{equation*}
with area element $\bb d u d\mu_\ga$, where $\ga$ is the induced metric of $g$ on $S_{t, u}$.

\begin{proposition}\label{int_est3}
Let $\psi$ be any solution of $\Box_\bg \psi=0$ on $[t_0, \tau_*]$ with $\tau_*\les \la^{1-8\ep_0}$ and with $\psi[t_0]$
supported within $B_R$. Let $\tau_2>\tau_1\ge t_0$ and $R\le R'<2R$. Then there holds
\begin{equation*}
\int_{\tau_1}^{\tau_2}\int_{\ti r\le R'} \left[({\emph \bT} \psi)^2+|N \psi|^2+|\sn \psi|^2 +\tir^{-1} \psi^2 \right] dx d\tau
\les \int_{\Sigma_{t_0}}\P^{(\emph{\bT})}_\a [\psi] \bn^\a.
\end{equation*}
\end{proposition}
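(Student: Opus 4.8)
\textbf{Proof proposal for Proposition \ref{int_est3}.}

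The plan is to run a Morawetz-type multiplier argument using the vector field $X = f(\tir) N$ for a well-chosen radial weight $f$, combined with a lower-order correction term, on the region $\R = [\tau_1,\tau_2]\times\{\tir\le R'\}$. First I would compute $\bd^\a\P_\a^{(X)}[\psi]=\f12 Q^{\a\b}[\psi]\pi^{(X)}_{\a\b}$ using \eqref{7.8.1}, and express $\pi^{(X)}$ in terms of the Ricci coefficients $\chi$, $\chib$, $\zeta$, $\zb$, $k$ and the null lapse $\bb$ via the formulas \eqref{6.7con}, \eqref{thetan}, \eqref{3.19.1}. The principal part of $\f12 Q^{\a\b}\pi^{(X)}_{\a\b}$ with $X=fN$ should, after choosing $f$ so that $f$ and $f/\tir$ are bounded above and below on $\tir\le R'$ (e.g.\ $f$ comparable to a constant, or a mild increasing function), produce a positive definite bulk term of the form $\int_\R (f'/\tir + \text{l.o.t.})(|\sn\psi|^2 + \tir^{-1}\psi^2) + (\text{good sign})|N\psi|^2$; this is the standard Morawetz mechanism, where the angular and $\psi^2$ terms come with a favorable sign because of the $\frac{2}{\tir}$ in $\tr\chi$ (recall $\tir\tr\chi\approx 2$ by \eqref{comp2}). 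To handle the zeroth-order term $\tir^{-1}\psi^2$ cleanly, I would add the auxiliary current $\f12 g(\tir)\bd_\a(\psi^2) - \f12\psi^2\bd_\a g(\tir)$ with an appropriate $g$, exactly as in \eqref{gneng1}, so that the divergence picks up $\frac{1}{2}(\sD g)\psi^2$ and the cross terms are controlled.

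Next I would bound the error terms. All the deformation-tensor contributions beyond the principal Minkowskian part are schematically $\bA\cdot(\text{quadratic in }\bd\psi)$ together with $\tir^{-1}\bb^{-1}$-type factors, where $\bA$ collects $\chih, z, \zeta, \pi, \frac{\bb^{-1}-1}{\tir}$. By Proposition \ref{awave} we have $\|\bA\|_{L_t^2 L_x^\infty(\D^+)}\les \la^{-\f12-4\ep_0}$, and $R'<2R<2t_0$ so $\tir\approx 1$ on the region of integration; hence
\begin{equation*}
\Big|\int_\R \bA\cdot(\bd\psi)^2\Big| \les \|\bA\|_{L_t^2 L_x^\infty}\, \sup_{\tau_1\le t\le\tau_2}\int_{\Sigma_t\cap\{\tir\le R'\}}|\bd\psi|^2 \les \la^{-\f12-4\ep_0}\sup_t \int_{\Sigma_t}\P_\a^{(\bT)}[\psi]\bn^\a,
\end{equation*}
which, for $\la$ large, is absorbed by the bulk positive term after using the standard energy bound. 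The standard energy $\sup_{t}\int_{\Sigma_t}\P_\a^{(\bT)}[\psi]\bn^\a \les \int_{\Sigma_{t_0}}\P_\a^{(\bT)}[\psi]\bn^\a$ is supplied by \eqref{std} in Lemma \ref{monoeng}, using that $\psi[t_0]$ is supported in $B_R$ so (by finite speed of propagation) $\psi$ on the slab is supported in $\cup_{0\le u\le t}S_{t,u}$, i.e.\ $\widetilde\Sigma_{t_0,R}$ contains the relevant data. The boundary terms produced by the divergence theorem on $\partial\R$ — the pieces on $\{t=\tau_1\}$, $\{t=\tau_2\}$ and on the lateral null/coordinate boundary $\{\tir=R'\}$ — are all controlled by $\int_{\Sigma_t}\P_\a^{(\bT)}[\psi]\bn^\a$ since $f$ is bounded and $N$ is a unit vector, and on the future boundary $\{t=\tau_2\}$ they have the right sign or are again dominated by the energy; the boundary term at $\{t=\tau_1\}$ is bounded using \eqref{std} once more.

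I expect the main obstacle to be bookkeeping the sign of the bulk term: with the rough metric $\bg$ one must verify that the ``Morawetz-positive'' combination $\int_\R \big(\tfrac{f'}{\tir}+\ldots\big)|\sn\psi|^2 + (\ldots)\psi^2 + (\ldots)|N\psi|^2$ genuinely dominates, after the error absorption, the $(\bT\psi)^2$ term as well — this is where one uses that $\bT = L-N$ and $L\psi$ is controllable together with $N\psi$ and $\sn\psi$, plus the fact that on $\tir\le R'<2R$ we are uniformly away from the null infinity and from the vertex, so no weights degenerate. A secondary technical point is the choice of $f$ and the lower-order weight $g$ so that all cross terms (e.g.\ $\sn f \cdot \psi\, N\psi$, and the terms involving $\sn\log\bb$ from $\sn_N N$) have controllable signs or are small; since $\|\sn\log\bb\|_{L_t^2 L_\omega^p}\les\la^{-\f12}$ by \eqref{ric4} these are again absorbed for $\la$ large. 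Putting the positive bulk term on the left and all errors and boundary terms (dominated by $\int_{\Sigma_{t_0}}\P_\a^{(\bT)}[\psi]\bn^\a$) on the right yields the claimed estimate.
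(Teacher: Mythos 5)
Your strategy coincides with the paper's: Proposition \ref{int_est3} is reduced to a Morawetz estimate (Lemma \ref{lem:mint_1}) obtained from the multiplier $X=fN$ together with the zeroth-order correction current $\f12\Theta\,\p_\mu(\psi^2)-\f12\psi^2\p_\mu\Theta$ as in \eqref{j1}, with the explicit choices $f=\b\big(1-(1+\tir)^{-\a}\big)$, $\Theta=\tir^{-1}f$, $\b\a=2$, $\a=2\ep_0$, for which the coefficients of $(N\psi)^2$, $(\bT\psi)^2$, $|\sn\psi|^2$ and $\psi^2$ are simultaneously positive (see \eqref{i2}, \eqref{7.8.5}, \eqref{theta_8}; note the $(\bT\psi)^2$ coefficient is $\f12 Nf$ plus an $\bA$-error, so no appeal to $\bT=L-N$ is needed), and with the deformation errors absorbed exactly as you describe via $\|\bA\|_{L_t^2L_x^\infty}\les\la^{-\f12-4\ep_0}$ and \eqref{std}.

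The genuine gap is your treatment of the lateral boundary. You integrate over $[\tau_1,\tau_2]\times\{\tir\le R'\}$ and assert that the flux through $\{\tir=R'\}$ is controlled by $\int_{\Sigma_t}\P^{(\bT)}_\a[\psi]\bn^\a$ because $f$ is bounded and $N$ is unit. But $\{\tir=R'\}$ is a timelike hypersurface, and the flux of $Q[\psi]\cdot fN$ through it is an indefinite quadratic form in $\bd\psi$ restricted to a codimension-one subset of each slice; such a trace is precisely the kind of local-energy quantity the Morawetz estimate is meant to produce, so assuming it is bounded by the slice energies is circular. Averaging over $R'\in[R,2R]$ only converts it into a bulk integral over the annulus $\{R\le\tir\le2R\}$, which costs a factor $\tau_2-\tau_1\les\la^{1-8\ep_0}$ and cannot be absorbed. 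The paper sidesteps this by working on the region $\R_{\tau_1,R}^{\tau_2}$ whose lateral boundary is the \emph{null} cone $C_{\tau_1-R}$: there $\bn=L$, one checks pointwise $|\widetilde\P^{(X)}_\mu L^\mu|\les \P^{(\bT)}_\mu L^\mu+|\tir^{-1}\psi|^2$, and the resulting boundary contributions are the null $\bT$-flux (controlled by \eqref{dec_1}) plus lower-order terms $\int\psi^2$ on the cone and $\|\tir^{1/2}\psi\|^2_{L^2_\omega(S_{\tau,\tau_1-R})}$ on the corner spheres, handled by integrating $L(\tir\psi^2)$ along the cone and by the Hardy-type identity \eqref{6.27.2} of Lemma \ref{lem7.15.1} (this is Lemma \ref{bdry1}). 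The same Hardy step is needed for the $\Theta\psi\,\p_\mu\psi$ and $\psi^2\p_\mu\Theta$ contributions on $\{t=\tau_1\}$ and $\{t=\tau_2\}$, which you pass over with a Cauchy-Schwarz remark. So the multiplier and the error analysis are right, but the domain of integration must be the null-cone-bounded region, not the coordinate cylinder; the restriction to $\{\tir\le R'\}$ in the statement is then recovered simply because the integrand there is dominated by the positive bulk of Lemma \ref{lem:mint_1}.
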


In view of (\ref{7.8.2}) and the fact that $\psi =0$ in $\{t_0\le t\le \tau_*\}\setminus \D_0^+$, Proposition \ref{int_est3}
follows immediately from the following result.

\begin{lemma}\label{lem:mint_1}
Under the same conditions on Proposition \ref{int_est3}, with $\a = 2\ep_0$ there holds
\begin{align*}
& \int_{\R_{\tau_1,R}^{\tau_2}} \left[\frac{(N \psi)^2+(\bT \psi)^2+|\sn \psi|^2}{(1+\tir)^{\a+1}}+\frac{\psi^2}{\tir(1+\tir)^{\a+2}}\right]\nn\\
& \quad \les \int_{\widetilde{\Sigma}_{\tau_1, R}^{\tau_2}}\P^{(\emph{\bT})}_\a[\psi] \bn^\a
+\|\tir^\f12 \psi\|^2_{L_\omega^2(S_{\tau_2, \tau_1-R})}+\|\tir^\f12 \psi\|^2_{L_\omega^2(S_{\tau_1, \tau_1-R})}.
\end{align*}
\end{lemma}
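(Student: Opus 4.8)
\textbf{Proof proposal for Lemma \ref{lem:mint_1}.}

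The plan is to run a Morawetz-type multiplier argument on the equation $\Box_\bg\psi=0$ in the region $\R^{\tau_2}_{\tau_1,R}$, using a radial vector field $X=f(\tir)N$ together with a lower-order scalar correction. Concretely, I would take a multiplier of the form $X=f(\tir)N$ with $f(\tir)=\tir(1+\tir)^{-\a-1}$ (a bounded, increasing weight which behaves like $\tir$ near the axis and like $\tir^{-\a}$ for large $\tir$), and add the zeroth-order current $\frac12 h(\tir)\,\psi\,\bd\psi - \frac14 (\Box_\bg h)\psi^2$ with a suitable $h$, so as to compensate the $\sn\psi$-sign problem coming from $\div_{S_{t,u}}X$ (this is the standard trick of \cite{Morawetz} adapted to curved backgrounds, cf.\ \cite{DaRod2,yang}). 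Then integrate the divergence identity $\bd^\a\P^{(X)}_\a[\psi]=\frac12 Q^{\a\b}[\psi]\pi^{(X)}_{\a\b}$ over $\R^{\tau_2}_{\tau_1,R}$, apply the divergence theorem, and organize the resulting spacetime integrand into a positive definite bulk term plus error terms.

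The key steps, in order, would be: (1) compute $\pi^{(X)}_{\a\b}$ for $X=f(\tir)N$ in the null frame $\{L,\Lb,e_1,e_2\}$ using (\ref{6.7con}), (\ref{3.19.1}), and the fact that $L(\tir)=1$, $\Lb(\tir)=1-2\bb^{-1}$, $N(\tir)=-\bb^{-1}$; the principal part produces $\tir^{-1}f$ times the round metric on $S_{t,u}$ plus $f'$-terms in the $NN$ and $LL$, $\Lb\Lb$ directions, while the "dangerous" pieces involve $\chih$, $\tr\chi-2/\tir$, $\zeta$, $\sn\log\bb$, i.e.\ the quantities controlled by Proposition \ref{ricco} and Proposition \ref{awave}. (2) Choose $h$ so that the combination $\frac12\div_{\ga}(f N)-\frac14\Box_\bg h$ is favorable, making the coefficient of $|\sn\psi|^2$ in the bulk comparable to $f/\tir\sim (1+\tir)^{-\a-1}$; the $\psi^2$ term of order $\tir^{-1}(1+\tir)^{-\a-2}$ comes from the zeroth-order correction and a Hardy-type inequality (Lemma \ref{lem7.15.1}), so a Hardy inequality must be invoked to absorb the zeroth-order boundary/bulk contributions. (3) Identify the boundary terms: on $\Sigma_\tau$ and on $\widetilde\Sigma^{\tau_2}_{\tau_1,R}$ the flux of $\P^{(X)}[\psi]$ is bounded by $\P^{(\bT)}_\a[\psi]\bn^\a$ up to the bounded weight $f$ — here one uses that $|f|\les 1$ and that $N$ is uniformly timelike-dominated by $\bT$ in the sense $|Q_{\a\b}N^\a\bn^\b|\les Q_{\a\b}\bT^\a\bn^\b$ — and on the "cap" $S_{\tau_2,\tau_1-R}$, $S_{\tau_1,\tau_1-R}$ the boundary terms are exactly the $\|\tir^{\f12}\psi\|^2_{L^2_\omega}$ quantities on the right-hand side. (4) Estimate all error terms: those are spacetime integrals of $|\bp g|\cdot Q[\psi]$ (deformation of $\bT$-type, controlled by $\|\bp g\|_{L^1_tL^\infty_x}\les\la^{-8\ep_0}\ll 1$ so they get absorbed into the main energy via (\ref{deferror})) and integrals of the Ricci-coefficient errors $(\chih,z,\zeta,\sn\log\bb)\cdot Q[\psi]$ times $f$, which by Proposition \ref{ricco} are $L^2_tL^\infty_x$ small of size $\la^{-\f12}$ on $\D^+$, hence after Cauchy–Schwarz in $t$ they too are absorbed using Lemma \ref{monoeng} to bound $\sup_t\int_{\Sigma_t}\P^{(\bT)}_\a[\psi]\bn^\a$ by the right-hand side. (5) Conclude by dropping the absorbed terms and relabeling.

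The main obstacle I expect is step (2)–(4) interplay: getting a genuinely \emph{positive} bulk term for $|\sn\psi|^2$ and $\psi^2$ with the correct $\tir$-weights while simultaneously ensuring that the zeroth-order correction $h$ does not re-introduce an uncontrollable term — in the rough setting $\Box_\bg h$ and $\bd h$ carry factors of $\tr\chi$, hence of $z$ and $V_4$, whose derivatives are only borderline controlled (Proposition \ref{awave}, Proposition \ref{improve}). One must be careful that the weight $f=\tir(1+\tir)^{-\a-1}$ with $\a=2\ep_0$ small is chosen precisely so that $f'/f$ and $f/\tir$ have compatible signs, and that the loss in $\tir$ for large $\tir$ (the factor $(1+\tir)^{-\a}$) is exactly affordable given $\tir\les\tau_*\les\la^{1-8\ep_0}$; the exterior region (large $u$, $\tir\approx t$) is where the weights are most delicate, whereas near the axis ($\tir$ small) the argument is essentially the flat Morawetz estimate. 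A secondary technical point is handling the corner where $\{t=\tau_1,\tir\le R\}$ meets $C_{\tau_1-R}$, which is dealt with by the specific shape of $\widetilde\Sigma^{\tau}_{\tau_1,R}$ and poses no real difficulty beyond bookkeeping.
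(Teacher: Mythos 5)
Your overall strategy coincides with the paper's: a multiplier $X=f(\tir)N$ plus a zeroth-order correction current, the divergence theorem over $\R_{\tau_1,R}^{\tau_2}$, a Hardy/Sobolev argument on the spheres for the $\psi^2$ terms, absorption of the deformation errors via Proposition \ref{awave} and the smallness $\|\bp g\|_{L_t^1L_x^\infty}\les\la^{-8\ep_0}$, and control of the boundary fluxes exactly as in Lemma \ref{bdry1}. The gap is in your concrete choice of weight, $f(\tir)=\tir(1+\tir)^{-\a-1}$, and it is not cosmetic. First, your description of it is internally inconsistent: a positive function vanishing at $\tir=0$ and decaying like $\tir^{-\a}$ at infinity cannot be increasing; indeed $f'=(1+\tir)^{-\a-2}(1-\a\tir)$ changes sign at $\tir=1/\a=1/(2\ep_0)$, while the region $\R_{\tau_1,R}^{\tau_2}$ reaches $\tir\sim\tau_*\les\la^{1-8\ep_0}$, far beyond that threshold. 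Second, and decisively, in the multiplier identity the sum of the bulk coefficients of $(N\psi)^2$ and $(\bT\psi)^2$ equals $Nf$ (up to $\tr\theta-2/\tir$ errors) \emph{independently of the zeroth-order correction}: the correction only contributes a multiple of $\bd^\mu\psi\bd_\mu\psi=-(\bT\psi)^2+(N\psi)^2+|\sn\psi|^2$, which cancels in that sum. Hence with your $f$ the derivative bulk term has the wrong sign on $\{\tir>1/\a\}$ and no choice of $h$ (or $\Theta$) can repair it.

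The paper instead takes $f=\b\bigl(1-(1+\tir)^{-\a}\bigr)$ with $\b\a=2$ and $\Theta=\tir^{-1}f$, for which $Nf=2\bb^{-1}(1+\tir)^{-\a-1}>0$, $\Theta-\f12 Nf\ge\f12\bb(1+\tir)^{-\a-1}$, and $-(\Theta''+\frac{2}{\tir}\Theta')=\frac{2(\a+1)}{\tir(1+\tir)^{\a+2}}$, which produce precisely the weights claimed in the lemma on all three derivative terms and on $\psi^2$. Your $|f'|\approx\a(1+\tir)^{-\a-1}$ has the right magnitude but the wrong sign in the far region. With that single replacement, the remainder of your outline (boundary terms on the cap spheres giving $\|\tir^{\f12}\psi\|_{L_\omega^2}^2$, the null-boundary term handled via $\psi^2=L(\tir\psi^2)-2\tir\psi L\psi$, and absorption of the $\chih,\,z,\,\zeta,\,k$ errors by Cauchy--Schwarz in $t$) matches the paper's proof.
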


\begin{proof}
We consider the modified energy current
\begin{equation}\label{j1}
\widetilde \P_\mu^{(X)}[\psi]=\P_\mu^{(X)}[\psi]-\f12 \p_\mu \Theta\c \psi^2+\f12 \Theta\c \p_\mu (\psi^2),
\end{equation}
where the vector field $X$ and the scalar function $\Theta$ will be chosen later. Direct calculation shows
\begin{align*}
\bd^\mu {\widetilde \P}_\mu^{(X)}[\psi]
%&=\bd^\mu \J_\mu^{(X)}[\phi]-\f12 \bd^\mu\left(\p_\mu \Theta\c \phi^2-\Theta\c \p_\mu(\phi^2)\right) \\
=\Box_\bg \psi (X \psi+\Theta \psi)+\Theta \bd^\mu \psi \bd_\mu \psi-\f12 \Box_\bg \Theta \c \psi^2 + \f12 \pi^{(X)}_{\mu\nu} Q^{\mu\nu}[\psi].
\end{align*}
Integrating this identity over $\R_{\tau_1,R}^{\tau_2}$ and using $\Box_\bg \psi =0$, we obtain
\begin{equation}\label{eng5}
\int_{\widetilde{\Sigma}_{\tau_1,R}^{\tau_2}} \widetilde \P_\mu^{(X)}[\psi] \bn^\mu-\int_{\Sigma_{\tau_2}} \widetilde \P_\mu^{(X)}[\psi] \bT^\mu
=\int_{\R_{\tau_1,R}^{\tau_2}} \left(\Theta \bd^\mu \psi \bd_\mu \psi-\f12 \Box_\bg \Theta\c \psi^2 + \f12 \pi^{(X)}_{\mu\nu} Q^{\mu\nu}\right).
\end{equation}
We take $X=f N$ with $f$ being a function of $\tir $ to be determined. Notice that $\sn f=0$ and
\begin{equation*}
\pi^{(X)}_{NN}=2N f, \quad \pi^{(X)}_{AN} = f \pi^{(N)}_{AN}, \quad \pi^{(X)}_{AB}= 2 f \theta_{AB},\quad \pi^{(X)}_{00}=0, \quad \pi^{(X)}_{0N}=0,
\end{equation*}
where $\pi^{(N)}_{AN}=k_{AN}- \zeta_A$. We have
\begin{align*}
\f12 \pi_{\mu\nu}^{(X)} Q^{\mu\nu} &= \frac{1}{2} f \tr \theta \left((\bT \psi)^2 -(N\psi)^2\right) + \frac{1}{2} Nf \left((N\psi)^2 +(\bT \psi)^2-|\sn \psi|^2\right)\\
& \quad \, +f \left(\hat \theta_{AB} Q^{AB} + \pi_{AN}^{(N)} Q^{AN}\right).
\end{align*}
We also have $\bd^\mu \psi \bd_\mu \psi = -(\bT \psi)^2 +(N\psi)^2 +|\sn \psi|^2$. Plugging these two identities into the right hand side
of (\ref{eng5}) gives
\begin{align}\label{i2}
\begin{split}
&\int_{\widetilde{\Sigma}_{\tau_1,R}^{\tau_2}} \widetilde \P_\mu^{(X)}[\psi] \bn^\mu-\int_{\Sigma_{\tau_2}} \widetilde \P_\mu^{(X)}[\psi] \bT^\mu\\
&=\int_{\R_{\tau_1,R}^{\tau_2}} \left[\f12 (N \psi)^2(N f+2\Theta-\tr\theta f)+\f12(\bT \psi)^2(N f-2\Theta+\tr\theta f)\right.\\
&  \quad \, \left. +|\sn \psi|^2(\Theta-\f12 Nf) -\f12 \Box_\bg \Theta\c \psi^2 +f \left(\hat \theta_{AB} Q^{AB} + \pi_{AN}^{(N)} Q^{AN}\right)\right].
\end{split}
\end{align}
Now we choose $f$ and $\Theta$ as
 \begin{equation}
 f=\b-\frac{\b}{(1+\ti r)^\a}, \quad \Theta=\ti r^{-1} f, \quad \b\a=2, \label{j2}
 \end{equation}
where $0<\a<1$ is a constant to be chosen later. Straightforward calculation shows that
\begin{equation}\label{7.8.5}
-\left(\Theta''+\frac{2}{\tir} \Theta'\right)=\frac{2(\a+1)}{\ti r(1+\ti r)^{\a+2}}, \qquad 0\le -\Theta'\le \frac{\b}{(1+\ti r)^2},
\end{equation}
where prime denotes the derivative with respect to $\tir$. By using $N=\bb^{-1} \p_{\tir}$ on $\Sigma_t$ and $|\bb-1|<\frac{1}{4}$,
see (\ref{bb_4}), we also have
\begin{equation}\label{theta_8}
\bb \Theta-\frac{1}{2} \bb N f=\frac{\bb \beta[(1+\tir)^\a-1]}{\tir(1+\tir)^\a}-\frac{1}{(1+\tir)^{\a+1}}
\ge \frac{2\bb-1}{(1+\tir)^{\a+1}} \ge \frac{\bb}{2(1+\tir)^{\a+1}}.
\end{equation}

To deal with the term $\Box_{\bg} \Theta$, recall the decomposition formula of $\Box_\bg \Theta$ under the null frame
\begin{equation*}
\Box_\bg \Theta=-L \Lb \Theta +2 \zb^A \sn_A \Theta+k_{NN} \Lb \Theta+\sD \Theta-\f12 \tr\chi \Lb \Theta-\f12 \tr\chib L\Theta.
\end{equation*}
Notice that $\sn \Theta=0$ and
\begin{equation}\label{lr}
\Lb (u)=2\bb^{-1},\quad \Lb (t)=1,\quad L(t)=1,\quad L(u)=0,
\end{equation}
we can obtain
\begin{align*}
\Box_\bg \Theta&=-L \Lb \Theta-\f12 \tr\chi \Lb \Theta-\f12 \tr\chib L \Theta+k_{NN} \Lb\Theta\\
&=\left(\Theta''+\frac{2}{\tir} \Theta'\right)(2\bb^{-1}-1)+\f12 \left(\tr\chi-\frac{2}{\tir}\right)(2\bb^{-1}-1)\Theta'\\
&\quad \, -\f12 \left(\tr\chib+\frac{2}{\tir}\right)\Theta'+\frac{\Theta'}{\tir}(2-2\bb^{-1})-k_{NN}\Theta'.
\end{align*}
We plug this identity into (\ref{i2}). We may use (\ref{7.8.5}), the fact $2 \bb^{-1}-1 \ge \frac{3}{5}$ which follows from
$|\bb-1|<\frac{1}{4}$, (\ref{theta_8}), and
$$
N f = \frac{2 \bb^{-1}}{(1+ \tir)^{\a+1}}, \quad 2 \Theta - f \tr \theta = \left(\frac{2}{\tir}-\tr \theta\right) f
$$
to obtain
\begin{align}\label{l2}
\begin{split}
\J&:=\int_{\R_{\tau_1,R}^{\tau_2}} \left[\frac{(N \psi)^2+(\bT \psi)^2+|\sn \psi|^2}{(1+\tir)^{\a+1}}+\frac{\psi^2}{\tir(1+\tir)^{\a+2}}\right]\\
&\les \left|\int_{\R_{\tau_1,R}^{\tau_2}} f \left(\hat \theta_{AB} Q^{AB} + \pi_{AN}^{(N)} Q^{AN}\right)\right|
+\left|\int_{\R_{\tau_1,R}^{\tau_2}} \left(\tr\theta-\frac{2}{\tir}\right) f ((N\psi)^2-(\bT \psi)^2)\right|\\
& + \int_{\R_{\tau_1,R}^{\tau_2}} \psi^2 |\bA| |\Theta'|
+\left|\int_{\Sigma_{\tau_2}} {\ti \P}^{(X)}_\mu [\psi] \bn^\mu -\int_{\widetilde{\Sigma}_{\tau_1,R}^{\tau_2}} {\ti \P}^{(X)}_\mu [\psi] \bn^\mu\right|,
\end{split}
\end{align}
where $\bA$ denotes any term among $\frac{\bb^{-1}-1}{\ti r}, \tr\chi-\frac{2}{\tir}, \tr\chib+\frac{2}{\tir}, k_{NN}$.
For the first term on the right side of (\ref{l2}), we may use the estimates
$\|\hat \theta, \zeta, k\|_{L_t^2 L_x^\infty}\les \la^{-\f12-4\ep_0}$ from Proposition \ref{awave}
and the standard energy estimate (\ref{std}) to conclude
\begin{align*}
\left|\int_{\R_{\tau_1,R}^{\tau_2}} f \left(\hat \theta_{AB} Q^{AB} + \pi_{AN}^{(N)} Q^{AN}\right)\right|
\les \tau_*^{1/2} \|\hat \theta, \zeta, k\|_{L_t^2 L_x^\infty} \|\bd \psi\|_{L_t^\infty L_x^2}^2
\les \la^{-8\ep_0}  \int_{{\widetilde\Sigma}_{\tau_1,R}^{\tau_2}}\P_\mu^{(\bT)}[\psi] \bn^\mu.
\end{align*}
For the second term in (\ref{l2}), we may use $\theta =\chi +k$ and $\|\tr\chi-\frac{2}{\tir}, k\|_{L_t^2 L_x^\infty}\les \la^{-\f12-4\ep_0}$
from Proposition \ref{awave} to obtain the same estimate. To estimate the third term in (\ref{l2}),  we use
\begin{equation*}
\|\psi\|_{L^4(S_{t, u})}^2\les \|\sn \psi\|_{L^2(S_{t,u})}\|\psi\|_{L^2(S_{t,u})}+\|\tir^{-\f12}\psi\|_{L^2(S_{t,u})}^2,
\end{equation*}
which is (\ref{sob}) with $q=4$, to obtain the estimate on $\itt_{t, u}$
\begin{align*}
\left\|\frac{|\psi|^2 \tir} {(1+\tir)^{\a+\frac{3}{2}} }\right\|_{L_u^1 L_\omega^2}
&\les (1+ t)^\f12 \left(\left\|\frac{\sn \psi}{(1+\tir)^{\frac{\a+1}{2}}}\right\|_{L_u^2 L_x^2}
\left\|\frac{\psi}{(1+\tir)^{\frac{\a}{2}+1} \tir^{\f12}}\right\|_{L_u^2 L_x^2}\right.\\
&\left.+\left\|\frac{\psi}{(1+\tir)^{\frac{\a}{2}+1}\tir^{\f12}}\right\|_{L_u^2 L_x^2}^2\right).
\end{align*}
Consequently, we may use (\ref{7.8.5}) and $\|\tir^{\f12}  \bA\|_{L^\infty L_\omega^2}\les \la^{-\f12}$ in Proposition \ref{awave}
to  obtain
\begin{align*}
\int_{\R_{\tau_1,R}^{\tau_2}} \psi^2 |\bA| |\Theta'|  & \les \int_{\Sigma\times I} \psi^2 |\bA| (1+\tir)^{-2}
\les \|\tir (1+\tir)^{\a-\f12} \bA\|_{L^\infty L_\omega^2} \left\|\frac{\psi^2 \tir}{(1+\tir)^{\a+\frac{3}{2}}} \right\|_{L_t^1 L_u^1 L_\omega^2}\nn\\
&\les (1+\tau_*)^{\f12+\a} \|\tir^{\f12} \bA\|_{L^\infty L_\omega^2}\J\les\la^{-2\ep_0} \J,
\end{align*}
where we choose $\a=2\ep_0$ and use $\tau_*\les \la^{1-8\ep_0}$.

Combining the above estimates with (\ref{l2}) we obtain
\begin{align*}
\J &\les \la^{-2\ep_0} \J +\la^{-8\ep_0}  \int_{{\widetilde\Sigma}_{\tau_1,R}^{\tau_2}}\P_\mu^{(\bT)}[\psi] n^\mu
+\left|\int_{\Sigma_{\tau_2}} {\widetilde \P}^{(X)}_\mu [\psi] \bn^\mu -\int_{\widetilde{\Sigma}_{\tau_1,R}^{\tau_2}} {\widetilde \P}^{(X)}_\mu [\psi] \bn^\mu\right|.
\end{align*}
Therefore, we can conclude the desired estimate by using the following result.
\end{proof}

\begin{lemma}\label{bdry1}
For $\tau_2> \tau_1\ge t_0$  there hold
\begin{align}
\int_{\Sigma_{\tau_2}}{\widetilde \P}^{(X)}_{\mu}[\psi] \bn^\mu
&\les \int_{\widetilde{\Sigma}_{\tau_1,R}^{\tau_2}} \P^{(\emph{\bT})}_\mu[\psi] \bn^\mu
+\|\tir^{\f12}\psi(t)\|_{L_\omega^2(S_{\tau_2, \tau_1-R})}^2,\label{7.7.1}\\
\int_{\widetilde{\Sigma}_{\tau_1,R}^{\tau_2}}{\widetilde \P}^{(X)}_\mu[\psi] \bn^\mu
&\les \int_{\widetilde{\Sigma}_{\tau_1,R}^{\tau_2}}\P_\mu^{(\emph{\bT})}[\psi] \bn^\mu
+\|\tir^\f12 \psi\|^2_{L_\omega^2(S_{\tau_2, \tau_1-R})}+\|\tir^\f12 \psi\|^2_{L_\omega^2(S_{\tau_1, \tau_1-R})}.\label{7.7.2}
\end{align}
\end{lemma}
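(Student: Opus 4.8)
\textbf{Proof plan for Lemma \ref{bdry1}.}

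The plan is to expand the modified current $\widetilde\P_\mu^{(X)}[\psi]$ contracted against the relevant normal $\bn^\mu$ and to separate a "good" part, controlled directly by $\P_\mu^{(\bT)}[\psi]\bn^\mu$, from a "zeroth order" remainder built from $\psi^2$ that will be absorbed by the boundary terms $\|\tir^{1/2}\psi\|_{L_\omega^2(S)}^2$. Recall $X = fN$ with $0\le f\le\b$ bounded, $\Theta = \tir^{-1}f$, and from \eqref{j1} that $\widetilde\P_\mu^{(X)}[\psi] = Q_{\mu\nu}[\psi]X^\nu - \f12\p_\mu\Theta\,\psi^2 + \f12\Theta\,\p_\mu(\psi^2)$. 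On the slice $\Sigma_{\tau_2}$ we have $\bn = \bT$, and since $Q_{\mu\nu}[\psi]\bT^\mu X^\nu = f\,Q_{\mu\nu}[\psi]\bT^\mu N^\nu$ is bounded pointwise by $\b\,(|\bT\psi|^2+|\nab\psi|^2) \les \P_\mu^{(\bT)}[\psi]\bT^\mu$ (using $0\le f\le\b$ and Cauchy-Schwarz on the mixed $\bT$-$N$ component), the genuinely first-order part is already of the desired form via the standard energy estimate \eqref{std}, which identifies $\int_{\Sigma_{\tau_2}}\P_\mu^{(\bT)}[\psi]\bn^\mu$ with a multiple of $\int_{\widetilde\Sigma_{\tau_1,R}^{\tau_2}}\P_\mu^{(\bT)}[\psi]\bn^\mu$. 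The remaining terms are $-\f12\bT(\Theta)\psi^2 + \f12\Theta\,\bT(\psi^2) = -\f12\Theta'\cdot\bT(\tir)\cdot\psi^2 + \Theta\,\psi\,\bT\psi$; using $|\Theta'|\les\b(1+\tir)^{-2}$ and $|\Theta|\les\b\tir^{-1}$ together with Cauchy-Schwarz $|\Theta\psi\bT\psi|\les |\bT\psi|^2 + \Theta^2\psi^2$, everything reduces to estimating $\int_{\Sigma_{\tau_2}}\tir^{-2}\psi^2\,dx$ over the support region $\{\tau_1-R\le u\le t\}$.

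The key step is then an integration-by-parts in the radial ($\tir$) direction on $\Sigma_{\tau_2}\cap\D_0^+$, which is exactly the content of Lemma \ref{lem7.15.1}: with $\hat u = \tau_1-R$ and using the radial foliation $ds^2 = \bb^2\,du^2 + \ga_{AB}d\omega^A d\omega^B$ with area element $\bb\,du\,d\mu_\ga \approx \tir^2\,du\,d\omega$, one writes
\begin{equation*}
\int_{\Sigma_{\tau_2}}\tir^{-2}\psi^2\,dx \approx \int \psi^2\,du\,d\omega \les \|\tir N(\psi)\|_{L_u^2 L_\omega^2}^2 + \|\tir^{-\f12}\psi\|_{L^2(S_{\tau_2,\hat u})}^2.
\end{equation*}
The first term is $\les \int_{\Sigma_{\tau_2}}|N\psi|^2\,dx \les \int_{\Sigma_{\tau_2}}\P_\mu^{(\bT)}[\psi]\bn^\mu$, again handled by \eqref{std}; the second term is precisely $\|\tir^{\f12}\psi\|^2_{L_\omega^2(S_{\tau_2,\tau_1-R})}$ after accounting for $\sqrt{|\ga|}\approx\tir^2$ from Lemma \ref{6.17.1}. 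This gives \eqref{7.7.1}.

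For \eqref{7.7.2} the argument is the same but now the integration domain $\widetilde\Sigma_{\tau_1,R}^{\tau_2} = \{\tir\le R,\ t=\tau_1\}\cup(C_{\tau_1-R}\cap\{\tau_1\le t\le\tau_2\})$ has two pieces: on the spacelike cap $\{t=\tau_1,\tir\le R\}$ one repeats the radial integration by parts of Lemma \ref{lem7.15.1}, producing the boundary term $\|\tir^{\f12}\psi\|^2_{L_\omega^2(S_{\tau_1,\tau_1-R})}$; on the null cone piece $C_{\tau_1-R}$ one has $\bn = L$, so $Q_{\mu\nu}[\psi]L^\mu X^\nu = f\,Q_{\mu\nu}[\psi]L^\mu N^\nu$ which, since $L = \bT + N$ and $N = L - \bT$, expands into $|L\psi|^2$-type terms plus $|\sn\psi|^2$ — all nonnegative pieces controlled by $\P_\mu^{(\bT)}[\psi]L^\mu$ because on a null cone $\P_\mu^{(\bT)}[\psi]L^\mu \approx |L\psi|^2 + |\sn\psi|^2$. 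The zeroth-order pieces on $C_{\tau_1-R}$ involve $\int_{C_{\tau_1-R}}\tir^{-1}\psi\,L\psi$ and $\int_{C_{\tau_1-R}}\tir^{-2}\psi^2\,d\mu_\ga\,dt$; integrating $L(\tir^{-1}\psi^2)$ along the generators ($L(\tir)=1$) converts the second into a boundary term at $t=\tau_2$, namely $\|\tir^{\f12}\psi\|^2_{L_\omega^2(S_{\tau_2,\tau_1-R})}$, plus a flux term bounded as before. I expect the main obstacle to be bookkeeping the conversions between the $L^2(S_{t,u})$ norms with their metric area element and the $\tir$-weighted $L_\omega^2$ norms, and making sure the null-cone boundary terms land on $S_{\tau_2,\tau_1-R}$ and $S_{\tau_1,\tau_1-R}$ with the correct powers of $\tir$ — this is where Lemma \ref{6.17.1} ($v_t\approx\tir^2$, $|\bb-1|<\tfrac14$) and the trace inequality \eqref{trc_1} do the heavy lifting, and where one must be careful that no term with a bad sign or a missing weight is left over.
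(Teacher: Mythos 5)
Your proposal is correct and follows essentially the same route as the paper: Cauchy--Schwarz on the modified current to isolate the $\psi^2$ remainders, Lemma \ref{lem7.15.1} for the radial integration by parts on the spacelike pieces, a total $L$-derivative identity along $C_{\tau_1-R}$ to trade $\int\psi^2$ for the boundary terms on $S_{\tau_1,\tau_1-R}$ and $S_{\tau_2,\tau_1-R}$ plus a flux, and the standard energy estimate (\ref{std})/(\ref{dec_1}) to close. The only cosmetic difference is that the paper integrates $\psi^2=L(\tir\psi^2)-2\tir\psi L\psi$ against $dt\,d\omega$ rather than your $L(\tir^{-1}\psi^2)$ against $d\mu_\ga\,dt$, which avoids the extra $\tr\chi$ term from the area element.
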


\begin{proof}
Recall that $X= fN$ and $\bn =\bT$ on $\Sigma_{\tau_2}$, we may use the definition of $\widetilde\P_\mu^{(X)}[\psi]$ in (\ref{j1}),
the expression of $f$ and $\Theta$ given in (\ref{j2}), the Cauchy-Schwarz inequality and (\ref{7.8.2}) to obtain
\begin{equation}\label{bdr1}
\int_{\Sigma_{\tau_2}}{\widetilde \P}^{(X)}_\mu[\psi] \bn^\mu\les  \int_{\Sigma_{\tau_2}}\P^{(\bT)}_\mu[\psi] \bn^\mu+\|\psi\|_{L_u^2 L_\omega^2(\itt_{t, \tau_1-R})}^2.
\end{equation}
In view of Lemma \ref{lem7.15.1} and (\ref{std}), we obtain (\ref{7.7.1}).

We next prove (\ref{7.7.2}). On the spatial part of $\widetilde{\Sigma}_{\tau_1,R}^{\tau_2}$, we repeat the proof for (\ref{7.7.1}) to obtain
\begin{align}
\int_{\Sigma_{\tau_1}\cap \{\tau_1-R\le  u\le \tau_1\}}{\widetilde \P}^{(X)}_{\mu}[\psi] \bn^\mu
\les \int_{\widetilde{\Sigma}_{\tau_1,R}^{\tau_2}} \P^{(\bT)}_\mu[\psi] \bn^\mu +\|\tir^{\f12}\psi\|_{L_\omega^2(S_{\tau_1, \tau_1-R})}^2\label{7.7.3}
\end{align}
On the null part of $\widetilde{\Sigma}_{\tau_1,R}^{\tau_2}$ on which $\bn= L$, we have
\begin{align*}
\ti\P_\mu^{(X)}[\psi] L^\mu &=\frac{1}{2} f\left(|L\psi|^2-|\sn \psi|^2\right)-\frac{1}{2} \psi^2 L \Theta +\Theta \psi L\psi,\\
\P_\mu^{(\bT)}[\psi] L^\mu & = \frac{1}{2} \left(|L\psi|^2 + |\sn \psi|^2\right).
\end{align*}
In view of the definition of $f$ and $\Theta$, we derive that
$
|\ti\P_\mu^{(X)}[\psi] L^\mu| \les \P_\mu^{(T)}[\psi] L^\mu + |\tir^{-1}\psi|^2.
$
Hence, by combining this with (\ref{7.7.3}) we obtain
\begin{align}
\int_{\widetilde{\Sigma}_{\tau_1,R}^{\tau_2}} \widetilde \P_\mu^{(X)}[\psi] \bn^\mu
& \les \int_{\widetilde{\Sigma}_{\tau_1,R}^{\tau_2}} \P_\mu^{(\bT)}[\psi] \bn^\mu + \|\tir^{\f12}\psi(t)\|_{L_\omega^2(S_{\tau_1, \tau_1-R})}^2 \nn\\
&\quad \,  +\int_{{\mathbb S}^2} \int_{\tau_1}^{\tau_2}\psi^2(t, \tau_1-R, \omega) dt d\omega.   \label{7.7.6}
\end{align}
To estimate the last term on the right hand side, we may integrate the identity
$
\psi^2 = L(\tir \psi^2) - 2 \tir \psi L\psi
$
over the null cone $C_{\tau_1-R}$ to obtain
\begin{align*}
\int_{{\mathbb S}^2} \int_{\tau_1}^{\tau_2}\psi^2(t, \tau_1-R, \omega) dt d\omega
& = \int_{S_{\tau_2, \tau_1-R}} \tir \psi^2  d\omega -\int_{S_{\tau_1, \tau_1-R}} \tir \psi^2  d\omega
+\int_{{\mathbb S}^2} \int_{\tau_1}^{\tau_2}2\tir \psi L\psi dt d\omega
\end{align*}
which, by the Cauchy-Schwarz inequality, implies
\begin{align*}
\int_{{\mathbb S}^2} \int_{\tau_1}^{\tau_2}\psi^2(t, \tau_1-R, \omega) dt d\omega
& \les \|\tir^{\f12} \psi\|_{L_\omega^2(S_{\tau_2, \tau_1-R})}^2 + \|\tir^{\f12} \psi\|_{L_\omega^2(S_{\tau_1, \tau_1-R})}^2
+ \int_{\widetilde{\Sigma}_{\tau_1,R}^{\tau_2}} |L\psi|^2.
\end{align*}
Combining this with (\ref{7.7.6}), we obtain (\ref{7.7.2}).
\end{proof}

%to be removed
%It then remains to check the error term is bounded by initial energy.
%\begin{corollary}\label{interior}
%Let $\ti \phi=\Omega \phi$, there hold
%\begin{align*}
%\int_{\tau_1}^{\tau_2}&\int_{r\le R} \frac{(\ti \phi_t)^2+(N\ti\phi)^2+|\sn \ti \phi|^2}{(1+r)^{\a+1}}+\frac{{\ti \phi}^2}{r(1+r)^{\a+2}} dx d\tau \\&\le C %E[\phi](\tau_1)+(1+\tau_1)^{-2+\a}(E[\phi](\tau_1)+C)
%\end{align*}
%where $\a\approx \ep$
%\end{corollary}

\subsection{Control of lower order terms }

Let $\psi$ be any solution of (\ref{wh}) with $\psi[t_0]$ supported on $B_R\subset \D_0^+\cap \Sigma_{t_0}$.
We introduce $\ti\psi = \Omega \psi$ with $\Omega = e^{-\sigma}$, where $\sigma$ is defined by (\ref{c1}).
On each wave front $S_{t,u}$, we use $v_t=\sqrt{|\ga|}$ to denote its area element. %set $\psi=v_t^{\f12}\phi$ and $\ti \phi=\Omega \phi$.
Let $m\in {\mathbb N}$ and $R\le R'\le 2 R$. We introduce the conformal flux
\begin{align}
CF_m[\psi]_{R'}(u,\tau)=\int_{u+R'}^\tau\int_{S_{t, u}}\tir^m |L(v_{t}^{\f12}\psi)|^2 d\omega dt.\label{3.22.2}
\end{align}
For a region $U\subset \D_0^+$ with the property that there exist two positive functions $u_0(t)$
and $u_1(t)$ on $[t_0, \tau_*]$ such that
\begin{equation}\label{8.8.1}
U_t:=U\cap \Sigma_t = \bigcup_{u_0(t) \le u\le u_1(t)} S_{t, u},
\end{equation}
we can introduce the energy
\begin{align}\label{3.22.1}
\E_m[\psi]_{U, R'}(t)&=\int_{U_t\cap \{\tir\ge R'\}} \tir^m \left( |L (v_t^{\f12} \psi)|^2+|\sn \ti\psi|^2 \Omega^{-2} v_t\right) d\omega du.
\end{align}
We will derive the bound on the conformal energy by a multiplier approach in which the conformal flux (\ref{3.22.2})
and the energy (\ref{3.22.1}) arise naturally, see Section \ref{mul_1}.

By incorporating the lower order term into (\ref{3.22.1}), we may introduce the modified energy
\begin{equation*}
\widetilde \E_m[\psi]_{U,R}(t)=\|r^{\frac{m}{2}}\psi(t)\|_{L_u^2 L_\omega^2(U_t\cap \{\tir \ge R\})}^2+\E_m[\psi]_{U,R}(t).
\end{equation*}
We will prove a comparison result in Lemma \ref{com5} which implies
\begin{equation*}
\C[\psi]^\be(t)\les \widetilde \E_2[\psi]_{\D_0^+,R}(t), \qquad \forall t\in [t_0, \tau_*].
\end{equation*}
We will give the control on $\widetilde \E_2[\psi]_{\D_0^+,R}(t)$ by a multiplier approach.
While to control $\C[\psi]^\bi(t)$, we adapt the method devised in \cite{DaRod1}. However, this does not directly
give the decay estimates for lower order terms, such as $\|\psi(t)\|_{L_u^2 L_\omega^2(U_t\cap \{\tir \ge R\})}$.
In curved spacetime, due to various error terms related to Ricci coefficients (\ref{ricc_def}),  the derivation of the decay
energy estimates on $\bp \psi$ has to be coupled with controlling weighted $L^2$ estimates on $\psi$ itself.

Given $\tau>\tau_1\ge t_0> R$, we have defined ${\widetilde\Sigma}^{\tau}_{\tau_1,R}$ and $\R_{\tau_1, R}^{\tau}$.
We set
\begin{equation}\label{7.14.5}
T_{\tau_1,R}^{\tau} = \R_{\tau_1, R}^{\tau} \cap \{\tir \ge R\}.
\end{equation}
We will first give the following result on the lower order terms.

\begin{figure}[ht]
\centering
\includegraphics[width = 0.46\textwidth]{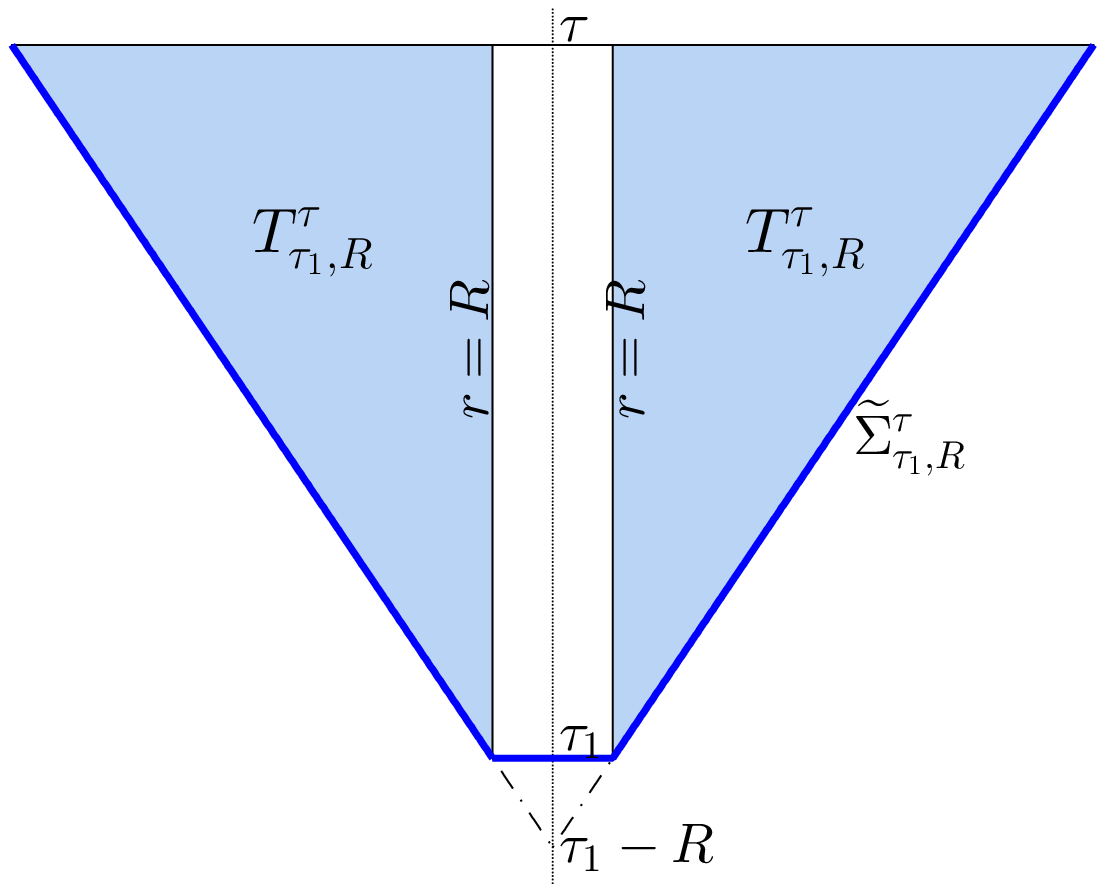}
  \includegraphics[width = 0.46\textwidth]{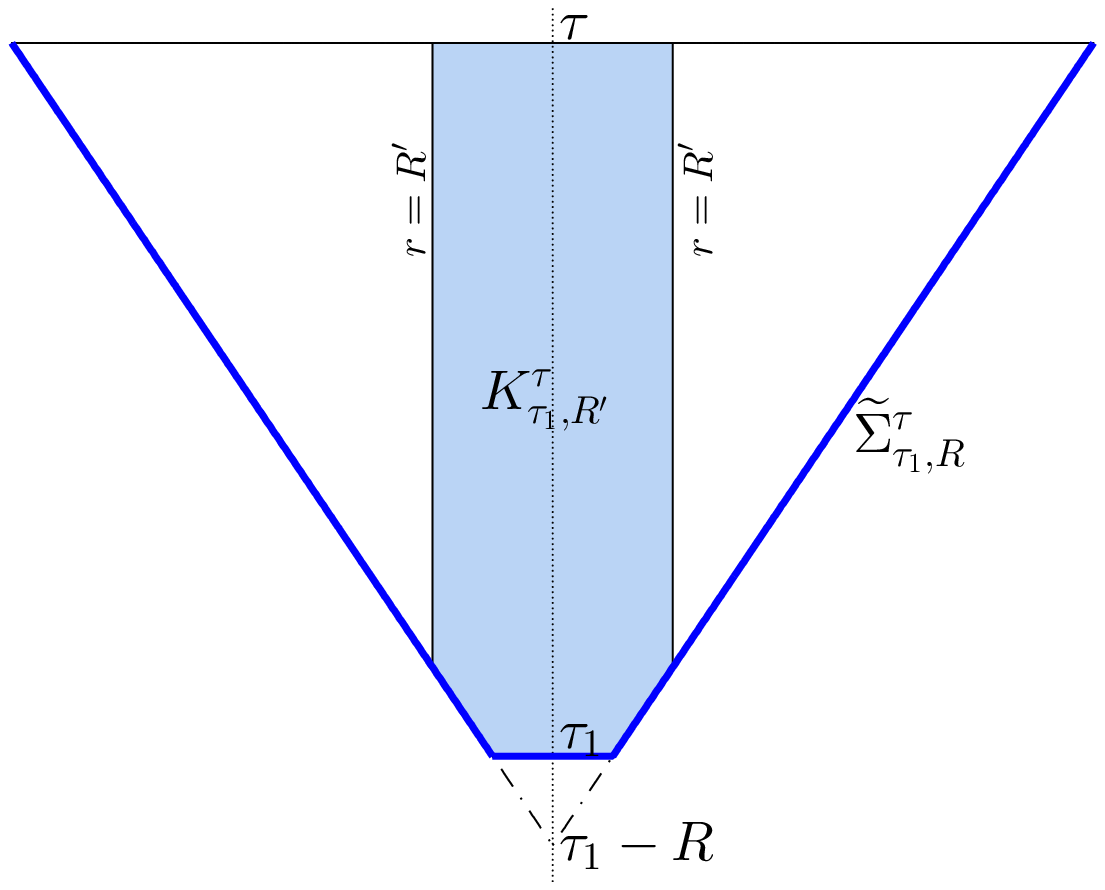}
  \vskip -0.4cm
  \caption{Illustration of $T_{\tau_1, R}^\tau$ and $K_{\tau_1, R'}^\tau$ used in Lemma \ref{lotlm} and Proposition \ref{6.27.3}}
\end{figure}

\begin{lemma}\label{lotlm}
Given $\tau >\tau_1\ge t_0$. There hold
\begin{align}\label{lot1}
\begin{split}
\sup_{\tau_1\le t\le \tau}\|\tir^\frac{m}{2}\psi\|_{L_u^2 L_\omega^2(T_{\tau_1, R}^{\tau}\cap \Sigma_t)}
& \les \Phi_m(\tau)\|\tir^{\frac{m-1}{2}}L(v_t^{\f12}\psi)\|_{L_u^2 L_t^2 L_\omega^2(T_{\tau_1, R}^{\tau})}\\
& +R^{m/2} \left(\int_{\tau_1}^{\tau} \int_{S_{t, t-R}} \psi^2 d\omega dt\right)^{\f12},
\end{split}
\end{align}
where $m= 1, 2$ with $\Phi_1(\tau) \equiv 1$ and $\Phi_2(\tau) = (\ln (2+\tau))^\f12$;
\begin{align}
& \sup_{\tau_1\le t\le \tau}\int_{S_{t,u}} \tir^2 \psi^2 d\omega\les\int_{S_{\tau_1, u}} \tir^2 \psi^2 d\omega
  +\int_{\tau_1}^{\tau}\int_{S_{t,u}} \tir \left(|L(v_t^{\f12}\psi)|^2+\psi^2\right) d\omega d t, \label{lot6} \displaybreak[0]\\
&\sup_{\tau_1\le t\le \tau}\int_{S_{t,u}} \tir \psi^2 d\omega \les\int_{S_{\tau_1, u}} \tir \psi^2 d\omega
  + \int_{\tau_1}^{\tau}\int_{S_{t,u}} |L(v_t^{\f12} \psi)|^2 d\omega dt, \label{lot10} \displaybreak[0]\\
&\left(\int_{\tau_1}^{\tau}\int_{S_{t,\tau_1-R}} \tir \psi^2 d\omega dt\right)^{\f12} \nn\\
&\les \ln\left(\frac{\tau-\tau_1+R}{R}\right)\left( R\|\psi\|_{L_\omega^2(S_{\tau_1, \tau_1-R})}
+\left(\int_{\tau_1}^{\tau}\int_{S_{t,\tau_1-R}}\tir |L(v_t^{\f12}\psi)|^2 d\omega dt\right)^{\f12}\right).\label{lot7}
\end{align}
\end{lemma}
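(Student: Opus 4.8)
\textbf{Proof plan for Lemma \ref{lotlm}.}

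The plan is to establish each of the four inequalities by integrating a pointwise identity over a suitable region (a null cone, or the spacetime region $T_{\tau_1,R}^{\tau}$), using the fact that $\sqrt{|\ga|}=v_t$ satisfies $L v_t = v_t\tr\chi$ (see (\ref{lv})) together with the transport coordinates. The common technical device is to replace $\psi$ by $v_t^{\f12}\psi$, so that the "good" derivative $L(v_t^{\f12}\psi)$ appears; the error from differentiating $v_t^{\f12}$ produces a factor $\tr\chi$ which by Proposition \ref{ricco} and Proposition \ref{awave} is $\frac{2}{\tir}$ plus an $L_t^2 L_x^\infty$-small (in $\la$) remainder, hence controllable. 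I expect (\ref{lot7}) to be the main obstacle, since it requires a Hardy-type (logarithmic) inequality along null cones; the other three are more routine.

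First I would prove (\ref{lot10}) and (\ref{lot6}): on a fixed null cone $C_u$, parametrized by $t,\omega$, write the pointwise identity
\begin{equation*}
L\big(\tir (v_t^{\f12}\psi)^2\big)= (v_t^{\f12}\psi)^2 + 2\tir (v_t^{\f12}\psi) L(v_t^{\f12}\psi),
\end{equation*}
using $L\tir=1$. Integrating over the portion $\{\tau_1\le t\le \tau\}\times{\mathbb S}^2$ of $C_u$, moving the first term to the left after noting $(v_t^{\f12}\psi)^2 = v_t\psi^2 \ge 0$, and applying Cauchy–Schwarz to the last term (absorbing $\|\tir^{\f12} v_t^{\f12}\psi\|$ on the left via $2ab\le \epsilon a^2 + \epsilon^{-1}b^2$), gives
\begin{equation*}
\sup_t \int_{S_{t,u}} \tir v_t\psi^2 d\omega \les \int_{S_{\tau_1,u}}\tir v_t\psi^2 d\omega + \int_{\tau_1}^\tau\int_{S_{t,u}} |L(v_t^{\f12}\psi)|^2 d\omega dt,
\end{equation*}
and then $v_t\approx \tir^2$ (Lemma \ref{6.17.1}) converts this to (\ref{lot10}). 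For (\ref{lot6}) I would instead expand $L\big(\tir^2 (v_t^{\f12}\psi)^2\big)= 2\tir (v_t^{\f12}\psi)^2 + 2\tir^2 (v_t^{\f12}\psi)L(v_t^{\f12}\psi)$ and bound $2\tir^2 |v_t^{\f12}\psi||L(v_t^{\f12}\psi)|\les \tir (v_t^{\f12}\psi)^2 + \tir^2 |L(v_t^{\f12}\psi)|^2$, then integrate; the term $\int \tir(v_t^{\f12}\psi)^2\approx \int\tir \psi^2\cdot v_t/\tir^2$ appears on the right, matching the statement.

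Next I would prove (\ref{lot1}). Here integrate $\p_t\big(\tir^m v_t\psi^2\big)$ — equivalently work with $\tir^m(v_t^{\f12}\psi)^2$ — over $T_{\tau_1,R}^\tau\cap\Sigma_t$ with respect to $du\,d\omega$ and in $t$, using that the boundary of $T_{\tau_1,R}^\tau$ consists of $\Sigma_{\tau_1}$, $\{t=\tau\}$, the null cone $C_{\tau_1-R}$, and the inner boundary $\{\tir=R\}$. The $t$-derivative hitting $v_t$ contributes $\tr\chi$; writing $\tr\chi = \frac{2}{\tir}+z-V_4$ and using Proposition \ref{awave}, the bad part $\frac{2}{\tir}$ combines with $\p_t\tir^m$ (note $N u=-\bb^{-1}$, so on $\Sigma_t$ moving in $u$ shifts $\tir$) to produce, after Cauchy–Schwarz, exactly the weight that yields the constant $\Phi_1\equiv 1$ when $m=1$ and the logarithmic loss $\Phi_2(\tau)=(\ln(2+\tau))^{\f12}$ when $m=2$ — the log arising from $\int_R^\tau \tir^{-1} d\tir$. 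The inner-boundary term over $\{\tir=R\}$ is the $C_{t-R}$ slice contributing the last term $R^{m/2}(\int\int_{S_{t,t-R}}\psi^2)^{\f12}$, and the $\Sigma_{\tau_1}$ contribution is absorbed into the definition of the norms at time $\tau_1$ (which for this lemma is taken as part of the hypothesis chain, or handled as in Lemma \ref{lotlm}'s intended application where $\psi[\tau_1]$ enters through already-controlled quantities). One must keep $z,V_4\in L_t^2 L_x^\infty$ small in $\la$ so the associated error is a fraction of the left side and can be absorbed.

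Finally, for (\ref{lot7}): on the null cone $C_{\tau_1-R}$, set $F(t)=\big(\int_{S_{t,\tau_1-R}}\tir\psi^2 d\omega\big)^{1/2}$ and aim to bound $\|F\|_{L^2_t[\tau_1,\tau]}$. The hard part is that we only control $\|\tir^{\f12} L(v_t^{\f12}\psi)\|_{L^2(C_{\tau_1-R})}$ and the data $\|\psi\|_{L^2_\omega(S_{\tau_1,\tau_1-R})}$, and need a logarithmic-weight Hardy inequality. The plan is: from the identity $L(v_t^{\f12}\psi) = v_t^{\f12}L\psi + \f12\tr\chi\, v_t^{\f12}\psi$ and $v_t\approx\tir^2$, rewrite things in terms of $h(t):=v_t^{\f12}\psi$; then $L(\tir^{-1}h^2) = \tir^{-1}\cdot 2h\,Lh - \tir^{-2}h^2$, so integrating from $\tau_1$ to $t$ and using Cauchy–Schwarz with the weight $\tir$ gives
\begin{equation*}
\int_{S_{t}}\tir^{-1}h^2 d\omega + \int_{\tau_1}^t\int_{S}\tir^{-2}h^2 d\omega ds \les \int_{S_{\tau_1}}\tir^{-1}h^2 d\omega + \int_{\tau_1}^t\int_S |Lh|^2 d\omega ds;
\end{equation*}
combined with $h^2\approx \tir^2\psi^2$ this controls $\int\int \psi^2$ (weight $\tir^0$), and one more integration with a logarithmic weight — splitting $[\tau_1,\tau]$ into dyadic pieces $\tir\in[2^j R, 2^{j+1}R]$, on each of which one uses the previous bound, and summing the $O(\ln(\tau/R))$ pieces with Cauchy–Schwarz in the dyadic index — produces the factor $\ln\frac{\tau-\tau_1+R}{R}$ in front. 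The bookkeeping of the $\tr\chi$ error terms (which carry at worst an $L^1_t$-in-disguise smallness via $\|z,V_4\|_{L^2_t L^\infty_x}\les\la^{-1/2}$ and $\tau_*\les\la^{1-8\ep_0}$, so $\les\la^{-4\ep_0}$) is the routine-but-delicate part; the genuine obstacle is organizing the dyadic-in-$\tir$ argument so the logarithm comes out with the correct power and no loss.
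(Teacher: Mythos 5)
Your overall strategy -- integrate exact $L$-derivatives of weighted quantities over null cones or over $T_{\tau_1,R}^{\tau}$, use $v_t\approx\tir^2$, and absorb the $\tr\chi-\frac{2}{\tir}$ errors via their $L_t^2L_x^\infty$ smallness -- is exactly the paper's, and your sketches of (\ref{lot1}) and (\ref{lot7}) are essentially workable (the paper gets the $m=2$ logarithm and the logarithm in (\ref{lot7}) by integrating $L(v_t^{\f12}\psi)$ along null geodesics and applying Cauchy--Schwarz with the weight $\tir^{-\f12}\cdot\tir^{\f12}$, which is the same mechanism you describe; also note $T_{\tau_1,R}^\tau$ has no $\Sigma_{\tau_1}$ boundary piece, since every null geodesic in it enters through $\{\tir=R\}$, so the boundary term you propose to ``absorb into the hypothesis chain'' simply is not there).

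There is, however, a genuine error in your proofs of (\ref{lot10}) and (\ref{lot6}): the weights are off by $\tir^2$, because the quantities in the lemma are measured against $d\omega$ while $h:=v_t^{\f12}\psi$ satisfies $h^2\,d\omega=\psi^2\,d\mu_\ga\approx\tir^2\psi^2\,d\omega$. Thus $\int_{S}\tir\psi^2\,d\omega\approx\int_S\tir^{-1}h^2\,d\omega$, and the identity you need for (\ref{lot10}) is $L(\tir^{-1}h^2)=2\tir^{-1}h\,Lh-\tir^{-2}h^2$ (which you do write correctly later, in the (\ref{lot7}) discussion): here the zeroth-order term $-\tir^{-2}h^2$ has the favorable sign and can be moved to the left, after which $2\tir^{-1}h\,Lh\le\tir^{-2}h^2+|Lh|^2$ closes the estimate. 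The identity you actually display, $L(\tir h^2)=h^2+2\tir h\,Lh$, produces the term $+\int\!\!\int h^2$ on the \emph{right} with a positive sign; it cannot be ``moved to the left'' to help, and the resulting claim $\sup_t\int_{S_{t,u}}\tir v_t\psi^2\,d\omega\les\int_{S_{\tau_1,u}}\tir v_t\psi^2\,d\omega+\int\!\!\int|L(v_t^{\f12}\psi)|^2$ is false: for $L(v_t^{\f12}\psi)\equiv0$ the left side equals $\frac{\tau-u}{\tau_1-u}$ times the first term on the right. The same off-by-$\tir^2$ slip occurs in your (\ref{lot6}) argument ($\tir(v_t^{\f12}\psi)^2\,d\omega\approx\tir^3\psi^2\,d\omega$, not $\tir\psi^2\,d\omega$); the correct weight there is $L(h^2)=2h\,Lh\le\tir^{-1}h^2+\tir|Lh|^2$, whose error term $\tir^{-1}h^2\approx\tir\psi^2\,d\omega$ is precisely the extra term allowed on the right of (\ref{lot6}). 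Equivalently, the paper encodes these as the algebraic identities $\tir^2(L\psi+\tir^{-1}\psi)^2=\tir^2(L\psi)^2+L(\tir\psi^2)$ and $\tir(\tir L\psi+\psi)^2=\tir^3(L\psi)^2-\tir\psi^2+L(\tir^2\psi^2)$, in which the pure $(L\psi)^2$ term is dropped by sign. Once you correct the powers of $\tir$, your argument coincides with the paper's.
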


\begin{proof}
We first claim that
\begin{align}\label{3.20.1}
&\left(\int_{T_{\tau_1, R}^\tau} \tir^{m+1} \left(L \psi+\frac{\psi}{\tir}\right)^2 d\omega du dt\right)^\f12 \nn\\
& \les \|\tir^{\frac{m-1}{2}}L(v_t^{\f12}\psi)\|_{L_t^2 L_u^2 L_\omega^2(T_{\tau_1, R}^\tau)}
 +\la^{-8\ep_0} \sup_{\tau_1<t\le \tau}\|\tir^\frac{m}{2} \psi\|_{L_u^2 L_\omega^2(T_{\tau_1, R}^{\tau}\cap \Sigma_t)}.
\end{align}
To see this, we use $L v_t = v_t \tr \chi$ to derive that
\begin{equation}\label{6.27.1}
L(v_t^{\f12}\psi)=v_t^{\f12}\left(L\psi+\frac{\psi}{\tir}+\frac{\psi}{2}\left(\tr\chi-\frac{2}{\tir}\right)\right).
\end{equation}
Since  $v_t^\f12$ is comparable with $\tir$,  we have
\begin{equation}\label{l_3_20_1}
{\ti r}^\frac{m+1}{2}\left|L\psi+\frac{\psi}{\tir}\right|
\les \tir^{\frac{m-1}{2}}|L(v_t^{\f12}\psi)| +{\ti r}^\frac{m+1}{2}\left|\left(\tr\chi-\frac{2}{\tir}\right) \psi\right|.
\end{equation}
By using (\ref{ric1}) in Proposition \ref{ricco}, we deduce that
\begin{align*}
\left\|{\ti r}^\frac{m+1}{2}\left(\tr\chi-\frac{2}{\tir}\right) \psi\right\|_{L_t^2 L_u^2 L_\omega^2(T_{\tau_1, R}^\tau)}
&\les \left\|\tir^\f12 \left(\tr\chi-\frac{2}{\tir}\right)\right\|_{L_t^2 L_x^\infty}
\sup_{\tau_1\le t\le \tau}\|\tir^\frac{m}{2} \psi\|_{L_u^2 L_\omega^2(T_{\tau_1,R}^\tau\cap \Sigma_t)}\\
&\les \la^{-8\ep_0}\sup_{\tau_1\le t\le \tau}\|\tir^\frac{m}{2}\psi\|_{L_u^2 L_\omega^2(T_{\tau_1, R}^\tau \cap \Sigma_t)}.
\end{align*}
Combining this with (\ref{l_3_20_1}), we obtain (\ref{3.20.1}).

Now we prove (\ref{lot1}) for $m=1$. By integrating the identity
\begin{equation}\label{7.9.1}
\tir^2 (L\psi + \frac{\psi}{\tir})^2 = \tir^2 (L\psi)^2 + L(\tir \psi^2)
\end{equation}
over $T_{\tau_1, R}^{\tau}$, we obtain
\begin{align}
\int_{T_{\tau_1, R}^{\tau}} \tir^2 (L \psi +\frac{\psi}{\tir})^2 d\omega du dt
& =\int_{T_{\tau_1, R}^{\tau}} \tir^2 (L\psi)^2+\int_{\tau_1-R}^{\tau-R} \int_{{\mathbb S}^2} \tir \psi^2(\tau, u, \omega) d\omega d u \nn\\
& -R\int_{\tau_1}^{\tau} \int_{{\mathbb S}^2} \psi^2(t, t-R, \omega) d\omega d t. \label{p1_1}
\end{align}
In view of (\ref{3.20.1}), we have
\begin{align*}
\|\tir^{\f12} \psi\|_{L_u^2L_\omega^2(T_{\tau_1, R}^\tau\cap \Sigma_\tau)}
& \les \|L(v_t^{\f12} \psi)\|_{L_t^2 L_u^2 L_\omega^2(T_{\tau_1,R}^\tau)}
+ \left(R\int_{\tau_1}^{\tau} \int_{{\mathbb S}^2} \psi^2(t, t-R, \omega) d\omega d t\right)^{\f12}\\
& +\la^{-8\ep_0} \sup_{\tau_1<t\le \tau}\|\tir^\f12 \psi\|_{L_u^2 L_\omega^2(T_{\tau_1, R}^{\tau}\cap \Sigma_t)}
\end{align*}
from which we can obtain (\ref{lot1}).

To prove (\ref{lot1}) for $m=2$,
We may integrate $L(v_t^{\f12}\psi)$ along null geodesics over the null hypersurface $C_u$ to obtain
\begin{equation}\label{psi1}
\|(v_t^{\f12} \psi)(\tau,u, \cdot)\|_{L_\omega^2}\le \left\|\int_{u+R}^{\tau} L(v_t^{\f12}\psi) dt\right\|_{L_\omega^2}
+\|(v_t^{\f12} \psi)(u+R,u,\cdot )\|_{L_\omega^2}
\end{equation}
By squaring the both sides, integrating for $\tau_1-R \le u\le \tau-R$ and using $v_t^{\f12}\approx \tir$, we derive that
\begin{equation*}
\|\tir \psi(\tau)\|_{L_u^2 L_\omega^2}^2\les  \int_{\tau_1-R}^{\tau-R} \|L(v_t^{\f12}\psi)\|_{L_t^2 L_\omega^2}^2 du
+R^2 \int_{\tau_1}^{\tau} \int_{{\mathbb S}^2} \psi^2(t, t-R, \omega) d\omega dt.
\end{equation*}
This gives (\ref{lot1}) for $m=2$ if we deal with the first term on the right by writing $L(v_t^{\f12} \psi) = \tir^{-\f12} \tir^{\f12} L(v_t^{\f12} \psi)$
and applying the Cauchy-Schwarz inequality for the integral in $t$.

To prove (\ref{lot7}), we divide (\ref{psi1}) by $\tir$ and use $v_t^{\f12}\approx \tir$ to obtain
$$
\int_{S_{t, \tau_1-R}} \tir \psi^2 d\omega \les \frac{1}{\tir} \int_{{\mathbb S}^2} \left(\int_{\tau_1}^t L(v_{\tt}^{\f12} \psi) d\tt\right)^2 d\omega
+ \frac{1}{\tir} \|R\psi(\tau_1, \tau_1-R, \c)\|_{L_\omega^2}^2.
$$
By integrating $t$ over $[\tau_1, \tau]$ and using the same argument in the above to deal with the first term on the right, we obtain (\ref{lot7}).

Similar to the proof for (\ref{3.20.1}), we can obtain
\begin{equation}\label{3.20.2}
\int_{C_u[\tau_1, \tau]} \tir^{m-1} (\tir L \psi+\psi)^2 d\omega dt
\les \left\|\tir^\frac{{m-1}}{2}L(v_t^{\f12}\psi) \right\|_{L_t^2 L_\omega^2}^2
+\la^{-16\ep_0} \sup_{\tau_1\le t\le \tau} \left\|\tir^\frac{m}{2} \psi(t) \right\|_{L_\omega^2}^2
\end{equation}
for $m\in {\mathbb N}$, where $C_u[\tau_1, \tau]:=C_u\cap \{\tau_1\le t\le \tau\}$. In view of (\ref{3.20.2}) with $m=2$, we
can obtain (\ref{lot6}) by integrating the identity
$$
\tir (\tir L\psi +\psi)^2 = \tir (\tir L\psi)^2 - \tir \psi^2 + L(\tir^2 \psi^2)
$$
over $C_u[\tau_1, \tau]$ and using the fact
\begin{equation*}
\int_{C_u[\tau_1, \tau]} L(\tir^2 \psi^2) dt d\omega =\int_{S_{\tau, u}} \tir^2 \psi^2 d\omega - \int_{S_{\tau_1, u}} \tir^2 \psi^2 d\omega.
\end{equation*}
In view of (\ref{3.20.2}) with $m=1$, we can obtain (\ref{lot10}) by integrating (\ref{7.9.1}) over $C_u[\tau_1,\tau]$.
\end{proof}

Given $\tau>\tau_1\ge t_0$, we set
\begin{align}\label{btau}
\begin{split}
\B_{\tau_1}^{\tau}:&=(\tau-\tau_1+R)^{-1} \int_{\tau_1}^{\tau}\int_{S_{t,\tau_1-R}} \tir \left(|L(v_t^{\f12}\psi)|^2+\psi^2\right) d\omega dt\\
&\quad \,  +\int_{\widetilde{\Sigma}^{\tau}_{\tau_1, R}}\P_\a^{(\bT)}[\psi] \bn^\a+\int_{S_{\tau_1,\tau_1-R}} \tir \psi^2 d\omega.
\end{split}
\end{align}

\begin{proposition}\label{6.27.3}
Let  $ R\le R'\le  2R$ and let $K_{\tau_1, R'}^{\tau} = \R_{\tau_1, R}^{\tau} \cap \{\tir \le R'\}$. There holds
\begin{equation}\label{intes_1}
\int_{K_{\tau_1, R'}^{\tau}} \left(\P_\a^{({\emph\bT})}[\psi] {\emph\bT}^\a+\frac{\psi^2}{\tir}\right) \les \B_{\tau_1}^{\tau}.
\end{equation}
\end{proposition}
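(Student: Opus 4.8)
\textbf{Proof proposal for Proposition \ref{6.27.3}.}
The plan is to combine the local (compactly supported in $\tir$) integrated energy estimate of Lemma \ref{lem:mint_1} with the lower-order bounds of Lemma \ref{lotlm}, so that every term that Lemma \ref{lem:mint_1} leaves on the right-hand side is absorbed into the quantity $\B_{\tau_1}^{\tau}$. First I would apply Lemma \ref{lem:mint_1} (equivalently Proposition \ref{int_est3}) with $R'$ replaced by the given $R'\in[R,2R)$ and with the outer time slice taken at $\tau_2=\tau$, noting that on $K_{\tau_1,R'}^{\tau}$ one has $\tir\le R'\approx R\approx 1$, so the weights $(1+\tir)^{-(\a+1)}$ and $\tir^{-1}(1+\tir)^{-(\a+2)}$ are all comparable to $1$ (up to universal constants) and $\tir^{-1}$ respectively. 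Hence the left side of Lemma \ref{lem:mint_1} controls $\int_{K_{\tau_1,R'}^{\tau}}\big((N\psi)^2+(\bT\psi)^2+|\sn\psi|^2+\tir^{-1}\psi^2\big)$, which by (\ref{7.8.2}) is exactly $\int_{K_{\tau_1,R'}^{\tau}}\big(\P_\a^{(\bT)}[\psi]\bT^\a+\psi^2/\tir\big)$ up to a constant; this gives the left side of (\ref{intes_1}).

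Next I would control the three error terms produced by Lemma \ref{lem:mint_1}, namely $\int_{\widetilde\Sigma_{\tau_1,R}^{\tau}}\P_\a^{(\bT)}[\psi]\bn^\a$, $\|\tir^{\f12}\psi\|_{L_\omega^2(S_{\tau,\tau_1-R})}^2$ and $\|\tir^{\f12}\psi\|_{L_\omega^2(S_{\tau_1,\tau_1-R})}^2$. The first is literally a summand of $\B_{\tau_1}^{\tau}$. The third is also (essentially) a summand: $S_{\tau_1,\tau_1-R}$ has $\tir=R\approx1$, so $\|\tir^{\f12}\psi\|_{L_\omega^2(S_{\tau_1,\tau_1-R})}^2\approx\int_{S_{\tau_1,\tau_1-R}}\tir\psi^2\,d\omega$, which appears in (\ref{btau}). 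The genuinely nontrivial piece is the first term, $\|\tir^{\f12}\psi\|_{L_\omega^2(S_{\tau,\tau_1-R})}^2$, which is a flux-type quantity on the \emph{late} cross-section $S_{\tau,\tau_1-R}$ of the null cone $C_{\tau_1-R}$; this I would handle with (\ref{lot10}) (or (\ref{lot6})) of Lemma \ref{lotlm} applied on $C_{\tau_1-R}$ between $\tau_1$ and $\tau$, which bounds $\sup_{t}\int_{S_{t,\tau_1-R}}\tir\psi^2\,d\omega$ by $\int_{S_{\tau_1,\tau_1-R}}\tir\psi^2\,d\omega + \int_{\tau_1}^{\tau}\int_{S_{t,\tau_1-R}}|L(v_t^{\f12}\psi)|^2\,d\omega\,dt$. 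The first of these is a summand of $\B_{\tau_1}^{\tau}$; for the second I would bound $|L(v_t^{\f12}\psi)|^2\le \tir|L(v_t^{\f12}\psi)|^2$ on the relevant cone where $\tir\ge R\gtrsim 1$ (or just keep the $\tir$ weight since $\tir$ is bounded above by $\tau-\tau_1+R$ there and the normalizing factor $(\tau-\tau_1+R)^{-1}$ in (\ref{btau}) is exactly designed to match), so that it is controlled by the first summand of $\B_{\tau_1}^{\tau}$. A small care point is that $L(v_t^{\f12}\psi)$ also controls $|L\psi|$ and $|\sn\psi|$-free flux via (\ref{6.27.1}) together with the Ricci estimate $\|\tir^{\f12}(\tr\chi-2/\tir)\|_{L_t^2L_x^\infty}\les\la^{-8\ep_0}$ from Proposition \ref{ricco}, which I would invoke if instead I want to replace $L(v_t^{\f12}\psi)$ by $L\psi$; either way the bound closes after absorbing the $\la^{-8\ep_0}\sup$ term into the left-hand side for $\la$ large.

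Assembling: the left side of (\ref{intes_1}) $\les$ [left side of Lemma \ref{lem:mint_1}] $\les$ [three error terms] $\les \B_{\tau_1}^{\tau}$, which is the claim. The main obstacle I anticipate is bookkeeping the boundary term on the future cross-section $S_{\tau,\tau_1-R}$: it is not among the summands of $\B_{\tau_1}^{\tau}$ and must be traded, via the null-cone identity behind (\ref{lot10}), for the averaged flux term $(\tau-\tau_1+R)^{-1}\int_{\tau_1}^{\tau}\int_{S_{t,\tau_1-R}}\tir(|L(v_t^{\f12}\psi)|^2+\psi^2)\,d\omega\,dt$ plus initial data on $S_{\tau_1,\tau_1-R}$; getting the weights and the normalizing factor $(\tau-\tau_1+R)^{-1}$ to line up exactly with the definition (\ref{btau}) — in particular seeing that the unweighted $\int\int\psi^2(t,t-R,\omega)$ term appearing in Lemma \ref{lem:mint_1}'s proof (the $-R\int_{\tau_1}^{\tau}\int\psi^2(t,t-R,\omega)$ in (\ref{p1_1})) drops with the favorable sign or is dominated by the $\psi^2$ flux term in (\ref{btau}) — is where the real care is needed. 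Everything else is a routine comparison of weights on the bounded region $\{\tir\le R'\}$ and absorption of $\la^{-8\ep_0}$-small terms.
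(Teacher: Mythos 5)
Your architecture is exactly the paper's: the proof is Lemma \ref{lem:mint_1} (whose left side dominates $\int_{K_{\tau_1,R'}^{\tau}}(\P_\a^{(\bT)}[\psi]\bT^\a+\psi^2/\tir)$ because $\tir\le R'\les 1$ there, so all the weights are $\approx 1$), plus an estimate from Lemma \ref{lotlm} to trade the future boundary term $\|\tir^{\f12}\psi\|^2_{L_\omega^2(S_{\tau,\tau_1-R})}$ for summands of $\B_{\tau_1}^{\tau}$. The other two error terms are handled exactly as you say.

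However, your primary mechanism for closing the boundary term is flawed in a way that matters. If you use (\ref{lot10}), you are left with the \emph{unweighted} cone flux $\int_{\tau_1}^{\tau}\int_{S_{t,\tau_1-R}}|L(v_t^{\f12}\psi)|^2\,d\omega\,dt$, and your claim that this matches the first summand of (\ref{btau}) because ``the normalizing factor $(\tau-\tau_1+R)^{-1}$ is exactly designed to match'' goes the wrong way: on $C_{\tau_1-R}$ one has $R\le\tir\le\tau-\tau_1+R$, so $(\tau-\tau_1+R)^{-1}\tir\le 1$ and hence
\begin{equation*}
(\tau-\tau_1+R)^{-1}\int_{\tau_1}^{\tau}\!\!\int_{S_{t,\tau_1-R}}\tir\,|L(v_t^{\f12}\psi)|^2 \;\le\; \int_{\tau_1}^{\tau}\!\!\int_{S_{t,\tau_1-R}}|L(v_t^{\f12}\psi)|^2,
\end{equation*}
i.e.\ the $\B_{\tau_1}^{\tau}$-term is \emph{weaker} than what (\ref{lot10}) requires you to control, by a factor that degenerates like $R/(\tau-\tau_1+R)$. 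Trying to recover the unweighted flux from $\int_{\widetilde\Sigma}\P^{(\bT)}\bn$ via (\ref{6.27.1}) also fails, because $|L(v_t^{\f12}\psi)|^2\approx\tir^2|L\psi|^2+\psi^2+\cdots$ and the zeroth-order piece $\int\int\psi^2$ on the cone is not controlled by $\B_{\tau_1}^{\tau}$ without a $\ln^2(\tau-\tau_1+R)$ loss (cf.\ (\ref{lot7})). The correct closure is the route you only mention parenthetically, and it is the one the paper takes: apply (\ref{lot6}) with $u=\tau_1-R$, evaluate its left side at $t=\tau$ where $\tir=\tau-\tau_1+R$, and divide by $\tau-\tau_1+R$. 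The extra power of $\tir$ in $\sup_t\int_{S_{t,u}}\tir^2\psi^2\,d\omega$ is precisely what produces the factor $(\tau-\tau_1+R)^{-1}$ in front of $\int_{\tau_1}^{\tau}\int\tir(|L(v_t^{\f12}\psi)|^2+\psi^2)$ and turns the initial term into $R\int_{S_{\tau_1,\tau_1-R}}\tir\psi^2\,d\omega\les\int_{S_{\tau_1,\tau_1-R}}\tir\psi^2\,d\omega$; both are then literally summands of (\ref{btau}). With that substitution your argument is complete. (A minor aside: the term $-R\int_{\tau_1}^{\tau}\int\psi^2(t,t-R,\omega)$ from (\ref{p1_1}) belongs to the proof of Lemma \ref{lotlm}, not of Lemma \ref{lem:mint_1}, so it plays no role here.)
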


\begin{proof}
In view of (\ref{7.8.2}), we can obtain (\ref{intes_1}) by using (\ref{lot6}) and Lemma \ref{lem:mint_1}.
\end{proof}

\begin{lemma}
\begin{equation}\label{ieq.2}
R^2 \int_{\tau_1}^{\tau}\int_{S_{t, t-R}} \psi^2  d\omega d t
\les \int_{K_{\tau_1, R}^{\tau}} \left(\P_\a^{({\emph\bT})}[\psi] {\emph\bT}^\a+\frac{\psi^2}{\tir}\right)
\les \B_{\tau_1}^{\tau}
\end{equation}
\end{lemma}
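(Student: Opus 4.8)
\textbf{Proof plan for \eqref{ieq.2}.} The statement asserts that the $L^2$-norm of $\psi$ restricted to the tip-cone $\{u=t-R\}$ (weighted by $R^2$) is controlled, first by the integrated local energy in the cylinder $K_{\tau_1,R}^\tau$, and then by the boundary quantity $\B_{\tau_1}^\tau$. The second inequality is immediate: it is exactly Proposition \ref{6.27.3} (applied with $R'=R$). So the whole task is the first inequality, and the plan is to produce a pointwise identity that trades the boundary integral over the null cone $C_{t-R}$ for a bulk integral over $K_{\tau_1,R}^\tau$.

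First I would restrict attention to the region $\tir\approx R$, which is where the cut-off foliation $\{u=t-R\}$ lives; since $t_0>R$ we have $R\le\tir\le R'\le 2R$ on the relevant slices, so all powers of $\tir$ are comparable to $R$ up to universal constants and may be freely inserted or removed. Next, the key step is to express $\psi^2$ on the hypersurface $\{u=t-R\}$ via a divergence identity in the $N$-direction, exactly in the spirit of Lemma \ref{lem7.15.1} and the identity $\psi^2=L(\tir\psi^2)-2\tir\psi L\psi$ used repeatedly in Lemma \ref{lotlm}. Concretely, I would integrate a relation of the form $N(\bb\,\psi^2)=2\bb\,\psi N\psi+(\text{lower order from }N\bb,\ N\tir)$ over slices of $K_{\tau_1,R}^\tau$ with $\tir\le R$, fixing $t$ and letting $u$ run from $t-R'$ down to $t-R$ (or the mirror computation along $C_{t-R}$ in the $L$-direction), so that the boundary term at $u=t-R$ is produced by an interior integral over $K_{\tau_1,R'}^\tau\setminus K_{\tau_1,R}^\tau$ plus a term supported on $\{u=t-R'\}$. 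Using Cauchy–Schwarz on the cross term $\psi N\psi$, together with $|\bb-1|<\tfrac14$ from \eqref{bb_4}, the right-hand side is bounded by
$$
\int_{K_{\tau_1,R'}^\tau}\Big(|N\psi|^2+\tir^{-2}\psi^2\Big)+\big(\text{flux on }\{u=t-R'\}\big),
$$
and since $|N\psi|^2\le 2\,\P_\a^{(\bT)}[\psi]\bT^\a$ by \eqref{7.8.2} and $\tir^{-2}\psi^2\les\tir^{-1}\cdot\tir^{-1}\psi^2\approx R^{-1}\tir^{-1}\psi^2$ on this region, multiplying through by $R^2$ gives precisely $\int_{K_{\tau_1,R}^\tau}\big(\P_\a^{(\bT)}[\psi]\bT^\a+\tir^{-1}\psi^2\big)$ up to the flux term, which in turn is absorbed into the same quantity by Proposition \ref{6.27.3} applied at the level $R'$ (recall $K_{\tau_1,R'}^\tau\subset\R_{\tau_1,R}^\tau$, so its integrated energy is bounded by $\B_{\tau_1}^\tau$, hence by the middle quantity of \eqref{ieq.2} after noting $K_{\tau_1,R}^\tau\subset K_{\tau_1,R'}^\tau$). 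Care is needed in handling the boundary piece coming from the non-null part $\{t=\tau_1\}\cap\{\tir\le R\}$ of $\widetilde\Sigma_{\tau_1,R}^\tau$, but that contributes only $\|\tir^{\f12}\psi\|^2_{L^2_\omega(S_{\tau_1,\tau_1-R})}$, which is already a summand of $\B_{\tau_1}^\tau$ and a fortiori bounded by the middle member of \eqref{ieq.2} via Proposition \ref{6.27.3}.

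The main obstacle I anticipate is purely bookkeeping of the cut-off geometry: the set $\{u=t-R\}$ is a \emph{moving} boundary (it tilts as $t$ varies), so the naive $N$-integration at fixed $t$ does not directly see it as a level set of $u$ on a single $\Sigma_t$ — rather one must integrate along $C_{t-R}$ in the $L$-direction and keep track of the endpoint at $t=\tau_1$ versus the running endpoint $u=t-R$. Organizing this so that only the good quantities $\P_\a^{(\bT)}[\psi]\bT^\a$, $\tir^{-1}\psi^2$ (on $K_{\tau_1,R'}^\tau$) and the already-controlled flux on $\widetilde\Sigma_{\tau_1,R}^\tau$ appear — with all Ricci-coefficient error terms absorbed using $\|\tir^{\f12}\bA\|_{L^\infty L^2_\omega}\les\la^{-1/2}$ from Proposition \ref{awave} and the smallness $\la^{-8\ep_0}$ — is the delicate point, but it is entirely parallel to the identities already established in Lemma \ref{lotlm} and Lemma \ref{lem:mint_1}, so no genuinely new estimate is required.
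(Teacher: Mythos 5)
Your second inequality is fine (it is indeed just Proposition \ref{6.27.3} with $R'=R$), but your plan for the first inequality has a genuine gap, and it stems from integrating in the wrong direction. The paper's proof applies the fundamental theorem of calculus to $\p_{\tir}(\tir^2\psi^2)$ at fixed $t$, integrating \emph{outward from the time axis}, $\tir=0$ to $\tir=R$: because the weight $\tir^2$ vanishes at $\tir=0$, there is no inner boundary term, and after Cauchy--Schwarz and $\p_{\tir}=\bb N$ the resulting bulk integral lives exactly on $\{\tir\le R\}$, i.e.\ on $K_{\tau_1,R}^\tau$, which is precisely the middle member of \eqref{ieq.2}. Your plan instead integrates \emph{inward} from $u=t-R'$ to $u=t-R$ (with $R'>R$), so (a) the bulk term you produce is supported on the annulus $\{R\le\tir\le R'\}$, which is disjoint from $K_{\tau_1,R}^\tau$ and therefore cannot be dominated by $\int_{K_{\tau_1,R}^\tau}(\P_\a^{(\bT)}[\psi]\bT^\a+\psi^2/\tir)$, and (b) you pick up an outer boundary flux on the tilted hypersurface $\{u=t-R'\}$ that must be controlled separately. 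Proposition \ref{6.27.3} bounds a spacetime bulk integral, not a trace on such a hypersurface, so invoking it for that flux requires an additional averaging-in-$R'$ step you have not supplied.

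There is also a logical slip at the end: you argue that the leftover flux is bounded by $\B_{\tau_1}^\tau$ and "hence by the middle quantity of \eqref{ieq.2}." The chain goes the other way --- the middle quantity is bounded \emph{by} $\B_{\tau_1}^\tau$, so a bound by $\B_{\tau_1}^\tau$ does not yield a bound by $\int_{K_{\tau_1,R}^\tau}(\cdots)$. At best your route proves the composite estimate $R^2\int\psi^2\les\B_{\tau_1}^\tau$, which suffices for the later applications but is strictly weaker than the first inequality as stated. The repair is the paper's one-line device: on each $\Sigma_t$ write $R^2\psi^2(t,t-R,\omega)=\int_0^R\p_{\tir}(\tir^2\psi^2)\,d\tir$, expand, apply Cauchy--Schwarz to the cross term $\tir^2\psi\,\p_{\tir}\psi$, and integrate over $[\tau_1,\tau]\times{\Bbb S}^2$ using $|\bb^{-1}-1|<\f12$ and $\tir^2\approx v_t$; no boundary or Ricci-coefficient error terms ever appear.
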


\begin{proof}
Consider the identity $\p_{\tir} (\tir^2 \psi^2) = 2 \tir \psi^2 + 2\tir^2 \psi \p_{\tir} \psi$, where $\psi = \psi(t, t-\tir, \omega)$.
Integrating in $\tir$ over $[0, R]$ gives
\begin{align*}
R^2 \psi^2(t, t-R, \omega)= 2\int_0^R \left(\tir \psi^2 + \tir^2 \psi \p_{\tir} \psi\right) d\tir
\le \int_0^R \left(2 \tir \psi^2 + \tir^2 \psi^2 + \tir^2 (\p_{\tir} \psi)^2\right) d\tir.
\end{align*}
Integrating over $[\tau_1, \tau]\times {\Bbb S}^2$, we obtain
\begin{align*}
R^2 \int_{\tau_1}^{\tau}\int_{S_{t, t-R}}  \psi^2 d\omega d t
&\le \int_{\tau_1}^{\tau} \int_{\tir\le R} \left(2 \frac{\psi^2}{\tir}+\psi^2+(\p_{\tir} \psi)^2\right) \tir^2 d\omega du dt.
\end{align*}
By using $\p_{\tir}=\bb N$, $|\bb^{-1}-1|< \f12$ and $\tir^2 \approx v_t$, we can obtain the first inequality in (\ref{ieq.2});
the second inequality is (\ref{intes_1}).
\end{proof}

We can adapt from (\ref{lot1}), and use (\ref{intes_1}) and (\ref{ieq.2}) to get

\begin{proposition}\label{lot8}
Given $\tau> \tau_1\ge t_0$. We have
\begin{align*}
\|\tir^\frac{m}{2}\psi\|_{L_u^2 L_\omega^2(T_{\tau_1, R}^\tau \cap \Sigma_\tau)}^2
&\les \Phi_m(\tau)^2 \|\tir^{\frac{m-1}{2}}L(v_t^{\f12}\psi)\|_{L_u^2 L_t^2 L_\omega^2(T_{\tau_1, R}^{\tau})}^2+\B_{\tau_1}^{\tau}.
\end{align*}
In the region with $\{R\le \tir\le 2R\}\cap \{\tau_1-R\le u\le \tau-R\}$, we have
$
\|\tir^\frac{m}{2}\psi (t)\|_{L_u^2 L_\omega^2}^2\les \B_{\tau_1}^{\tau}.
$
\end{proposition}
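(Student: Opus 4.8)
\textbf{Proposal for the proof of Proposition \ref{lot8}.}
The plan is to deduce both displayed estimates directly from Lemma \ref{lotlm}, feeding into it the integrated estimates \eqref{intes_1} and \eqref{ieq.2} that have just been established, so that the right-hand sides collapse onto the single quantity $\B_{\tau_1}^\tau$ defined in \eqref{btau}. First I would recall the inequality \eqref{lot1} in Lemma \ref{lotlm}, which already controls $\|\tir^{m/2}\psi\|_{L_u^2L_\omega^2(T_{\tau_1,R}^\tau\cap\Sigma_\tau)}$ by $\Phi_m(\tau)\|\tir^{(m-1)/2}L(v_t^{\f12}\psi)\|_{L_u^2L_t^2L_\omega^2(T_{\tau_1,R}^\tau)}$ plus the boundary term $R^{m/2}\big(\int_{\tau_1}^\tau\int_{S_{t,t-R}}\psi^2\,d\omega dt\big)^{1/2}$. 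The only new input needed for the first assertion is therefore the estimate of that boundary term; but for $m=1,2$ we have $R^{m/2}\le R$ (since $R<t_0=1$ and we may harmlessly enlarge constants), and \eqref{ieq.2} gives precisely $R^2\int_{\tau_1}^\tau\int_{S_{t,t-R}}\psi^2\,d\omega dt\les\B_{\tau_1}^\tau$. Squaring \eqref{lot1} and substituting this bound yields
\[
\|\tir^{\frac m2}\psi\|_{L_u^2L_\omega^2(T_{\tau_1,R}^\tau\cap\Sigma_\tau)}^2
\les \Phi_m(\tau)^2\|\tir^{\frac{m-1}2}L(v_t^{\f12}\psi)\|_{L_u^2L_t^2L_\omega^2(T_{\tau_1,R}^\tau)}^2+\B_{\tau_1}^\tau,
\]
which is the first claim with $\tau$ in place of the supremum slice; since \eqref{lot1} is already stated with a supremum over $t\in[\tau_1,\tau]$ on its left side, the same argument gives the stated form verbatim.

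For the second assertion, restricted to the collar region $\{R\le\tir\le 2R\}\cap\{\tau_1-R\le u\le\tau-R\}$, the point is that there the weight $\tir^{m/2}$ is comparable to a constant (to $R^{m/2}\approx 1$), so $\|\tir^{\frac m2}\psi(t)\|_{L_u^2L_\omega^2}^2\approx\|\psi(t)\|_{L_u^2L_\omega^2}^2$ over that slice, and the corresponding spacetime integral $\int_{K_{\tau_1,2R}^\tau\setminus K_{\tau_1,R}^\tau}\psi^2\,du\,d\omega\,dt$ is bounded by $\int_{K_{\tau_1,R}^\tau}\big(\P_\a^{(\bT)}[\psi]\bT^\a+\psi^2/\tir\big)$ up to a constant depending on $R$, which by \eqref{intes_1} is $\les\B_{\tau_1}^\tau$. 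To upgrade this spacetime bound to a fixed-slice bound I would use \eqref{lot6} (or directly integrate $\p_{\tir}(\tir^2\psi^2)$ in the normal direction as in the proof of \eqref{ieq.2}) to trade one power of the time variable: on $\{R\le\tir\le 2R\}$ the transport estimate \eqref{lot6} bounds $\sup_t\int_{S_{t,u}}\tir^2\psi^2\,d\omega$ by the data on $S_{\tau_1,u}$ plus $\int_{\tau_1}^\tau\int_{S_{t,u}}\tir(|L(v_t^{\f12}\psi)|^2+\psi^2)\,d\omega dt$, and both ingredients are contained in $\B_{\tau_1}^\tau$ — the first by the last term of \eqref{btau} after noting $\tir\approx R$, the second after integrating in $u$ over the (bounded) collar and invoking \eqref{intes_1} again for the $\psi^2$ contribution and the definition of $\B_{\tau_1}^\tau$ via its $\int_{\widetilde\Sigma_{\tau_1,R}^\tau}\P^{(\bT)}$ term together with \eqref{lot10} for the $|L(v_t^{\f12}\psi)|^2$ contribution. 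Collecting these gives $\|\tir^{\frac m2}\psi(t)\|_{L_u^2L_\omega^2}^2\les\B_{\tau_1}^\tau$ uniformly in $t\in[\tau_1,\tau]$ on the collar.

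The routine parts here are the bookkeeping of weights (constants depending on $R$, which is admissible since $R<1$ is fixed) and checking that each term produced is literally one of the four pieces assembled in $\B_{\tau_1}^\tau$. The one place requiring a little care — and what I expect to be the main obstacle — is the passage from the spacetime ($L_t^2$ in time) integrated bounds \eqref{intes_1}, \eqref{ieq.2} to a bound that holds for \emph{every} fixed slice $\Sigma_t$, $t\in[\tau_1,\tau]$: this is exactly the role of the transport identities \eqref{lot6}/\eqref{lot10}, which convert an $L^2_t$-in-time quantity into an $L^\infty_t$ one at the cost of the flux $\int\tir|L(v_t^{\f12}\psi)|^2$ and data on the initial cut $S_{\tau_1,u}$, and one must verify that no uncontrolled boundary contribution on the inner cone $\{\tir=R\}$ or on $\{\tir=2R\}$ is generated — here the smallness of the Ricci coefficients from Proposition \ref{awave} (used already inside \eqref{3.20.1}) absorbs the error terms $(\tr\chi-2/\tir)\psi$ that appear when commuting $L$ past $v_t^{\f12}$. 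Once that is in hand the proof is complete.
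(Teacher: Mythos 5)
Your proposal is correct and follows essentially the same route as the paper, whose proof of Proposition \ref{lot8} consists precisely of invoking (\ref{lot1}) and controlling its boundary term by (\ref{ieq.2}), and of using (\ref{intes_1}) together with the transport identities on the collar for the second assertion. The only quibbles are cosmetic: for $m=1$ the inequality $R^{m/2}\le R$ is false for $R<1$, but irrelevant since constants may depend on the fixed $R$ (one simply writes $R^{m}\int\psi^2=R^{m-2}\cdot R^{2}\int\psi^2\les\B_{\tau_1}^{\tau}$), and in the collar argument the transport in $t$ should start from the inner cone $\{\tir=R\}$ (i.e.\ $S_{u+R,u}$, whose contribution is again controlled by (\ref{ieq.2})) rather than from $\Sigma_{\tau_1}$.
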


\subsection{Comparison results}\label{cmpsec}

We will prove some comparison results which will be used in following subsections.

\begin{proposition}\label{comp3}
Let $U\subset \D_0^+\cap \{R\le \tir \le 2 R\}$ be a  region satisfying (\ref{8.8.1}). There holds
\begin{equation*}
\int_U \left(|L(v_t^{\f12}\psi)|^2+ |\sn \ti\psi|^2 v_t\right)\bb du dt d\omega
\les\int_U \left(\P^{(\emph{\bT})}_\a[\psi] \bT^\a+\frac{\psi^2}{\tir}\right).
\end{equation*}
\end{proposition}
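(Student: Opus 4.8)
\textbf{Proof proposal for Proposition \ref{comp3}.} The statement is a purely local (in $t$) comparison between the weighted null‐derivative energy density $|L(v_t^{\f12}\psi)|^2 + |\sn\ti\psi|^2 v_t$ and the standard energy density $\P^{(\bT)}_\a[\psi]\bT^\a + \tir^{-1}\psi^2$, integrated over a slab $U$ sitting in the fixed dyadic shell $\{R\le\tir\le 2R\}$. Since on this shell $\tir\approx R\approx 1$, all weights $\tir^m$ are comparable to constants, $v_t\approx\tir^2\approx 1$ by Lemma \ref{6.17.1}, $\bb\approx 1$ by (\ref{bb_4}), and $\Omega = e^{-\sigma}\approx 1$ by Lemma \ref{pre_sig}; so the proposition should reduce to pointwise algebra plus absorbing the connection‐coefficient errors using the $L^2_t L^\infty_x$ bounds of Proposition \ref{awave}. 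First I would expand $L(v_t^{\f12}\psi)$ using (\ref{6.27.1}), namely $L(v_t^{\f12}\psi) = v_t^{\f12}\big(L\psi + \frac{\psi}{\tir} + \f12\psi(\tr\chi - \frac{2}{\tir})\big)$, and bound $|L(v_t^{\f12}\psi)|^2 \les |L\psi|^2 + \tir^{-2}\psi^2 + |z - V_4|^2\psi^2$ (using $\tr\chi - \frac{2}{\tir} = z - V_4$). The term $|L\psi|^2 = |(\bT+N)\psi|^2 \les (\bT\psi)^2 + |N\psi|^2 \les 2\,\P^{(\bT)}_\a[\psi]\bT^\a$ by (\ref{7.8.2}); the term $\tir^{-2}\psi^2 \approx \tir^{-1}\psi^2$ on the shell. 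For $|\sn\ti\psi|^2 v_t$, since $\ti\psi = \Omega\psi$ with $\sn\ti\psi = \Omega(\sn\psi - \psi\sn\sigma)$ and $\Omega\approx 1$, $v_t\approx 1$, we get $|\sn\ti\psi|^2 v_t \les |\sn\psi|^2 + |\sn\sigma|^2\psi^2 \les |\sn\psi|^2 + |\sn\sigma|^2\psi^2$, and $|\sn\psi|^2 \les \P^{(\bT)}_\a[\psi]\bT^\a$.

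The remaining task is to integrate the genuinely error‐type terms, namely $\int_U |z - V_4|^2\psi^2$ and $\int_U |\sn\sigma|^2\psi^2$, and show they are controlled by $\int_U(\P^{(\bT)}_\a[\psi]\bT^\a + \tir^{-1}\psi^2)$. Here I would pull $\psi^2$ out in a mixed norm: on each $\Sigma_t$‐slice of $U$, write $\int |z-V_4|^2\psi^2 \le \||z-V_4|^2\|_{L^\infty_x}\cdot\|\psi\|_{L^2(U_t)}^2$ or, more carefully, use the Sobolev/trace inequalities on $S_{t,u}$ to put $\psi$ in $L^4(S_{t,u})$ and the coefficient in a weighted norm, exactly as in the proof of Lemma \ref{lem:mint_1} where terms $\int \psi^2|\bA||\Theta'|$ were handled via (\ref{sob}) with $q=4$ and $\|\tir^{\f12}\bA\|_{L^\infty L^2_\omega}\les\la^{-\f12}$. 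On the shell $\tir\approx 1$ this gives $\int_U |z-V_4|^2\psi^2 \les \|z-V_4\|_{L^2_t L^\infty_x}^2\,\sup_t\|\psi(t)\|_{L^2(U_t)}^2 + (\text{gradient terms})$, and by Proposition \ref{awave} (or (\ref{ric1}) in Proposition \ref{ricco}) $\|z, \chih, \zeta, \pi\|_{L^2_t L^\infty_x(\D^+)}\les\la^{-\f12-4\ep_0}$ and similarly $\|\sn\sigma\|$ is small by Proposition \ref{improve}/\ref{dcmpsig}. Thus the error integrals carry a small constant $\les\la^{-8\ep_0}$ times $\int_U(\P^{(\bT)}_\a[\psi]\bT^\a + \tir^{-1}\psi^2)$ after Cauchy–Schwarz in $t$ over the short interval, which is therefore absorbable (indeed dominated) by the right‐hand side.

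The one genuine subtlety — and the step I expect to be the main obstacle — is controlling the $L^2$ norm of $\psi$ itself on $U$ by $\int_U(\P^{(\bT)}_\a[\psi]\bT^\a + \tir^{-1}\psi^2)$ in a way that does not circularly reintroduce $|L(v_t^{\f12}\psi)|^2$. Since $U\subset\{R\le\tir\le 2R\}$ is bounded away from the vertex, the weight $\tir^{-1}\psi^2 \approx \psi^2$ is already on the right‐hand side, so $\|\psi\|_{L^2(U)}^2 \les \int_U \tir^{-1}\psi^2$ directly and no Hardy/Poincaré argument is needed — this is exactly why the proposition is stated on the shell rather than on all of $\D_0^+$. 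So in fact the "obstacle" dissolves: every error term reduces to $\|\text{coeff}\|_{L^2_t L^\infty_x}^2 \cdot \|\psi\|_{L^2(U)}^2$ plus gradient contributions already present on the right. I would therefore organize the proof as: (i) pointwise expansion of $L(v_t^{\f12}\psi)$ and $\sn\ti\psi$ using (\ref{6.27.1}) and $\ti\psi=\Omega\psi$; (ii) discard the main terms into $\P^{(\bT)}_\a[\psi]\bT^\a + \tir^{-1}\psi^2$ using (\ref{7.8.2}), $v_t\approx\tir^2$, $\Omega\approx 1$, $\bb\approx 1$; (iii) bound the error terms $\int_U(|z-V_4|^2 + |\sn\sigma|^2)\psi^2$ using Proposition \ref{awave}, Cauchy–Schwarz in $t$, and $\|\psi\|_{L^2(U)}^2\les\int_U\tir^{-1}\psi^2$. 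This completes the comparison.
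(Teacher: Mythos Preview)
Your overall plan matches the paper's proof: expand $L(v_t^{1/2}\psi)$ via (\ref{6.27.1}) and $\sn\ti\psi=\Omega(\sn\psi-\psi\sn\sigma)$, put the main pieces $|L\psi|^2,|\sn\psi|^2,\tir^{-2}\psi^2$ directly into $\P^{(\bT)}_\a[\psi]\bT^\a+\tir^{-1}\psi^2$ using $v_t\approx\tir^2\approx 1$, $\bb\approx1$, $\Omega\approx1$, and then control the coefficient errors $\int_U|M|^2\psi^2$ with $M\in\{\tr\chi-\tfrac{2}{\tir},\,\sn\sigma\}$. The paper reduces the proposition precisely to this last estimate, written there as (\ref{comp4}).

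The gap is in your treatment of $\int_U|M|^2\psi^2$. The bound you write down, ``$\les \|M\|_{L^2_tL^\infty_x}^2\cdot\|\psi\|_{L^2(U)}^2$'' (or with $\sup_t\|\psi(t)\|_{L^2}^2$), is not a valid H\"older inequality: placing $|M|^2$ in $L^1_tL^\infty_x$ forces $\psi^2$ into $L^\infty_tL^1_x$, i.e.\ $\sup_t\|\psi(t)\|_{L^2(U_t)}^2$, and that supremum is \emph{not} controlled by the spacetime integral $\int_U\tir^{-1}\psi^2$ on the right-hand side, no matter how short each $u$-fibre is in $t$. Moreover, for $M=\sn\sigma$ you invoke Propositions~\ref{improve}/\ref{dcmpsig}, but those give only $L^2_uL^2_tL^\infty_\omega$ or a decomposition with an $L^2_u$ piece, not the uniform-in-$u$ bound $\|\sn\sigma\|_{L^2_tL^\infty_x}$ your scheme would need.

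The fix is exactly the ``more careful'' Sobolev route you allude to, but the coefficient must go into an $L^\infty_{t,u}$ space, not $L^2_t$. The paper takes $q>2$ with $0<1-\tfrac{2}{q}<s-2$ and $\tfrac{2}{q}+\tfrac{1}{q_*}=1$, and uses H\"older only in $\omega$:
\[
\int_U|M|^2\psi^2\,v_t\,\bb\,d\omega\,du\,dt \;\le\; \|\tir^{1/2}M\|_{L^\infty_{t,u}L^q_\omega}^2\;\|\tir^{1/2}\psi\|_{L^2_tL^2_uL^{2q_*}_\omega}^2.
\]
Then angular Sobolev (\ref{sob}) on each $S_{t,u}$ (with $\tir\approx R$) gives $\|\tir^{1/2}\psi\|_{L^{2q_*}_\omega}\les\|\tir\sn\psi\|_{L^2_\omega}+\|\tir^{1/2}\psi\|_{L^2_\omega}$, so the second factor is $\les\int_U(|\sn\psi|^2+\tir^{-1}\psi^2)$, which sits inside the right-hand side. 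For the first factor one needs the \emph{pointwise-in-$(t,u)$} estimates $\|\tir^{1/2}(z,V_4)\|_{L^\infty L^{2q}_\omega}\les\la^{-1/2}$ from (\ref{pric2}) and (\ref{transtrace}), and $\|\tir^{1/2}\sn\sigma\|_{L^\infty L^q_\omega}\les\la^{-1/2}$ from (\ref{l4.1}) in Lemma~\ref{pre_sig}; Propositions~\ref{improve}/\ref{dcmpsig} are not the right references here. With this H\"older splitting the error is bounded by a universal constant times the right-hand side, and no absorption or smallness is even needed.
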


\begin{proof}
Recall (\ref{6.27.1}) for $L(v_t^{\f12} \psi)$. Since $\ti\psi=\Omega \psi$ with $\Omega=e^{-\sigma}$, we also have
\begin{equation}\label{6.30.1}
\sn\ti\psi=\Omega \left(\sn \psi-\psi\sn \sigma\right).
\end{equation}
In view of (\ref{l4.19}),  $v_t^\f12\approx \tir$ and $\tir\approx R$, it suffices to show that
\begin{equation}\label{comp4}
\int_U \psi^2 |M|^2 v_t \bb d\omega dt du\les \int_U \left(\P^{(\bT)}_\a[\psi] \bT^\a+\frac{\psi^2}{\tir}\right)
\end{equation}
where $M=\sn \sigma$ or $\tr\chi-\frac{2}{\tir}$. To see this, we take $q>2$ with $0<1-\frac{2}{q}<s-2$ and let $q_*$ be such that
$2/q+1/q_*=1$. Then
\begin{align}
\int_U \psi^2 |M|^2 v_t \bb d\omega dt du
&\le \|r^{\f12} M\|_{L^\infty L_\omega^q(U)}^2 \|\tir^{\f12}\psi\|_{L_t^2L_u^2 L_\omega^{2q_*}(U)}^2\label{p2_2}
\end{align}
By the Sobolev embedding (\ref{sob}) and $\tir \approx R$, we have
 \begin{align*}
\|\tir^\f12 \psi\|_{L_\omega^{2q_*}} &\les\|\tir\sn \psi\|_{L_\omega^2} + \|\tir^{\f12} \psi\|_{L_\omega^2}.
\end{align*}
Therefore
 \begin{align}
\| \tir^{\f12} \psi\|_{L_t^2L_u^2 L_\omega^{2q_*}(U)}^2 \les \int_U \left(|\sn \psi|^2 + \frac{\psi^2}{\tir}\right)
\les \int_U \left(\P^{(\bT)}_\a[\psi] \bT^\a+\frac{\psi^2}{\tir}\right).\label{3_21_2}
 \end{align}
Recall that $\tr\chi-\frac{2}{\tir}=z-V_4$. We may use (\ref{pric2}), (\ref{transtrace}) and (\ref{l4.1}) to obtain
$\|\tir^{\frac{1}{2}}M\|_{L^\infty L_\omega^q}\les \la^{-\f12} \les 1$.
Combining this with  (\ref{p2_2}) and (\ref{3_21_2}), we thus obtain (\ref{comp4}).
\end{proof}

\begin{lemma}\label{com5}
Given $R\le R'\le 2R$ and $m\in {\mathbb N}$. Let $U:=\D_0^+\cap \{\tir \ge R'\}$ and $U_t:= U\cap \Sigma_t$. Then there hold
\begin{align*}
&\|\tir^{\frac{m}{2}} \sn \ti\psi\|_{L_u^2 L_x^2(U_t)}+\|\tir^{\frac{m}{2}}\psi\|_{L_u^2 L_\omega^2(U_t)}
\approx \|\tir^{\frac{m}{2}}\sn \psi\|_{L_u^2 L_x^2(U_t)}+\|\tir^{\frac{m}{2}}\psi\|_{L_u^2 L_\omega^2(U_t)}
\end{align*}
and
\begin{align*}
&\|\tir^{\frac{m}{2}} L(v_t^{\f12} \psi)\|_{L_u^2L_\omega^2(U_t)} +\|\tir^{\frac{m}{2}} \sn \ti\psi\|_{L_u^2 L_x^2(U_t)}
+\|\tir^{\frac{m}{2}}\psi\|_{L_u^2 L_\omega^2(U_t)}\\
&\approx \|\tir^{\frac{m}{2}} L\psi\|_{L_u^2 L_x^2(U_t)} +\|\tir^{\frac{m}{2}}\sn \psi\|_{L_u^2 L_x^2(U_t)}
+\|\tir^{\frac{m}{2}}\psi\|_{L_u^2 L_\omega^2(U_t)}.
\end{align*}
\end{lemma}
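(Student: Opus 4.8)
\textbf{Proof proposal for Lemma \ref{com5}.}

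The plan is to establish both equivalences by expanding $\ti\psi = \Omega\psi$ and $L(v_t^{\f12}\psi)$ in terms of the undifferentiated quantities and absorbing the difference terms into the right-hand side using the smallness of $\Omega - 1$, $\sn\sigma$, and $\tr\chi - 2/\tir$. For the first equivalence, I would use (\ref{6.30.1}), namely $\sn\ti\psi = \Omega(\sn\psi - \psi\sn\sigma)$, together with $\Omega \approx 1$ from (\ref{l4.19}). This gives the pointwise bound $|\sn\ti\psi| \les |\sn\psi| + |\psi||\sn\sigma|$, and symmetrically $|\sn\psi| \les |\sn\ti\psi| + |\psi||\sn\sigma|$. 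The only genuine work is to control $\|\tir^{m/2}\psi\sn\sigma\|_{L_u^2 L_x^2(U_t)}$ by $\|\tir^{m/2}\psi\|_{L_u^2 L_\omega^2(U_t)}$ up to a small constant; this is exactly the same Sobolev-embedding argument used in the proof of Proposition \ref{comp3} (passing to $L_\omega^{2q_*}$ on each $S_{t,u}$ and using $\|\tir^{1/2}\sn\sigma\|_{L^\infty L_\omega^q}\les \la^{-\f12}$ from (\ref{l4.1}), together with $\|\tir^{1/2}\psi\|_{L_\omega^{2q_*}}\les \|\tir\sn\psi\|_{L_\omega^2}+\|\tir^{1/2}\psi\|_{L_\omega^2}$ via (\ref{sob})). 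Since the resulting factor is $\les \la^{-\f12} < 1$, the $\|\tir^{m/2}\sn\psi\|$ term on the right can be absorbed, yielding the first $\approx$.

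For the second equivalence I would add the $L$-derivative terms. From (\ref{6.27.1}) we have
$$
L(v_t^{\f12}\psi) = v_t^{\f12}\Big(L\psi + \frac{\psi}{\tir} + \frac{\psi}{2}\big(\tr\chi - \frac{2}{\tir}\big)\Big),
$$
so since $v_t^{\f12}\approx \tir$ on $U$ (Lemma \ref{6.17.1}), we get
$$
\big|L(v_t^{\f12}\psi)\big| \les \tir|L\psi| + |\psi| + \tir|\psi|\big|\tr\chi - \tfrac{2}{\tir}\big|,
$$
and conversely $\tir|L\psi| \les |L(v_t^{\f12}\psi)| + |\psi| + \tir|\psi||\tr\chi - 2/\tir|$. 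Taking the $L_u^2 L_\omega^2(U_t)$ norm after multiplying by $\tir^{(m-2)/2}$ (note the $v_t^{\f12}\approx \tir$ conversion between $L_x^2$ and $L_\omega^2$ norms, so that $\|\tir^{m/2}L\psi\|_{L_u^2 L_x^2} \approx \|\tir^{(m+2)/2}L\psi\|_{L_u^2 L_\omega^2}$ etc. — I need to be careful to track this weight bookkeeping consistently), the term $\|\tir^{m/2}\psi(\tr\chi - 2/\tir)\|$ is handled exactly as above, now invoking $\tr\chi - 2/\tir = z - V_4$, $\|\tir^{1/2}z\|_{L^\infty} \les \la^{-\f12}$ from (\ref{comp2}), $\|\tir^{1/2}V_4\|_{L^\infty L_\omega^{2p}}\les\la^{-\f12}$ from (\ref{transtrace}) and (\ref{pric2})/(\ref{aux_2}), and again the Sobolev embedding on $S_{t,u}$. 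The small constant $\la^{-\f12}$ lets us absorb the offending $\|\tir^{m/2}L\psi\|$ term back to the left. Combining with the first equivalence (to trade $\sn\ti\psi$ for $\sn\psi$) closes the argument.

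The main obstacle I anticipate is purely the weight- and norm-bookkeeping: keeping straight the conversions between $\|\cdot\|_{L_x^q(S_{t,u})}$ and $\|\cdot\|_{L_\omega^q(S_{t,u})}$ (which differ by a factor $\tir^{2/q}\approx v_t^{1/q}$), and ensuring that when I apply the fractional Sobolev step to $\|\tir^{m/2}\psi M\|$ (with $M = \sn\sigma$ or $z - V_4$) the powers of $\tir$ and the exponents $q, q_*$ align so that the ``bad'' term that reappears on the right is genuinely $\|\tir^{m/2}\sn\psi\|$ (or $\|\tir^{(m+2)/2}L\psi\|$ after the $v_t$ conversion) with a small prefactor, and not some norm that is not already present. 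Since $0 < 1 - \frac{2}{p} < s-2$ the relevant embeddings all hold, and none of the Ricci-coefficient estimates are borderline, so no new analytic input beyond Propositions \ref{ricco}, \ref{awave} and Lemma \ref{pre_sig} is needed — this lemma is essentially a repackaging of the estimates already in hand.
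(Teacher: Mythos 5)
Your argument follows essentially the paper's route: reduce via (\ref{6.30.1}) and (\ref{6.27.1}) to absorbing $\|\tir^{m/2}\psi\,\sn\sigma\|_{L_u^2L_x^2}$ and $\|\tir^{m/2}\psi(\tr\chi-2/\tir)\|_{L_u^2L_x^2}$, then estimate these by H\"older on each $S_{t,u}$ followed by the Sobolev inequality (\ref{sob}) (giving $\|\tir^{m/2}\psi\|_{L_u^2L_\omega^{2q_*}}\les\|\tir^{m/2}\sn\psi\|_{L_u^2L_x^2}+\|\tir^{m/2}\psi\|_{L_u^2L_\omega^2}$), with smallness supplied by Proposition \ref{awave} and (\ref{l4.1}). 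The one correction to your bookkeeping is exactly the $\tir$-power tracking you flagged: matching the $L_x^2$ and $L_\omega^2$ weights in the H\"older step leaves an unused power $\tir\le\tau_*$, so the true absorption prefactor is $\tau_*^{1/2}\|\tir^{1/2}(\sn\sigma,\tr\chi-2/\tir)\|_{L^\infty L_\omega^q}\les\la^{(1-8\ep_0)/2}\cdot\la^{-1/2}=\la^{-4\ep_0}$, not $\la^{-1/2}$---still $<1$, so the absorption closes, but the margin is tighter than you stated.
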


\begin{proof}
By virtue of (\ref{6.27.1}) and (\ref{6.30.1}), it suffices to show that
$$
\|\tir^{\frac{m}{2}} \psi (\tr\chi -\frac{2}{\tir})\|_{L_u^2 L_x^2} + \|\tir^{m\over 2} \psi \sn \sigma\|_{L_u^2L_x^2}
\les \la^{-4\ep_0} \left(\|\tir^{\frac{m}{2}}\sn \psi\|_{L_u^2 L_x^2}+\|\tir^{\frac{m}{2}}\psi\|_{L_u^2 L_\omega^2}\right).
$$
To see this, we take $q>2$ with $0<1-\frac{2}{q}<s-2$ and let $q_*$ be such that $2/q+1/q_*=1$. By using $v_t \approx \tir^2$
and $\tir \le \tau_*$, we then obtain
\begin{align*}
& \|\tir^{\frac{m}{2}} \psi (\tr\chi -\frac{2}{\tir})\|_{L_u^2 L_x^2}^2 + \|\tir^{m\over 2} \psi \sn \sigma\|_{L_u^2L_x^2}^2 \\
%= \int \tir^m \psi^2 |\sn \sigma|^2 v_t \bb d\omega du
&\les \tau_* \left(\|\tir^{\f12} (\tr \chi - \frac{2}{r})\|_{L^\infty L_\omega^q}^2 +\|\tir^{\f12} \sn \sigma\|_{L^\infty L_\omega^q}^2\right) \|\tir^{m\over 2}\psi\|_{l_u^2L_\omega^{2q_*}}^2
\end{align*}
By using (\ref{sob}) with $p=2q_*$, we have
\begin{equation*}
\|\tir^\frac{m}{2}\psi\|_{L_u^2 L_\omega^{2q_*}}\les \|\tir^\frac{m}{2}\sn \psi\|_{L_u^2 L_x^2}+\|\tir^\frac{m}{2}\psi\|_{L_u^2 L_\omega^2}.
\end{equation*}
From Proposition \ref{awave} and (\ref{l4.1}), we also have
$\tau_*^\f12 (\|\tir^{\f12} (\tr \chi - \frac{2}{\tir})\|_{L^\infty L_\omega^q} +\|\tir^\f12\sn\sigma\|_{L^\infty L_\omega^q})
\les \la^{-4\ep_0}$. The proof is complete.
\end{proof}

\begin{lemma}\label{correc_2}
Given $\tau>\tau_1\ge t_0$, let $U:=T_{\tau_1, R}^\tau$ be the region defined by (\ref{7.14.5}).
Then for $\tau_1\le t\le \tau$ and $\tau_1-R\le u\le \tau-R$ there hold
\begin{align}\label{compp1}
\begin{split}
\E_m[\psi]_{U, R}(t) &\les \E_m[\psi]_{U, 2R}(t) + \B_{\tau_1}^\tau,\\
\widetilde \E_m[\psi]_{U,R}(t) &\les \widetilde \E_m[\psi]_{U, 2R}(t)+\B_{\tau_1}^\tau,\\
CF_m[\psi]_{R}(u, \tau) &\les CF_m[\psi]_{2R}(u, \tau)+\B_{\tau_1}^\tau.
\end{split}
\end{align}
\end{lemma}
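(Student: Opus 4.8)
\textbf{Proof plan for Lemma \ref{correc_2}.}
The three comparison estimates in \eqref{compp1} all share the same structure: each quantity on the left differs from the corresponding quantity with $2R$ in place of $R$ only by an integral over the thin region $T_{\tau_1,R}^\tau\cap\{R\le\tir\le 2R\}$, and this region has the shape \eqref{8.8.1} with $\tir\approx R$ throughout. So the plan is to reduce all three statements to the single pointwise-integrated bound supplied by Proposition \ref{comp3}, and then to feed the right hand side of Proposition \ref{comp3} into Proposition \ref{6.27.3}. First I would write, for the energy,
$$
\E_m[\psi]_{U,R}(t)=\E_m[\psi]_{U,2R}(t)+\int_{U_t\cap\{R\le\tir\le 2R\}}\tir^m\Big(|L(v_t^{\f12}\psi)|^2+|\sn\ti\psi|^2\Omega^{-2}v_t\Big)\,d\omega\,du,
$$
and since $\tir\approx R$ on the extra slab, $\tir^m$ is comparable to a constant and can be pulled out, leaving precisely the integrand handled by Proposition \ref{comp3} on the region $U=T_{\tau_1,R}^\tau\cap\{R\le\tir\le 2R\}$ (which is contained in $\D_0^+\cap\{R\le\tir\le 2R\}$ and is of the form \eqref{8.8.1}). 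Proposition \ref{comp3} then bounds this slab integral by $\int_{K_{\tau_1,R}^\tau}\big(\P_\a^{(\bT)}[\psi]\bT^\a+\psi^2/\tir\big)$, and Proposition \ref{6.27.3} bounds the latter by $\B_{\tau_1}^\tau$. This gives the first line of \eqref{compp1}.

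For the second line I would note $\widetilde\E_m[\psi]_{U,R}(t)=\|\tir^{m/2}\psi(t)\|_{L_u^2L_\omega^2(U_t\cap\{\tir\ge R\})}^2+\E_m[\psi]_{U,R}(t)$, so beyond the energy estimate just obtained it only remains to absorb the difference $\|\tir^{m/2}\psi\|_{L_u^2L_\omega^2(U_t\cap\{R\le\tir\le 2R\})}^2$; by $\tir\approx R$ this is $\approx R^m\int_{U_t\cap\{R\le\tir\le 2R\}}\psi^2\,d\omega\,du$, which is controlled by $\int_{K_{\tau_1,R}^\tau}\psi^2/\tir$, hence by $\B_{\tau_1}^\tau$ via Proposition \ref{6.27.3} (or directly by the last assertion of Proposition \ref{lot8}). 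For the third line, the conformal flux difference is $CF_m[\psi]_R(u,\tau)-CF_m[\psi]_{2R}(u,\tau)=\int_{u+R}^{u+2R}\int_{S_{t,u}}\tir^m|L(v_t^{\f12}\psi)|^2\,d\omega\,dt$, an integral over a piece of the cone $C_u$ lying in $\{R\le\tir\le 2R\}$; again $\tir\approx R$, and the same slab is contained in $\R_{\tau_1,R}^\tau\cap\{R\le\tir\le 2R\}$, so Proposition \ref{comp3} (used on that cone-adapted region, or after a trivial co-area splitting over $u$) together with Proposition \ref{6.27.3} closes it.

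The only genuine point requiring care — and the one I expect to be the main obstacle — is the bookkeeping that makes the three "slab" regions literally fit the hypotheses of the tools cited: the energy and $\widetilde\E$ slabs sit on fixed time slices $\Sigma_t$ and must be recognized as subsets of $K_{\tau_1,R}^\tau=\R_{\tau_1,R}^\tau\cap\{\tir\le 2R\}$ (so $\tau_1\le t\le\tau$ and $\tir\ge R$ is exactly what places them inside $T_{\tau_1,R}^\tau\cap\{R\le\tir\le 2R\}$), while the flux slab is fibered over $u$ and one integrates the slicewise bound of Proposition \ref{comp3} against $du$. Everything else is a direct citation: Proposition \ref{comp3} for the slab integrand, Proposition \ref{6.27.3} for $\int_{K_{\tau_1,R}^\tau}(\P_\a^{(\bT)}[\psi]\bT^\a+\psi^2/\tir)\les\B_{\tau_1}^\tau$, and the elementary comparison $v_t^{1/2}\approx\tir\approx R$ together with $\Omega\approx 1$ from Lemma \ref{pre_sig}. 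No new estimates on Ricci coefficients or on $\sigma$ are needed beyond those already assembled in Section \ref{sec_5} and Section \ref{sec_6}.
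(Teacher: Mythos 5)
Your overall reduction is the right one: subtract the two quantities, observe that the difference lives on the slab $D=U\cap\{R\le\tir\le 2R\}$ where $\tir\approx R$, $v_t\approx\tir^2$, $\Omega\approx 1$, and convert $(L(v_t^{\f12}\psi),\sn\ti\psi)$ back to $(L\psi,\sn\psi,\psi)$ using the bounds on $\tr\chi-\frac{2}{\tir}$ and $\sn\sigma$. That much coincides with the paper. The gap is in the central citation. Proposition \ref{comp3} is a \emph{spacetime} estimate: both sides carry the full $du\,dt\,d\omega$ integration. The quantities you must control are a \emph{fixed-time-slice} integral, $\E_m[\psi]_{U,R}(t)-\E_m[\psi]_{U,2R}(t)=\int_{D\cap\Sigma_t}(\cdots)\,d\omega\,du$ for each single $t$, and a \emph{fixed-cone} integral, $CF_m[\psi]_R(u,\tau)-CF_m[\psi]_{2R}(u,\tau)=\int_{u+R}^{u+2R}\int_{S_{t,u}}(\cdots)\,d\omega\,dt$ for each single $u$. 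Applying Proposition \ref{comp3}, even after your "co-area splitting," only bounds the $t$-average (respectively the $u$-average) of these quantities, and nothing in the definitions gives monotonicity in $t$ or $u$ that would let you pass from the average to every slice. So the chain "Proposition \ref{comp3} then Proposition \ref{6.27.3}" does not yield the conclusion as stated, which is needed for \emph{each} $t\in[\tau_1,\tau]$ and \emph{each} $u\in[\tau_1-R,\tau-R]$ (and is used downstream with suprema over $t$ and $u$, e.g.\ in Proposition \ref{e1}).

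The fixed-slice bounds require different inputs, which is what the paper's proof supplies after the same reduction. For the gradient terms, the energy monotonicity \eqref{dec_1} of Lemma \ref{monoeng} gives $\|L\psi,\sn\psi\|^2_{L_u^2L_\omega^2(D\cap\Sigma_t)}\les\int_{\widetilde\Sigma_{\tau_1,R}^{\tau}}\P^{(\bT)}_\a[\psi]\bn^\a\le\B_{\tau_1}^{\tau}$ for every fixed $t$, and likewise $\|L\psi,\sn\psi\|^2_{L^2(C_u\cap D)}\les\B_{\tau_1}^{\tau}$ for every fixed $u$ (taking $\widetilde\Sigma_{u+R,R}^{\tau}$ as the initial hypersurface). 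For the zeroth-order term on the time slice one needs the last assertion of Proposition \ref{lot8} (which you mention only as a parenthetical alternative — it is in fact the required tool, not an option). For the zeroth-order term on the cone, $\|\psi\|^2_{L_t^2L_\omega^2(C_u\cap D)}$, one uses the trace identity \eqref{6.27.2}, integrated in $t$ over $[u+R,u+2R]$, to convert the cone integral into a bulk integral over a region $\widetilde K\subset K_{\tau_1,2R}^{\tau}$, which \eqref{intes_1} then controls by $\B_{\tau_1}^{\tau}$. None of these three steps follows from Proposition \ref{comp3}; as written, your argument proves only the $t$- and $u$-averaged versions of \eqref{compp1}.
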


%This result allows us not to distinguish $R'\in [R, \frac{3R}{2}]$ for the definitions (\ref{3.22.1}) and (\ref{3.22.2}).
\begin{proof}
 %Let $K$ be the interior of the region enclosed by  $\widetilde\Sigma_{u+R}$ and $\Sigma_t$.
Let $D:= U\cap \{R\le \tir \le 2R\}$ and $D_t= D\cap \Sigma_t$. By definition it suffices to show that
\begin{align}\label{7.14.7}
\begin{split}
&\int_{D_t} \tir^m \left(|L(v_t^{\f12} \psi)|^2 + |\sn \ti\psi|^2 v_t \Omega^{-2} + \psi^2\right) d\omega du \les \B_{\tau_1}^\tau,\\
& \int_{u+R}^{u+2R} \int_{S_{t,u}} \tir^m |L(v_t^{\f12}\psi)|^2 d\omega dt\les \B_{\tau_1}^\tau.
\end{split}
\end{align}
We first claim that (\ref{7.14.7}) can be obtained by showing that
 \begin{align}
&\left\|L\psi,\sn\psi, \psi\right\|_{L_u^2 L_\omega^2(D_t)}^2\les \B_{\tau_1}^\tau, \label{3.22.3}\\
&\left\|L \psi, \sn\psi\right\|_{L^2(C_u\cap D)}^2+ \left\|\psi \right\|_{L_t^2 L_\omega^2(C_u\cap D)}^2\les \B_{\tau_1}^\tau.\label{phi2}
\end{align}
To see this, by using $v_t\approx \tir^2$, $\Omega\approx 1$, $L v_t = v_t \tr \chi$ and $\sn \Omega =-\Omega \sn \sigma$, we only need to show
\begin{align*}
& \left\|\tir^{\frac{m+1}{2}} \psi (\tr \chi-\frac{2}{\tir}), \, \tir^{\frac{m+1}{2}}\psi \sn \sigma\right\|_{L_u^2 L_\omega^2(D_t)}\les \|\sn \psi\|_{L_u^2 L_\omega^2(D_t)}
+ \|\psi\|_{L_u^2 L_t^2(D_t)}, \\
& \left\|\tir^{\frac{m+1}{2}}\psi (\tr\chi-\frac{2}{\tir}) \right\|_{L_t^2 L_\omega^2(C_u\cap D)} \les \|\sn \psi\|_{L_t^2 L_\omega^2(C_u\cap D)}
+ \|\psi\|_{L_t^2 L_\omega^2(C_u\cap D)}.
\end{align*}
By using Proposition \ref{awave}, (\ref{l4.1}) and $\tir \approx R$ in $D$, we can employ the same argument in the proof of Lemma
\ref{com5} to derive these estimates.

Now we prove (\ref{3.22.3}) and (\ref{phi2}). By using (\ref{dec_1}) in Lemma \ref{monoeng}, we have
\begin{align*}
& \|L\psi,\sn\psi\|_{L^2_u L_\omega^2(D_t)}^2+\|L \psi, \sn \psi\|_{L^2(C_u\cap D)}^2
%\les \int_{\widetilde\Sigma_{u+R, R}^\tau}\P_\a^{(\bT)}[\psi] n^\a
\les \int_{\widetilde\Sigma_{\tau_1, R}^\tau} \P_\a^{(\bT)}[\psi] \bn^\a\le \B_{\tau_1}^\tau.
\end{align*}
By (\ref{intes_1}), we also have $\|\psi\|_{L_u^2 L_\omega^2(D_t)}^2 \les \B_{\tau_1}^\tau$. Hence (\ref{3.22.3}) is proved.
Next by using (\ref{dec_1}) we derive that
\begin{equation*}
\|L \psi, \sn \psi\|_{L^2(C_u\cap D)}^2 \le \int_{\widetilde{\Sigma}_{u+R, R}^\tau} \P_\a^{(\bT)}[\psi] \bn^\a
\les \int_{\widetilde{\Sigma}_{\tau_1, R}^\tau}  \P^{(\bT)}_\a[\psi] \bn^\a \les \B_{\tau_1}^\tau.
\end{equation*}
which shows the first part of (\ref{phi2}). To see the second part of (\ref{phi2}), we use (\ref{6.27.2}) to obtain
\begin{equation*}
\int_{S_{t, u}} \tir^2 \psi^2 d\omega=\int_{\itt_{t,u}} 2\tir \psi^2 du d\omega +\int_{\itt_{t,u}} 2\tir^2 \psi (N \psi) \bb du d\omega.
\end{equation*}
By integrating this equation in $t$ for $u+R\le t\le u+2R$ and using $\tir\approx R$ in $D$, we have
\begin{equation*}
\|\psi\|_{L_t^2 L_\omega^2(C_u\cap D)}^2\les \int_{\widetilde K}(\P^{(\bT)}_\a[\psi] \bn^\a+\frac{\psi^2}{\tir})
\end{equation*}
where $\widetilde K$ is the region enclosed by $\widetilde{\Sigma}_{u+R, R}^{u+2R}$ and $\Sigma_{u+2R}$. Since
$\widetilde K\subset K_{\tau_1, 2R}^\tau$, we may use (\ref{intes_1}) in Proposition \ref{6.27.3} to derive that
$ \|\psi\|_{L_t^2 L_\omega^2(C_u\cap D)}^2 \les \B_{\tau_1}^\tau$. We thus complete the proof of (\ref{phi2}).
\end{proof}

\subsection{Step 2: Multiplier approach}\label{mul_1}

We use $\Omega= e^{-\sigma}$ to introduce the metric $\ti \bg= \Omega^{-2} \bg$ and $\ti\psi=\Omega \psi$.
We first derive a null decomposition for $\Box_{\ti\bg} \ti \psi$. For simplicity of exposition, we set
\begin{equation*}
h=\f12 \tr\chi,\quad \vb=\f12 \tr\chib, \quad \ti h=\f12 \tr \ti \chi,\quad \ti \vb=\f12 \tr \ti \chib.
\end{equation*}
By using (\ref{c_4_20}) we have
\begin{equation}\label{defv}
\ti h = h +  L\sigma, \qquad \ti \vb = \vb + \Lb \sigma.
\end{equation}
By direct calculation one has (see \cite[page 72]{shock})
\begin{equation*}
\Omega^{-2} \Box_{\ti\bg} \psi=\Box_{\bg} \psi+\bd^\mu \log(\Omega^{-2}) \bd_\mu \psi.
\end{equation*}
Because $\bd \log \Omega=-\bd\sigma$, we thus have $\Omega^{-2} \Box_{\ti\bg}\psi=\Box_\bg \psi+2\bd^\mu \sigma \bd_\mu \psi$.
Recall that
\begin{equation}\label{wd}
\Box_\bg \psi=\sD\psi-\Lb L \psi-(\vb-k_{NN}) L\psi- h \Lb \psi+2\zeta\c\sn \psi
\end{equation}
We therefore obtain
\begin{equation}\label{cwd}
\Omega^{-2}\Box_{\ti\bg} \psi= \sD\psi-\Lb L \psi-(\ti\vb-k_{NN}) L\psi-\ti h\Lb \psi+2(\zeta+\sn \sigma)\c\sn \psi.
\end{equation}
%\begin{equation*}
%\Box_\bg \phi=\sD \phi-\Lb(L\phi+v\phi)-(\vb+\bar{k}_{NN})(L\phi+v\phi)+2\zeta\c \sn \phi+(\Lb v+v\vb+v\bar{k}_{NN})\phi.
%\end{equation*}
Since (\ref{cwd}) is true for any scalar function $\psi$, we may replace $\psi$ by $\ti \psi$ and regroup terms to obtain
\begin{equation} \label{wave2}
\begin{split}
\Omega^{-2}\Box_{\ti \bg} \ti\psi& =\sD \ti\psi+2(\zeta+\sn \sigma)\c \sn \ti\psi-\Lb(L\ti\psi+\ti h\ti\psi)
-(\ti\vb-k_{NN})(L\ti\psi+\ti h\ti\psi)\\
& \quad \, +(\Lb \ti h+\ti h\ti \vb-\ti h k_{NN})\ti\psi.
\end{split}
\end{equation}
In the following we assume that $\psi$ is a solution of $\Box_\bg \psi=0$. By virtue of (\ref{idc1}) we have
$
\Omega^{-2}\Box_{\ti \bg}\ti \psi=-(\Box_\bg \sigma+\bd^\mu \sigma\bd_\mu \sigma) \ti \psi.
$
Combining this equation with (\ref{wave2}) we obtain
\begin{align*}
0 & = \sD \ti\psi+2(\zeta+\sn \sigma)\c \sn \ti\psi-\Lb(L\ti\psi+\ti h\ti\psi)
-(\ti\vb-k_{NN})(L\ti\psi+\ti h\ti\psi) \nn \displaybreak[0]\\
& \quad \, +(\Lb \ti h+\ti h\ti \vb + \Box_\bg \sigma +\bd^\mu \sigma \bd_\mu \sigma -\ti h k_{NN})\ti\psi.
\end{align*}
By using (\ref{defv}) and (\ref{wd}) for $\Box_\bg \sigma$, we can derive that
\begin{align*}
 \Lb \ti h+\ti h\ti \vb + \Box_\bg \sigma +\bd^\mu \sigma \bd_\mu \sigma -\ti h k_{NN} \displaybreak[0]
%& = \Lb (h + L\sigma) + (h+L\sigma) (\vb+ \Lb \sigma) +\sD \sigma -\Lb L \sigma -(\vb-k_{NN}) L\sigma - h \Lb \sigma \displaybreak[0]\\
%& \quad \, + 2\zeta\c \sn \sigma - L\sigma \Lb \sigma + |\sn \sigma|^2 -(h+L\sigma) k_{NN}\displaybreak[0]\\
& = \Lb h + h \vb +\sD \sigma +|\sn \sigma|^2 +2\zeta\c \sn \sigma - h k_{NN}.
\end{align*}
Note that $2(\Lb h + h\vb) = \mu := \Lb \tr \chi +\f12 \tr \chi\tr \chib$. Therefore
\begin{align}\label{7.11.1}
0 & = \sD \ti\psi+2(\zeta+\sn \sigma)\c \sn \ti\psi-\Lb(L\ti\psi+\ti h\ti\psi)
-(\ti\vb-k_{NN})(L\ti\psi+\ti h\ti\psi) \nn\\
& \quad \, +\left(\f12 \mu +\sD \sigma +|\sn \sigma|^2 +2\zeta\c \sn \sigma - h k_{NN}\right)\ti\psi.
\end{align}
We will use this equation to derive a useful formula by multiplying it a suitably chosen function
and integrating the result over a suitable domain contained in $\D_0^+$.

\begin{figure}[ht]
\centering
\includegraphics[width = 0.46\textwidth]{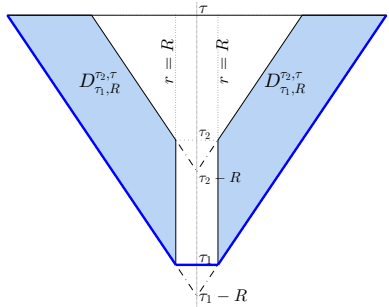}
  \vskip -0.4cm
  \caption{Illustration of $D_{\tau_1, R}^{\tau_2,\tau}$}\label{fig4}
\end{figure}

To this end, for $\tau>\tau_2>\tau_1 \ge t_0$, we use $D_{\tau_1, R}^{\tau_2, \tau}$ to denote the region
enclosed by $\{u=\tau_1-R,  \tau_1\le t<\tau\}$, $\{u=\tau_2-R, \tau_2\le t<\tau\}$,  $\{\ti r=R\}$ and $\{t=\tau, \tau_1-R\le u\le
\tau_2-R\}$. When $\tau=\tau_*$ we set $D_{\tau_1, R}^{\tau_2} = D_{\tau_1, R}^{\tau_2, \tau_*}$.
 We multiply (\ref{7.11.1}) by $(L\ti\psi+\ti h \ti \psi) \tir^m$ and integrate over $D_{\tau_1, R}^{\tau_2, \tau}$ to obtain
\begin{align}\label{7.11.2}
0 & = \int_{D_{\tau_1,R}^{\tau_2, \tau}} \left(\sD \ti\psi+2(\zeta+\sn \sigma)\c \sn \ti\psi\right)
(L\ti \psi + \ti h \ti \psi) \tir^m \bb \Omega^{-2} d\mu_\ga du dt \nn \displaybreak[0]\\
& - \int_{D_{\tau_1,R}^{\tau_2, \tau}} \left(\frac{1}{2} \Lb( (L\ti\psi+\ti h\ti\psi)^2 )
+(\ti\vb-k_{NN}) (L\ti\psi+\ti h\ti\psi)^2 \right) \tir^m \bb d\mu_{\ti\ga}du dt \nn \displaybreak[0]\\
& + \int_{D_{\tau_1,R}^{\tau_2, \tau}} \left(\f12 \mu +\sD \sigma +|\sn \sigma|^2 +2\zeta\c \sn \sigma - h k_{NN}\right)\ti\psi
(L\ti\psi+\ti h \ti \psi) \tir^m \bb d\mu_{\ti\ga} du dt,
\end{align}
where $d\mu_{\ti\ga} =\Omega^{-2} d\mu_\ga$. To simplify the first term in (\ref{7.11.2}), we consider the integral
$$
\I=\int_{S_{t,u}} (\sD \ti \psi+2(\zeta+\sn \sigma)\c \sn \ti \psi)(L\ti\psi+\ti h\ti\psi)\bb\Omega^{-2} d\mu_{\ga}.
$$
By integration by part on $S_{t,u}$, we have
\begin{align*}
\I&=\int \left[-\sn \ti \psi \sn \left(\Omega^{-2} \bb(L\ti \psi+\ti h \ti \psi)\right)
+2 (\zeta+\sn \sigma)\c \sn \ti \psi \c (L\ti \psi+\ti h\ti \psi) \Omega^{-2} \bb\right] d\mu_\ga \displaybreak[0]\\
&=\int \left[-\sn \ti\psi \sn (L \ti\psi+\ti h\ti \psi)+(2\zeta-\sn \log \bb) \c\sn \ti \psi \c (L\ti \psi+\ti h \ti \psi)\right] \bb \Omega^{-2} d\mu_\ga \displaybreak[0]\\
&=\int\left[-\sn \ti \psi \left(\sn_L \sn \ti \psi+[\sn, L]\ti \psi+\sn (\ti h \ti \psi)\right)
+(2\zeta-\sn \log \bb) \sn \ti \psi(L\ti \psi+\ti h \ti \psi)\right] \bb \Omega^{-2} d\mu_\ga \nn \displaybreak[0]\\
& = \int \Big[ -\f12 L(|\sn \ti\psi|^2) - (h+\ti h)  |\sn \ti \psi|^2 -\ti \psi \sn \ti h \sn \ti \psi -\sn \ti \psi \c \chih \c \sn\ti \psi \displaybreak[0]\\
& \quad \, +(2\zeta-\sn \log \bb) \sn \ti \psi(L\ti \psi+\ti h \ti \psi)\Big] \bb \Omega^{-2} d\mu_\ga
\end{align*}
where, in the last step, we dealt with $[\sn, L] \ti \psi$ by using (\ref{cmu2}) and writing $\chi =\chih + h \ga$.
Combining this with (\ref{7.11.2}) and rearranging the terms, we obtain
\begin{align}\label{7.11.3}
& \f12 \int  \bb \tir^m \left[L(|\sn \ti\psi|^2)+2(h+\ti h) |\sn \ti\psi|^2+\Lb( (L\ti\psi+\ti h\ti\psi)^2 )
+2\ti\vb (L\ti\psi+\ti h\ti\psi)^2 \right] d\mu_{\ti\ga} du dt \nn \displaybreak[0]\\
&=\int \tir^m\bb\left[(2\zeta-\sn\log \bb)\c \sn \ti\psi(L\ti\psi+\ti h\ti\psi) +{k}_{NN} (L\ti\psi+\ti h\ti\psi)^2
-\chih\c\sn\ti\psi\c\sn\ti\psi \right]d\mu_{\ti\ga}dudt  \nn \displaybreak[0]\\
&+\int \tir^m \bb\ti\psi\left[\Big(\f12 \mu +\sD \sigma +|\sn \sigma|^2
+ 2\zeta\c \sn \sigma - h k_{NN}\Big) (L\ti\psi+\ti h\ti \psi)-\sn \ti \psi \sn \ti h\right] d\mu_{\ti \ga}dudt,
\end{align}
where the integrals are taken over $D_{\tau_1,R}^{\tau_2, \tau}$.

\begin{proposition}\label{prop:MA}
Let $\tau>\tau_2>\tau_1\ge t_0$. For $m\in {\mathbb N}$ there holds
\begin{align}\label{B_1}\tag{\bf {MA}}
&\int_{\tau_2}^\tau \int_{S_{t, \tau_2-R}} \tir^m |L(v_t^{\f12}\psi)|^2 d\omega dt\nn\\
&\qquad +\int_{D^{\tau_2,\tau}_{\tau_1, R}} \bb \tir^{m-1} \left(m |L(v_t^{\f12}\psi)|^2+(2-m)|\sn\ti\psi|^2v_t \Omega^{-2}\right) d\omega du dt\nn \displaybreak[0]\\
&\qquad +\int_{\tau_1-R}^{\tau_2-R} \int_{S_{\tau, u}} \bb \tir^m \left(|L(v_t^{\f12}\psi)|^2+|\sn\ti\psi|^2v_{t}\Omega^{-2}\right) d\omega du \nn \displaybreak[0]\\
& =\int_{\tau_1}^{\tau_2} \int_{S_{t, t-R}} \tir^m\left(\bb|\sn\ti\psi|^2 v_t\Omega^{-2}-(2-\bb) |L(v_t^{\f12}\psi)|^2\right) d\omega dt \nn \displaybreak[0]\\
& \qquad + \int_{\tau_1}^\tau \int_{S_{t, \tau_1-R}} \tir^m |L(v_t^{\f12}\psi)|^2 d\omega d t +\mbox{error}(m,R) \nn %\label{B}
\end{align}
with
$$
\mbox{error}(m,R)=2\int_{D_{\tau_1, R}^{\tau_2, \tau}}\er_m(\psi) d\mu_\ga du dt
+\int_{D_{\tau_1,R}^{\tau_2, \tau}} \tir^m \bA\c \J[\psi] \bb\Omega^{-2} d\mu_\ga du dt,
$$
where
$$
\er_m(\psi) =\tir^m \bb \Omega^{-2} \ti \psi \left[ \left(\f12 \mu +\sD \sigma +|\sn \sigma|^2 + 2\zeta\c \sn \sigma - h k_{NN}\right) (L\ti \psi + \ti h \ti \psi)
-\sn \ti \psi \sn \ti h\right],
$$
the symbol $\bA$ denote any terms among $L\log\bb, L\sigma,\zeta, \tr\chi-\frac{2}{t-u},\chih,\frac{\bb-1}{\ti r}, k$, and
the symbol $\J[\psi]$ denotes any quadratic terms with factors being either $\sn \ti\psi$ or $ L\ti\psi+\ti h \ti \psi$.
\end{proposition}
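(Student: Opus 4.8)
The plan is to start from the multiplier identity \eqref{7.11.3}, which was already derived by multiplying \eqref{7.11.1} by $(L\ti\psi+\ti h\ti\psi)\tir^m\bb$ and integrating over $D_{\tau_1,R}^{\tau_2,\tau}$, and then to convert each of the two genuine derivative terms on the left, namely $L(|\sn\ti\psi|^2)$ and $\Lb((L\ti\psi+\ti h\ti\psi)^2)$, into a divergence in the $(t,u,\omega)$ coordinates so that the divergence theorem produces the boundary fluxes listed in \eqref{B_1}. For the $L$-term one uses $L=\p_t+N$ acting on functions on $\D_0^+$ together with $Lv_t=v_t\tr\chi=2h\,v_t$; the combination $\bb\tir^m(L(|\sn\ti\psi|^2)+2(h+\ti h)|\sn\ti\psi|^2)\,\Omega^{-2}v_t$ is, up to the weight factors $\bb\tir^m\Omega^{-2}$ being themselves differentiated, the $L$-derivative of $\tir^m|\sn\ti\psi|^2\Omega^{-2}v_t$; the mismatch $L(\bb\tir^m\Omega^{-2}) $ versus the coefficient $2h$ we carry is exactly what generates the bulk term $m\tir^{m-1}|\sn\ti\psi|^2v_t\Omega^{-2}$ (from $L\tir^m=m\tir^{m-1}$) plus lower-order pieces proportional to $L\log\bb$, $L\sigma$ and $\tr\chi-2/\tir$, which go into $\bA\cdot\J[\psi]$. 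For the $\Lb$-term one uses \eqref{lr}, i.e.\ $\Lb(t)=1$, $\Lb(u)=2\bb^{-1}$, so that $\Lb$ differentiates along the slices; rewriting $(L\ti\psi+\ti h\ti\psi)^2=v_t^{-1}(v_t^{1/2}(L\ti\psi+\ti h\ti\psi))^2$ and recognizing $v_t^{1/2}(L\ti\psi+\ti h\ti\psi)=L(v_t^{1/2}\ti\psi)$ (the defining computation behind \eqref{6.27.1}, now with $\ti h$ in place of $h$ since $\ti\psi=\Omega\psi$ and $\tr\ti\chi=2\ti h$) turns this into $|L(v_t^{1/2}\ti\psi)|^2$; its $\Lb$-divergence contributes the null-cone fluxes on $C_{\tau_1-R}$ and $C_{\tau_2-R}$ and the $\{\tir=R\}$ and $\{t=\tau\}$ fluxes, with the coefficient $2\ti\vb$ we carry matching $\Lb\log v_t$ up to $2(\ti\vb-\vb)=2\Lb\sigma$ and $\tr\chib+2/\tir$ corrections, again absorbed into the error.

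Next I would bookkeep the boundary of $D_{\tau_1,R}^{\tau_2,\tau}$: it consists of the two ingoing null pieces $\{u=\tau_1-R\}$ and $\{u=\tau_2-R\}$, the inner timelike-ish boundary $\{\tir=R\}$, and the top slice $\{t=\tau,\ \tau_1-R\le u\le\tau_2-R\}$; see Figure~\ref{fig4}. On $\{u=\tau_2-R\}$ (where $\bn$ is proportional to $L$) the flux of the $\Lb$-divergence is $+\int_{\tau_2}^\tau\int_{S_{t,\tau_2-R}}\tir^m|L(v_t^{1/2}\psi)|^2$ (after replacing $v_t^{1/2}\ti\psi$ by $v_t^{1/2}\psi$ using $\Omega\approx 1$ and moving the $O(\sn\sigma)$ difference into the error via \eqref{l4.1}); on $\{u=\tau_1-R\}$ it is $-\int_{\tau_1}^\tau\int_{S_{t,\tau_1-R}}\tir^m|L(v_t^{1/2}\psi)|^2$, which appears with a plus sign on the right-hand side of \eqref{B_1} after moving it across; on $\{\tir=R\}$, parametrized by $(t,\omega)$ with $t$ ranging over $[\tau_1,\tau_1+?]$—more precisely $S_{t,t-R}$—the induced measure and the sign of the normal produce the term $\int_{\tau_1}^{\tau_2}\int_{S_{t,t-R}}\tir^m(\bb|\sn\ti\psi|^2v_t\Omega^{-2}-(2-\bb)|L(v_t^{1/2}\psi)|^2)$, the factor $2-\bb$ coming from combining the $L$- and $\Lb$-fluxes on that face (where $L$ and $\Lb$ are both transverse, with $\bn\sim \p_{\tir}$ and $\p_{\tir}=\bb N=\tfrac12\bb(L-\Lb)$); and on $\{t=\tau\}$ one gets $\int_{\tau_1-R}^{\tau_2-R}\int_{S_{\tau,u}}\bb\tir^m(|L(v_t^{1/2}\psi)|^2+|\sn\ti\psi|^2v_t\Omega^{-2})$. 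Assembling these with the right-hand side of \eqref{7.11.3}—whose three displayed groups of terms are precisely $\er_m(\psi)$ (the $\ti\psi$-linear group, including $\mu$, $\sD\sigma$, $|\sn\sigma|^2$, $\zeta\cdot\sn\sigma$, $hk_{NN}$ and $\sn\ti\psi\,\sn\ti h$) and the $\bA\cdot\J[\psi]$ group (the first integral on the right of \eqref{7.11.3}, where $2\zeta-\sn\log\bb$, $k_{NN}$, $\chih$ times quadratic expressions in $\sn\ti\psi$ and $L\ti\psi+\ti h\ti\psi$ all have the stated $\bA$-coefficients)—gives exactly \eqref{B_1}.

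The main obstacle, and where care is needed, is the precise bookkeeping of the weight factors: every time one of $\bb$, $\tir^m$, $\Omega^{-2}$, $v_t$ is hit by $L$ or $\Lb$ rather than appearing as an inert multiplier, one generates a term, and one must check that (i) the "good" coefficients $2(h+\ti h)$ and $2\ti\vb$ carried on the left of \eqref{7.11.3} are exactly cancelled by $L\log(\tir^m v_t)$ resp.\ $\Lb\log(\tir^m v_t)$ up to the factor $\tir^m\to m\tir^{m-1}$ shift and remainders of the form $L\log\bb$, $L\sigma$, $\tr\chi-2/\tir$, $\Lb\log\bb$, $\Lb\sigma$, $\tr\chib+2/\tir$, and (ii) those remainders are all genuinely in the list $\{L\log\bb, L\sigma, \zeta, \tr\chi-2/\tir, \chih, (\bb-1)/\tir, k\}$. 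The identity $\Lb(u)=2\bb^{-1}$ means $\Lb\log v_t$ is $\tr\chib$ plus a $\bb^{-1}$-weighted radial piece, which is why $(\bb-1)/\tir$ enters; and $\tr\chib+2/\tir$ is not literally in the $\bA$-list, so one must rewrite it, using $\chi+\chib=-2k$ and $\tr\chi-2/\tir=z-V_4\in\bA$, as $-(\,\tr\chi-2/\tir\,)-2k_{NN}+O(\chih,k)$, which is in $\bA$. Also, replacing $v_t^{1/2}\ti\psi$ by $v_t^{1/2}\psi$ throughout (to land on $|L(v_t^{1/2}\psi)|^2$ rather than $|L(v_t^{1/2}\ti\psi)|^2$ in the fluxes) costs a term $L(v_t^{1/2}\psi)\cdot v_t^{1/2}\psi\,L\sigma$ and $(\sn\sigma)$-terms, all of which are $\bA\cdot\J[\psi]$ or go into $\er_m$; here $\Omega\approx 1$ and Lemma~\ref{pre_sig} guarantee these are harmless in form. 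None of these manipulations requires an estimate—only the algebra of the null frame and \eqref{6.7con}, \eqref{cmu2}, \eqref{lr}, \eqref{lv}, \eqref{defv}, \eqref{wd}—so the proposition is, in the end, an organized integration-by-parts identity, and the proof should consist of carrying out exactly this regrouping and invoking the divergence theorem on $D_{\tau_1,R}^{\tau_2,\tau}$.
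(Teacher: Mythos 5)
Your overall route is the paper's: start from the integrated identity (\ref{7.11.3}), recognize the two remaining derivative groups as exact divergences, and apply the divergence theorem on $D_{\tau_1,R}^{\tau_2,\tau}$; your boundary bookkeeping (including the $(2-\bb)$ coefficient on $\{\tir=R\}$ coming from $\p_{\tir}=\f12\bb(L-\Lb)$) matches the paper's.

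There is, however, one step that would fail as written, and it is exactly the one nontrivial observation in the paper's short proof. The identity you invoke, $v_t^{\f12}(L\ti\psi+\ti h\ti\psi)=L(v_t^{\f12}\ti\psi)$, is false: since $Lv_t=2h\,v_t$ one has $L(v_t^{\f12}\ti\psi)=v_t^{\f12}(L\ti\psi+h\ti\psi)$, with $h$ rather than $\ti h$. The correct statement is the paper's (\ref{lpsi}), the \emph{exact} identity $(L\ti\psi+\ti h\ti\psi)^2\sqrt{|\ti\ga|}=|L(v_t^{\f12}\psi)|^2$, which one gets by working with the conformal area element $\sqrt{|\ti\ga|}=\Omega^{-2}v_t$: then $L\sqrt{|\ti\ga|}=2\ti h\sqrt{|\ti\ga|}$ and $|\ti\ga|^{1/4}\ti\psi=v_t^{\f12}\psi$, the factors of $\Omega$ cancelling identically, and likewise $\Lb((L\ti\psi+\ti h\ti\psi)^2)\sqrt{|\ti\ga|}+2\ti\vb(L\ti\psi+\ti h\ti\psi)^2\sqrt{|\ti\ga|}=\Lb\bigl(|L(v_t^{\f12}\psi)|^2\bigr)$ with no remainder. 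Your proposed repairs — replacing $v_t^{\f12}\ti\psi$ by $v_t^{\f12}\psi$ ``using $\Omega\approx 1$'' and matching $2\ti\vb$ against $\Lb\log v_t$ ``up to $2\Lb\sigma$ \dots absorbed into the error'' — are not legitimate: \eqref{B_1} is an identity, not an estimate; the cross terms you would generate (e.g.\ $L\sigma\cdot v_t^{\f12}\psi\cdot L(v_t^{\f12}\psi)$) carry an undifferentiated factor $v_t^{\f12}\psi$, which is neither $\sn\ti\psi$ nor $L\ti\psi+\ti h\ti\psi$ and hence does not fit $\bA\c\J[\psi]$; and $\Lb\sigma$ is deliberately absent from the $\bA$-list because the paper has no control over it (this is precisely why the multiplier and the measure $d\mu_{\ti\ga}$ were chosen as they were). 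Once you replace your bookkeeping of the $\Lb$-group by the exact combination above followed by $\Lb=L-2N$, $\bb N=\p_{\tir}$, the rest of your argument goes through and coincides with the paper's proof.
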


\begin{proof}
Recall that $\sn_A \log \bb = \zeta_A - k_{AN}$. Thus all the terms in (\ref{7.11.3}) can be put into $\mbox{error}(m,R)$ except the terms
\begin{align*}
\I_1 & = \int_{D_{\tau_1,R}^{\tau_2, \tau}} \bb \tir^m \left[L(|\sn \ti\psi|^2)+2(h+\ti h) |\sn \ti\psi|^2\right] \sqrt{|\ti\ga|} d\omega du dt, \displaybreak[0]\\
\I_2 & = \int_{D_{\tau_1,R}^{\tau_2, \tau}} \bb \tir^m \left[\Lb( (L\ti\psi+\ti h\ti\psi)^2 )+2\ti\vb (L\ti\psi+\ti h\ti\psi)^2 \right] \sqrt{|\ti\ga|} d\omega du dt
\end{align*}
The term $\I_1$ can be treated by using $L \sqrt{|\ti \ga|}=2\ti h\sqrt{|\ti \ga|}$ and integrating along the direction of $L$. To deal with the term
$\I_2$, we first use $\Lb \sqrt{|\ti \ga|}=2\ti \vb\sqrt{|\ti \ga|}$ and the identity
\begin{equation}\label{lpsi}
(L\ti \psi+\ti h\ti\psi)^2 \sqrt{|\ti \ga|}= |L(v_t^{\f12} \psi)|^2;
\end{equation}
we then use $\Lb=L-2N$, $\bb N = \p_{\tir}$ and integrate along the directions of $L$ and $N$.
\end{proof}

By applying the above same argument for deriving (\ref{B_1}) on the region $D_{\tau_1, R}^{\tau_2, \tau}\cap \{\tir \ge R'\}$ with $R\le R'\le 2R$,
we can derive that
\begin{align}\label{MA2}
&\int_{\tau_2+R'-R}^\tau \int_{S_{t, \tau_2-R}} \tir^m |L(v_t^{\f12}\psi)|^2 d\omega dt\nn\\
&\qquad +\int_{D^{\tau_2,\tau}_{\tau_1, R}\cap \{\tir \ge R'\}} \bb \tir^{m-1}
  \left(m |L(v_t^{\f12}\psi)|^2+(2-m)|\sn\ti\psi|^2v_t \Omega^{-2}\right) d\omega du dt\nn \displaybreak[0]\\
&\qquad +\int_{\tau_1-R}^{\tau_2-R} \int_{S_{\tau, u}} \bb \tir^m \left(|L(v_t^{\f12}\psi)|^2+|\sn\ti\psi|^2v_{t}\Omega^{-2}\right) d\omega du \nn \displaybreak[0]\\
& =\int_{\tau_1+R'-R}^{\tau_2+R'-R} \int_{S_{t, t-R'}} \tir^m\left(\bb|\sn\ti\psi|^2 v_t\Omega^{-2}-(2-\bb) |L(v_t^{\f12}\psi)|^2\right) d\omega dt \nn \displaybreak[0]\\
& \qquad + \int_{\tau_1+R'-R}^\tau \int_{S_{t, \tau_1-R}} \tir^m |L(v_t^{\f12}\psi)|^2 d\omega d t +\mbox{error}(m,\tir \ge R'),
\end{align}
where
$$
\mbox{error}(m,\tir \ge R')=2\int_{D_{\tau_1, R}^{\tau_2, \tau}\cap \{\tir \ge R'\}}\er_m(\psi) d\mu_\ga du dt
+\int_{D_{\tau_1,R}^{\tau_2, \tau}\cap \{\tir \ge R'\}} \tir^m \bA\c \J[\psi] \bb\Omega^{-2} d\mu_\ga du dt,
$$

%By setting $m=2$ in (\ref{B_1}) we can obtain
%\begin{align}
%& \int_{\{u=\tau_2-R, \tau_2\le t\le \tau\}} \tir^2 |L (v_t^{\f12}\psi)|^2 d\omega dt
%+\int_{D_{\tau_1, R}^{\tau_2, \tau}} 2\tir |L(v_t^{\f12}\psi)|^2\bb d\omega du d t\nn \displaybreak[0]\\
%&\qquad +\int_{\{t=\tau, \tau_1-R\le u\le \tau_2-R\}} \bb \tir^2 \left(|\sn \ti\psi|^2 v_t\Omega^{-2}+|L(v_t^{\f12}\psi)|^2\right)d\omega du\nn \displaybreak[0]\\
%&\qquad =\int_{\{\tir =R, \tau_1\le t\le \tau_2\}} \tir^2\bb \left(|\sn\ti\psi|^2 v_t\Omega^{-2}-|L(v_t^{\f12}\psi)|^2\right) d\omega d t\label{l3} \displaybreak[0]\\
%&\qquad + \int_{\{u=\tau_1-R, \tau_1\le t\ge \tau\}} \tir^2 |L(v_t^{\f12}\psi)|^2 d\omega dt +\mbox{error}(2,R)\label{dis1}
%\end{align}

\subsubsection{Bootstrap Assumptions for conformal energy}

We will prove Theorem \ref{BT} by a bootstrap argument.  Let $\C_0=\|\psi[t_0]\|_{\dot{H}^1}+\|\psi(t_0)\|_{L^2}$. We  make the following assumptions,
\begin{align}
\C[\psi](t) &\le \la^{2\ep_0}\C_0^2, \label{baen} \displaybreak[0]\\
\|\psi(t)\|_{L_u^2 L_\omega^2} & \le (t+1)^{-1} \la^{\ep_0}\C_0.\label{lowba}
\end{align}
%\begin{equation}\label{assm_1}
%\int_{\widetilde{\Sigma}_\tau} \J_\a^{(\bT)}[\phi] n^\a\le \la^{2\ep_0} (1+\tau)^{-2} \C_0^2
%\end{equation}
We will improve them to be
\begin{align}
& \int_{\Sigma_t}|(t+1)\sn \psi|^2+|(t+1) L\psi|^2\les (1+t)^{2\ep} \C_0^2, \label{con1} \displaybreak[0]\\
& \int_{\Sigma_t\cap\{u\ge \frac{t}{2}\}} \P_\a^\bT[\psi] \bn^\a\les (1+t)^{-2}\C_0^2, \label{con} \displaybreak[0]\\
&\|\psi(t)\|_{L_u^2 L_\omega^2}\les (t+1)^{-1+\ep} \C_0, \label{con2}
\end{align}
%\begin{equation}\label{cassm_1}
%\int_{\widetilde{\Sigma}_\tau} \J_\a^{(\bT)}[\phi] n^\a\les  (1+\tau)^{-2} \C_0^2
%\end{equation}
where $\ep>0$ is any number close to $0$.  The combination of (\ref{con1}), (\ref{con}) and (\ref{con2}) gives
\begin{equation}
\C[\psi](t)\les (t+1)^{2\ep}\C_0^2
\end{equation}
which improves (\ref{baen}) because $\ep>0$ can be arbitrarily close to $0$.

\subsubsection{Bounded weighted energy}

In Lemma \ref{lem_629}, we will control error term involved in (\ref{B_1}). We then obtain various weighted
energies in Proposition \ref{e1} and Proposition \ref{intg}. They are crucial for Step 3, which provides the
desired decay results, in particular, for the interior region of $\D^+$ where $u\ge \frac{t}{2}$.  Proposition \ref{e1}
gives conformal energy in the exterior region of $\D^+$ where $u<\frac{t}{2}$. The proof of Lemma \ref{lem_629}
relies crucially on Section \ref{sec_5} and Section \ref{sec_6}.

We first give some consequences of (\ref{baen}) and (\ref{lowba}). We will use the inequality
\begin{equation}\label{sobp}
\|\tir^{\frac{m-1}{2}} \psi(t)\|_{L_u^2 L_\omega^q}
\les \| \tir^\frac{m-1}{2}\sn\psi(t)\|_{L_u^2 L_x^2} +\|\tir^{\frac{m}{2}-\frac{3}{2}}\psi(t)\|_{L_u^2 L_x^2}
\end{equation}
which follows from (\ref{sob}), where $2<q<\infty$.

\begin{lemma}
Let $t_0\le \tau_1<\tau_*$. Within $D_{\tau_1, R}^{\tau_*}$ there hold
\begin{align}
& \|\sn \psi(t)\|_{L_u^2 L_x^2}+\|\psi(t)\|_{L_u^2 L_\omega^2}+\|\sn\ti\psi(t)\|_{L_u^2 L_x^2} \les (t+1)^{-1}\la^{\ep_0}\C_0, \label{bba1} \displaybreak[0]\\
& \|r^{\frac{m-1}{2}}\psi\|_{L_t^2 L_u^2 L_\omega^q(D_{\tau_1, R}^{\tau_*})}\les(\tau_1+1)^{\frac{m-2}{2}} (\ln\la)^\f12 \la^{\ep_0}\C_0,\label{bba2}
\end{align}
where $m=1,2$ and $2<q<\infty$.
\end{lemma}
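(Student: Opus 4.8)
The proof proceeds in two parts, corresponding to the two estimates \eqref{bba1} and \eqref{bba2}. The key inputs are the bootstrap assumptions \eqref{baen}, \eqref{lowba}, the energy comparison results of Section \ref{cmpsec}, the Sobolev inequality \eqref{sobp}, and the control on $\sn\sigma$ and $\tr\chi-\frac{2}{\tir}$ from Proposition \ref{awave} and Lemma \ref{pre_sig}.

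For \eqref{bba1}: the plan is first to control $\|\sn\psi(t)\|_{L_u^2L_x^2}$ and $\|\psi(t)\|_{L_u^2L_\omega^2}$ directly from the bootstrap assumptions. The estimate $\|\psi(t)\|_{L_u^2L_\omega^2}\les(t+1)^{-1}\la^{\ep_0}\C_0$ is exactly \eqref{lowba}, so nothing is needed there. For $\|\sn\psi(t)\|_{L_u^2L_x^2}$, I would bound it by $(t+1)^{-1}$ times the conformal energy: on $D_{\tau_1,R}^{\tau_*}\subset\D_0^+$ we have $\tir\approx t$ once $t$ is not too small, so $\|\sn\psi(t)\|_{L_u^2L_x^2}\les (t+1)^{-1}\|\tir\sn\psi\|_{L_u^2L_x^2}\les(t+1)^{-1}\C[\psi]^{\f12}(t)\les(t+1)^{-1}\la^{\ep_0}\C_0$ by Definition \ref{cfen} and \eqref{baen}. (Here one uses that $\varpi=1$ and $\underline\varpi=1$ appropriately on the wave zone so that $\C[\psi]^\be$ or $\C[\psi]^\bi$ dominates $\int\tir^2|\sn\psi|^2$, together with the fact that $\psi$ vanishes outside $\D^+$ by finite speed of propagation.) Then, for the $\sn\ti\psi$ bound, I would invoke the comparison in Lemma \ref{com5} (with $m=2$): $\|\tir\,\sn\ti\psi\|_{L_u^2L_x^2}\approx\|\tir\,\sn\psi\|_{L_u^2L_x^2}+\|\tir\,\psi\|_{L_u^2L_\omega^2}$ up to a $\la^{-4\ep_0}$-small correction, so dividing by $\tir\approx t+1$ transfers the bound from $\sn\psi$ and $\psi$ to $\sn\ti\psi$. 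The only subtlety is the region where $t$ is comparable to $t_0$ and $\tir$ may be as small as $R$; there one argues with the $\{R\le\tir\le 2R\}$ portion using Lemma \ref{correc_2}-type comparisons, but since $t+1$ is bounded there the estimate is trivial.

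For \eqref{bba2}: the plan is to apply the Sobolev inequality \eqref{sobp} fiberwise in $(u,\omega)$ on each $\Sigma_t$, giving
\[
\|\tir^{\frac{m-1}{2}}\psi(t)\|_{L_u^2L_\omega^q}\les\|\tir^{\frac{m-1}{2}}\sn\psi(t)\|_{L_u^2L_x^2}+\|\tir^{\frac{m}{2}-\frac{3}{2}}\psi(t)\|_{L_u^2L_x^2}.
\]
Both terms on the right are handled by \eqref{bba1}: since $\tir\approx t+1$, $\|\tir^{\frac{m-1}{2}}\sn\psi(t)\|_{L_u^2L_x^2}\les(t+1)^{\frac{m-1}{2}}\|\sn\psi(t)\|_{L_u^2L_x^2}\les(t+1)^{\frac{m-3}{2}}\la^{\ep_0}\C_0$, and similarly $\|\tir^{\frac{m}{2}-\frac{3}{2}}\psi(t)\|_{L_u^2L_x^2}\les(t+1)^{\frac{m}{2}-\frac{3}{2}}\cdot(t+1)^{\f12}\|\psi(t)\|_{L_u^2L_\omega^2}$; using $v_t\approx\tir^2$ to pass between $L_x^2$ and $L_\omega^2$ and then \eqref{bba1} gives $(t+1)^{\frac{m-3}{2}}\la^{\ep_0}\C_0$ as well (when $m=2$ this is $(t+1)^{-1/2}\la^{\ep_0}\C_0$). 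Hence $\|\tir^{\frac{m-1}{2}}\psi(t)\|_{L_u^2L_\omega^q}\les(t+1)^{\frac{m-3}{2}}\la^{\ep_0}\C_0$. Finally I take the $L_t^2$ norm over $t\in[\tau_1,\tau_*]$: for $m=2$, $\|(t+1)^{-1/2}\|_{L_t^2[\tau_1,\tau_*]}\approx(\ln\tau_*)^{\f12}\les(\ln\la)^{\f12}$ since $\tau_*\les\la^{1-8\ep_0}$, and $(\tau_1+1)^{\frac{m-2}{2}}=1$; for $m=1$ one has $\|(t+1)^{-1}\|_{L_t^2[\tau_1,\tau_*]}\les(\tau_1+1)^{-1/2}$, matching the stated $(\tau_1+1)^{\frac{m-2}{2}}$ with room to spare (the logarithm is harmless). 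This yields \eqref{bba2}.

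The main obstacle is not in the computation itself — which is routine once \eqref{bba1} is in hand — but rather in being careful about the geometry near the cone tip and the transition between $\tir\approx t+1$ and $\tir\approx R$: one must verify that the weight $\tir$ is genuinely comparable to $t+1$ throughout $D_{\tau_1,R}^{\tau_*}$ away from the innermost shell, and in the innermost shell $\{R\le\tir\le 2R\}$ use the comparison estimates of Section \ref{cmpsec} (Lemma \ref{correc_2} and Proposition \ref{6.27.3}) together with the fact that $t+1\approx\tau_1+1$ is bounded on the relevant time range to close the argument. A secondary point requiring care is that the passage from $\sn\psi$ to $\sn\ti\psi$ relies on $\Omega\approx1$ and on the smallness of $\sn\sigma$ and $\tr\chi-\frac{2}{\tir}$ in the mixed norms used in Lemma \ref{com5}, all of which are supplied by Proposition \ref{awave} and Lemma \ref{pre_sig}; one simply has to match the exact norms appearing there.
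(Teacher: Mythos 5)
Your overall route is the paper's: \eqref{bba1} from the bootstrap assumptions \eqref{baen}, \eqref{lowba} together with the comparison Lemma \ref{com5}, and \eqref{bba2} from \eqref{sobp}, \eqref{bba1} and the $t$-integration $\bigl(\int_{\tau_1}^{\tau_*}(1+t)^{m-3}dt\bigr)^{1/2}$, which matches the paper exactly. However, your treatment of $\|\sn\ti\psi(t)\|_{L_u^2L_x^2}$ has a genuine gap. You invoke Lemma \ref{com5} with $m=2$ and then ``divide by $\tir\approx t+1$''. This is not legitimate: on $\Sigma_t\cap D_{\tau_1,R}^{\tau_*}$ the weight $\tir=t-u$ ranges over $[R,\,t-\tau_1+R]$, because the inner boundary $\{\tir=R\}$ of $D_{\tau_1,R}^{\tau_*}$ is the hypersurface $u=t-R$ and is present at every time $t\in[\tau_1,\tau_*]$, not only for $t$ near $t_0$ or $\tau_1$. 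Your fallback (``since $t+1$ is bounded there the estimate is trivial'') therefore rests on a misreading of the geometry: on the shell $\{R\le\tir\le 2R\}$ one has $u\approx t$ with $t$ unbounded. Converting the equivalence of the $\tir$-weighted norms into $\|\sn\ti\psi\|_{L_u^2L_x^2}\les(t+1)^{-1}(\cdots)$ would require $\tir\gtrsim t$ on the left-hand side, which fails there; naively one only extracts the useless factor $R^{-1}$. The repair is exactly what the paper does: apply Lemma \ref{com5} with $m=0$, which gives $\|\sn\ti\psi\|_{L_u^2L_x^2}\approx\|\sn\psi\|_{L_u^2L_x^2}+\|\psi\|_{L_u^2L_\omega^2}$ with no weight to remove, so the bound transfers immediately from the first two terms of \eqref{bba1}.

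A secondary imprecision of the same kind occurs in your bound for $\|\sn\psi(t)\|_{L_u^2L_x^2}$, where you again appeal to ``$\tir\approx t$''. What actually yields $(t+1)^{-1}\C[\psi]^{1/2}(t)$ is the structure of the conformal energy: $\C[\psi]^{\bi}$ carries the weight $t^2$ on $|\bd\psi|^2$ where $\underline{\varpi}-\varpi\neq 0$ (which covers the region $u$ close to $t$, i.e.\ $\tir$ close to $R$), and on $\mathrm{supp}\,\varpi\cap\{u\ge 0\}$ one has $\tir\ge t/4$, so the $\tir^2$ weight in $\C[\psi]^{\be}$ also dominates $t^2$; your parenthetical gestures at this, so the conclusion stands, but the justification should run through the cutoffs rather than a global comparison of $\tir$ with $t$. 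Part \eqref{bba2} is correct and identical to the paper's argument; your intermediate exponent bookkeeping for $\|\tir^{\frac m2-\frac32}\psi\|_{L_u^2L_x^2}$ is garbled (for $m=1$ you upper-bound $\tir^{-1}$ by $(t+1)^{-1}$, which is false), but the clean computation $\|\tir^{\frac m2-\frac32}\psi\|_{L_u^2L_x^2}\approx\|\tir^{\frac{m-1}{2}}\psi\|_{L_u^2L_\omega^2}\le(t+1)^{\frac{m-1}{2}}\|\psi\|_{L_u^2L_\omega^2}$ gives precisely the claimed $(t+1)^{\frac{m-3}{2}}\la^{\ep_0}\C_0$.
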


\begin{proof}
It is easy to obtain (\ref{bba1}) from (\ref{baen}), (\ref{lowba}) and Lemma \ref{com5} with $m=0$.
By using (\ref{sobp}) and (\ref{bba1}), we can obtain for $m\ge 1$ that
\begin{equation*}
\|r^{\frac{m-1}{2}} \psi\|_{L_t^2 L_u^2 L_\omega^q(D_{\tau_1, R}^{\tau_*})}
\les \left(\int_{\tau_1}^{\tau_*} (1+t)^{m-3} dt\right)^{1/2} \la^{\ep_0}\C_0.
\end{equation*}
which together with $\tau_*\le \la^{1-8\ep_0}$ implies (\ref{bba2}) for $m=1,2$.
\end{proof}

Next we consider the error term $\mbox{error}(m,R)$ in (\ref{B_1}). We will use the same notation when all term in the integrand
are taken absolute values, i.e.
$$
\mbox{error}(m,R) = \int_{D_{\tau_1, R}^{\tau_2, \tau}} |\er_m(\psi)| d\mu_\ga du dt +
\int_{D_{\tau_1, R}^{\tau_2, \tau}}  \tir^m  |\bA| |\J[\psi]|\Omega^{-2} \bb d\omega du dt.
$$

\begin{lemma}\label{lem_629}
Let $t_0\le \tau_1<\tau_2\le \tau\le \tau_*$ and $U:=D_{\tau_1,R}^{\tau_2, \tau}$. Then for $m=1,2$ there hold
\begin{align}
\mbox{error}(m, R)&\les \la^{-3\ep_0+} \Big((\tau_1+1)^{(m-2)}\C_0^2+\sup_{\tau_1\le t\le \tau} \|\tir^\frac{m}{2}\psi(t)\|_{L_u^2L_\omega^2(U\cap \Sigma_t)}^2\nn\\
& +\sup_{\tau_1 \le t\le \tau} \E_m[\psi](t)+\sup_{\tau_1-R\le u\le \tau_1-R} CF_m[\psi](u) \Big),\label{e2}
\end{align}
where the domain of integral for $\E_m[\psi](t')$ is $\Sigma_\tt\cap U$ in (\ref{3.22.1}) and the domain of integral for
$CF_m[\psi](u)$ is $C_u\cap U$ in (\ref{3.22.2}).
\end{lemma}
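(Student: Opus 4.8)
\textbf{Proof proposal for Lemma \ref{lem_629}.}

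The plan is to split $\mbox{error}(m,R)$ into the two structural pieces indicated in its definition: the ``lower order'' integrand $\er_m(\psi)$, which is linear in $\ti\psi$ and thus couples the weighted $L^2$-control of $\psi$ to a first-derivative factor, and the genuinely quadratic piece $\tir^m \bA\c\J[\psi]$, which pairs a connection coefficient $\bA$ against a product of two first-order quantities $\sn\ti\psi$ or $L\ti\psi+\ti h\ti\psi$. For the quadratic piece the strategy is the standard one: put $\bA$ in a norm carrying the smallness $\la^{-\f12-4\ep_0}$ (or $\la^{-4\ep_0}$ after integrating in $t$), using Proposition \ref{awave} and Lemma \ref{lem_629}'s hypothesis $\tau_*\les\la^{1-8\ep_0}$, and allocate the two $\J[\psi]$ factors to the energy $\E_m[\psi](t)$ and the conformal flux $CF_m[\psi](u)$ respectively, after a Cauchy--Schwarz split in the appropriate mixed $L_t^2 L_u^2 L_\omega^2$ / $L_\omega^\infty$ norms. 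The point is that $\bb,\Omega\approx 1$ (Lemma \ref{6.17.1}, Lemma \ref{pre_sig}), $v_t\approx\tir^2$, and $L(v_t^{\f12}\psi)$ differs from $\tir(L\psi+\tir^{-1}\psi)$ by a harmless $(\tr\chi-\tfrac{2}{\tir})\psi$ term, so one rewrites everything in terms of $L(v_t^{\f12}\psi)$, $\sn\ti\psi$ and $\psi$, then bounds the $\psi$-remainders by $\la^{-4\ep_0}$ times $\sup_t\|\tir^{m/2}\psi\|_{L_u^2 L_\omega^2}$ exactly as in the proof of Lemma \ref{com5}. Summing the resulting bounds gives the $\la^{-3\ep_0+}$ prefactor (one saves $\la^{-\f12}$ from $\bA$, loses at worst $\tau_*^{1/2}\les\la^{1/2-4\ep_0}$ from the time integration, and absorbs the $\la^{\ep_0}$ in the bootstrap constants from (\ref{baen})--(\ref{lowba}), with a logarithmic loss hidden in the ``$+$'').

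Next I would treat $\er_m(\psi)$, which is the delicate term. Its integrand is $\tir^m\bb\Omega^{-2}\ti\psi\big[(\tfrac12\mu+\sD\sigma+|\sn\sigma|^2+2\zeta\c\sn\sigma-hk_{NN})(L\ti\psi+\ti h\ti\psi)-\sn\ti\psi\,\sn\ti h\big]$. The key observation is that the dangerous scalar factor $\tfrac12\mu+\sD\sigma$ is precisely $\tfrac12\ckk\mu$ up to the controllable terms $\tr\chi\,k_{NN}$ and $\tr\chi\,V_3$ (Lemma \ref{mu4.4}'s definition of $\ckk\mu$), and that $\ckk\mu$, $\sn\ti\zeta=\sn\sigma+\sn\zeta$ obey the improved estimates of Proposition \ref{improve}: $\|r\ckk\mu\|_{L_u^2 L_t^2 L_\omega^p}\les\la^{-4\ep_0}$ and $\|r^{3/2}\ckk\mu\|_{L_u^2 L_t^\infty L_\omega^p}\les\la^{-4\ep_0}$, together with $\|\sn\sigma\|_{L_u^2 L_t^2 L_\omega^\infty}\les\la^{-4\ep_0}$. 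Also $\sn\ti h=\sn(h+L\sigma)$, whose spherical derivative is controlled via $\sn\widetilde{\tr\chi}=\sn z-\sn V_4$ (Proposition \ref{awave}) and the $\sn\sigma$-transport equation. Thus I would bound $\er_m$ by: writing $\ti\psi\approx\psi$, pairing the scalar factor (in $L_u^2 L_t^2 L_\omega^p$ or $L_u^2 L_t^\infty L_\omega^p$, which is where the $r$-weights from Proposition \ref{improve} live) against $\psi$ (in a dual $L_u^2 L_\omega^{q}$ norm with $0<1-\tfrac2q<s-2$, controlled by (\ref{bba2}) and (\ref{sobp})) and against $L(v_t^{\f12}\psi)$ (giving a flux factor $CF_m[\psi]$, using $r^m|L(v_t^{1/2}\psi)|^2$). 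Crucially the weights must balance: $\tir^m \psi\cdot(L\ti\psi+\ti h\ti\psi)\cdot(\text{scalar})$ carries total weight $\tir^{m}$, which one distributes as $\tir^{(m-1)/2}\psi$, $\tir^{(m-1)/2}L(v_t^{\f12}\psi)\cdot\tir^{-1}$ (giving $CF_m$) times $\tir\cdot(\text{scalar})$ — and here one uses exactly $\|\tir\ckk\mu\|_{L_u^2 L_t^2 L_\omega^p}$. The $\sn\ti\psi\,\sn\ti h$ term is handled the same way with $\|\tir\sn\widetilde{\tr\chi},\tir\sn\sigma\|$-type bounds from Proposition \ref{awave} and Lemma \ref{pre_sig} paired against two factors of $\sn\ti\psi$, one going to $\E_m$, the other to... wait, $\sn\ti\psi$ appears only linearly here, so one factor goes to $\E_m$ and the remaining weighted $\psi$-free factor is $\sn\ti h$ itself — thus this term needs $\|\tir\sn\ti h\|$ against $\sup_t\|\tir^{m/2-1/2}\sn\ti\psi\|$ and $\|\tir^{(m-1)/2}\psi\|$, again via Cauchy--Schwarz.

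I would then collect all pieces. Every term produces either $(\tau_1+1)^{(m-2)}\C_0^2$ (from the flux/$L^2$ bounds (\ref{bba1})--(\ref{bba2}) evaluated at the base slice $\tau_1$, using the decay weight $(t+1)^{-1}$ integrated against the $\tir$-weights over $D_{\tau_1,R}^{\tau_2,\tau}$ with $\tir\ge R$ so $\tir\approx t$), or $\sup_t\|\tir^{m/2}\psi(t)\|_{L_u^2 L_\omega^2}^2$, or $\sup_t\E_m[\psi](t)$, or $\sup_u CF_m[\psi](u)$, each multiplied by a power of $\la^{-\ep_0}$ coming from the $\la^{-4\ep_0}$ on $\bA/\ckk\mu$, the loss $\tau_*^{1/2}$, and the two bootstrap constants $\la^{\ep_0}$; net exponent $-4\ep_0+\ep_0+\ep_0=-2\ep_0$, improved to $-3\ep_0+$ (with logarithmic loss absorbed) by noting that in fact only one bootstrap power of $\la^{\ep_0}$ is incurred on the terms that also carry $CF_m$ or $\E_m$ (those quantities are the bootstrap objects, not $\C_0$). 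The main obstacle, as flagged, is the $\er_m$ term: one must be careful that $\sD\sigma$ never appears alone — it is only ever controlled in the combination $\mu+2\sD\sigma=\ckk\mu-(\text{good})$ — and that the weight $\tir$ accompanying $\ckk\mu$ exactly matches the deficit left after one $\psi$ and one $L(v_t^{\f12}\psi)$ have been extracted; the matching of the $L_u^2 L_t^2 L_\omega^p$ vs $L_u^2 L_t^\infty L_\omega^p$ norms of $\ckk\mu$ with, respectively, the $L_u^2 L_t^2$ flux and the $\sup_t L_u^2$ energy is where the two estimates of Proposition \ref{improve} are both genuinely used, and getting the $\omega$-exponents to close (Hölder $2/q+1/q_*=1$ and the Sobolev embedding (\ref{sob}) on $S_{t,u}$) is the one calculation requiring real care.
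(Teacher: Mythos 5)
Your overall architecture matches the paper's: split $\mbox{error}(m,R)$ into the quadratic piece $\tir^m\bA\c\J[\psi]$ (absorbed into $\sup_t\E_m[\psi]$ after putting $\bA$ in $L_t^1L_x^\infty\les\la^{-8\ep_0}$ — the paper does this in one line, without your flux/energy splitting, since $\tir^m|\J[\psi]|\Omega^{-2}v_t$ is pointwise dominated by the energy density) and the linear-in-$\ti\psi$ piece $\er_m(\psi)$; recognize $\f12\mu+\sD\sigma$ as $\f12\ckk\mu$ modulo $\tr\chi k_{NN}$ and $\tr\chi V_3$; use $\|\tir^{3/2}\ckk\mu\|_{L_u^2L_t^\infty L_\omega^{q'}}$ from Proposition \ref{improve} paired against $\|v_t^{\f12}\psi\,L(v_t^{\f12}\psi)\tir^{m-\f12}\|_{L_u^2L_t^1L_\omega^{q_*}}$ to produce a flux factor; and handle $\sn\ti h=\f12\sn z$ via (\ref{ricp}). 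All of that is the paper's proof.

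The genuine gap is your treatment of the terms $|\sn\sigma|^2$ and $\zeta\c\sn\sigma$ in $\er_m(\psi)$. You propose to control them with $\|\sn\sigma\|_{L_u^2L_t^2L_\omega^\infty}\les\la^{-4\ep_0}$ from (\ref{ckmu2}) together with $\|\tir^{\f12}\sn\sigma\|_{L^\infty L_\omega^{q'}}\les\la^{-\f12}$ from (\ref{l4.1}). This cannot close as stated: $\sn\sigma$ has \emph{no} $L_t^2L_x^\infty$ bound, and the only sup-in-$\omega$ control available puts $L_u^2$ on the \emph{outside}. Pairing it by H\"older therefore forces the complementary bilinear quantity $\tir^{m-\f12}|v_t^{\f12}\psi|\,|L(v_t^{\f12}\psi)|$ into a norm of the form $L_u^2L_t^2$ (or $L_u^2L_t^\infty$ on the $\psi$ factor), whereas the right-hand side of (\ref{e2}) only provides $L_t^\infty L_u^2$ control (the energy $\E_m$), $L_u^\infty L_t^2$ control (the flux $CF_m$), and $L_t^\infty L_u^2$ control of $\tir^{m/2}\psi$; since $\|\cdot\|_{L_t^\infty L_u^2}\le\|\cdot\|_{L_u^2L_t^\infty}$, the norm you would need on $\psi$ is strictly stronger than what you have. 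The paper resolves this with Proposition \ref{dcmpsig}, which you never invoke: it decomposes $\sn\sigma=\bA+\bA^\dag+\mu^\dag$, where $\bA,\bA^\dag$ carry an honest $L_t^2L_x^\infty$ bound $\les\la^{-\f12-4\ep_0}$ (so the complementary product goes into $L_t^2L_u^1L_\omega^{q_*}$ and pairs with the \emph{energy} via $\|\tir^{m/2}L(v_t^{\f12}\psi)\|_{L_t^\infty L_u^2L_\omega^2}$), while $\mu^\dag$ carries an $L_u^2L^\infty$ bound (so the complementary product goes into $L_u^2L_t^1L_\omega^{q_*}$ and pairs with the \emph{flux} via $\|\tir^{m/2}L(v_t^{\f12}\psi)\|_{L_u^\infty L_t^2L_\omega^2}$); in both cases the $\psi$ factor is handled by the space-time bound (\ref{bba2}), which is $L_t^2L_u^2L_\omega^q$ and hence compatible with either ordering. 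Without this decomposition the $|\sn\sigma|^2$ and $\zeta\c\sn\sigma$ contributions do not reduce to the four quantities on the right of (\ref{e2}).

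A secondary inaccuracy: you claim both estimates of Proposition \ref{improve} on $\ckk\mu$ (the $L_u^2L_t^2L_\omega^p$ and the $L_u^2L_t^\infty L_\omega^p$ bounds) are "genuinely used" here, matched respectively to the flux and the energy. In fact only the $L_t^\infty$ bound (\ref{ckmu3}) is used in this lemma, and it is matched to the \emph{flux}, not the energy; the energy is what absorbs the $\bA,\bA^\dag$ contributions just described. This is harmless for correctness but indicates the norm bookkeeping — which you yourself identify as "the one calculation requiring real care" — has not actually been carried out.
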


\begin{proof}
Note that $\tir^m |\J[\psi]| \Omega^{-2} v_t \les \J_m[\psi]$. Recall that $\|\bA\|_{L_t^2 L_x^\infty}\les \la^{-\f12-4\ep_0}$
from Proposition \ref{awave}. We have
\begin{equation*}
\int_{U} \tir^m |\bA| |\J[\psi]|\Omega^{-2}\bb d\mu_\ga du dt
\les\la^{-8\ep_0} \sup_{\tau_1\le t\le \tau} \E_m[\psi](t).
\end{equation*}

Next we consider $\int_U |\er_m(\psi)| d\mu_\ga du dt$ by splitting it into $E^{(1)}+E^{(2)}$, where
\begin{align*}
E^{(1)}&=\int_U \tir^m |\psi| |\sn \ti\psi| |\sn \ti h| \bb d\mu_\ga du dt, \displaybreak[0]\\
E^{(2)}&=\int_U \tir^m \left|(\sn \sigma\c\sn \sigma,\zeta\c\sn \sigma, \ckk\mu, \tr\chi\ti V_3)\right|
\left|v_t^{\f12} \psi\right| \left|L(v_t^{\f12}\psi)\right| \bb d\omega du dt.
\end{align*}
where $\ckk\mu = \mu + 2 \sD \sigma -k_{NN} \tr \chi + \f12 \tr\chi V_3$, see Lemma \ref{mu4.4}.

Let $\frac{1}{q}+\frac{1}{q'}=\frac{1}{2}$ and $0<1-\frac{2}{q'}<s-2$.  Then by using the H\"{o}lder inequality and (\ref{bba1})
we have
\begin{align*}
E^{(1)} &\les \int_{\tau_1}^\tau \|\tir^{3\over 2} \sn \ti h(t)\|_{L_u^\infty L_\omega^{q'}} \|\tir^{m-\f12} \psi(t)\|_{L_u^2 L_\omega^q}
\|\sn \ti \psi(t)\|_{L_u^2 L_x^2} dt \\
& \les \la^{\ep_0} \C_0 \int_{\tau_1}^\tau (1+t)^{-1} \|\tir^\frac{3}{2} \sn \ti h(t)\|_{L_u^\infty L_\omega^{q'}}
\|\tir^{m-\f12}\psi(t)\|_{L_u^2 L_\omega^q} dt.
\end{align*}
In view of (\ref{sobp}) and $\tir \le t$, we have for $m\ge 1$ that
\begin{equation}\label{6.28.2}
\|\tir^{m-\f12} \psi(t)\|_{L_u^2 L_\omega^q}
\les t^{\frac{m-1}{2}} \left(\|\tir^\frac{m}{2} \sn \psi(t)\|_{L_u^2 L_x^2} +\|\tir^{\frac{m}{2}}\psi(t)\|_{L_u^2 L_\omega^2}\right).
\end{equation}
Therefore
\begin{align*}
E^{(1)}&\les \la^{\ep_0} \C_0 \int_{\tau_1}^\tau (1+t)^{\frac{m-3}{2}} \|\tir^{\frac{3}{2}}\sn \ti h(t)\|_{L^\infty L_\omega^{q'}}
\left(\|\tir^\frac{m}{2} \sn \psi(t)\|_{L_u^2 L_x^2}+\|\tir^{\frac{m}{2}}\psi(t)\|_{L_u^2 L_\omega^2}\right) dt.
\end{align*}
In view of (\ref{ricp}) and $2\sn \ti h = \sn z$, we obtain for $m=1,2$ that
\begin{align*}
E^{(1)}&\les \la^{-3\ep_0} (\tau_1+1)^{\frac{m-2}{2}} \C_0
\sup_{\tau_1\le t\le \tau} \left(\E_m[\psi](t)^\f12+\|\tir^{\frac{m}{2}} \psi(t)\|_{ L_u^2 L_\omega^2(U\cap \Sigma_t)}\right).
\end{align*}

To complete the proof of (\ref{e2}), it remains to show that
\begin{align}
E^{(2)}\les \la^{-3\ep_0+} \left(\sup_{\tau_1\le t\le \tau} \E_m[\psi](t)
+\sup_{\tau_1-R\le u \le \tau_2-R} CF_m[\psi](u) +(\tau_1+1)^{m-2}\C_0^2 \right). \label{engymu}
\end{align}
In view of Proposition \ref{dcmpsig}, we can write
$\zeta\c\sn \sigma, \sn \sigma\c\sn \sigma=(\bA+\bA^\dag+\mu^\dag)\sn\sigma$. Thus, we can prove (\ref{engymu}) by considering the following terms
\begin{align*}
\A_1&=\int_U \left|\mu^\dag\right| \left|\sn \sigma\right| \left|v_t^{\f12}\psi\right| \left|L(v_t^{\f12}\psi)\right| \tir^m\bb d\omega du dt,\\
\A_2&=\int_U |(\left|\bA\right|+\left|\bA^\dag\right|) \left|\sn \sigma\right| \left|v_t^{\f12}\psi\right| \left|L(v_t^{\f12}\psi)\right| \tir^m \bb d\omega du dt,\\
\A_3&=\int_U \left|(\ckk\mu,\tr\chi V_3)\right| \left|v_t^{\f12}\psi\right| \left|L(v_t^{\f12}\psi)\right |\tir^m\bb d\omega du dt.
\end{align*}
Let $\frac{1}{q}+\frac{1}{q'}=\f12$ with $0<1-\frac{2}{q'}<s-2$. Let $1=\frac{1}{q'}+\frac{1}{q_*}$.
Clearly $\frac{1}{2}+\frac{1}{q}=\frac{1}{q_*}$.  In what follows, we will constantly employ the H\"{o}lder inequality
based on such choices of $q, q_*$ and $q'$.

Let $m=1,2$, by using (\ref{bba2}) and $v_t\approx \tir^2$ we have
\begin{align}
\left\| v_t^{\f12} \psi L(v_t^{\f12}\psi) \tir^{m-\f12} \right\|_{L_u^2 L_t^1 L_\omega^{q_*}}
&\les \|\tir^{\frac{m}{2}+\f12}\psi\|_{L_u^2 L_t^2 L_\omega^q}\|\tir^\frac{m}{2} L(v_t^{\f12} \psi)\|_{L_u^\infty L_t^2 L_\omega^2}\nn \displaybreak[0]\\
&\les \tau_*(\ln \la )^\f12\la^{\ep_0}(\tau_1+1)^\frac{m-2}{2}\|\tir^\frac{m}{2} L(v_t^{\f12}\psi)\|_{L_u^\infty L_t^2 L_\omega^2}\C_0,\label{sob1p1} \displaybreak[0]\\
\left\| v_t^{\f12}\psi L(v_t^{\f12}\psi) \tir^{m-\f12} \right\|_{L_t^2 L_u^1 L_\omega^{q_*}}
&\les \|\tir^{\frac{m}{2}+\f12}\psi\|_{L_t^2 L_u^2 L_\omega^q}\|\tir^\frac{m}{2} L(v_t^{\f12} \psi)\|_{L_t^\infty L_u^2 L_\omega^2}\nn \displaybreak[0]\\
&\les \tau_*(\ln \la )^\f12\la^{\ep_0}(\tau_1+1)^\frac{m-2}{2}\|\tir^\frac{m}{2} L(v_t^{\f12} \psi)\|_{L_t^\infty L_u^2 L_\omega^2}\C_0.\label{sob1p}
\end{align}
With the help of (\ref{l4.1}), Proposition \ref{dcmpsig} and (\ref{sob1p1}), we have
\begin{align*}
\A_1&\les \|\mu^\dag\|_{L_u^2 L^\infty}\| \tir^{\f12}\sn \sigma\|_{L^\infty L_\omega^{q'}}\|v_t^{\f12}\psi L(v_t^{\f12}\psi) \tir^{m-\f12}\|_{L_u^2 L_t^1 L_\omega^{q_*}}\\
&\les\tau_*\la^{-1-4\ep_0}(\tau_1+1)^\frac{m-2}{2}\la^{\ep_0} (\ln \la)^\f12 \|\tir^\frac{m}{2} L(v_t^{\f12} \psi)\|_{L_u^\infty L_t^2 L_\omega^2} \C_0\\
&\les \la^{-10\ep_0} (\tau_1+1)^\frac{m-2}{2}\|\tir^\frac{m}{2} L(v_t^{\f12} \psi)\|_{L_u^\infty L_t^2 L_\omega^2}\C_0
\end{align*}
By using (\ref{l4.1}) and Proposition \ref{awave}, we have
\begin{align*}
\A_2&\les \|\bA, \bA^\dag\|_{L_t^2 L_x^\infty} \|\tir^\f12 \sn \sigma\|_{L_t^\infty L_u^\infty L_\omega^{q'}}
\|v_t^{\f12}\psi L(v_t^{\f12} \psi) \tir^{m-\f12}\|_{L_t^2 L_u^1 L_\omega^{q_*}}\\
&\les \la^{-10\ep_0}(\tau_1+1)^\frac{m-2}{2}\|\tir^\frac{m}{2} L(v_t^{\f12} \psi)\|_{L_t^\infty L_u^2 L_\omega^2}\C_0.
\end{align*}
Using (\ref{ckmu3}), (\ref{err_4_4}) and the similar derivation for (\ref{sob1p1}), we have
\begin{align*}
\A_3&\les \|\tir^\frac{3}{2}\ckk\mu,\tir^{\frac{3}{2}}\tr\chi V_3 \|_{L_u^2 L_t^\infty L_\omega^{q'}}
\|\tir^{m-\frac{1}{2}} \psi L(v_t^{\f12}\psi)\|_{L_u^2 L_t^1 L_\omega^{q_*}}\\
&\les (\tau_1+1)^\frac{m-2}{2} \la^{-3\ep_0} (\ln\la)^\f12 \|\tir^\frac{m}{2} L(v_t^{\f12} \psi)\|_{L_u^\infty L_t^2 L_\omega^2}\C_0.
\end{align*}
%By (\ref{lot10}), (\ref{ieq.2}) and (\ref{intes_1})
%\begin{equation}\label{philo1}
%\|r^\f12\phi\|_{L_u^2 L_t^\infty L_\omega^2}\les \|L \psi\|_{L_u^2 L_t^2 L_\omega^2(r>\frac{3R}{2})}+\int_{\Sigma_{\tau_1}}\J^{(\bT)}_\a[\phi] n^\a +\int_{\tau_1}^t\int_{S_{t',u}} r %((L\psi)^2+\phi^2) d\omega dt'\big)
%\end{equation}
%\begin{equation*}
%\|r\phi\|_{L_u^2 L_t^\infty L_\omega^2}\les \ln \la \big(\|r L \psi\|_{L_u^2 L_t^2 L_\omega^2(r>\frac{3R}{2})}+\int_{\Sigma_{\tau_1}}\J^{(\bT)}_\a[\phi] n^\a +\int_{\tau_1}^t\int_{S_{t',u}} %r ((L\psi)^2+\phi^2) d\omega dt'\big)
%\end{equation*}
%which follows from Lemma \ref{lotlm}.
Notice that
\begin{align*}
\|\tir^\frac{m}{2} L(v_t^{\f12} \psi)\|_{L_t^\infty L_u^2 L_\omega^2(U)} &\les \sup_{\tau_1\le t \le\tau} \E_m[\psi](t)^{\f12},\displaybreak[0]\\
\|\tir^\frac{m}{2} L(v_t^{\f12} \psi)\|_{L_u^\infty L_t^2 L_\omega^2(U)} &\les \sup_{\tau_1-R\le u\le \tau_2-R} CF_m[\psi](u)^{\f12},
\end{align*}
we thus complete the proof for (\ref{engymu}).
\end{proof}

%Due to the appearance of derivative terms of $\phi$ on the timelike boundary in (\ref{B_1}), we  need to repeat the argument in $D^{t,t}_{\cdot, R'}=D_{\tau_1, R}^{t,t}\cap \{\tir \ge R'\}$ %with $R'$ varies in $[R, \frac{3R}{2}]$. Corresponding to the change of boundary to $\tir=R'$,  $\mbox{error}(m, R)$ is replaced by $\mbox{error}(m, \tir \ge R')$. The integrand in %$\mbox{error}(m, \tir \ge R')$ takes the same form, while the domain of integration is changed to $D^t_{\cdot, R'}$. Then we integrate the obtained identity of the form (\ref{B_1}) in $R\le %R'\le \frac{3R}{2}$. In this way, the term of along the boundary $\tir=R'$ is integrated and can be treated as the integrated energy.

\begin{proposition}\label{e1}
Given $t_0\le \tau_1\le \tau_*$ and let $U=D_{\tau_1,R}^{\tau_*}$. Then for $m=1,2$ there holds
\begin{align}
&\sup_{t\in  [\tau_1, \tau_*]}\|\Phi_m^{-1} \tir^{\frac{m}{2}} \psi(t)\|_{L_u^2 L_\omega^2(U\cap \Sigma_t)}^2
+\|\tir^{\frac{m-1}{2}}L(v_t^{\f12}\psi)\|_{L_u^2 L_t^2 L_\omega^2(U)}^2
+\sup_{t\in [\tau_1, \tau_*]}\E_m[\psi]_{U, R}(t)\nn\\
&\quad +\sup_{\tau_1-R\le u\le \tau_*-R}CF_m[\psi]_R(u, \tau_*)
\les CF_m[\psi]_R(\tau_1-R, \tau_*)+(\tau_1+1)^{m-2}\C_0^2+\B_{\tau_1},\label{e3}
\end{align}
where $\B_{\tau_1}=\sup_{t\in [\tau_1, \tau_*]} \B_{\tau_1}^t$.
\end{proposition}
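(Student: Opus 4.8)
The identity \eqref{B_1} (Proposition \ref{prop:MA}) is the engine: it relates the conformal flux through the incoming null boundary $\{u=\tau_1-R\}$, the bulk integral over $D_{\tau_1,R}^{\tau_2,\tau}$, and the energy/flux through the other boundary faces, up to $\mbox{error}(m,R)$. The plan is to run a standard Gr\"onwall/continuity scheme on top of this identity, using Lemma \ref{lem_629} to absorb the error term and the bootstrap assumptions \eqref{baen}--\eqref{lowba} (which are being improved) together with the ricci estimates of Proposition \ref{awave} and the conformal-factor estimates of Proposition \ref{improve} and Proposition \ref{dcmpsig}. Concretely, I would first specialize \eqref{B_1} to $\tau=\tau_*$ and let $\tau_2\to\tau_*$ to get a monotonicity-type inequality of the schematic form
\[
CF_m[\psi]_R(\tau_2-R,\tau_*)+\E_m[\psi]_{U,R}(\tau_2)+\int_{D_{\tau_1,R}^{\tau_2}}\bb\tir^{m-1}\big(m|L(v_t^{\f12}\psi)|^2+(2-m)|\sn\ti\psi|^2v_t\Omega^{-2}\big)
\]
bounded by $CF_m[\psi]_R(\tau_1-R,\tau_*)$ plus the interior-cone boundary term $\int_{\tau_1}^{\tau_*}\int_{S_{t,t-R}}\cdots$ plus $\mbox{error}(m,R)$. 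For $m=1,2$ both $m$ and $2-m$ are $\ge 0$, so the bulk term has a favorable sign and can be dropped (it will be recovered as part of what we prove). The term on the incoming face $\{\tir=R\}$ is controlled by Proposition \ref{6.27.3}/Lemma \ref{ieq.2} by $\B_{\tau_1}^{\tau_*}$, since on $\{\tir\le R'\}$ one has $\tir\approx R$ and the integrand there is comparable to $\P_\a^{(\bT)}[\psi]\bT^\a+\psi^2/\tir$ up to the ricci errors already absorbed in Lemma \ref{correc_2}/Proposition \ref{comp3}.

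The second step is to close the coupling between the weighted $L^2$ norm of $\psi$ and the flux. Proposition \ref{lot8} gives
\[
\|\tir^{m/2}\psi\|_{L_u^2L_\omega^2(T_{\tau_1,R}^{\tau_*}\cap\Sigma_t)}^2\les \Phi_m(\tau_*)^2\|\tir^{(m-1)/2}L(v_t^{\f12}\psi)\|_{L_u^2L_t^2L_\omega^2(T_{\tau_1,R}^{\tau_*})}^2+\B_{\tau_1}^{\tau_*},
\]
and the spacetime flux $\|\tir^{(m-1)/2}L(v_t^{\f12}\psi)\|_{L_u^2L_t^2L_\omega^2}^2$ is exactly what the $m$-term in the bulk of \eqref{B_1} controls --- but only the full $D_{\tau_1,R}^{\tau_2}$ version; near $\{\tir\le 2R\}$ one passes from $T_{\tau_1,R}$ to $D_{\tau_1,R}$ via Lemma \ref{correc_2}, again at the cost of $\B_{\tau_1}$. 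Thus, writing $X:=\sup_t\E_m[\psi](t)+\sup_u CF_m[\psi](u)+\sup_t\|\Phi_m^{-1}\tir^{m/2}\psi(t)\|_{L_u^2L_\omega^2}^2+\|\tir^{(m-1)/2}L(v_t^{\f12}\psi)\|_{L_u^2L_t^2L_\omega^2}^2$ over the relevant ranges, the combination of \eqref{B_1}, Proposition \ref{lot8}, Lemma \ref{lem_629} and Proposition \ref{6.27.3} yields
\[
X \les CF_m[\psi]_R(\tau_1-R,\tau_*)+(\tau_1+1)^{m-2}\C_0^2+\B_{\tau_1}+\la^{-3\ep_0+}X,
\]
where the last term is precisely the bound \eqref{e2} for $\mbox{error}(m,R)$ with all its pieces ($\E_m$, $CF_m$, $\sup\|\tir^{m/2}\psi\|^2$) dominated by $X$ (and the $(\tau_1+1)^{m-2}\C_0^2$ piece moved to the right). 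Taking $\la$ large enough that $\la^{-3\ep_0+}\le \f12$ (this is where the smallness hypothesis $\la\ge\La$ enters), the $\la^{-3\ep_0+}X$ term is absorbed into the left side, giving \eqref{e3}.

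\textbf{Remaining technical points and the main obstacle.} One must be careful with the $\Phi_m$ weights: for $m=2$, $\Phi_2(\tau_*)=(\ln(2+\tau_*))^{\f12}$ is genuinely present, which is why \eqref{e3} is stated with $\Phi_m^{-1}$ on the left; the logarithm is harmless since $\tau_*\les\la^{1-8\ep_0}$ and every estimate carries a spare power $\la^{-\ep_0}$-type gain. A second point is that the boundary term $\int_{\tau_1}^{\tau_*}\int_{S_{t,t-R}}\tir^m(\bb|\sn\ti\psi|^2v_t\Omega^{-2}-(2-\bb)|L(v_t^{\f12}\psi)|^2)$ in \eqref{B_1} lives on $\{\tir=R\}\approx\{\tir\approx R\}$; using $|\bb-1|<\f14$ (Lemma \ref{6.17.1}) one has $2-\bb>0$ so the $L(v_t^{\f12}\psi)$ part has a good sign, and the $|\sn\ti\psi|^2$ part is $\les\int_{K_{\tau_1,2R}^{\tau_*}}(\P_\a^{(\bT)}[\psi]\bT^\a+\psi^2/\tir)\les\B_{\tau_1}$ by Proposition \ref{6.27.3} and Proposition \ref{comp3}. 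The main obstacle, as the paper's own discussion signals, is the error estimate Lemma \ref{lem_629}: the terms $E^{(2)}$ involving $\ckk\mu$, $\sn\sigma\c\sn\sigma$ and $\zeta\c\sn\sigma$ are only borderline integrable, and one genuinely needs the decomposition $\sn\sigma=\bA+\bA^\dag+\mu^\dag$ of Proposition \ref{dcmpsig} together with the $L_u^2L_t^2L_\omega^p$ control of $\tir\ckk\mu$ from Proposition \ref{improve} --- the weaker-than-$\sn\widetilde{\tr\chi}$ control of $\mu+2\sD\sigma$ is exactly compensated by pairing it against the conformal flux $\|\tir^{(m-2)/2}L(\tir\psi)\|_{L^2}$ rather than against an energy norm. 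Once Lemma \ref{lem_629} is in hand the rest is bookkeeping; its proof is the only place where the special structure of $\mu$ and the smoothing of the metric along null hypersurfaces are used, and it is where I expect all the difficulty to concentrate. Finally, the passage from the fixed-$\tau$ version to the $\tau=\tau_*$ statement \eqref{e3}, and the claim $\B_{\tau_1}=\sup_{t\in[\tau_1,\tau_*]}\B_{\tau_1}^t<\infty$, follow by the usual continuity argument in $\tau$ using Lemma \ref{monoeng} and the finiteness of the data energy.
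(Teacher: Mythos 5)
Your overall strategy is exactly the paper's: run the multiplier identity \eqref{B_1}, absorb $\mbox{error}(m,R)$ via Lemma \ref{lem_629} and the smallness $\la^{-2\ep_0}$, couple the lower-order term back in through Proposition \ref{lot8}, and close by taking suprema over $(\tau_2,\tau)$ and over $u$ of the resulting family of inequalities. The identification of Lemma \ref{lem_629} (and behind it Propositions \ref{improve} and \ref{dcmpsig}) as the real content is also correct.

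There is, however, one step that as written would fail: your treatment of the inner boundary term $\int_{\tau_1}^{\tau_2}\int_{S_{t,t-R}}\tir^m\bigl(\bb|\sn\ti\psi|^2v_t\Omega^{-2}-(2-\bb)|L(v_t^{\f12}\psi)|^2\bigr)\,d\omega\,dt$. This is an integral over the codimension-one hypersurface $\{\tir=R\}$ (measure $d\omega\,dt$, no $du$), and you claim it is $\les\int_{K_{\tau_1,2R}^{\tau_*}}\bigl(\P_\a^{(\bT)}[\psi]\bT^\a+\psi^2/\tir\bigr)$ "by Proposition \ref{6.27.3} and Proposition \ref{comp3}." Both of those results compare \emph{bulk} integrals with \emph{bulk} integrals; a nonnegative density on a fixed level set $\{\tir=R'\}$ cannot be dominated by its spacetime integral over $\{R\le\tir\le 2R\}$ (it could concentrate on the level set), and this hypersurface is timelike-type, so no flux identity applies either. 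The paper's device is to state the identity on $D_{\tau_1,R}^{\tau_2,\tau}\cap\{\tir\ge R'\}$ for every $R'\in[R,2R]$ (this is \eqref{MA2}), observe that the left-hand side is monotone decreasing in $R'$, and then \emph{average over} $R'\in[R,2R]$: the coarea formula converts the family of boundary integrals into the bulk integral $\frac1R\int_{\{R\le\tir\le2R\}\cap D}(\,\cdot\,)$, which is then legitimately controlled by Proposition \ref{comp3} and \eqref{intes_1}, and one returns from the subscript $2R$ to $R$ via Lemma \ref{correc_2} and \eqref{7.14.4}. You cite the right comparison lemmas but omit the averaging in $R'$ that makes the hypersurface-to-bulk passage valid; without it the inequality you assert is simply false. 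With that device inserted, the rest of your argument reproduces the paper's proof.
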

%\begin{footnote}
%{This definition is slightly different from (\ref{btau}) since we have to take supreme in $t\in[\tau_1, \tau_*]$ }
%\end{footnote}

\begin{remark}\label{rmk1}
By the definition (\ref{btau}) of $\B_{\tau_1}^t$, we have
\begin{align*}
\B_{\tau_1}\les \int_{\tau_1}^{\tau_*} \!\!\int_{S_{t,\tau_1-R}} \tir \left(|L(v_t^{\f12}\psi)|^2+\psi^2\right) d\omega dt
& + \int_{\widetilde{\Sigma}^{\tau_*}_{\tau_1, R}}\P_\a^{(\bT)}[\psi] \bn^\a+\int_{S_{\tau_1,\tau_1-R}} \tir \phi^2 d\omega.
\end{align*}
Recall that $\psi[t_0]$ is supported over $B_r$. If $\tau_1=t_0$, the terms in the above equation integrated on the null boundary
vanishes, which implies that $\B_{t_0} \les \C_0^2$. Similarly, if $\tau_1=t_0$, the right hand side of (\ref{e3}) is bounded by $\C_0^2$.
\end{remark}

We will use (\ref{MA2}) with $R\le R'\le 2R$ to prove Proposition \ref{e1}. From (\ref{MA2}) and the definition of
$CF_m[\psi]_{R'}$ and $\E_m[\psi]_{D_{\tau_1, R}^{\tau_2, \tau}, R'}(\tau)$ it follows that
\begin{align}\label{7.14.1}
\begin{split}
& CF_m[\psi]_{R'}(\tau_2-R, \tau) + \E_m[\psi]_{D_{\tau_1, R}^{\tau_2, \tau}, R'}(\tau)
+ \left\|\tir^{\frac{m-1}{2}} L(v_t^{\f12}\psi)\right\|^2_{L_t^2L_u^2 L_\omega^2(D_{\tau_1, R}^{\tau_2, \tau}\cap \{\tir\ge R'\})} \\
& \les \int_{\tau_1+R'-R}^{\tau+R'-R} \int_{S_{t, t-R'}} \tir^m \left(|\sn \ti\psi|^2 v_t \Omega^{-2} + |L(v_t^{\f12}\psi)|^2\right) d\omega dt \\
& + CF_m[\psi]_{R'}(\tau_1-R, \tau) +\mbox{error}(m, \tir\ge R'),
\end{split}
\end{align}
where
$$
\mbox{error}(m,\tir \ge R') = \int_{D_{\tau_1, R}^{\tau_2, \tau}\cap \{\tir \ge R'\}} |\er_m(\psi)| d\mu_\ga du dt
+ \int_{D_{\tau_1, R}^{\tau_2, \tau}\cap \{\tir \ge R'\}} \tir^m  |\bA| |\J[\psi]|\Omega^{-2} \bb d\omega du dt.
$$
which comes from the term in (\ref{MA2}) with the same notation but with every integrand taken absolute value.
By integrating (\ref{7.14.1}) with respect to $R'$ for $R\le R'\le 2R$, noting that the left hand side of
(\ref{7.14.1}) is decreasing with respect to $R'$, and using the fact $\mbox{error}(m,\tir\ge R')\le \mbox{error}(m,R)$, we can obtain
\begin{align*}
& CF_m[\psi]_{2R}(\tau_2-R, \tau) + \E_m[\psi]_{D_{\tau_1, R}^{\tau_2, \tau}, 2R}(\tau)
+ \left\|\tir^{\frac{m-1}{2}} L(v_t^{\f12}\psi)\right\|^2_{L_t^2L_u^2 L_\omega^2(D_{\tau_1, R}^{\tau_2, \tau}\cap \{\tir\ge 2R\})} \displaybreak[0]\\
& \les \frac{1}{R} \int_{R}^{2R} \int_{\tau_1+R'-R}^{\tau+R'-R} \int_{S_{t, t-R'}} \tir^m \left(|\sn \ti\psi|^2 v_t \Omega^{-2}
+ |L(v_t^{\f12}\psi)|^2\right) d\omega dt dR' \displaybreak[0]\\
& + CF_m[\psi]_{R}(\tau_1-R, \tau) +\mbox{error}(m, R).
\end{align*}
We may employ Proposition \ref{comp3} and (\ref{intes_1}) to derive that
\begin{align*}
& \frac{1}{R} \int_{R}^{2R} \int_{\tau_1+R'-R}^{\tau+R'-R} \int_{S_{t, t-R'}} \tir^m \left(|\sn \ti\psi|^2 v_t \Omega^{-2}
+ |L(v_t^{\f12}\psi)|^2\right) d\omega dt dR' \nn \displaybreak[0]\\
& \le \int_{\{R\le \tir\le 2R\}\cap D_{\tau_1, R}^{\tau_2, \tau}} \left(|L(v_t^{\f12}\psi)|^2+ |\sn \ti\psi|^2 v_t\right) d\omega du dt
\les \B_{\tau_1}^t. % \label{err_13}.
\end{align*}
Therefore
\begin{align}\label{7.14.3}
\begin{split}
& CF_m[\psi]_{2R}(\tau_2-R, \tau) + \E_m[\psi]_{D_{\tau_1, R}^{\tau_2, \tau}, 2R}(\tau)
+ \left\|\tir^{\frac{m-1}{2}} L(v_t^{\f12}\psi)\right\|^2_{L_t^2L_u^2 L_\omega^2(D_{\tau_1, R}^{\tau_2, \tau}\cap \{\tir\ge 2R\})} \\
& \les \B_{\tau_1} + CF_m[\psi]_{R}(\tau_1-R, \tau) +\mbox{error}(m, R).
\end{split}
\end{align}

\begin{proof}[Proof of Proposition \ref{e1}]
We first use Lemma \ref{lem_629} to estimate $\mbox{error}(m, R)$, we then use Proposition \ref{lot8}
to control the term $\|\tir^{\frac{m}{2}} \psi\|_{L_u^2 L_\omega^2(D_{\tau_1, R}^{\tau_2, \tau}\cap \Sigma_t)}$.
Combining the result with (\ref{7.14.3}) gives
\begin{align}
& CF_m[\psi]_{2R}(\tau_2-R, \tau) + \E_m[\psi]_{D_{\tau_1, R}^{\tau_2, \tau}, 2R}(\tau)
+ \left\|\tir^{\frac{m-1}{2}} L(v_t^{\f12}\psi)\right\|^2_{L_t^2L_u^2 L_\omega^2(D_{\tau_1, R}^{\tau_2, \tau}\cap \{\tir\ge 2R\})} \nn\\
& \les \B_{\tau_1}+(\tau_1+1)^{m-2}\C_0^2+CF_m[\psi](\tau_1-R, \tau_*) + \la^{-2\ep_0} B, \label{6.29.0}
\end{align}
where
\begin{align*}
B = \sup_{t\in [\tau_1, \tau_*]} \E_m[\psi]_{D_{\tau_1, R}^{\tau_*}, R} (t)
+\sup_{\tau_1-R \le u\le \tau_*-R} CF_m[\psi]_R (u, \tau_*)
+\|\tir^{\frac{m-1}{2}}L(v_t^{\f12}\psi)\|_{L_u^2 L_t^2 L_\omega^2(D_{\tau_1, R}^{\tau_*})}^2.
\end{align*}
In view of Proposition \ref{6.27.3} and Proposition \ref{comp3} we can obtain
\begin{align}\label{7.14.4}
\left\|\tir^{\f12} L(v_t^{\f12} \psi)\right\|^2_{L_t^2 L_u^2 L_\omega^2(D_{\tau_1, R}^{\tau_2, \tau}\cap \{R\le \tir \le 2R\})} \les\B_{\tau_1}.
\end{align}
This together with the comparison result in Lemma \ref{correc_2} implies that (\ref{6.29.0}) holds with the subscript $2R$ replaced by
$R$, i.e.
\begin{align}
& CF_m[\psi]_{R}(\tau_2-R, \tau) + \E_m[\psi]_{D_{\tau_1, R}^{\tau_2, \tau}, R}(\tau)
+ \left\|\tir^{\frac{m-1}{2}} L(v_t^{\f12}\psi)\right\|^2_{L_t^2L_u^2 L_\omega^2(D_{\tau_1, R}^{\tau_2, \tau})} \nn\\
& \les \B_{\tau_1}+(\tau_1+1)^{m-2}\C_0^2+CF_m[\psi](\tau_1-R, \tau_*) +
\la^{-2\ep_0} B. \label{6.29.1}
\end{align}
By using (\ref{6.29.1}) with $\tau_2=u+R$ and $\tau=\tau_*$ to estimate
$\|\tir^{\frac{m-1}{2}} L(v_t^{\f12}\psi)\|_{L_t^2 L_u^2 L_\omega^2(D_{\tau_1, R}^{u+R})}$ and
$CF_m[\psi]_R(u, \tau_*)$ for $\tau_1-R\le u \le \tau_*-R$;
by using (\ref{6.29.1}) with $\tau_2=\tau=t$ to estimate $\E_m[\psi]_{D_{\tau_1, R}^{t,t}}(t)$ for $\tau_1\le t\le \tau_*$.
We can conclude
$$
B\les \B_{\tau_1}+(\tau_1+1)^{m-2}\C_0^2+CF_m[\psi](\tau_1-R, \tau_*) + \la^{-2\ep_0} B
$$
which implies the desired estimate for $B$. This together with Proposition \ref{lot8} gives (\ref{e3}).
\end{proof}

In view of Proposition \ref{e1} with Lemma \ref{lem_629}, we immediately obtain

\begin{corollary}\label{corer_1}
Let $t_0\le \tau_1<\tau_2 \le \tau \le \tau_*$. Then for $m=1,2$ there holds
\begin{align*}
\mbox{error}(m, R)&\les \la^{-2\ep_0} \big((\tau_1+1)^{m-2}\C_0^2+CF_m[\psi](\tau_1-R, \tau_*)+\B_{\tau_1}).
\end{align*}
\end{corollary}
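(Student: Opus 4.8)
\textbf{Proof proposal for Corollary \ref{corer_1}.}
The statement is simply the combination of the error estimate in Lemma \ref{lem_629} with the weighted energy bound established in Proposition \ref{e1}. The plan is to invoke Lemma \ref{lem_629} with $U = D_{\tau_1,R}^{\tau_2,\tau}$, which gives, for $m=1,2$,
\begin{align*}
\mbox{error}(m, R)&\les \la^{-3\ep_0+} \Big((\tau_1+1)^{m-2}\C_0^2+\sup_{\tau_1\le t\le \tau} \|\tir^\frac{m}{2}\psi(t)\|_{L_u^2L_\omega^2(U\cap \Sigma_t)}^2\\
& \quad +\sup_{\tau_1 \le t\le \tau} \E_m[\psi](t)+\sup_{\tau_1-R\le u\le \tau_2-R} CF_m[\psi](u) \Big),
\end{align*}
where the domains of integration for $\E_m[\psi]$ and $CF_m[\psi]$ are $\Sigma_t\cap U$ and $C_u\cap U$ respectively. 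Since $U=D_{\tau_1,R}^{\tau_2,\tau}\subset D_{\tau_1,R}^{\tau_*}$, each of the three quantities on the right is dominated by its counterpart taken over the larger region $D_{\tau_1,R}^{\tau_*}$; more precisely $\sup_{\tau_1\le t\le\tau}\E_m[\psi]_{U\cap\Sigma_t}(t) \le \sup_{t\in[\tau_1,\tau_*]}\E_m[\psi]_{D_{\tau_1,R}^{\tau_*},R}(t)$, $\sup_{\tau_1-R\le u\le \tau_2-R} CF_m[\psi]_{C_u\cap U}(u)\le \sup_{\tau_1-R\le u\le\tau_*-R}CF_m[\psi]_R(u,\tau_*)$, and likewise $\sup_{\tau_1\le t\le\tau}\|\tir^{m/2}\psi(t)\|_{L_u^2L_\omega^2(U\cap\Sigma_t)}^2$ is controlled by the quantity appearing on the left of (\ref{e3}) (using $\Phi_m \les (\ln\la)^{1/2}$, which is absorbed by the power $\la^{-3\ep_0+}$).

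Next I would feed in Proposition \ref{e1}, which bounds precisely the sum
\[
\sup_{t}\|\Phi_m^{-1}\tir^{m/2}\psi(t)\|_{L_u^2L_\omega^2}^2 + \|\tir^{(m-1)/2}L(v_t^{1/2}\psi)\|_{L_u^2L_t^2L_\omega^2}^2 + \sup_t \E_m[\psi]_{U,R}(t) + \sup_u CF_m[\psi]_R(u,\tau_*)
\]
by $CF_m[\psi]_R(\tau_1-R,\tau_*) + (\tau_1+1)^{m-2}\C_0^2 + \B_{\tau_1}$. Substituting this into the displayed inequality for $\mbox{error}(m,R)$, and noting that $\la^{-3\ep_0+}(\ln\la) \les \la^{-2\ep_0}$ for $\la$ large (so the logarithmic loss from $\Phi_m$ is harmless), yields
\[
\mbox{error}(m,R) \les \la^{-2\ep_0}\big((\tau_1+1)^{m-2}\C_0^2 + CF_m[\psi](\tau_1-R,\tau_*) + \B_{\tau_1}\big),
\]
which is exactly the claimed bound (with $CF_m[\psi](\tau_1-R,\tau_*)$ abbreviating $CF_m[\psi]_R(\tau_1-R,\tau_*)$).

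There is essentially no obstacle here: the corollary is a bookkeeping consequence of the two preceding results, and the only point requiring a line of care is checking that the region monotonicity $D_{\tau_1,R}^{\tau_2,\tau}\subset D_{\tau_1,R}^{\tau_*}$ lets one replace the suprema over the truncated region by those over the full region appearing in Proposition \ref{e1}, together with the observation that the exponent $-3\ep_0+$ in Lemma \ref{lem_629} still leaves room for the $(\ln\la)$-type factors hidden in $\Phi_m$ and in (\ref{bba2}) after passing to $\la^{-2\ep_0}$. I would state these two observations explicitly and then conclude.
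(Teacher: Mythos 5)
Your proposal is correct and is exactly the paper's argument: the paper disposes of this corollary in one line ("In view of Proposition \ref{e1} with Lemma \ref{lem_629}, we immediately obtain"), i.e. by substituting the bounds of Proposition \ref{e1} into the right-hand side of Lemma \ref{lem_629}. Your additional remarks — the region monotonicity $D_{\tau_1,R}^{\tau_2,\tau}\subset D_{\tau_1,R}^{\tau_*}$ and the absorption of the $\Phi_m$ logarithm into the gap between $\la^{-3\ep_0+}$ and $\la^{-2\ep_0}$ — are precisely the implicit details, correctly handled.
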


%Applying Proposition \ref{e1} with $\tau_1=t_0$, we prove

\begin{proposition}\label{intg}
\begin{equation}\label{intes_2}
\int_{D_{t_0,R}^{\tau_*}} \tir \left|L(v_t^{\f12} \psi)\right|^2 d\omega du dt\les \C^2_0.
\end{equation}
\end{proposition}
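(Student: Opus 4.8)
\textbf{Proof strategy for Proposition \ref{intg}.} The plan is to apply the master multiplier identity \eqref{B_1} with $m=1$ on the full region $D_{t_0,R}^{\tau_*}=D_{t_0,R}^{\tau_*,\tau_*}$, taking $\tau_1=\tau_2=t_0$. With $\tau_1=t_0$ the supporting hypothesis $\psi[t_0]$ supported in $B_R$ kills all the boundary contributions along the inner null cone $\{u=t_0-R\}$ and along $\{u=\tau_2-R\}$, so the terms $\int_{\tau_1}^\tau\int_{S_{t,\tau_1-R}}\tir\,|L(v_t^{\f12}\psi)|^2$, $\int_{\tau_1-R}^{\tau_2-R}\int_{S_{\tau,u}}(\cdots)$ and the cone-flux term on the left all degenerate or vanish; this is exactly the reduction recorded in Remark \ref{rmk1}, where $\B_{t_0}\les\C_0^2$ and the right side of \eqref{e3} is $\les\C_0^2$. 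What survives on the left is precisely the bulk integral $\int_{D_{t_0,R}^{\tau_*}}\bb\,\tir^{0}\big(|L(v_t^{\f12}\psi)|^2+|\sn\ti\psi|^2 v_t\Omega^{-2}\big)$ (since $m=1$ gives coefficient $m=1$ on the first term and $2-m=1$ on the second), together with the $L(v_t^{\f12}\psi)$ flux on the outer cone, which is nonnegative. Wait — I want the \emph{weighted} integral $\int \tir|L(v_t^{\f12}\psi)|^2$, so in fact the cleaner route is to use \eqref{e3} of Proposition \ref{e1} directly with $m=1$ and $\tau_1=t_0$: its left-hand side already contains $\|\tir^{(m-1)/2}L(v_t^{\f12}\psi)\|_{L_u^2L_t^2L_\omega^2(U)}^2=\|L(v_t^{\f12}\psi)\|_{L^2(D_{t_0,R}^{\tau_*})}^2$, not the $\tir$-weighted version, so that gives the $m=1$ $L^2$ bound but not \eqref{intes_2}.

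Therefore the correct choice is $m=2$ in Proposition \ref{e1}: then $\tir^{(m-1)/2}=\tir^{1/2}$ and the term $\|\tir^{1/2}L(v_t^{\f12}\psi)\|_{L_u^2L_t^2L_\omega^2(D_{t_0,R}^{\tau_*})}^2 = \int_{D_{t_0,R}^{\tau_*}}\tir|L(v_t^{\f12}\psi)|^2\,d\omega\,du\,dt$ is exactly the left-hand side of \eqref{intes_2}. So the proof is: invoke \eqref{e3} with $m=2$, $\tau_1=t_0$; the right-hand side is $CF_2[\psi]_R(t_0-R,\tau_*)+(t_0+1)^{0}\C_0^2+\B_{t_0}$. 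By Remark \ref{rmk1}, since $\psi[t_0]$ is supported in $B_R\subset\mathcal D_0^+\cap\Sigma_{t_0}$, the cone $C_{t_0-R}$ does not meet the support of the data, so $CF_2[\psi]_R(t_0-R,\tau_*)=0$ and $\B_{t_0}\les\C_0^2$; hence the right-hand side is $\les\C_0^2$, and in particular $\int_{D_{t_0,R}^{\tau_*}}\tir|L(v_t^{\f12}\psi)|^2\les\C_0^2$, which is \eqref{intes_2}.

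The one point that needs care — and which I expect to be the main (though modest) obstacle — is checking that the hypotheses of Proposition \ref{e1} are genuinely available at $\tau_1=t_0$ without circularity: Proposition \ref{e1} is proved under the bootstrap assumptions \eqref{baen}–\eqref{lowba}, and its proof routes through Corollary \ref{corer_1} / Lemma \ref{lem_629} for the error terms, all of which carry the small factor $\la^{-2\ep_0}$ or $\la^{-3\ep_0+}$ and so are absorbed. One must confirm that at $\tau_1=t_0$ the quantity $\B_{t_0}=\sup_{t\in[t_0,\tau_*]}\B_{t_0}^t$ is controlled by $\int_{\widetilde\Sigma_{t_0,R}^{\tau_*}}\P_\a^{(\bT)}[\psi]\bn^\a$ plus vanishing boundary terms, and that $\int_{\widetilde\Sigma_{t_0,R}^{\tau_*}}\P_\a^{(\bT)}[\psi]\bn^\a=\int_{\{t=t_0,\tir\le R\}}\P_\a^{(\bT)}[\psi]\bn^\a\les\|\psi[t_0]\|_{\dot H^1}^2\les\C_0^2$, again using finite speed of propagation so the null portion of $\widetilde\Sigma_{t_0,R}$ is outside the data support. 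These are all statements recorded or immediate from Lemma \ref{monoeng}, Proposition \ref{int_est3}, and Remark \ref{rmk1}, so the argument closes.

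\textbf{Proof.}
Apply estimate \eqref{e3} of Proposition \ref{e1} with $m=2$ and $\tau_1=t_0$, taking $U=D_{t_0,R}^{\tau_*}$. Since $\psi[t_0]$ is supported in $B_R\subset\mathcal D_0^+\cap\Sigma_{t_0}$, the finite speed of propagation implies that $\psi$ vanishes on a neighbourhood of the null cone $C_{t_0-R}$; hence $CF_2[\psi]_R(t_0-R,\tau_*)=0$, and by Remark \ref{rmk1} the boundary terms in $\B_{t_0}^t$ along $C_{t_0-R}$ vanish, so that $\B_{t_0}=\sup_{t\in[t_0,\tau_*]}\B_{t_0}^t\les\int_{\{t=t_0,\,\tir\le R\}}\P_\a^{(\bT)}[\psi]\bn^\a\les\|\psi[t_0]\|_{\dot H^1}^2\les\C_0^2$. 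Consequently the right-hand side of \eqref{e3} is bounded by $(t_0+1)^{0}\C_0^2+\B_{t_0}\les\C_0^2$. Retaining on the left-hand side only the term $\|\tir^{\frac{m-1}{2}}L(v_t^{\f12}\psi)\|_{L_u^2L_t^2L_\omega^2(U)}^2$ with $m=2$, which equals $\int_{D_{t_0,R}^{\tau_*}}\tir\,|L(v_t^{\f12}\psi)|^2\,d\omega\,du\,dt$, we obtain \eqref{intes_2}.
\qed
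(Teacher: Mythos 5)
Your final argument is correct and matches the paper's proof in substance: the paper establishes \eqref{intes_2} by taking $m=2$, $\tau_1=t_0$, $\tau_2=\tau=\tau_*$ in \eqref{7.14.3}, controlling the error via Corollary \ref{corer_1}, noting $CF_2[\psi]_R(t_0-R,\tau_*)=0$ and $\B_{t_0}\les\C_0^2$ from the support of $\psi[t_0]$ and Remark \ref{rmk1}, and then covering the remaining slab $\{R\le\tir\le 2R\}$ with \eqref{7.14.4}; you instead cite the already-packaged estimate \eqref{e3} of Proposition \ref{e1} with $m=2$, $\tau_1=t_0$, whose left-hand side contains $\|\tir^{1/2}L(v_t^{\f12}\psi)\|^2_{L_u^2L_t^2L_\omega^2(D_{t_0,R}^{\tau_*})}$ over the full region, which is a legitimate and essentially equivalent shortcut since \eqref{e3} is exactly the synthesis of those ingredients.
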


\begin{proof}
We use (\ref{7.14.3}) with $m=2$, $\tau_1=t_0$, $\tau_2 = \tau =\tau_*$. This together with Corollary \ref{corer_1} then implies that
$$
\left\|\tir^{\f12} L(v_t^{\f12} \psi)\right\|^2_{L_t^2 L_u^2 L_\omega^2(D_{t_0, R}^{\tau_*}\cap \{\tir \ge 2R\})}
\les \C_0^2 +\B_{t_0} + CF_2[\psi]_R (t_0-R, \tau_*).
$$
Since $\psi[t_0]$ is supported within $B_R$, $\psi$ must vanish on $\{u\le t_0-R\}$ and hence $CF_2[\psi]_R(t_0-R, \tau_*)=0$.
According to Remark \ref{rmk1}, $\B_{t_0}\les \C_0^2$. Therefore
\begin{align*}
\left\|\tir^{\f12} L(v_t^{\f12} \psi)\right\|^2_{L_t^2 L_u^2 L_\omega^2(D_{t_0, R}^{\tau_*}\cap \{\tir \ge 2R\})} \les\C_0^2.
\end{align*}
This together with (\ref{7.14.4}) with $\tau_1 = t_0$ and $\tau_2=\tau=\tau_*$ gives (\ref{intes_2}).
\end{proof}

\subsection{Step 3: Decay estimates for energy}

We will provide a decay estimate for $\int_{\widetilde{\Sigma}_{\tau}} \P_\a^{(\bT)}[\phi] \bn^\a$ in Proposition \ref{dis2}.
We first give a consequence of (\ref{intes_2}) and (\ref{ieq.2}).

\begin{proposition}\label{disp}
Given $\eta>1$. There exists a sequence $\{\tau_n\}_{n\ge 0} \subset [t_0, \tau_*]$ satisfying
\begin{equation}\label{ratio}
c_1 \tau_n \le \tau_{n+1}\le c_2 \tau_n
\end{equation}
for some constant $c_2>c_1>1$ such that
\begin{equation}\label{dsq}
\int_{\tau_n}^{\tau_*} \int_{S_{t, \tau_n-R}} \tir \left|L(v_t^{\f12} \psi)\right|^2 d\omega dt+R^2\int_{S_{\tau_n, \tau_n- R}} \psi^2 d\omega  \les (1+\tau_n)^{-1}\C_0^2.
\end{equation}
Consequently, for $\tau_n\le \tau \le \tau_*$ there holds
\begin{equation}\label{dsq2}
\int_{\tau_n}^\tau \int_{S_{t, \tau_n-R}} \tir \psi^2\les (\ln (\tau-\tau_n+R))^2(1+\tau_n)^{-1}\C_0^2.
\end{equation}
\end{proposition}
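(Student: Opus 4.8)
The plan is to derive Proposition \ref{disp} from the uniform bound \eqref{intes_2} of Proposition \ref{intg} by a mean-value (pigeonhole) argument in the null parameter $u$, supplemented by \eqref{ieq.2} for the pointwise $L^2$ term and by \eqref{lot7} of Lemma \ref{lotlm} for the last assertion. The key observation is that the spacetime integral in \eqref{intes_2} decouples, after Fubini, into an integral over $u$ of conformal fluxes along the cones $C_u$.

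First I would rewrite \eqref{intes_2} using that the region $D_{t_0,R}^{\tau_*}=D_{t_0,R}^{\tau_*,\tau_*}$ is foliated by the truncated null cones $C_u\cap\{u+R\le t\le\tau_*\}$, $u\in[t_0-R,\tau_*-R]$; this identifies the spacetime integral there with $\int_{t_0-R}^{\tau_*-R}CF_1[\psi]_R(u,\tau_*)\,du$, so that
\begin{equation*}
\int_{t_0-R}^{\tau_*-R}CF_1[\psi]_R(u,\tau_*)\,du\les\C_0^2 .
\end{equation*}
In parallel, \eqref{ieq.2} with $\tau_1=t_0$ and $\tau=\tau_*$, together with Remark \ref{rmk1} (which gives $\B_{t_0}\les\C_0^2$, since $\psi[t_0]$ is supported in $B_R$ and hence the null-boundary contributions to $\B_{t_0}$ vanish), and the substitution $t=u+R$, yields
\begin{equation*}
\int_{t_0-R}^{\tau_*-R}R^2\Big(\int_{S_{u+R,u}}\psi^2\,d\omega\Big)\,du\les\C_0^2 .
\end{equation*}

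Next I would apply the mean-value inequality to the single nonnegative function $F(u):=CF_1[\psi]_R(u,\tau_*)+R^2\int_{S_{u+R,u}}\psi^2\,d\omega$, which by the above satisfies $\int_{t_0-R}^{\tau_*-R}F(u)\,du\les\C_0^2$. Fixing $\eta>1$ and covering the relevant range by intervals $I_n$ whose left endpoints grow geometrically with ratio comparable to $\eta$, one obtains $u_n\in I_n$ with $F(u_n)\les\C_0^2/|I_n|$; since $|I_n|\approx u_n\ge t_0-R$, this is $F(u_n)\les\C_0^2/u_n$ with an $\eta$-dependent constant. Setting $\tau_n:=u_n+R$ and using $R\le t_0=1\le u_n$ gives $\tau_n\approx u_n$, hence $F(u_n)\les(1+\tau_n)^{-1}\C_0^2$; separating the two summands of $F$ is exactly \eqref{dsq}. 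Thinning the intervals $I_n$ (i.e. passing to a subsequence) if necessary, the $\tau_n$ increase by at least a fixed factor $>1$ and by at most a fixed factor at each step, which is the two-sided bound $c_1\tau_n\le\tau_{n+1}\le c_2\tau_n$ with $c_2>c_1>1$ depending only on $\eta$; the shift by the universal constant $R$ only alters $c_1,c_2$. Finally, \eqref{dsq2} follows by applying \eqref{lot7} of Lemma \ref{lotlm} with $\tau_1=\tau_n$: its right-hand side is controlled by $R\|\psi\|_{L_\omega^2(S_{\tau_n,\tau_n-R})}$ and by $\big(\int_{\tau_n}^{\tau}\int_{S_{t,\tau_n-R}}\tir|L(v_t^{\f12}\psi)|^2\,d\omega dt\big)^{1/2}$, each of which is $\les\big((1+\tau_n)^{-1}\C_0^2\big)^{1/2}$ by \eqref{dsq} (the flux integral only grows when $\tau$ is enlarged to $\tau_*$); squaring and absorbing $\ln(1/R)$ into the implicit constant, using that $R$ is a fixed universal quantity, gives \eqref{dsq2}.

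I do not anticipate a genuine obstacle here: once \eqref{intes_2} is in hand, the argument is elementary. The only points deserving some attention are the Fubini reorganisation of \eqref{intes_2} as a $u$-integral of conformal fluxes over $D_{t_0,R}^{\tau_*}$, and the bookkeeping that keeps the ratio estimate $c_1>1$ strict while accommodating the shift $\tau_n=u_n+R$; the need to make two competing quantities simultaneously small at a common $\tau_n$ is handled by the standard device of applying the mean-value inequality to their sum $F$.
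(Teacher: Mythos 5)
Your proof is correct and follows essentially the same argument as the paper: combine \eqref{intes_2} and \eqref{ieq.2} (with $\B_{t_0}\les\C_0^2$ from Remark \ref{rmk1}) into a single $u$-integrated bound $\les\C_0^2$, pigeonhole over geometrically spaced intervals to extract $\tau_n$ satisfying \eqref{ratio} and \eqref{dsq}, and then deduce \eqref{dsq2} from \eqref{lot7} using the monotonicity of the flux term in $\tau$. The only cosmetic difference is that you phrase the pigeonhole step as a direct mean-value inequality for the combined function $F(u)$, while the paper presents it as a contradiction argument over $\tau\in[\eta^n,\rho\eta^n]$ with $1<\rho<\eta$; these are interchangeable.
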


\begin{proof}
In view of (\ref{intes_2}), (\ref{ieq.2}) and $\B_{t_0}\les \C_0^2$ from Remark \ref{rmk1}, there is a universal constant $B$ such that
\begin{equation}\label{7.13.1}
\int_{D_{t_0, R}^{\tau_*}} \tir \left|L(v_t^{\f12} \psi)\right|^2 d\omega du dt + R^2 \int_{t_0}^{\tau_*} \int_{S_{t,t-R}} \psi^2 d \omega d\tau \le B \C_0^2.
\end{equation}
Fix $1<\rho<\eta$. If, for some $n$ there is no $\tau_n\in[\eta^n, \rho\eta^n]\subset [t_0, \tau_*]$ such that (\ref{dsq}) holds,
then for $B'> 2 B/\ln \rho$ we have
\begin{align*}
\int_{\eta^n}^{\rho\eta^n} \left(\int_\tau^{\tau_*} \int_{S_{t, \tau-R}} \tir \left|L(v_t^{\f12} \psi)\right|^2 d\omega d t
+R^2\int_{S_{\tau, \tau-R}} \psi^2 d\omega \right) d \tau
& \ge B' \int_{\eta^n}^{\rho\eta^n} (1+\tau)^{-1} \C_0^2 d\tau \\
& \ge \frac{1}{2} B' (\ln \rho) \C_0^2>B \C_0^2
\end{align*}
which contradicts to (\ref{7.13.1}).  This shows the existence of $\tau_n$ satisfying (\ref{ratio}) and (\ref{dsq}).
From (\ref{lot7}) and (\ref{dsq}) we immediately obtain (\ref{dsq2}).
\end{proof}

\begin{corollary}\label{decay_1}
Let $\{\tau_n\}_{n\ge 1} \subset [t_0, \tau_*]$ be the sequence obtained in Proposition \ref{disp}. Then
\begin{equation*}
\B_{\tau_n}=\sup_{t\in [\tau_n, \tau_*]}\B_{\tau_n}^t
\les (1+\tau_n)^{-1}\C_0^2 +\int_{\widetilde{\Sigma}_{\tau_n, R}^{\tau_*}} \P_\a^{(\bT)}[\psi] {\bf n}^\a.
\end{equation*}
\end{corollary}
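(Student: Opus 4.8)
The plan is to unwind the definition of $\B_{\tau_n}^t$ in \eqref{btau} and bound each of its three pieces separately, using the decay sequence from Proposition \ref{disp}. Recall that
$$
\B_{\tau_n}^t = (t-\tau_n+R)^{-1}\int_{\tau_n}^t\!\!\int_{S_{t',\tau_n-R}}\tir\left(|L(v_{t'}^{\f12}\psi)|^2+\psi^2\right)d\omega dt'
+\int_{\widetilde\Sigma^t_{\tau_n,R}}\P_\a^{(\bT)}[\psi]\bn^\a+\int_{S_{\tau_n,\tau_n-R}}\tir\psi^2 d\omega.
$$
The middle term is already of the desired form (it is monotone in $t$, so $\sup_t$ of it equals the integral over $\widetilde\Sigma^{\tau_*}_{\tau_n,R}$, which is exactly the second term on the right of the claimed inequality), so nothing needs to be done there. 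The last term $\int_{S_{\tau_n,\tau_n-R}}\tir\psi^2\,d\omega$ is controlled directly by \eqref{dsq} in Proposition \ref{disp}: since $\tir\approx R$ on $S_{\tau_n,\tau_n-R}$ and $R$ is a universal constant, this is $\les R^2\int_{S_{\tau_n,\tau_n-R}}\psi^2\,d\omega\les(1+\tau_n)^{-1}\C_0^2$.

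For the first term, I would split the integrand into its two summands. The contribution of $|L(v_{t'}^{\f12}\psi)|^2$ is bounded, after dropping the favorable prefactor $(t-\tau_n+R)^{-1}\le R^{-1}\les 1$, by $\int_{\tau_n}^{\tau_*}\int_{S_{t',\tau_n-R}}\tir|L(v_{t'}^{\f12}\psi)|^2\,d\omega dt'$, which is exactly the first quantity appearing in \eqref{dsq}, hence $\les(1+\tau_n)^{-1}\C_0^2$. The contribution of $\psi^2$ requires the prefactor: writing $t-\tau_n+R$ for the length of the $t'$-interval, one has
$$
(t-\tau_n+R)^{-1}\int_{\tau_n}^t\!\!\int_{S_{t',\tau_n-R}}\tir\psi^2\,d\omega dt'
\les (t-\tau_n+R)^{-1}(\ln(t-\tau_n+R))^2(1+\tau_n)^{-1}\C_0^2
$$
by \eqref{dsq2} of Proposition \ref{disp}. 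Since $x\mapsto x^{-1}(\ln x)^2$ is bounded for $x\ge R$ with $R$ a fixed universal constant, the supremum over $t\in[\tau_n,\tau_*]$ of this expression is $\les(1+\tau_n)^{-1}\C_0^2$, which is what we want.

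Assembling these three bounds gives $\B_{\tau_n}^t\les(1+\tau_n)^{-1}\C_0^2+\int_{\widetilde\Sigma^{\tau_*}_{\tau_n,R}}\P_\a^{(\bT)}[\psi]\bn^\a$ uniformly in $t\in[\tau_n,\tau_*]$, and taking the supremum over $t$ yields the corollary. The only mild subtlety—and the step I would be most careful about—is the logarithmic factor in the $\psi^2$ piece: one must verify that the decay rate in \eqref{dsq2} combines with the $(t-\tau_n+R)^{-1}$ weight to absorb the $(\ln(t-\tau_n+R))^2$ growth, which works precisely because the length of the time interval $t-\tau_n+R$ can be large (up to $\sim\tau_*\les\la^{1-8\ep_0}$) and $x^{-1}(\ln x)^2\to 0$; no interval-dependence survives, so the estimate is uniform. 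Everything else is a direct substitution of the conclusions of Proposition \ref{disp} into the definition \eqref{btau}.
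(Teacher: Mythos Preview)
Your proof is correct and follows the same route as the paper's argument: the paper summarizes the computation in a single line by writing $\B_{\tau_n}^t \les\left(\frac{(\ln(t-\tau_n+R))^{2}}{t-\tau_n+R}+1\right)(1+\tau_n)^{-1}\C_0^2 + \int_{\widetilde{\Sigma}_{\tau_n, R}^{\tau_*}} \P_\a^{(\bT)}[\psi] {\bf n}^\a$ (citing \eqref{btau} and Proposition~\ref{disp}), which is precisely the term-by-term estimate you spell out in detail.
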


\begin{proof}
In view of (\ref{btau}) and Proposition \ref{disp}, we derive that
\begin{equation*}
\B_{\tau_n}^t \les\left(\frac{(\ln(t-\tau_n+R))^{2}}{t-\tau_n+R}+1\right)(1+\tau_n)^{-1}\C_0^2
+ \int_{\widetilde{\Sigma}_{\tau_n, R}^{\tau_*}} \P_\a^{(\bT)}[\psi] {\bf n}^\a
\end{equation*}
which implies the desired estimate.
\end{proof}

Now we give a  result  which is  crucial to prove (\ref{con1}), (\ref{con}) and (\ref{con2}).

\begin{proposition}\label{dis2}
Let $\psi$ be any solution of $\Box_{\bg} \psi=0$ with $\psi[t_0]$ supported on $B_R\subset \D_0^+\cap \{t=t_0\}$.
There holds the decay estimate
\begin{equation}\label{maincon}
\int_{\widetilde{\Sigma}_{\tau, R}^{\tau_*}} \P_\a^{(\bT)}[\psi] {\bf n}^\a\les (1+\tau)^{-2}\C_0^2, \qquad \forall t_0\le \tau \le \tau_*.
\end{equation}
\end{proposition}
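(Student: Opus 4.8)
The goal (\ref{maincon}) is a quadratic-in-time decay statement for the flux $E(\tau):=\int_{\widetilde\Sigma_{\tau,R}^{\tau_*}}\P_\a^{(\bT)}[\psi]\bn^\a$, and the plan is to obtain it through the Dafermos--Rodnianski $r^p$-weighted energy mechanism \cite{DaRod1} with $p\in\{1,2\}$, adapted to the curved rough domain $(\D_0^+,\ti\bg)$. Two structural facts are already in hand. Proposition \ref{intg} furnishes the $p=2$ bounded spacetime flux $\int_{D_{t_0,R}^{\tau_*}}\tir\,|L(v_t^{\f12}\psi)|^2\,d\omega\,du\,dt\les\C_0^2$, and Proposition \ref{disp} extracts from it a dyadic sequence $\{\tau_n\}\subset[t_0,\tau_*]$ with $c_1\tau_n\le\tau_{n+1}\le c_2\tau_n$ along which the $p=1$ conformal flux on $C_{\tau_n-R}$ and the boundary term $R^2\int_{S_{\tau_n,\tau_n-R}}\psi^2$ are both $\les(1+\tau_n)^{-1}\C_0^2$. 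In addition $E$ is monotone up to a universal factor by Lemma \ref{monoeng} (inequality (\ref{dec_1})), and $E(t_0)\les\C_0^2$ since $\psi[t_0]$ is supported in $B_R$, so that no null-boundary contribution enters $\B_{t_0}$ (Remark \ref{rmk1}).

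First I would run the $m=1$ instance of Proposition \ref{e1} with $\tau_1=\tau_n$. Its left-hand side controls $\|L(v_t^{\f12}\psi)\|_{L_u^2L_t^2L_\omega^2(D_{\tau_n,R}^{\tau_*})}^2$, $\sup_t\E_1[\psi]_{D_{\tau_n,R}^{\tau_*},R}(t)$, $\sup_u CF_1[\psi]_R(u,\tau_*)$ and $\sup_t\|\tir^{\f12}\psi(t)\|_{L_u^2L_\omega^2}^2$, while its right-hand side is $CF_1[\psi]_R(\tau_n-R,\tau_*)+(1+\tau_n)^{-1}\C_0^2+\B_{\tau_n}$, the error term $\mbox{error}(1,R)$ hidden in the multiplier identity (\ref{B_1}) having been absorbed via Corollary \ref{corer_1}. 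Inserting the pigeonhole bound on $CF_1[\psi]_R(\tau_n-R,\tau_*)$ and Corollary \ref{decay_1} (which gives $\B_{\tau_n}\les(1+\tau_n)^{-1}\C_0^2+E(\tau_n)$) reduces the right-hand side to $\les(1+\tau_n)^{-1}\C_0^2+E(\tau_n)$. Now, since $v_t\approx\tir^2$ and $\tir\ge R$ with $R$ a fixed universal constant, the comparison estimates of Section \ref{cmpsec} convert the controlled quantities into a bound for the ordinary exterior energy: the slice energy $\int_{\Sigma_\tau\cap\{\tir\ge R\}}\P_\a^{(\bT)}[\psi]\bn^\a$ and, through the standard energy identity of Lemma \ref{monoeng} applied between $C_{\tau_n-R}$ and $C_{\tau-R}$, the flux along $C_{\tau-R}$, are all $\les(1+\tau_n)^{-1}\C_0^2+E(\tau_n)$; the complementary piece $\{\tir\le R\}$ is handled by the local energy estimate Proposition \ref{6.27.3}, which is again $\les\B_{\tau_n}$. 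Thus $E(\tau)\les(1+\tau_n)^{-1}\C_0^2+E(\tau_n)$ for $\tau_n\le\tau\le\tau_*$, and combined with $E(t_0)\les\C_0^2$ this already gives the intermediate decay $E(\tau_n)\les(1+\tau_n)^{-1}\C_0^2$.

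Second, I would feed this improved decay back into the same scheme. With $E(\tau_n)\les(1+\tau_n)^{-1}\C_0^2$ available, a further application of Proposition \ref{e1} with $m=1$ at $\tau_1=\tau_{n-1}$, together with the near-monotonicity of $E$ and a mean-value step $\tau_n\,E(\tau_n)\les\int_{\tau_{n-1}}^{\tau_n}E(t)\,dt$ estimated through the $m=1$ weighted energies and Proposition \ref{6.27.3}, upgrades the bound to $E(\tau_n)\les(1+\tau_{n-1})^{-2}\C_0^2+(1+\tau_{n-1})^{-1}E(\tau_{n-1})$. Iterating this recursion along the geometric sequence $\{\tau_n\}$ stabilizes after the two passes: each step improves the power of $(1+\tau_n)^{-1}$ by one until the quadratic rate is reached, so $E(\tau_n)\les(1+\tau_n)^{-2}\C_0^2$. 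Finally, for arbitrary $\tau\in[t_0,\tau_*]$ I choose $n$ with $\tau_n\le\tau\le\tau_{n+1}$, so that $\tau\approx\tau_n$ by (\ref{ratio}), and invoke (\ref{dec_1}) to conclude $E(\tau)\les E(\tau_n)\les(1+\tau_n)^{-2}\C_0^2\les(1+\tau)^{-2}\C_0^2$, which is (\ref{maincon}).

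I expect the main difficulty to lie in the error term $\mbox{error}(m,R)$ of the multiplier identity (\ref{B_1}): unlike in flat space it carries the dangerous combination $\f12\mu+\sD\sigma+|\sn\sigma|^2+2\zeta\c\sn\sigma-hk_{NN}$ as well as the quadratic form $\bA\c\J[\psi]$, none of which is pointwise small, and making it $\la^{-2\ep_0+}$-small so that Corollary \ref{corer_1} applies is precisely what consumes the hard-won control of $\mu+2\sD\sigma$ and $\sn\sigma$ from Proposition \ref{improve} and the decomposition Proposition \ref{dcmpsig}. The second delicate point is the self-referential appearance of $E(\tau_n)$: since it enters the right-hand side of the $m=1$ estimate through $\B_{\tau_n}$ with an $O(1)$ coefficient, the decay cannot be closed by a single Gr\"onwall-type absorption but only by the two-pass iteration above, and one must keep the dyadic ratios $c_1,c_2$ and all constants universal so that neither the pigeonhole extraction nor the final upgrade to all $\tau$ loses a power of time; bookkeeping the interplay between the intrinsic flux on $C_{\tau-R}$ and the $r^p$-controlled quantities $CF_1$, $\E_1$ is the technically heaviest part.
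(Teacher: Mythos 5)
Your overall strategy -- Dafermos--Rodnianski $r^p$-weighted energies with $m=1,2$, the pigeonhole sequence $\{\tau_n\}$ from Proposition \ref{disp}, Corollary \ref{corer_1} to tame $\mbox{error}(m,R)$, and a two-step iteration to convert the $m=1$ weighted control into quadratic decay of $E(\tau):=\int_{\widetilde\Sigma_{\tau,R}^{\tau_*}}\P_\a^{(\bT)}[\psi]\bn^\a$ -- matches the paper's proof. You also correctly identify the conversion from conformal quantities ($|L(v_t^{1/2}\psi)|^2$, $|\sn\ti\psi|^2$) back to the physical flux as the comparison step, which the paper performs through (\ref{7.15.1}), (\ref{7.15.3}), the term $\I(\psi)$, and (\ref{ieq.1}).

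There is, however, a genuine gap in your ``first pass.'' You claim that from $E(\tau)\les(1+\tau_n)^{-1}\C_0^2+E(\tau_n)$ together with $E(t_0)\les\C_0^2$ you can already conclude $E(\tau_n)\les(1+\tau_n)^{-1}\C_0^2$. That deduction does not close: plugging $E(\tau_n)\les\C_0^2$ into the right-hand side yields only $E(\tau)\les\C_0^2$, and even telescoping the recursion $E(\tau_{n+1})\les(1+\tau_n)^{-1}\C_0^2+E(\tau_n)$ along the geometric sequence produces a logarithmic divergence, not decay. The gain of a factor $(1+\tau_n)^{-1}$ is not produced by a pointwise-in-$\tau$ comparison of slice energies; it comes from the \emph{time-integrated} bound $\int_{\tau_n}^{\tau_{n+1}}E(\tau)\,d\tau\les(1+\tau_n)^{-1}\C_0^2+E(\tau_n)$ (the paper's (\ref{ind1})), which, combined with the near-monotonicity (\ref{dec_1}) and $\tau_{n+1}-\tau_n\gtrsim\tau_{n+1}$, yields $\tau_{n+1}E(\tau_{n+1})\les(1+\tau_n)^{-1}\C_0^2+E(\tau_n)$ -- the inequality (\ref{bd_4.21}). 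That mean-value mechanism, which you do invoke explicitly but only in your ``second pass,'' is in fact what the paper uses in \emph{both} passes: first with $E(\tau_n)\les\C_0^2$ (from (\ref{dec_1}) with $\tau'=t_0$) to get $E(\tau_n)\les(1+\tau_n)^{-1}\C_0^2$, and then again with that improved input to reach the quadratic rate. So your two-pass structure is the right idea, but you need to route the first pass through the integrated estimate (\ref{ind1}) and (\ref{bd_4.21}) rather than through a pointwise slice comparison.

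One smaller remark: you apply Proposition \ref{e1} as a black box over $[\tau_n,\tau_*]$, while the paper re-derives the $m=1$ multiplier identity (\ref{MA2}) directly over the dyadic slab $D_{\tau_n,R}^{\tau_{n+1}}$ and integrates in $R'$ over $[R,2R]$ to handle the $\{R\le\tir\le 2R\}$ collar (yielding (\ref{ieq.1})). Your variant is acceptable -- restricting the $L^2_tL^2_uL^2_\omega$ spacetime quantity from $[\tau_n,\tau_*]$ to the slab still gives the bound you need -- but when you convert to $\int_{\tau_n}^{\tau_{n+1}}E(\tau)\,d\tau$ you must keep the $\{\tir\le R\}$ piece (Proposition \ref{6.27.3}), the null-boundary piece along $C_{\tau-R}\cap\{\tir\ge R\}$ via the identity (\ref{7.15.1}), and the error $\I(\psi)$ that this identity produces; your sketch elides these, and they are exactly where the control on $\sigma$, $\chih$, $V_4$ and $\bb$ from Sections \ref{sec_5}--\ref{sec_6} enters.
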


\begin{proof}
Let $\{\tau_n\}\subset [t_0, \tau_*]$ be obtained in Proposition \ref{disp}. We may apply (\ref{MA2}) with $m=1$ over
$D_{\tau_n, R}^{\tau_{n+1}} \cap \{\tir \ge R'\}$ with $R\le R'\le 2R$ to obtain
\begin{align}
&\int_{D^{\tau_{n+1}}_{\tau_n, R}\cap \{\tir\ge R'\}}\bb\left((L\psi)^2+|\sn \ti\psi|^2 v_t\Omega^{-2}\right) d\omega du dt \nn\\
& \le \int_{\tau_n+R'-R}^{\tau_{n+1}+R'-R} \int_{S_{t, t-R'}} \tir \left(\bb |\sn\ti\psi|^2v_t\Omega^{-2}-(2-\bb) |L(v_t^{\f12}\psi)|^2\right) d\omega dt \nn\\
& +\int_{\tau_n+R'-R}^{\tau_*} \int_{S_{t, \tau_n-R}} \tir |L(v_t^{\f12}\psi)|^2 d\omega dt +\mbox{error}(1,R). \label{6.29.4}
\end{align}
%where we incorporated a term $f=\int_{D^{\tau_n}_{\tau_{n-1}, R}}\bb(t-u)(\tr\chi-\frac{2}{t-u})|\sn \ti\phi|^2 \Omega^{-2} d\mu_\ga du dt$ to $\mbox{error}(1,R)$.
According to Corollary \ref{corer_1}, (\ref{dsq}) and Corollary \ref{decay_1}, we have
\begin{equation}\label{6.29.3}
\mbox{error}(1,R)\les \la^{-2\ep_0}((\tau_n+1)^{-1}\C_0^2+\B_{\tau_n})
\les (1+\tau_n)^{-1}\C_0^2 +\int_{\widetilde{\Sigma}_{\tau_n, R}^{\tau_*}} \P_\a^{(\bT)}[\psi] {\bf n}^\a.
\end{equation}
Let $D_n= \{R\le \tir \le 2R\}\cap \{\tau_n-R\le u\le \tau_{n+1}-R\}\cap \D_0^+$. Noting that $D_n \subset K_{\tau_n, 2R}^{\tau_*}$, we may use
Proposition \ref{comp3}, (\ref{intes_1}) and Corollary \ref{decay_1} to deduce that
\begin{align*}
& \int_{R}^{2R} \int_{\tau_n+R'-R}^{\tau_{n+1}+R'-R} \int_{S_{t, t-R'}}
\tir \left(\bb |\sn\ti\psi|^2v_t\Omega^{-2}-(2-\bb) |L(v_t^{\f12}\psi)|^2\right) d\omega dt \\
& \les \int_{D_n} (\P_\a^{(\bT)}[\psi] \bn^\a+\frac{\psi^2}{\tir})
\les \B_{\tau_n}^{\tau_*} \les (1+\tau_n)^{-1}\C_0^2 + \int_{\widetilde{\Sigma}_{\tau_n, R}^{\tau_*}} \P_\a^{(\bT)}[\psi] \bn^\a.
\end{align*}
%\begin{align*}
%\int_{\{u=\tau_n-R, t\ge \tau_n\}} &(t-u) (L\psi)^2 d\omega dt +\int_{D_{\tau_{n-1}}^{\tau_n}}\left((L\psi)^2+|\sn \ti\phi|^2 v_t\Omega^{-2}\right) d\omega du dt\\
%&\le\int_{\{u=\tau_{n-1}-R, t\ge \tau_{n-1}\}} (t-u) (L\psi)^2 d \omega dt +C\int_{\Sigma_{\tau_{n-1}} }\J_\a^{(\bT)}[\phi] n_{\Sigma_t}^\a+|\mbox{error}(1)|
%\end{align*}
Therefore, by integrating the both sides of (\ref{6.29.4}) for $R\le R'\le 2R$ and using the monotonicity of the left
hand side with respect to $R'$, we obtain
\begin{align*}
\int_{D_{\tau_n, R}^{\tau_{n+1}}\cap \{\tir \ge 2R\}} \bb(|L(v_t^{\f12}\psi)|^2+|\sn \ti\psi|^2\Omega^{-2} v_t) d\omega du dt
\les (1+\tau_n)^{-1}\C_0^2+ \int_{\widetilde{\Sigma}_{\tau_n, R}^{\tau_*}} \P_\a^{(\bT)}[\psi] \bn^\a.
\end{align*}
In view of Proposition \ref{comp3}, (\ref{intes_1}) and Corollary \ref{decay_1}, we can improve the above estimate to be
\begin{align}\label{ieq.1}
\int_{D_{\tau_n, R}^{\tau_{n+1}}} \bb(|L(v_t^{\f12}\psi)|^2+|\sn \ti\psi|^2\Omega^{-2} v_t) d\omega du dt
\les (1+\tau_n)^{-1}\C_0^2+ \int_{\widetilde{\Sigma}_{\tau_n, R}^{\tau_*}} \P_\a^{(\bT)}[\psi] \bn^\a.
\end{align}

On the other hand, by using Proposition \ref{6.27.3} we have
\begin{align}\label{7.15.3}
 \int_{\tau_n}^{\tau_{n+1}} \!\!\int_{\widetilde\Sigma_{\tau, R}^{\tau_*}} \P_\a^{(\bT)}[\psi] \bn^\a
& =\int_{\tau_n}^{\tau_{n+1}} \!\!\int_{\Sigma_\tau\cap \{\tir \le R\}} \P_\a^{(\bT)}[\psi] \bn^\a
+\int_{\tau_n}^{\tau_{n+1}}\!\! \int_{C_{\tau-R}\cap\{\tir\ge R\}} \P_\a^{(\bT)}[\psi] \bn^\a \nn \\
&\les \B_{\tau_n} + \int_{\tau_n}^{\tau_{n+1}} \!\!\int_{C_{\tau-R}\cap\{\tir\ge R\}} \left(|L\psi|^2+|\sn \psi|^2\right) \bb v_td\omega dt d\tau.
\end{align}
In view of $L(v_t^{\f12} \psi) = (L\psi + \f12 \tr\chi \psi) v_t^{\f12}$, $\tr\ti \chi = \tr \chi + V_4$ and
$\sn \ti \psi = \Omega (\sn \psi - \psi \sn \sigma)$, it is easily seen that
\begin{align}\label{7.15.1}
\left(|L(v_t^{\f12}\psi)|^2 + |\sn \ti \psi|^2 \Omega^{-2} v_t\right) \bb
= \left(|L\psi|^2+|\sn \psi|^2\right) \bb v_t + L(\f12 \tr \ti \chi \bb v_t \psi^2) + \I(\psi),
\end{align}
where
\begin{align*}
\I(\psi) &= \bb \left[ -L(\f12 \tr\ti\chi v_t) +(\f12 \tr \chi)^2 v_t -\f12 L(\log \bb) \tr \ti \chi  v_t + |\sn \sigma|^2 v_t\right] \psi^2 \\
& \quad \, - \left[ V_4 \psi L\psi + 2\psi \sn \psi \sn \sigma\right] \bb v_t
\end{align*}
Integrating (\ref{7.15.1}) over $C_{\tau-R}\cap \{\tir \ge R\}$ gives
\begin{align} %\label{4_7_14}
&\int_{C_{\tau-R}\cap \{\tir\ge R\}} \left(|L \psi|^2+|\sn \psi|^2\right) \bb v_t d\omega dt \nn\\
& = \int_{C_{\tau-R}\cap\{\tir\ge R\}} \bb\left(|L(v_t^{\f12} \psi)|^2+|\sn \ti\psi|^2 \Omega^{-2} v_t\right) d\omega dt
-\f12 \int_{S_{\tau_*, \tau-R}} \tr \ti \chi \psi^2 \bb v_t d\omega \nn\\
& + \f12 \int_{S_{\tau, \tau-R}} \tr \ti \chi \psi^2 \bb v_t d\omega -\int_{C_{\tau-R}\cap\{\tir\ge R\}}\I(\phi) d\omega dt\nn
\end{align}
Combining this with (\ref{7.15.3}) and using (\ref{ieq.1}) and Corollary \ref{decay_1} we obtain
\begin{align}\label{4.21.3}
\int_{\tau_n}^{\tau_{n+1}} \!\!\int_{\widetilde\Sigma_{\tau, R}^{\tau_*}} \P_\a^{(\bT)}[\psi] \bn^\a
& \les \int_{\tau_n}^{\tau_{n+1}} \!\!\int_{S_{\tau, \tau-R}} \tr \ti \chi \psi^2 \bb v_t d\omega d\tau
 +\int_{\tau_n}^{\tau_{n+1}} \!\!\int_{C_{\tau-R}\cap\{\tir\ge R\}}|\I(\psi)| d\omega dt d\tau \nn\\
& + (1+\tau_n)^{-1}\C_0^2+ \int_{\widetilde{\Sigma}_{\tau_n, R}^{\tau_*}} \P_\a^{(\bT)}[\psi] \bn^\a.
\end{align}
By using $\tr \ti \chi v_t\approx \tir$ , $|\bb-1|<\f12$,  (\ref{ieq.2})  and Corollary \ref{decay_1}, we have
\begin{align*}
\int_{\tau_n}^{\tau_{n+1}}&\int_{S_{\tau, \tau-R}} \tr \ti \chi \psi^2 \bb v_t d\omega d\tau
\les \B_{\tau_n}\les (1+\tau_n)^{-1}\C_0^2 +\int_{\widetilde{\Sigma}_{\tau_n, R}^{\tau_*}} \P_\a^{(\bT)}[\psi] \bn^\a.
\end{align*}
We need to estimate the term involving $\I(\psi)$. Let $\bA$ denote any term among $\{k, \hat \chi, V_4\}$. Since
$L(\log \bb)=-k_{NN}$ and
\begin{align*}
 L(\tr \ti \chi v_t)-\f12(\tr\chi)^2 v_t=v_t(-V_4^2 +V_4 \tr \ti \chi-|\hat \chi|^2-k_{NN} \widetilde\tr\chi+\ti \bE)
\end{align*}
with $\ti \bE$ being of the form $\bA\cdot \bA + \tr\ti \chi\cdot \bA$, see Section 5, thus symbolically we have
$$
|\I(\psi)| \les \left(|\bA|^2 + \tr \ti \chi |\bA| + |\sn \sigma|^2 \right) \psi^2 v_t +\left(|V_4 \psi L\psi| + |\psi\sn \psi \sn \sigma|\right) v_t.
$$
Let $q$ and $q'$ be such that $\frac{1}{q}+\frac{1}{q'}=\f12$ and $0<1-\frac{2}{q'}<s-2$.  Recall that $\widetilde{\tr\chi}v_t\approx \tir$
and $|\bb-1|<\f12$, we may use (\ref{bba2}), Proposition \ref{awave}  and (\ref{l4.1}) to derive that
\begin{align*}
& \int_{\tau_n}^{\tau_{n+1}} \int_{C_{\tau-R}\cap\{\tir\ge R\}} \bb(|\sn \sigma|^2, |\bA|^2, \widetilde{\tr\chi} |\bA|) \psi^2 v_t d\omega dt d\tau\\
&\les\tau_*^{\f12}\|\psi\|^2_{L^2_{[\tau_n, \tau_*]} L_u^2 L_\omega^{q}} \|\tir^{\frac{3}{2}} (|\bA|^2+\widetilde{\tr\chi} |\bA|+|\sn \sigma|^2)\|_{L^\infty L_\omega^{\frac{q'}{2}}}
\les \la^{-\ep_0} (\tau_n+1)^{-1}\C_0^2.
\end{align*}
By using (\ref{bba1}) and (\ref{bba2}), we also have
\begin{align*}
& \int_{\tau_n}^{\tau_{n+1}} \int_{C_{\tau-R}\cap \{\tir \ge R\}} \bb|\psi\sn \psi \sn \sigma| v_t d\omega dt d\tau\\
&\les \|\psi\|_{L^2_{[\tau_n, \tau_*]} L_u^2 L_\omega^{q}}
\left(\int_{\tau_n}^{\tau_{n+1}}\|\sn  \psi(t)\|_{L_u^2 L_x^2}^2 dt\right)^\f12 \|\tir \sn \sig\|_{L^\infty L_\omega^{q'}}
\les (\tau_n+1)^{-1} \la^{-\ep_0} \C_0^2.
\end{align*}
Furthermore, by using (\ref{baen}), (\ref{lowba}) and (\ref{pi.2}), we can obtain
\begin{align*}
&\int_{\tau_n}^{\tau_{n+1}}\int_{C_{\tau-R}\cap \{\tir\ge R\}} |\psi L\psi V_4| v_t du dt d\omega\\
&\les\sup_{\tau_n\le t\le \tau_{n+1}}\|\psi\|_{L_u^2 L_\omega^2}\|\tir^2 L\psi\|_{L_t^\infty L_u^2 L_\omega^2}\|V_4\|_{L_t^1 L_x^\infty}
\les (\tau_n+1)^{-1} \la^{-6\ep_0} \C_0^2.
\end{align*}
Therefore
\begin{equation*}
\int_{\tau_n}^{\tau_{n+1}} \int_{C_{\tau-R}\cap\{\tir\ge R\}}|\I(\psi)| d\omega dt d\tau
\les \la^{-\ep_0} (1+\tau_n)^{-1}\C_0^2.
\end{equation*}
Combining the above estimates with (\ref{4.21.3}) we can conclude that
\begin{equation}
\int_{\tau_n}^{\tau_{n+1}} \int_{\widetilde\Sigma_{\tau, R}^{\tau_*}}\P_\a^{(\bT)}[\psi] \bn^\a
\les (1+\tau_n)^{-1}\C_0^2+\int_{\widetilde\Sigma_{\tau_n, R}^{\tau_*}} \P_\a^{(\bT)}[\psi] \bn^\a.\label{ind1}
\end{equation}
Recall that Lemma \ref{monoeng} implies
\begin{equation*}
\int_{\widetilde{\Sigma}_{\tau_{n+1}, R}^{\tau_*}} \P_\a^{(\bT)}[\phi] \bn^\a
\le 2\int_{\widetilde{\Sigma}_{\tau, R}^{\tau_*}} \P_\a^{(\bT)}[\psi] \bn^\a,
\qquad \forall t_0\le \tau \le \tau_{n+1}.
\end{equation*}
We may use (\ref{ind1}) to conclude
\begin{align}
(\tau_{n+1}-\tau_n)\int_{\widetilde{\Sigma}_{\tau_{n+1}, R}^{\tau_*}}& \P_\a^{(\bT)}[\psi] \bn^\a
\les (1+\tau_n)^{-1}\C_0^2+ \int_{\widetilde\Sigma_{\tau_n, R}^{\tau_*}} \P_\a^{(\bT)}[\psi] \bn^\a.\label{bd_4.21}
\end{align}
By using  (\ref{dec_1}) with $\tau'=t_0$, we obtain for all $n\ge 0$ that
\begin{equation*}
\int_{\widetilde\Sigma_{\tau_n, R}^{\tau_*}} \P_\a^{(\bT)}[\psi] \bn^\a \les \C_0^2.
\end{equation*}
Since $\tau_{n+1}-\tau_n \gtrsim \tau_{n+1}$, from (\ref{bd_4.21}) we can deduce for all $n$ that
\begin{equation*}
\int_{\widetilde{\Sigma}_{\tau_n, R}^{\tau_*}}\P_\a^{(\bT)}[\psi] \bn^\a
\les (\tau_n+1)^{-1}\C_0^2.
\end{equation*}
Substituting this estimate into (\ref{bd_4.21}) yields
\begin{equation*}
(\tau_{n+1}-\tau_n)\int_{\widetilde{\Sigma}_{\tau_n, R}^{\tau_*}} \P_\a^{(\bT)}[\psi] \bn^\a
\les (1+\tau_n)^{-1}\C_0^2
\end{equation*}
which implies
\begin{equation*}
\int_{\widetilde{\Sigma}_{\tau_n, R}^{\tau_*}} \P_\a^{(\bT)}[\psi] \bn^\a\les (1+\tau_n)^{-2}\C_0^2, \qquad \forall n\ge 0.
\end{equation*}
For any $\tau \in [t_0, \tau_*]$, by the construction of $\{\tau_n\}$ it is always possible to find $\tau_n\le \tau$ such that
$\tau \les \tau_n$. Thus, by using (\ref{dec_1}) in Lemma \ref{monoeng}, we can conclude
\begin{equation*}
\int_{\widetilde{\Sigma}_{\tau, R}^{\tau_*}} \P_\a^{(\bT)}[\psi] \bn^\a
\les \int_{\widetilde{\Sigma}_{\tau_n, R}^{\tau_*}} \P_\a^{(\bT)}[\psi] \bn^\a
\les (1+\tau_n)^{-2} \C_0^2 \les (1+\tau)^{-2} \C_0^2.
\end{equation*}
The proof is therefore complete.
\end{proof}

\subsection{Proof of  Theorem \ref{BT}}

We first show (\ref{con}). This follows from (\ref{std}) and Proposition \ref{dis2} as we have
\begin{align}\label{f2}
\int_{\Sigma_t\cap\{u\ge \frac{t}{2}\}} \P_\a^{(\bT)}[\psi] \bn^\a
\les \int_{\widetilde{\Sigma}_{\frac{t}{2}+R, R}^t} \P_\a^{(\bT)}[\psi] \bn^\a\les (1+t)^{-2} \C_0^2.
\end{align}
In the interior region $\{u\le \frac{t}{2}\}\cap D_0^+$, (\ref{con1}) is a direct consequence of (\ref{con}).
In the exterior region $\{u\le  \frac{3t}{4}\}\cap \D_0^+$, we use (\ref{e3}) with $m=2$. By taking $\tau_1=t_0$,
using $\B_{t_0}\les \C_0^2$ and noting that $CF_2[\psi](t_0-R, \tau_*)=0$, we can obtain
\begin{align}
&\int_{\Sigma_t\cap\{u\le \frac{3}{4}t\}} \tir^2 \left(|L(v_t^{\f12} \psi)|^2 + |\sn \ti\psi|^2 v_t \Omega^{-2}\right) d\omega du \les \C_0^2, \label{7.16.1}\\
& \|\psi\|_{L_u^2 L_\omega^2(\Sigma_t\cap \{u\le \frac{3}{4}t\})} \les (1+t)^{-1+} \C_0. \label{7.16.2}
\end{align}
In view of Lemma \ref{com5} and (\ref{7.16.1}), we also obtain (\ref{con1}) in the exterior region and thus (\ref{con1}) follows.
To obtain (\ref{con2}), in view of (\ref{7.16.2}), it suffices to show (\ref{con2}) in the interior region, i. e.
\begin{equation*}
\|\psi(t)\|_{L_u^2 L_\omega^2(\Sigma_t\cap \{u\ge \frac{t}{2}\})} \les  (1+t)^{-1+}\C_0
\end{equation*}
To see this, according to the construction of $\{\tau_n\}$ we can find $\tau_n \le \frac{t}{2}$ such that $t\les \tau_n$.
Let $u_n=\tau_n-R$. We can integrate $L(v_t^{\f12} \psi)$ along null geodesics on $C_{u_n}$ to obtain
\begin{equation*}
(v_t^{\f12}\psi)(t, u_n, \omega)-(v_t^{\f12}\psi)(\tau_n, u_n, \omega)=\int_{\tau_n}^t L(v_{t'}^{\f12} \psi)(t', u_n, \omega) dt'
\end{equation*}
which together with $v_t\approx (t-u)^2$ implies
\begin{align*}
\int_{S_{t,u_n}} \psi^2 \tir^2 d\omega &\les R^2 \int_{S_{\tau_n, u_n}} \psi^2 d\omega
+ \log (t-\tau_n+R) \int_{\tau_n}^t \int_{S_{t', u_n}} \tir |L(v_{t'}^{\f12}\psi)|^2 d\omega dt'.
\end{align*}
In view of (\ref{dsq}), we then obtain
 \begin{equation*}
\int_{S_{t,u_n}} \psi^2 \tir^2 d\omega\les (1+\tau_n)^{-1} (\log (t-\tau_n+R)+1)\C_0^2.
 \end{equation*}
Recall that $\tau_n \le \frac{t}{2} \les \tau_n$, we have $t-u_n \ge \frac{t}{2}$. Thus we can conclude that
 \begin{equation}\label{e4}
\int_{S_{t,u_n}} \psi^2 \tir d\omega\les (1+t)^{-2} \log (t+1)\C_0^2.
 \end{equation}
Now we use (\ref{lot4}) from Lemma \ref{lem7.15.1} which gives
\begin{equation*}
\|\psi\|_{L_u^2 L_\omega^2(\Sigma_t\cap \{u\ge u_n\})}^2
\les \|r N(\psi)\|_{L_u^2 L_\omega^2(\Sigma_t\cap\{u\ge u_n\})}^2+\|r^{\f12}\psi\|_{L^2(S_{t,u_n})}^2.
\end{equation*}
By Lemma \ref{monoeng},  (\ref{e4}) and Proposition \ref{dis2},  we obtain
\begin{align*}
\|\psi\|_{L_u^2 L_\omega^2(\Sigma_t\cap \{u\ge u_n\}}^2&\les (1+t)^{-2+} \C_0^2 +\int_{{\widetilde\Sigma}_{\tau_n}}\P_\a^{(\bT)}[\psi] {\bf n}^\a
\les (1+t)^{-2+} \C_0^2.
\end{align*}
This immediately gives the desired estimate because $\{u\ge \frac{t}{2}\} \subset \{u\ge u_n\}$.

\section{\bf Appendix A: Proof of Lemma \ref{pres}}

In this subsection, we give the proof of Lemma \ref{pres}. We will rely on the following product estimates.

\begin{lemma}
Let $0<\ep<\f12$. Then for any scalar functions $f$ and $G$ there hold
\begin{align}
\|f G\|_{\dot{H}^\ep}&\les \|f\|_{L_x^\infty}\|G\|_{\dot{H}^\ep}+\|\p f\|_{L_x^6}\|G\|_{L_x^2},\label{prd3}\\
\|\mu^\ep P_\mu(fG)\|_{L_x^\infty}&\les \mu^{-\f12+\ep}\|\p f\|_{L_x^\infty}\|G\|_{L_x^6}
+\|f\|_{L_x^\infty}\|\mu^{\ep} P_\mu G\|_{L_x^\infty}, \mbox{ for } \mu>1.\label{prd7}
\end{align}
\end{lemma}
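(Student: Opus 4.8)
The plan is to prove both estimates by the standard Littlewood--Paley trichotomy: decompose
\[
fG=\sum_\mu (P_{<\mu}f)\,(P_\mu G)+\sum_\mu (P_\mu f)\,(P_{<\mu}G)+\sum_\nu (P_\nu f)\,(\ti P_\nu G),
\]
where $\ti P_\nu$ collects the frequencies comparable to $\nu$, and estimate the dyadic block of each of the three paraproducts separately. The only ingredients are the finite--band property $\|P_\mu\p h\|_{L^q}\approx\mu\|P_\mu h\|_{L^q}$, the Bernstein inequalities on ${\mathbb R}^3$ (in particular $\|P_\mu h\|_{L^q}\les\mu^{3(1/p-1/q)}\|h\|_{L^p}$, with the loss of $\mu^{1/2}$ when passing from $L^6$ to $L^\infty$), uniform $L^q$--boundedness of the LP projections, and dyadic summation; the hypothesis $0<\ep<\f12$ is exactly what makes every sum converge.

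For \eqref{prd3} one takes $\ell^2_\mu$ of $\mu^\ep$ times the $L^2$ norm of each block. In the low--high term $\|P_\mu((P_{<\mu}f)(P_\mu G))\|_{L^2}\les\|f\|_{L^\infty}\|P_\mu G\|_{L^2}$, which after summation gives $\|f\|_{L^\infty}\|G\|_{\dot H^\ep}$. In the high--low term one writes, by Bernstein followed by the finite--band property, $\|P_\mu f\|_{L^\infty}\les\mu^{1/2}\|P_\mu f\|_{L^6}\les\mu^{-1/2}\|\p f\|_{L^6}$, so each block is $\les\mu^{-1/2}\|\p f\|_{L^6}\|G\|_{L^2}$; multiplying by $\mu^\ep$ and summing $\sum_{\mu\ge1}\mu^{2\ep-1}<\infty$ yields $\|\p f\|_{L^6}\|G\|_{L^2}$. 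In the high--high term the output frequency $\mu$ is $\les\nu$, each contribution is $\les\|P_\nu f\|_{L^\infty}\|\ti P_\nu G\|_{L^2}\les\nu^{-1/2}\|\p f\|_{L^6}\|\ti P_\nu G\|_{L^2}$, and a routine Schur--test summation over $\nu\gtrsim\mu$ (again using $\ep<\f12$) bounds the $\ell^2_\mu$ norm by $\|\p f\|_{L^6}\|G\|_{L^2}$. Since in this paper $\dot H^\ep$ is defined with the sum restricted to $\lambda\ge1$, the low--frequency output $P_{\le1}(fG)$ need not be treated, which is what legitimizes these sums.

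For \eqref{prd7} one instead estimates a single block $P_\mu(fG)$, $\mu>1$, in $L^\infty$. The low--high term satisfies $\|P_\mu((P_{<\mu}f)(P_\mu G))\|_{L^\infty}\les\|f\|_{L^\infty}\|P_\mu G\|_{L^\infty}$, which gives the second term $\|f\|_{L^\infty}\|\mu^\ep P_\mu G\|_{L^\infty}$ directly. For the high--low term, Bernstein gives $\|P_{<\mu}G\|_{L^\infty}\les\sum_{\nu<\mu}\nu^{1/2}\|P_\nu G\|_{L^6}+\|P_{\le1}G\|_{L^\infty}\les\mu^{1/2}\|G\|_{L^6}$, while $\|P_\mu f\|_{L^\infty}\les\mu^{-1}\|\p f\|_{L^\infty}$, so the block is $\les\mu^{-1/2}\|\p f\|_{L^\infty}\|G\|_{L^6}$; as $\mu>1$ and $\ep<\f12$, multiplying by $\mu^\ep$ keeps it $\les\mu^{-1/2+\ep}\|\p f\|_{L^\infty}\|G\|_{L^6}$. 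For the high--high term, Bernstein $L^6\to L^\infty$ yields $\|P_\mu((P_\nu f)(\ti P_\nu G))\|_{L^\infty}\les\mu^{1/2}\|P_\nu f\|_{L^\infty}\|\ti P_\nu G\|_{L^6}\les\mu^{1/2}\nu^{-1}\|\p f\|_{L^\infty}\|G\|_{L^6}$, and summing the geometric series $\sum_{\nu\gtrsim\mu}\mu^{1/2}\nu^{-1}\les\mu^{-1/2}$ reproduces the first term.

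The step I expect to require the most care is not conceptual but clerical: keeping the Bernstein exponents on ${\mathbb R}^3$ straight, since it is precisely the $\mu^{1/2}$ loss in $L^6\hookrightarrow L^\infty$ together with the $\mu^{-1}$ gain of the finite--band property that turns $\|\p f\|_{L^6}$ (resp.\ $\|\p f\|_{L^\infty}$) into the advertised powers $\mu^{\mp1/2}$, and one must check in each of the three paraproducts that the resulting dyadic sum — $\sum_{\mu\ge1}\mu^{2\ep-1}$ for the first estimate and the single factor $\mu^{-1/2+\ep}$ for the second — is controlled thanks to $\ep<\f12$.
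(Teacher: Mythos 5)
Your argument is correct and is essentially the paper's own: both rest on the dyadic trichotomy, the finite-band property, and the $L^6\to L^\infty$ Bernstein loss of $\mu^{1/2}$ on ${\mathbb R}^3$, with $\ep<\tfrac12$ providing exactly the summability $\sum_{\mu\ge1}\mu^{2\ep-1}<\infty$ (resp.\ the single factor $\mu^{-1/2+\ep}$). The only organizational difference is that the paper first writes $P_\mu(fG)=[P_\mu,f]G+fP_\mu G$ and controls the low-frequency part of the commutator by the ready-made estimate $\|[P_\mu,f]h\|_{L^q}\les\mu^{-1}\|\p f\|\,\|h\|$ from \cite{Wangrough} before applying the trichotomy to the remainder, whereas you treat the high--low and low--high interactions by hand via Bernstein and finite band applied to $P_\mu f$; the paper's packaging also quietly disposes of the cosmetic point that your low--high paraproduct feeds frequencies $\nu\sim\mu$ rather than exactly $P_\mu G$ into the second term of \eqref{prd7}.
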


\begin{proof}
For $1\le q\le \infty$ write
\begin{align}\label{w7.29.1}
\| P_\mu( f G) \|_{L_x^q}\le \|[P_\mu, f]G\|_{L_x^q}+\|f P_\mu G\|_{L_x^q}.
\end{align}
By using \cite[Eq. (6.195)]{Wangrough} we have
\begin{equation}\label{imu}
\|[P_\mu, f]G\|_{L_x^q} \les \|[P_\mu, f]G_{\le\mu}\|_{L_x^q}+ \Big\|\sum_{\ell>\mu}P_\mu( P_\ell f\c P_\ell G)\Big\|_{L_x^q}.
\end{equation}
To prove (\ref{prd3}), we proceed with $q=2$ in (\ref{w7.29.1}) and (\ref{imu}). For the first term on the right of (\ref{imu}),
by using \cite[Corollary 1]{Wangrough} and  the Bernstein inequality  we derive that
\begin{equation*}
 \|[P_\mu, f] G_{\le\mu}\|_{L_x^2}\les \mu^{-1}\| \p f\|_{L_x^6}\|G_{\le \mu}\|_{L_x^3}\les \mu^{-\f12}\|\p f\|_{L_x^6}\|G\|_{L_x^2} .
\end{equation*}
Hence
\begin{align*}
\|\mu^\ep [P_\mu, f]G_{\le\mu}\|_{l_\mu^2 L_x^2}&\les \|\p f\|_{L_x^6}\|G\|_{L_x^2}.
\end{align*}
For the second term on the right of (\ref{imu}), we have
\begin{align*}
\mu^\ep \Big\|\sum_{\ell>\mu} P_\mu( P_\ell f\c P_\ell G)\Big\|_{L_x^2}
\les \sum_{\ell>\mu}(\mu\ell^{-1})^\ep\|P_\ell f\|_{L_x^\infty} \|\ell^{\ep} P_\ell G\|_{L_x^2},
\end{align*}
by taking $l_\mu^2$,  we derive that
\begin{equation}\label{imu_1}
\|\mu^\ep [P_\mu, f]G\|_{l_\mu^2}\les\|f\|_{L_x^\infty} \|G\|_{\dot{H}^\ep} +\|\p f\|_{L_x^6}\|G\|_{L_x^2}.
\end{equation}
It is straightforward to get that
\begin{equation}\label{jmu}
\|\mu^\ep (f P_\mu G)\|_{l_\mu^2 L_x^2}\le \|f\|_{L_x^\infty}\|G\|_{\dot{H}^{\ep}}.
\end{equation}
Combining (\ref{imu_1}) with (\ref{jmu}), we obtain (\ref{prd3}).

Next we prove (\ref{prd7}) by employing (\ref{w7.29.1}) and (\ref{imu}) with $q=\infty$. By using \cite[Corollary 1]{Wangrough}
and the Bernstein inequality, we obtain
\begin{align}
\mu^\ep\|[P_\mu, f]G_{\le \mu}\|_{L_x^\infty}&\les \mu^{-1+\ep} \|\p f\|_{L_x^\infty}\|G_{\le \mu}\|_{L_x^\infty}
\les \mu^{-\f12+\ep}\|\p f\|_{L_x^\infty}\|G\|_{L_x^6}.\label{w7.29.3}
\end{align}
By the Berntein inequality and the finite band property, we have
\begin{align}
\mu^\ep \Big\|\sum_{\ell>\mu>1} P_\mu(P_\ell f\c P_\ell G) \Big\|_{L_x^\infty}&\les \sum_{\ell>\mu>1} \mu^{\f12+\ep}\|P_\ell f\|_{L_x^\infty} \|P_\ell G\|_{L_x^6}\nn\\
&\les\sum_{\ell>\mu>1}\ell^{-\f12+\ep}(\frac{\mu}{\ell})^{\f12+\ep}\|G\|_{L_x^6}\|\p f\|_{L_x^\infty}\label{w7.29.2}
\end{align}
Combining (\ref{w7.29.3}) with (\ref{w7.29.2}), we can obtain (\ref{prd7}).
\end{proof}

\begin{proof}[Proof of Lemma  \ref{pres}]
% We first claim that
%\begin{equation}\label{prd2}
%\|\la^{\ep+1} P_\la( (\bp \phi)^2)\|_{l_\la^2 L_x^2}\les \|\bp \phi\|_{L_x^\infty}\|\la^{1+\ep} (\bp \phi)_\la\|_{l_\la^2 L_x^2}.
%\end{equation}
%Indeed, by the trichotomy law in the Littlewood Paley theory, we have
%\begin{align*}
%\|\la^{\ep+1} P_\la ( (\bp & \phi)^2)\|_{L_x^2}
%\les \|\la^{\ep+1}P_\la ((\bp \phi)_\la(\bp \phi)_{\le \la})\|_{L_x^2}+\sum_{\mu>\la} \|\la^{\ep+1}P_\la((\bp\phi)_\mu(\bp \phi)_\mu)\|_{L_x^2}\\
%&\les \|\la^{\ep+1}(\bp \phi)_\la\|_{L_x^2}\|\bp \phi\|_{L_x^\infty}+\sum_{\mu>\la}(\la \mu^{-1})^{\ep+1}\|(\bp \phi)_\mu\|_{L_x^\infty} \|\mu^{\ep+1}(\p \phi)_\mu\|_{L_x^2}.
%\end{align*}
%Taking $l_\la^2$ gives (\ref{prd2}).

We prove (\ref{nphi}) for $m=1$; the case for $m=0$ follows similarly. In view of the finite band property,
it suffices to consider $P_\mu\p(\W(\phi)\bp \phi)$. We write
\begin{equation}\label{n2}
\p(\W(\phi)\bp \phi)=\W'(\phi) (\bp \phi)^2 +\W(\phi) \p\bp\phi.
\end{equation}
Let $f=\W'(\phi)$ and $G= (\bp\phi)^2$, by \cite[Lemma 17]{Wangrough} we have
\begin{equation}\label{g_6_2}
\|G\|_{\dot{H}^\ep}\les \|\bp\phi\|_{H^{\f12+\ep}}\|\bp \phi\|_{H^1}.
\end{equation}
Then, in view of (\ref{prd3}), the Sobolev embedding and (\ref{eng00}), we deduce that
\begin{align}
\|fG\|_{\dot{H}^\ep}&\les \|\bp\phi\|_{H^1}^3+\|\W'(\phi)\|_{L_x^\infty} \|\bp \phi\|_{H^1}\|\bp \phi\|_{H^{\f12+\ep}}
\les \|\phi[0]\|_{H^2}^3.\label{est_1}
\end{align}
With $f=\W(\phi)$ and $G=\p\bp \phi$, also using Sobolev embedding and (\ref{eng00})
\begin{equation*}
\|f G\|_{\dot{H}^\ep}\les \|\p\bp \phi\|_{\dot{H}^\ep}\|\W(\phi)\|_{L_x^\infty}+\|\p\bp\phi\|_{L^2}\|\p\W(\phi)\|_{L_x^6}
\les \|\p\bp \phi\|_{\dot{H}^\ep}+\|\phi[0]\|_{H^2}.
\end{equation*}
In view of (\ref{n2}), we complete the proof of (\ref{nphi}).

To show (\ref{prd1}), we write $\N(\phi, \bp\phi)=f G$ with $f = \N(\phi)$ and $G=\bp \phi\c \bp \phi$.
By using (\ref{g_6_2}) and the similar argument for deriving (\ref{est_1}) we can obtain (\ref{prd1}).

Now we consider (\ref{prd}). We write
\begin{align}\label{6_2.1}
\bp(\N(\phi, \bp \phi))=\N'(\phi) (\bp \phi)^3+\V(\phi) \bp\p \phi\c\bp\phi
\end{align}
where $\V$ are products of factors in $(g, \N)$. The term $\p_t^2 \phi$ does not appear in (\ref{6_2.1}) since it can be replaced  by $g(\phi)\p^2\phi$ and $\N(\phi, \bp\phi)$ in view of (\ref{wave1}).
%then
%\begin{align}
%\|P_\la\p (\N(\phi)F)\|_{L_x^2}&\le \|P_\la (\p\N(\phi) F)\|_{L_x^2}+\|P_\la(\N(\phi) \p F)\|_{L_x^2}.\label{6_2.1}
%\end{align}
For the first term on the right of (\ref{6_2.1}), by using \cite[Lemma 18]{Wangrough} and (\ref{prd3}) we derive that
\begin{align}
\|\la^\ep P_\la \left(\N'(\phi) (\bp \phi)^3\right)\|_{l_\la^2 L_x^2}&
\les \|\N'(\phi)\|_{L_x^\infty} \|\la^\ep P_\la ((\bp\phi)^3)\|_{L_x^2}+\|\p \N'(\phi)\|_{L_x^6}\|\bp \phi\|_{L_x^6}^3\nn\\
&\les (\|\bp \p \phi\|_{H^\ep}+\|\bp\phi\|_{L_x^6}^2)\|\bp \phi\|_{L_x^6}^2.\label{tria}
\end{align}
For the second term on the right of (\ref{6_2.1}), let $G= \V(\phi)\bp \phi$ and use the trichotomy law we can write
\begin{align*}
P_\la(G\c \p\bp \phi)&=P_\la  (G_\la (\bp \p \phi)_{\le\la})+\sum_{\mu>\la}P_\la(G_\mu (\bp \p \phi)_\mu)\\
&+P_\la (G_{\le \la}  (\bp\p\phi)_\la)
=a_\la+b_\la+c_\la.
\end{align*}
By the finite band property and (\ref{nphi}), we have
\begin{align}
\|\la^\ep a_\la\|_{l_\la^2 L_x^2}&\les \|\bp \phi\|_{L_x^\infty } \|\la^{\ep+1} G_\la\|_{l_\la^2 L_x^2}
\les \|\bp \phi\|_{L_x^\infty}\left(\|\bp \phi\|_{H^{1+\ep}}+\|\phi[0]\|_{H^2}\right) .\label{a_324}
\end{align}
and
\begin{align*}
\|\la^\ep b_\la\|_{L_x^2}&\les \|\bp \phi\|_{L_x^\infty}\sum_{\mu>\la}( \frac{\la}{\mu})^\ep\|\mu^\ep(\p \bp \phi)_\mu)\|_{L_x^2}
\end{align*}
Hence
\begin{equation}\label{trib}
\|\la^\ep b_\la\|_{l_\la^2 L_x^2}\les \|\bp\phi\|_{L_x^\infty} \|\mu^\ep(\bp \p\phi)_\mu\|_{l_\mu^2 L_x^2}.
\end{equation}
Finally
\begin{align}\label{tric}
\|\la^\ep c_\la\|_{l_\la^2 L_x^2}\les \|\bp \phi\|_{L_x^\infty}\|\la^\ep(\bp\p \phi)_\la \|_{l_\la^2 L_x^2}.
\end{align}
Combining (\ref{a_324}), (\ref{trib}) and (\ref{tric}), we obtain
\begin{align}\label{w8.9.1}
\|\la^{\ep}P_\la (G\c \p\bp \phi)\|_{l_\la^2 L_x^2}
&\les \|\bp \phi\|_{L_x^\infty}\left(\|\bp \phi\|_{H^{1+\ep}}+\|\phi[0]\|_{H^2}\right),
\end{align}
 which, combined with (\ref{tria}), implies (\ref{prd}). (\ref{prd_2}) follows from (\ref{prd3}).
We thus complete the proof of Lemma \ref{pres}.
\end{proof}

\section{\bf Appendix B: Proof of Theorem \ref{str2}}\label{apd_tt}

Using Theorem \ref{decayth},  we will prove Theorem \ref{str2} by  a $\T\T^*$ argument as in \cite{Wangrough}; see also \cite{KRduke,KR1}.

%%%%%%%%%%%%%%%%%%%%%%%%%%%%%%%%%%%%%%%%%%%%%%%%%%%%%%%%%%%%%%%%%%%%%%%%%%%%%%%%%%%
%\begin{theorem}[Dyadic Strichartz estiamte]\label{str2}
%There is a large universal constant $C_0$ such that if on the time interval $I_*:=[0, t_*]$
%there holds
%\begin{equation}\label{smles}
%C_0 \|\pi, \hn g, \hn Y\|_{L^1_{I_*} L_x^\infty}\le 1,
%\end{equation}
%then for any $\phi$ satisfying the wave equation $\Box_\bg \phi=0$ and  $q>2$  sufficiently close to $2$, there holds
%\begin{equation}\label{str3}
%\|P\p \phi\|_{L^q_{I_*} L_x^\infty} \les \| \phi[0]\|_{H^1},
%\end{equation}
%where $P$ denote the Littlewood-Paley projection on the frequency
%domain $\{1/2\le |\xi|\le 2\}$.
%\end{theorem}
%%%%%%%%%%%%%%%%%%%%%%%%%%%%%%%%%%%%%%%%%%%%%%%%%%%%%%%%%%%%%%%%%%%%%%%%%%%%%%%%%%%%%

%Applying the $\T\T^*$ argument therein  directly to our setting
%requires the  control over  $\p \bg$ including the undesired quantity $\bd_\bT Y$.
%To get around this difficulty, we  give a careful refinement.

\begin{definition}
Let $\omega:=(\omega_0, \omega_1)\in H^1({\mathbb R}^3)\times L^2({\mathbb R}^3)$.
We denote by $\psi(t;s,\omega)$ the unique solution of the homogeneous geometric wave
equation $\Box_\bg \psi=0$ satisfying the initial condition $\psi(s;s,\omega)=\omega_0$
and $\p_t \psi(s;s,\omega)=\omega_1$. We set $\Psi(t;s,\omega):=(\psi(t;s,\omega),
\p_t\psi(t;s,\omega))$. By uniqueness we have $\Psi\left(t;s,\Psi(s;t_0,\omega)\right)=\Psi(t;t_0,\omega)$.
\end{definition}

We first show that
\begin{equation}\label{pen}
\|P (\p_t\psi)\|_{L^q_{I_*} L_x^\infty} \les \| \bp \psi(0)\|_{L^2({\mathbb R}^3)}.
\end{equation}
To this end, we let $\H:= \dot{H}^1({\mathbb R}^3)\times L^2({\mathbb R}^3)$ endowed with the inner product
\begin{equation*}
\l \omega, v\r=\int_{\Sigma}\big(\omega_1\c v_1+\delta^{ij} \bd_i \omega_0\c \bd_j v_0\big)
\end{equation*}
relative to an orthonormal frame $\{e_i=1,2,3\}$ in ${\mathbb R}^3$. Let $I=[t',t_*]$ with $0\le t'\le t_*$  and let $X=L^q_{I} L_x^\infty$.
 Then the dual of $X$ is $X'=L^{q'}_{I} L_x^1$, where $1/q'+1/q=1$. Let $\T(t'): \H \to X$ be the
 linear operator defined by
\begin{equation*}
\T(t') \omega:=P \p_t \psi(t;t',\omega).
\end{equation*}
%where $\phi:=\phi(t;t',\omega)$ is the unique solution of $\Box_{\bg} \phi=0$ satisfying
%$\phi(t')=\omega_0$ and $D_0\phi(t')=\omega_1$ with $\omega:=(\omega_0, \omega_1)$.
By the Bernstein inequality for LP projections and the energy estimate it is easy to see that
\begin{equation*}
\|\T(t')\omega\|_X=\|P \p_t\phi\|_{L^q_{I} L_x^\infty}\le C(t_*)\|\bp\psi(t')\|_{L_x^2} \le C(t_*) \|\omega\|_{\H}
\end{equation*}
for some constant $C(t_*)$ possibly depending on $t_*$, that is, $\T(t'):\H\to X$ is a bounded linear operator.
Let $M(t'):=\|\T(t')\|_{\H\to X}$. Then $M(t')<\infty$, and for the adjoint $\T(t')^*: X'\to \H$ we have
$$
\|\T(t')^*\|_{X'\to \H} =M(t'), \quad \|\T(t')\T(t')^*\|_{X'\to X}=M(t')^2.
$$
Note that $M(\cdot)$ is a continuous function on $I_*$, whose  maximum, denoted by $M$, is achieved at certain $t_0\in [0,t_*)$.
Our goal is to show that $M$ is independent of $\la$. We will confirm this by showing that
\begin{equation}\label{gl1}
M^2\le C+\f12 M^2
\end{equation}
for some universal positive constant $C$ independent of $t_*$. Let us set $I_0=[t_0, t_*]$,  $X=L^q_{I_0} L_x^\infty$ and
$X'=L^{q'}_{I_0} L_x^1$, and consider the operator $\T:=\T(t_0)$ and its adjoint $\T^*$.

We first calculate $\T^*: X'\to \H$. For any $f\in X'$ and $\omega\in \H$ we have
\begin{align*}
\l \T^* f, \omega\r_\H &=\l f, \T\omega\r_{X', X} %=\int_{I_0\times \Sigma} f P \p_t \psi(t, t_0, \omega)
=\int_{I_0\times \Sigma} (Pf) \p_t \psi(t,t_0, \omega).
\end{align*}
Let $\eta$  be the solution of the initial value problem
\begin{equation}\label{psiastz}
\left\{\begin{array}{lll}
\Box_\bg \eta=-Pf, \quad \mbox{in } [t_0, t_*)\times {\mathbb R}^3,\\
\eta(t_*)=\p_t \eta(t_*)=0
\end{array}\right.
\end{equation}
and consider the energy-momentum tensor
\begin{equation*}
Q[\psi, \eta]_{\mu\nu}=\f12 (\bd_\mu \psi\bd_\nu \eta+\bd_\nu \psi \bd_\mu \eta)
-\frac{1}{2}\bg_{\mu \nu} (\bg^{\a\b} \bd_\a\psi \bd_\b\eta).
\end{equation*}
For any vector field $Z$ we set $P_\mu:=Q[\psi, \eta]_{\mu\nu} Z^\nu$.  In view of $\Box_\bg \psi=0$, it is easy to check that
\begin{equation*}
 \bd^\b P_\b=\frac{1}{2}\left((Z \psi) \Box_\bg \eta+Q[\psi, \eta]^{\a\b}\pi^{(Z)}_{\a\b} \right).
\end{equation*}
By the divergence theorem we have
\begin{align*}
\int_{\Sigma_{t_*}} Q[\psi, \eta]_{\mu\nu} Z^\mu \bT^\nu
-\int_{\Sigma_{t_0}} Q[\psi, \eta]_{\mu\nu} Z^\mu \bT^\nu
&=-\int_{I_0\times {\mathbb R}^3} \bd^\b P_\b
\end{align*}
which together with the initial conditions in (\ref{psiastz}) implies that
\begin{equation}\label{enbinlinear}
\int_{I_0\times {\mathbb R}^3} (Z\psi) \Box_\bg \eta=2\int_{\Sigma_{t_0}}\bT^\a P_\a
-\int_{I_0\times{\mathbb R}^3} Q[\psi, \eta]^{\a\b}\pi^{(Z)}_{\a\b}.
\end{equation}
Now we take $Z=\bT:=\p_t$. Then it follows from (\ref{enbinlinear}) that
\begin{align}\label{expr1}
\int_{I_0\times {\mathbb R}^3} \p_t\psi \Box_\bg \eta
&=\int_{\Sigma_{t_0}} \left(\p_t \psi\p_t \eta+\delta^{ij} \bd_i \psi\bd_j \eta\right)
- \int_{I_0\times {\mathbb R}^3} Q[\psi, \eta]^{\a\b}\pi^{(\bT)}_{\a\b}.
\end{align}
Therefore
\begin{equation}\label{10.4.1}
\l \T^*f, \omega\r_\H=\l \eta[t_0],\omega\r_\H+l(\omega),
\end{equation}
where $l(\cdot)$ is a linear functional on $\H$ defined by
\begin{equation*}
l(\omega):= \int_{I_0\times {\mathbb R}^3} Q[\psi, \eta]^{\a\b}\pi^{(\bT)}_{\a\b}.
\end{equation*}
We claim that $l(\cdot)$ is a bounded linear functional on $\H$. To see this, let
$\omega\in \H$ with $\|\omega\|_\H\le 1$. Then by the energy estimate we have
$\|\bd \psi\|_{L_t^\infty L_x^2}\le \|\omega\|_\H\les 1$.  Thus
$$
|l(\omega)|\le \|\pi\|_{L_t^1 L_x^\infty} \|\bd \psi \|_{L_{I_0}^\infty L_x^2}
\|\bd \eta \|_{L_{I_0}^\infty L_x^2}
\les \|\pi\|_{L_t^1 L_x^\infty} \|\bd \eta \|_{L_{I_0}^\infty L_x^2}.
$$
Hence, by the Riesz representation theorem we have $l(\omega)=\l R(f), \omega\r_\H$ for some
$R(f)\in \H$ and there is a universal constant $C_1$ such that
$$
\|R(f)\|_\H\le C_1 \|\pi\|_{L_t^1 L_x^\infty} \|\bd \eta \|_{L^\infty_{I_0} L_x^2}.
$$
Moreover, we have from (\ref{10.4.1}) that $\T^*f=\eta[t_0]+R(f)$ and hence
\begin{equation}\label{10.4.2}
\T \T^* f=\T \eta[t_0]+\T R(f).
\end{equation}
We claim that there is a universal constant $C_2$ such that
\begin{equation}\label{dpsi}
\|\bd\eta\|_{L^\infty_{I_0} L_x^2}\le C_2 M \|f\|_{L^{q'}_{I_0} L_x^1}.
\end{equation}
Assuming this claim for a moment, it follows from the definition of $M$ that
\begin{equation*}
\|\T R(f)\|_{L^{q}_{I_0} L_x^\infty}\le C_1 C_2  M^2 \|\pi\|_{L_t^1 L_x^\infty}
\|f\|_{L^{q'}_{I_0} L_x^1}.
\end{equation*}
Thus, if (\ref{smallas}) holds with $C_0\ge 2C_1 C_2$, then
\begin{equation}\label{10.4.3}
\|\T R(f)\|_{L^{q}_{I_0} L_x^\infty}\le \frac{1}{2} M^2 \|f\|_{L^{q'}_{I_0} L_x^1}.
\end{equation}

Next we estimate $\|\T \eta[t_0]\|_{L_{I_0}^q L_x^\infty}$. We set $F:=(0,- Pf)$.
By the Duhamel principle we have
\begin{equation*}
\eta[t]=\int_{t_*}^t \Psi(t; s, F(s)) ds.
\end{equation*}
By uniqueness we have $\eta(t) = -\int_{t_0}^{t_*} \psi(t; s, F(s)) ds$.
%Then $\psi[t_0]=-\int_{t_0}^{t_*} \Phi(t_0;s, F(s)) ds$ and
Thus
\begin{align*}
\T \eta[t_0]&=-P \p_t\left(\int_{t_0}^{t_*}\psi(t,s, F(s)) ds\right) =-\int_{t_0}^{t_*} P \p_t \psi(t, s, F(s)) ds.
\end{align*}
It follows from Theorem \ref{decayth} that
\begin{align*}
\|P \p_t \psi(t,s, F(s))\|_{L_x^\infty} &\les \left((1+|t-s|)^{-\frac{2}{q}}+d(|t-s|)\right)\sum_{m=0}^2\|\p^m Pf(s)\|_{L_x^1}\\
& \les \left((1+|t-s|)^{-\frac{2}{q}}+d(|t-s|)\right) \|f\|_{L_x^1}.
\end{align*}
%Observe that on $\Sigma$ there holds
%\begin{align*}
%\|\hn(n^2 Pf)\|_{L_x^2}&\les \|\hn(n^2) P f\|_{L_x^2}+\|n^2\hn Pf\|_{L_x^2}
%\les (\|\hn n\|_{L_x^\infty}+1)\|f\|_{L_x^1}\les \|f\|_{L_x^1}.
%\end{align*}
Thus, in view of the Hardy-Littlewood-Sobolev inequality, (\ref{correccondi}) and  Hausdorff Young inequality we obtain
\begin{equation}\label{t1}
\|\T \eta[t_0]\|_{L^q_{I_0} L_x^\infty}
\les \|f\|_{L^{q'}_{I_0}L_x^1}+\left\|\int_{t_0}^{t_*} d(|t-s|)\|f(s)\|_{L_x^1} ds\right\|_{L^q_{I_0}}
\les \|f\|_{L^{q'}_{I_0} L_x^1}.
\end{equation}
Combining (\ref{10.4.2}), (\ref{10.4.3}) and  (\ref{t1}), we therefore obtain (\ref{gl1}).

It remains to prove (\ref{dpsi}). Let $\ti \phi$ be a solution of $\Box_\bg \ti \phi=0$ in $I_*$.
Then there holds the energy estimate $\|\bd \ti\phi(t)\|_{L^2(\Sigma)}
\les \|\bd\ti \phi(t_0)\|_{L^2(\Sigma)}$ for $t\in [t_0, t_*]$. Let $t_0\le t'<t_*$.  Similar to the derivation of
(\ref{expr1}), we have on  $I=[t',t_*]$ that
\begin{align*}
\int_{I\times {\mathbb R}^3} \p_t\ti \phi \Box_\bg \eta
&=\int_{\Sigma_{t'}} \left(\p_t \ti \phi \p_t \eta +\delta^{ij} \bd_i \ti \phi\bd_j \eta\right)
- \int_{I\times {\mathbb R}^3} Q[\ti\phi, \eta]^{\a\b}\pi^{(\bT)}_{\a\b},
\end{align*}
which together with $\Box_\bg \eta =-P f$ and the definition of $M$ gives
\begin{align*}
\int_{\Sigma_{t'}} \left(\p_t \ti \phi \p_t \eta +\delta^{ij} \bd_i \ti \phi\bd_j \eta\right)
&\les \|P \p_t \ti \phi\|_{L^q_{I} L_x^\infty}\|f\|_{L^{q'}_{I} L_x^1}
+\|\pi^{(\bT)}\|_{L^1_{I}L_x^\infty}\|\bd\eta\|_{L_t^\infty L_x^2}\|\bd \ti \phi\|_{L_I^\infty L_x^2}\\
& \les \left( M\|f\|_{L^{q'}_{I} L_x^1}
+ \|\pi^{(\bT)}\|_{L^1_{I}L_x^\infty}  \|\bd\eta\|_{L_t^\infty L_x^2}\right) \|\bd \ti \phi(t')\|_{L_x^2}.
\end{align*}
Since $\bd \ti\phi(t')$ can be arbitrary, there is a universal constant $C_3$ such that
$$
\| \bd\eta(t')\|_{L_x^2} \le C_3  M\|f\|_{L^{q'}_I L_x^1}
+ C_3 \|\pi^{(\bT)}\|_{L^1_{I}L_x^\infty}  \|\bd\eta\|_{L_I^\infty L_x^2}.
$$
Recall that $t'\in [t_0, t_*)$ is arbitrary. Thus, if (\ref{smallas}) holds with $C_0\ge 2 C_3$ then
$$
\| \bd\eta\|_{L_{[t_0, t_*)}^\infty L_x^2} \le C_3 M\|f\|_{L^{q'}_{[t_0, t_*)} L_x^1}
+ \frac{1}{2} \|\bd\eta\|_{L_{[t_0, t_*)}^\infty L_x^2}.
$$
This implies (\ref{dpsi}) with $C_2=2 C_3$.  The proof of (\ref{pen}) is thus completed. We also have proved for any $t\in I_*$
 \begin{equation}\label{dsp2}
\|\bd\eta\|_{L^\infty_{[t, t_*)} L_x^2}\le C_2 M \|f\|_{L^{q'}_{[t_,t_*)} L_x^1}.
\end{equation}

Now we consider $\|P \p_i \psi\|_{L^q_{I_*} L_x^\infty}$. It suffices to estimate
\begin{equation*}
\I=\int_{I_*\times {\mathbb R}^3} f P \p_i \psi =\int_{I_*\times {\mathbb R}^3} \p_i \psi Pf
\end{equation*}
for any function $f$ satisfying $\|f\|_{L^{q'}_{I_*} L_x^1} \le 1$. Let $\eta$ be the solution of (\ref{psiastz}), then
$$
\I= -\int_{I_*\times {\mathbb R}^3} \p_i \psi\Box_\bg \eta.
$$
In view of (\ref{enbinlinear}), we have with $Z=\p_i$ that
\begin{align*}
\I &= -2\int_{\Sigma_0}\bT^\a P_\a + \int_{I_*\times {\mathbb R}^3} Q[\psi, \eta]^{\a\b}\pi^{(Z)}_{\a\b}.
\end{align*}
By direct calculation we can see that $\pi^{(Z)}=\bg\c\p g$. Thus it follows from the energy estimate (\ref{dsp2}) and (\ref{smallas}) that
\begin{align*}
\left|\int_{I_*\times {\mathbb R}^3} Q[\psi, \eta]^{\a\b}\pi^{(Z)}_{\a\b}\right|
&\les \|\ti\pi\|_{L^1_{I_*} L_x^\infty}\|\bd \eta\|_{L^\infty_{I_*}L_x^2}\|\bd\psi\|_{L^\infty_{I_*} L_x^2}
\les \|\bd\psi(0)\|_{L_x^2}\|f\|_{L^{q'}_{I_*}L_x^1}
\end{align*}
and
\begin{align*}
\left|\int_{\Sigma_0}\bT^\a P_\a\right|
\les\|\bd\psi(0)\|_{L^2}\|\bd \eta\|_{L^\infty_{I_*} L_x^2}
\les\|\bd\psi(0)\|_{L^2} \|f\|_{L_{I_*}^{q'}L_x^1}.
\end{align*}
Therefore $|\I|\les \|\bd\psi(0)\|_{L_x^2}\|f\|_{L^{q'}_{I_*} L_x^1}$.
Hence we can conclude that
\begin{equation*}
\|P \p_i \psi\|_{L^q_{I_*} L_x^\infty}\les \|\bd \psi(0)\|_{L_x^2}.
\end{equation*}
 The proof is thus complete.

 \section{\bf Appendix C: Proof of Proposition \ref{exten}}

The goal of this section is to prove Proposition \ref{exten}. Let us consider within the domain of the geodesic ball $B_{\tau_*}({\bf o})$ on $\Sigma_0$.
When there is a $v$-foliation defined in a neighborhood of ${\bf o}$ contained in $B_{\tau_*}({\bf o})$ with $v({\bf o})=0$ such that
each level set $S_v$ is diffeomorphic to ${\mathbb S}^2$, let $a$ be the corresponding lapse function defined by $a^{-1}=|\nab v|_g$
and let $\ga$ denote the induced metric of $g$ on $S_v$. Then the metric $g$ of $\Sigma_0$ in the neighborhood of ${\bf o}$ can be
written as
\begin{equation*}
a^2 dv^2+ \ga_{AB} d\omega^A d\omega^B.
\end{equation*}
Let $N$ be the outward unit normal vector field to each $S_v$. The second fundamental form $\theta$ of $S_v$
is defined by $\theta=\f12\Lie_N \ga$. We refer to \cite{JS, JS1} for a local existence of $v$-foliation in a small neighborhood of ${\bf o}$
such that
\begin{equation}\label{thedef}
\tr\theta+k_{NN}=\frac{2}{av}+\Tr k-V_4, \quad a({\bf o})=1.
\end{equation}
%Within radius of injectivity of on $\Sigma_0$, we recall
%Let $\itt S_{v_*}=\cup_{0\le v\le v_*} S_v$.
Under such a $v$-foliation, there hold the following  structure equations (See \cite[Chapter 3]{CK} and \cite{JS}):
 \begin{align}
 &a^{-1} \sD a =-N(\tr\theta)-|\theta|^2-R_{NN}\label{a_1}\\
 &\sn^B {\hat\theta}_{AB}=\f12 \sn_A \tr\theta+R_{NA}\label{a_2}\\
 &\sn_N \hat\theta_{AB}+\tr\theta \hat \theta_{AB}=-a^{-1}(\sn\hot\sn a)_{AB}-{\hat R}_{AB}\label{a_11}\\
&K=\frac{(\tr\theta)^2}{4}-\f12 |\hat \theta|^2+\frac{R-2 R_{NN}}{2}\label{a_12}
 \end{align}
where $K$ is the Gaussian curvature on  $S_v$, and $\hat R_{AB}=R_{AB}-\f12 \ga_{AB} R_{CD}\ga^{CD}$.
Here $R_{ij}$ denotes the Ricci curvature of $\Sigma_0$ induced by $g$. For a scalar function $f$, $(\sn\hot\sn f)_{AB}$ denotes the traceless part of $\sn^2_{AB} f$. 

Noting that with $\ti s$  the geodesic distance to ${\bf o}$, and $\sl g$ the induced metric of $g$ at the level sets of $\ti s$,
 we can write the metric $g$ in the geodesic ball $B_{\tau_*}({\bf o})\subset\Sigma_0$  as
$
d\ti s^2+\sl g_{AB} d\omega^A d\omega^B.
$
By $a^{-1}=|\nab v|_g$ and if $a^{-1}\le 2$,
$
 \frac{dv}{d\ti s}\le a^{-1}\le 2.
$
This implies  $v\le 2\tau_*\les \la^{1-8\ep_0}$ in $B_{\tau_*}({\bf o})$.

 Let us fix $0<\ep<s-2 $, and $\frac{2}{p}=1-\ep$.
 To prove Proposition \ref{exten}, we make the bootstrap assumptions on $\itt_{v_*}=\cup_{0\le v\le v_*} S_v$
 \begin{align}
 &|a-1|<\frac{1}{2},\label{a_0} \\
&  \|v^{\f12}\hat \theta\|_{L_v^4 L_\omega^4}\le \la^{-\frac{1}{4}}, \quad \|v^\f12 \sn a\|_{L_v^4 L_\omega^{4}}\le \la^{-\frac{1}{4}}\label{a_6} \\
&|\cga-\ga^{(0)}|+\|\p(\cga-\ga^{(0)})\|_{L_\omega^p(S_v)} \le \la^{-\ep_0}\label{a_7}
 \end{align}
 where $\gac:=v^{-2}\ga$ is the rescaled metric on $S_v$ and  ${\gamma}^{(0)}$ is the
canonical metric on ${\Bbb S}^2$.  (\ref{a_7}) are the comparisons of components of the metrics under the transport coordinates. With $v_*\les\tau_*$,
(\ref{a_0}) and (\ref{a_7}) are improved to (\ref{a_3}) and  (\ref{a_5}) respectively. (\ref{a_6}) is improved in (\ref{theta_1}) and (\ref{ellp_2}). By continuity  argument,  this shows  that if  $v_*\le \frac{4\tau_*}{5}$, there exists $S_v$ foliation  such that  (\ref{thedef}) holds till $v\ge v_*$, and that (\ref{w7.13.1}) holds true.

Similar to Lemma \ref{inii},  we can derive by local expansion (see  \cite{Wangthesis}),
\begin{lemma}\label{inii_7}
(i)
\begin{equation*}
\lim_{v\rightarrow 0}\sn a= 0, \quad \lim_{v\rightarrow 0} \|\hat \theta\|_{L^\infty_\omega}<\infty.
\end{equation*}

(ii)  Relative to the transport coordinates,  there hold
\begin{equation}
\lim_{v\rightarrow 0}\stackrel{\circ}
\gamma_{ab}={\gamma}_{ab}^{(0)},\qquad  \lim_{v\rightarrow
0}\p_c\!\!\stackrel{\circ}\ga_{ab}=\p_c{\ga}_{ab}^{(0)}\label{w8.7.1}
\end{equation}
where $a, b, c=1,2$.
\end{lemma}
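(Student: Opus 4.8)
\textbf{Proof plan for Lemma \ref{inii_7}.}

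The plan is to derive both statements from a local power-series (Minkowski-type) expansion of the geometry of the $v$-foliation near the vertex ${\bf o}\in\Sigma_0$, in complete parallel with the derivation of Lemma \ref{inii}(i)--(ii) in \cite{Wangthesis} and \cite[Section 2]{Wangricci}. The key observation is that near ${\bf o}$ the $v$-foliation is, to leading order, the geodesic-sphere foliation, because the defining equation (\ref{thedef}) forces $\tr\theta+k_{NN}=\frac{2}{av}+\Tr k-V_4$ and $a({\bf o})=1$, so that $\tr\theta=\frac{2}{v}+O(1)$ and $a=1+O(v)$; combined with the ellipticity of $g$ from (\ref{g9}) and the $H^2$ bound on $\phi$ in Lemma \ref{lem4.15}, one gets that $\ga$ is comparable to $v^2\ga^{(0)}$ as $v\to 0$.

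First I would set up the transport coordinates $(v,\omega^1,\omega^2)$ on a small punctured neighborhood $\cup_{0<v\le \ep}S_v$ by following the integral curves of the geodesic vector field $N$ from ${\bf o}$, and record (\ref{trscoord2})-type evolution $\frac{d}{dv}\ga_{ab}=2a\,\theta_{ab}$, hence $\frac{d}{dv}\gac_{ab}=a(\tr\theta-\tfrac{2}{av})\gac_{ab}+2v^{-2}a\,\hat\theta_{ab}$. Using (\ref{thedef}) and the boundedness of $\Tr k$, $V_4$, $k_{NN}$ (all controlled by $|\bp\phi|\in L^\infty$ locally via Sobolev on $\Sigma_0$), together with the structure equations (\ref{a_1})--(\ref{a_12}), one expands $a$, $\tr\theta$, $\hat\theta$, $\sn a$ in powers of $v$ against the standard sphere: this is precisely the computation carried out for the null-cone vertex in \cite{Wangthesis}, applied here to the Riemannian cone in $\Sigma_0$ rather than the spacetime light cone, and it yields $a\to 1$, $\sn a\to 0$, $\gac_{ab}\to\ga^{(0)}_{ab}$, $\p_c\gac_{ab}\to\p_c\ga^{(0)}_{ab}$, and the uniform bound $\limsup_{v\to0}\|\hat\theta\|_{L^\infty_\omega}<\infty$. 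The elliptic equation (\ref{a_11}) for $\hat\theta$, integrated along $N$ with the transport lemma, shows that the limit of $\hat\theta$ as $v\to 0$ stays finite (it is governed by $\frac{1}{v}\int_0^v v'\,|{\hat R}+a^{-1}\sn\hot\sn a|\,dv'$ plus the geodesic-sphere data), while (\ref{a_1}) and (\ref{a_2}) give the decay of $\sn a$. Item (ii) then follows by integrating the evolution equation for $\gac_{ab}$ and its $\p_c$-derivative, using the $O(v)$ size of $a(\tr\theta-\tfrac{2}{av})$ and $v^{-2}\hat\theta$ contributions near $v=0$, exactly as in the proof of (\ref{8.1.1}) in Lemma \ref{inii}.

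The main obstacle I anticipate is making the expansion rigorous with only $H^2$ (rather than smooth) regularity of the metric $g$ on $\Sigma_0$: the curvature terms $R_{NN}$, $R_{NA}$, ${\hat R}_{AB}$, and the third-order-looking term $a^{-1}\sn\hot\sn a$ in (\ref{a_11}) are only in Sobolev spaces, so the "power series" must be interpreted as an asymptotic expansion with error terms measured in the integral norms $L_v^q L_\omega^p$ of Lemma \ref{flux00}/Proposition \ref{flux_2}, and one must check that these error integrals $\int_0^v$ against weights $v'^{\,k}$ genuinely vanish in the limit. This is handled by the same device used throughout Section \ref{sec_5}: localize near ${\bf o}$, use the trace and Sobolev inequalities on $S_v$ (which are available once $\gac\approx\ga^{(0)}$ is bootstrapped via (\ref{a_7})), and invoke the transport lemma (Lemma \ref{tsp2}, in its Riemannian analogue). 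Since the statement only asserts the existence of the limits — not quantitative rates — the argument reduces to showing the relevant weighted integrals are $o(1)$, which follows from absolute continuity of the $L^1_v$ norms of the curvature and connection quantities that are already bounded. I would therefore present the proof as: (a) set up transport coordinates and the evolution equations; (b) quote the local existence of the $v$-foliation for small $v$ from \cite{JS,JS1}; (c) run the asymptotic expansion against $\ga^{(0)}$ using (\ref{a_1})--(\ref{a_12}); (d) pass to the limit $v\to0$ in each quantity, citing \cite{Wangthesis} for the detailed bookkeeping.
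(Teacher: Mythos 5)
Your proposal matches the paper's treatment: the paper disposes of this lemma in one line, "Similar to Lemma \ref{inii}, we can derive by local expansion (see \cite{Wangthesis})," i.e.\ precisely the asymptotic expansion of $a$, $\theta$, and $\gac$ near the vertex ${\bf o}$ in transport coordinates that you describe, with the detailed bookkeeping delegated to \cite{Wangthesis}. Your elaboration via (\ref{thedef}) and the structure equations (\ref{a_1})--(\ref{a_12}) is a faithful (and more explicit) version of the same argument.
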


We will constantly employ the facts
\begin{align}
&\p_v \sqrt{|\ga|}=a \tr \,\theta \sqrt{|\ga|}, \label{thet1}\\
&a\tr\, \theta-\frac{2}{v}=-\G,\,\,  \G=a (V_4- \tr \, k), \label{G1}
\end{align}
where $\tr\, k= \ga^{AB}k_{AB}$; the equation (\ref{G1}) is a consequence of (\ref{thedef}).

By (\ref{a_0}) and  the first assumption in (\ref{a_7}), (\ref{trc_1})-(\ref{w7.28.1}) hold.
We will derive the following useful consequences of (\ref{a_0})-(\ref{a_7}).

\begin{lemma}\label{mtieq}
For $0\le 1-\frac{2}{q}<s-2$, there hold
\begin{align}
&C^{-1} \le \sqrt{|\ga|}v^{-2} \le C, \label{a_8}\\
&\|v \bp^2 \phi, \bp \phi\|_{L_v^2 L_\omega^q}+\|v^\f12 \bp \phi\|_{L_v^\infty L_\omega^{2q}}+\|\bp \phi\|_{L_v^2 L_\omega^\infty}\les \la^{-\f12},\label{a_9_w}\\
&\|v (\emph{\bd} \pi,\mbox{Ric}), \pi\|_{L_v^2 L_\omega^q}+\|v^\f12 \pi\|_{L_v^\infty L_\omega^{2q}}+\|\pi\|_{L_v^2 L_\omega^\infty}\les \la^{-\f12}.\label{a_9}
\end{align}
\end{lemma}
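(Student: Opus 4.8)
\textbf{Proof proposal for Lemma \ref{mtieq}.}

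The plan is to prove the three estimates in the order (\ref{a_8}), (\ref{a_9_w}), (\ref{a_9}), with (\ref{a_9_w}) carrying essentially all the analytic content and (\ref{a_9}) then being a short derived consequence. First I would establish (\ref{a_8}): starting from (\ref{thet1}) and (\ref{G1}) one has $\p_v(\sqrt{|\ga|}v^{-2}) = (a\tr\theta - 2/v)\sqrt{|\ga|}v^{-2} = -\G\cdot \sqrt{|\ga|}v^{-2}$, so that $\sqrt{|\ga|}v^{-2}$ solves a linear transport equation along the $v$-direction. By definition $\G = a(V_4 - \tr k)$, and $|V_4|, |\tr k| \les |\bp\phi|$; using the bootstrap bound (\ref{a_0}) for $a$ and the initial condition (ii) of Lemma \ref{inii_7} (which via the transport coordinates gives $\sqrt{|\ga|}v^{-2} \to 1$ as $v\to 0$, after identifying with $\sqrt{|\ga^{(0)}|}$), one integrates and is left to control $\|\G\|_{L_v^1 L_\omega^\infty}$. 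This is where (\ref{pi.2}) enters: the rescaled bootstrap estimate gives $\|\bp\phi\|_{L_v^2 L_\omega^\infty}$ small, hence with $v_* \les \tau_* \les \la^{1-8\ep_0}$ and Cauchy-Schwarz, $\|\G\|_{L_v^1 L_\omega^\infty} \les \la^{-4\ep_0} < \infty$, and Gronwall closes (\ref{a_8}) with a universal constant; this simultaneously improves (\ref{a_0}) to (\ref{a_3}) as indicated after the bootstrap list.

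For (\ref{a_9_w}), the key observation is that this is exactly the analogue, on the $\{t=0\}$ cone foliation, of Lemma \ref{flux00} and its consequences Proposition \ref{flux_2} on $\widetilde{\D^+}$; indeed (\ref{a_9_w}) is the $t=0$ statement that feeds into the initial-data handling throughout Section \ref{sec_5}. So I would mirror the proof of Lemma \ref{flux00}: use the energy estimate Proposition \ref{eng3} and the flux estimate Lemma \ref{lem:flux} for $\phi$ (restricted to the geodesic ball $B_{\tau_*}({\bf o})$ on $\Sigma_0$) together with the Sobolev/trace inequalities (\ref{trc_1}), (\ref{trc_2}), (\ref{sob}), (\ref{tran_sob}), (\ref{w7.28.1}), which are available here because (\ref{a_0}) and the first assumption in (\ref{a_7}) hold, exactly as remarked just before the lemma. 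Concretely: the $L_v^2 L_\omega^q$ bound on $v\bp^2\phi$ follows from (\ref{w7.28.1}) (with $q$ in the stated fractional range) and Proposition \ref{eng3}; the $L_v^2 L_\omega^q$ bound on $\bp\phi$ follows from (\ref{trc_1})-(\ref{trc_2}) to get $L_v^2 L_\omega^2$ then interpolating up via (\ref{sob}) as in (\ref{w7.28.2}); the $L_v^\infty L_\omega^{2q}$ bound follows from the transport-trace inequality (\ref{tran_sob}) combined with (\ref{sobinf}) and the previous two bounds, as in the displayed estimate for $\|\tir^{1/2}\bp\phi\|_{L^\infty L_\omega^{2p}}$ in the proof of Lemma \ref{flux00}; and $\|\bp\phi\|_{L_v^2 L_\omega^\infty}$ comes from (\ref{sobinf}) plus the first two. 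The overall gain of $\la^{-1/2}$ is obtained by rescaling the coordinates back (the $(t,x)\to(\la t,\la x)$ scaling) exactly as in Lemma \ref{flux00}; here one uses $t=0$ so no further restriction is needed. The one genuinely new point, which I expect to be the main obstacle, is that the foliation on $\Sigma_0$ is the $v$-foliation rather than the $u$-foliation, so I must verify that $v$ and the geodesic distance $\ti s$ are comparable (which follows from $a^{-1} = |\nabla v|_g$ and $|a-1|<1/2$, giving $v \le 2\ti s \le 2\tau_*$ as noted) and that the area element comparison (\ref{a_8}), just established, lets one pass freely between $\|F\|_{L^q(S_v)}$ and $\|v^{2/q}F\|_{L_\omega^q(S_v)}$; with these in hand the Sobolev machinery transfers verbatim.

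Finally, (\ref{a_9}) is immediate from (\ref{a_9_w}) once one unwinds the meaning of $\pi$: recall $\pi$ denotes contractions of $\ti\pi = f(\phi)\bp\phi$ with $L,\Lb$, or the projection $\sl\Pi$, so $|\pi| \les |\bp\phi|$ and, as in the proof of Proposition \ref{flux_2}, $|\bd\pi| \les |\bp\phi|^2 + |\bd\bp\phi| \les |\bp\phi|^2 + |\bp^2\phi|$ (the boundedness of $f(\phi)$ and $f'(\phi)$ coming from (\ref{g9}) with $\Lambda_0$ chosen as in the footnote to (\ref{BA1}), via $|\phi| \le \Lambda_1+1$). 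The Ricci curvature $\mathrm{Ric}$ of $\Sigma_0$ is likewise $\les |\bp\phi|^2 + |\bp^2\phi|$ schematically (it is the spatial Ricci of $g_{ij}(\phi)$, quadratic in $\p g$ plus one more derivative). Then $\|v(\bd\pi,\mathrm{Ric})\|_{L_v^2 L_\omega^q} \les \|v\bp^2\phi\|_{L_v^2 L_\omega^q} + \|v(\bp\phi)^2\|_{L_v^2 L_\omega^q}$, and the quadratic term is handled by $\|v(\bp\phi)^2\|_{L_v^2 L_\omega^q} \les v_*^{1/2}\|v^{1/2}\bp\phi\|_{L_v^\infty L_\omega^{2q}}\|\bp\phi\|_{L_v^\infty L_\omega^{2q}}$-type splitting together with $v_*^{1/2}\les \la^{(1-8\ep_0)/2}$ absorbed against the two $\la^{-1/2}$ factors from (\ref{a_9_w}); the $\pi$ and $v^{1/2}\pi$ terms follow directly from the corresponding $\bp\phi$ bounds in (\ref{a_9_w}). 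This completes the proof.
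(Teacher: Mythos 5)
Your treatment of (\ref{a_9_w}) and (\ref{a_9}) follows essentially the paper's route (trace/Sobolev inequalities (\ref{trc_1})--(\ref{w7.28.1}) fed by Proposition \ref{eng3}, then (\ref{w7.28.1}) for the $v\bp^2\phi$ term, (\ref{tran_sob}) for the $L_v^\infty L_\omega^{2q}$ piece, and schematic bounds on $\bd\pi$ and $\mathrm{Ric}$). But your proof of (\ref{a_8}) has a genuine gap. You close the transport estimate for $\sqrt{|\ga|}v^{-2}$ by claiming that (\ref{pi.2}) "gives $\|\bp\phi\|_{L_v^2 L_\omega^\infty}$ small." It does not: (\ref{pi.2}) is an $L_t^2 L_x^\infty$ bound, i.e.\ square-integrability in the \emph{time} variable, whereas $\|\G\|_{L_v^1 L_\omega^\infty}$ on $\Sigma_0$ is a purely spatial integral along the $v$-foliation of the fixed slice $\{t=0\}$. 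No spacetime $L_t^2$ bound controls an integral in $v$ at a single time. Since the trace and Sobolev inequalities you then invoke for (\ref{a_9_w}) themselves require (\ref{a_8}) (or at least the bootstrap comparisons (\ref{a_0}), (\ref{a_7})) to identify $d\mu_\ga$ with $v^2 d\omega$, there is an apparent circularity that must be broken explicitly. The paper does this by adding a bootstrap assumption $\|\G\|_{L_\omega^\infty L_v^2}\les\la^{-\f12+2\ep_0}$ (its (\ref{g5.3})), deriving (\ref{a_8}) from it, then deriving the last inequality of (\ref{a_9_w}) — $\|\bp\phi\|_{L_v^2 L_\omega^\infty}\les\la^{-\f12}$, coming from the $H^{s-1}$ energy via (\ref{sobinf}), (\ref{w7.28.1}) and the trace inequalities, not from (\ref{pi.2}) — which improves and closes that bootstrap. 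You need this (or an equivalent device); as written, the first step of your argument fails.

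A secondary, fixable imprecision: in (\ref{a_9}) your schematic $|\bd\pi,\mathrm{Ric}|\les|\bp\phi|^2+|\bp^2\phi|$ omits the terms $|\ti\pi|\cdot|(\pi,\theta,\sn\log a)|$ produced by differentiating the frame contractions defining $\pi$; these are controlled using the bootstrap assumptions (\ref{a_0}), (\ref{a_6}) and the interpolation bound (\ref{w7.27.1}), and your proposed splitting of the quadratic term uses $\|\bp\phi\|_{L_v^\infty L_\omega^{2q}}$, which is not among the norms established in (\ref{a_9_w}) (only $\|v^{\f12}\bp\phi\|_{L_v^\infty L_\omega^{2q}}$ is).
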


\begin{proof}
We make bootstrap assumption on $\G$
\begin{equation}\label{g5.3}
\|\G\|_{L_\omega^\infty L_v^2}\les \la^{-\f12+2\ep_0},
\end{equation}
which will be improved by the last inequality of (\ref{a_9_w}).

Now we prove (\ref{a_8}).  By using (\ref{thet1}) and (\ref{G1})
\begin{align}\label{tr_5.1}
\p_v(v^{-2}\sqrt{|\ga|})=-\G \c v^{-2}\sqrt{|\ga|}.
\end{align}
Using (\ref{g5.3}), (\ref{tr_5.1}), and the fact that $v^{-2} \sqrt{|\ga|}\rightarrow \sqrt{|\ga^{(0)}|}$ as $v\rightarrow 0$, we obtain
\begin{equation*}
\left|\log (v^{-2}\sqrt{|\ga|}) - \log \sqrt{|\ga^{(0)}}\right| \les \int_0^v |\G| d v' \les \la^{-2\ep_0}.
\end{equation*}
Thus (\ref{a_8}) is proved.

With the help of (\ref{a_8}), by using the $L^2$ estimates of energy (\ref{eng0}), (\ref{trc_1}) and (\ref{trc_2})
\begin{equation}\label{w7.31.1}
\|v \bp^2 \phi\|_{L_v^2 L_\omega^2}+\| \bp\phi\|_{L_v^2 L_\omega^2}+\|v^\f12 \bp \phi\|_{L^\infty L^4_\omega}\les \la^{-\f12}.
\end{equation}
This proves the first three  inequalities in (\ref{a_9_w}) for the case $q=2$.

For $0<1-\frac{2}{q}<s-2$,   using (\ref{w7.28.1}) with $f=\bp^2 \phi$, we can derive
\begin{equation*}
\|v \bp^2 \phi\|_{L_v^2 L_\omega^q}\les \|\bp^2 \phi\|_{H^{s-2}}.
\end{equation*}
Using Proposition \ref{eng3}, we obtain the first inequality of (\ref{a_9_w}) that
\begin{equation}\label{5.3.6}
\|v \bp^2 \phi\|_{L_v^2 L_\omega^q}\les \la^{-\f12}.
\end{equation}
 By using  (\ref{sobinf}), (\ref{5.3.6}) and the second inequality in (\ref{w7.31.1})
 \begin{equation}\label{w7.31.3}
\|\bp \phi\|_{L_v^2 L_\omega^\infty}\les \|v \sn \bp \phi\|_{L_v^2 L_\omega^q}+\|\bp \phi\|_{L_v^2 L_\omega^2}\les \la^{-\f12}.
 \end{equation}
 By (\ref{sob}) and (\ref{w7.31.1}),
\begin{equation}\label{w7.31.2}
\| \bp \phi\|_{L_v^2 L_\omega^q}\les \|v\sn \bp \phi\|_{L_v^2 L_\omega^2}+\|\bp \phi\|_{L_v^2 L_\omega^2}\les \la^{-\f12}.
\end{equation}
The estimate on $\|v^\f12\bp\phi\|_{L^\infty L_\omega^{2q}}$ in (\ref{a_9_w}) can be proved by using (\ref{tran_sob}), (\ref{5.3.6}) and (\ref{w7.31.3}).
Hence the proof for (\ref{a_9_w}) is complete.

It only remains to consider the estimates for $\bd \pi$ and $Ric$  in (\ref{a_9}), since all other estimates follow from (\ref{a_9_w}) and $|\pi|\les |\bp\phi|$.

Note that with $\frac{1}{q'}=\frac{1}{q}-\frac{1}{4}$ and $\frac{1}{b}=\frac{q}{4}-\frac{1}{2}$,  by (\ref{a_9_w}), we can derive
\begin{equation}\label{w7.27.1}
\|v^\f12 \bp \phi\|_{L_v^4 L_\omega^{q'}}\le v_*^{\frac{1}{4}}\|v^{1-\frac{q}{4}}\bp \phi\|_{L_v^b L_\omega^{q'}}\les \|v^\f12 \bp \phi\|_{L_v^\infty L_\omega^{2q}}^{2-\frac{q}{2}}\|\bp\phi\|_{L_v^2 L_\omega^\infty}^{\frac{q}{2}-1}{v_*}^\frac{1}{4}\les \la^{-\f12}{v_*}^{\frac{1}{4}}.
\end{equation}

  In view of (\ref{ricc_def}), and writing $\mbox{Ric}$ under coordinates,
$
|\bd \pi, \mbox{Ric}|\les |\bd\ti\pi|+|\ti\pi|\c |(\pi, \theta, \sn\log a)|.
$
Also using $|\bd\ti\pi|\les |\bp\phi\c \bp\phi|+|\bp^2 \phi|,$ we can derive
\begin{equation*}
\|v(\bd \pi, \mbox{Ric})\|_{L_v^2 L_\omega^q}\les \|v\bp^2 \phi\|_{L_v^2 L_\omega^q}+\|v|\bp \phi|\c |(\bp \phi,\theta,\sn\log a)| \|_{L_v^2 L_\omega^q}\les \la^{-\f12}
\end{equation*}
which follows from  (\ref{G1}), (\ref{a_6}), (\ref{a_0}), (\ref{a_9_w}) and (\ref{w7.27.1}).
\end{proof}

As a consequence of (\ref{a_7}), (\ref{sob}) and (\ref{sobinf})
\begin{lemma}
Let  $2\le q\le p$.  For any $S_v$ tangent vector field $F$, there holds
\begin{equation}\label{eqv_2}
\|v\sn F\|_{L_v^2 L_\omega^q}+\|F\|_{L_v^2 L_\omega^q}\approx \sum_A\|\sn(F^A)\|_{L_v^2 L_x^q}+\|F\|_{L_v^2 L_\omega^q}
\end{equation}
where $F^A$ is the component relative to the coordinate frame on $S_v$.
This same equivalence holds for any $S_v$ tangent tensor fields.
\end{lemma}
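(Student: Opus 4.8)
\textbf{Plan of proof for the equivalence \eqref{eqv_2}.} The claim is a coordinate-versus-covariant equivalence for $S_v$-tangent tensor fields, to be proved under the bootstrap assumption \eqref{a_7} (together with its consequence \eqref{a_0}). The idea is standard: the covariant derivative $\sn$ and the coordinate derivative $\p$ on $S_v$ differ only by Christoffel symbols of $\ga$, and \eqref{a_7} gives exactly the control on those Christoffel symbols needed to absorb the difference. First I would reduce to the case of a $1$-form (or, since the scaling is the same, to a scalar component $F^A$); the general tensor case is identical after summing over components. Writing $\sn_B F^A = \p_B F^A + \Ga_{BC}^A F^C$, the two sides of \eqref{eqv_2} differ by the term $\|v\,\Ga\cdot F\|_{L_v^2 L_\omega^q} $, which must be bounded by the right-hand side (and symmetrically, by the left-hand side).

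\textbf{Key steps.} The first step is to record the bound on the Christoffel symbols of $\ga$: writing $\Ga_{ab}^c$ in terms of $\p\ga$ and $\ga^{-1}$, and using $\ga = v^2\gac$ so that $\Ga[\ga]_{ab}^c = \Ga[\gac]_{ab}^c$ is scale-invariant, \eqref{a_7} gives $\|\Ga[\gac] - \Ga[\ga^{(0)}]\|_{L_\omega^p(S_v)} \les \la^{-\ep_0}$ together with $|\gac - \ga^{(0)}| \les \la^{-\ep_0}$, so that $\|\Ga[\ga]\|_{L_\omega^p(S_v)} \les 1$ (this is precisely the bound \eqref{gaa}, derived by the same computation in Section~5.4). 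The second step is the Hölder estimate: for $2\le q\le p$, choose $r$ with $\frac1q = \frac1p + \frac1r$; then
\begin{equation*}
\|v\,\Ga\cdot F\|_{L_\omega^q(S_v)} \les \|\Ga\|_{L_\omega^p(S_v)} \|v F\|_{L_\omega^r(S_v)}.
\end{equation*}
Since $\gac$ is comparable to $\ga^{(0)}$ by \eqref{a_7}, and $v\cdot$ accounts for the one power of $v$ relating $\ga$-norms on $S_v$ to $\omega$-norms, the Sobolev inequality \eqref{sob} on $S_v$ gives $\|v F\|_{L_\omega^r} \les \|v\sn F\|_{L_\omega^2}^{1-2/r}\|vF\|_{L_\omega^2}^{2/r} + \|vF\|_{L_\omega^2} \approx \|v\sn F\|_{L_\omega^2}^{1-2/r}\|F\|_{L_\omega^2}^{2/r} + \|F\|_{L_\omega^2}$ (here I use $v^{1/2}$ scaling to pass between $L_\omega$ and $L_x(S_v)$, i.e. $\|F\|_{L_x^q(S_v)} \approx \|v^{2/q}F\|_{L_\omega^q(S_v)}$ from \eqref{a_8}). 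The third step is to take the $L_v^2$ norm in $v$: after applying the Cauchy–Schwarz inequality in $v$ (or directly interpolating), the mixed-norm $\|v\,\Ga\cdot F\|_{L_v^2 L_\omega^q}$ is bounded by $\la^{-\ep_0}\big(\|v\sn F\|_{L_v^2 L_\omega^q} + \|F\|_{L_v^2 L_\omega^q}\big)$, and since $\la^{-\ep_0}$ is small, this term is absorbed into the left-hand side. The reverse inequality is obtained the same way, writing $\p_B F^A = \sn_B F^A - \Ga_{BC}^A F^C$ and absorbing the Christoffel term into the right-hand side. Finally I would note that \eqref{eqv_2} as stated compares $\|v\sn F\|_{L_v^2 L_\omega^q} + \|F\|_{L_v^2 L_\omega^q}$ with $\sum_A \|\sn(F^A)\|_{L_v^2 L_x^q} + \|F\|_{L_v^2 L_\omega^q}$; since $\sn(F^A)$ for the \emph{scalar} $F^A$ is just $\p F^A$ (covariant derivative of a scalar), and the $L_x^q(S_v)$ norm differs from $\|v\cdot\|_{L_\omega^q}$ only by the comparable volume factor \eqref{a_8}, this is the same statement modulo the bookkeeping in the previous steps.

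\textbf{Main obstacle.} The only genuine subtlety is making sure the small constant $\la^{-\ep_0}$ from \eqref{a_7} really appears in front of the absorbable term, which requires that the Hölder exponent split $\frac1q = \frac1p + \frac1r$ be admissible — i.e. that $r<\infty$ — for every $q$ in the stated range $2\le q\le p$, and that the Sobolev embedding \eqref{sob} be available for exponent $r$ on $S_v$ with constants uniform in $v$; this uniformity is exactly what \eqref{a_7}/\eqref{a_0} provide. One also has to be slightly careful that the bootstrap is not circular: \eqref{eqv_2} is used only with the Christoffel bound \eqref{gaa}, which is established from \eqref{a_7} \emph{before} \eqref{eqv_2} is invoked, so there is no logical loop. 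Everything else is routine Hölder and Sobolev bookkeeping on the family of surfaces $\{S_v\}$, entirely parallel to the proofs of the Sobolev and trace inequalities \eqref{trc_1}–\eqref{w7.28.1} used throughout Section~5.
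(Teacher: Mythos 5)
Your overall architecture is the right one and matches what the paper intends: the paper offers no proof of this lemma beyond the phrase ``as a consequence of (\ref{a_7}), (\ref{sob}) and (\ref{sobinf})'', and your steps --- relate $\sn F$ to $\p F^A$ through the Christoffel symbols of $\ga$, control $\|\Ga\|_{L_\omega^p}\les 1$ exactly as in the derivation of (\ref{gaa}), apply H\"older with $\frac1q=\frac1p+\frac1r$, and use the Sobolev inequalities on $S_v$ to handle the zeroth-order term --- are precisely the intended ingredients.

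Two points in your third step need repair. First, the absorption cannot be driven by a factor $\la^{-\ep_0}$: as you yourself record in your first step, only the difference $\Ga[\gac]-\Ga[\ga^{(0)}]$ is $O(\la^{-\ep_0})$ in $L_\omega^p$, while $\Ga[\ga^{(0)}]$ is merely $O(1)$, so $\|v\,\Ga\cdot F\|_{L_v^2L_\omega^q}$ is bounded by a universal constant (not a small one) times $\|\Ga\|_{L_\omega^p}\,\|vF\|_{L_v^2L_\omega^r}$; your claimed bound with prefactor $\la^{-\ep_0}$ is inconsistent with your own Step 1. The absorption instead comes from the structure of the Sobolev bound: $\|vF\|_{L_\omega^r}\les\|v\sn F\|_{L_\omega^2}^{1-2/r}\|F\|_{L_\omega^2}^{2/r}+\|F\|_{L_\omega^2}\le\epsilon\|v\sn F\|_{L_\omega^2}+C_\epsilon\|F\|_{L_\omega^2}$ by Young's inequality, together with $\|v\sn F\|_{L_\omega^2}\les\|v\sn F\|_{L_\omega^q}$ since $q\ge2$ and the sphere has finite measure; it is the small $\epsilon$, not $\la^{-\ep_0}$, that lets you absorb the Christoffel term into the left-hand side. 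Second, at the endpoint $q=p$ the H\"older split forces $r=\infty$, which is perfectly admissible, but then the required bound on $\|vF\|_{L_\omega^\infty}$ is the $L^\infty$ Sobolev inequality (\ref{sobinf}) rather than (\ref{sob}) --- this is exactly why the paper cites both --- so your remark that the argument ``requires $r<\infty$'' has the logic backwards. With these two corrections the argument closes.
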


We frequently employ the following formula  for  smooth scalar functions $f$,
\begin{align}
\p_v \int_{S_v} v^{-m} f d \mu_\ga&=\int v^{-m}\left(\p_v f +\left(a\tr\theta-\frac{m}{v}\right)f\right) d \mu_\ga\nn \\
&=\int v^{-m}\left(\p_v f+\left(\frac{2-m}{v}-\G\right) f\right) d \mu_\ga\label{4.25.2}
\end{align}
where $\G=a (V_4-\tr k) $. (\ref{4.25.2}) is a direct consequence of (\ref{thet1}) and (\ref{G1}).
%By Gauss equation,
%\begin{equation*}
%\bR_{NN}=\bR_{NN}+k\c k,
%\end{equation*}
%with the help of decomposition for $\bR_{NN}$,
%\begin{align}
%\bR_{NN}&=\bd_N V_N+\Box_\bg g+\bg\bp g\c \bp g\nn \\
%&= N (V_N)+(\sn \log a+k_{NN})V_N+\bg\bp g\c\bp g \label{rnn}
%\end{align}

\begin{lemma}
There holds for the lapse function $a$, the parabolic equation
\begin{equation}\label{aa_1}
a^2\left(\sD \log a+|\sn \log a|^2\right) =\frac{2}{v}\p_v \log a+a^2 \mathfrak{E}, \quad \log a(0, \c)=0
\end{equation}
where
\begin{equation}\label{ff}
\mathfrak{E}=a^{-1} \p_v (V_4-{\emph\tr}\, k)-|\hat \theta|^2+\frac{2}{va}(V_4-{\emph \tr}\, k)-\frac{(V_4-{\emph \tr}\, k)^2}{2}+R_{NN}.
\end{equation}
Equivalent to (\ref{aa_1}), there holds for the lapse function $a$ the following equation
\begin{equation}\label{4.27.1}
a^{-1} \sD a=-\frac{2}{av}\p_v(a^{-1})+\mathfrak{E}
\end{equation}
\end{lemma}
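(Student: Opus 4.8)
The plan is to obtain (\ref{aa_1}) directly from the intrinsic structure equation (\ref{a_1}) for the lapse together with the foliation normalization (\ref{thedef}); the equivalence of (\ref{aa_1}) and (\ref{4.27.1}) is then an elementary manipulation, and the initial condition $\log a(0,\cdot)=0$ is inherited from $a({\bf o})=1$.

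First I would rewrite (\ref{a_1}) using the pointwise decomposition $|\theta|^2=|\hat\theta|^2+\frac12(\tr\theta)^2$, valid for a symmetric $2$-tensor on the surface $S_v$, so that
\begin{equation*}
a^{-1}\sD a=-N(\tr\theta)-|\hat\theta|^2-\frac12(\tr\theta)^2-R_{NN}.
\end{equation*}
Since the metric of $\Sigma_0$ near ${\bf o}$ has the form $a^2 dv^2+\ga_{AB} d\omega^A d\omega^B$ with $\p_v\perp S_v$ and $a^{-1}=|\nab v|_g$, the outward unit normal is $N=a^{-1}\p_v$, whence $N(\tr\theta)=a^{-1}\p_v(\tr\theta)$.

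Next I would insert the normalization. From (\ref{thedef}) and $\Tr k=\tr k+k_{NN}$ one gets $\tr\theta=\frac{2}{av}-(V_4-\tr k)$, i.e. $a\tr\theta-\frac{2}{v}=-\G$ with $\G=a(V_4-\tr k)$, which is precisely (\ref{G1}). Abbreviating $W:=V_4-\tr k$ and differentiating in $v$,
\begin{equation*}
N(\tr\theta)=a^{-1}\p_v\left(\frac{2}{av}-W\right)=-\frac{2\p_v a}{a^3v}-\frac{2}{a^2v^2}-a^{-1}\p_v W,\qquad
\frac12(\tr\theta)^2=\frac{2}{a^2v^2}-\frac{2W}{av}+\frac{W^2}{2}.
\end{equation*}
Substituting both into the rewritten (\ref{a_1}), the singular terms $\pm\frac{2}{a^2v^2}$ cancel — this cancellation is the structural point of the lemma, since it is what turns a potential $v^{-2}$ zeroth-order blow-up into the benign first-order radial singularity $\frac{2}{v}\p_v\log a$ and makes the resulting equation genuinely parabolic. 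The remaining curvature and quadratic terms then collect into the quantity $\mathfrak{E}$ of (\ref{ff}), leaving $a^{-1}\sD a=\frac{2\p_v a}{a^3v}+\mathfrak{E}$, and since $\frac{2\p_v a}{a^3v}=-\frac{2}{av}\p_v(a^{-1})$ this is exactly (\ref{4.27.1}). Finally, from $\sD\log a=a^{-1}\sD a-a^{-2}|\sn a|^2$ and $|\sn\log a|^2=a^{-2}|\sn a|^2$ we have $a^2(\sD\log a+|\sn\log a|^2)=a\,\sD a$; multiplying (\ref{4.27.1}) by $a^2$ and using $a^2\cdot\frac{2\p_v a}{a^3v}=\frac{2\p_v a}{av}=\frac{2}{v}\p_v\log a$ yields (\ref{aa_1}), while $\log a(0,\cdot)=0$ follows from $a({\bf o})=1$ in (\ref{thedef}).

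There is no deep obstacle here: the statement is an algebraic identity on each $S_v$. The only point that genuinely requires care is the bookkeeping — one must respect the sign conventions in (\ref{a_1}) (spatial versus spacetime Ricci, and the orientation of $N$ relative to increasing $v$) and verify the precise cancellation of the $v^{-2}$ terms — because it is exactly the \emph{form} of (\ref{aa_1})/(\ref{4.27.1}), a parabolic equation whose sole singularity at $v=0$ is of radial type, that is exploited afterwards to derive the weighted estimates (\ref{a_3})--(\ref{a_4}) for $a$ and $\sn\log a$.
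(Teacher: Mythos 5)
Your derivation follows precisely the paper's own route: insert the normalization~(\ref{thedef}) and the split $|\theta|^2=|\hat\theta|^2+\tfrac12(\tr\theta)^2$ into~(\ref{a_1}), watch the $\frac{2}{a^2v^2}$ terms cancel, read off~(\ref{4.27.1}), and then pass to~(\ref{aa_1}) via $\sD\log a=a^{-1}\sD a-|\sn\log a|^2$. The algebra and the identification of the structural cancellation are all correct, and the final identities match the paper's displayed manipulations step for step.

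One point you should not gloss over is the sign of $R_{NN}$. Starting from~(\ref{a_1}) exactly as printed, your computation yields $\mathfrak{E}$ with a $-R_{NN}$, whereas~(\ref{ff}) records $+R_{NN}$; you simply assert that the remaining terms ``collect into'' (\ref{ff}) without noting that the signs do not literally agree. (In fact the paper's own proof of this lemma carries a $+R_{NN}$ from the very first line, and thus already disagrees with~(\ref{a_1}) as printed; one can check against~(\ref{a_12}), or against model cases, that~(\ref{a_1}) with $-R_{NN}$ is the correct Riccati trace identity, so the sign slip sits in~(\ref{ff}) and the accompanying proof.) This is harmless for all downstream uses --- the only way $\mathfrak{E}$ enters the estimates is through $\|v\mathfrak{E}\|$-type norms, which are insensitive to the sign of $R_{NN}$ --- but a careful write-up should flag the discrepancy rather than silently reconcile it.
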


Indeed, by using (\ref{thedef}) we have
\begin{align*}
a^{-1} \sD a&=-N \left(\frac{2}{va}+\tr k-V_4\right)-\f12 \left(\frac{2}{av}+\tr k-V_4\right)^2-|\hat\theta|^2+R_{NN}\\
&=-a^{-1}\left(-\frac{2}{v^2a }+\frac{2}{v} \p_v(a^{-1})\right)-\frac{2}{(va)^2}+\mathfrak{E}.
\end{align*}
This gives (\ref{4.27.1}). With the help of
\begin{equation*}
\sD \log a=\sn(a^{-1} \sn a)=-|\sn \log a|^2+a^{-1} \sD a
\end{equation*}
we can obtain (\ref{aa_1}).

\subsection{$L^2$ estimates}
In Lemma \ref{ellp_4.26} and Lemma \ref{ellp_5.1}, we provide preliminary estimates for lapse function $a$.

\begin{lemma}\label{ellp_4.26}
\begin{align}
&\|\sn\log a\|_{L_v^2 L_\omega^2}+\|v^{-\f12} \log a\|_{L_v^\infty L_\omega^2}+\|v^{-1}\log a\|_{L_v^2 L_\omega^2}\les \la^{-\f12},\label{ellp_1}\\
& \|v^{-\f12} |a^{-1}-1|\|_{L_v^\infty L_\omega^2}+\|v^{-1}|a^{-1}-1|\|_{L_v^2 L_\omega^2}\les \la^{-\f12}, \label{ellp_0}\\
& \|v^{-\f12}\log a\|_{L_v^4 L_\omega^4}+\|v^{-\f12} |a^{-1}-1|\|_{L_v^4 L_\omega^4}\les \la^{-\frac{1}{4}-2\ep_0}. \label{ellp_8}
\end{align}
\end{lemma}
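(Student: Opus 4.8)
\textbf{Proof proposal for Lemma \ref{ellp_4.26}.}
The plan is to treat the three assertions by the same parabolic energy method, using the equation (\ref{aa_1}) for $\log a$ together with the weighted integral identity (\ref{4.25.2}). First I would multiply (\ref{aa_1}) by $\log a$ and integrate over $S_v$ with a weight $v^{-m}$ chosen appropriately: since the dangerous first-order term on the right of (\ref{aa_1}) is $\frac{2}{v}\p_v \log a$, pairing it with $v^{-m}\log a\, d\mu_\ga$ produces $\frac{1}{v^{m+1}}\p_v((\log a)^2)$ up to the $\G$-error supplied by (\ref{4.25.2}), and after integration by parts on $S_v$ the left side contributes $-\int v^{-m+2} a^{-2}\left(|\sn\log a|^2 - |\sn\log a|^2 (\log a) \cdot(\dots)\right)$; the quadratic gradient term is of good sign. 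Carrying out this computation with $m=1$ should give, after using $a\approx 1$ from (\ref{a_0}) and absorbing the $\G$-term via the bootstrap control of $\|\G\|_{L_\omega^\infty L_v^2}$ from the proof of Lemma \ref{mtieq} (or directly the last estimate of (\ref{a_9_w})), a differential inequality of the form
\begin{equation*}
\p_v\left(\int_{S_v} v^{-1}(\log a)^2 d\mu_\ga\right) + \int_{S_v} v\, |\sn\log a|^2 d\mu_\ga \les v\,\|\mathfrak E\|_{L^1_\omega(S_v)} \cdot \|v^{-1}\log a\|_{L^\infty_\omega(S_v)} + (\text{error}),
\end{equation*}
and then integrating in $v$ from $0$, where the boundary term vanishes by Lemma \ref{inii_7}(i) and the local expansion, yields $\|v^{-1/2}\log a\|_{L_v^\infty L_\omega^2}$, $\|v^{-1}\log a\|_{L_v^2 L_\omega^2}$ and $\|\sn\log a\|_{L_v^2 L_\omega^2}$ simultaneously.

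The key input for the right-hand side is an estimate on $\mathfrak E$ defined in (\ref{ff}). Each term there — $a^{-1}\p_v(V_4-\tr k)$, $|\hat\theta|^2$, $\frac{2}{va}(V_4-\tr k)$, $(V_4-\tr k)^2$, $R_{NN}$ — is controlled by Lemma \ref{mtieq}: the first and last are of the type $v^{-1}\cdot v(\bd\pi,\Ric)$ and lie in $L_v^2 L_\omega^q$ with norm $\les \la^{-1/2}$ after inserting the $v^{-1}$ weight, while $\frac{2}{va}(V_4-\tr k) = \frac{2}{va}\cdot \frac{\G}{a}$ and the quadratic terms $|\hat\theta|^2$, $(V_4-\tr k)^2$ are handled by the product estimates $\|v^{1/2}\hat\theta\|_{L_v^4 L_\omega^4}\les \la^{-1/4}$ from (\ref{a_6}) and $\|v^{1/2}\bp\phi\|_{L_v^\infty L_\omega^{2q}} + \|\bp\phi\|_{L_v^2 L_\omega^\infty}\les \la^{-1/2}$ from (\ref{a_9_w}) together with $v_*\les \la^{1-8\ep_0}$. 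So schematically $\|v\,\mathfrak E\|_{L_v^1 L_\omega^{q}} \les \la^{-1}\cdot v_*^{\text{(small power)}}$, which beats $\la^{-1}$ by enough room; the gradient term and the $L^\infty_\omega$ factor of $v^{-1}\log a$ are then absorbed by Gronwall (using $\|v^{-1}\log a\|_{L^\infty_\omega}\les \|\sn\log a\|_{L^q_\omega} + \|v^{-1}\log a\|_{L^2_\omega}$ via the Sobolev inequality (\ref{sobinf}) with $v^{-2}\sqrt{|\ga|}\approx 1$ from (\ref{a_8})). Once (\ref{ellp_1}) holds, (\ref{ellp_0}) is immediate from $a^{-1}-1 = -\log a + O((\log a)^2)$ and $|\log a|\les \la^{-1/2+}$ small. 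Finally (\ref{ellp_8}) follows by interpolation: $\|v^{-1/2}\log a\|_{L_v^4 L_\omega^4}$ is bounded by $\|v^{-1}\log a\|_{L_v^2 L_\omega^2}^{1/2}$ times a term of the form $\|v^{1/2}\sn\log a\|_{L_\omega^{q}}^{1/2}$-type quantity controlled by (\ref{a_4})/(\ref{ellp_1}) and the Gagliardo--Nirenberg inequality (\ref{sob}) with $q=4$ on $S_v$, keeping track of the extra $v_*^{\text{(small)}}\les\la^{\text{(small)}\cdot(1-8\ep_0)}$ factors to land the exponent $-\tfrac14-2\ep_0$.

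The main obstacle I anticipate is the bookkeeping of weights and the circular dependence between the energy inequality and the pointwise-in-$v$ bound $\|v^{-1}\log a\|_{L_\omega^\infty}$ that appears in the right-hand side: one must choose the weight $m$ so that the gradient term genuinely controls the Sobolev $L^\infty_\omega$ norm on each $S_v$ without losing powers of $v$, and then run the Gronwall argument inside the bootstrap region $\itt_{v_*}$ so that the smallness $\la^{-4\ep_0}$ (rather than merely finiteness) is preserved; this is exactly where $v_*\le \tfrac{4}{5}\tau_*\les \la^{1-8\ep_0}$ is used to convert $L^2_v$ bounds into $L^1_v$ or $L^4_v$ bounds with an admissible power of $\la$. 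The rest is routine once the structure of $\mathfrak E$ from (\ref{ff}) and the estimates of Lemma \ref{mtieq} are in hand.
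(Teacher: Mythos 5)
Your strategy is the paper's: multiply the parabolic equation (\ref{aa_1}) for $\log a$ by a $v$-weighted copy of $\log a$, integrate by parts on $S_v$, convert the $\frac{2}{v}\p_v$ term into a total $v$-derivative via (\ref{4.25.2}), use the vanishing initial data of Lemma \ref{inii_7} at $v=0$, bound $\|v\mathfrak{E}\|$ exactly as in the paper's (\ref{fl2}) using Lemma \ref{mtieq}, (\ref{a_6}) and (\ref{a_9_w}), absorb the $\G$-error through the Gagliardo--Nirenberg inequality (\ref{sobv}) together with $v_*\les\la^{1-8\ep_0}$, and obtain (\ref{ellp_8}) from (\ref{sobv}) applied to the $L_v^\infty L_\omega^2$ and $L_v^2L_\omega^2$ bounds. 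Two remarks. First, your treatment of (\ref{ellp_0}) is a genuine (and legitimate) shortcut: since (\ref{a_0}) keeps $a$ uniformly bounded away from $0$ and $\infty$, the pointwise inequality $|a^{-1}-1|\les|\log a|$ lets you read (\ref{ellp_0}) off (\ref{ellp_1}) directly, whereas the paper runs a second, parallel energy identity on (\ref{4.27.1}) with multiplier $a(1-a^{-1})v^{-2}$; your route saves that computation at no cost. Second, the weights in your displayed differential inequality are off by two powers of $v$: against $d\mu_\ga\approx v^2\,d\omega$, the quantity $\int_{S_v}v^{-1}(\log a)^2\,d\mu_\ga$ is $\|v^{\f12}\log a\|_{L_\omega^2}^2$, not $\|v^{-\f12}\log a\|_{L_\omega^2}^2$, and $\int_{S_v}v|\sn\log a|^2\,d\mu_\ga$ is $\|v^{\frac32}\sn\log a\|_{L_\omega^2}^2$; to produce the norms in (\ref{ellp_1}) one must multiply (\ref{aa_1}) by $v^{-2}\log a$ and invoke (\ref{4.25.2}) with $m=3$ (so that the total-derivative term is $\p_v\int_{S_v}v^{-3}|\log a|^2\,d\mu_\ga$ and the gradient term carries weight $v^{-2}$). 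You flag the weight bookkeeping as the anticipated difficulty, and with that correction the argument closes as in the paper.
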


\begin{proof}
We first consider (\ref{ellp_1}). Multiplying (\ref{aa_1}) by  $v^{-2}\log a$ and integrating on $S_v$ , also by integration by parts, we deduce
\begin{align*}
\int_{S_v}& \left(\frac{\p_v |\log a|^2}{v}+a^2 |\sn \log a|^2(1+\log a)\right) v^{-2} d\mu_\ga
=-\int_{S_v} v^{-2} a^2 \mathfrak{E}\log a d\mu_\ga.
\end{align*}
By using (\ref{4.25.2}) with $m=3$, we have
\begin{align*}
\p_v\int_{S_v}& \frac{|\log a|^2}{v^3} d\mu_\ga+\int_{S_v} v^{-2} \left( \left(\frac{\log a}{v}\right)^2+a^2 |\sn \log a|^2(1+\log a)\right)d\mu_\ga \\
&=-\int_{S_v} v^{-2}\left(a^2 \mathfrak{E}+v^{-1}\G\c \log a\right)\log a d\mu_\ga.
\end{align*}
We integrate this identity in $v$ from $v=0$ to $v=\tau$ and use Lemma \ref{inii_7}.
Note that (\ref{a_0}) implies $1+\log a>\frac{3}{10}$. Thus, by taking supremum in $0\le v\le v_*$ and using the Cauchy-Schwartz
inequality and (\ref{a_0}), we derive
\begin{align}
& \|\sn \log a\|_{L_v^2 L_\omega^2}^2 +\|v^{-\f12}\log a\|_{L_v^\infty L_\omega^2}^2+\left\|\frac{\log a}{v}\right\|_{L_v^2 L_\omega^2}^2\nn\\
& \les \|v \mathfrak{E}\|_{L_v^2 L_\omega^2}^2+\|\G\|_{L_v^2 L_\omega^2} \|v^{-\f12} \log a\|_{L_v^4 L_\omega^4}^2 \label{snaa_2}
\end{align}
Applying (\ref{sob}) to $S_v$-tangent tensor field or scalar function $W$
\begin{align}\label{sobv}
\|v^{-\f12} W\|_{L_v^4 L_\omega^4}^2\les (\|\sn W\|_{L_v^2 L_\omega^2} +\|v^{-\f12} W\|_{L_v^\infty L_\omega^2})\|v^{-\f12}W\|_{L_v^\infty L_\omega^2} (v_*)^\f12.
\end{align}
Hence by  (\ref{a_9}) which gives $\|\G\|_{L_v^2 L_\omega^2}\les \la^{-\f12}$ and in view of $v_*\les\la^{1-8\ep_0}$,
\begin{align}
\|\G\|_{L_v^2 L_\omega^2} \|v^{-\f12} W\|_{L_v^4 L_\omega^4}^2
&\les \la^{-4\ep_0} \left(\|\sn W\|_{L_v^2 L_\omega^2} +\|v^{-\f12} W\|_{L_v^\infty L_\omega^2}\right)\|v^{-\f12}W\|_{L_v^\infty L_\omega^2}\label{4.25.3}
\end{align}
Then the last term in (\ref{snaa_2}) can be treated as
\begin{align*}
\|\G\|_{L_v^2 L_\omega^2} \|v^{-\f12} \log a\|_{L_v^4 L_\omega^4}^2
&\les \la^{-4\ep_0} \left(\|\sn \log a\|_{L_v^2 L_\omega^2} +\|v^{-\f12} \log a\|_{L_v^\infty L_\omega^2}\right)\|v^{-\f12}\log a\|_{L_v^\infty L_\omega^2}
\end{align*}
This term can be absorbed by the left hand side of (\ref{snaa_2})

Note that by using (\ref{a_6}) and (\ref{a_9}),
\begin{align}
\|v \mathfrak{E}\|_{L_v^2 L_\omega^2}
&\les \left\|v\Big(\nab_N (V_4-\tr k)+\frac{2}{va}(V_4-\tr k)-\f12(V_4-\tr k)^2 -|\hat\theta|^2+R_{NN}\Big)\right\|_{L_v^2 L_\omega^2}\nn\\
&\les \|v (\nab_N (V_4-\tr k), R_{NN})\|_{L_v^2 L_\omega^2}+\|V_4-\tr k\|_{L_v^2 L_\omega^2}+\|v(V_4-\tr k)^2, v|\hat \theta|^2\|_{L_v^2 L_\omega^2}\nn\\
&\les \la^{-\f12}\label{fl2}
\end{align}
Hence, in view of (\ref{snaa_2}),  we obtain (\ref{ellp_1}).
By using (\ref{ellp_1}), we obtain by using (\ref{sobv}) that
\begin{equation}\label{4.27.2}
\|v^{-\f12}\log a\|_{L_v^4 L_\omega^4}^4\les \la^{-1-8\ep_0}.
\end{equation}

Next, we prove (\ref{ellp_0}).  We multiply (\ref{4.27.1}) by $a(1-a^{-1})v^{-2}$ and integrating on $S_v$
\begin{align*}
\int_{S_v} -v^{-2}\sn (1-a^{-1}) \sn a d\mu_\ga
&=\int_{S_v} \left(v^{-1} \p_v\left( (a^{-1}-1)^2 \right)+a\mathfrak{E}(1-a^{-1})\right)v^{-2}  d\mu_\ga
\end{align*}
By using (\ref{4.25.2}) with $m=3$,
\begin{align*}
-\int_{S_v} a^{-2}|\sn a|^2 v^{-2} d\mu_\ga&=\p_v \int_{S_v} v^{-3} (a^{-1}-1)^2 d\mu_\ga +\int_{S_v} \left(\frac{1}{v}+\G\right) (a^{-1}-1)^2 v^{-3} d\mu_\ga\\
&+\int_{S_v} \mathfrak{E}(1-a^{-1}) v^{-2}a d\mu_\ga.
\end{align*}
Integrating  on $(0, v]$ for $v\le  v_* $,
\begin{align}
\|v^{-\f12} (a^{-1}-1)\|_{L_v^\infty L_\omega^2}^2&+\|v^{-1}(a^{-1}-1)\|_{L_v^2 L_\omega^2}^2+\|\sn \log a\|_{L_v^2 L_\omega^2}^2\label{4.27.3}\\
&\les \|v \mathfrak{E}\|_{L_v^2 L_\omega^2}+\|\G\|_{L_v^2 L_\omega^2}\|\frac{a^{-1}-1}{v^\f12}\|_{L_v^4 L_\omega^4}^2\nn
\end{align}
Apply  (\ref{4.25.3}) to  $W=a^{-1}-1$,  the last term is absorbed by (\ref{4.27.3}). By using (\ref{fl2}) we complete the proof of (\ref{ellp_0}).

Similar  to (\ref{4.27.2}), the last inequality in (\ref{ellp_8}) can be proved by using (\ref{ellp_0}), the first inequality in (\ref{ellp_1})  and (\ref{sobv}).
\end{proof}

We now prove

\begin{lemma}\label{ellp_5.1}
\begin{align}
&\|v\sn^2\log a\|_{L_v^2 L_\omega^2}+\|v^\f12 \sn\log a\|_{L_v^\infty L_\omega^2}\les \la^{-\f12}, \quad\|v^\f12\sn\log a\|_{L_v^4 L_\omega^4}\les \la^{-2\ep_0-\frac{1}{4}} \label{ellp_2}
\end{align}
\end{lemma}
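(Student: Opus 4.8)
\textbf{Proof proposal for Lemma \ref{ellp_5.1}.}
The plan is to upgrade the zeroth-order estimates of Lemma \ref{ellp_4.26} on $\log a$ to second-order estimates by running an energy argument on the elliptic (parabolic-in-$v$) equation \eqref{aa_1}, in the same spirit as the proof of \eqref{ellp_1}, but now after applying $\sn$ to that equation. First I would commute $\sn$ through \eqref{aa_1}. Using the commutation formula relating $\sn \sD$ and $\sD \sn$ on $S_v$ (which produces curvature terms $K \sn \log a$ controllable by \eqref{a_7}, \eqref{a_12} and \eqref{a_9}) and the commutator $[\sn,\p_v]$ (producing a $\theta\cdot\sn\log a$ term, controlled via \eqref{G1}, \eqref{a_6}), one obtains a transport-elliptic identity of the schematic form
\begin{equation*}
\frac{2}{v}\p_v(\sn\log a) - a^2\sD(\sn\log a) = a^2\,\sn\mathfrak{E} + (\text{l.o.t.}),
\end{equation*}
where the lower-order terms are products of $\sn\log a$ with $\theta$, $\sn\log a$ itself, $K$, or $\G$, plus $\sn(a^2)\cdot(\sD\log a+|\sn\log a|^2)$, all of which carry a factor that is small in $\la$ by Lemma \ref{mtieq} and Lemma \ref{ellp_4.26}.

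Next I would test this identity against $v^{-2}\sn\log a$ (with an appropriate power of $v$, as in \eqref{snaa_2}), integrate over $S_v$, integrate by parts on $S_v$ to turn $-\sD(\sn\log a)\cdot\sn\log a$ into $|\sn^2\log a|^2$, and then use the weighted Leibniz formula \eqref{4.25.2} to convert the $v$-derivative term into $\p_v\int_{S_v}v^{-m}|\sn\log a|^2$ plus a controlled remainder involving $\G$. Integrating in $v$ from $0$ (where Lemma \ref{inii_7}(i) gives $\sn a\to 0$, so all boundary terms vanish) to $v\le v_*$ yields
\begin{equation*}
\|v\sn^2\log a\|_{L_v^2L_\omega^2}^2 + \|v^{\f12}\sn\log a\|_{L_v^\infty L_\omega^2}^2 \les \|v\,\sn\mathfrak{E}\|_{L_v^2L_\omega^2}^2 + (\text{absorbable terms}).
\end{equation*}
The absorbable terms are handled exactly as in Lemma \ref{ellp_4.26}: the $\G$-weighted quartic term is dominated using the Sobolev inequality \eqref{sobv}/\eqref{4.25.3} with $W=\sn\log a$ and the smallness $\|\G\|_{L_v^2L_\omega^2}\les\la^{-\f12}$ together with $v_*\les\la^{1-8\ep_0}$, so it gets moved to the left side; the terms with $\theta$, $K$, $\sn\log a$ are bounded by $\|\cdot\|_{L_v^2L_\omega^\infty}$-type norms from \eqref{a_6}, \eqref{a_9}, \eqref{a_7} times the left-hand quantities, again with an extra small power of $\la$. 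For the forcing term $\sn\mathfrak{E}$, from the explicit expression \eqref{ff} we have $\sn\mathfrak{E}$ schematically of the form $\bd(\sn\bp\phi)$ plus products $\sn(\bp\phi)\cdot(\bp\phi,\theta,\sn\log a)$ plus $\sn R_{NN}$, and $\|v\,\sn\mathfrak{E}\|_{L_v^2L_\omega^2}\les\la^{-\f12}$ follows from \eqref{a_9_w}, \eqref{a_9}, \eqref{w7.31.1}, and Proposition \ref{eng3} in essentially the same way \eqref{fl2} was obtained. This gives the first two estimates in \eqref{ellp_2}. The third, $\|v^{\f12}\sn\log a\|_{L_v^4L_\omega^4}\les\la^{-2\ep_0-\frac14}$, then follows by feeding the two just-proven bounds into the Sobolev inequality \eqref{sobv} with $W=\sn\log a$ (interpolating $\|\sn^2\log a\|_{L_v^2L_\omega^2}$ and $\|v^{\f12}\sn\log a\|_{L_v^\infty L_\omega^2}$ against $v_*\les\la^{1-8\ep_0}$), exactly as \eqref{ellp_8} was deduced from \eqref{ellp_1}.

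The main obstacle I anticipate is the commutator bookkeeping in the first step: one must verify that commuting $\sn$ past both $\sD$ and the singular transport operator $\tfrac{2}{v}\p_v$ in \eqref{aa_1} produces only terms that are genuinely lower order — in particular that the curvature term $K\cdot\sn\log a$ is controlled without circularity (it is, since $K$ is tied to $\theta$ and $\mbox{Ric}$ via \eqref{a_12}, and $\|\cga-\ga^{(0)}\|$ is small by \eqref{a_7}), and that no uncontrolled $\sn^2$ of a metric component or $\bp^3\phi$ sneaks in through $\sn\mathfrak{E}$ or through $[\sn,\p_v]$. A secondary technical point is keeping track of the correct $v$-weights so that the $\p_v$ term integrates cleanly against the chosen test weight and the boundary term at $v=0$ vanishes by Lemma \ref{inii_7}; this is a routine but careful adaptation of the weight choice $m=3$ used in \eqref{snaa_2} and \eqref{4.27.3}.
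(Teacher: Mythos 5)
Your overall strategy --- differentiate the parabolic equation \eqref{aa_1}, run a weighted energy estimate to obtain $\|v\sn^2\log a\|_{L_v^2L_\omega^2}$ and $\|v^{\f12}\sn\log a\|_{L_v^\infty L_\omega^2}$, then interpolate via \eqref{sobv} for the $L_v^4L_\omega^4$ bound --- is exactly the paper's. But there is a genuine gap at the step you yourself flagged as a worry and then dismissed: the forcing term $\sn(a^2\mathfrak{E})$. You assert $\|v\,\sn\mathfrak{E}\|_{L_v^2L_\omega^2}\les\la^{-\f12}$ "in essentially the same way \eqref{fl2} was obtained." It does not hold. By \eqref{ff}, $\mathfrak{E}$ contains $R_{NN}$ and $a^{-1}\p_v(V_4-\tr k)$, both already of the schematic form $f(\phi)\,\bp^2\phi+\cdots$; hence $\sn\mathfrak{E}$ contains $\sn\bp^2\phi$, a genuine third derivative of the solution. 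With data only in $H^s$, $s>2$, one has $\bp^2\phi\in H^{s-2}$ with $s-2$ arbitrarily small, so $\sn\bp^2\phi$ lives at negative Sobolev order and no weighted $L_v^2L_\omega^2$ bound for it follows from \eqref{a_9_w}, \eqref{w7.31.1} or Proposition \ref{eng3}, all of which stop at two derivatives. This loss of a derivative is precisely the obstruction the whole paper is organized around, and it cannot be waved away here.

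The paper's proof avoids ever estimating $\sn\mathfrak{E}$: after multiplying the differentiated equation \eqref{ellp_3} by $\sn\log a$ and integrating over $S_v$, it integrates the term $\int\sn(a^2\mathfrak{E})\cdot\sn\log a$ by parts once more on $S_v$, converting it into $\int (a^2\mathfrak{E})\,\sD\log a$, which is bounded by $\|v\mathfrak{E}\|_{L_v^2L_\omega^2}\,\|v\sn^2\log a\|_{L_v^2L_\omega^2}$ and then absorbed using \eqref{fl2} and Cauchy--Schwarz. A second, smaller omission: the cubic term $\int a^2\,\sn^2\log a\cdot\sn\log a\cdot\sn\log a$ produced by the chain rule is not absorbable by the argument of Lemma \ref{ellp_4.26} alone, since via \eqref{4.25.5} it is cubic in the unknown quantities; the paper closes it with an auxiliary bootstrap assumption $\|v\sn^2\log a\|_{L_v^2L_\omega^2}\le\la^{-\f12+2\ep_0}$ (improved at the end), which your outline does not set up. With these two repairs the remainder of your argument, including the deduction of the $L_v^4L_\omega^4$ estimate from \eqref{sobv}, goes through as written.
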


The last inequality  improves the second assumption in (\ref{a_6}). We will rely on the following equation to prove Lemma \ref{ellp_5.1}.

\begin{lemma}
\begin{align}
& a^2\left(\sD \sn \log a+\Big(-\frac{3(\tr\theta)^2}{4}+\frac{|\hat \theta|^2}{2}-\frac{R-2R_{NN}}{2}\Big) \sn \log a
+2 \sn \sn_B \log a \sn_B \log a\right)\nn\\
&+2 a^2 \sn \log a \left( \sD \log a+|\sn \log a|^2\right)
=\frac{2\p_v \sn \log a}{v}+a\left(\frac{2}{v}  \hat \theta +\f12 \G\tr\theta\right)\sn \log a+\sn (a^2 \mathfrak{E}). \label{ellp_3}
\end{align}
\end{lemma}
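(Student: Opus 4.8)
\textbf{Proof strategy for the equation (\ref{ellp_3}).} The plan is to derive (\ref{ellp_3}) directly from the parabolic equation (\ref{aa_1}) for $\log a$ by applying the angular derivative $\sn$ and carefully commuting it with the Laplacian $\sD$ on $S_v$. First I would write (\ref{aa_1}) in the schematic form $a^2(\sD\log a+|\sn\log a|^2)=\tfrac{2}{v}\p_v\log a+a^2\mathfrak{E}$, then apply $\sn_A$ to both sides. On the right-hand side, $\sn_A$ hits $\tfrac{2}{v}\p_v\log a$ (this is where the commutation $[\sn_A,\p_v]$ enters, producing the $\hat\theta$ and $\G\tr\theta$ terms via (\ref{trscoord2})-type identities, i.e. $\p_v\ga_{AB}=2a\theta_{AB}$), and $\sn_A$ hits $a^2\mathfrak{E}$ giving $\sn(a^2\mathfrak{E})$. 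On the left-hand side, differentiating $a^2$ produces the term $2a^2\sn\log a\,(\sD\log a+|\sn\log a|^2)$, and differentiating $\sD\log a$ requires the commutator formula $\sn_A\sD f=\sD\sn_A f-K\sn_A f+\sn_B(\sn_A\sn_B f-\sn_B\sn_A f)$ on a surface, where the Gaussian curvature $K$ is then replaced by its expression (\ref{a_12}). This is precisely how the coefficient $-\tfrac{3(\tr\theta)^2}{4}+\tfrac{|\hat\theta|^2}{2}-\tfrac{R-2R_{NN}}{2}$ arises: it is $-2K$ together with the $\tfrac12|\sn\log a|^2\cdot(\text{lower order})$ contributions assembled from (\ref{a_12}). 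Finally $\sn_A|\sn\log a|^2=2\sn_B\log a\,\sn_A\sn_B\log a$ gives the $2\sn\sn_B\log a\,\sn_B\log a$ term.

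\textbf{Carrying out the computation.} Concretely, I would proceed in three steps. Step one: establish the commutation identity $\sn_A(\p_v f)=\p_v(\sn_A f)+a\theta_A{}^B\sn_B f$ for scalar $f$ on $S_v$, which follows from $\p_v\ga_{AB}=2a\theta_{AB}$ and the definition of the Christoffel symbols of $\ga$; specialize to $f=\log a$ and split $\theta=\hat\theta+\tfrac12\tr\theta\,\ga$ to isolate the $\tfrac{2}{v}\hat\theta$ piece (using $a\tr\theta=\tfrac{2}{v}-\G$) and the $\tfrac12\G\tr\theta$ piece. Step two: establish the surface commutator $\sn_A\sD f-\sD\sn_A f=-K\sn_A f$ for scalars (the general tensorial commutator simplifies for $\sn$ of a scalar gradient since a $1$-form on a surface has only the Ricci/curvature obstruction $\bR\sim K\ga$), and substitute $K$ from (\ref{a_12}). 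Step three: collect the $\p_v$ terms, noting that $\sn_A(\tfrac{2}{v}\p_v\log a)=\tfrac{2}{v}\p_v(\sn_A\log a)+\tfrac{2}{v}a\theta_A{}^B\sn_B\log a$, and absorb the last summand into the claimed $a(\tfrac{2}{v}\hat\theta+\tfrac12\G\tr\theta)\sn\log a$ term after the splitting from step one. Reassembling everything yields exactly (\ref{ellp_3}). All of this is purely algebraic manipulation of connection and curvature identities on the $2$-surface $S_v$; no estimates are needed.

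\textbf{Expected obstacle.} The main delicacy is bookkeeping the curvature terms: one must be careful that the scalar curvature $R$ of $\Sigma_0$, the Ricci component $R_{NN}$, and the Gaussian curvature $K$ of $S_v$ are combined consistently through the Gauss equation (\ref{a_12}), and that the factor $-\tfrac34(\tr\theta)^2$ (rather than, say, $-\tfrac14(\tr\theta)^2$) emerges correctly — this comes from combining the $-2K$ from the $\sD$-commutator with the extra $+\tfrac12(\sD\log a+|\sn\log a|^2)$-type contributions that get folded in when one differentiates $a^2$ and re-expresses $\sD\log a$ via (\ref{aa_1}) itself. There is also a sign/normalization subtlety in the $[\sn,\p_v]$ commutator depending on the orientation convention for $\theta=\tfrac12\Lie_N\ga$. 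Since the statement of (\ref{ellp_3}) is only needed as an intermediate transport-type equation whose right-hand side will subsequently be estimated (to prove Lemma \ref{ellp_5.1}), the computation does not need to be optimized — only verified term by term. I would do this verification in local coordinates $(\omega^1,\omega^2)$ on $S_v$ to avoid ambiguity, then translate back to covariant notation.
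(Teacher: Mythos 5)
Your proposal matches the paper's proof, which is exactly this one-line computation: apply $\sn$ to (\ref{aa_1}) and use the commutation identities (\ref{comm_4.25.1}) and (\ref{comm_4.25}) together with the Gauss equation (\ref{a_12}) for $K$. The only inaccuracy is your account of the coefficient $-\tfrac{3}{4}(\tr\theta)^2$: it arises not from ``$-2K$'' but from $-K$ (contributing $-\tfrac14(\tr\theta)^2$) combined with rewriting $\frac{a\tr\theta}{v}\sn_A\log a$ on the right-hand side via $\frac{1}{v}=\tfrac12(a\tr\theta+\G)$, which yields the $\tfrac12 a\G\tr\theta$ term plus a residue $\tfrac12 (a\tr\theta)^2\sn_A\log a$ that moves to the left and supplies the missing $-\tfrac12(\tr\theta)^2$.
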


(\ref{ellp_3}) follows by differentiating (\ref{aa_1}) and using
the following commutation relations on $\Sigma$.
\begin{equation}\label{comm_4.25.1}
[\sn, \p_v]f= a\left(\frac{1}{2}\tr\theta \sn f+\hat\theta \sn f\right)
\end{equation}
and
\begin{equation}\label{comm_4.25}
\sn_A \sD f-\sD \sn_A f=-K\sn_A f.
\end{equation}
where $K$ is the Gaussian curvature.

(\ref{comm_4.25.1}) can be obtained in view of \cite[Corollary 3.2.3.1, Page 64]{CK}. (\ref{comm_4.25}) can be obtained by using
\begin{align*}
\sn \sD f&=\sn_C\sn_A\sn_C f+[\sn_A, \sn_C]\sn_C f=\sn_C \sn_A \sn_C f-R_{AC}\sn_C f
\end{align*}
and $
R_{AC}=\delta_{AC} K$.
%\begin{align*}
%\sn \sD a&=v^{-1}\sn \p_v a-v^{-2}\sn ((1-a)a )+\sn (-\frac{a(1-a)^2}{2v^2}+a F)\nn\\
%&=v^{-1} \p_v \sn a+v^{-1}[\sn, \p_v] a-v^{-2} (\sn a-2a \sn a) +\sn (-\frac{a(1-a)^2}{2v^2}+aF)\nn\\
%&=v^{-1} \p_v \sn a+v^{-1} a(\f12 \tr\theta \sn a+\hat \theta \c \sn a)-v^{-2} (\sn a-2a \sn a) +\sn (-\frac{a(1-a)^2}{2v^2}+aF)\nn\\
%&=v^{-1} \p_v \sn a+\f12v^{-1} a \tr\theta \sn a+v^{-2} (1+2a-2)\sn a +G
%\end{align*}
%where $G= v^{-1} a\hat \theta \c \sn a+ \sn (-\frac{a(1-a)^2}{2v^2}+aF)$.

\begin{proof}[Proof of Lemma \ref{ellp_5.1}]
Let us make a bootstrap assumption
\begin{equation}\label{bba3}
\|v\sn^2\log a\|_{L_v^2 L_\omega^2}\le \la^{-\f12+2\ep_0}
\end{equation}
which will be improved by the first inequality in (\ref{ellp_2}).

Multiply (\ref{ellp_3}) by $\sn_A  \log a$, followed by integration by part on $S_v$ with area form $d\mu_\ga$,
\begin{align*}
& \int_{S_v} \left(a^2 \left(|\sn^2 \log a|^2+2|\sn\log a|^4\right)+v^{-1}\p_v |\sn \log a|^2+\frac{3a^2}{4}(\tr\theta)^2|\sn \log a|^2\right)\\
&=\int_{S_v} \left(- 4a^2 \sn^2 \log a\sn\log a \sn\log a +a^2\left(\frac{|\hat \theta|^2}{2}+\frac{2R_{NN}-R}{2}\right) |\sn \log a|^2\right)\\
&-\int_{S_v} \left(\frac{2a}{v} \hat \theta \sn \log a+\frac{a}{2}\G\tr\theta \sn \log a+\sn (a^2 \mathfrak{E})\right)\sn \log a.
\end{align*}
Now we multiply by $v^2$ and integrate from $0$ to $\tau$, using Lemma \ref{inii_7} and (\ref{thedef})
\begin{align*}
\int_{S_\tau}& v^{-1}|\sn \log a|^2 d\mu_\ga+ \int_0^\tau\int_{S_v}\left[a^2\left(| \sn^2 \log a|^2+2|\sn \log a|^4\right) +G_1 |\sn \log a|^2\right] d\mu_\ga dv\\
&=\int_0^\tau \int_{S_v}  \left(-4 a^2 \sn^2 \log a \sn \log a \c \sn \log a+a^2\frac{|\hat \theta|^2+2R_{NN}-R}{2}|\sn \log a|^2 \right)d\mu_\ga dv\\
&-\int_0^\tau \int_{S_v} \left(\frac{2a}{v}\hat \theta \sn \log a+\f12 a\tr\theta \G \sn \log a+\sn (a^2 \mathfrak{E})\right)\sn \log a d\mu_\ga d v,
\end{align*}
where the coefficient $G_1=\frac{3a^2}{4}(\tr\theta)^2 -v^{-1}(a\tr\theta-\frac{1}{v})$.

By using $a\tr\theta=\frac{2}{v}-\G$, we can write
\begin{align*}
G_1&=\frac{3}{4} \left(\frac{2}{v}-\G\right)^2-\frac{1}{v}\left(\frac{1}{v}-\G\right)
=\frac{2}{v^2}-\frac{2\G}{v}+\frac{3\G^2}{4}.
\end{align*}
We can drop the last term in $G_1$ due to non-negativity, and incorporate the second into terms on the right hand side.
This implies,
%in view of (\ref{a_0})
\begin{align*}
& \int_{S_\tau} v^{-1}|\sn \log a|^2 d\mu_\ga
+\int_0^\tau\int_{S_v} \left(a^2\left(|\sn^2 \log a|^2+2|\sn \log a|^4\right)+\frac{2}{v^2} |\sn \log a|^2\right) d\mu_\ga d v\\
&\les \int_0^\tau\int_{S_v}\left( a^2|\sn\log a|^2 \left(|\sn^2 \log a|+|\hat \theta|^2+|\G|^2+\frac{|\hat \theta|+|\G|}{av} +|\frac{R-2R_{NN}}{2}|\right)
+|(a^2 \mathfrak{E})\sD \log a|\right),
\end{align*}
where we performed the integration by part on $S_v$ to treat the term of $\mathfrak{E}$.

By (\ref{a_0}), (\ref{a_8}), we can deduce by the H\"{o}lder and the Cauchy-Schwartz inequalities that
\begin{align}
\int &(|\sn^2\log a|^2+|\sn\log a|^4)+ \frac{1}{v^2}|\sn\log a|^2+\int_{S_\tau}v^{-1}|\sn \log a|^2\nn\\
&\les \|v (R, R_{NN}, |\hat\theta, \G|^2), v\sn^2\log a\|_{L_v^2 L_\omega^2}\|v^{\f12}\sn\log a\|^2_{L_v^4 L_\omega^4}+\|v\mathfrak{E}\|_{L_v^2 L_\omega^2}^2\nn\\
&+\|v(\hat \theta+\G)\c \sn\log a\|_{L_v^2 L_\omega^2}\|\sn\log a\|_{L_v^2 L_\omega^2}.\label{bb4.25}
\end{align}

We now treat  the term
$$
\A_1=\| v\sn^2\log a, v(R, R_{NN}, |\hat \theta, \G|^2)\|_{L_v^2 L_\omega^2}\|v^{\f12}\sn\log a\|^2_{L_v^4 L_\omega^4}.
$$
By (\ref{a_9}) and (\ref{a_6}), we have
\begin{equation}\label{5_3.1}
\|v(R, R_{NN}, |\hat \theta, \G|^2)\|_{L_v^2 L_\omega^2}\les \la^{-\f12}.
\end{equation}
Applying (\ref{sobv}) to $W=v \sn\log a$,
%\begin{align}\label{sna_4.25}
%\|v^\f12 \sn\log a\|_{L_v^4 L_\omega^4}^2\les \|v\sn^2\log a\|%_{L_v^2 L_\omega^2} \|{v^\f12}\sn\log a\|_{L_v^\infty L_\omega^2}{v_*}^\f12
%\end{align}
we obtain
\begin{equation}\label{4.25.5}
\|v^\f12 \sn\log a\|_{L_v^4 L_\omega^4}^2\les {v_*}^\f12 \|{v^\f12}\sn\log a\|_{L_v^\infty L_\omega^2}(\|{v^\f12}\sn\log a\|_{L_v^\infty L_\omega^2}+\|v\sn^2\log a\|_{L_v^2 L_\omega^2}).
\end{equation}
Hence, in view of (\ref{bba3}) and (\ref{5_3.1}), we conclude
\begin{equation}\label{a.4.25}
\A_1\les  \la^{-2\ep_0}( \|{v^\f12}\sn\log a\|_{L_v^\infty L_\omega^2}^2+\| v\sn^2\log a\|_{L_v^2 L_\omega^2}^2).
\end{equation}
 (\ref{a_6}) and  the second inequality in (\ref{a_9}) imply
\begin{equation}\label{4.25.4}
\|v^\f12 (|\hat\theta|+|\G|)\|_{L_v^4 L_\omega^4}\les \la^{-\frac{1}{4}}.
\end{equation}
We consider $\A_2=\|v(\hat \theta+\G)\c \sn\log a\|_{L_v^2 L_\omega^2}\|\sn\log a\|_{L_v^2 L_\omega^2}$
by using (\ref{4.25.4}) and (\ref{4.25.5})
\begin{align*}
\A_2&\les \la^{-2\ep_0}\|{v^\f12}\sn\log a\|_{L_v^\infty L_\omega^2}^\f12(\|{v^\f12}\sn\log a\|_{L_v^\infty L_\omega^2}+\|v\sn^2\log a\|_{L_v^2 L_\omega^2})^\f12\|\sn \log a\|_{L_v^2 L_\omega^2}.
\end{align*}
Thus $\A_1$ and $\A_2$ can be absorbed by the left side of (\ref{bb4.25}).
By using (\ref{fl2}), we complete the proof of the first pair of inequalities in (\ref{ellp_2}). The last inequality follows in view of (\ref{4.25.5}). (\ref{ellp_2}) improves (\ref{bba3}). Thus we complete the proof.
\end{proof}

\subsubsection{ Estimates on $\hat\theta$}

We will rely on the Hodge system (\ref{a_2}) and the transport equation (\ref{a_11}) to derive estimates on $\theta$.
Similar to Lemma \ref{hdgm1}, under the assumption of (\ref{a_7}) there holds the following  Calderon-Zygmund
estimates on $\cup_{0\le v\le v_*}S_v$,

\begin{proposition}
Let $\D$ be the Hodge operator $\D_1$ or $\D_2$.
For $2\le q\le p$, there holds for $S_v$ tangent tensor fields $F$ in the domain of $\D$,
\begin{equation}\label{ellp_6}
\|v \sn F\|_{L_\omega^q}+\|F\|_{L_\omega^q}\les \|v\D F\|_{L_\omega^q}.
\end{equation}
For scalar functions $f$,
\begin{equation}\label{cz_518}
\|v \sn^2 f\|_{L_\omega^q}+\|\sn f\|_{L_\omega^q}\les \|v \sD f\|_{L_\omega^q}.
\end{equation}
\end{proposition}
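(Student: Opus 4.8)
The plan is to follow the proof of Lemma \ref{hdgm1} (as in \cite{KR2} and \cite[Lemma 2.18]{Wangricci}), with the assumption (\ref{a_7}) now playing the role that (\ref{ba3}) played there: reduce the estimate to a fixed elliptic inequality on the round sphere and treat $\cga$ as a small perturbation of $\ga^{(0)}$. First I would pass to the rescaled metric $\cga = v^{-2}\ga$ on $S_v$. Since $v$ is constant on each $S_v$, the Levi-Civita connections of $\ga$ and $\cga$ coincide on $S_v$, so $v\sn$ is (up to constants) the covariant derivative for $\cga$; by the comparability of $\ga$ with $\ga^{(0)}$ recorded in (\ref{a_7}) (compare (\ref{eqv_2}) and its tensorial version), the quantities $\|v\sn F\|_{L_\omega^q(S_v)}$, $\|F\|_{L_\omega^q(S_v)}$ and $\|v\D F\|_{L_\omega^q(S_v)}$ are each equivalent, with universal constants, to the $W^{1,q}$-norm of $F$ and the $L^q$-norm of its Hodge image computed with respect to $\ga^{(0)}$ on $S_v\cong{\mathbb S}^2$. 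Moreover, expressing Christoffel symbols in the transport coordinates, (\ref{a_7}) gives $|\cga-\ga^{(0)}|\les\la^{-\ep_0}$ and $\|\Gamma_{\cga}-\Gamma_{\ga^{(0)}}\|_{L_\omega^p(S_v)}\les\la^{-\ep_0}$, uniformly in $v\in(0,v_*]$, so the Hodge operator of $\cga$ differs from that of $\ga^{(0)}$ only by a zeroth-order term of the schematic form $(\Gamma_{\cga}-\Gamma_{\ga^{(0)}})\cdot F$.

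Next I would invoke the standard Calderón–Zygmund estimate on the round sphere: since ${\mathbb S}^2$ carries no nontrivial harmonic $1$-form, nor any nontrivial symmetric traceless $2$-tensor in the kernel of its divergence, for the operator $\D_1$ or $\D_2$ taken with respect to $\ga^{(0)}$ one has the full elliptic bound $\|F\|_{W^{1,q}({\mathbb S}^2,\ga^{(0)})}\les\|\D F\|_{L^q({\mathbb S}^2,\ga^{(0)})}$ for every $1<q<\infty$. Applying this to the splitting $\D_{\cga}F=\D_{\ga^{(0)}}F+(\Gamma_{\cga}-\Gamma_{\ga^{(0)}})\cdot F$ gives
\[
\|F\|_{W^{1,q}}\les\|\D_{\cga} F\|_{L^q}+\big\|(\Gamma_{\cga}-\Gamma_{\ga^{(0)}})\cdot F\big\|_{L^q}.
\]
To absorb the last term I would use Hölder's inequality with exponents $p$ and $\tfrac{qp}{p-q}$ together with the Sobolev embedding $W^{1,q}({\mathbb S}^2)\hookrightarrow L^{\frac{qp}{p-q}}({\mathbb S}^2)$, which is valid precisely in the regime $p>2$, that is $0<1-\tfrac2p<s-2$:
\[
\big\|(\Gamma_{\cga}-\Gamma_{\ga^{(0)}})\cdot F\big\|_{L^q}\les\|\Gamma_{\cga}-\Gamma_{\ga^{(0)}}\|_{L^p_\omega}\,\|F\|_{W^{1,q}}\les\la^{-\ep_0}\|F\|_{W^{1,q}}.
\]
Choosing $\La$ large enough that $\la^{-\ep_0}$ is small absorbs this into the left-hand side, and translating back through the comparability with $\ga^{(0)}$ yields (\ref{ellp_6}) with constants uniform in $v$. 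Finally (\ref{cz_518}) is obtained by applying (\ref{ellp_6}) to the $S_v$-tangent $1$-form $F=\sn f$, for which $\div F=\sD f$ and $\curl F=0$, so that $\D_1 F=(\sD f,0)$ and the right-hand side becomes $\|v\sD f\|_{L^q_\omega}$.

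The main obstacle is the low regularity of the foliation: the first derivatives of $\cga-\ga^{(0)}$, hence the Christoffel symbols, are controlled only in $L^p_\omega$ with $p$ possibly close to $2$, below the threshold for any $L^\infty$ bound; the entire argument must therefore be run within the scale $L^q$–$W^{1,q}$, and the endpoint case $q=p$, where the embedding $W^{1,p}\hookrightarrow L^\infty$ is only borderline, must be checked with care—this is exactly why the hypothesis $p>2$ is indispensable. A secondary point needing attention is that the $L^q$ elliptic constants and Sobolev embedding constants for $({\mathbb S}^2,\cga)$ depend only on the smallness in (\ref{a_7}) and not on $v$; this uses the uniformity of (\ref{a_7}) over $0\le v\le v_*$, which is also what makes the resulting estimates valid on all of $\cup_{0\le v\le v_*}S_v$.
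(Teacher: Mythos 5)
Your argument is essentially the standard Calderón–Zygmund reduction to the round sphere that the paper implicitly appeals to via its citation of \cite{KR2} and \cite[Lemma 2.18]{Wangricci} (the paper offers no proof beyond ``Similar to Lemma \ref{hdgm1}''), and the overall structure is sound: rescale to $\cga$, use the vanishing of harmonic $1$-forms and transverse--traceless $2$-tensors on ${\mathbb S}^2$ to get the unconditional $W^{1,q}$ elliptic estimate for $\D_{\ga^{(0)}}$, perturb, and absorb the small remainder.

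One inaccuracy worth correcting: when you write $\D_{\cga}F-\D_{\ga^{(0)}}F$ schematically as a zeroth-order term $(\Gamma_{\cga}-\Gamma_{\ga^{(0)}})\cdot F$, you are dropping the first-order contribution coming from the difference of inverse metrics, namely a term of the form $(\cga^{-1}-(\ga^{(0)})^{-1})\cdot\p F$. This term is harmless—its coefficient is small in $L^\infty$ by the first inequality of (\ref{a_7}), so $\|(\cga^{-1}-(\ga^{(0)})^{-1})\cdot\p F\|_{L^q}\les\la^{-\ep_0}\|F\|_{W^{1,q}}$ absorbs directly into the left-hand side without any Sobolev embedding—but it should appear in the splitting, otherwise the elliptic bound one is quoting for $\D_{\ga^{(0)}}$ is being applied to the wrong operator. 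Also, a small terminological point: $W^{1,p}({\mathbb S}^2)\hookrightarrow L^\infty$ is a genuine (not borderline) embedding for $p>2$; the borderline case is $p=2$, which is excluded because the paper fixes $p$ with $1-\tfrac2p<s-2$ and $s>2$. With the missing first-order term restored, the proof is correct and matches the paper's intended approach.
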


Now we provide some estimates on $\hat \theta$. The second estimate in (\ref{theta_1}) improves the first assumption in (\ref{a_6}).

\begin{proposition}\label{w7.24.4}
\begin{align}
&\|v\sn \hat \theta\|_{L_v^2 L_\omega^2}+\|\hat \theta\|_{L_v^2 L_\omega^2}\les \la^{-\f12}, \label{elli_1}\\
&\|v^\f12 \hat \theta\|_{L_v^\infty L_\omega^2}\les \la^{-\f12}, \quad \|v^\f12 \hat \theta\|_{L_v^4 L_\omega^4}\les \la^{-\frac{1}{4}-2\ep_0}. \label{theta_1}
\end{align}
\end{proposition}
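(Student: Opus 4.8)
The plan is to derive \eqref{elli_1} from the Codazzi (Hodge) system \eqref{a_2}, and then to obtain the $L_v^\infty L_\omega^2$ and $L_v^4 L_\omega^4$ bounds in \eqref{theta_1} by a weighted transport–energy estimate based on \eqref{a_11} together with the $L^4_\omega$ Sobolev inequality \eqref{sobv}. Throughout, the already–established lapse estimates of Lemma~\ref{ellp_4.26} and Lemma~\ref{ellp_5.1} — which were proved using only the bootstrap assumptions \eqref{a_0}--\eqref{a_7} — together with Lemma~\ref{mtieq} will supply the control of all forcing terms; the purpose of Proposition~\ref{w7.24.4} is precisely to improve the bootstrap assumption \eqref{a_6} on $\hat\theta$, after which the continuity argument of Appendix~C closes.

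For \eqref{elli_1} I would first rewrite the right side of \eqref{a_2}. By \eqref{G1} one has $\tr\theta=a^{-1}\!\left(\tfrac2v-\G\right)$ with $\G=a(V_4-\tr k)$, so, using that $\sn$ annihilates $\ga$ (hence $\sn\tr k$ is of type $\sn k$) and that $V_4,k$ are of type $\pi$,
\begin{equation*}
\tfrac12\sn_A\tr\theta=-\tfrac1{av}\sn_A\log a+O\!\left(|\sn\log a|\,|\pi|+|\bd\pi|+|\pi|^2\right).
\end{equation*}
Thus \eqref{a_2} reads $\D_2\hat\theta=-\tfrac1{av}\sn\log a+\tfrac12(\cdots)+\Ric_{N}$, and multiplying by $v$ the dangerous factor $1/v$ cancels exactly against the weight in the Calderón--Zygmund estimate \eqref{ellp_6} applied to $\D_2$. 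Taking $q=2$ in \eqref{ellp_6} and then the $L^2_v$ norm gives
\begin{equation*}
\|v\sn\hat\theta\|_{L_v^2L_\omega^2}+\|\hat\theta\|_{L_v^2L_\omega^2}\les\|\sn\log a\|_{L_v^2L_\omega^2}+\|v\,\bd\pi,\ v\,\Ric\|_{L_v^2L_\omega^2}+(\text{bilinear terms}),
\end{equation*}
where the first two terms are $\les\la^{-1/2}$ by \eqref{ellp_1} and Lemma~\ref{mtieq}, and a bilinear term such as $\|v\pi\,\sn\log a\|_{L_v^2L_\omega^2}\le\|\pi\|_{L_v^2L_\omega^\infty}\|v\sn\log a\|_{L_v^\infty L_\omega^2}\les\la^{-1/2}\,v_*^{1/2}\la^{-1/2}\les\la^{-4\ep_0}\cdot\la^{-1/2}$ is harmless by Lemma~\ref{ellp_5.1} and Lemma~\ref{mtieq} (here $v_*\les\la^{1-8\ep_0}$). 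This yields \eqref{elli_1}.

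For the $L_v^\infty L_\omega^2$ bound in \eqref{theta_1} I would run a weighted energy estimate off \eqref{a_11}: multiply it by $v^{-1}\hat\theta$, integrate over $S_v$, use $N=a^{-1}\p_v$ and the identity \eqref{4.25.2} to convert the $\p_v\!\int_{S_v}$ terms, so that the transport coefficient $\tr\theta=\tfrac2{av}+\text{l.o.t.}$ produces precisely the weight $v^{-1}$ after integrating from $v=0$ — where $\|\hat\theta\|_{L^\infty_\omega}<\infty$ by Lemma~\ref{inii_7}(i). The forcing terms $a^{-1}(\sn\hot\sn a)$ and $\hat R$ are controlled in $L_v^2L_\omega^2$ with weight $v$ by Lemma~\ref{ellp_5.1} and Lemma~\ref{mtieq}, while $a^{-1}\sn a\otimes\sn a$ is controlled by Lemma~\ref{ellp_4.26}--\ref{ellp_5.1}; the Cauchy--Schwarz argument then gives $\|v^{1/2}\hat\theta\|_{L_v^\infty L_\omega^2}\les\la^{-1/2}$, the first estimate in \eqref{theta_1}. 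Finally, applying \eqref{sobv} to $W=v\hat\theta$ (so $\sn W=v\sn\hat\theta$) yields
\begin{equation*}
\|v^{1/2}\hat\theta\|_{L_v^4L_\omega^4}^2\les v_*^{1/2}\big(\|v\sn\hat\theta\|_{L_v^2L_\omega^2}+\|v^{1/2}\hat\theta\|_{L_v^\infty L_\omega^2}\big)\|v^{1/2}\hat\theta\|_{L_v^\infty L_\omega^2}\les v_*^{1/2}\la^{-1}\les\la^{-1/2-4\ep_0},
\end{equation*}
which gives the claimed $\la^{-1/4-2\ep_0}$.

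The main obstacle I anticipate is bookkeeping the interplay of weights and norms rather than any single hard inequality: one must (i) verify that the $1/v$ singularity in $\sn\tr\theta$ is matched exactly by the $v$ weight in the Calderón--Zygmund bound; (ii) control the commutator $[\sn,\p_v]$ from \eqref{comm_4.25.1}, which reintroduces $\tr\theta$ and $\hat\theta$ whenever one differentiates the transport equation to reach $\sn\hat\theta$, and check that this does not consume the gain; and (iii) make sure one only invokes Lemma~\ref{ellp_5.1} in the form in which it was proved from the original bootstrap assumptions \eqref{a_0}--\eqref{a_7}, so that improving \eqref{a_6} here is not circular. Once these points are handled, Proposition~\ref{w7.24.4} closes the $\hat\theta$ part of the bootstrap.
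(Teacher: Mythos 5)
Your proposal is correct and follows essentially the same route as the paper: \eqref{elli_1} comes from the Codazzi system \eqref{a_2} plus the Calder\'on--Zygmund bound \eqref{ellp_6} (with the $1/v$ in $\sn\tr\theta$ cancelled by the weight, and $\sn\log a$, $\sn\pi$, $\Ric$ controlled by \eqref{ellp_1} and \eqref{a_9}), while \eqref{theta_1} comes from the transport equation \eqref{a_11} with forcing $\sn\hot\sn a,\hat R$ bounded via Lemma~\ref{ellp_5.1} and \eqref{a_9}, followed by \eqref{sobv}. The only cosmetic difference is that the paper integrates \eqref{a_11} directly along the generators and applies Cauchy--Schwarz in $v$ (yielding \eqref{theta_2}) rather than running your weighted energy identity with multiplier $v^{-1}\hat\theta$; both close on the same inputs, and your worry (ii) about $[\sn,\p_v]$ is moot since $\sn\hat\theta$ is obtained elliptically, not by differentiating the transport equation.
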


\begin{proof}
By using (\ref{ellp_6}), (\ref{a_2}) and also (\ref{a_8}), (\ref{a_0}), we obtain
\begin{align*}
\|v\sn \hat \theta\|_{L_v^2 L_\omega^2}&+\|\hat \theta\|_{L_v^2 L_\omega^2}\les \|\sn_A\tr\theta, R_{NA}\|_{L_v^2 L_x^2}\\
&\les \|v^{-1}\sn a, \sn \tr k, \sn V_4,R_{AN} \|_{L_v^2 L_x^2}\les \la^{-\f12}.
\end{align*}
For the last inequality, we employed (\ref{ellp_1}) and (\ref{a_9}).

To derive an estimate on $\|v^\f12 \hat \theta\|_{L_v^\infty L_\omega^2}$, we rely on (\ref{a_11}).
 We first have with $p=2$  and by using (\ref{a_8}), (\ref{thet1}) and Lemma \ref{inii_7} that
\begin{align}
\|v^\f12 \hat \theta\|_{L_\omega^p L_v^\infty}
&\les \left\|v^{-\frac{3}{2}}\int_0^v |(\sn\hot \sn a, a\hat R_{AB})| d\mu_\ga dv'\right\|_{L_\omega^p}\label{theta_2}
\end{align}
which implies, in view of Lemma \ref{ellp_5.1} and (\ref{a_9})
\begin{equation*}
\|v^\f12 \hat \theta\|_{L_\omega^2 L_v^\infty}\les \la^{-\f12}.
\end{equation*}
Using (\ref{sobv}) and (\ref{elli_1}),  we then have
\begin{equation*}
\|v^\f12 \hat \theta\|_{L_v^4 L_\omega^4}^2\les \la^{-4\ep_0-\f12}.
\end{equation*}
Thus the proof of (\ref{theta_1}) is completed.
\end{proof}

For future reference, we give the following result.

\begin{lemma}
There holds the following decomposition for the Gaussian curvature
\begin{equation}\label{kep}
K=\frac{1}{a^2 v^2}+\ckk K+\sn \pi,
\end{equation}
where $\ckk K$ is a scalar function which verifies
 \begin{equation}\label{ckkk_4}
\|v^\frac{3}{2}\ckk K\|_{L_\omega^{q}}\les \la^{-1/2}, \mbox{ with } 2<q\le p.
\end{equation}
\end{lemma}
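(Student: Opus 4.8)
The statement to prove is the decomposition $K=\frac{1}{a^2v^2}+\ckk K+\sn\pi$ with $\|v^{3/2}\ckk K\|_{L_\omega^q}\les\la^{-1/2}$ for $2<q\le p$, where $K$ is the Gaussian curvature on $S_v$.

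\textbf{Plan of proof.} The starting point is the Gauss equation (\ref{a_12}), $K=\frac{(\tr\theta)^2}{4}-\f12|\hat\theta|^2+\frac{R-2R_{NN}}{2}$, where $R$ is the scalar curvature of $(\Sigma_0,g)$ and $R_{NN}$ the normal-normal component of its Ricci tensor. First I would handle the leading term: using (\ref{G1}), $a\tr\theta=\frac{2}{v}-\G$ with $\G=a(V_4-\tr k)$, so $\frac{(\tr\theta)^2}{4}=\frac{1}{a^2}\left(\frac{1}{v}-\frac{\G}{2}\right)^2=\frac{1}{a^2v^2}-\frac{\G}{a^2 v}+\frac{\G^2}{4a^2}$. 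This extracts the singular term $\frac{1}{a^2v^2}$ exactly, and the remainder $-\frac{\G}{a^2v}+\frac{\G^2}{4a^2}$ is quadratic-or-better in $\pi$-type quantities (recall $|\G|\les|\bp\phi|$, $a\approx 1$). So far everything after removing $\frac{1}{a^2v^2}$ is either a term of the schematic form $\pi\c\pi$ (namely $\G^2/(4a^2)$, $|\hat\theta|^2$ from (\ref{a_12}), with $\hat\theta$ controlled in the $\bA$-family) or the genuinely curvature-level term $\frac{R-2R_{NN}}{2}-\frac{\G}{a^2v}$.

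The crucial step is to show $\frac{R-2R_{NN}}{2}$ (modulo the $\frac{\G}{a^2v}$ correction) can be written as $\sn\pi$ plus a controllable remainder. Here I would invoke the curvature decomposition of the spatial slice, analogous to (\ref{ricc6.7.1})–(\ref{ricc6.7.2}) but for the Riemannian metric $g$ on $\Sigma_0$: the Ricci tensor $R_{ij}$ of $g$ decomposes as $-\f12\slashed\Delta_g(g_{ij})+\f12(\nabla_iW_j+\nabla_jW_i)+\widetilde S_{ij}$ with $W$ of the form $g\c\p g$ and $\widetilde S$ quadratic in $\p g$; and by the constraint-type structure (or simply from the wave equation (\ref{wave1}) which controls $\p_t^2\phi=g^{ij}\p_i\p_j\phi-\N$), the dangerous second-order pieces $\slashed\Delta_g g_{ij}$ are expressible through tangential/normal derivatives that, when projected onto $S_v$ and combined with $R_{NN}$, produce a genuine divergence $\div$ of a $\pi$-type one-form plus lower order. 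Concretely, one uses the same type of manipulation as in Lemma \ref{decom_lem}(iii) (where $\bR_{ABAB}=\div\pi+\sl\bE$) adapted to the spatial slice: the combination $R-2R_{NN}=\ga^{AB}R_{AB}-\text{(something)}$ can be reorganized using the Gauss–Codazzi relations so that the top-order part is $\sn_A(\pi^A)$. I then \emph{define} $\ckk K$ to be exactly $K-\frac{1}{a^2v^2}-\sn\pi$, i.e. the collection of all the leftover terms: $-\frac{\G}{a^2v}+\frac{\G^2}{4a^2}-\f12|\hat\theta|^2$ together with the lower-order ($\sl\bE$-type) debris from the curvature decomposition, schematically $\ckk K = v^{-1}\pi + \pi\c\pi + \hat\theta\c\pi + \ldots$ where every term carries at least one factor controlled in $L_v^2 L_\omega^\infty$ or $L_v^\infty L_\omega^{2q}$ type norms.

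\textbf{The estimate.} For $\|v^{3/2}\ckk K\|_{L_\omega^q}\les\la^{-1/2}$ with $2<q\le p$, I would estimate term by term. The term $v^{-1}\pi$ gives $v^{1/2}\pi$, bounded in $L_v^\infty L_\omega^{2q}\subset L_\omega^q$ (on a fixed $S_v$, $v\le v_*\les\la^{1-8\ep_0}$) by $\la^{-1/2}$ via (\ref{a_9}). The quadratic terms $v^{3/2}\pi\c\pi = (v^{1/2}\pi)(v\pi)$ or $v^{3/2}\hat\theta\c\pi$ are handled by Hölder on $S_v$: $\|v^{1/2}\pi\|_{L_\omega^{2q}}$ and $\|v\pi\|_{L_\omega^{q'}}$ (with $1/q=1/(2q)+1/q'$ suitably), using (\ref{a_9}), Proposition \ref{w7.24.4} for $\hat\theta$, and $v_*\les\la^{1-8\ep_0}$ to absorb the extra power of $v$; each such product is $\les\la^{-1}v_*^{1/2}\les\la^{-1/2-4\ep_0}\les\la^{-1/2}$. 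The $\sl\bE$-type lower order terms (of the form $\bA\c\pi+\tr\theta\c\pi$ after projection, cf. Lemma \ref{decom_lem}) are treated identically since $\bA$ and $\tr\theta-2/v$ obey the same $L_v^2 L_\omega^\infty$ bounds.

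\textbf{Main obstacle.} The genuinely delicate point is the curvature decomposition showing $\frac{R-2R_{NN}}{2}=\sn\pi+\text{l.o.t.}$, i.e. that the scalar curvature combination of the spatial slice has its top-order derivative sitting inside a $S_v$-tangential divergence — this is where the structure of the equation (\ref{wave1}) must be used (to eliminate $\p_t^2\phi$ in favor of $g^{ij}\p_i\p_j\phi$ and $\N$), exactly parallel to how Lemma \ref{decom_lem} handles the spacetime curvature but now on $(\Sigma_0,g)$. Once that algebraic identity is in hand, the rest is a routine Hölder/Sobolev bookkeeping exercise using the estimates already assembled in Lemma \ref{mtieq}, Lemma \ref{ellp_4.26}, Lemma \ref{ellp_5.1} and Proposition \ref{w7.24.4}.
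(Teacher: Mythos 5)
Your overall architecture is right — extract $\frac{1}{a^2v^2}$ from $\frac{(\tr\theta)^2}{4}$ via (\ref{G1}), collect the remaining terms as $\sn\pi$ plus quadratic/lower-order debris, and close with H\"older on $S_v$ using $v_*\les\la^{1-8\ep_0}$ and the bounds of (\ref{a_9}) and Proposition \ref{w7.24.4}. That final bookkeeping matches the paper's. But there is a genuine gap at the step you yourself flag as ``the main obstacle'': starting from the Riemannian Gauss equation (\ref{a_12}) you must show that $\frac{R-2R_{NN}}{2}$ decomposes as $\sn\pi$ plus controllable terms, and you only sketch a plan for this (a slice-level analogue of (\ref{ricc6.7.1}) plus an unspecified Gauss--Codazzi reorganization) without carrying it out. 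That identity is the entire content of the lemma — without it the statement is not proved. Note also that your remark about needing (\ref{wave1}) to eliminate $\p_t^2\phi$ is off target here: $R_{ij}(g)$ on $\Sigma_0$ involves only spatial second derivatives of $\phi$, so the issue is not removing time derivatives but showing that the $\p^2\phi$ content of $\ga^{AB}R_{AB}-R_{NN}$ organizes itself into a tangential divergence.

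The paper closes this gap by a different and cheaper route: it writes the Gaussian curvature via the \emph{spacetime} Gauss equation, $K=-\frac{1}{4}\tr\chi\tr\chib+\f12\chih\c\chibh-\f12\ga^{AC}\ga^{BD}\bR_{ADCB}$, and then invokes the already-established curvature decomposition $\bR_{ABAB}=\div\pi+\sl\bE$ from Lemma \ref{decom_lem}, converting $\chi,\chib$ into $\theta$ and $k$ via (\ref{3.19.1}). This hands over the divergence structure for free, so that $K-\frac{1}{a^2v^2}=\pi\c\pi+|\hat\theta|^2+\hat\theta\c\pi+\frac{\pi}{av}+\sn\pi$ follows by pure algebra, and the lemma reduces to the H\"older estimates you already have. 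If you want to salvage your route through (\ref{a_12}), you would need to actually prove the slice-level decomposition of $R-2R_{NN}$, essentially redoing the work of Lemma \ref{decom_lem} on $(\Sigma_0,g)$; the efficient fix is instead to switch to the spacetime Gauss equation and reuse Lemma \ref{decom_lem} directly.
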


\begin{proof}
Let us recall (see \cite[Page 167]{CK})
\begin{equation*}
K=-\frac{1}{4} \tr\chi \tr\chib+\f12 \chih\c \chibh-\f12 \ga^{AC}\ga^{BD} \bR_{ADCB}
\end{equation*}
Recall from Lemma \ref{decom_lem} (ii) that
$
\bR_{ABAB}=\div \pi+\chi\c \pi +\pi\c\pi,
$ also using (\ref{G1}) and $\theta=k+\chi$ by (\ref{amc}),
we can rewrite
\begin{align}
K-\frac{1}{a^2 v^2}&=\frac{1}{4}(\tr\theta)^2-\frac{1}{a^2 v^2}+\pi\c\pi-\f12|\hat\theta|^2+\chi\c \pi+\sn\pi\nn\\
&=\pi\c\pi+|\hat\theta|^2+\hat\theta\c\pi+\frac{\pi}{av}+\sn\pi\label{kep1}
\end{align}
where the last identity is a symbolic expression.

Let $\ckk K$  be the part $\pi\c \pi+|\hat\theta|^2+\hat\theta\c\pi+\frac{\pi}{av}$ in (\ref{kep1}).
 To see (\ref{ckkk_4}),   by H\"{o}lder inequality, (\ref{theta_1}) and (\ref{a_9}) we have
\begin{align*}
&\left\|v^{\frac{3}{2}}\Big(\pi\c\pi+|\hat\theta|^2+\hat\theta\c \pi+\frac{\pi}{av}\Big)\right\|_{L_\omega^{q}}
\les\|v^\f12\pi\|_{L_\omega^q}+v_*^\f12\|v^\f12(\pi, \hat \theta)\|_{L_\omega^{2q}}^2\les \la^{-\f12}
\end{align*}
as desired.
\end{proof}

\subsection{\bf $L^p$ estimates}

Next, we derive estimates of higher regularity than (\ref{ellp_2}). We will employ Littlewood Paley decomposition $\slP_\mu$ on $(S_v, \ga)$.
Let $m$ be a function in the Schwartz class defined on $[0,\infty)$ having finite number of vanishing moments
and set $m_\mu(\tau):=\mu^2 m(\mu^2 \tau)$ for any dyadic numbers $\mu>0$. The geometric Littlewood-Paley
projection $\slP_\mu$ associated with $\ga$ is defined by
\begin{equation}\label{glp}
\slP_\mu H=\int_0^\infty m_\mu(\tau) U(\tau)H d\tau
\end{equation}
for any $S$-tangent tensor $H$, where $U(\tau) H$ denotes the solution of the heat flow
\begin{equation*}
\frac{d}{d\tau}U(\tau)H=v^2\sD U(\tau)H, \quad\quad U(0)H=H.
\end{equation*}
where $\sD$ is the Laplace-Beltrami operator of the induced metric $\ga$ on $S_v$.
One can refer to \cite{KRsurf} for various properties of $U(\tau)$ and $\slP_\mu$. In particular,
it has been shown that one can always find an $m$ such that the associated $\slP_\mu$ satisfies
$\sum_{\mu>0} \slP_\mu^2=Id$. We will also employ the fact (see \cite{KRsurf})
\begin{equation}\label{sdp}
\sD \slP_\mu=v^{-2} \mu^2 \widetilde{\slP}_\mu,
\end{equation}
where $\widetilde\slP_\mu$ is a Littlewood Paley decomposition associated to a different symbol $\ti m$.

We recall from \cite{KRsurf} that  for $S_v$ tangent tensor $F$,
\begin{align}
&\|U(\tau)F\|_{L^q_\omega}\les \|F\|_{L_\omega^q},  \quad \mbox{ for } 2\le q\le \infty, \label{u1} \\
&\|\sn U(\tau)F\|_{L^2_\omega}\les \|\sn F\|_{L_\omega^2}, \label{u2}\\
&\|v\sn U(\tau) F\|_{L^2_\omega}\les \tau^{-\f12} \|F\|_{L_\omega^2}, \label{u3}\\
&\|v^2\sD U(\tau) F\|_{L^2_\omega}\les \tau^{-1} \|F\|_{L_\omega^2}.\label{u4}
\end{align}
%By (\ref{u1}), Sobolev embedding and duality argument
%\begin{equation}\label{berns_1}
%\|U(\tau) F\|_{L_\omega^2}\les  \tau^{-(\f12-\frac{1}{q})}\|F\|_{L_\omega^{q'}}, \mbox{ where }\frac{1}{q}+\frac{1}{q'}=1, q'\mbox{ close to } 1
%\end{equation}
For scalar functions $f$,  by using (\ref{cz_518}), (\ref{u1})-(\ref{u4}), with the help  of Sobolev inequalities (\ref{sobinf}) and (\ref{sob}), we obtain the sharp Bernstein  inequality
\begin{equation}\label{bern_2}
\|U(\tau)f\|_{L_\omega^\infty}\les \tau^{-1/2} \|f\|_{L_\omega^2},\quad \|U(\tau)f\|_{L_\omega^2}\les \tau^{-\f12} \|f\|_{L_\omega^1}.
\end{equation}
As consequences of (\ref{u1})-(\ref{bern_2}), there hold the finite band property (\ref{glp2}) and (\ref{glp3}),
and the Bernstein inequality (\ref{bs}) for the geometric Littlewood-Paley projection.

\begin{lemma}\label{glpp}
The geometric Littlewood-Paley projections $\slP_\mu$ has the following properties for any smooth $S_v$ tangent tensor $F$,
\begin{eqnarray}
\|\slP_I F\|_{L^q}\les \|F\|_{L^q},
&&\|\sn \slP_\mu F\|_{L_\omega^2}\les \|\sn F\|_{L_\omega^2},\label{glp1}\\
\|v \sn \slP_\mu F\|_{L_\omega^2}+\|v\slP_\mu \sn F\|_{L_\omega^2}\les \mu \|F\|_{L_\omega^2},
&&\|v^2 \sD \slP_\mu F\|_{L_\omega^q}\les \mu^2 \|F\|_{L_\omega^q}, \label{glp2}\\
\|\slP_\mu F\|_{L_\omega^2}\les \mu^{-1} \|v \sn F\|_{L_\omega^2}, && \|\slP_\mu F\|_{L^q_\omega}\les \mu^{-2}\|v^2 \sD F\|_{L^q_\omega}. \label{glp3}
\end{eqnarray}
where $I$ is an interval in $\{ 2^m, m\in \Bbb Z\}$ and $ 1\le q\le \infty$. Moreover
\begin{eqnarray}
\|\slP_\mu F\|_{L^q_\omega}\les (\mu^{1-\frac{2}{q}}+1)\|F\|_{L_\omega^2},
&&\|\slP_\mu f\|_{L^\infty_\omega}\les (\mu+1)\|f\|_{L_\omega^2},\label{bs}
\end{eqnarray}
where $2\le q<\infty$ and $f$ is a scalar function.
\end{lemma}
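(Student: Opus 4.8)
The plan is to establish Lemma \ref{glpp} by transferring the heat-semigroup bounds \eqref{u1}--\eqref{bern_2} through the representation \eqref{glp} of $\slP_\mu$, exactly in the spirit of Klainerman--Rodnianski's construction of geometric Littlewood--Paley theory on two-surfaces \cite{KRsurf}. The preliminary point is that all these estimates hold \emph{uniformly over $0\le v\le v_*$}: the constants in \eqref{u1}--\eqref{u4} depend only on the isoperimetric/Sobolev constant of $(S_v,\ga)$, which is controlled uniformly by the bootstrap assumption \eqref{a_7} (and its improved version \eqref{a_5}), together with $\sqrt{|\ga|}\approx v^2$ from \eqref{a_8}. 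So throughout one may treat $(S_v,\ga)$ as a fixed surface with uniformly bounded geometry, rescaled so that $v^2\sD$ plays the role of the Laplacian.

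First I would record the first bound in \eqref{glp1}: since $\slP_I=\sum_{\mu\in I}\slP_\mu$ is itself of the form $\int_0^\infty \widetilde m_I(\tau)U(\tau)\,d\tau$ with $\widetilde m_I$ a fixed Schwartz kernel with the same vanishing-moment and integrability properties, the $L^q_\omega$-boundedness follows immediately from \eqref{u1} and $\|\widetilde m_I\|_{L^1(0,\infty)}\les 1$; the second bound in \eqref{glp1} follows the same way from \eqref{u2}. For the first inequality in \eqref{glp2} one writes $v\sn\slP_\mu F=\int_0^\infty m_\mu(\tau)\, v\sn U(\tau)F\,d\tau$, splits the $\tau$-integral at $\tau\sim\mu^{-2}$, uses $\|v\sn U(\tau)F\|_{L^2_\omega}\les \|F\|_{L^2_\omega}\min(\mu,\tau^{-1/2})$ coming from \eqref{u2} (combined with $\|v\sn\cdot\|\les\|v\sn\cdot\|$ trivially for small $\tau$, more precisely from the commutator relation \eqref{comm_4.25.1} that lets one compare $v\sn$ with the geometry) and \eqref{u3}, and integrates against $|m_\mu(\tau)|$; the variant $\|v\slP_\mu\sn F\|$ is handled by moving one derivative onto $m_\mu$ via \eqref{sdp}, i.e. $v\slP_\mu\sn F$ relates to $\sn U(\tau)F$ and one re-runs the same argument. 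The second inequality in \eqref{glp2} is the ``finite band'' property: using \eqref{sdp}, $v^2\sD\slP_\mu F=\mu^2\widetilde\slP_\mu F$, and $\widetilde\slP_\mu$ is again an admissible projection, so $L^q_\omega$-boundedness reduces to \eqref{u1}. The two inequalities in \eqref{glp3} are the dual/inverse statements: from $\sum_\mu\slP_\mu^2=\mathrm{Id}$ and the vanishing moments, $\slP_\mu$ ``gains'' $\mu^{-1}$ or $\mu^{-2}$ relative to $v\sn$ or $v^2\sD$; concretely one writes $\slP_\mu F$ in terms of $U(\tau)$ acting on $v\sn F$ (resp. $v^2\sD F$) using integration by parts in the defining heat integral together with \eqref{u3}, \eqref{u4}, and bounds the $\tau$-integral.

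Finally, \eqref{bs}: the first inequality interpolates between $\|\slP_\mu F\|_{L^2_\omega}\les\|F\|_{L^2_\omega}$ (from \eqref{u1}) and $\|\slP_\mu F\|_{L^\infty_\omega}\les\mu\|F\|_{L^2_\omega}$, where the latter comes from writing $\slP_\mu F=\int m_\mu(\tau)U(\tau)F\,d\tau$, applying the sharp Bernstein bound $\|U(\tau)F\|_{L^\infty_\omega}\les\tau^{-1/2}\|F\|_{L^2_\omega}$ from \eqref{bern_2} (for tensors this uses \eqref{sob}, \eqref{sobinf} together with \eqref{u2}--\eqref{u4} as indicated in the text), and integrating $|m_\mu(\tau)|\tau^{-1/2}$ over $\tau$, which produces the factor $\mu$; the $+1$ accounts for $\mu\le 1$ where one simply uses $L^2\hookrightarrow$ nothing and keeps the $L^2$ bound, or rather the low-frequency piece is bounded directly by \eqref{u1} and Sobolev on the compact surface. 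The second inequality in \eqref{bs} for scalars is the same interpolation between \eqref{u1} and $\|U(\tau)f\|_{L^\infty_\omega}\les\tau^{-1/2}\|f\|_{L^2_\omega}$.

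The only genuinely delicate point — and the one I would be most careful about — is the \emph{uniformity in $v$} of all constants as $v\to 0$ and as $v\to v_*$: one must check that the Sobolev and isoperimetric constants of $(S_v,\ga)$ stay bounded, which is where \eqref{a_7} (not yet improved at the point Lemma \ref{glpp} is used in the continuity argument) and $\sqrt{|\ga|}\approx v^2$ enter essentially, and that the heat-flow estimates \eqref{u1}--\eqref{u4} are applied in their correctly-rescaled form with $v^2\sD$. Modulo this bookkeeping, the lemma is a direct transcription of \cite[Section 3--4]{KRsurf} and involves no new ideas; everything else is the routine $\tau$-integral splitting described above.
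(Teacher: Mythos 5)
Your overall plan — transferring the heat-flow bounds (\ref{u1})--(\ref{bern_2}) through the representation (\ref{glp}), with the $\tau$-integral splittings, the duality step for $\|v\slP_\mu\sn F\|$, and the use of (\ref{sdp}) for the finite-band property — is exactly the route the paper intends: the paper itself offers no proof beyond "as consequences of (\ref{u1})--(\ref{bern_2})" together with the citation of Klainerman--Rodnianski's geometric Littlewood--Paley theory, and your reconstruction of that argument is correct for (\ref{glp1}), (\ref{glp2}) and (\ref{glp3}). Your emphasis on uniformity of the constants in $v$ via (\ref{a_7}) and (\ref{a_8}) is also the right bookkeeping point.

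There is, however, one step that fails as written: your derivation of the first inequality in (\ref{bs}) for \emph{tensor} fields. You interpolate between the $L^2_\omega$ bound and an $L^\infty_\omega$ bound $\|\slP_\mu F\|_{L^\infty_\omega}\les\mu\|F\|_{L^2_\omega}$, the latter resting on the sharp Bernstein estimate $\|U(\tau)F\|_{L^\infty_\omega}\les\tau^{-1/2}\|F\|_{L^2_\omega}$. But (\ref{bern_2}) is established in the paper only for scalar functions, because its proof uses the scalar elliptic estimate (\ref{cz_518}) for $\sD$; the tensorial analogue (\ref{ellp_6}) controls Hodge operators, not $\sD$, and the sharp $L^\infty$ Bernstein for tensors is precisely the estimate that is \emph{not} available in this rough setting (which is why the lemma states the $L^\infty$ bound only for scalars and only $L^q_\omega$ with $q<\infty$ for tensors). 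The correct derivation of $\|\slP_\mu F\|_{L^q_\omega}\les(\mu^{1-2/q}+1)\|F\|_{L^2_\omega}$ bypasses $L^\infty$ entirely: apply the Sobolev inequality (\ref{sob}) to $\slP_\mu F$ and then invoke the finite-band property (\ref{glp2}) together with the $L^2$ boundedness from (\ref{u1}), giving $\|\slP_\mu F\|_{L^q_\omega}\les\|v\sn\slP_\mu F\|_{L^2_\omega}^{1-2/q}\|\slP_\mu F\|_{L^2_\omega}^{2/q}+\|\slP_\mu F\|_{L^2_\omega}\les(\mu^{1-2/q}+1)\|F\|_{L^2_\omega}$. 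Your interpolation argument is fine for the scalar $L^\infty$ bound in (\ref{bs}), where (\ref{bern_2}) legitimately applies.
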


%As a consequence of (\ref{u1})-(\ref{u4}),
%\begin{lemma}\label{glp}
% The geometric Littlewood Paley decomposition $\slP_\mu$ has the following properties for any smooth $S_v$ tangent tensor $F$,
%\begin{align*}
%&\|P_\mu F\|_{L^q_\omega}\les \|F\|_{L_\omega^q}\\
%&\|\sn P_\mu F\|_{L^2_\omega}\les \|\sn F\|_{L_\omega^2}\\
%&\|v\sn P_\mu F\|_{L^2_\omega}\les \mu\|F\|_{L_\omega^2}\\
%&\|v^2\sD P_\mu F\|_{L^2_\omega}\les \mu^2 \|F\|_{L_\omega^2}
%\end{align*}
%\end{lemma}

Let $\slE_\mu$ be the standard Littlewood-Paley projection defined on ${\Bbb S}^2$, and $\sum_\mu\slE_\mu=Id$.  Under the assumption (\ref{a_7}),
 for $S_v$ tangent vector field $F$, there holds with $\sum \slP_\mu^2=Id$  the equivalence relation (see \cite{Wang09})
\begin{equation}\label{eqv}
\|\mu^\ep \slP_\mu F\|_{l_\mu^2L^2(S_v)}\approx \sum_{A}\|\mu^\ep \slE_\mu (F^A)\|_{l_\mu^2 L^2(S_v)}.
\end{equation}
where $F^A$ denotes the components of $F$ under the transport coordinate frame. The same equivalence relation holds for $S_v$-tangent tensor fields $F$.

\begin{lemma}\label{embd_1}
For scalar functions $f$, with $2<q<\infty$,  there holds
\begin{equation*}
\|f\|_{L_\omega^q}\les \|\mu^{1-\frac{2}{q}} \slE_\mu f\|_{l_\mu^2 L_\omega^2}
\end{equation*}
\end{lemma}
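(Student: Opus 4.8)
\textbf{Proof plan for Lemma \ref{embd_1}.}

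The statement is the standard fractional Sobolev embedding $\dot W^{1-2/q,2}(\mathbb S^2)\hookrightarrow L^q(\mathbb S^2)$ for $2<q<\infty$, rephrased via the Littlewood--Paley projections $\slE_\mu$ on $\mathbb S^2$ (which, by the comment preceding the lemma, are associated to the \emph{canonical} metric $\ga^{(0)}$, so that the standard harmonic analysis on $\mathbb S^2$ applies directly). Thus the plan is to reduce the estimate to a frequency-by-frequency argument and sum, exactly as in the Euclidean proof of the fractional Gagliardo--Nirenberg--Sobolev inequality. First I would write $f=\sum_\mu \slE_\mu f$ and estimate each dyadic piece in $L^q_\omega$ by interpolating between $L^2_\omega$ and $L^\infty_\omega$: by the Bernstein inequality on $\mathbb S^2$ (the analogue of \eqref{bs}, now for $\slE_\mu$ on the round sphere), $\|\slE_\mu f\|_{L^\infty_\omega}\les \mu\|\slE_\mu f\|_{L^2_\omega}$, hence $\|\slE_\mu f\|_{L^q_\omega}\les \|\slE_\mu f\|_{L^2_\omega}^{2/q}\|\slE_\mu f\|_{L^\infty_\omega}^{1-2/q}\les \mu^{1-2/q}\|\slE_\mu f\|_{L^2_\omega}$.

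Next I would sum over the dyadic scales $\mu$. Writing $a_\mu:=\mu^{1-2/q}\|\slE_\mu f\|_{L^2_\omega}$, the previous step gives $\|f\|_{L^q_\omega}\le \sum_\mu \|\slE_\mu f\|_{L^q_\omega}\les \sum_\mu a_\mu$. A crude bound $\sum_\mu a_\mu \le \|a_\mu\|_{\ell^1_\mu}$ is not quite what is claimed (the right side is $\|a_\mu\|_{\ell^2_\mu}$); to recover the $\ell^2$ norm one uses the standard trick of exploiting \emph{almost orthogonality at the output frequency}. Concretely, one observes that $\slE_\mu\slE_{\mu'}=0$ unless $|\log_2(\mu/\mu')|\le 2$, so for any $g$ with $\|g\|_{L^{q'}_\omega}\le 1$ (where $1/q+1/q'=1$) one can pair $\langle f,g\rangle=\sum_\mu\langle \slE_\mu f,\widetilde\slE_\mu g\rangle$ with a fattened projection $\widetilde\slE_\mu$, apply H\"older in $\omega$ and then Cauchy--Schwarz in $\mu$, using the reverse Bernstein / finite-band property $\|\widetilde\slE_\mu g\|_{L^2_\omega}\les \mu^{-(1-2/q)}\|\widetilde\slE_\mu g\|_{L^{q'}_\omega}$ together with the Littlewood--Paley square-function bound $\|(\sum_\mu\|\widetilde\slE_\mu g\|^2_{?})^{1/2}\|\les\|g\|_{L^{q'}_\omega}$. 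Dualizing gives $\|f\|_{L^q_\omega}\les \|a_\mu\|_{\ell^2_\mu}=\|\mu^{1-2/q}\slE_\mu f\|_{\ell^2_\mu L^2_\omega}$, which is the claim. An equivalent and perhaps cleaner route is to invoke the $L^q$-boundedness of the Littlewood--Paley square function on $\mathbb S^2$, $\|f\|_{L^q_\omega}\approx \|(\sum_\mu|\slE_\mu f|^2)^{1/2}\|_{L^q_\omega}$ for $1<q<\infty$, combined with the pointwise Bernstein-type gain after summing the geometric series in the exponents; I would present whichever of these is shortest given the tools already cited (the paper has $\mathbb S^2$ Littlewood--Paley theory available through \cite{KRsurf} and the sharp Bernstein inequalities \eqref{bern_2}--\eqref{bs}).

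The only genuinely delicate point is the passage from the trivial $\ell^1$ summation to the asserted $\ell^2$ summation; everything else (Bernstein on $\mathbb S^2$, interpolation) is routine and already in the paper's toolbox. I expect this to be handled in one line by citing the Littlewood--Paley square-function estimate on the sphere, so in practice the proof is short: combine \eqref{bs}-type Bernstein for $\slE_\mu$ with interpolation, then sum using $\ell^2$-almost-orthogonality of $\{\slE_\mu\}$, possibly after recording the relevant $\mathbb S^2$ Littlewood--Paley facts as a remark. No step should require new ideas beyond standard harmonic analysis on the round sphere.
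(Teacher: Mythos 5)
Your proposal is correct and, in its second suggested route, coincides with the paper's proof: the paper invokes the Littlewood--Paley square-function equivalence $\|f\|_{L_\omega^q}\approx\|(\slE_\mu f)_{l_\mu^2}\|_{L_\omega^q}$, applies Minkowski's inequality (valid since $q\ge 2$) to pass to $\ell^2_\mu L^q_\omega$, and then bounds each dyadic piece by $(\mu^{1-2/q}+1)\|\slE_\mu f\|_{L^2_\omega}$ using (\ref{sob}) together with the finite band property, which is the same gain you obtain via $L^2$--$L^\infty$ Bernstein and interpolation. The duality/almost-orthogonality summation you describe in detail is a valid alternative but is not needed.
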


\begin{proof}
We recall the Littlewood-Paley inequality for smooth functions $f$ for $1<q'<\infty$
\begin{equation}\label{lpineq}
\|f\|_{L_\omega^{q'}}\approx \|(\slE_\mu f)_{l_\mu^2}\|_{L_\omega^{q'}}.
\end{equation}
Using (\ref{lpineq}), (\ref{sob}), the finite band inequality and the Minkowski inequality, we can infer
\begin{equation*}
\|f\|_{L_\omega^q}\approx \|(\slE_\mu f)_{l_\mu^2}\|_{L_\omega^q}\les \|\mu^{1-\frac{2}{q}} \slE_{\mu} f\|_{l_\mu^2 L_\omega^2}
\end{equation*}
as desired.
\end{proof}

Applying the Littlewood-Paley projections $P_\ell$ in ${\Bbb R}^3$ with $\sum_{\ell}P_\ell=Id$,
and using the finite band property, we can derive for scalar function $f$ and $\mu>1$ that
\begin{equation}\label{w7.26.1}
\mu^\ep \|\slE_\mu f\|_{L_v^2 L_x^2}\le \left(\sum_{\ell>\mu} \left(\frac{\mu}{\ell}\right)^\ep
+\sum_{1<\ell\le \mu}\left(\frac{\ell}{\mu}\right)^{1-\ep} \right)\|\ell^\ep P_\ell f\|_{L^2(\Sigma_0)}
+\mu^{-1+\ep}\|f\|_{L^2(\Sigma_0)}.
\end{equation}

\begin{lemma}\label{com_5.24}
For $\mu>1$ and $\ell>1$, there holds with $c>0$ sufficiently close to $0$,
\begin{equation*}
\mu^\ep \|\slP_\mu \sn \slP^2_\ell f\|_{L_\omega^2}
\les \min\left(\left(\frac{\mu}{\ell}\right)^\ep, \left(\frac{\ell}{\mu}\right)^{2-\ep}\right)
\|\ell^\ep\sn\slP_\ell f\|_{L_\omega^2}+\mu^{\frac{1}{2}-\frac{2}{p}+c}\ell^{-c}\|\sn f\|_{L_\omega^2}.
\end{equation*}
\end{lemma}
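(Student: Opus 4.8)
\textbf{Proof proposal for Lemma \ref{com_5.24}.}
The plan is to exploit the two representations of $\slP_\mu$ provided by the heat flow calculus together with the fact that $\slP_\ell^2 f$ is, up to lower order, frequency localized at $\ell$ on $S_v$. First I would write $\slP_\mu = \int_0^\infty m_\mu(\tau) U(\tau)\, d\tau$ using the defining formula (\ref{glp}) and split the $\tau$-integral at $\tau \approx \mu^{-2}$, or equivalently decompose according to whether $\mu \le \ell$ or $\mu > \ell$. The crucial gain in each range comes from the smoothing/finite-band estimates (\ref{u1})--(\ref{u4}) applied to the tensor $\sn \slP_\ell^2 f$, combined with the second inequality in (\ref{glp2}) and the finite band property (\ref{glp3}): in the low range $\mu \le \ell$ one uses $\slP_\mu$ boundedness on $L^2_\omega$ and transfers $\ell^{\varepsilon}$ weights through the identity $\sn\slP_\ell^2 = \slP_\ell(\sn\slP_\ell f) + [\sn,\slP_\ell]\slP_\ell f$, picking up the factor $(\mu/\ell)^\varepsilon$; in the high range $\mu > \ell$ one inserts $\slP_\mu = \mu^{-2} v^2 \sD \widetilde\slP_\mu$ (using (\ref{sdp}), (\ref{sdp}) rewritten as (\ref{glp2})) twice and moves the two geometric Laplacians onto $\slP_\ell^2 f$, gaining $(\ell/\mu)^{2}$ and then absorbing an $\varepsilon$ loss into $(\ell/\mu)^{2-\varepsilon}$ to keep the commutator terms under control.

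The second step is to control the commutators $[\sn,\slP_\ell]$ and $[\sD,\slP_\ell]$, which are not free: differentiating through the heat flow produces terms involving the Gaussian curvature $K$ of $S_v$ via (\ref{comm_4.25}) and the second fundamental form. Here I would invoke the curvature decomposition (\ref{kep}): the principal part $\frac{1}{a^2 v^2}$ is a harmless scalar multiple (bounded below and above by (\ref{a_8}), (\ref{a_0})), the remainder $\ckk K$ satisfies the $L^q_\omega$ bound (\ref{ckkk_4}), and the $\sn\pi$ piece is controlled by (\ref{a_9}). This is where the extra exponent $\frac12 - \frac2p + c$ in the statement originates: the commutator terms cannot be estimated purely in $L^2_\omega$, so one applies a Sobolev embedding on $S_v$ (Lemma \ref{embd_1} or (\ref{sob})--(\ref{sobinf})) which costs $\tir^{-(1-2/p)}$-type loss in the angular scale, producing the power $\mu^{1/2 - 2/p}$ after rescaling $v\sn$ derivatives, with $c>0$ absorbing logarithmic losses from the Calderon--Zygmund estimate (\ref{cz_518}). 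The weighted $\ell^2_\mu$-summability is then encoded in the statement's right-hand side by keeping the $\ell^\varepsilon$ weight on $\sn\slP_\ell f$ and the unweighted $\sn f$ on the lower-order term.

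Finally, I would assemble the two ranges: the main term yields $\min((\mu/\ell)^\varepsilon,(\ell/\mu)^{2-\varepsilon})\,\|\ell^\varepsilon \sn\slP_\ell f\|_{L^2_\omega}$ directly, and all curvature/commutator contributions are bundled into $\mu^{1/2-2/p+c}\ell^{-c}\|\sn f\|_{L^2_\omega}$ after using (\ref{glp1})--(\ref{bs}), the bootstrap hypotheses (\ref{a_0})--(\ref{a_7}), and the estimates (\ref{a_8}), (\ref{a_9}), (\ref{ckkk_4}) for the geometric quantities on $S_v$. The $\ell^{-c}$ decay in the error term is obtained by noting $\slP_\ell^2 f$ is essentially supported at frequency $\ell \gg 1$, so one can always trade one power of $\ell$ (from applying $v\sn$ or from finite band) for the small negative power, at the cost of the matching positive power of $\mu$; balancing these gives exactly $\mu^{1/2-2/p+c}\ell^{-c}$.

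The main obstacle I anticipate is the curvature-driven commutator analysis in the high-frequency range $\mu>\ell$: moving two geometric Laplacians across $\slP_\ell^2$ generates terms where $K$ (hence $\sn\pi$ and $\hat\theta$) is differentiated or multiplied against $\sn f$ in a way that is only borderline controllable, and one must be careful that the Sobolev loss $1/2 - 2/p$ is not worsened and that the logarithmic losses from (\ref{cz_518}) genuinely fit into the arbitrarily small $c$. Handling this cleanly will require the heat-flow commutator machinery of \cite{KRsurf} adapted to the present (rough, $v$-dependent) metric, using precisely the bounds on $\ckk K$ and $\sn\pi$ recorded above.
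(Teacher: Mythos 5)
Your proposal follows essentially the same route as the paper: split into $\mu<\ell$ (direct $L^2_\omega$ boundedness of $\slP_\mu$ and $\sn\slP_\ell$) and $\mu>\ell$ (insert one factor of $\sD$ via (\ref{sdp}), commute $\sD$ with $\sn$ using (\ref{comm_4.25}), and control the Gauss-curvature commutator with the decomposition (\ref{kep}) together with (\ref{ckkk_4}), (\ref{a_9}) and Bernstein). Two small corrections: the Laplacian is inserted once (as $\mu^{-2}v^2\sD\widetilde\slP_\mu$) and transferred to $\slP_\ell^2$ via $[\sD,\slP_\ell]=0$, giving $(\ell/\mu)^{2-\ep}$ without any double insertion, and the factor $\mu^{c}\ell^{-c}$ is just the trivial trade $\mu>\ell$ applied to the commutator bound $\mu^{\f12-\frac{2}{p}}\|\sn f\|_{L_\omega^2}$, not an absorption of logarithmic Calderon--Zygmund losses.
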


\begin{proof}
If $\mu<\ell$, using (\ref{glp1})
\begin{align}
\mu^\ep \|\slP_\mu \sn \slP^2_\ell f\|_{L_\omega^2}\les (\frac{\mu}{\ell})^\ep\|\ell^\ep \sn \slP^2_\ell f\|_{L_\omega^2}
\les (\frac{\mu}{\ell})^\ep \|\ell^\ep \sn \slP_\ell f\|_{L_\omega^2}. \label{i5.4}
\end{align}
In the case that $\mu>\ell$,  by (\ref{sdp}) we have
\begin{align}
\|\mu^\ep \slP_\mu \sn \slP_\ell^2 f\|_{L_\omega^2}&\les \|\mu^{\ep-2} v^2\sD \ti \slP_\mu \sn \slP_\ell^2 f\|_{L_\omega^2}\nn\\
&\les v^2\|\mu^{\ep-2} \ti\slP_\mu[\sD, \sn]\slP_\ell^2 f\|_{L_\omega^2}+v^2\|\mu^{\ep -2} \ti\slP_\mu \sn \sD \slP_\ell^2 f\|_{L_\omega^2} \label{i_5.2}
\end{align}
By using (\ref{comm_4.25}) and $|a-1|\le \f12$,
\begin{align}
I_\mu &:=\mu^{\ep-2}v^2\|\ti\slP_\mu[\sD, \sn]\slP_\ell^2 f\|_{L_\omega^2}\nn\\
&\les \mu^{\ep-2}v^2 \left\|\ti\slP_\mu\left(\left(K-\frac{1}{a^2 v^2}\right) \sn \slP_\ell^2f\right)\right\|_{L_\omega^2}
+\mu^{\ep-2}\|\ti\slP_\mu \sn \slP_\ell^2 f\|_{L_\omega^2}\label{i5.24}.
\end{align}
In view of $K=\frac{1}{a^2 v^2}+\ckk K+\sn \pi$ from (\ref{kep}),
we will show
\begin{align}
&\mu^{\ep-2}v^2\|\ti\slP_\mu(\ckk K\c \sn\slP_\ell^2 f)\|_{L_\omega^2}\les \la^{-4\ep_0} \mu^{\ep-2+\frac{2}{p}} \|\sn f\|_{L_\omega^2}\label{w7.23.2}\\
&\mu^{\ep-2} v^2 \|\ti {\sl P}_\mu(\sn \pi\c \sn P_\ell^2 f)\|_{L_\omega^2}\les \la^{-4\ep_0} \mu^{\f12-\frac{2}{p}} \|\sn f\|_{L_\omega^2}\label{w7.23.1}
\end{align}
which, together with (\ref{i5.24}), imply with $c>0$  sufficiently close to $0$
\begin{equation}\label{i_5.1}
I_\mu\les \mu^{\f12-\frac{2}{p}}\|\sn f\|_{L_\omega^2}\les \mu^{\f12-\frac{2}{p}+c}\ell^{-c}\|\sn f\|_{L_\omega^2}.
\end{equation}
To see (\ref{w7.23.2}), with  $\frac{1}{q}=\frac{1}{2}+\frac{1}{p}$, we derive from the Bernstein inequality (\ref{bs}) that
\begin{align}
\mu^{\ep-2}v^2\|\ti\slP_\mu(\ckk K\c \sn\slP_\ell^2 f)\|_{L_\omega^2}&\les \mu^{\ep-2+\frac{2}{p}}v^2 \|\ckk K\c \sn \slP_\ell^2 f\|_{L_\omega^{q}}\nn\\
&\les \mu^{\ep-2+\frac{2}{p}}v^2\|\ckk K\|_{L_\omega^p}\|\sn \slP_\ell^2 f\|_{L_\omega^2}\nn.
\end{align}
By using  (\ref{ckkk_4}) and (\ref{glp1}), we can obtain (\ref{w7.23.2}).

Noting that
$\sn\pi \c\sn \slP_\ell^2 f=\sn(\pi\c \sn \slP_\ell^2 f)- \pi\c \sn^2 \slP_\ell^2 f$,
we consider (\ref{w7.23.1})  by writing
\begin{align*}
\mu^{\ep-2} v^2 \|\ti {\sl P}_\mu(\sn \pi\c \sn \slP_\ell^2 f)\|_{L_\omega^2}
&\le \mu^{\ep-2} v^2 \left(\|\ti\slP_\mu\sn (\pi\c \sn \slP_\ell^2 f)\|_{L_\omega^2}+\|\ti\slP_\mu(\pi\c \sn^2 \slP_\ell^2 f)\|_{L_\omega^2}\right)\\
&=I^{(1)}_\mu+I^{(2)}_\mu.
\end{align*}
Let $\frac{1}{p'}=1-\frac{1}{p}$. By H\"{o}lder inequality, (\ref{sob}) and  (\ref{a_9})
\begin{align*}
I^{(1)}_\mu&\les\mu^{\ep-1} v \|\pi\c \sn \slP_\ell^2 f\|_{L_\omega^2}
\les \mu^{\ep-1}\|\sn \slP_\ell^2 f\|_{L_\omega^{2p'}}\|v\pi\|_{L_\omega^{2p}}\\
&\les \mu^{\ep-1} \la^{-4\ep_0} \left(\|v\sn^2\slP_\ell^2 f\|_{L_\omega^2}^{1-\frac{1}{p'}}\|\sn \slP_\ell^2 f\|_{L_\omega^2}^{\frac{1}{p'}}
+\|\sn \slP_\ell^2 f\|_{L_\omega^2}\right)\\
&\les \la^{-4\ep_0}\mu^{\ep-1}\ell^{1-\frac{1}{p'}}\|\sn f\|_{L_\omega^2},
\end{align*}
where we employed the fact that $\frac{1}{2}-\frac{2}{p}>-\frac{1}{p}$  and
\begin{equation}\label{w8.6.2}
\|v \sn^2 \slP_\ell^2 f\|_{L_\omega^2}\les \ell \|\sn f\|_{L_\omega^2}.
\end{equation}
To derive (\ref{w8.6.2}), we used (\ref{cz_518}), (\ref{sdp}), the finite band property and the second inequality in (\ref{glp1}).

Since $\ep=1-\frac{2}{p}$,  and $\mu>\ell>1$,
\begin{equation*}
I^{(1)}_\mu\les \la^{-4\ep_0} \ell^{\frac{1}{p}}\mu^{-\frac{2}{p}} \|\sn f\|_{L_\omega^2}\les \la^{-4\ep_0} \mu^{\f12-\frac{2}{p}}\|\sn f\|_{L_\omega^2}.
\end{equation*}
Using (\ref{bs}), (\ref{a_9}) and (\ref{w8.6.2})
\begin{align}
I^{(2)}_\mu &\les \mu^{\ep-2} v^2 \|\pi\c \sn^2 \slP_\ell^2 f\|_{L_\omega^2}
\les \mu^{\ep-2}\mu^\f12  v \|\pi\|_{L_\omega^4} \|v\sn^2 \slP^2_\ell f\|_{L_\omega^2}\label{w7.23.3}\\
&\les \mu^{\ep-\frac{3}{2}}\ell \la^{-4\ep_0} \|\sn f\|_{L_\omega^2}
\les \mu^{\f12-\frac{2}{p}}\la^{-4\ep_0} \|\sn f\|_{L_\omega^2}. \nn
\end{align}
  (\ref{w7.23.1}) can be obtained by combining the estimates for $I^{(1)}_\mu$ and  $I^{(2)}_\mu$.

For the other term in (\ref{i_5.2}), using (\ref{sdp}) and (\ref{glp1})
\begin{align}
\mu^\ep\mu^{-2} v^2\|\ti\slP_\mu \sn \sD \slP_\ell^2 f\|_{L_\omega^2}
\les  \mu^{\ep-2}\ell^2 \|\ti\slP_\mu \sn \ti \slP_\ell \slP_\ell f\|_{L_\omega^2}
\les \left(\frac{\ell}{\mu}\right)^{2-\ep} \ell^{\ep}\|\sn \slP_\ell f\|_{L_\omega^2}\label{i5.5}.
\end{align}
Lemma \ref{com_5.24} follows by combining (\ref{i5.4}), (\ref{i_5.2}), (\ref{i_5.1}) and (\ref{i5.5}).
\end{proof}

Now we prove the following product estimates.

\begin{lemma}\label{prodan}
For scalar functions $f$ and $g$ on $S_v$ there hold the product estimates
\begin{align}
&\|\mu^\ep \slE_\mu (f G)\|_{l_\mu^2 L^2(S_v)}\les (\|\sn f\|_{L^2(S_v)}+\|f\|_{L^\infty})\|G\|_{\dot{H}^\ep(S_v)}, \label{prd4}\\
&\|\mu^\ep \slE_\mu(f G)\|_{l_\mu^2 L_\omega^2}\les \|\sn f\|_{L_x^2}\| G\|_{L_\omega^p}+\|\sn G\|_{L_x^2}\| f\|_{ L_\omega^p}.\label{prd5}
\end{align}
For $S_v$-tangent tensor field $F$, there holds
\begin{align}
&\|\mu^\ep \slE_\mu(|F|^2)v\|_{l_\mu^2 L_v^2 L_\omega^2}
\les \left(\|v \sn F\|_{L_v^2 L_\omega^2}+\|F\|_{L_v^2 L_\omega^2}\right)\|v F\|_{L_v^\infty L_\omega^p}, \label{prd5.1}
\end{align}
where $p=\frac{2}{1-\ep}$, and $0<\ep<s-2$.
\end{lemma}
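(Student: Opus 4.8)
The plan is to prove Lemma \ref{prodan} by reducing all three product estimates to the dyadic trichotomy for the \emph{standard} Littlewood--Paley projections $\slE_\mu$ on $({\Bbb S}^2,\ga^{(0)})$, exploiting the equivalence \eqref{eqv} and the metric comparison \eqref{a_7}, together with the sharp Bernstein inequality \eqref{bs} / Lemma \ref{embd_1} and the finite band property. The point is that \eqref{a_7} lets us replace $\sn$ (with respect to $\ga$) by $\p$ (with respect to $\ga^{(0)}$) up to harmless lower order errors of size $\la^{-\ep_0}$ in $L^p_\omega$, so that we are essentially proving product estimates on the fixed manifold ${\Bbb S}^2$.

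First I would prove \eqref{prd4}. Applying the trichotomy to $\slE_\mu(fG)$ we split $fG=\sum_{\nu}\slE_\nu f\cdot(G)_{\le\nu}+\sum_\nu (f)_{\le\nu}\slE_\nu G+\sum_{\nu>\mu}\slE_\nu f\cdot\slE_\nu G$ in the usual way, and bound $P_\mu$ of each piece. For the low-high term $\slE_\mu((f)_{\le\mu}\slE_\mu G)$ we pull out $\|f\|_{L^\infty}$ and are left with $\|\mu^\ep\slE_\mu G\|_{l^2_\mu L^2}\les\|G\|_{\dot H^\ep}$. For the high-low term we use Bernstein on ${\Bbb S}^2$ to gain $\mu^{1-\frac2p}$, pay for it with $\|\slE_\mu f\|_{L^p_\omega}\les\|\sn f\|_{L^2_\omega}$ (by Lemma \ref{embd_1} with $q=p$, since $1-\frac2p=\ep$), and the remaining $\|G\|_{L^2}$ combines with the $\ep$-gain via Cauchy--Schwarz in the dyadic sum. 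The high-high term is handled by summing a geometric series in $\nu/\mu$. Estimate \eqref{prd5} is the symmetric version of the same argument, now keeping both $f$ and $G$ at $L^p_\omega$ and their derivatives at $L^2$; here one uses that $\frac1p+\frac1p+\frac12>1$ fails in general, so one must be slightly careful and instead use the H\"older triple $\frac1{q}=\frac12+\frac1p$ together with $\|\slE_\mu h\|_{L^2_\omega}\les\mu^{-1}\|\sn h\|_{L^2_\omega}$ (finite band, via \eqref{glp3} transported to ${\Bbb S}^2$) to absorb the $\mu^\ep$ weight.

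For \eqref{prd5.1}, which is the case $f=G=$ a component of $F$ but with an extra $v$-weight and an $L^2_v$ in front, I would first use \eqref{eqv} to pass from $\slP_\mu$ to $\slE_\mu$ acting on the coordinate components $F^A$ (legitimate under \eqref{a_7}), apply \eqref{prd5} on each fixed $S_v$ with $f=G=F^A$, then multiply by $v$ and take $L^2_v$. Using $\|v\sn F\|_{L^2_v L^2_\omega}$ for the $L^2$ factor and $\|vF\|_{L^\infty_v L^p_\omega}$ for the $L^p$ factor (pulling the $L^\infty_v$ out) gives the claimed bound, with the comparison \eqref{eqv_2} converting $\sum_A\|\sn F^A\|$ back to $\|v\sn F\|+\|F\|$.

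The main obstacle is the transfer between the geometric projections $\slP_\mu$ (adapted to $\ga$) and the flat projections $\slE_\mu$ (adapted to $\ga^{(0)}$): one must check that the commutator/comparison errors generated by \eqref{a_7}, when plugged into the trichotomy, are genuinely lower order in the relevant norms — this is exactly the content that makes the constants in \eqref{prd4}--\eqref{prd5.1} depend only on the background and not on $\la$. I expect this to follow from \eqref{a_7}, \eqref{eqv}, Lemma \ref{embd_1}, and the sharp Bernstein inequalities \eqref{bs}, \eqref{bern_2} in the same manner as the analogous surface product estimates in \cite{KRsurf} and \cite[Section 6]{Wangrough}, so the proof is a careful bookkeeping of dyadic sums rather than a genuinely new difficulty; the one genuinely delicate point is ensuring the $L^p_\omega$-endpoint ($p=\frac2{1-\ep}$, i.e. $1-\frac2p=\ep$) is exactly the Sobolev exponent so that Lemma \ref{embd_1} applies without loss.
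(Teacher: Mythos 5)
Your proposal follows essentially the same route as the paper's proof: the dyadic trichotomy for the flat projections $\slE_\mu$, the sharp Bernstein inequality \eqref{bs}, the finite band property, Lemma \ref{embd_1}, and the reduction of \eqref{prd5.1} to the scalar estimate \eqref{prd5} via \eqref{eqv_2}; your treatments of \eqref{prd5} and \eqref{prd5.1} match the paper's almost verbatim. Two remarks. First, what you identify as the main obstacle --- transferring between the geometric projections $\slP_\mu$ and the flat ones $\slE_\mu$ --- does not arise in this lemma: all three estimates are stated for $\slE_\mu$, and the paper's proof never touches $\slP_\mu$ here; \eqref{a_7} enters only through $\|\p_\omega f\|_{L_\omega^2}\les\|\sn f\|_{L_x^2}$ and the comparison of $d\mu_\ga$ with $v^2 d\mu_{\ga^{(0)}}$. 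Second, your bookkeeping for the high--low term $\slE_\mu(\slE_\mu f\cdot G_{\le\mu})$ in \eqref{prd4} does not close as written: pairing $\slE_\mu f$ in $L^p_\omega$ with $G_{\le\mu}$ and using Bernstein to gain $\mu^{1-2/p}=\mu^\ep$ leaves an uncancelled $\mu^\ep$ in the dyadic sum rather than a weight of the form $(\ell/\mu)^{\delta}\ell^\ep$ with $\delta>0$. The paper instead puts the low-frequency factor in $L^\infty$, using $\|\slE_{\le\mu}G\|_{L_\omega^\infty}\les\sum_{\ell\le\mu}\ell^{1-\ep}\cdot\ell^\ep\|\slE_\ell G\|_{L_x^2}$ together with the finite band bound $\|\slE_\mu f\|_{L_x^2}\les\mu^{-1}\|\p_\omega\slE_\mu f\|_{L_\omega^2}$, which produces the summable kernel $(\ell/\mu)^{1-\ep}$ for every $\ep\in(0,1)$; you should adopt this pairing (or, equivalently, keep the factor $\mu^{\ep-1}$ when estimating $\|\slE_\mu f\|_{L^p_\omega}$ and pair against $G_{\le\mu}$ in $L^{2/\ep}_\omega$, which only works for $\ep<\f12$).
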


\begin{proof}
We first consider (\ref{prd4}). For $f$ and $G$  smooth scalar functions, we write by trichotomy
\begin{equation}\label{fg}
\slE_\mu (f\c G)= \slE_\mu(\slE_\mu G\c\slE_{\le \mu}f)+\slE_\mu(\slE_\mu f\c\slE_{\le \mu}G)+\slE_\mu\sum_{\ell>\mu}(\slE_\ell f\c \slE_\ell G).
\end{equation}
By Bernstein inequality and finite band property,
\begin{align}
\mu^\ep\|\slE_\mu(\slE_\mu f\c \slE_{\le \mu}  G)\|_{L_x^2}
&\les \mu^\ep \|\slE_{\le \mu} G\|_{L_x^\infty}\|\slE_\mu f\|_{L_x^2}
\les \sum_{1<\ell<\mu}(\frac{\ell}{\mu})^{1-\ep}\|\ell^\ep \slE_\ell G\|_{L_x^2}\|\p_\omega \slE_\mu f\|_{L_\omega^2}\nn
\end{align}
Taking $l_\mu^2$ yields
\begin{equation*}
\|\mu^\ep\slE_\mu(\slE_\mu f\c \slE_{\le \mu} G)\|_{l_\mu^2 L_x^2}\les \|\sn f\|_{L^2(S_v)} \|G\|_{\dot{H}^\ep(S_v)}
\end{equation*}
where we employed  $\|\p_\omega f\|_{L_\omega^2}\les \|\sn f\|_{L_x^2}$  which can be derived using (\ref{a_7}).
For the first term on the right of (\ref{fg}), we deduce
\begin{align*}
\mu^\ep\|\slE_\mu(\slE_{\le \mu} f\c \slE_\mu G))\|_{L_x^2}&\les\mu^\ep\|\slE_{\le \mu} f\|_{L^\infty(S_v)}\|\slE_\mu G\|_{L_x^2}.
\end{align*}
By taking $l_\mu^2$, we obtain
\begin{align*}
\|\mu^\ep\slE_\mu(\slE_{\le \mu} f \c \slE_\mu G))\|_{l_\mu^2L_x^2}&\les \|f\|_{L^\infty(S_v)}\| G\|_{\dot{H}^\ep (S_v)},
\end{align*}
where we used $\|f_{\le \mu}\|_{L^\infty(S_v)}\les \|f\|_{L^\infty(S_v)}$.
For the last term of (\ref{fg}),  we use the Bernstein inequality and the finite band property to obtain
\begin{align*}
\mu^\ep \Big\|\sum_{\ell>\mu}\slE_\mu(\slE_\ell f\c \slE_\ell G) \Big\|_{L_x^2}
&\les \sum_{\ell>\mu} \left(\frac{\mu}{\ell}\right)^{\ep+1}\|\p_\omega \slE_\ell f\|_{L_\omega^2}\|\ell^\ep \slE_\ell G\|_{L_x^2}.
\end{align*}
By taking $l_\mu^2$, we have
\begin{equation*}
\Big \|\mu^\ep\sum_{\ell>\mu}\slE_\mu(\slE_\ell f \c \slE_\ell G)\|_{l_\mu^2L_x^2} \les \|\sn f\|_{L^2(S_v)}\|G\|_{\dot{H}^\ep(S_v)}.
\end{equation*}
Thus the proof of (\ref{prd4}) is completed.

Now we prove (\ref{prd5}).
With $\frac{1}{q}=\frac{1}{2}+\frac{1}{p}$, we use Bernstein inequality and finite band property to obtain
\begin{align*}
\|\mu^\ep \slE_\mu(\slE_\mu G\c\slE_{\le \mu} f)\|_{L^2_\omega}&\les \mu^{\ep+\frac{2}{q}-1}\|\slE_\mu G\c \slE_{\le \mu} f\|_{L_\omega^{q}}\\
&\les \mu^{\ep+\frac{2}{p}-1}\|\slE_{\le \mu} f\|_{L_\omega^p}\|\slE_\mu\p_\omega G\|_{L_\omega^2}.
\end{align*}
Noting that $\frac{2}{p}=1-\ep$, by taking $l_\mu^2$, using (\ref{lpineq}) and (\ref{a_7}), we obtain
\begin{equation}\label{3.17.4}
\|\mu^\ep \slE_\mu( \slE_\mu G\c \slE_{\le \mu} f)\|_{l_\mu^2L_\omega^2}\les \|\sn G\|_{L_x^2}\|f\|_{L_\omega^p}.
\end{equation}
The second term  of (\ref{fg}) can be treated in the same way.

For the last term of (\ref{fg}), we employ Bernstein inequality, finite band property to obtain
\begin{align*}
\Big\|\mu^\ep \slE_\mu\sum_{\ell>\mu}(\slE_\ell f\c \slE_\ell G)\Big\|_{L_\omega^2}
&\les \mu^{\ep+\frac{2}{q}-1}\|\slE_\ell f\c \slE_ \ell G\|_{L_\omega^{q}}
\les \sum_{\ell>\mu}\mu^{\ep+\frac{2}{p}}\ell^{-1} \| \p_\omega \slE_\ell G\|_{L_\omega^2} \|\slE_\ell f\|_{L_\omega^p}.
\end{align*}
By taking $l_\mu^2$, we can obtain
\begin{equation}\label{3.17.3}
\Big\|\mu^\ep \slE_\mu\sum_{\ell>\mu}(\slE_\ell f\c \slE_\ell G)\Big\|_{l_\mu^2 L_\omega^2}
\les\|f\|_{L_\omega^p} \|\sn G\|_{L_x^2}.
\end{equation}
Combining (\ref{3.17.4}) with (\ref{3.17.3}), we obtain (\ref{prd5}) for scalar functions $f$ and $G$.
Finally, if $F$ is a $S_v$-tangent vector field,  (\ref{prd5.1}) can be obtained by using (\ref{eqv_2}) and (\ref{prd5}).
\end{proof}

We will employ the following result to improve the results in Lemma \ref{ellp_5.1}.

\begin{lemma}
\begin{equation}\label{w7.24.3}
\|\mu^\ep \slE_\mu((\bp \phi)^2)\|_{l_\mu^2 L_v^2 L_x^2}\les \la^{-\f12-4\ep_0},
\quad \|\mu^\ep \slE_\mu(\bp^2 \phi)\|_{l_\mu^2 L_v^2 L_x^2}\les \la^{-\f12}.
\end{equation}
\end{lemma}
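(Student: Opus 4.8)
The plan is to treat the two inequalities in (\ref{w7.24.3}) separately: the quadratic term $(\bp\phi)^2$ via the sphere product estimates of Lemma \ref{prodan}, and the second derivative $\bp^2\phi$ via the comparison inequality (\ref{w7.26.1}) that transfers $\mathbb R^3$ Littlewood--Paley control to the angular projections $\slE_\mu$, using the wave equation (\ref{wave1}) to absorb $\p_t^2\phi$. Throughout I would use $\sqrt{|\ga|}\approx v^2$ from (\ref{a_8}), so that on each $S_v$ one has $\|h\|_{L^2_x}\approx v\|h\|_{L^2_\omega}$, and $a\approx 1$ from (\ref{a_0}), so that $\|\cdot\|_{L^2_v L^2_x}$ over $\cup_{0\le v\le v_*}S_v$ is comparable to the $L^2$ norm on that region.

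For the first estimate I would decompose $(\bp\phi)^2$ into scalar products (involving $\p_t\phi$ and $N\phi$) and the $S_v$-tangent term $|\sn\phi|^2$; applying (\ref{prd5}) to the former pointwise in $v$ and (\ref{prd5.1}) to the latter, with $p=\frac{2}{1-\ep}$, and then taking the $L^2_v$ norm while placing $L^\infty_v$ on the lower-order factor, one arrives at
\[
\|\mu^\ep\slE_\mu((\bp\phi)^2)\|_{l^2_\mu L^2_v L^2_x}\les\big(\|v\sn\bp\phi\|_{L^2_v L^2_\omega}+\|\bp\phi\|_{L^2_v L^2_\omega}\big)\,\|v\,\bp\phi\|_{L^\infty_v L^p_\omega}.
\]
The first factor is $\les\la^{-\f12}$ by (\ref{a_9_w}), since $\sn\bp\phi$ is an angular component of $\bp^2\phi$. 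For the second, writing $v\,\bp\phi=v^{\f12}\cdot v^{\f12}\bp\phi$ and using $v\le v_*\les\la^{1-8\ep_0}$ together with $\|v^{\f12}\bp\phi\|_{L^\infty_v L^p_\omega}\les\la^{-\f12}$ (from (\ref{a_9_w}), after the trivial embedding $L^{2q}_\omega\hookrightarrow L^p_\omega$ on ${\mathbb S}^2$ when $p\le 2q$), one gets $\|v\,\bp\phi\|_{L^\infty_v L^p_\omega}\les\la^{-4\ep_0}$. The product is $\la^{-\f12-4\ep_0}$.

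For the second estimate I would apply (\ref{w7.26.1}) to each scalar component of $\bp^2\phi$ on $\cup_{0\le v\le v_*}S_v\subset\Sigma_0$. The resulting kernel $\min\big((\mu/\ell)^\ep,(\ell/\mu)^{1-\ep}\big)$ is summable in $\ell$ uniformly in $\mu$ and conversely, so a Schur/Young estimate in $l^2_\mu$ gives
\[
\|\mu^\ep\slE_\mu(\bp^2\phi)\|_{l^2_\mu L^2_v L^2_x}\les\|\ell^\ep P_\ell(\bp^2\phi)\|_{l^2_\ell L^2(\Sigma_0)}+\|\bp^2\phi\|_{L^2(\Sigma_0)}\approx\|\bp^2\phi\|_{H^\ep(\Sigma_0)}.
\]
Since $0<\ep\le s-2$, the mixed and spatial second derivatives are bounded in $H^\ep$ by $\|\bp\phi\|_{H^{1+\ep}}$, while $\p_t^2\phi$ is rewritten via (\ref{wave1}) as $g^{ij}(\phi)\p_i\p_j\phi-\N(\phi,\bp\phi)$ and estimated in $H^\ep$ using (\ref{prd_2}) and (\ref{prd1}) of Lemma \ref{pres}; all of these are controlled by Proposition \ref{eng3}, and on the rescaled scale they carry the factor $\la^{-\f12}$ exactly as in the derivation of (\ref{a_9_w}). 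This gives $\|\bp^2\phi\|_{H^\ep(\Sigma_0)}\les\la^{-\f12}$, hence (\ref{w7.24.3}).

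The only genuine difficulty is matching function spaces: the available regularity lives either on $\Sigma_0$ (as an $H^\ep$ norm) or on the spheres $S_v$ with $v$-weights, whereas (\ref{w7.24.3}) asks for unweighted angular Littlewood--Paley control; this is resolved by the transfer inequality (\ref{w7.26.1}) and the comparison $\sqrt{|\ga|}\approx v^2$, and alongside it one must carry the wave equation for $\p_t^2\phi$ (as in Section \ref{sec_2}) and keep the power of $\la$ straight. Once established, (\ref{w7.24.3}) is exactly what upgrades the $L^2$-type lapse bounds of Lemma \ref{ellp_5.1} to the sharper form used in the continuity argument for the $v$-foliation.
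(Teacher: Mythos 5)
Your argument is correct and follows essentially the same route as the paper, which proves the first bound by combining (\ref{prd5}) with (\ref{a_9_w}) and the second by passing through (\ref{w7.26.1}) to reduce to $\|\bp^2\phi\|_{H^\ep}$ and then invoking Proposition \ref{eng3} under the rescaled coordinates. The only cosmetic difference is that you split $(\bp\phi)^2$ into scalar and $S_v$-tangent pieces so as to also use (\ref{prd5.1}); this is unnecessary (all components of $\bp\phi$ are coordinate scalars, so (\ref{prd5}) alone applies) but harmless, and your bookkeeping of the $\la$-powers matches the paper's.
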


\begin{proof}
The first inequality is a consequence of (\ref{prd5}) and (\ref{a_9_w}).  Note that  (\ref{w7.26.1}) implies $\|\mu^\ep \slE_\mu(\bp^2 \phi)\|_{l_\mu^2 L_v^2 L_x^2}\les \|\bp^2 \phi\|_{H^{\ep}}$.
The second inequality is a consequence of Proposition \ref{eng3} under the rescaled coordinates.
\end{proof}

We now consider the parabolic equations for smooth scalar functions $\Psi$ on the domain $\cup_{0\le v\le v_*}S_v$,
\begin{equation}\label{4.26.1}
\sD\Psi=\frac{2\p_v \Psi}{v}+\F, \quad \Psi(0)=0.
\end{equation}
By assuming the following commutator estimate

\begin{lemma}\label{comm_4.26}
For scalar functions $f$, there holds with $\ep=1-\frac{2}{p}$,
\begin{equation}\label{cca_2}
\|\mu^\ep [\slP_\mu, \p_v]f\|_{l_\mu^2 L_v^2 L_\omega^2}
\les \la^{-4\ep_0} \left(\|v^\f12\sn f\|_{L_v^\infty L_\omega^2} +\|v \sn^2 f\|_{L_v^2 L_\omega^2}\right).
\end{equation}
\end{lemma}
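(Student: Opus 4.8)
The plan is to run the standard heat-flow Littlewood--Paley commutator argument of \cite{KRsurf}, reducing $[\slP_\mu,\p_v]f$ to a quadratic expression in the connection coefficients $\hat\theta$, $\sn\log a$ and $\G$ of the $v$-foliation, all of which carry smallness through Proposition \ref{w7.24.4}, (\ref{elli_1}), (\ref{ellp_2}) and (\ref{a_9}). Starting from $\slP_\mu H=\int_0^\infty m_\mu(\tau)U(\tau)H\,d\tau$ with $m_\mu(\tau)=\mu^2 m(\mu^2\tau)$, we have $[\slP_\mu,\p_v]f=-\int_0^\infty m_\mu(\tau)[\p_v,U(\tau)]f\,d\tau$. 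First I would write $[\p_v,U(\tau)]f$ by Duhamel: the scalar function $W_\tau:=\p_v\big(U(\tau)f\big)-U(\tau)\big(\p_v f\big)$ solves $\p_\tau W_\tau=v^2\sD W_\tau+[\p_v,v^2\sD]\big(U(\tau)f\big)$ with $W_0=0$, hence
\begin{equation*}
[\p_v,U(\tau)]f=\int_0^\tau U(\tau-\tau')\Big([\p_v,v^2\sD]\,U(\tau')f\Big)\,d\tau'.
\end{equation*}

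Next I would record the algebraic identity for $[\p_v,v^2\sD]$. Using $\p_v\ga_{AB}=2a\theta_{AB}$ (equivalently (\ref{thet1})) and $a\tr\theta=\frac{2}{v}-\G$ from (\ref{G1}), the pure-trace part of $-2a\theta^{AB}\sn^2_{AB}g$ produces exactly $-2v\sD g$, which cancels the $2v\sD g$ arising from $\p_v(v^2)$; what remains is, schematically,
\begin{equation*}
[\p_v,v^2\sD]g=v^2\big(-2a\hat\theta^{AB}\sn^2_{AB}g+\G\,\sD g\big)-v^2\,\ga^{AB}(\p_v\Ga^C_{AB})\,\sn_C g,
\end{equation*}
and $\p_v\Ga$ involves only $a\sn\hat\theta$, $a\hat\theta\,\sn\log a$ and $\sn\G$, the constant $2/v$ dropping under $\sn$. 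Thus every coefficient lies among $\hat\theta,\sn\hat\theta,\hat\theta\,\sn\log a,\G,\sn\G$.

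Then, pointwise in $v$ and $\tau$, I would estimate the $L^2_\omega(S_v)$-norm of the double integral. The term $\G\,v^2\sD U(\tau')f$ is directly controlled by $\|v^2\sD U(\tau')f\|_{L^2_\omega}\les(\tau')^{-1/2}\|v\sn f\|_{L^2_\omega}$ (heat smoothing (\ref{u1})--(\ref{u4})) together with $\G\in L^2_\omega$. For the top-order term $v^2 a\hat\theta\cdot\sn^2 U(\tau')f$ and the first-order terms I would integrate by parts on $S_v$, moving one angular derivative either onto $U(\tau-\tau')$ --- using self-adjointness of $U$ and (\ref{u3}) in the form $\|\sn U(s)\varphi\|_{L^2_\omega}\les s^{-1/2}v^{-1}\|\varphi\|_{L^2_\omega}$ --- or onto $\hat\theta$, creating $\sn\hat\theta$, which (\ref{elli_1}) controls; the remaining angular derivatives on $U(\tau')f$ are traded for powers of $(\tau')^{-1}$ via (\ref{u2})--(\ref{u4}) and the Bernstein inequalities (\ref{glp2})--(\ref{bs}). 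This collapses everything to a finite sum of integrals $\int_0^\infty|m_\mu(\tau)|\int_0^\tau(\tau-\tau')^{-a}(\tau')^{-b}\,d\tau'\,d\tau$ with $a,b<1$ and $a+b<1$; the inner Beta integral is $\sim\tau^{1-a-b}$, and $\int_0^\infty|m_\mu(\tau)|\tau^{\beta}\,d\tau\sim\mu^{-2\beta}$ for $\beta>-1$ gives a strictly negative power of $\mu$ that beats the prefactor $\mu^\ep$. Finally I would take $L^2_v$ and apply H\"older in $v$ over $[0,v_*]$, pairing $\|v^{1/2}\hat\theta\|_{L^\infty_v L^2_\omega}$, $\|v\sn\hat\theta\|_{L^2_v L^2_\omega}$, $\|\sn\log a\|_{L^2_v L^2_\omega}$, $\|\G\|_{L^2_v L^2_\omega}$ and $\|v\sn\G\|_{L^2_v L^2_\omega}$ (each $\les\la^{-\f12}$) against $\|1\|_{L^2_v[0,v_*]}=v_*^{1/2}\les\la^{\f12-4\ep_0}$; this is exactly where the gain $\la^{-4\ep_0}$ comes from. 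The part of the $\tau'$-integral in which $U(\tau')$ cannot smooth away both angular derivatives is bounded by $\|v\sn^2 f\|_{L^2_v L^2_\omega}$, everything else by $\|v^{1/2}\sn f\|_{L^\infty_v L^2_\omega}$, matching the two terms in (\ref{cca_2}); the $l^2_\mu$-summation uses the extracted $\mu^{-\ep-}$ decay together with the near-diagonal structure inherited from $U(\tau-\tau')$, or the equivalence (\ref{eqv}) after passing to transport coordinates.

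The step I expect to be the main obstacle is the top-order term $v^2 a\hat\theta\cdot\sn^2 U(\tau')f$: one must split its two angular derivatives among $\hat\theta$ (or $\sn\hat\theta$) and the two heat semigroups so that simultaneously (i) the $\hat\theta$-factor lands in a norm genuinely controlled by Proposition \ref{w7.24.4} or (\ref{elli_1}), (ii) the resulting $(\tau-\tau')$ and $\tau'$ exponents satisfy $a+b<1$ so that a negative power of $\mu$ is produced, and (iii) at most one power of $v$ is consumed, so that the final H\"older-in-$v$ step yields the full $\la^{-4\ep_0}$ rather than only $\la^{-\ep_0}$. Balancing these three constraints is the delicate book-keeping at the core of the proof.
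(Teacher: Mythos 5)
Your skeleton coincides with the paper's: the same Duhamel representation $[U(\tau),\p_v]f=\int_0^\tau U(\tau-\tau')[\p_v,v^2\sD]U(\tau')f\,d\tau'$, the same cancellation of the $\frac{2}{v}$ part of $a\tr\theta$ against $\p_v(v^2)$ leaving coefficients $\G,\ \hat\theta,\ R^\sharp=2aR_{NA}+a\sn_A\tr\theta+2\hat\theta_{AB}\sn_B a$ (your $\sn\hat\theta,\sn\G$ version is equivalent via Codazzi), and the same source of the gain: H\"older in $v$ over $[0,v_*]$ pairing $v_*^{1/2}\les\la^{\f12-4\ep_0}$ against the $\la^{-\f12}$ smallness of $\|vR^\sharp\|_{L_v^2L_\omega^2}$, $\|v^{\f12}\hat\theta\|_{L_v^\infty L_\omega^2}$, $\|\G\|_{L_v^2L_\omega^2}$.

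The one place your plan goes wrong is exactly the step you flag: the top-order term $v^2a\hat\theta\c\sn^2U(\tau')f$. If you integrate by parts and invoke heat smoothing on either semigroup, the best you can do is trade each surviving angular derivative for $(\tau')^{-1/2}$ or $(\tau-\tau')^{-1/2}$; keeping two derivatives on $U(\tau')f$ and smoothing both gives $b=1$ (divergent inner integral), while smoothing only one gives $a+b=1$, so $\int_0^\infty|m_\mu(\tau)|\tau^{1-a-b}d\tau\sim 1$ and no negative power of $\mu$ survives to make the $l_\mu^2$-sum converge against the prefactor $\mu^\ep$. Your constraint $a+b<1$ cannot be met along that route. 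The paper's resolution is simpler and should replace it: since $[\sD,U(\tau')]=0$, one has $\|\sn^2U(\tau')f\|_{L^2(S_v)}\les\|\sD U(\tau')f\|_{L^2(S_v)}=\|U(\tau')\sD f\|_{L^2(S_v)}\les\|\sn^2 f\|_{L^2(S_v)}$ by the Calder\'on--Zygmund estimate (\ref{cz_518}) and the $L^2$-boundedness (\ref{u1}) — i.e.\ $b=0$ with no integration by parts at all. The only singularity is then the outer sharp Bernstein $\|U(\tau-\tau')\cdot\|_{L_\omega^2}\les(\tau-\tau')^{-\f12}\|\cdot\|_{L_\omega^1}$ (which also lets you put the product $\hat\theta\c\sn^2U(\tau')f$ in $L_\omega^1$ by Cauchy--Schwarz with both factors merely in $L_\omega^2$), so $a=\f12$, the inner integral is $\sim\tau^{\f12}$, and $\int_0^\infty|m_\mu(\tau)|\tau^{\f12}d\tau\les\mu^{-1}$ supplies the $\mu^{\ep-1}$ decay for the $l_\mu^2$-sum. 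The terms $\G\,\sD U(\tau')f$ and $R^\sharp\c\sn U(\tau')f$ are handled identically (using $\|\sn U(\tau')f\|_{L_\omega^2}\les\|\sn f\|_{L_\omega^2}$ for the latter), producing respectively the $\|v\sn^2f\|_{L_v^2L_\omega^2}$ and $\|v^{\f12}\sn f\|_{L_v^\infty L_\omega^2}$ contributions in (\ref{cca_2}).
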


we first prove

\begin{proposition}\label{ellp_4}
Let   $\ep=1-\frac{2}{p}$.  For $\Psi$ verifying (\ref{4.26.1}) there holds
\begin{align*}
\|\mu^\ep v^{-\f12} \sn \slP_\mu \Psi\|_{L_\mu^2 L_v^\infty L_x^2}&+\|\mu^\ep \sn^2 \slP_\mu \Psi\|_{L_v^2 l_\mu^2 L_x^2}+\|\mu^\ep v^{-1} \sn \slP_\mu \Psi\|_{l_\mu^2 L_v^2 L_x^2}\nn\\
&\les \|\mu^\ep \slP_\mu \F\|_{l_\mu^2 L_v^2 L_x^2}+\la^{-4\ep_0}\| \F\|_{L_v^2 L_x^2}.
\end{align*}
\end{proposition}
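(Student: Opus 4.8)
The plan is to run a weighted parabolic energy estimate for (\ref{4.26.1}), first for $\Psi$ itself and then for its geometric Littlewood--Paley pieces $\slP_\mu\Psi$. The basic identity comes from pairing the equation with $\sD\Psi$ against the weight $v^{-1}$ on each $S_v$. Using $\p_v\int_{S_v}f\,d\mu_\ga=\int_{S_v}(\p_v f+a\tr\theta\,f)\,d\mu_\ga$, the pointwise formula $\p_v|\sn\Psi|^2=2\langle\sn\p_v\Psi,\sn\Psi\rangle-a\tr\theta|\sn\Psi|^2-2a\,\hat\theta(\sn\Psi,\sn\Psi)$ (which follows from $\p_v\ga^{AB}=-2a\theta^{AB}$), the relation $a\tr\theta=\tfrac2v-\G$, and the integration by parts $\int_{S_v}\langle\sn\p_v\Psi,\sn\Psi\rangle=-\int_{S_v}(\p_v\Psi)\sD\Psi$ together with $\p_v\Psi=\tfrac v2(\sD\Psi-\F)$, one sees that the two $a\tr\theta|\sn\Psi|^2/v$ terms cancel and
\[
\frac{d}{dv}\int_{S_v}\frac{|\sn\Psi|^2}{v}\,d\mu_\ga+\int_{S_v}(\sD\Psi)^2\,d\mu_\ga+\int_{S_v}\frac{|\sn\Psi|^2}{v^2}\,d\mu_\ga=\int_{S_v}\F\,\sD\Psi\,d\mu_\ga-\int_{S_v}\frac{2a\,\hat\theta(\sn\Psi,\sn\Psi)}{v}\,d\mu_\ga.
\]
Integrating in $v$ from $0$ to $v_*$ (the left boundary term vanishes by the initial behaviour in Lemma \ref{inii_7}), using the elliptic estimate $\|\sn^2 f\|_{L_x^2(S_v)}\les\|\sD f\|_{L_x^2(S_v)}$ from (\ref{cz_518}), Cauchy--Schwarz on the $\F$--term, and absorbing the $\hat\theta$--error by the left side with the help of (\ref{theta_1}), (\ref{a_6}) and (\ref{sobv}) (exactly as the terms $\A_1,\A_2$ are handled in the proof of Lemma \ref{ellp_5.1}), this yields the basic bound
\[
\|v^{-\f12}\sn\Psi\|_{L_v^\infty L_x^2}+\|\sn^2\Psi\|_{L_v^2 L_x^2}+\|v^{-1}\sn\Psi\|_{L_v^2 L_x^2}\les\|\F\|_{L_v^2 L_x^2},
\]
equivalently $\|v^{\f12}\sn\Psi\|_{L_v^\infty L_\omega^2}+\|v\sn^2\Psi\|_{L_v^2 L_\omega^2}\les\|\F\|_{L_v^2 L_x^2}$, which is precisely what is needed to feed Lemma \ref{comm_4.26}.

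Next I would localize. Since at each fixed $v$ the projection $\slP_\mu$ is built from the heat semigroup of $v^2\sD$, it commutes with $\sD$ on $S_v$ and with multiplication by functions of $v$ alone; hence, applying $\slP_\mu$ to (\ref{4.26.1}), the function $\slP_\mu\Psi$ solves $\sD(\slP_\mu\Psi)=\tfrac2v\p_v(\slP_\mu\Psi)+\F_\mu$ with $\slP_\mu\Psi(0)=0$ and $\F_\mu=\slP_\mu\F+\tfrac2v[\slP_\mu,\p_v]\Psi$. Running the displayed identity for $\slP_\mu\Psi$, multiplying by $\mu^{2\ep}$ and summing over $\mu$, and denoting by $X$ the left side of the Proposition, I get $X^2\les\sum_\mu\mu^{2\ep}\big(|\langle\sD(\slP_\mu\Psi),\F_\mu\rangle|+\langle v^{-1}|a\,\hat\theta|,|\sn\slP_\mu\Psi|^2\rangle\big)$, all pairings over $\cup_v S_v$. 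The $\hat\theta$--error sums to $\les\la^{-4\ep_0}X^2$ as before. In $\F_\mu$, the $\slP_\mu\F$ contribution is handled by Cauchy--Schwarz, absorbing $\|\sD(\slP_\mu\Psi)\|_{L_v^2 L_x^2}$ into $X$ and leaving $\|\mu^\ep\slP_\mu\F\|_{l_\mu^2 L_v^2 L_x^2}$. For the commutator contribution I would use the elementary identification $\|v^{-1}[\slP_\mu,\p_v]\Psi\|_{L_x^2(S_v)}\approx\|[\slP_\mu,\p_v]\Psi\|_{L_\omega^2(S_v)}$ (the $v^{-1}$ cancels the $v^2$ in the area form, via (\ref{a_8})), then Lemma \ref{comm_4.26} with $f=\Psi$ combined with the first step to get $\|\mu^\ep v^{-1}[\slP_\mu,\p_v]\Psi\|_{l_\mu^2 L_v^2 L_x^2}\les\la^{-4\ep_0}\|\F\|_{L_v^2 L_x^2}$, so this contribution is $\les X\cdot\la^{-4\ep_0}\|\F\|_{L_v^2 L_x^2}$. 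Collecting terms, $X^2\les\la^{-4\ep_0}X^2+X\big(\|\mu^\ep\slP_\mu\F\|_{l_\mu^2 L_v^2 L_x^2}+\la^{-4\ep_0}\|\F\|_{L_v^2 L_x^2}\big)$, and for $\la$ large the claimed estimate follows after absorption.

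The main obstacle is the commutator term. One has to make the commutations $[\slP_\mu,\sD]=0$ and $[\slP_\mu,\tfrac1v\cdot]=0$ precise so that $\slP_\mu\Psi$ genuinely satisfies a perturbed copy of (\ref{4.26.1}), and then track carefully the mismatch between the $v$--weighted $L_x^2$ norms that arise naturally in the energy identity and the $L_\omega^2$ norms in which Lemma \ref{comm_4.26} is phrased, feeding the first step's output in at the right place; the genuinely new analytic input here is Lemma \ref{comm_4.26}, which is established separately. The $\hat\theta$--error, while not difficult, needs the same Sobolev bookkeeping as in the proof of Lemma \ref{ellp_5.1}, and a brief remark (or a continuity argument in $v$, as for (\ref{bba3})) is needed to know a priori that $X<\infty$ so that the final absorption is legitimate.
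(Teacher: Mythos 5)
Your proposal is correct and rests on the same pillars as the paper's proof: apply $\slP_\mu$ to (\ref{4.26.1}) so that $\slP_\mu\Psi$ solves the same parabolic equation with forcing $\slP_\mu\F+\tfrac2v[\slP_\mu,\p_v]\Psi$, run a $v^{-1}$--weighted energy identity, control the commutator via Lemma \ref{comm_4.26} fed by the unprojected estimate $\|v^{-\f12}\sn\Psi\|_{L_v^\infty L_x^2}+\|\sn^2\Psi\|_{L_v^2L_x^2}+\|v^{-1}\sn\Psi\|_{L_v^2L_x^2}\les\|\F\|_{L_v^2L_x^2}$, and absorb for $\la$ large. The one genuine (if minor) divergence is the choice of multiplier: the paper first applies $\sn$ to the projected equation, using the commutation $[\sn,\sD]$ from (\ref{comm_4.25}), and then pairs with $\sn\slP_\mu\Psi$; this introduces the Gaussian curvature, which must be expanded via (\ref{a_12}), producing the error $\J$ with terms in $R$, $R_{NN}$, $|\hat\theta|^2$, $v^{-2}|a^{-2}-1|$, each estimated through (\ref{sobv}), (\ref{ellp_0}), (\ref{theta_1}), (\ref{a_9}). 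You instead pair directly with $\sD\slP_\mu\Psi$, which cancels the $a\tr\theta$ contributions exactly and leaves only the $\hat\theta$ deformation term and the forcing, recovering $\|\sn^2\slP_\mu\Psi\|$ at the end from the elliptic estimate (\ref{cz_518}); this is a slightly cleaner bookkeeping at the cost of invoking the Calder\'on--Zygmund bound, and it is legitimate since (\ref{cz_518}) holds under the bootstrap assumption (\ref{a_7}). Your cautionary remarks are apt: the conversion between the $L_x^2(S_v)$ norms produced by the energy identity and the $L_\omega^2$ norms in Lemma \ref{comm_4.26} (via $\sqrt{|\ga|}\approx v^2$ from (\ref{a_8})) and the a priori finiteness needed for the final absorption are exactly the points one must make precise, and the paper handles them in the same way.
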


\begin{proof}
We apply the Littlewood-Paley projection $\slP_\mu$ to obtain
\begin{equation*}
\sD \slP_\mu\Psi=\frac{2\p_v \slP_\mu\Psi}{v}+\frac{2[\slP_\mu, \p_v]\Psi}{v}+\slP_\mu\F.
\end{equation*}
By using (\ref{comm_4.25}), this implies
\begin{align*}
\sD\sn \slP_\mu \Psi&=K\c \sn \slP_\mu \Psi+\frac{2\p_v \sn \slP_\mu \Psi }{v}+\frac{2[\sn, \p_v] \slP_\mu \Psi}{v}+\frac{2\sn [\slP_\mu, \p_v]\Psi}{v}+\sn \slP_\mu \F.
\end{align*}
Now we multiply the above equation by $\sn \slP_\mu \Psi$ followed with integration by part on $S_v$,
\begin{align}
\int_{S_v} -|\sn^2 \slP_\mu \Psi|^2
&=\int_{S_v} \left(K|\sn \slP_\mu \Psi|^2+\frac{2[\sn, \p_v]\slP_\mu \Psi}{v}\c \sn \slP_\mu \Psi + \frac{\p_v |\sn \slP_\mu \Psi|^2}{v}\right) \label{3.17.1}\\
&+\int_{S_v} \left(\frac{2}{v}\sn[\slP_\mu, \p_v]\Psi\c \sn \slP_\mu \Psi- \slP_\mu \F \sD \slP_\mu \Psi\right)\nn.
\end{align}
Noting that by using (\ref{comm_4.25.1}), we have
\begin{equation*}
[\sn, \p_v]\slP_\mu \Psi=\f12 a \tr\theta \sn \slP_\mu\Psi+a\hat \theta \sn \slP_\mu \Psi.
\end{equation*}
Denote by $\I$ the three terms on the right hand side of (\ref{3.17.1}). In view of (\ref{4.25.2})  and (\ref{a_12}),
\begin{align*}
\I&=\int_{S_v} \left(\frac{(\tr\theta)^2}{4}-\f12 |\hat\theta|^2+\frac{R-2R_{NN}}{2}\right)|\sn \slP_\mu \Psi|^2  d \mu_\ga
+ \p_v\int_{S_v} v^{-1}|\sn \slP_\mu \Psi|^2 d\mu_\ga\\
&+\int_{S_v} \left[- v^{-1}\left(\frac{1}{v}-\G\right) |\sn \slP_\mu\Psi|^2 +v^{-1}\left(a\tr\theta \sn \slP_\mu \Psi
+2 a\hat \theta\sn \slP_\mu \Psi\right)\sn \slP_\mu \Psi\right] d\mu_\ga  .
\end{align*}
Now we integrate $\I$ in $(0, v)$ for $0<v\le v_*$,  also using (\ref{G1})
\begin{equation}\label{i1}
\int_0^v\I dv'= \int_{S_v} v^{-1}|\sn \slP_\mu \Psi|^2
+\int_0^{v}\int_{S_{v'}}\left(\frac{2}{{v'}^2}+ \frac{\G^2}{4a^2}\right) |\sn \slP_\mu \Psi|^2 d\mu_\ga dv'+\J,
\end{equation}
where the error term $\J$ is written schematically as
\begin{align}
\J&=\int_0^{v}\int_{S_{v'}}\left(|R-2R_{NN}|, |\hat \theta|^2, {v'}^{-1}(|\hat \theta|+|\G|)\right)|\sn \slP_\mu \Psi|^2  d\mu_\ga dv'\nn\\
&+\int_0^{v}\int_{S_{v'}} {v'}^{-2}|a^{-2}-1| |\sn \slP_\mu \Psi|^2 d\mu_\ga dv'\nn.
\end{align}
For $\J$, we proceed by using H\"{o}lder inequality,
\begin{equation*}
\J\les \|vR_{ij},\hat\theta,\G, v |\hat \theta|^2, (a^{-2}-1)v^{-1}\|_{L_v^2 L_\omega^2}\|v|\sn \slP_\mu \Psi|^2\|_{L_v^2 L_\omega^2}.
\end{equation*}
By using (\ref{sobv}),
\begin{align*}
\|v^\f12\sn \slP_\mu \Psi\|_{L_v^4 L_\omega^4}\les(\|v \sn^2 \slP_\mu \Psi\|_{L_v^2 L_\omega^2}+\|v^\f12 \sn \slP_\mu \Psi\|_{L_v^\infty L_\omega^2}) v_*^\frac{1}{4}.
\end{align*}
Combining the above inequalities with  (\ref{elli_1}),  (\ref{ellp_0}), (\ref{theta_1}) and (\ref{a_9}), we can obtain
\begin{equation*}
\J\les \la^{-4\ep_0}( \|v \sn^2 \slP_\mu \Psi\|_{L_v^2 L_\omega^2}+\|v^\f12 \sn \slP_\mu \Psi\|_{L_v^\infty L_\omega^2})^2.
\end{equation*}

At last, by applying Lemma \ref{comm_4.26} to $f=\Psi$ and combined with an integration by part,
 we can obtain
 \begin{align}
\|\mu^\ep v^{-\f12} \sn \slP_\mu \Psi\|_{L_\mu^2 L_v^\infty L_x^2}&+\|\mu^\ep \sn^2 \slP_\mu \Psi\|_{L_v^2 l_\mu^2 L_x^2}+\|\mu^\ep v^{-1} \sn \slP_\mu \Psi\|_{l_\mu^2 L_v^2 L_x^2}\nn\\
&\les \|\mu^\ep \slP_\mu \F\|_{l_\mu^2 L_v^2 L_x^2}+\la^{-4\ep_0}\| \F\|_{L_v^2 L_x^2}.\label{w7.24.1}
\end{align}
By repeating the above procedure to (\ref{4.26.1}), we have
\begin{equation*}
\| v^{-\f12} \sn\Psi\|_{L_v^\infty L_x^2}+\| \sn^2  \Psi\|_{L_v^2L_x^2}+\|v^{-1} \sn\Psi\|_{L_v^2 L_x^2}\les \|\F\|_{L_v^2 L_x^2}.
\end{equation*}
Proposition \ref{ellp_4} follows by substituting the above inequality to (\ref{w7.24.1}).
\end{proof}

Now we prove Lemma \ref{comm_4.26}.
\begin{proof}[Proof of Lemma \ref{comm_4.26}]
To see (\ref{cca_2}), we employ \cite[Page 64, Corollary 3.2.3.2]{CK} to obtain the following commutation formula for  smooth functions $f$ on $\Sigma_0$ that
\begin{align}\label{ff_1}
[\p_v, \sD]f&=-a\tr\theta\sD f-2a \hat \theta \c \sn^2 f-(2a R_{NA}+a \sn_A \tr\theta+2 \hat \theta_{AB}\sn_B a)\cdot \sn_A f.
\end{align}
By (\ref{glp}), the definition of the geometric Littlewood-Paley decomposition, we can write
\begin{equation*}
[\slP_\mu, \p_v] f=\int_0^\infty m_\mu(\tau) [U(\tau), \p_v]f d\tau
\end{equation*}
By Duhamel principle, we have
\begin{equation*}
[U(\tau), \p_v] f=\int_0^\tau U(\tau-\tau')[v^2\sD, \p_v] U(\tau') f d\tau'.
\end{equation*}
Thus we can obtain in view of (\ref{ff_1}) that
\begin{align*}
[\slP_\mu, \p_v]f&=v^2 \int_0^\infty m_\mu(\tau) \int_0^\tau U(\tau-\tau')[(a\tr\theta-\frac{2}{v})\sD U(\tau') f \\
&-2a \hat \theta \c \sn^2 U(\tau')f -(2a R_{NA}+a \sn_A \tr\theta+2 \hat \theta_{AB}\sn_B a)\cdot \sn_A U(\tau') f]
\end{align*}
We consider the term with $R^\sharp_A=2a R_{NA}+a \sn_A \tr\theta+2 \hat \theta_{AB}\sn_B a$. Let
\begin{equation*}
I_\mu=v^2\int_0^\infty m_\mu(\tau) \int_0^\tau U(\tau-\tau')\left( R^\sharp \cdot \sn U(\tau') f\right) d\tau'.
\end{equation*}
By using (\ref{bern_2}) and (\ref{u2})
\begin{align*}
\|I_\mu\|_{L_\omega^2}&\les v^2\int_0^\infty| m_\mu (\tau)| \int_0^\tau \frac{1}{\sqrt{\tau-\tau'}}\|R^\sharp\|_{L_\omega^2}\|\sn U(\tau') f \|_{L_\omega^2}d\tau'\\
&\les v_*^\f12\int_0^\infty \tau^\f12|m_\mu(\tau) |\|vR^\sharp \|_{L_\omega^2}\|v^\f12\sn f\|_{L_\omega^2}.
\end{align*}
By using (\ref{a_9}), (\ref{G1}), (\ref{ellp_1}), (\ref{ellp_8}) and (\ref{theta_1}),
\begin{align*}
\|v R^\sharp\|_{L_v^2 L_\omega^2}&\les \|v R\|_{L_v^2 L_\omega^2}+\|v\sn(\G/a), \sn (a^{-1})\|_{L_v^2 L_\omega^2}+\|v^\f12( \hat \theta, \sn\log a)\|_{L_v^4 L_\omega^4}^2\les \la^{-\f12}.
\end{align*}
Hence, also using $\int_0^\infty| m_\mu(\tau) |\tau^\f12\les \mu^{-1}$, we obtain
\begin{equation}\label{est_I}
\|\mu^\ep I_\mu\|_{l_\mu^2 L_v^2 L_\omega^2}\les v_*^\f12\la^{-\f12}\|v^\f12 \sn f\|_{L_v^\infty L_\omega^2}.
\end{equation}

Now we consider
\begin{align*}
{\emph {II}}_\mu&:=v^2 \int_0^\infty m_\mu(\tau) \int_0^\tau U(\tau-\tau')[2a \hat \theta \sn^2 U(\tau')f] d\tau'.
\end{align*}
With the help of (\ref{cz_518}),  for $0\le v \le v_*$, we  have on $S_v$,
\begin{equation}\label{flowbound}
\|\sn^2 U(\tau) f\|_{L^2(S_v)}\les\|\sD U(\tau) f\|_{L^2(S_v)}\les \|\sn^2 f\|_{L^2(S_v)}
\end{equation}
where we employed (\ref{u1}) and $[\sD, U(\tau)]=0$.

By using (\ref{bern_2}), (\ref{flowbound}), and (\ref{theta_1})
\begin{align*}
\|{\emph II}_\mu\|_{L_v^2 L_\omega^2}
&\les v_*^\f12 \int_0^\infty |m_\mu(\tau)|\int_0^\tau \frac{1}{\sqrt{\tau-\tau'}} d\tau' d\tau
\|v^\f12 a \hat \theta\|_{L_v^\infty L_\omega^2}\|v\sn^2 f\|_{L_v^2 L_\omega^2} \\
&\les v_*^\f12\mu^{-1} \la^{-\f12}\|v \sn^2 f\|_{L_v^2 L_\omega^2}.
\end{align*}
Thus
\begin{align*}
\|\mu^\ep {\emph II}_\mu\|_{l_\mu^2 L_v^2 L_\omega^2}\les v_*^\f12 \la^{-\f12}\|v \sn^2 f\|_{L_v^2 L_\omega^2}
\end{align*}

Similar to ${\emph II}_\mu$, we now consider
\begin{equation*}
{\emph {III}}_\mu: =v^2 \int_0^\infty m_\mu(\tau) \int_0^\tau U(\tau-\tau')(a\tr\theta-\frac{2}{v})\sD U(\tau') f d\tau'
\end{equation*}
Using (\ref{a_9}) and $ a\tr\theta-\frac{2}{v}=-\G$, we
conclude that
\begin{equation*}
\|\mu^\ep {\emph {III}}_\mu\|_{l_\mu^2 L_v^2 L_\omega^2}\les v_*^\f12 \la^{-\f12} \|v \sn^2 f\|_{L_v^2 L_\omega^2}.
\end{equation*}
Thus  the proof of Lemma \ref{comm_4.26} is completed.
\end{proof}

Now we  show

\begin{proposition}\label{ellp_7}
For  $\ep=1-\frac{2}{p}$, there holds
\begin{align*}
\|\mu^\ep v^{-\f12} \sn \slP_\mu \log a\|_{L_\mu^2 L_v^\infty L_x^2}
&+\|\mu^\ep \sn^2 \slP_\mu \log a\|_{L_v^2 l_\mu^2 L_x^2}+\|\mu^\ep v^{-1} \sn \slP_\mu \log a\|_{l_\mu^2 L_v^2 L_x^2}
\les \la^{-\f12}.
\end{align*}
\end{proposition}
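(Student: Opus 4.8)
The plan is to recast the parabolic equation (\ref{aa_1}) for $\log a$ in the form (\ref{4.26.1}) and then apply Proposition \ref{ellp_4}. Starting from
$$a^2\bigl(\sD\log a+|\sn\log a|^2\bigr)=\frac2v\p_v\log a+a^2\mathfrak E$$
with $\mathfrak E$ as in (\ref{ff}), one solves for $\sD\log a$ and splits off the coefficient $a^{-2}$ to obtain
$$\sD\log a=\frac2v\p_v\log a+\F,\qquad \F:=(1-a^2)\sD\log a-a^2|\sn\log a|^2+a^2\mathfrak E,$$
with $\log a(0,\cdot)=0$. Then Proposition \ref{ellp_4} reduces the statement to the single bound
$$\|\mu^\ep\slP_\mu\F\|_{l_\mu^2 L_v^2 L_x^2}+\la^{-4\ep_0}\|\F\|_{L_v^2 L_x^2}\les\la^{-\f12},$$
and throughout I would use that $\|\cdot\|_{L_v^2 L_x^2}$ carries the $\sqrt{|\ga|}\approx v^2$ area element (recall (\ref{a_8})), so that a weight $v^{-1}$ on an $\slP_\mu$-localized quantity is harmless.

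First I would record the structure of $\F$: writing $\G=a(V_4-\tr k)$ (cf.\ (\ref{thedef})--(\ref{G1})) and absorbing the smooth factors $a^{\pm1}$ controlled in Lemma \ref{ellp_4.26}, $\F$ is a sum of (i) genuine second-order metric terms $\p_v(V_4-\tr k)$ and $R_{NN}$, schematically $\bp^2\phi$; (ii) quadratic terms $|\hat\theta|^2$, $(V_4-\tr k)^2$ and $|\sn\log a|^2$, of $(\bp\phi)^2$-type; (iii) the weighted term $\tfrac{2\G}{v}$; and (iv) the coefficient-error term $(1-a^2)\sD\log a$. The $L_v^2 L_x^2$ part is bounded by $\la^{-\f12}$ directly: Lemma \ref{mtieq} ((\ref{a_9}), (\ref{a_9_w})) handles (i) and (iii) — note $\|\tfrac{\G}{v}\|_{L_v^2 L_x^2}\approx\|\G\|_{L_v^2 L_\omega^2}\les\la^{-\f12}$ precisely because of the $v^2$ weight — Proposition \ref{w7.24.4} and Lemma \ref{ellp_5.1} handle (ii), and $\|1-a^2\|_{L^\infty}\les 1$ together with $\|\sD\log a\|_{L_v^2 L_x^2}\approx\|v\sn^2\log a\|_{L_v^2 L_\omega^2}\les\la^{-\f12}$ (Lemma \ref{ellp_5.1}) handles (iv). For the frequency-localized part I would pass from $\slP_\mu$ to the flat projections $\slE_\mu$ on ${\Bbb S}^2$ via the equivalence (\ref{eqv}), peel the coefficients $a^{\pm2}$ and $\sn\log a$ off each product using the product estimates (\ref{prd4})--(\ref{prd5.1}) of Lemma \ref{prodan} (the required $L^\infty$ and $\sn$-control of these coefficients coming from Lemmas \ref{ellp_4.26}, \ref{ellp_5.1}), and then invoke (\ref{w7.24.3}): $\|\mu^\ep\slE_\mu(\bp^2\phi)\|_{l_\mu^2 L_v^2 L_x^2}\les\la^{-\f12}$ for (i), $\|\mu^\ep\slE_\mu((\bp\phi)^2)\|_{l_\mu^2 L_v^2 L_x^2}\les\la^{-\f12-4\ep_0}$ for (ii), and an $H^\ep$ bound via (\ref{w7.26.1}) and Proposition \ref{eng3} for (iii). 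For (iv), using (\ref{sdp}) one extracts $1-a^2$ and is left with $\la^{-\f12}$ from Lemma \ref{ellp_5.1}; more simply, the whole contribution of (iv) can be absorbed into the left-hand side of the conclusion of Proposition \ref{ellp_4} thanks to the smallness $|a^2-1|\les\la^{-8\ep_0}$.

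The main obstacle I anticipate is the bookkeeping of the frequency-localized product estimates: one must split each term of $\F$ into a ``rough factor'' carrying the $\mu^\ep\slP_\mu$ (to which (\ref{w7.24.3}) or an $H^\ep$ estimate applies) and a ``coefficient factor'' for which only an $L^\infty$ bound plus an $\sn$-bound is available, control the commutators $[\slP_\mu,\cdot]$ and the transfer $\slP_\mu\leftrightarrow\slE_\mu$ under the weak metric comparison (\ref{eqv}), and keep the powers of $v$ matched against the $v^2$ area element at every step; the $v\to0$ behaviour (Lemma \ref{inii_7}) is what makes the weighted term (iii) and the coefficient-error term (iv) legitimate near the vertex. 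Everything else is a reassembly of estimates already established in this appendix, with the commutator input to Proposition \ref{ellp_4} being exactly Lemma \ref{comm_4.26}.
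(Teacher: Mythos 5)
Your proposal matches the paper's proof: the same recasting of (\ref{aa_1}) into the form (\ref{4.26.1}) with the identical inhomogeneity $\F=a^2\mathfrak{E}-(a^2-1)\sD\log a-|\sn a|^2$, the same application of Proposition \ref{ellp_4} reducing everything to the bound of Lemma \ref{err_1}, the same term-by-term treatment via (\ref{eqv}), Lemma \ref{prodan} and (\ref{w7.24.3}), and the same absorption of $(a^2-1)\sD\log a$ using its smallness. The one imprecision is that the $L_v^\infty L_\omega^p$ control of $v\sn a$ and $v\hat\theta$ required by the product estimate (\ref{prd5.1}) is supplied by the bootstrap assumption (\ref{aa_p}) (improved only afterwards in Propositions \ref{w8.6.4} and \ref{3.17.5}), not by Lemmas \ref{ellp_4.26} and \ref{ellp_5.1}, which give only $L_\omega^2$ and $L_\omega^4$ bounds.
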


To prove Proposition \ref{ellp_7}, let  us make bootstrap assumption that
\begin{equation}\label{aa_2}
|a-1|\le \la^{-\ep_0}
\end{equation}
and
\begin{equation}\label{aa_p}
\|v\sn a\|_{L_v^\infty L_\omega^p}+\|v \hat \theta\|_{L_v^\infty L_\omega^p}\le 1.
\end{equation}
We will improve it to
\begin{equation}\label{aa_3}
|a-1|\les \la^{-4\ep_0}
\end{equation}
\begin{equation}\label{aa_4}
\|v \sn a\|_{L_v^\infty L_\omega^p}+\|v \hat \theta\|_{L_v^\infty L_\omega^p}\les \la^{-4\ep_0}.
\end{equation}
which are contained  in Proposition \ref{w8.6.4} and Proposition \ref{3.17.5}.

In view of (\ref{aa_1}), we can apply Proposition \ref{ellp_4} to $\Psi=\log a$ with
$$\F=a^2 \mathfrak{E}-(a^2-1) \sD \log a-|\sn a|^2,$$
where $\mathfrak{E}$ has been defined in (\ref{ff}). We will prove  the following result for $\F$.
\begin{lemma}\label{err_1}
For  $\ep=1-\frac{2}{p}$,
\begin{align}
&\|v \F\|_{L_v^2 L_\omega^2}+\|\mu^\ep \slE_\mu \F\|_{L_v^2 l_\mu^2 L_x^2}\les \la^{-\f12}. \label{w7.24.2}
\end{align}
\end{lemma}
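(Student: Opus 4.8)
The plan is to expand $\F$ by substituting the formula (\ref{ff}) for $\mathfrak{E}$ and estimating the resulting pieces one at a time. After using the identity (\ref{aa_1}) to rewrite $a^2\mathfrak{E}$, one sees that $\F$ is, up to the harmless factor $a^2$ and the universally small factor $a^2-1$ (which satisfies $\|a^2-1\|_{L^\infty}\les\la^{-\ep_0}$ by the bootstrap assumption (\ref{aa_2})), a sum of three types of terms: \textbf{(i)} ``linear'' terms carrying a single second-order object, namely $a^{-1}\p_v(V_4-\tr k)$, the curvature term $R_{NN}$, and $(a^2-1)\sD\log a$; \textbf{(ii)} the borderline term $\tfrac{2}{va}(V_4-\tr k)$ carrying one factor of $v^{-1}$; and \textbf{(iii)} the purely quadratic terms $|\hat\theta|^2$, $(V_4-\tr k)^2$ and $|\sn a|^2=a^2|\sn\log a|^2$. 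Since $V_4-\tr k$ is of type $\pi$, writing $\p_v=aN$ and expanding in the transport coordinates shows that $a^{-1}\p_v(V_4-\tr k)$ splits into a $\bp^2\phi$--piece and a piece quadratic in $(\bp\phi,\hat\theta)$; this is what makes the available estimates applicable.

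For the bound on $\|v\F\|_{L_v^2L_\omega^2}$ the terms in (i) are controlled directly: $\|v(\bd\pi,\mbox{Ric})\|_{L_v^2L_\omega^2}\les\la^{-\f12}$ from (\ref{a_9}) handles the first two, while $\|v\sD\log a\|_{L_v^2L_\omega^2}\lesssim\|v\sn^2\log a\|_{L_v^2L_\omega^2}\les\la^{-\f12}$ from (\ref{ellp_2}), together with the smallness of $a^2-1$, handles the third. The term in (ii) contributes $v\cdot\tfrac{2}{va}(V_4-\tr k)=\tfrac{2}{a}\pi$, bounded by $\|\pi\|_{L_v^2L_\omega^2}\les\la^{-\f12}$ via (\ref{a_9}). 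For the quadratic terms in (iii) I would use $\|vF^2\|_{L_v^2L_\omega^2}\lesssim\|v^{\f12}F\|_{L_v^4L_\omega^4}^2$ with $F=\hat\theta$, $F=\pi$ and $F=\sn\log a$, invoking the improved $L_v^4L_\omega^4$ bounds from (\ref{theta_1}), from (\ref{a_9_w})--(\ref{w7.27.1}), and from (\ref{ellp_2}) respectively; each of these carries a spare factor $\la^{-2\ep_0}$, so the target $\la^{-\f12}$ is more than attained.

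For the Littlewood--Paley bound $\|\mu^\ep\slE_\mu\F\|_{L_v^2\ell_\mu^2L_x^2}\les\la^{-\f12}$ I would again treat the three groups, now with the product estimates of Lemma \ref{prodan} and the projections in (\ref{w7.24.3}). For (i): the $\bp^2\phi$--part of $a^{-1}\p_v(V_4-\tr k)$ is controlled by the second inequality in (\ref{w7.24.3}), its quadratic part and $R_{NN}$ by (\ref{prd5}) and the first inequality in (\ref{w7.24.3}); the multiplication by the smooth factors $a^{\pm2}$ is absorbed by (\ref{prd4}) using the energy bounds on $\sn a$; and $(a^2-1)\sD\log a$ is estimated by combining the smallness of $a^2-1$ with the Hessian control (\ref{ellp_2}), re-expressing $\sD\log a$ through (\ref{4.27.1}) if needed. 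For (ii), since $v$ is constant on each $S_v$ the projection $\slE_\mu$ commutes with $v^{-1}$, reducing matters to $\mu^\ep\slE_\mu(a^{-1}\pi)$, handled by (\ref{prd5}), the energy estimates and (\ref{a_9_w}). For (iii) I would apply the tensor product estimate (\ref{prd5.1}) with $F=\hat\theta$ and $F=\sn\log a$, feeding in $\|v\sn F\|_{L_v^2L_\omega^2}+\|F\|_{L_v^2L_\omega^2}\les\la^{-\f12}$ (from (\ref{elli_1}), (\ref{ellp_2})) and $\|vF\|_{L_v^\infty L_\omega^p}\les\la^{-\f12}$ (from (\ref{theta_1}), (\ref{ellp_2})), and use (\ref{prd5}) with (\ref{w7.24.3}) for the $(V_4-\tr k)^2$ term.

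The main obstacle I anticipate is not the analytic input, which is entirely supplied by Lemma \ref{mtieq}, Proposition \ref{w7.24.4}, Lemmas \ref{ellp_4.26} and \ref{ellp_5.1}, the product estimates of Lemma \ref{prodan}, and the bound (\ref{w7.24.3}); it is the bookkeeping of the $v$-weights so that every contribution comes out genuinely sharp. The clean $\la^{-\f12}$ for the linear terms is immediate, but the quadratic terms must be organized so as to land exactly on the improved $L_v^4L_\omega^4$ (or $v_*$-absorbing $L_v^\infty$) estimates, and the term $\tfrac{2}{va}(V_4-\tr k)$ must be treated by exploiting that $v$ is constant on each leaf $S_v$, so that its $v^{-1}$--singularity is harmless once absorbed into the area element $d\mu_\ga\approx v^2\,d\mu_\omega$; keeping all of this consistent with the definitions of the mixed norms on $\cup_{0\le v\le v_*}S_v$ is the delicate part.
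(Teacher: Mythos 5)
Your decomposition of $\F$ and almost all of your ingredients coincide with the paper's proof: the $L_v^2L_\omega^2$ bound is obtained exactly as you describe from (\ref{a_9}), (\ref{theta_1}), (\ref{ellp_2}) and the $L_v^4L_\omega^4$ estimates; the Littlewood--Paley bound for $a^2\mathfrak{E}$ is obtained from (\ref{prd4})--(\ref{prd5.1}) and (\ref{w7.24.3}) after writing $R_{NN}$ and $\nab_N(V_4,\tr k)$ symbolically as $(f\c N)\p^2\phi+(f\c N)(\p\phi)^2+f\c\bp\phi\c\nab_N N$ (note the last piece is quadratic in $(\bp\phi,\sn\log a)$ rather than $(\bp\phi,\hat\theta)$, since $\nab_N N=-\sn\log a$ by (\ref{thetan}); it is handled by (\ref{prd5}) and (\ref{ellp_2})). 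Your treatment of the borderline term $\tfrac{2}{va}(V_4-\tr k)$ by commuting out $v^{-1}$ and applying (\ref{prd5}) is a legitimate variant of the paper's use of the finite band property.

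There is, however, one genuine gap: the term $\F_1=(a^2-1)\sD\log a$ in the $\ell_\mu^2$-weighted norm. Neither (\ref{ellp_2}) (which is only an $L_v^2L_\omega^2$ bound on $v\sn^2\log a$) nor the substitution via (\ref{4.27.1}) closes this term: the product estimate (\ref{prd4}) with $f=a^2-1$, $G=\sD\log a$ forces the appearance of $\|\mu^\ep\slE_\mu\sD\log a\|_{l_\mu^2L_v^2L_x^2}$, i.e.\ precisely the top-order LP Hessian norm of $\log a$ that Proposition \ref{ellp_7} is in the process of establishing, and the substitution via (\ref{4.27.1}) merely reintroduces $v^{-1}\p_v\log a$, which by the parabolic equation (\ref{aa_1}) is again $\sD\log a$ up to controlled terms. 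The correct resolution --- and what the paper actually proves as its (\ref{f4.26.1}) --- is to bound this contribution by $\la^{-\ep_0}\|\mu^\ep\sn^2\slP_\mu\log a\|_{l_\mu^2L_v^2L_x^2}$ using the $L^\infty$ smallness of $a^2-1$ from (\ref{aa_2}), the equivalence (\ref{eqv}) and $[\sD,\slP_\mu]=0$, and then to absorb this term into the left-hand side of the parabolic estimate of Proposition \ref{ellp_4} when deducing Proposition \ref{ellp_7}. In other words, for this one piece the lemma's stated bound $\les\la^{-\f12}$ is not available a priori; it only holds after the absorption, and your write-up should make that explicit rather than attributing it to (\ref{ellp_2}).
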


The inequality in Proposition \ref{ellp_7} follows immediately in view of (\ref{eqv}), Lemma \ref{err_1} and  Proposition \ref{ellp_4}.

\begin{proof}
In view of (\ref{ff}), by using (\ref{a_9}) and (\ref{theta_1}) we can derive
\begin{equation*}
\|v\mathfrak{E}\|_{L_v^2 L_\omega^2}\les \|R_{NN}, N (\tr k, V_4)\|_{L_v^2 L_x^2}+\|v^\f12 (\hat \theta, \pi)\|_{L_v^4 L_\omega^4}^2+\|\pi\|_{L_v^2 L_\omega^2}\les \la^{-\f12}.
\end{equation*}
Hence the first inequality (\ref{w7.24.2}) follows by using (\ref{ellp_2}). We prove the second one by showing
\begin{align}
&\|\mu^\ep \slE_\mu \F_1\|_{l_\mu^2 L_v^2 L_x^2}\les\la^{-\ep_0} \|\mu^\ep \sn^2 \slP_\mu \log a\|_{l_\mu^2L_v^2 L_x^2}, \label{f4.26.1}\\
&\|\mu^\ep \slE_\mu (|\sn a|^2)\|_{l_\mu^2 L_v^2 L_x^2}\les \la^{-\f12}, \label{f4.26.2}\\
&\|\mu^\ep \slE_\mu (a^2\mathfrak{E})\|_{l_\mu^2 L_v^2 L_x^2}\les \la^{-\f12}, \label{4.26.3}
\end{align}
where $\F_1=(a^2-1) \sD \log a$.

We apply (\ref{prd4}) to $f=a^2-1$ and $G= \sD \log a$ to obtain that
\begin{equation}\label{4.26.4}
\|\mu^\ep \slE_\mu \F_1\|_{l_\mu^2 L_v^2 L_x^2}
\les \left( \|v\sn(a^2-1)\|_{L_v^\infty L^2_\omega}+\|a^2-1\|_{L^\infty}\right)\|\mu^\ep \slE_\mu \sD \log a\|_{l_\mu^2 L_v^2 L_x^2}.
\end{equation}
By (\ref{ellp_2}) and (\ref{aa_2}), we have
\begin{equation*}
\|v\sn(a^2-1)\|_{L_v^\infty L^2_\omega}+\|a^2-1\|_{L^\infty}\les \la^{-\ep_0}.
\end{equation*}
By using (\ref{eqv}), we derive from (\ref{4.26.4}) that
\begin{equation}\label{4.26.7}
\|\mu^\ep \slE_\mu \F_1\|_{l_\mu^2 L_v^2 L_x^2}\les\la^{-\ep_0}\|\mu^\ep \slE_\mu \sD \log a\|_{l_\mu^2 L_v^2 L_x^2}\les \la^{-\ep_0}\|\mu^\ep \slP_\mu \sD \log a\|_{l_\mu^2 L_v^2 L_x^2}.
\end{equation}
By using $[\sD, \slP_\mu]=0$, we obtain (\ref{f4.26.1}).

We now consider (\ref{f4.26.2}) by applying (\ref{prd5.1}) to $F=\sn a$,
\begin{equation}\label{4.26.5}
\|\mu^\ep \slE_\mu(|\sn a|^2) v\|_{l_\mu^2 L_v^2 L_\omega^2}\les \|v \sn a\|_{L_v^\infty L_\omega^p}\|\sn a, v \sn a\|_{L_v^2 L_\omega^2}\les \la^{-\f12}
\end{equation}
where we employed (\ref{ellp_1}), Lemma \ref{ellp_5.1} and (\ref{aa_p}). Thus we proved (\ref{f4.26.2}).

 Next we prove (\ref{4.26.3}). Let us  first write
 $\mathfrak{E}=\ckk F+\frac{2}{va} (V_4-\tr k)$.
 By finite band property, (\ref{a_9}) and (\ref{ellp_2})
\begin{align}
& \left\|\mu^\ep \slE_\mu \left(\frac{a}{v}(V_4-\tr k)\right)\right\|_{l_\mu^2 L_v^2 L_x^2}\nn\\
&\les \|\sn( a (V_4-\tr k))\|_{L_v^2 L_x^2}\les\|\sn a\c \pi\|_{L_v^2 L_x^2}+\|\sn \pi\|_{L_v^2 L_x^2} \les \la^{-\f12}. \label{4.26.8}
 \end{align}
 Similar to (\ref{4.26.7}),
 \begin{align}
& \|\mu^\ep \slE_\mu\left((a^2-1)\ckk F\right)\|_{l_\mu^2 L_v^2 L_x^2}\les \la^{-\ep_0}\c\|\mu^\ep \slE_\mu \ckk F\|_{l_\mu^2 L_v^2 L_x^2}.\label{w7.31.4}
\end{align}
(\ref{4.26.3}) will follow by combining (\ref{4.26.8}), (\ref{w7.31.4}) with the following result.
 \begin{align}
&\|\mu^\ep \slE_\mu \ckk F\|_{l_\mu^2 L_v^2 L_x^2}\les \la^{-\f12}. \label{4.26.6}
 \end{align}
To see (\ref{4.26.6}), we  first  derive with $p=\frac{2}{1-\ep}$, by using (\ref{prd5.1}),
\begin{align*}
&\|\mu^\ep \slE_\mu (|\hat \theta|^2, (V_4-\tr k)^2)v\|_{l_\mu^2 L_v^2 L_\omega^2}\\
&\les \|v(\hat \theta, \tr k, V_4)\|_{L_v^\infty L_\omega^p}\|\sn \hat \theta, v^{-1}\hat \theta, \sn \tr k, \sn V_4\|_{L_v^2 L_x^2}\les\la^{-\f12},
\end{align*}
where we employed (\ref{aa_p}), Proposition \ref{w7.24.4} and Lemma \ref{mtieq} for deriving the last inequality.
 It only remains  to show
\begin{align}
&\|\mu^\ep \slE_\mu R_{NN}\|_{l_\mu^2 L_v^2 L_x^2}+\|\mu^\ep \slE_\mu(\nab_N (\tr k, V_4))\|_{l_\mu^2 L_v^2 L_x^2}\les \la^{-\f12}.\label{ep_2}
\end{align}

Indeed, with $f(\phi)$ being products of the factors among $\{ g^{(i)}(\phi), i=0,1,2\}$
$$R_{NN}=N^m N^n R_{mn}(g)=N^m N^n\c f(\phi) \c( \p^2\phi+(\p \phi)^2)$$
 we  write symbolically that
 \begin{equation}\label{rnn}
 R_{NN}=(f\c N)\c \p^2 \phi+(f\c N)(\p \phi)^2.
 \end{equation}
 By using (\ref{prd4}), (\ref{a_9_w}) and (\ref{w7.24.3}),
\begin{align}
&\|\mu^\ep \slE_\mu \left((f\c N) \p^2 \phi\right)\|_{l_\mu^2 L_v^2 L_x^2}+\|\mu^\ep \slE_\mu\left((f\c N)(\p \phi)^2\right)\|_{l_\mu^2 L_v^2 L_x^2}\label{epRnn}\\
&\les \left(\|\sn (f\c N) \|_{L_v^\infty L_x^2}+\|f\c N\|_{L_v^\infty L_x^\infty}\right)
\left(\|\mu^\ep \slE_\mu(\p^2 \phi)\|_{l_\mu^2 L_v^2 L_x^2}+\|\mu^\ep \slE_\mu\left((\p \phi)^2\right)\|_{l_\mu^2 L_v^2 L_x^2} \right)\nn\\
&\les \la^{-\f12}\nn
\end{align}
where we used $\sn N=\theta$, (\ref{G1}) and (\ref{theta_1}). Thus  the estimate for $R_{NN}$ in (\ref{ep_2}) is proved.
%where we used that
%$R_{NN}=R-R_{mn}\Pi_{m'}^m \Pi_{n'}^n g^{m'n'}$

Note that  with  $\Pi_{m'}^m=\delta_{m'}^m-N_{m'}N^m$
\begin{align*}
\nab_N \tr k&=N^i\nab_i( k_{mn} \Pi_{m'}^m \Pi_{n'}^\nu g^{m' n'})
=N^i\Pi_{m'}^m \Pi_{n'}^n g^{m'n'}\nab_i k_{mn}+N^i k_{mn}\nab_i(\Pi_{\mu'}^\mu \Pi_{\nu'}^\nu g^{m'n'}).
\end{align*}
Also in view of  $L^\mu=N^\mu+\bT^\mu$ and
$
\nab_N(V_4)= N^i \nab_i V_\mu L^\mu+N^i V_\mu \nab_i L^\mu,
$
we have the symbolic identity
\begin{align}
\nab_N(V_4), \nab_N \tr k&=(N^i\c f)(\p \bp \phi)+ f\c(\bp \phi)^2 \c N^i+f\c\bp \phi\c \nab_N N\label{5.27.2}.
\end{align}
The first two terms can be treated as (\ref{epRnn}).
We treat the last term in (\ref{5.27.2}) by using (\ref{prd5}), (\ref{thetan}) and (\ref{aa_p})
\begin{align*}
&\|\mu^\ep \slE_\mu(f\bp \phi\c \nab_N N)\|_{l_\mu^2 L_v^2 L_x^2}\nn\\
&\les  \|\sn(f \bp \phi)\|_{L_v^2 L_x^2}\|v\nab_N N\|_{L^\infty L^p_\omega}+\|\sn^2\log a, v^{-1}\sn\log a\|_{L_v^2 L_x^2}\|v \bp\phi\|_{L_v^\infty L_\omega^p}\les \la^{-\f12},
\end{align*}
where we also used  (\ref{a_9_w}), (\ref{ellp_1}) and (\ref{ellp_2}) to derive the last inequality. Thus the proof of (\ref{ep_2}) is complete.
\end{proof}

Finally, we need to improve the assumptions (\ref{aa_2}), (\ref{aa_p}) to (\ref{aa_3}) and (\ref{aa_4}), which  are contained in Proposition \ref{3.17.5} and  the following result.

\begin{proposition}\label{w8.6.4}
There hold for the lapse function of the $v$-foliation that
 \begin{equation}\label{lpa}
  \|v^\frac{1}{2} \sn a \|_{L_v^\infty L_\omega^p}+\|\sn a, v \sn^2  a\|_{L_v^2 L_\omega^p}\les \la^{-\f12}
 \end{equation}
\begin{align}
|a-1|\les \la^{-4\ep_0}, \|v^{-\f12}(a-1)\|_{L^\infty}+\|\sn \log a\|_{L_v^2 L_x^\infty}\les \la^{-\f12}.\label{lpa1}
\end{align}
\end{proposition}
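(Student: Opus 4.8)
\subsection{Proof of Proposition \ref{w8.6.4}}

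The estimates in Proposition \ref{w8.6.4} are a reformulation of the frequency--localized parabolic bounds of Proposition \ref{ellp_7}, combined with the $L^2$ estimates of Lemma \ref{ellp_4.26} and Lemma \ref{ellp_5.1}. The plan is to pass from the $\slP_\mu$--weighted $L_x^2$ estimates to $L_\omega^p$ and $L_\omega^\infty$ estimates on $\sn\log a$ and $\sn^2\log a$ by Littlewood--Paley summation together with the sharp Bernstein and Sobolev inequalities on $S_v$ (Lemma \ref{glpp}, (\ref{sob}), (\ref{sobinf})), and then to transfer everything from $\log a$ to $a$ using $|a-1|\les\la^{-\ep_0}$ from (\ref{aa_2}), so that $\sn a=a\,\sn\log a$ and $\sn^2 a=a\,\sn^2\log a+a\,\sn\log a\otimes\sn\log a$. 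Throughout I use the comparison $\sqrt{|\ga|}\approx v^2$ from (\ref{a_8}), which gives $\|F\|_{L_x^q(S_v)}\approx v^{2/q}\|F\|_{L_\omega^q(S_v)}$; in particular, rewriting the $L_x^2$ norms, Proposition \ref{ellp_7} reads, with $\ep=1-\frac{2}{p}$,
\begin{align}\label{w86plan1}
\|\mu^\ep v^{\f12}\sn\slP_\mu\log a\|_{l_\mu^2 L_v^\infty L_\omega^2}
+\|\mu^\ep v\,\sn^2\slP_\mu\log a\|_{L_v^2 l_\mu^2 L_\omega^2}
+\|\mu^\ep \sn\slP_\mu\log a\|_{l_\mu^2 L_v^2 L_\omega^2}\les\la^{-\f12}.
\end{align}

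First I would sum (\ref{w86plan1}) over dyadic $\mu$. Writing $\sn\log a=\sum_\mu \slP_\mu^2\sn\log a$ and commuting $\slP_\mu$ past $\sn$ --- the commutators $[\slP_\mu,\sn]$ being of the same order and absorbed using Lemma \ref{ellp_5.1} together with the equivalence (\ref{eqv}) between the geometric projections $\slP_\mu$ and the standard ones $\slE_\mu$ --- the sharp Bernstein inequality $\|\slP_\mu F\|_{L_\omega^p}\les\mu^{1-2/p}\|F\|_{L_\omega^2}$ from Lemma \ref{glpp} yields
\begin{align}\label{w86plan2}
\|v^{\f12}\sn\log a\|_{L_v^\infty L_\omega^p}+\|v\,\sn^2\log a\|_{L_v^2 L_\omega^p}+\|\sn\log a\|_{L_v^2 L_\omega^p}\les\la^{-\f12}.
\end{align}
The linear parts of $\sn a$ and $\sn^2 a$ are then controlled by (\ref{w86plan2}) and $a\approx 1$, which gives the $\|v^{\f12}\sn a\|_{L_v^\infty L_\omega^p}$ and $\|\sn a,\,v\sn^2 a\|_{L_v^2 L_\omega^p}$ bounds of (\ref{lpa}) up to the quadratic term $v\,\sn\log a\otimes\sn\log a$; the latter is handled by H\"older's inequality, (\ref{sobinf}), and Lemma \ref{ellp_5.1}, contributing a factor $\la^{-\f12}$ times a quantity $\les 1$ (using also $v_*\les\la^{1-8\ep_0}$).

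For the $L^\infty$ statements in (\ref{lpa1}) I would combine the pointwise--in--$v$ bound $\|v^{\f12}\sn\log a\|_{L_\omega^p(S_v)}\les\la^{-\f12}$ from (\ref{w86plan2}) with $\|v^{-\f12}\log a\|_{L_v^\infty L_\omega^2}\les\la^{-\f12}$ from (\ref{ellp_1}). By (\ref{sobinf}), $\|\log a\|_{L_\omega^\infty(S_v)}\les\|v\sn\log a\|_{L_\omega^p(S_v)}+\|\log a\|_{L_\omega^2(S_v)}\les v^{\f12}\la^{-\f12}\les\tau_*^{\f12}\la^{-\f12}\les\la^{-4\ep_0}$, which gives $|a-1|\les\la^{-4\ep_0}$. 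Applying (\ref{sobinf}) instead to $v^{-\f12}\log a$, with $v\,\sn(v^{-\f12}\log a)=v^{\f12}\sn\log a$, yields $\|v^{-\f12}\log a\|_{L^\infty}\les\la^{-\f12}$, hence $\|v^{-\f12}(a-1)\|_{L^\infty}\les\la^{-\f12}$. Finally, applying (\ref{sobinf}) in the form $\|\sn\log a\|_{L_\omega^\infty(S_v)}\les\|v\sn^2\log a\|_{L_\omega^p(S_v)}+\|\sn\log a\|_{L_\omega^2(S_v)}$ and taking the $L_v^2$ norm gives $\|\sn\log a\|_{L_v^2 L_x^\infty}\les\la^{-\f12}$ by (\ref{w86plan2}) and Lemma \ref{ellp_5.1}. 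These improve (\ref{aa_2}) to (\ref{aa_3}) and the $\sn a$ part of (\ref{aa_p}) to (\ref{aa_4}), closing the continuity argument.

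The genuinely analytic work --- the parabolic Littlewood--Paley estimate Proposition \ref{ellp_7} and its commutator input Lemma \ref{comm_4.26} --- is already in place, so the remaining difficulty is essentially bookkeeping with the geometric Littlewood--Paley calculus: one must commute $\slP_\mu$ past $\sn$, pass between $\slP_\mu$ and $\slE_\mu$ via (\ref{eqv}), and carry the volume factor $\sqrt{|\ga|}\approx v^2$ correctly so that the $\mu$--summation lands on exactly the weights appearing in (\ref{lpa}), while verifying that the error terms (the product $\sn\log a\otimes\sn\log a$ and the commutators) carry a genuine $\la^{-4\ep_0}$ gain, which uses Lemma \ref{ellp_4.26}, Lemma \ref{ellp_5.1} and $v_*\les\la^{1-8\ep_0}$. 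No new idea beyond what has been assembled is required.
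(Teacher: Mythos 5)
Your outline tracks the paper's strategy — use Proposition \ref{ellp_7}, sum the frequency pieces, pass from $L^2$ to $L_\omega^p$, then get pointwise bounds by Sobolev and close the bootstrap — but it understates the central commutator difficulty, which is where the actual work lies and which is not covered by the ingredients you invoke.

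The issue is the passage from what Proposition \ref{ellp_7} controls, namely $\|\mu^\ep\,\sn\slP_\mu\log a\|_{l_\mu^2 L^2}$, to what you need for the $L_\omega^p$ embedding, namely $\|\mu^\ep\,\slP_\mu\sn\log a\|_{l_\mu^2 L^2}$. You write ``the commutators $[\slP_\mu,\sn]$ being of the same order and absorbed using Lemma \ref{ellp_5.1} together with the equivalence (\ref{eqv}).'' This does not work as stated: (\ref{eqv}) compares $\slP_\mu$ with $\slE_\mu$ acting on a fixed tensor, it does not commute either projection past $\sn$; and Lemma \ref{ellp_5.1} gives only unweighted $L^2$ control of $\sn\log a$ and $v\sn^2\log a$, with no $\mu^\ep$ off-diagonal gain, so it cannot by itself make the commutator sum converge with the required $\la^{-\f12}$. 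The geometric projections $\slP_\mu$ are built from the heat flow of $\sD$, so $[\slP_\mu,\sn]$ genuinely involves the Gauss curvature $K$, and the paper isolates this in a dedicated estimate, Lemma \ref{com_5.24}, which controls $\mu^\ep\|\slP_\mu\sn\slP_\ell^2 f\|_{L_\omega^2}$ by a Schur kernel $\min\big((\mu/\ell)^\ep,(\ell/\mu)^{2-\ep}\big)$ against $\|\ell^\ep\sn\slP_\ell f\|_{L_\omega^2}$ plus a summable residual $\mu^{\f12-\frac{2}{p}+c}\ell^{-c}\|\sn f\|_{L_\omega^2}$. Proving that lemma requires the curvature decomposition $K=\tfrac{1}{a^2v^2}+\ckk K+\sn\pi$ of (\ref{kep}) and the estimate (\ref{ckkk_4}) on $\ckk K$; this is the substantive analytic step you would need to supply and cannot be dismissed as bookkeeping. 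The paper's argument is: decompose $\log a=\sum_\ell\slP_\ell^2\log a$, apply Lemma \ref{com_5.24} to each $\slP_\mu\sn\slP_\ell^2\log a$, sum in $\ell$, and combine with Lemma \ref{embd_1} and (\ref{eqv}) (which \emph{are} the right tools for the final $L_\omega^p$ embedding, as you noted).

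For the second-derivative bound $\|v\sn^2\log a\|_{L_v^2 L_\omega^p}$ your sketch has the same gap but worse: commuting $\slP_\mu$ past $\sn^2$ is harder than past $\sn$. The paper sidesteps it entirely by noting that $\slP_\mu$ commutes exactly with $\sD$ by construction, so $\|\mu^\ep\slE_\mu\sD\log a\|_{l_\mu^2 L_v^2 L_x^2}\approx\|\mu^\ep\slP_\mu\sD\log a\|_{l_\mu^2 L_v^2 L_\omega^2}=\|\mu^\ep\sD\slP_\mu\log a\|_{l_\mu^2\cdots}$, and then uses the $L_\omega^q$ Calderon--Zygmund estimate (\ref{cz_518}) to convert $\sD$ into $\sn^2$. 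You should incorporate this; trying to sum $\slP_\mu^2\sn^2\log a$ directly would require a new second-order commutator estimate that is not in the paper. The remainder of your argument --- transferring from $\log a$ to $a$, the H\"older treatment of $\sn\log a\otimes\sn\log a$, the Sobolev inequalities (\ref{sobinf}) for the $L^\infty$ conclusions of (\ref{lpa1}), and the observation that $|a-1|\les\la^{-\f12}v_*^{\f12}\les\la^{-4\ep_0}$ closes the bootstrap --- is correct and matches the paper.
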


Indeed, in view of Lemma \ref{embd_1} and (\ref{eqv}),
\begin{equation}\label{lp5.24}
 \|\sn\log a\|_{L_\omega^p}\les  \|\mu^\ep\slE_\mu \sn\log a\|_{l_\mu^2 L_\omega^2}\approx  \|\mu^\ep \slP_\mu \sn\log a\|_{l_\mu^2 L_\omega^2}
\end{equation}
Applying the Littlewood-Paley decomposition $\slP_\ell$ such that  $\sum \slP^2_\ell=Id$, followed with using Lemma \ref{com_5.24} and Minkowski inequality, we obtain
\begin{equation*}
v^\f12 \|\sn \log a\|_{L_\omega^p}\les \|v^\f12 \mu^\ep \sn\slP_\mu \log a\|_{l_\mu^2 L_v^\infty L_\omega^2} +\|v^\f12\sn\log a\|_{L_v^\infty L_\omega^2}
\end{equation*}
Using Proposition \ref{ellp_7} and Lemma \ref{ellp_5.1}, we can obtain the first inequality in (\ref{lpa}).

By (\ref{cz_518}), Lemma \ref{embd_1}, we deduce that
\begin{equation*}
\|v \sn^2\log a\|_{L_v^2 L_\omega^p}\les\|\mu^\ep \slE_\mu \sD \log a\|_{l_\mu^2 l_v^2 L_x^2}\les \|\mu^\ep \slP_\mu \sD \log a\|_{l_\mu^2 L_v^2 L_\omega^2}\les \la^{-\f12}
\end{equation*}
where we employed (\ref{eqv})  and Proposition \ref{ellp_7}.

Using (\ref{ellp_1}), (\ref{ellp_2}) and (\ref{sob}), we obtain
$
\|\sn a\|_{L_v^2 L_\omega^p}\les \la^{-\f12}$.
Hence, we get  (\ref{lpa}).
The last inequality  of (\ref{lpa1}) follows from  (\ref{lpa}) and (\ref{ellp_1}) by using (\ref{sobinf}).

By (\ref{sobinf}), (\ref{lpa}) and (\ref{ellp_0}),
\begin{align*}
\|v^{-\f12} (a-1)\|_{L^\infty}\les \|v^\f12 \sn a\|_{ L_v^\infty L_\omega^p}+\|v^{-\f12}(a-1)\|_{L_v^\infty L_\omega^2}\les \la^{-\f12}
\end{align*}
This gives
\begin{equation}\label{a0.0}
|a-1|\les \la^{-\f12}v_*^\f12 \les \la^{-4\ep_0} \le \frac{1}{4}.
\end{equation}
Hence we finish the proof of (\ref{lpa1}) and the first inequality in (\ref{a_3}).

Next we give improved estimates on $\hat\theta$  including the one in (\ref{a_3}).

\begin{proposition}\label{3.17.5}
For $\frac{2}{p}=1-\ep$, there hold
\begin{equation}\label{theta_p}
\|v^\f12 \hat \theta\|_{L_v^\infty L_\omega^{p}}\les \la^{-\f12},
\end{equation}
\begin{equation}\label{theta_3}
\|v(\sn \hat\theta , \sn \tr\theta)\|_{L_v^2 L_\omega^p}\les \la^{-\f12}, \quad \|\hat \theta\|_{L_v^2 L_x^\infty}\les \la^{-\f12}.
\end{equation}
\end{proposition}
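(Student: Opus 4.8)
The plan is to upgrade the $L^2_\omega$-based bounds on $\hat\theta$ from Proposition \ref{w7.24.4} to the $L^p_\omega$ level, using the $L^p_\omega$ control on $\sn^2 a$ and on $a$ itself now furnished by Proposition \ref{w8.6.4}, together with the curvature bounds of Lemma \ref{mtieq}. The logical ordering is essential: Proposition \ref{ellp_7} and Proposition \ref{w8.6.4} are established first (their proofs only use the $L^2_\omega$-level estimates of Proposition \ref{w7.24.4} and the bootstrap assumptions (\ref{aa_2}), (\ref{aa_p})), and only afterwards do we run the argument below, which feeds the refined $a$-estimates into the transport and Hodge equations for $\hat\theta$. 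There is no circularity, since $\hat\theta$ never enters the equations governing $a$.

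First I would prove (\ref{theta_p}). Integrating the transport equation (\ref{a_11}) along the normal direction exactly as in the derivation of (\ref{theta_2}) (valid for the present $p$, using (\ref{thet1}), the comparison $\sqrt{|\ga|}\approx v^2$ from (\ref{a_8}), and the vanishing at $v=0$ from Lemma \ref{inii_7}) gives
\[
\|v^\f12\hat\theta\|_{L^p_\omega L^\infty_v}\les \Big\|v^{-\frac32}\int_0^v |(\sn\hot\sn a,\, a\hat R_{AB})|\,d\mu_\ga\,dv'\Big\|_{L^p_\omega}.
\]
Using $d\mu_\ga\approx v'^2\,d\mu_{\ga^{(0)}}$, Minkowski's inequality in $\omega$, and the elementary bound $v^{-3/2}\int_0^v v'^2 g(v')\,dv'\les \|v' g\|_{L^2_{v'}(0,v)}$ (Cauchy--Schwarz), the right-hand side is dominated by $\|v(\sn^2 a,\,\Ric)\|_{L^2_v L^p_\omega}$, since $\hat R_{AB}$ is a contraction of $\Ric$ and $\ga$ with $\cga\approx\ga^{(0)}$. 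This quantity is $\les\la^{-1/2}$ by Proposition \ref{w8.6.4} and Lemma \ref{mtieq}. As $L^p_\omega L^\infty_v \hookrightarrow L^\infty_v L^p_\omega$, this yields (\ref{theta_p}).

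Next I would treat the first bound in (\ref{theta_3}). From (\ref{thedef}) in the form $a\tr\theta=\tfrac2v-\G$ with $\G=a(V_4-\tr k)$, differentiating gives $\sn\tr\theta$ schematically of the form $\tfrac{\sn a}{v}+\sn\pi+\pi\cdot\sn a$, so $\|v\sn\tr\theta\|_{L^2_v L^p_\omega}\les \|\sn a\|_{L^2_v L^p_\omega}+\|v\sn\pi\|_{L^2_v L^p_\omega}+\|v\,\pi\cdot\sn a\|_{L^2_v L^p_\omega}\les\la^{-1/2}$, the first two terms by Proposition \ref{w8.6.4} and Lemma \ref{mtieq}, and the last (lower-order) term by Hölder combined with the $L^\infty_v$ bounds on $v^{1/2}\sn a$ and $v^{1/2}\pi$. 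For $\sn\hat\theta$ I would apply the Calderón--Zygmund estimate (\ref{ellp_6}) for $\D_2$ to the Codazzi identity (\ref{a_2}), i.e. $\D_2\hat\theta=\tfrac12\sn\tr\theta+R_{NA}$, obtaining $\|v\sn\hat\theta\|_{L^2_v L^p_\omega}\les \|v(\sn\tr\theta,\Ric)\|_{L^2_v L^p_\omega}\les\la^{-1/2}$; note $\sn\tr\theta$ is independent of $\hat\theta$, so no auxiliary bootstrap on $\sn\hat\theta$ is needed. Finally, the sup-norm bound $\|\hat\theta\|_{L^2_v L^\infty_x}\les\la^{-1/2}$ follows from the Sobolev inequality (\ref{sobinf}), $\|\hat\theta\|_{L^\infty_\omega}\les\|v\sn\hat\theta\|_{L^q_\omega}+\|\hat\theta\|_{L^2_\omega}$ with $2<q\le p$, taken in $L^2_v$ and combined with the estimate just proved and $\|\hat\theta\|_{L^2_v L^2_\omega}\les\la^{-1/2}$ from Proposition \ref{w7.24.4}.

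The main obstacle is the transport step for (\ref{theta_p}): this is where the argument genuinely requires the refined $L^p_\omega$ control on $\sn^2 a$, which itself rests on the parabolic estimates of Proposition \ref{ellp_7} and the product and commutator lemmas; getting the $v$-weights right in the integral $v^{-3/2}\int_0^v v'^2(\cdots)\,dv'$ and keeping the $L^p_\omega$ norm on the outside via Minkowski is the delicate (if routine) bookkeeping. Everything else is a direct application of the Hodge and Sobolev machinery already set up.
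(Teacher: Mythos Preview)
Your proof is correct and follows the paper's approach essentially step by step: the transport equation (\ref{a_11}) via (\ref{theta_2}) together with the $L^p_\omega$ bounds on $\sn^2 a$ and $\Ric$ from (\ref{lpa}) and (\ref{a_9}) for (\ref{theta_p}); then (\ref{thedef}), the Codazzi equation (\ref{a_2}) with the Calder\'on--Zygmund estimate (\ref{ellp_6}), and finally (\ref{sobinf}) for (\ref{theta_3}).

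One small inaccuracy in your commentary on logical ordering: $\hat\theta$ \emph{does} enter the equation governing $a$, through the $|\hat\theta|^2$ term in $\mathfrak{E}$, see (\ref{ff}) and (\ref{aa_1}). The correct reason there is no circularity is that Proposition \ref{w8.6.4} only consumes the $L^2_\omega$-level control on $\hat\theta$ already supplied by Proposition \ref{w7.24.4} (together with the bootstrap assumption (\ref{aa_p})), not the $L^p_\omega$ bounds you are proving here.
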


\begin{proof}
Recall from (\ref{theta_2}) that
\begin{equation}\label{t_317_1}
\|v^\f12 \hat\theta\|_{L_\omega^p L_v^\infty}\les \|v \sn^2 a\|_{L_\omega^p L_v^2}+\| v \hat R_{AB}\|_{L_\omega^p L_v^2}
\end{equation}
 by using (\ref{lpa}) and (\ref{a_9}),
\begin{align*}
\|v^\f12 \hat\theta\|_{L_\omega^p L_v^\infty}&\les \la^{-\f12}.
\end{align*}

Now we consider (\ref{theta_3}).
 By using (\ref{thedef}), (\ref{a_9}) and (\ref{lpa}) we obtain
\begin{equation*}
\|v\sn \tr\theta\|_{L_v^2 L_\omega^p}\les \|\sn a\|_{L_v^2L_\omega^p}+\|v \sn (\tr k, V_4)\|_{L_v^2 L_\omega^p}\les \la^{-\f12}.
\end{equation*}
Hence, by (\ref{a_2}), (\ref{ellp_6}) and (\ref{a_9})
\begin{align*}
\|v\sn \hat \theta\|_{L_v^2 L_\omega^p}&\les \|v\sn \tr\theta \|_{L_v^2 L_\omega^p}+\|v R_{NA}\|_{L_v^2 L_\omega^p}\les \la^{-\f12}.
\end{align*}
The last inequality of (\ref{theta_3}) follows from the above estimate, (\ref{theta_p}) and  (\ref{sobinf}).
\end{proof}

 We can replace $\chi$ by $\theta$, and $\frac{1}{t-u}$ by $\frac{1}{v}$ in the proof for (\ref{8.0.3}). By using Proposition \ref{3.17.5} and (\ref{w8.7.1}), we derive
\begin{equation*}
|\cga-\ga^{(0)}|+\|\p(\cga-\ga^{(0)})\|_{L_\omega^{p}(S_v)} \les \la^{-4\ep_0}
\end{equation*}
which improves (\ref{a_7}).

Finally, we prove (\ref{4a_6}). In view of (\ref{thet1}),  $\p_v \varphi=a\tr\theta-\frac{2}{v}$. Also using (\ref{comm_4.25.1}), we obtain the transport equation
\begin{equation}\label{w8.7.2}
\p_v \sn \varphi+\frac{a\tr\theta}{2}\sn \varphi=-a\hat \theta\c \sn \varphi+\sn(a\tr\theta-\frac{2}{v}).
\end{equation}
Using (\ref{theta_3}), (\ref{a_9}) and (\ref{w8.7.1}), applying  to (\ref{w8.7.2}) an argument similar as Lemma \ref{tsp2}, we can obtain
\begin{equation*}
\|\tir^\f12 \sn \varphi\|_{L_v^\infty L_\omega^p}\les \|v\sn\mathfrak{\G}\|_{L_\omega^p L_v^2}\les \|v \sn a \c \pi\|_{L_v^2 L_\omega^p}+\|v \sn \pi\|_{L_v^2 L_\omega^p}\les \la^{-\f12}.
\end{equation*}
To derive the last inequality, we employed (\ref{a_9}) and (\ref{ellp_2}).
Thus we complete the proof of Proposition \ref{exten}.

%\rotatebox{90}{\vbox{%
%$$
%\sin^2+\cos^2 = 1
%$$}}

\end{document}